\numberwithin{thm}{section}
\newlength\longest
        \renewcommand{\headrulewidth}{0pt}
\begin{document}
    
    \renewcommand{\bibname}{References}
    \frontmatter  
    \maketitle
    
    \chapter{Abstract}

Trace formulas appear in many forms in noncommutative geometry (NCG). In the first part of this thesis, we obtain results for asymptotic expansions of trace formulas like heat trace expansions by adapting the theory of Multiple Operator Integration to NCG. More broadly, this construction provides a natural language for operator integrals in NCG, which systematises and simplifies operator integral arguments throughout the literature. Towards this end, we construct a functional calculus for abstract pseudodifferential operators and generalise Peller's construction of multiple operator integrals to this abstract pseudodifferential calculus. In the process, we obtain a noncommutative Taylor formula.

In the second part of this thesis, we shift our attention to Dixmier trace formulas. First, we provide an approximation of the noncommutative integral for spectrally truncated spectral triples in the Connes--Van Suijlekom paradigm of operator system spectral triples. Our approximation has a close link to Quantum Ergodicity, which we will use to state an NCG analogue of the fundamental result that ergodic geodesic flow implies quantum ergodicity. Furthermore, we provide a Szeg\H{o} limit theorem in NCG. Next, we provide a Dixmier trace formula for the density of states, a measure originating in solid state physics that can be associated with an operator on a geometric space. We first provide this formula in the setting of discrete metric spaces, and then in the setting of manifolds of bounded geometry. The latter leads to a Dixmier trace formula for Roe's index on open manifolds.
    \chapter{Acknowledgement}
With heartfelt gratitude and admiration to my PhD supervisor Fedor Sukochev, and co-supervisors Edward McDonald and Dmitriy Zanin. You have not only the highest standards for research, but also the utmost confidence in your students to meet them. No encouragement is more effective. Thank you for your guidance, I will remember my time with you very fondly.

In particular, I would like to thank Edward McDonald for the many hours you set aside to keep an eye on my progress. 
You are an inspiration to me, and I am proud to call myself a student of yours.

I owe much to Teun van Nuland as well. Thank you for taking me along in a new research direction when I was looking for something fresh. 
I learned a lot from you, and it has been fantastic to work together.

A sincere thanks to Nigel Higson is in order, I am immensely grateful for your support. It has been inspirational to witness your joyful lectures. 

I would like to thank Nigel Higson, Marius Junge, Edward McDonald, Teun van Nuland, and Walter van Suijlekom for their hospitality during my visits abroad, as well as the staff at the Pennsylvania State University, Delft University of Technology, and the Radboud University Nijmegen.
Thank you to Nigel Higson, Fedor Sukochev, and the Australian Mathematical Society for providing funds for these travels.

For the work this thesis was based on, I am glad to acknowledge my co-authors Nurulla Azamov, Edward McDonald, Teun van Nuland, Fedor Sukochev, and Dmitriy Zanin, as well as helpful communications with Magnus Goffeng, Nigel Higson, Bruno Iochum, Eric Leichtnam, Qiaochu Ma, and Rapha\"el Ponge. Their contributions will be indicated in the relevant chapters as well.

For the brave task of proofreading this thesis, I thank and apologise to Fedor Sukochev, Edward McDonald, and Teun van Nuland. Of course, I claim full responsibility for any mistakes remaining. Furthermore, I am thankful to the referees who volunteered their time for reading and assessing this dissertation, as well as the anonymous referees of the preprints and journal articles on which this is all based.

I thoroughly enjoyed my trips down to the University of Wollongong for the OANCG seminars. Thank you to Adam Rennie, Aidan Sims, Angus Alexander, Ada Masters, Alexander Mundey, Abraham Ng and the others for the warm welcomes, the cold drinks, and of course the maths.

Many thanks to Thomas Scheckter for answering silly questions of mine, and for helping me navigate the scary world of academic paperwork. This PhD was partially supported by Australian Research Council grant FL170100052.

Cheers, Abirami Srikumar and Haley Stone, for helping with a bit of Python coding in the final stretch of work on this thesis.

Finally, I wish to thank my parents, my sibling, my grandmother, and my dear friends for your love and support.  
These past few years have been quite a ride, and I will never forget that you were there for me through it~all.

 {\setlength{\epigraphwidth}{0.45\textwidth}
  \epigraph{ someone will remember us\\ 
     \ \ \quad\quad \quad \quad \quad \quad \quad \quad \quad  I say \\
     \quad \ \ \quad  \quad \quad \quad \quad \quad \quad \quad even in another time}{Sappho, Fragment 147~\cite{Voigt}\\
     Trans. Anne Carson~\cite{Carson2002}}
    }


    \chapter{List of Publications}
\printbibliography[heading=none,keyword=ownphd]

    \tableofcontents
    
    \mainmatter
    \pagestyle{fancy}
        \fancyhf{}
        \fancyhead[LE]{\leftmark}
        \fancyhead[RO]{\rightmark}
        \fancyfoot[C]{\thepage}
        \renewcommand{\headrulewidth}{1pt}
        \setcounter{secnumdepth}{3}

    \chapter{Introduction}\label{Ch:Intro}
{\setlength{\epigraphwidth}{\widthof{Do not forget our motto ``work harder'', and act~accordingly.}}
\epigraph{Do not forget our motto ``work harder'', and act~accordingly.}{Fedor Sukochev}}
The thesis before you covers a variety of topics in spectral theory, tied together by Alain Connes' philosophy of noncommutative geometry~\cite{Connes1994}. These will be served in two parts: the first concerns pseudodifferential operators and multiple operator integrals based on the paper~\cite{MOOIs}, the second part revolves around Dixmier trace formulas related to Connes' integral formula based on the papers~\cite{AHMSZ, HekkelmanMcDonald2024, QE}. All four papers are joint works with Edward McDonald, and furthermore Nurulla Azamov, Teun van Nuland, Fedor Sukochev and Dmitriy Zanin feature as co-author on one paper each. 

Along the way we will additionally encounter heat trace expansions, quantum ergodicity, and a little index theory.
While this mix hopefully provides an entertaining read, as a consequence there is a lot of background material to discuss. I have made an attempt to summarise the five main themes of this dissertation in five sections. The experts and the brave 
could skip ahead as the mathematical content of each chapter is meant to stand alone --- with the exception of Chapter~\ref{Ch:MOIs} which depends on Chapter~\ref{Ch:FunctCalc}. In comparison with the papers this thesis draws from, the presentation here is made more accessible, with additional background information and some novel results.

\section{Noncommutative geometry}\label{S:NCG}
Over the past decades noncommutative geometry has developed into a rich field of mathematics, bringing together differential geometry, functional analysis, spectral theory, $K$-theory, representation theory, quantum field theory, and many more areas. It allows the study of `manifolds' on which coordinate functions do not necessarily commute, exotic spaces which appear in various areas of mathematics and physics. 
As any brief summary of this subject must, the overview here leaves out many interesting and important aspects of NCG. Recommended expositions which do more justice to the field can be found in~\cite{Connes1994, GVF2001, Suijlekom2025}. The book~\cite{EcksteinIochum2018} is an excellent reference for many of the analytical details that we will use. For a more algebraic introduction to noncommutative geometry, see~\cite{Khalkhali2013}.

\subsection{Spectral geometry} \label{SS:SpectralGeometry}
As is taught in kindergarten, operators associated with geometric spaces carry geometric information in their spectra which can be carefully extracted. Indeed, (hearing) children can tell that smaller objects tend to produce higher-pitched sounds, hollow objects sound different from solid ones, and can blindly distinguish between the sound of strings, drums, and blocks. What their ears are picking up is the spectrum of the Laplace operator. 

Modeling an object as a bounded domain (open connected set) in $d$-dimensional Euclidean space $\Omega \subseteq \R^d$, the sound it produces when lightly struck decomposes into frequencies which are determined by the eigenvalues of the Laplace operator $\Delta:= \sum_{j=1}^d\partial_{j}^2$ on $\Omega$ with Dirichlet boundary conditions, that is, they are the eigenvalues of those $\lambda \in \R_{\geq 0}$ for which there exists a solution of the Helmholtz equation
\[\begin{cases}
  -\Delta u(x) &= \lambda  u(x),\quad   x\in \Omega;\\
  u|_{\partial \Omega} &\equiv 0.
\end{cases}
\]
Writing $N(\lambda)$ for the number of such eigenvalues (counting multiplicities) less than $\lambda$, Weyl's law~\cite{Weyl1912, Chavel1984, Ivrii2016} gives that
\begin{equation}\label{eq:WeylLaw}
N(\lambda) \sim \frac{\omega_{d}}{d(2\pi)^{d}}\vol(\Omega) \lambda^{\frac{d}{2}}, \quad \lambda \to \infty,
\end{equation}
where $\omega_d$ is the volume of the unit sphere $\mathbb{S}^{d-1}\subseteq \mathbb{R}^d$, and $\sim$ means that the ratio converges to $1$. This indicates that in a perfect world, our ears can detect the dimension $d$ and the volume of an object. For $d \geq 2$ and with technical assumptions on $\Omega$  (smooth boundary, set of periodic billiards has measure zero), Duistermaat, Guillemin and Ivrii have shown~\cite{DuistermaatGuillemin1975,Ivrii1980, Ivrii2016} that even the $d-1$-dimensional volume of the boundary $\partial \Omega$, $\vol(\partial \Omega)$, can be `heard' as indicated by the Weyl law with error terms 
\begin{equation}\label{eq:WeylLaw2}
N(\lambda) \sim \frac{\omega_{d}}{d(2\pi)^{d}}\vol(\Omega) \lambda^{\frac{d}{2}}- \frac{\omega_{d-1}}{4(d-1)(2\pi)^{d-1}} \vol(\partial \Omega)  \lambda^{\frac{d-1}{2}} + o(\lambda^{\frac{d-1}{2}}), \quad \lambda \to \infty.
\end{equation}
This error term was already conjectured by Weyl himself~\cite{Weyl1913}. 
However, note that spheres satisfy neither these technical conditions nor this version of Weyl's law~\cite{Ivrii2016}. See Figure~\ref{F:WeylLaw} for an illustration of the effect of the error term in Weyl's law.

\begin{figure}[h!]
\centering
    \includegraphics[width=0.9\linewidth]{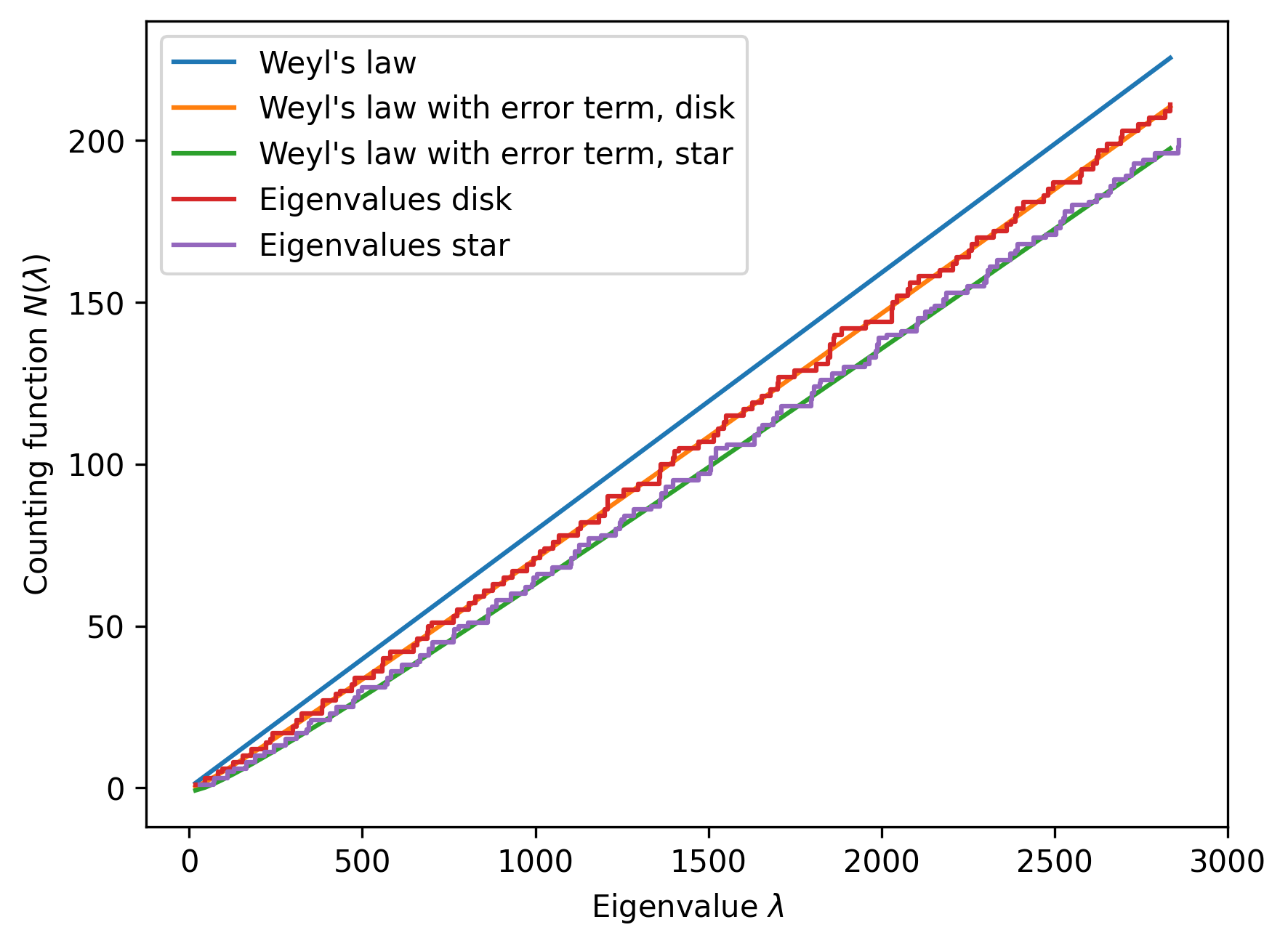}
    \caption{Weyl's law and the spectral counting function for the Laplacian on a disk and a 5-pointed star, with Dirichlet boundary conditions. Both domains have area equal to~$1$, making the first-order term in Weyl's law the same, $N(\lambda) \approx \frac{1}{4\pi} \lambda$, plotted in blue. The 5-pointed star has a longer perimeter than the disk, resulting in a larger deviation from Weyl's law~\eqref{eq:WeylLaw2}. These eigenvalues were generated in Python with the finite-element method, using FEniCSx~\cite{DOLFINx}.}
    \label{F:WeylLaw}
\end{figure}

More is true: even information about an object's curvature is hidden inside the spectrum of the Laplace operator. Taking now a closed (oriented) Riemannian manifold $(M, g)$ with Laplace--Beltrami operator $\Delta_g$, Minakshisundaram and Pleijel~\cite{MinakshisundaramPleijel1949} proved that the heat trace admits an asymptotic expansion
\[
\Tr(\exp(t \Delta_g)) \sim \sum_{k=0}^\infty a_k(\Delta_g) t^{\frac{k-d}{2}}, \quad t \to 0,
\]
the first coefficients of which can be given as~\cite{Berger1968,Gilkey1975}
\begin{align*}
    a_0(\Delta_g) &= (4 \pi)^{-\frac{d}{2}} \operatorname{Vol}(M);\\
    a_2(\Delta_g) &= -\frac{1}{6} (4 \pi)^{-\frac{d}{2}} \int_M R \, d\nu_g,
\end{align*}
where $\nu_g$ is the Riemannian volume form of $M$ and $R$ is the scalar curvature. The coefficients $a_k(\Delta_g)$ vanish for odd $k$, and the higher coeffients $a_k(\Delta_g)$ are integrals over $M$ of (complicated) expressions involving the metric of $M$~\cite{Gilkey1975}.

These facts raise the question how far this philosophy can be pushed. 
And indeed it is tradition in any text on spectral geometry to cite an influential essay by Mark Kac from 1966 with the title
``Can one hear the shape of a drum?''~\cite{KacDrum}. In this text, Kac asked whether a bounded domain in $\R^2$ can be determined up to isometries by the eigenvalues of the Laplace operator, and explored various aspects surrounding this question. For Riemannian manifolds the answer was known to be negative~\cite{Milnor1964} --- only much later two isospectral domains in $\R^2$ were identified~\cite{Gordon1992}, answering Kac's question in the negative, see Figure~\ref{F:drums}.
\begin{figure}[h]
\centering
    \includegraphics[width=0.9\linewidth]{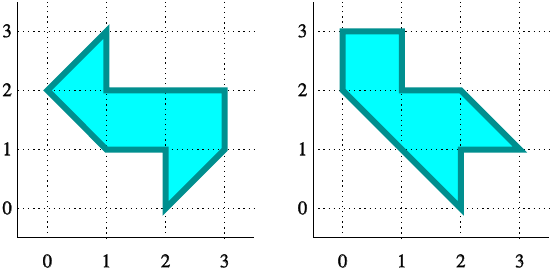}
    \caption{Two domains in $\mathbb{R}^2$ with spectrally indistinguishable Laplacians~\cite{Gordon1992}. This figure was created by Jitse Niesen and is in public domain.}
    \label{F:drums}
\end{figure}

In some sense, the Laplacian is not too far off from pinning down a compact Riemannian manifold, however. Let us say that the Riemannian manifolds $(M_1, g_1)$ and $(M_2, g_2)$ are isometric if there exists a diffeomorphism $\phi: M_1 \to M_2$ such that $\phi^* g_2 = g_1$, where $\phi^*$ denotes the pull-back via $\phi$. Arendt, Biegert and Ter Elst proved that the data $(L_2(M), \Delta_g)$ in combination with the cone of positive elements $L_2(M)_+$, that is, those $\phi \in L_2(M)$ corresponding to positive real-valued functions, determines the manifold up to isometries.

\begin{thm}[\cite{ArendtBiegert2012}]\label{T:ABTE}
    Let $(M_1, g_1)$ and $(M_2, g_2)$ be connected compact Riemannian manifolds, with corresponding Laplace--Beltrami operators $\Delta_1$ and $\Delta_2$. Then the following are equivalent:
    \begin{enumerate}
        \item the Riemannian manifolds $(M_1, g_1)$ and $(M_2, g_2)$ are isometric;
        \item there exists a bounded linear map $U: L_2(M_1) \to L_2(M_2)$ such that 
        \begin{gather*}
        \phi \in L_2(M_1)_+ \iff U\phi \in L_2(M_2)_+;\\
        U\Delta_1 = \Delta_2 U. \ \, \,
        \end{gather*}
    \end{enumerate}
\end{thm}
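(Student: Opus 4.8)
The implication (1)$\Rightarrow$(2) is the easy one, and I would dispatch it by an explicit construction. Given a Riemannian isometry $\phi\colon M_1\to M_2$, set $Uf:=f\circ\phi^{-1}$, the pull-back along $\phi^{-1}$. Since $\phi$ preserves the Riemannian volume, $U$ is a surjective isometry $L_2(M_1)\to L_2(M_2)$, in particular bounded; it visibly sends nonnegative functions to nonnegative functions, and, being invertible with positive inverse $f\mapsto f\circ\phi$, it satisfies the two-sided cone condition; finally $U\Delta_1=\Delta_2U$ is exactly the naturality of the Laplace--Beltrami operator under isometries, checked in local coordinates since $\Delta_g$ is assembled from $g$ and its derivatives.

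For the substantive direction (2)$\Rightarrow$(1) the plan is first to show that the abstract data force $U$ to be essentially a composition operator, and then to bootstrap the underlying point transformation to a smooth Riemannian isometry. \emph{Normalisation.} On a connected compact manifold $\ker\Delta$ is the one-dimensional space of constant functions; applying the intertwining relation to the constant function $1$ gives $\Delta_2(U1)=U(\Delta_1 1)=0$, so $U1\in\ker\Delta_2$. The cone condition makes $U$ injective (if $U\phi=0$ then $\pm\phi\in L_2(M_1)_+$) and positivity-preserving, so $U1=c\cdot 1$ with $c>0$, and after rescaling I may assume $U1=1$. \emph{$U$ is a composition operator.} The two-sided cone condition says $U$ is an order embedding of the Banach lattice $L_2(M_1)$; granting that such an embedding is disjointness-preserving, a standard representation theorem for disjointness-preserving operators (in the spirit of Lamperti) yields $Uf=h\cdot(f\circ\tau)$ for a Borel isomorphism $\tau$ between full-measure subsets of $M_2$ and $M_1$ and a strictly positive measurable weight $h$; then $U1=1$ forces $h\equiv 1$, so $U=\tau^{*}$. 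The intertwining with the Markovian heat semigroups $e^{t\Delta_i}$, each carrying the normalised Riemannian volume as unique invariant probability measure, in addition pins $\tau$ down to be measure-preserving, so that $U$ is unitary.

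\emph{Regularity and recovery of the metric.} Since $U=\tau^{*}$ intertwines $\Delta_1$ with $\Delta_2$, it intertwines $F(\Delta_1)$ with $F(\Delta_2)$ for every Borel $F$; taking $F(\lambda)=(1+\lambda)^{k}$ shows $\tau^{*}$ carries the domain of $\Delta_1^{k}$, namely the Sobolev space $H^{2k}(M_1)$, onto $H^{2k}(M_2)$ for every $k$, hence $C^{\infty}(M_1)$ onto $C^{\infty}(M_2)$. A composition operator preserving smooth functions in both directions is implemented almost everywhere by a diffeomorphism, so $\tau\colon M_2\to M_1$ may be taken smooth with smooth inverse. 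Finally $\tau^{*}\Delta_1=\Delta_2\tau^{*}$ forces $\tau^{*}g_1=g_2$: either compare principal symbols --- the principal symbol of $\Delta_g$ encodes the inverse metric --- or invoke Varadhan's short-time asymptotics $d_{g_i}(x,y)^{2}=-4\lim_{t\downarrow 0}t\log p^{i}_{t}(x,y)$, observing that $U$ identifies the heat kernels through $\tau$, whence $\tau$ is a metric isometry and Myers--Steenrod upgrades it to a Riemannian isometry; then $\phi:=\tau^{-1}$ witnesses (1).

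The step I expect to be the real obstacle is showing that the order embedding $U$ is a (weighted) composition operator, that is, disjointness-preserving. The naive lattice argument stalls because, for disjoint $f,g\ge 0$, the element $Uf\wedge Ug$ need not lie in the range of $U$, so it cannot be pulled back directly; one must control the range of $U$ --- ideally show it is all of $L_2(M_2)$, or at least a sublattice --- and it is precisely here that the hypotheses beyond pure order theory, the $L_2$-boundedness of $U$ together with the positivity-improving (irreducible) nature of the heat semigroups it intertwines, must be brought to bear. Making this rigorous, and extracting from it the precise nature of $\tau$, is the technical heart of the matter; everything afterwards is bootstrapping together with the classical fact that $\Delta_g$ remembers $g$.
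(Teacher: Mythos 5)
This theorem is not proved in the thesis at all: it is imported verbatim from Arendt--Biegert--ter Elst \cite{ArendtBiegert2012}, and the only argument the thesis supplies in this vicinity is the deduction of the subsequent corollary \emph{from} it. So there is no in-paper proof to measure you against; your proposal has to stand on its own, and as it stands it does not close. Your direction $(1)\Rightarrow(2)$ is fine, and the normalisation $U1=1$, the injectivity of $U$ from the two-sided cone condition, and the endgame (Sobolev/smoothness bootstrap, principal symbol or Varadhan asymptotics, Myers--Steenrod) are all reasonable. But the step you yourself flag --- that the bipositive intertwiner $U$ is disjointness-preserving, hence a (weighted) composition operator --- is precisely the mathematical content of the theorem, and you have not supplied it. Bipositivity alone genuinely does not give it: already in finite dimensions the map $U(x,y)=(x,y,x+y)$ from $\mathbb{R}^2$ to $\mathbb{R}^3$ satisfies $\phi\geq 0 \iff U\phi\geq 0$ yet sends the disjoint pair $e_1,e_2$ to non-disjoint vectors, so any argument must exploit the intertwining with the (positivity-improving, Markovian) heat semigroups and the structure of the $L_2$ spaces, not just the order hypothesis. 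Saying ``granting that such an embedding is disjointness-preserving'' and then invoking Lamperti is therefore a circular placeholder for the theorem's core, which in \cite{ArendtBiegert2012} occupies the bulk of the work.

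A secondary caution: even once $U=\tau^\ast$ is granted, your claim that the semigroup intertwining ``pins $\tau$ down to be measure-preserving'' needs the uniqueness-of-invariant-measure argument to be run carefully (the pushforward $\tau_\ast\nu_{g_2}$ is invariant for $e^{t\Delta_1}$, hence proportional to $\nu_{g_1}$ by irreducibility, and only then is $U$ unitary up to a scalar); and surjectivity of $\tau^\ast$ on the Sobolev scale uses that $U^{-1}=U^\ast$ intertwines in the reverse direction, which is available only after unitarity is established. These are repairable, but the disjointness-preservation step is a genuine gap, not a technicality.
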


Still, the cone of positive elements $L_2(M)_+$ is in some sense not operator theoretical, and one can wonder if there is a clever and natural way to nail down a compact Riemannian manifold using only operators. 

And indeed there is an algebra of operators on $L_2(M)$ that we can add to the data $(L_2(M), \Delta_g)$ to do this job. It is the commutative $C^*$-algebra $C(M)$ of continuous complex-valued functions on $M$, which represents canonically as bounded operators on $L_2(M)$ via pointwise multiplication: for $f \in C(M)$ we write
\begin{align*}
    M_f: L_2(M) &\to L_2(M)\\
    g &\mapsto fg.
\end{align*}
Then the triple $(C(M), L_2(M), \Delta_g)$ uniquely determines the Riemannian manifold.

\begin{cor}
    Let $(M_1, g_1)$ and $(M_2, g_2)$ be connected compact Riemannian manifolds, with corresponding Laplace--Beltrami operators $\Delta_1$ and $\Delta_2$. 
    Then the following are equivalent:
    \begin{enumerate}
        \item the Riemannian manifolds $(M_1, g_1)$ and $(M_2, g_2)$ are isometric;
        \item there exists a unital $*$-isomorphism $\psi: C(M_1) \xrightarrow{
   \,\smash{\raisebox{-0.35ex}{\ensuremath{\scriptstyle\sim}}}\,} C(M_2)$ and
        a unitary operator $U: L_2(M_1) \to L_2(M_2)$ such that  
        \begin{align*}
            UM_f &= M_{\psi(f)}U, \quad f \in C(M_1)\\
            U\Delta_1 &= \Delta_2 U.
        \end{align*}
    \end{enumerate}
\end{cor}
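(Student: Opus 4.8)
The plan is to reduce the statement to Theorem~\ref{T:ABTE} by manufacturing, out of the data in item~(2), a single unitary that simultaneously intertwines the Laplacians and respects the positive cones, so that the Arendt--Biegert--Ter~Elst theorem applies directly.

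For the easy implication $(1)\Rightarrow(2)$ I would argue as follows. Given a Riemannian isometry $\phi\colon M_1\to M_2$ with $\phi^*g_2=g_1$, take $\psi:=(\phi^{-1})^*\colon C(M_1)\to C(M_2)$, $\psi(f)=f\circ\phi^{-1}$, which is a unital $*$-isomorphism, and let $U\colon L_2(M_1)\to L_2(M_2)$ be the operator $\xi\mapsto\xi\circ\phi^{-1}$. Because $\phi$ pulls the Riemannian volume form of $g_2$ back to that of $g_1$, the operator $U$ is unitary; the relation $UM_f=M_{\psi(f)}U$ is immediate, and $U\Delta_1=\Delta_2U$ is the naturality of the Laplace--Beltrami operator under isometries, $\phi^*\Delta_{g_2}=\Delta_{g_1}\phi^*$, read on the relevant domains.

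For the substantial implication $(2)\Rightarrow(1)$ I would proceed in four steps. First, by Gelfand duality the $*$-isomorphism $\psi$ has the form $\psi(f)=f\circ h$ for a unique homeomorphism $h\colon M_2\to M_1$. Second, evaluating the intertwining relation $UM_f=M_{\psi(f)}U$ at the constant function $1_{M_1}\in L_2(M_1)$ yields $Uf=(f\circ h)\,w$ for all $f\in C(M_1)$, where $w:=U1_{M_1}\in L_2(M_2)$; since $C(M_1)$ is dense in $L_2(M_1)$, this pins down $U$. Third --- the crux --- the constant $1_{M_1}$ lies in $\operatorname{dom}\Delta_1$ with $\Delta_11_{M_1}=0$, so the Laplacian intertwining gives $\Delta_2w=U\Delta_11_{M_1}=0$; as $M_2$ is connected, $\ker\Delta_2$ consists only of constants, so $w$ equals a nonzero constant $c$. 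Hence $U$ is the composition operator $C_h\colon\xi\mapsto\xi\circ h$ multiplied by the scalar $c$, and unitarity of $U$ forces $h_*\nu_{g_2}=|c|^{-2}\nu_{g_1}$, so that $|c|\,C_h$ is unitary and $h$ matches null sets in both directions. Fourth, I would replace $U$ by $V:=(\bar c/|c|)\,U=|c|\,C_h$, which is still unitary, still satisfies $V\Delta_1=\Delta_2V$ (scalars commute with $\Delta_2$), and --- because $h$ is a homeomorphism with $h_*\nu_{g_2}\sim\nu_{g_1}$ --- carries $L_2(M_1)_+$ bijectively onto $L_2(M_2)_+$, with $\xi\in L_2(M_1)_+\iff V\xi\in L_2(M_2)_+$. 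Theorem~\ref{T:ABTE} applied to $V$ then gives that $(M_1,g_1)$ and $(M_2,g_2)$ are isometric.

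The hard part, conceptually, is the third step: a priori the unitary $U$ need not preserve positivity --- it is determined only up to post-composition with multiplication by a measurable unimodular function --- and a naive unimodular correction would destroy the commutation with $\Delta_2$. What rescues the argument is the rigidity of the Laplacian hypothesis, which confines $U1_{M_1}$ to the one-dimensional kernel of $\Delta_2$, leaving only a scalar ambiguity; this scalar can be normalized away without cost, since a positive rescaling is invisible to both the Laplacian intertwining and the positive cone. Everything beyond this is bookkeeping, and the genuine Riemannian rigidity is supplied by Theorem~\ref{T:ABTE}.
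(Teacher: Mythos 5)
Your argument is correct, and it shares the paper's skeleton: both proofs hinge on the observation that $U\Delta_1=\Delta_2U$ together with compactness and connectedness forces $U1_{M_1}$ into the one-dimensional kernel of $\Delta_2$, i.e.\ $U1_{M_1}$ is a nonzero constant, and both then conclude by feeding a suitably normalised unitary into Theorem~\ref{T:ABTE}. Where you diverge is in how the positive-cone hypothesis of Theorem~\ref{T:ABTE} is verified. The paper does this without ever identifying $U$: after normalising $U1_{M_1}=1_{M_2}$ it characterises $\phi\in L_2(M_1)_+$ by the pairings $\langle M_f1_{M_1},\phi\rangle\geq 0$ for all $f\in C(M_1)_+$, pushes these through the intertwining relation $UM_f=M_{\psi(f)}U$, and uses that the $*$-isomorphism $\psi$ maps $C(M_1)_+$ onto $C(M_2)_+$ — a two-line duality computation. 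You instead invoke Gelfand duality to write $\psi(f)=f\circ h$ for a homeomorphism $h$, identify $U$ (up to a scalar) as the weighted composition operator $c\,C_h$, extract the measure relation $|c|^2 h_*\nu_{g_2}=\nu_{g_1}$ from unitarity, and read off cone preservation from the fact that $h$ matches null sets in both directions. Your route is more structural and yields the explicit form of $U$ as a by-product (which is pleasant, and closer in spirit to how one would prove $(1)\Rightarrow(2)$), at the cost of some extra bookkeeping (extending $Uf=c(f\circ h)$ from $C(M_1)$ to $L_2$ and the null-set matching); the paper's duality trick avoids Gelfand duality and composition operators entirely. Both are complete proofs.
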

\begin{proof}
    $(1) \Rightarrow (2)$ is trivial. For the reverse direction, suppose we have a unitary $U: L_2(M_1) \to L_2(M_2)$ as in (2). Because the manifolds $M_i$ are assumed to be compact, the kernel of $\Delta_i$ consists of constant functions. Since $U$ intertwines the Laplacians $\Delta_i$, we can therefore assume without loss of generality that $U 1_{L_2(M_1)} = 1_{L_2(M_2)}$. By Theorem~\ref{T:ABTE}, we need only check that
    \[
    \phi\in L_2(M_1)_+ \iff U\phi \in L_2(M_2)_+.
    \]
    Indeed, we have    
    \begin{align*}
        \phi \in L_2(M_1)_+ &\iff \langle M_f 1_{L_2(M_1)}, \phi\rangle_{L_2(M_1)}  \geq 0 \quad \forall f\in C(M_1)_+\\
        &\iff \langle M_{\psi(f)}1_{L_2(M_2)}, U\phi \rangle_{L_2(M_2)} \geq 0 \quad \forall f\in C(M_1)_+\\
        &\iff U\phi \in L_2(M_2)_+. \qedhere
    \end{align*}
\end{proof}

\subsection{Noncommutative geometry}
Comfortable with the idea that operators on a Hilbert space can encode geometrical data, we can now take the radical step that Alain Connes laid out and generalise geometry beyond the commutative world.

With Gelfand duality in mind (the correspondence between commutative $C^*$-algebras and locally compact Hausdorff spaces~\cite{Murphy1990}), the study of noncommutative $C^*$-algebras can be thought of as `noncommutative topology'. Similarly, the theory of noncommutative von Neumann algebras can be thought of as `noncommutative measure theory'. These analogies are more than amusing observations. Such noncommutative $C^*$-algebras and von Neumann algebras have proven their value as two of the most versatile objects in operator theory, appearing in countless areas of mathematics and physics. Topological and measure theoretic constructions and arguments can inform analogues in $C^*$-algebras and von Neumann algebras and vice versa.

As a concrete example, $K$-theory was developed by Atiyah and Hirzebruch~\cite{AtiyahHirzebruch1961} as a theory of groups constructed from vector bundles on topological spaces (now called topological $K$-theory). This was used with great success in a proof of the Atiyah--Singer index theorem~\cite{AtiyahSinger1968}. Soon after, $K$-theory was generalised to (noncommutative) $C^*$-algebras, where it became indispensable for operator algebraists, with landmark results being Elliott’s program for the classification of $C^*$-algebras~\cite{Elliott1976,Elliott1993} and Brown--Douglas--Fillmore's development of $K$-homology and the classification of extensions of $C^*$-algebras~\cite{BDF} (see~\cite{Wegge-Olsen1993,Blackadar1998,RordamLarsen2000} for books on $K$-theory).

In the wake of the developments and ideas surrounding $K$-theory and the Atiyah--Singer index theorem, Connes postulated his noncommutative differential geometry~\cite{Connes1982,Connes1985, Connes1994}. 
He saw that a noncommutative $*$-algebra of bounded operators, in combination with a self-adjoint operator satisfying certain properties, can be interpreted as a noncommutative generalisation of a (spin) manifold. These ideas and techniques are relevant for badly behaved spaces like the unitary dual of a finitely generated non-abelian discrete or Lie group, the leaf space of a foliated manifold, or the orbit space of group actions on manifolds, which are more easily described with noncommutative $*$-algebras than as a usual point-set topological space. 

Explicitly, a `noncommutative space' is described by a spectral triple. This has its origin in a paper on Kasparov's $KK$-theory by Baaj and Julg~\cite{BaajJulg1983} where spectral triples are introduced as a type of `unbounded $K$-cycle'. Spectral triples were later popularised by Alain Connes who recognised their potential as a characterisation of manifolds. 

\begin{defn}
    A unital spectral triple $(\Ac, \Hc, D)$ consists of:
    \begin{enumerate}
        \item a Hilbert space $\Hc$;
        \item a unital $*$-algebra $\Ac$ represented faithfully as bounded operators $\pi: \Ac \to B(\Hc)$, where $\pi(1) = 1_{B(\Hc)}$;
        \item a self-adjoint operator $D$ on $\Hc$ with compact resolvent,
    \end{enumerate}
    such that $\pi(a)\dom(D)\subseteq \dom(D)$ and $[D, \pi(a)]$ extends to a bounded operator for all $a \in \Ac$. We usually identify $a\in \Ac$ with its image $\pi(a) \in B(\Hc)$, and omit writing $\pi$ altogether.
    
    The spectral triple is called `even' if equipped with a $\Z_2$-grading $\gamma$ on $\Hc$ such that $D\gamma = -\gamma D$ and $a\gamma = \gamma a$ for all $a \in \Ac$. 
\end{defn}

The basic commutative example of a spectral triple has roots in the Atiyah--Singer index theorem. It is not dissimilar from the triple $(C(M), L_2(M), \Delta_g)$ that we arrived at in Section~\ref{SS:SpectralGeometry}. We simply replace $C(M)$ by $C^\infty(M)$, and $\Delta_g$ by the Dirac operator, an elliptic first-order differential operator which functions as a `square root' of $\Delta_g$. Such an operator does not exist on $L_2(M)$, but it does on the Hilbert space of square-integrable sections of the spinor bundle of so-called `spin manifolds'. An oriented Riemannian manifold is called \textit{spin} if there exists a complex Hermitian vector bundle $S \to M$ (the spinor bundle) such that $\mathrm{End}(S)$ is isomorphic to a certain Clifford algebra, and there exists a charge conjugation on $S$~\cite[Chapter~II]{LawsonMichelsohn1989}\cite[Chapter~9]{GVF2001}\cite[Chapter~4]{Suijlekom2025}. The precise details do not matter much for this thesis, what is important is that the triple then takes the form $(C^\infty(M), L_2(S), D_S)$, where $L_2(S)$ are the square-integrable sections of $S$ and $D_S$ is the Dirac operator associated with $S$. The terminology `even' spectral triple is motivated by the fact that for an even-dimensional spin manifold $M$, the  canonical spectral triple $(C^\infty(M), L_2(S), D_S)$ admits a natural grading $\gamma_M$ making it an even spectral triple~\cite[Chapter~9]{GVF2001}\cite[Chapter~4]{Suijlekom2025}.

The claim that $D_S$ functions as a square root of the Laplacian is motivated by the Lichnerowicz formula~\cite[Theorem~II.8.8]{LawsonMichelsohn1989}\cite[Theorem~9.16]{GVF2001}
\[
D_S^2 = -\Delta_S + \frac{1}{4}s,
\]
where $\Delta_S$ is the Laplacian associated with the spin bundle $S$ and $s$ is the scalar curvature of $M$. In line with this formula, for general spectral triples $(\Ac, \Hc, D)$, the operator $D^2$ is commonly interpreted as an analogue of a Laplace operator.

In light of Section~\ref{SS:SpectralGeometry} it is not surprising that the canonical spectral triple of a compact Riemannian spin manifold is a unique invariant for the spin manifold. However, for spectral triples we even have an explicit \textit{reconstruction} theorem~\cite{Connes2013}. From an abstract unital spectral triple $(\Ac, \Hc, D)$ with specific additional properties like $\Ac$ being commutative, one can construct a compact spin manifold $M$ which realises the spectral triple concretely as $(C^\infty(M), L_2(S), D_S)$. This highly non-trivial result requires recognising when the $*$\nobreakdash-algebra $\Ac$ is of the form $C^\infty(M)$ for a smooth manifold $M$, using only how $\Ac$ and $D$ are situated relative to each other as operators on the abstract Hilbert space $\Hc$. This puts solid ground under the claim that spectral triples can be considered noncommutative generalisations of (spin) manifolds.

Interest in NCG has not been limited to mathematics. In physics, noncommutative spaces have been used to explain features of the quantum Hall effect~\cite{BellissardVanElst1994} and topological insulators~\cite{Schulz-Baldes2016}, and there is extensive literature on applications of NCG in Quantum Field Theory, see e.g.~\cite{Szabo2003,ConnesMarcolli2008,Suijlekom2025} for overviews. The Standard Model of particle physics, minimally coupled to gravity, can be entirely derived through an action principle~\cite{ChamseddineConnes1996} from the description of the universe as a noncommutative space~\cite{ChamseddineConnes2007,Suijlekom2025}.
This unified geometrical treatment of both Quantum Field Theory and General Relativity makes for an attractive approach to quantum gravity, and extends beyond the Standard Model to, for example, the Pati--Salam model~\cite{ChamseddineConnes2013,ChamseddineConnes2015}.

Wrapping up this overview of NCG, we will now give two examples of noncommutative spaces which will appear throughout this dissertation. This exposition is taken from the joint paper of the author with Edward McDonald~\cite{QE}; for more details on the noncommutative torus (on which there is much literature) we refer to~\cite{HaLee2019,HaLee2019b} and~\cite[Section IV.12.3]{GVF2001}, for details on almost commutative manifolds to~\cite[Chapter~10]{Suijlekom2025}.

\begin{ex}
    Let $d\geq 2$ and let $\theta$ be a real $d \times d$ antisymmetric matrix. The noncommutative torus is the universal $C^*$-algebra $C(\mathbb{T}^d_\theta)$ generated by a family of unitary elements $\{u_n\}_{n\in \Z^d}$ subject to the relations
    \[
    u_n u_m = e^{\frac{i}{2}\langle n, \theta m\rangle}u_{n+m}, \quad n,m\in \Z^d.
    \]
    The functional
    \[
    \tau_\theta \bigg(\sum_{k \in \mathbb{Z}^d}c_k u_k \bigg):= c_0
    \]
    extends to a continuous faithful tracial state on $C(\mathbb{T}^d_\theta).$ 
    The smooth subspace $C^\infty(\mathbb{T}^d_\theta)$ is the subalgebra of $x \in C(\mathbb{T}^d_\theta)$ for which $\widehat{x}(k) = \tau_\theta(xu_k^*)$ is a rapidly decaying sequence on $\Z^d.$
    The Hilbert space in the GNS representation corresponding to $\tau_\theta$ is denoted $L_2(\mathbb{T}^d_\theta),$ and $\{u_n\}_{n\in \Z^d}$ is an orthonormal basis for $L_2(\mathbb{T}^d_\theta).$ The self-adjoint operators $D_j$, $j=1, \ldots, d$ on $L_2(\mathbb{T}^d_\theta)$ are defined on the basis by
    \[
    D_j u_k := k_j u_k,\quad k=(k_1,\ldots,k_d)\in \Z^d.
    \]
    The operator $D = \sum_{j=1}^d D_j \otimes \gamma_j$ on $L_2(\mathbb{T}^d_\theta)\otimes \mathbb{C}^{N_d}$, where $\gamma_j$ are standard Clifford matrices on $\mathbb{C}^{N_d}$ with $N_d = 2^{\lfloor\frac{d}{2}\rfloor}$, gives a spectral triple
    \[
    (C^\infty(\mathbb{T}^d_\theta),L_2(\mathbb{T}^d_\theta)\otimes \mathbb{C}^{N_d}, D ),
    \]
    where we represent $C^\infty(\mathbb{T}^d_\theta)$ as operators on $L_2(\mathbb{T}^d_\theta)\otimes \mathbb{C}^{N_d}$ by acting on the first component~\cite[Section 12.3]{GVF2001}.
    We write $\Delta := -\sum_{j=1}^d D_j^2$ as an operator on $L_2(\mathbb{T}^d_\theta)$, so that $|D|= \sqrt{-\Delta} \otimes 1_{\mathbb{C}^{N_d}}$. 
\end{ex}

\begin{ex}
    Given the canonical spectral triple $(C^\infty(M), L_2(S), D_M)$ of an even-dimensional spin manifold with natural grading $\gamma_M$, and a finite spectral triple $(\Ac_F, \Hc_F, D_F)$, meaning that $\Hc_F$ and $\Ac_F$ are finite-dimensional, the product spectral triple
    \[
    (C^\infty(M) \otimes \Ac_F, L_2(S) \otimes \Hc_F, D_M \otimes 1 + \gamma_M \otimes D_F).
    \]
    is called an almost-commutative manifold. 
\end{ex}

\section{Pseudodifferential operators}\label{S:PSDOs}
Pseudodifferential operators naturally emerged from the study of partial differential equations (PDEs)~\cite{KohnNirenberg1965,Hormander1967}, where they became a crucial tool in determining qualitative properties of solutions to PDEs. For us, they are indispensable in obtaining Weyl laws and in determining coefficients in heat trace expansions like those covered in Section~\ref{S:NCG}. See~\cite{,Taylor1981,Shubin2001,HormanderLPDOIII} for classic books on the subject.

Let us loosely sketch a path leading to the definition of a classical pseudodifferential operator, with no respect for technical details. 
Suppose we want to study a PDE on Euclidean space,
\[
     \sum_{|\alpha|\leq k} a_\alpha(x)\partial^\alpha u(x) = f(x), \quad x \in \R^d,
\]
for nice enough functions $f$ and $a_\alpha$. The operator $L := \sum_{|\alpha|\leq k} M_{a_\alpha}\partial^\alpha$, complicated as it is, is much easier to study after a Fourier transform. For the Fourier transform, we will use the convention
\[
\hat{u}(\xi) := \mathcal{F}(u)(\xi) := \frac{1}{(2\pi)^{\frac d2}}\int_{\R^d}e^{-i\langle x, \xi \rangle} u(x) \,dx, \quad \xi \in \R^d, u \in L_2(\R^d).
\]
In the frequency domain, the operator $L$ then corresponds to multiplying with the polynomial $p_L(x,\xi):= \sum_{|\alpha|\leq k} a_\alpha(x)(i\xi)^\alpha$,
\[
Lu(x) = \frac{1}{(2\pi)^{\frac{d}{2}}} \int_{\R^d} e^{i\langle x, \xi\rangle} p_{L}(x,\xi) \hat{u}(\xi) \, d\xi.
\]

In an ideal scenario, when $p_L(x,\xi) = p_L(\xi)$ is non-zero and independent of $x$, we can simply solve the PDE from before by putting
\[
u(x) = L^{-1}f(x) = \frac{1}{(2\pi)^{\frac{d}{2}}} \int_{\R^d} e^{i\langle x, \xi\rangle} \frac{1}{p_{L}(\xi)} \hat{f}(\xi) \, d\xi.
\]
Likewise, the heat equation 
\begin{align*}
    Lu(t,x) &= \partial_t u(t,x);\\
    u(0,x) &= f(x),
\end{align*}
can then be solved by putting
\[
u(t,x) = e^{tL}f(x) :=  \frac{1}{(2\pi)^{\frac{d}{2}}} \int_{\R^d} e^{i\langle x, \xi\rangle} e^{t p_{L}(\xi)} \hat{f}(\xi) \, d\xi.
\]
The idea behind pseudodifferential operators is that 
even though these operators $L^{-1}$ and $e^{tL}$ are not differential operators, the analytical properties of an operator of the form
\[
T_a u(x) = \frac{1}{(2\pi)^{\frac{d}{2}}} \int_{\R^d} e^{i\langle x, \xi\rangle} a(x,\xi) \hat{u}(\xi) \, d\xi,
\]
depend not so much on its symbol $a$ being a \textit{polynomial}, but rather on the qualitative properties of $a(x,\xi)$ like its asymptotic growth as $|\xi| \to \infty$. Studying properties of these more general operators, and deriving similar norm and regularity estimates as for differential operators, lets us conclude properties about solutions to PDEs, as the examples $L^{-1}$ and $e^{tL}$ above show.

With the theory of pseudodifferential operators, it can be made precise when and in what ways the operator $L^{-1}$ behaves like an order $-m$ `differential' operator, $e^{tL}$ an operator of order $-\infty$, and an operator like $(1-\Delta)^{\frac{s}{2}}$ for $s \in \R$ an operator of  order $s$.

The oldest and most `standard' theory of pseudodifferential operators is that of classical pseudodifferential operators on $\R^d$~\cite{KohnNirenberg1965,Hormander1967}. Its definition is quite technical; we follow~\cite[Chapter~II]{Taylor1981}, and write $\langle \xi \rangle := (1+|\xi|^2)^{\frac{1}{2}}$.

\begin{defn}\label{D:Symbols}
    The symbol class $S^m_{1,0}(\R^d)$, $m\in \R$, is defined as those smooth functions $a \in C^\infty(\R^d \times \R^d)$ such that for all multi-indices $\alpha, \beta \in \N^d$, there exists a constant $C_{\alpha,\beta}\geq 0$ such that 
    \[
    |\partial_x^\alpha \partial_\xi^\beta a(x,\xi)|\leq C_{\alpha,\beta}\langle\xi\rangle^{m-|\alpha|}, \quad x,\xi \in \R^d. 
    \]
    We say that $a \in S^{m}_{1,0}(\R^d)$ is a classical symbol, denoted $a\in S^m_{cl}(\R^d)$, if there exist smooth functions $a_{m-j} \in C^\infty(\R^d \times \R^d)$ for all $j\in \N$ which are homogeneous of order $m-j$,
    \[
    a_{m-j}(x, \lambda \xi) = \lambda^{m-j} a_{m-j}(x,\xi), \quad \lambda,|\xi|\geq1,
    \]
    with the property that
    \[
    a(x,\xi) \sim \sum_{j\geq 0} a_{m-j}(x,\xi),
    \]
    meaning that
    \[
    a(x,\xi)-\sum_{j=0}^Na_{m-j}(x,\xi) \in S^{m-N-1}_{1,0}(\R^d), \quad N\in \N.
    \]
\end{defn}

It is no coincidence that for the differential operator $L$ from before, if each coefficient $a_\alpha(x)$ is smooth and it and its derivatives are bounded, the symbol $p_L(x,\xi) = \sum_{|\alpha| \leq k} a_\alpha(x) (i\xi)^\alpha$ is in the symbol class $S^k_{1,0}(\R^d)$. It is furthermore a classical symbol, as we can write
\[
p_L(x,\xi) = \sum_{j=0}^k \sum_{|\alpha|=j}a_\alpha(x) (i\xi)^\alpha,
\]
where each component $\sum_{|\alpha|=j}a_\alpha(x) (i\xi)^\alpha$ is homogeneous of order $j$. 

\begin{defn}
    The Schwartz functions $\mathcal{S}(\R^d)$ are defined as those $f \in C^\infty(\R^d)$ for which all seminorms
    \[
    \|f\|_{\alpha, \beta}:= \sup_{x\in \R^d} |x^\alpha \partial_x^\beta f(x)|, \quad \alpha, \beta \in \N^d,
    \]
    are finite. 
\end{defn}

\begin{defn}\label{def:ClassicPSDOs}
    A pseudodifferential operator of order $m\in \R$, denoted $T\in \Psi^m(\R^d)$, is an operator $T : \mathcal{S}(\R^d) \to \mathcal{S}(\R^d)$ such that
\[
T f(x) = \frac{1}{(2\pi)^{\frac{d}{2}}} \int_{\R^d} e^{i\langle x, \xi\rangle} a(x,\xi) \hat{f}(\xi) \, d\xi, \quad f \in \mathcal{S}(\R^d),
\]
for some $a \in S^m_{1,0}(\R^d)$ --- see~\cite[Theorem~18.1.6]{HormanderLPDOIII} for a proof that $Tf \in \mathcal{S}(\R^d)$. If furthermore $a \in S^m_{cl}(\R^d)$, we write $T \in \Psi^m_{cl}(\R^d)$, and call $T$ a classical pseudodifferential operator. Finally, 
\[
\Psi^\infty(\R^d):= \bigcup_{m \in \R}\Psi^m(\R^d), \quad \Psi^{-\infty}(\R^d):=\bigcap_{m \in \R} \Psi^m(\R^d).
\]
Similarly we define $\Psi^\infty_{cl}(\R^d):= \bigcup_{m \in \R}\Psi_{cl}^m(\R^d)$. 
\end{defn}

With these constructions, in many ways the pseudodifferential operators $\Psi^\infty(\R^d)$ behave like differential operators, and $\Psi^\infty_{cl}(\R^d)$ mimics this behaviour even more closely. For $S \in \Psi^m(\R^d)$ and $T \in \Psi^{m'}(\R^d)$, we have
\[
S\circ T \in \Psi^{m+m'}(\R^d), \quad [S,T]  \in \Psi^{m+m'-1}(\R^d),
\]
as is the case for differential operators. Furthermore, we can define the Bessel-potential Sobolev spaces (see e.g.~\cite[Section~VII.9]{HormanderLPDOI} or~\cite[Section~2.3.3]{Triebel1978})
\[
\Hc^s:= \overline{\dom (1-\Delta)^{\frac{s}{2}}}^{\|\cdot\|_s}, \quad \|f\|_s:= \| (1-\Delta)^{\frac{s}{2}}f\|_{L_2(\R^d)}, \quad s \in \R,
\]
which form Hilbert spaces. To be very precise, we take $\Delta$ to be the Friedrich's extension of the Laplacian, and $\dom  (1-\Delta)^{\frac{s}{2}}$ indicates the domain of the corresponding operator on $L_2(\R^d)$ defined via functional calculus~\cite[Section~5.3]{Schmudgen2012}. For $s \geq 0$, this domain is complete in the norm $\| \cdot \|_s$,
but for $s < 0$ taking the completion in the norm $\| \cdot \|_s$ is necessary.
For positive integer $s$, the space $\Hc^s$ consists of the $s$ times weakly differentiable $L_2$-functions. For the differential operator $L = \sum_{|\alpha|\leq k} M_{a_\alpha}\partial^\alpha$ as above, if all coefficients $a_\alpha$ and their derivatives are uniformly bounded, then $L$ extends to a bounded operator
\[
L: \Hc^{s+k} \to \Hc^s, \quad s\in \R.
\]
Pseudodifferential operators have the same property: if $T \in \Psi^m(\R^d)$, then $T$ extends to a bounded operator
\[
T: \Hc^{s+m} \to \Hc^s, \quad s \in \R.
\]
Intuitively, $T$ therefore `removes' $m$ degrees of regularity from a function. Note, however, that $m \in \R$ need not be an integer --- or even positive.

All these objects can similarly be constructed on smooth (not necessarily Riemannian) manifolds~\cite[Section~I.4]{Shubin2001}\cite[Chapter~XVIII]{HormanderLPDOIII}. This takes some care, as we need to think about how to handle the Fourier transform on manifolds. Commonly, pseudodifferential operators are defined locally in coordinate patches via the definitions for $\R^d$ above, and stitched together to define global operators on the manifold. However, symbols can be defined intrinsically as functions on the cotangent bundle as well~\cite{Widom1978,Widom1980,Safarov1997}. 

The principal symbol of a pseudodifferential operator is of particular importance, as it can tell us a lot about the operator's (approximate) invertibility. For compact manifolds, we have a short exact sequence of vector spaces
\[
0 \to \Psi_{cl}^{m-1}(M) \to \Psi_{cl}^m(M) \xrightarrow{\sigma_0} C^\infty(S^*M)\to 0,
\]
where the map $\sigma_0$ is called the symbol map, and $S^*M \subseteq T^*M$ is the cotangent sphere. The manifold $M$ is not required to be equipped with a Riemannian metric in order for this to make sense: the cotangent sphere $S^*M$ can be defined independently of such a metric as the rays in the cotangent bundle $T^*M\setminus M$, which is $T^*M$ with the zero vectors removed. The intuitive explanation of this exact sequence is that an essentially homogeneous function $f \in C^\infty(T^*M)$ of order $m$, which satisfies locally (up to a compactly supported error),
\[
f(x,\lambda \xi) = \lambda^m f(x,\xi), \quad \lambda , |\xi|\geq 1,
\]
clearly defines a function on $S^*M$. The map $\sigma_0$ in the exact sequence selects the part of the symbol of the classical pseudodifferential operator that is essentially homogeneous of order $m$ and produces the corresponding function on $S^*M$.

Turning this around, such exact sequences can be used to \textit{define} principal symbols. For example, since on compact manifolds the operators $\Psi^{-1}(M)$ are compact, a map like $\sigma_0$ above, now mapping from order zero operators into the Calkin algebra $B(\Hc
)/K(\Hc)$, is the basis for a paradigm called the $C^*$-algebraic approach to the principal symbol~\cite{Cordes1979, Dao1, Dao2, Dao3}. 

Furthermore, it should be emphasised that there are by now many different pseudodifferential calculi, in different settings, for different purposes, all generalising differential operators in a different way. Fundamentally, these various pseudodifferential calculi are crafted for studying different classes and aspects of PDEs. On both Euclidean space and on manifolds, various symbol classes have been used as a base (variations of Definition~\ref{D:Symbols})~\cite{BealsFefferman1973,Bony2013,NicolaRodino2010}. Other directions of research include pseudodifferential calculi on the Heisenberg group~\cite{BahouriFermanianKammerer2012,FischerRuzhansky2014}, Lie groups~\cite{Melin1983,Taylor1984,MeladzeShubin1986,RuzhanskyTurunen2010}, Lie groupoids~\cite{MonthubertPierrot1997,NistorWeinsteinXu1999}, and types of manifolds like manifolds with boundaries~\cite{Melrose1991,Melrose1993,MazzeoMelrose1998}, Heisenberg manifolds~\cite{BealsGreiner1988, Taylor1984,Ponge2008}, and filtered manifolds~\cite{vanErpYuncken2019,AndroulidakisMohsen2022,Fermanian-KammererFischer2024}.

In this thesis, we will focus on a very general abstract pseudodifferential calculus which strips away any reference to a geometric space. It is the purely operator theoretical skeleton at the basis of any pseudodifferential calculus, and as such it is difficult to give accurate credits towards its invention. Elements of this theory even \textit{predate} the study of pseudodifferential operators via the symbol calculus by Kohn--Nirenberg~\cite{KohnNirenberg1965} and H\"ormander~\cite{Hormander1967}. The abstract Sobolev spaces in this theory already appeared in their current form in work by S. G. Krein where they are called Hilbert scales, see the reviews~\cite{Mitjagin1961, KreinPetunin1966} and references therein. 
Later, Guillemin~\cite{Guillemin1985}, Connes--Moscovici~\cite{ConnesMoscovici1995} and Higson~\cite{Higson2003} amongst others have used general pseudodifferential calculi. Of these, Connes and Moscovici introduced the formalism in NCG.

The basic principle is as follows. We take one operator $\Theta$ as reference, a self-adjoint boundedly invertible operator on a separable Hilbert space $\Hc$. With this operator, define the Sobolev spaces 
\[
\Hc^s(\Theta):= \overline{\dom \Theta^s}^{\|\cdot\|_s}, \quad \|\xi\|_s:= \|\Theta^s\xi\|_{\Hc}, \quad s \in \R,
\]
clearly inspired by the Bessel potential Sobolev spaces from before. And as before, here $\dom \Theta^s$ is defined via the functional calculus for unbounded self-adjoint operators~\cite[Section~5.3]{Schmudgen2012}, which is complete in the norm $\|\cdot\|_s$ for $s\geq 0$ but not for $s<0$. Our `pseudodifferential operators' are simply operators on $\Hc$ which extend or restrict to bounded maps
\[
T: \Hc^{s+r}(\Theta) \to \Hc^{s}(\Theta), \quad s \in \R,
\]
in a consistent and well-defined way. These operators are denoted by $\op^r(\Theta)$, sometimes called operators of analytic order (less than) $r$~\cite{Higson2003}.

In this calculus, there is no notion of symbols, which is at once a strength and a weakness: we will not be able to use any techniques based on symbol manipulations, but as a consequence results are valid in any pseudodifferential calculus. And, importantly, this makes the pseudodifferential calculus perfectly adapted for noncommutative geometry where we do not want to interact directly with a geometric base space in the first place.

Some properties of this calculus have been compiled in~\cite{Uuye2011}. The following concrete examples of this abstract pseudodifferential calculus are borrowed from the exposition in~\cite{MOOIs} (joint work of the author with Edward McDonald and Teun van Nuland).
\begin{itemize}
    \item As remarked before, taking $\Theta = (1-\Delta)^{\frac{1}{2}}$ on $L_2(\mathbb{R}^d)$ gives the classical (Bessel potential) Sobolev spaces $\Hc^s = W_2^s(\mathbb{R}^d)$. (Pseudo)differential operators of order $k$, and (unbounded) Fourier multipliers $T_\phi$ with symbols $|\phi(\xi)|\lesssim (1+|\xi|)^k,\, \xi\in \mathbb{R}^d$ are contained in $\op^k(1-\Delta)^{\frac12}$. Note though that $\op(1-\Delta)^{\frac12}$ is a much larger class than this, and for example also contains translation operators.
    \item  For spectral triples $(\Ac, \Hc, D)$, taking $\Theta= (1+D^2)^{\frac{1}{2}}$ recovers the calculus of Connes and Moscovici as used in noncommutative geometry~\cite{ConnesMoscovici1995}.
    \item Taking $\Theta = (1-\Delta)^{\frac12}$, where $\Delta$ is the sub-Laplacian on a stratified Lie group, gives the Sobolev spaces defined by Folland and Stein~\cite{FollandStein1974,RothschildStein1976}. 
    \item Related to the previous example is the anharmonic oscillator $\Theta^2 = 1+ \Delta^{2l} + |x|^{2k} $ on $\R^d$ for integers $l,k\geq 1$ and generalisations thereof, which define Sobolev spaces and a pseudodifferential calculus that appear in the study of sub-Laplacian operators on stratified Lie groups too~\cite{ChatzakouDelgado2021}. The special case where $\Theta^2$ is the harmonic oscillator gives Shubin's Sobolev spaces $Q^s(\R^d)$~\cite[Section~IV.25]{Shubin2001} and   the $\Gamma$-pseudo-differential operators in~\cite[Chapter~2]{NicolaRodino2010}, see also~\cite{BongioanniTorrea2006}.
    \item In the aforementioned pseudodifferential calculus in Van Erp--Yuncken~\cite{vanErpYuncken2019}, and Androulidakis--Mohsen--Yuncken~\cite{AndroulidakisMohsen2022} on filtered manifolds, Sobolev spaces are constructed with a similar procedure.
    \item A similar calculus has been constructed for quantum Euclidean spaces~\cite{GaoJunge2022}.
    \item Finally, we note the case where $\Theta = 1_{\Hc}$, which gives that $\Hc^s = \Hc$ for all $s \in \R$, and $\op^r(1_{\Hc}) = B(\Hc)$, $r \in \R$.
\end{itemize}

It is a basic fact that the square of an operator in $\op^m(\Theta)$ is an operator in $\op^{2m}(\Theta)$, but what can be said of its square root? More generally, one can wonder for which type of abstract pseudodifferential operators and class of functions there exists a well-behaved functional calculus within the framework of this abstract pseudodifferential calculus.
In Chapter~\ref{Ch:FunctCalc}, we resolve this question by defining abstract \textit{elliptic} pseudodifferential operators, and provide a functional calculus for these operators based on the preprint~\cite{MOOIs}.

\section{Multiple operator integrals}\label{S:MOIs}
To an extent, we have physicists to thank for the field of multiple operator integrals. To see their ingenuity in manipulating operator integrals, simply open any book on Quantum Field Theory. For example, chances are that you will encounter the time-dependent operator
\[
V(t) = e^{itH_0} e^{-it(H_0 + H_I)}, \quad t \in \R,
\]
where $H_0$ and $H_I$ are unbounded self-adjoint operators on a Hilbert space, which is expanded by the Dyson series~\cite[Chapter~6]{Folland2008}
\[
V(t) \sim I + \sum_{n=1}^\infty V_n(t),
\]
where
\[
V_n(t) = \frac{1}{i^n}\int_0^t \int_0^{\tau_n}\cdots \int_0^{\tau_2} e^{i\tau_n H_0}H_I e^{i(\tau_{n-1}-\tau_{n})H_0} H_I  \cdots e^{i(\tau_1-\tau_2)H_0} H_I e^{-i\tau_1 H_0}\,d\tau_1 \cdots d\tau_n.
\]
Of course, the symbols `$=$' and `$\sim$' in these formulas mean something different to physicists than they do to mathematicians, and making these identities mathematically rigorous requires some effort. 
The subject of Multiple Operator Integration provides a rigorous foundation for identities like the Dyson series above and generalisations thereof. We will mostly follow~\cite{Peller2016, SkripkaTomskova2019} in this overview, but see also~\cite{PagterWitvlietSukochev2002, BirmanSolomyak2003,ACDS} for important developments of the subject.

Suppose that we are interested in the G\^ateaux derivative
\[
\frac{d}{dt}\bigg|_{t=0}f(A+tB) = \lim_{h\to 0} \frac{f(A+hB) - f(A)}{h},
\]
for $A, B\in B(\Hc)$ and some holomorphic function $f:\C \to \C$ (using the holomorphic functional calculus), taking the limit with respect to the operator norm. Picking a contour $\gamma$ in the complex plane which encircles counterclockwise the union of the spectra of $A+tB$ for $t \in (-\varepsilon, \varepsilon)$, we have
\[
f(A+tB)  =\frac{1}{2\pi i}\int_{\gamma}f(z) (z-A-tB)^{-1}\,dz, \quad t \in (-\varepsilon, \varepsilon),
\]
as a $B(\Hc)$-valued Bochner integral.
Using the resolvent identity, we have that
\begin{align*}
    \frac{d}{dt}\bigg|_{t=0} (z-A-tB)^{-1} & = \lim_{t\to0} \frac{(z-A-tB)^{-1} - (z-A)^{-1}}{t} \\
    &= \lim_{t\to 0} (z-A-tB)^{-1}B(z-A)^{-1} \\
    &=  (z-A)^{-1}B(z-A)^{-1}.
\end{align*}
Similarly, for the holomorphic function $f$ above we can see that
\begin{equation}\label{eq:HoloDeri}
\frac{1}{n!}\frac{d^n}{dt^n}\bigg|_{t=0}f(A+tB)  =\frac{1}{2\pi i}\int_{\gamma}f(z) (z-A)^{-1} B (z-A)^{-1} B \cdots (z-A)^{-1}\,dz,
\end{equation}
where we have an alternating product of $n+1$ factors of $(z-A)^{-1}$ and $n$ factors of $B$. In case $A$ and $B$ commute, this formula simply reduces to
\[
\frac{d^n}{dt^n}\bigg|_{t=0}f(A+tB) = f^{(n)}(A)B^n. 
\]
When they do not, we can devise a way to describe the right-hand side of~\eqref{eq:HoloDeri}. First define the divided differences $f^{[n]}:\C^{n+1}\to \C$ by
\begin{align*}
f^{[0]}(\lambda)&:= f(\lambda),\\
f^{[n]}(\lambda_0, \ldots, \lambda_n)&:= \frac{f^{[n-1]}(\lambda_0, \ldots, \lambda_{n-1})- f^{[n-1]}(\lambda_1, \ldots, \lambda_n)}{\lambda_0- \lambda_n}, \quad \lambda_0 , \ldots, \lambda_{n-1},\lambda_n\neq \lambda_0 \in \C,
\end{align*}
defining $f^{[n]}$ with an appropriate limit when $\lambda_{0}=\lambda_n$, so that
\[
f^{[n]}(\lambda, \ldots, \lambda) = \frac{1}{n!}f^{(n)}(\lambda), \quad \lambda \in \C.
\]
By similar resolvent manipulations as before, it can be shown that for our holomorphic function $f$, for a contour $\gamma$ encircling $\lambda_0, \ldots, \lambda_n \in \C$ counterclockwise,
\[
f^{[n]}(\lambda_0,\ldots,\lambda_n) = \frac{1}{2\pi i}\int_\gamma f(z) (z-\lambda_0)^{-1}\cdots (z-\lambda_n)^{-1}\, dz.
\]
Hence, intuitively, the expression on the right-hand side of~\eqref{eq:HoloDeri} is something like $f^{[n]}(A,\ldots, A)$ with factors of $B$ spliced `in-between' the $n+1$ copies of $A$.

Richard Feynman dealt with the issue~\cite{Feynman1951} by defining a clever system of notation
\[
\overset{2}{A} \overset{1}{B}\overset{3}{C} := BAC. 
\]
In this notation, we formally have
\[
\frac{d^n}{dt^n}f(A+tB) = f^{[n]}(\overset{1}{A}, \overset{3}{A},\ldots, \overset{2n+1}{A})\overset{2}{B}\overset{4}{B} \cdots \overset{2n}{B}.
\]
Obviously, this needs serious work to make well-defined and rigorous, see~\cite{Feynman1951,Maslov1976}\cite[Appendix~I]{KarasevMaslov1993}\cite[Chapter~7]{Jefferies2004}. Daletskii has made some historical comments on this matter~\cite{Daletskii1998}. 

A different approach, originally developed for studying evolution equations in Hilbert spaces, was devised by Daletskii and S. G. Krein~\cite{DaletskiiKrein1951,DaletskiiKrein1956, Daletskii1998}, elements of which had already appeared independently in work by L\"owner~\cite{Lowner1934}. Their work was continued by Birman--Solomyak~\cite{BirmanSolomyak1,BirmanSolomyak2,BirmanSolomyak3,BirmanSolomyak2003}, Pavlov~\cite{Pavlov1971]}, Sten'kin~\cite{SolomyakStenkin1971,Stenkin1977}, and Peller~\cite{Peller2006, Peller2016}. See also~\cite{ACDS, PagterWitvlietSukochev2002,SkripkaTomskova2019}. Given a function $\phi:\R^{n+1} \to \C$, they define operators corresponding to the formal expression
\[
T^{H_0, \ldots, H_n}_\phi(V_1, \ldots, V_n) :=  \int_{\sigma(H_0)\times \cdots \times \sigma(H_n)} \!\!\!\!\!\!\!\!\!\!\!\!\!\!\!\!\!\!\!\!\!\!\!\!\!\!\! \phi(\lambda_0,\ldots, \lambda_n) \, dE_0(\lambda_0) V_1 dE_1(\lambda_1) \cdots V_n dE_n(\lambda_n),
\]
where $dE_i$ is the spectral measure of the self-adjoint operator $H_i$.
While, like Feynman's approach, this too is only formal notation, in various settings it can be made precise. An important class of examples is the following~\cite{Peller2006,Peller2016}. Assume that $\phi:\R^{n+1} \to \C$ can be written as
\begin{equation}\label{eq:BSrep}
\phi(\lambda_0, \ldots, \lambda_n) = \int_\Omega a_0(\lambda_0, \omega) \cdots a_n(\lambda_n, \omega)\, d\nu(\omega),
\end{equation}
where $(\Omega,\nu)$ is a finite measure space, and the functions $a_i:\R \times \Omega \to \C$ are bounded and measurable. Then we can define 
\begin{equation}\label{eq:MOIs1}
T^{H_0, \ldots, H_n}_\phi(V_1, \ldots, V_n)\psi := \int_\Omega a_0(H_0, \omega)V_1 a_1(H_1,\omega) \cdots V_n a_n(H_n, \omega)\psi\, d\nu(\omega), \quad \psi \in \Hc,
\end{equation}
as an $\Hc$-valued Bochner integral, where $V_1, \ldots, V_n \in B(\Hc)$ and $H_0, \ldots, H_n$ are self-adjoint. Peller proved that this defines a well-defined bounded operator, independent of the representation~\eqref{eq:BSrep} chosen~\cite{Peller2006,Peller2016}. Furthermore, he proved that for functions $f:\R \to \C$ in the Besov space $B^n_{\infty, 1}(\R)$, we have that the divided difference $f^{[n]}$ is of the appropriate form and hence $T^{H_0,\ldots, H_n}_{f^{[n]}}(V_1, \ldots, V_n)$ can be defined. If furthermore $f \in B^n_{\infty,1}(\R)\cap B^1_{\infty,1}(\R)$ and $f^{[n]}$ is bounded, the formula
\[
\frac{1}{n!}\frac{d^n}{dt^n}\bigg|_{t=0}f(A+tB) = T^{A,\ldots, A}_{f^{[n]}}(B, \ldots, B) 
\]
can be justified for unbounded self-adjoint $A$ and bounded self-adjoint $B$~\cite[Theorem~5.6]{Peller2006}. The operators $T^{H_0,\ldots, H_n}_{\phi}(V_1, \ldots, V_n)$ are called multiple operator integrals, and when $n=1$ they are often called double operator integrals.

Multiple and double operator integrals (MOIs and DOIs) are not only useful for studying derivatives, but they also come with the identities
\begin{equation}
    \begin{split}\label{eq:MOIIdentities}
        [f(A),B] &= T^{A,A}_{f^{[1]}}([A,B]),\\
    f(A)-f(B) &= T^{A, B}_{f^{[1]}}(A-B),
    \end{split}
\end{equation}
as well as higher-order analogues. 
This indicates that DOIs and MOIs can appear in many settings, and this framework has indeed been a powerful formalism in functional analysis. The technique has been crucial in Potapov and Sukochev's proof of the long-standing conjecture that Lipschitz functions are operator Lipschitz on the Schatten classes $\mathcal{L}_p$, $1<p<\infty$~\cite{PotapovSukochev2011}, that is, the estimate
\[
\|f(A) - f(B)\|_p \leq C_{p,f} \|A-B\|_p, \quad 1<p<\infty.
\]
The case $0<p<1$ has been studied in~\cite{McDonaldSukochev2022}. DOIs have similarly been used to provide different kinds of sharp operator estimates~\cite{AleksandrovPeller2010,AleksandrovPeller2016,CaspersMontgomery-Smith2014,CaspersPotapov2019}.
Furthermore, MOIs are used in the study of spectral flow and the spectral shift function~\cite{AzamovCarey2007, PotapovSkripka2013}, and in many other areas of functional analysis like Biane--Speicher's stochastic analysis and their free It\^o formula~\cite{BianeSpeicher1998,Nikitopoulos2022}. 

A useful consequence of the two identities~\eqref{eq:MOIIdentities} (and their higher-order analogues) is a noncommutative Taylor expansion:
\begin{equation}\label{eq:NonComTaylor}
    f(D+V) \sim  \sum_{n,m=0}^\infty \sum_{m_1 + \dots + m_n = m} \frac{C_{m_1, \dots, m_n}}{(n+m)!} \delta_D^{m_1}(V) \cdots \delta_D^{m_n}(V) f^{(n+m)}(D),
\end{equation}
where $\delta_D(V) := [D,V]$, and $C_{m_1, \dots, m_n}$ are some combinatorial constants specified below in Section~\ref{S:AsympExp}. If the operators $D$ and $V$ happen to commute, this reduces to a standard Taylor expansion. 
These Taylor expansions should for now be interpreted as purely formal, but they have been made rigorous for classical pseudodifferential operators on manifolds and in Banach algebras~\cite{Paycha2007, Paycha2011, HartmannLesch2024}. 
We too will prove a precise version of the noncommutative Taylor expansion for pseudodifferential operators in Chapter~\ref{Ch:MOIs}, using the abstract pseudodifferential calculus described in Section~\ref{S:PSDOs}.

For us, MOIs connect to the other themes of this dissertation due to their appearance in noncommutative geometry, which has at times gone unrecognised. Take for example the JLO cocycle~\cite{JLO1988}, which functions in NCG as the noncommutative analogue of the Chern character in differential geometry. Given an even spectral triple $(\Ac, \Hc, D)$ with grading $\gamma$, the JLO cocycle is a collection of functionals $(\phi^0, \phi^2, \phi^4, \ldots )$ where $\phi^{n}: \Ac^{\otimes n+1} \to \C$ is defined as
\[
\phi^{n}(a_0 \otimes a_1 \cdots \otimes  a_{n}) := \int_{\Sigma^{n}} \Tr(\gamma a^0 e^{-t_0 D^2} [D,a_1] e^{-t_1D^2} \cdots  [D,a_{n}]e^{-t_{n}D^2})\,dt.
\]
Here, $\Sigma_{n}$ is the $n$-simplex with normalised Lebesgue measure $dt$. Comparing this expression with~\eqref{eq:MOIs1}, it is not difficult to see that we have
\[
\phi^{n}(a_0, a_1, \ldots, a_{n}) = \Tr(\gamma a_0 T_{f^{[n]}}^{D^2, \ldots, D^2}([D,a_1], \ldots, [D,a_{n}])),
\]
where $f(x) = \exp(-x)$.

This cocycle is a cornerstone of the local index formula, a landmark result in NCG which generalises the Atiyah--Singer index theorem to noncommutative spaces. For this result by Connes and Moscovici~\cite{ConnesMoscovici1995}, there exist by now several different proofs~\cite{Higson2003,CPRS1,CPRS2, CPRS3, CPRS4}. Stemming from manipulations of the JLO cocycle, these papers contain many elaborate and technical arguments involving operator integrals. The theory of MOIs can potentially be used to simplify these arguments. In fact, the original statement of the local index formula already suggests this possibility.
\begin{thm}[Local index formula~\cite{ConnesMoscovici1995}]\label{T:LocalIndex} Let $(\Ac, \Hc, D)$ be an odd unital spectral triple with simple dimension spectrum, such that $D^{-1} \in \mathcal{L}_{p,\infty}$ for some $p>0$. 
    The following functionals $\phi^n : \Ac^{\otimes n+1}\to \C$ define a cocycle $(\phi^1, \phi^3, \phi^5, \ldots)$ in the $(b,B)$-bicomplex of $\Ac$:
    \[
    \phi^n(a_0, \ldots, a_n) = \sqrt{2i} \sum_{q\geq 0, k_j \geq 0} c_{n,k,q} \tau_q\big(a_0 \delta_{D^2}^{k_1}([D,a_1]) \cdots \delta_{D^2}^{k_n}([D,a_n]) |D|^{-(n+2\sum k_j)} \big),
    \]
    where $ c_{n,k,q}$ are some constants, and $\tau_q(T) := \mathrm{res}_{z=0} z^q \Tr(T|D|^{-2z})$. The cohomology class of the cocycle $(\phi^1, \phi^3, \phi^5, \ldots)$ in $HC^{odd}(\Ac)$ coincides with the cyclic cohomology Chern character $ch_*(\Ac, \Hc, D)$.
\end{thm}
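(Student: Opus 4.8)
The strategy I would follow is the standard two-step one: first use that the Chern character $ch_*(\Ac,\Hc,D)$ is \emph{already} represented, in entire cyclic cohomology, by the (odd) JLO cocycle $\Phi^{\mathrm{JLO}}$ built from $a_0\,e^{-t_0 D^2}[D,a_1]\,e^{-t_1 D^2}\cdots[D,a_n]\,e^{-t_n D^2}$ integrated over the simplex $\Sigma^n$, exactly as recalled above in the even case; then ``localise'' this cocycle, i.e.\ replace the heat operators by their pseudodifferential/short-time asymptotics, using the abstract pseudodifferential calculus of Section~\ref{S:PSDOs} and the noncommutative Taylor formula \eqref{eq:NonComTaylor}. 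The output of the localisation should be exactly the residue cochain $(\phi^1,\phi^3,\ldots)$ of the statement, up to an explicit coboundary; this simultaneously shows that it is a $(b,B)$-cocycle and that its class is $ch_*(\Ac,\Hc,D)$.

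Concretely, I would rescale $D$ to $tD$ for $t>0$. Since this is a homotopy of $\theta$-summable Fredholm modules (finite summability $D^{-1}\in\mathcal{L}_{p,\infty}$ guarantees $\theta$-summability), the class of the associated JLO cocycle $\Phi^{\mathrm{JLO}}_t$ in $HC^{odd}(\Ac)$ is independent of $t$, so it suffices to compute $\lim_{t\to\infty}\Phi^{\mathrm{JLO}}_t$ modulo coboundaries. To evaluate $\Phi^{\mathrm{JLO}}_t$ I would expand each factor $e^{-sD^2}$ by Duhamel's principle — equivalently by \eqref{eq:NonComTaylor} — so as to push every bounded operator $a_j$, $[D,a_j]$ to the left past the heat semigroups, paying for this with iterated commutators $\delta_{D^2}^{k_j}([D,a_j])$. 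After carrying out the $\Sigma^n$-integral (which contributes Gamma-factors) and rewriting the remaining function of $tD$ via a Mellin transform, $\phi^n_t(a_0,\dots,a_n)$ becomes a sum of terms, each a scalar function of $t$ times a trace $\Tr\big(a_0\,\delta_{D^2}^{k_1}([D,a_1])\cdots\delta_{D^2}^{k_n}([D,a_n])\,|D|^{-2z}\big)$ read off along a vertical contour in $z$. These are precisely the zeta functions of the pseudodifferential algebra generated by $\Ac$, $[D,\Ac]$, $\delta_{|D|}$ and the complex powers of $|D|$.

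Here the hypotheses do their work. The simple dimension spectrum assumption guarantees that each of these zeta functions continues meromorphically with at worst simple poles; shifting the contour to $z=0$ and taking $t\to\infty$ therefore leaves only the residues $\mathrm{res}_{z=0}\,z^q\,\Tr(b\,|D|^{-2z})=\tau_q(b)$, which is exactly the $\tau_q$ in the statement. Because each $\delta_{D^2}$ raises analytic order by one while $\Tr(\cdot\,|D|^{-2z})$ converges for $\mathrm{Re}(z)$ large (using $D^{-1}\in\mathcal{L}_{p,\infty}$), only finitely many multi-indices $(k_1,\dots,k_n)$ and finitely many $q$ survive for each $n$, so the sum in $\phi^n$ is finite. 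The constants $c_{n,k,q}$ are then just the product of the multinomial coefficients coming out of the Taylor expansion, the Gamma-factors from the simplex integral, and the normalisation of the Mellin transform — pure bookkeeping once the expansion is set up. Finally, the ``tail'' of the expansion, together with the discrepancy between $\Phi^{\mathrm{JLO}}_t$ at finite $t$ and its $t\to\infty$ limit, should organise into an explicit transgression cochain, giving the coboundary that relates the residue cocycle to $\Phi^{\mathrm{JLO}}$, and hence the stated identification of cohomology classes.

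The hard part is making the second and third steps genuinely rigorous and \emph{uniform}: one needs remainder estimates for the Duhamel / noncommutative-Taylor expansion of $e^{-sD^2}$ — phrased in the orders of the abstract pseudodifferential calculus of Section~\ref{S:PSDOs} — strong enough that, after integrating over the simplex and over the Mellin contour, one may legitimately interchange the operator trace, the (finite but $t$-dependent) sum, the contour integral, and the limit $t\to\infty$, \emph{and} one must identify the leftover cochains as actual $(b,B)$-coboundaries rather than merely asymptotically negligible quantities. This is exactly the point where the abstract pseudodifferential calculus and the noncommutative Taylor formula of this thesis are designed to carry the load, and where the combination ``$D^{-1}\in\mathcal{L}_{p,\infty}$ plus simple dimension spectrum'' is indispensable for every analytic continuation in sight to behave.
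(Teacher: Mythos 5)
First, a point of order: the paper does not prove this statement at all --- Theorem~\ref{T:LocalIndex} is quoted as background from Connes--Moscovici~\cite{ConnesMoscovici1995}, with the proofs delegated to the cited literature (\cite{Higson2003,CPRS1,CPRS2,CPRS3,CPRS4}), so there is no in-paper argument to measure your proposal against. Judged on its own terms, your sketch follows the recognisable heat-kernel/JLO renormalisation route, and the overall architecture (JLO represents $ch_*$; expand the heat factors by Duhamel/noncommutative Taylor; Mellin-transform and use the dimension spectrum to extract residues; absorb the discrepancy into a transgression) is indeed the skeleton of one family of known proofs. But as a proof it has genuine gaps, which you partly acknowledge: everything that makes the theorem hard is deferred. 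The uniform remainder estimates needed to interchange the operator trace, the simplex integral, the Mellin contour and the scaling limit; the construction of the transgression cochains and the verification that they are honest $(b,B)$-coboundaries (not merely asymptotically small); the finiteness of the sums over $(k_1,\dots,k_n)$ and $q$ as a consequence of the dimension-spectrum hypothesis rather than of formal order counting; and the passage from entire cyclic cohomology, where the JLO character naturally lives, to the periodic $(b,B)$-bicomplex in which the statement is formulated --- these constitute essentially the whole content of the cited proofs, and your outline supplies none of them.

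One concrete wrinkle: with your convention $D_t = tD$, the heat times in the JLO integrand scale like $t^2$, so the regime in which the pseudodifferential/short-time asymptotics and the dimension-spectrum meromorphy operate is $t\downarrow 0$, not $t\to\infty$; the large-$t$ limit instead retracts the JLO cocycle towards the non-local Fredholm-module character (Connes--Moscovici's transgression result), which identifies classes but produces no residues. As written, your single limit is asked to do two incompatible jobs, and you would need to separate the class-identification step from the localisation step (or reverse the scaling convention) before the argument can even be set up correctly.
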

Even without knowing what a simple dimension spectrum, $(B,b)$-cocycles, cyclic cohomology, or the Chern character are, one can see that the cocycle defined in this theorem involves something resembling a noncommutative Taylor expansion~\eqref{eq:NonComTaylor}. In fact, it more closely resembles an expansion of a single MOI, which we will see in Chapter~3 of this thesis (Proposition~\ref{P:Expansion}).

The occurrence of MOI arguments in NCG is not limited to proofs of the local index formula. They also appear in expansions of the spectral action~\cite{Skripka2014,Skripka2018, Suijlekom2011, vNvS21a} and the heat trace (often studied for a noncommutative analogue of curvature)~\cite{ConnesTretkoff2011, FathizadehKhalkhali2012, FathizadehKhalkhali2013, ConnesMoscovici2014, DabrowskiSitarz2015, LeschMoscovici2016, KhalkhaliSitarz2018, Liu2018, ConnesFathizadeh2019, Liu2022, NulandSukochev2025}. See~\cite{FathizadehKhalkhali2019} for a review of the topic of curvature in NCG. An important tool in the area of these curvature computations is the rearrangement lemma~\cite{ConnesTretkoff2011,ConnesMoscovici2014,lesch2017divided,HartmannLesch2024}, which is closely related to MOIs. See~\cite{NulandSukochev2025} for an approach to curvature via multiple operator integrals.

The theory of MOIs can potentially be a useful tool in all these contexts in NCG, but there is a hurdle to be taken. This is best illustrated by seeing what happens when one takes a spectral triple $(\Ac, \Hc, D)$ and applies an identity like~\eqref{eq:MOIIdentities} when the relevant operators are $D^2$ and $a\in \Ac$ like for the JLO cocycle:
\begin{align*}
    [f(D^2),a] = T^{D^2, D^2}_{f^{[1]}}([D^2,a]).
\end{align*}
Such manipulations of MOIs are (implicitly) common in the literature, and indeed necessary to interpret the local index formula, Theorem~\ref{T:LocalIndex}, as a MOI expansion. The problem is that the operator $[D^2,a]$ appearing as an argument in the MOI on the right-hand side is typically \textit{unbounded} in NCG, and the theory of MOIs as in~\cite{Peller2016, SkripkaTomskova2019} is only equipped to deal with bounded arguments. In the NCG literature, many ad-hoc arguments can be found to justify such integral manipulations~\cite{CPRS1, ConnesMoscovici1995, Higson2003}, but an overarching theory is missing.

In Chapters~\ref{Ch:FunctCalc} and~\ref{Ch:MOIs} of this thesis, this is resolved by developing a theory of multiple operator integrals for abstract pseudodifferential operators (introduced in Section~\ref{S:PSDOs}), based on joint work~\cite{MOOIs} with Edward McDonald and Teun van Nuland. Our results extend those by Paycha~\cite{Paycha2011} who proved a noncommutative Taylor expansion for classical pseudodifferential operators on manifolds, and are comparable to those by Hartmann and Lesch~\cite{HartmannLesch2024} on Banach spaces. Our results are more general than those of Paycha, both in terms of the functions and the operators considered, and more closely adapted to operators appearing in NCG than those in~\cite{HartmannLesch2024}. Furthermore, while the Paycha, Hartmann and Lesch papers prove Taylor expansions, our fundamental construction of MOIs also systematises the underlying techniques involving operator integrals that are widespread in NCG, allowing to justify the arguments illustrated above involving unbounded operators. As a demonstration of the power and adaptability of this formalism, Chapter~\ref{Ch:MOIs} resolves an open question regarding the existence of certain asymptotic expansions in spectral triples, posed by Eckstein and Iochum~\cite{EcksteinIochum2018}.

\section{Connes' integral formula}\label{S:ConnesIntegral}
The fact that Riemannian manifolds can be completely captured by spectral triples, indicates that there must be a way to integrate functions on manifolds spectrally. Alain Connes recognised that Dixmier traces, or more general singular traces, are the appropriate tool.

The Dixmier trace was originally constructed as an example by Dixmier~\cite{Dixmier1966}, which settled in the negative the question whether the operator trace is (up to constants) the only trace that can be defined on $B(\Hc)$. With trace we mean a linear functional $\phi: \mathcal{J} \to \C$ on a two-sided ideal $\mathcal{J}\subseteq B(\Hc)$ which vanishes on commutators,
\[
\phi([A,B]) = 0, \quad A \in \mathcal{J}, B\in B(\Hc).
\]
See~\cite{SukochevUsachev2016} for details on Dixmier's original construction. We will take a different approach, following~\cite{LSZVol1,LMSZVol2}.

\begin{defn}
    The weak Schatten class $\mathcal{L}_{p,\infty} \subseteq B(\Hc)$, $p>0$, consists of those compact operators $A$ for which
    \[
    \lambda(k,|A|) = O(k^{-\frac1p}), \quad k \to \infty,
    \]
    where for a compact operator $T$ we write $\{\lambda(k,T)\}_{k=0}^\infty$ for an eigenvalue sequence of $T$ counted with algebraic multiplicity, ordered in decreasing modulus. These sets are two-sided ideals in $B(\Hc)$. The ideal $\mathcal{L}_{1,\infty}$ is called the weak trace-class ideal.
\end{defn}

\begin{defn}
    An extended limit $\omega \in (\ell_\infty(\N))^*$ is a continuous positive linear functional on $\ell_\infty(\N)$ such that $\omega(1) = 1$, and for each sequence $x \in \ell_\infty(\N)$ that converges to zero, we have $\omega(x) = 0$.

    Associated with each extended limit $\omega$, we can define the Dixmier trace $\Tr_\omega: \mathcal{L}_{1,\infty} \to \C$ by
    \begin{align*}
    \Tr_\omega(A):= \omega \bigg(\bigg\{\frac{1}{\log(n+2)} \sum_{k=0}^n \lambda(k,A)\bigg\}_{n=0}^\infty \bigg).
    \end{align*}
    In general, the value of $\Tr_\omega(A)$ may depend on the choice of $\omega$. If it does not, we call $A$ Dixmier measurable. Note that there is some inconsistency in the literature on the definition of Dixmier measurable operators, we follow~\cite{LSZVol1}. See also~\cite{SukochevUsachev2013, UsachevThesis}.
\end{defn}

At first sight, it might come as a surprise that the Dixmier traces are linear maps or even traces. 
It is furthermore a crucial fact that they vanish on the finite-rank operators (a forteriori, on $\mathcal{L}_1$), a property which characterises so-called singular traces.

There is a rich mathematical theory surrounding Dixmier traces and their more general cousins the singular traces. Originally Dixmier~\cite{Dixmier1966}, and later Connes~\cite{Connes1988}\cite[Section~4.2]{Connes1994}, required additional properties of the extended limits $\omega$ like dilation invariance in their definitions of the Dixmier trace, and defined these on a larger ideal $\mathcal{M}_{1,\infty}$ which is now sometimes called the Macaev--Dixmier ideal. There are many subtle points to be made regarding these differing definitions, or the question when different extended limits define the same Dixmier trace, or what the sufficient and necessary conditions are for an operator to be Dixmier measurable and more. For these subtleties we refer to the books~\cite{LSZVol1,LMSZVol2} and the many references therein.

The most important result illustrating that Dixmier traces can be used for integration, comes from Connes' trace theorem~\cite{Connes1988}. This provides that the noncommutative residue (sometimes called Wodzicki residue or Wodzicki--Guillemin residue) for order $-d$ classical pseudodifferential operators on $d$-dimensional manifolds can be given by a Dixmier trace.

If we equip $M$ with a Riemannian metric $g$, and consider the operator $ M_f(1-\Delta_g)^{-\frac{d}{2}}$ on $L_2(M)$, where  $f\in C_c^\infty(M)$ and $\Delta_g$ is the Laplace--Beltrami operator, Connes' trace theorem reduces to Connes' integral formula.
\begin{thm}[Connes' integral formula~\cite{Connes1988}]\label{T:ConnesIntegral} Let $M$ be a $d$-dimensional Riemannian manifold. For all $f\in C^\infty_c(M)$, we have that $M_f(1-\Delta_g)^{-\frac{d}{2}} \in \mathcal{L}_{1,\infty}$, and for all extended limits $\omega \in \ell_\infty^*$
    \begin{equation}\label{eq:ConnesintegralDix}
    \Tr_\omega(M_f(1-\Delta_g)^{-\frac{d}{2}}) = \frac{\vol(\mathbb{S}^{d-1})}{d(2\pi)^d}\int_Mf\,d\nu_g, \quad f\in C^\infty_c(M).
    \end{equation}
\end{thm}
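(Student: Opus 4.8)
The plan is to derive the formula as the specialisation of Connes' trace theorem to $A:=M_f(1-\Delta_g)^{-\frac d2}$. First I would check that $A$ is a classical pseudodifferential operator of order exactly $-d$. Since $\Delta_g$ is an elliptic second-order differential operator with principal symbol $-g^{jk}(x)\xi_j\xi_k=-|\xi|_g^2$, Seeley's theorem on complex powers shows $(1-\Delta_g)^{-\frac d2}\in\Psi^{-d}_{cl}$ with principal symbol $(g^{jk}(x)\xi_j\xi_k)^{-\frac d2}$; composing on the left with the order-zero operator $M_f$, $f\in C^\infty_c(M)$, keeps the order and multiplies the principal symbol by $f(x)$, so $\sigma_{-d}(A)(x,\xi)=f(x)(g^{jk}(x)\xi_j\xi_k)^{-\frac d2}$. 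Granting Connes' trace theorem, $A\in\mathcal L_{1,\infty}$ and for every extended limit $\omega$
\[
\Tr_\omega(A)=\frac{1}{d(2\pi)^d}\,\mathrm{Res}(A),\qquad \mathrm{Res}(A)=\int_M\Big(\int_{|\xi|=1}\sigma_{-d}(A)(x,\xi)\,dS(\xi)\Big)\,dx,
\]
the inner integral taken in a chart over the Euclidean unit sphere, with the full integrand a coordinate-independent density. So the remaining task is to compute the Wodzicki residue $\mathrm{Res}(A)$ and to check the normalisation.

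\textbf{Localisation.} Since $M$ is not assumed compact, the first genuine step is a localisation. I would fix a relatively compact coordinate chart $U\supseteq\operatorname{supp}f$ and a cutoff $\chi\in C^\infty_c(U)$ with $\chi\equiv1$ near $\operatorname{supp}f$, and split $A=M_f(1-\Delta_g)^{-\frac d2}M_\chi+M_f(1-\Delta_g)^{-\frac d2}M_{1-\chi}$. The second summand has Schwartz kernel $f(x)K(x,y)(1-\chi(y))$ with $\operatorname{supp}f$ and $\operatorname{supp}(1-\chi)$ disjoint; by pseudolocality and off-diagonal kernel decay of $(1-\Delta_g)^{-\frac d2}$ on the complete manifold $(M,g)$ this kernel is smooth and rapidly decaying, so the operator is trace-class and invisible to $\Tr_\omega$. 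The first summand is supported inside a single chart, so it can be transported --- by embedding a neighbourhood of $\operatorname{supp}f$ into a torus $\mathbb T^d$ or a closed manifold --- to a compact setting where the trace theorem applies verbatim; the Wodzicki residue is the integral of a local density built from $\sigma_{-d}(A)|_U$, so the transport leaves $\mathrm{Res}(A)$ unchanged. (Alternatively, cite the form of the trace theorem for properly supported operators on open manifolds.)

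\textbf{Residue computation.} Writing $Q=(g^{jk}(x))$, I need $\int_{|\xi|=1}(\xi^{\mathsf T}Q\xi)^{-\frac d2}\,dS(\xi)$. Evaluating $\int_{\R^d}(1+\xi^{\mathsf T}Q\xi)^{-N}\,d\xi$ for $N>\tfrac d2$ in two ways --- by the substitution $\eta=Q^{1/2}\xi$ and by polar coordinates in $\xi$ --- the common one-dimensional radial factor cancels and one obtains $\int_{|\xi|=1}(\xi^{\mathsf T}Q\xi)^{-\frac d2}\,dS(\xi)=(\det Q)^{-1/2}\vol(\mathbb S^{d-1})$. Since $\det(g^{jk}(x))=(\det g_{jk}(x))^{-1}$, this equals $\sqrt{\det g(x)}\,\vol(\mathbb S^{d-1})$, whence
\[
\mathrm{Res}(A)=\vol(\mathbb S^{d-1})\int_M f(x)\sqrt{\det g(x)}\,dx=\vol(\mathbb S^{d-1})\int_M f\,d\nu_g,
\]
and combining with the trace theorem yields $\Tr_\omega(A)=\frac{\vol(\mathbb S^{d-1})}{d(2\pi)^d}\int_M f\,d\nu_g$ exactly. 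Both sides are linear in $f$, so no positivity of $f$ is needed and the identity holds for complex-valued $f$.

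\textbf{Main obstacle.} The genuinely hard input is Connes' trace theorem itself. If a self-contained argument were wanted, I would instead, after the localisation above, reduce to a positive elliptic operator on a compact manifold, relate $\Tr_\omega(A)$ to the rightmost pole of the zeta function $s\mapsto\Tr(M_f(1-\Delta_g)^{-s})$ via a Hardy--Littlewood/Karamata Tauberian theorem passing from the zeta residue to the logarithmic Ces\`aro average defining $\Tr_\omega$, and compute that zeta residue locally from the symbol --- this last step is where $(2\pi)^{-d}$ and the spherical volume enter, tied to the chosen Fourier convention. Within the deduction from the trace theorem as stated, the only delicate points are the localisation for non-compact $M$ and keeping the constants $\tfrac{1}{d(2\pi)^d}$ and $\vol(\mathbb S^{d-1})$ consistently book-kept.
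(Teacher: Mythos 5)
Your derivation matches the route the paper itself indicates: the paper gives no independent proof of this statement but presents it (with citation to Connes) precisely as the specialisation of Connes' trace theorem to $M_f(1-\Delta_g)^{-\frac d2}$, which is exactly what you carry out, with the correct principal symbol, the correct residue computation $\int_{|\xi|=1}(\xi^{\mathsf T}Q\xi)^{-d/2}dS(\xi)=(\det Q)^{-1/2}\vol(\mathbb S^{d-1})$, and the constants book-kept correctly. The only delicate point, which you rightly flag rather than gloss over, is the localisation/trace-class estimate for the off-diagonal remainder on a non-compact (and not necessarily complete) manifold; this is standard and does not affect the correctness of the approach.
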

For compact manifolds and $\R^d$, this theorem admits a generalisation to \textit{all} singular traces on $\mathcal{L}_{1,\infty}$ (not just Dixmier traces), and
all $f\in L_2(M)$, see~\cite{LordPotapov2010,KaltonLord2013},~\cite[Chapters~7,8]{LSZVol1} and~\cite[Chapters~2,3]{LMSZVol2}. Interestingly, $f \in L_2(M)$ is both necessary and sufficient for~\eqref{eq:ConnesintegralDix} to hold~\cite{LordPotapov2010,KaltonLord2013}, illustrating that an extension to $L_1(M)$ is impossible as mistakenly claimed in~\cite[Corollary~7.22]{GVF2001}.

The state of the art for Connes' integral formula can be found in~\cite{LordSukochev2020}, supplemented by~\cite{ZaninSukochev2023}. By modifying the left-hand side of~\eqref{eq:ConnesintegralDix} to the symmetric expression 
\[
\phi ((1-\Delta_g)^{-\frac{d}{4}}M_f(1-\Delta_g)^{-\frac{d}{4}}),
\]
where $\phi$ is a singular trace on the aforementioned Dixmier--Macaev ideal $\mathcal{M}_{1,\infty}$, these papers extend Connes' integral formula to functions $f$ in certain function spaces which include the space $L_p(M)$ for $p>1$ (on a compact manifold $M$). Even in this approach, an extension to $L_1(M)$ is demonstrably impossible~\cite[Lemma~5.7]{LordPotapov2010}.

In the field of noncommutative geometry, Connes' integral formula is of significant philosophical importance. It justifies that for $d$-summable spectral triples $(\Ac, \Hc, D)$ (meaning that $(1+D^2)^{-\frac{d}{2}} \in \mathcal{L}_{1,\infty}$), the functional
\[
a \mapsto \Tr_\omega(a(1+D^2)^{-\frac{d}{2}}), \quad a \in \Ac,
\]
can be interpreted as a noncommutative integral, which allows us to talk about integration on a noncommutative space. After all, for a spectral triple the operator $D^2$ functions as the analogue of the Laplace operator (see Section~\ref{S:NCG}). We refer to~\cite{LordSukochev2010, LSZVol1, LMSZVol2} for thorough studies of the noncommutative integral in this context.

The philosophy goes deeper, however. The integral formula shows that the Lebesgue measure can be completely `recovered' from operator based computations with Dixmier traces. It has led Connes to craft a paradigm where he considers compact operators to be infinitesimals, with the weak trace-class $\mathcal{L}_{1,\infty}$ being infinitesimals of order $1$, and the Dixmier trace being the integral~\cite{Connes1994}. This `quantised calculus' is a completely operator-theoretical approach to integration theory. In the words of Alain Connes: ``I~believe that \textit{most} integrals that we know of, are special cases of this''~\cite{ConnesYT}, i.e. are computable via a Dixmier trace.

These Dixmier trace formulas have some advantages over the usual approach to integration. Due to the nature of the Dixmier trace, properties like convergence or positivity of an integral are easy to see on the operator-theoretical side. Furthermore, its robustness towards trace-class perturbations (recall that $\Tr_\omega(T) =0$ if $T$ is trace-class) can lead to deductions about similar robustness properties for the integral in question~\cite[p.558]{Connes1994}.

It should be remarked that in the noncommutative geometry literature, a popular alternative definition for the noncommutative integral is the functional
\[
a \mapsto \mint{-}a(1+D^2)^{\frac{s}{2}} := \mathrm{res}_{s=0} \Tr(a(1+D^2)^{\frac{1+s}{2}}), \quad a \in \Ac.
\]
Whenever $s \mapsto \zeta_{a,D}(s):=\Tr(a(1+D^2)^{\frac{1+s}{2}})$ can be extended meromorphically to a neighbourhood of the origin, we have for all extended limits $\omega \in \ell_\infty^*$~\cite[Theorem~9.1.5(a)]{LSZVol1}
\[
 \mathrm{res}_{s=0} \Tr(a(1+D^2)^{\frac{1+s}{2}}) = \Tr_\omega(a(1+D^2)^{\frac12}).
\]
In general, however, it need not be the case that $\zeta_{a,D}$ allows such an extension, in which case $\mint{-}a(1+D^2)^{\frac12}$ is not well-defined. Counterexamples are given in~\cite[Lemma~17]{CareySukochev2012}\cite[Corollary~6.34, Corollary~7.23]{KaltonLord2013}\cite[Example~9.3.3]{LSZVol1}, of which
\cite[Corollary~6.34, Corollary~7.23]{KaltonLord2013} is notable for providing an example on a $d$-dimensional manifold of a compactly supported pseudodifferential operator of order $-d$ with such behaviour. See also~\cite[Chapters~8,9]{LSZVol1} for extensive analysis on the relation between the Dixmier trace and residue formulas. Throughout this thesis, to maintain maximal generality, we will stick with the Dixmier trace formulation of the noncommutative integral.

In Chapter~\ref{Ch:QE}, the noncommutative integral is studied in relation to the paradigm of spectral truncations in NCG due to Connes and Van Suijlekom~\cite{ConnesvSuijlekom2021,ConnesvSuijlekom2022}. The result provides a vehicle to relate NCG to a field of mathematics called Quantum Ergodicity. In Chapters~\ref{Ch:DOSDiscrete} and~\ref{Ch:DOSManifolds}, we provide a Dixmier trace formula for integrals with respect to a measure called the density of states, on respectively discrete metric spaces and open manifolds based on the papers~\cite{AHMSZ,HekkelmanMcDonald2024}, validating Connes' mantra in this context.

\section{Density of states}\label{S:DOS}
Parts of this section are borrowed from~\cite{AHMSZ, HekkelmanMcDonald2024}, respectively a joint work with Nurulla Azamov, Edward McDonald, Fedor Sukochev and Dmitriy Zanin, and a joint work with Edward McDonald.

Many electrical and thermal properties of a material can be deduced from its associated density of states (DOS), a construction in solid-state physics. Loosely speaking, the DOS describes how many quantum states are admitted for the electrons in the material at each energy level per unit volume. 
Over the years the topic has kept of plenty of mathematicians and mathematical physicists occupied, see for example the books~\cite{PasturFigotin1992,Veselic2008,AizenmanWarzel2015}. 

The rigorous definition of the DOS already takes some effort, and there exist differing approaches. In this thesis we follow Simon~\cite[Section C]{Simon1982}. Given a (possibly unbounded) lower-bounded self-adjoint operator $H$ on the Hilbert space $L_2(X)$ where $X$ is some metric space with a Borel measure written as $|\cdot|$, we consider the limits
\[
\lim_{R\to \infty} \frac{1}{|B(x_0,R)|}\mathrm{Tr}(f(H)M_{\chi_{B(x_0,R)}}), \quad f\in C_c(\R),
\]
where $B(x_0,R)$ denotes the closed ball with center $x_0 \in X$ and radius $R$, and $\chi_{B(x_0,R)}$ is its indicator function. When these limits exist (this includes assuming that $f(H)M_{\chi_{B(x_0,R)}}$ is trace-class in the first place), we have a positive linear functional on $C_c(\R)$ and hence, via the Riesz--Markov--Kakutani theorem, we obtain a Borel measure $\nu_H$ on $\R$~\cite[Proposition C.7.2]{Simon1982} such that
\[
\lim_{R\to \infty} \frac{1}{|B(x_0,R)|}\mathrm{Tr}(f(H)M_{\chi_{B(x_0,R)}}) = \int_{\mathbb{R}}f \,d\nu_H, \quad f\in C_c(\mathbb{R}).
\]
This measure, if it exists, is what we call the density of states of the operator $H.$ This essentially coincides with similar definitions elsewhere in the literature, at least for Schr\"odinger operators on Euclidean space~\cite{Simon1982}. Important to observe is that while we define the DOS via increasing balls, in principle we could also consider taking another increasing sequence of sets $\{\Omega_n\}_{n\in \N}$. However, the DOS in general depends on the choice of this increasing sequence of sets, even for Schr\"odinger operators with radially homogenous potentials on~$\R^d$~\cite{AzamovMcDonald2022}.

Apart from its origin in physics, the density of states can be considered as a substitute for the spectral counting function~\cite{Strichartz2012} which featured in Section~\ref{S:NCG},
\[
N(\lambda) := \# \{ k:\lambda_k\leq \lambda\} =\Tr(\chi_{(-\infty, \lambda]}(H)),
\]
in case $H$ does not have discrete spectrum. Indeed, when $X$ has finite diameter and $|X|<\infty$, every self-adjoint operator $H$ with compact resolvent admits a DOS and we have
\[
\nu_H(-\infty, \lambda] = \frac{N(\lambda)}{|X|}.
\]

Common research areas for studying the DOS are its existence~\cite{AizenmanWarzel2015, BerezinShubin1991, CarmonaLacroix1990, DoiIwatsuka2001, PasturFigotin1992, Shubin1979, Simon1982}, the analytical properties of the function $\lambda \mapsto \nu_H(-\infty, \lambda]$~\cite{AizenmanWarzel2015, BourgainKlein2013, CarmonaLacroix1990, PasturFigotin1992} and its asymptotic behaviour as $\lambda$ approaches boundaries of the support of $\nu_H$~\cite{AizenmanWarzel2015, BourgainKlein2013, CarmonaLacroix1990, Lang1991, PasturFigotin1992}. The study of the Anderson localisation phenomenon, of interest to mathematical physicists, is inseparably related to the study of the density of states~\cite{AizenmanWarzel2015, CarmonaLacroix1990, Lang1991}.

The DOS can be connected to NCG, which will be the focus of the studies of the DOS in this dissertation. Connes' mantra that most integrals can be computed with Dixmier traces was mentioned in Section~\ref{S:ConnesIntegral}, and since the DOS is a measure associated with operators, it is a good case study for Connes' philosophy. This was first observed and investigated by Nurulla Azamov, Edward McDonald, Fedor Sukochev and Dmitriy Zanin on $\R^d$ for Schr\"odinger operators~\cite{AzamovMcDonald2022}.

Let us consider the Laplacian on $\R^d$, $\Delta = \sum_{j=1}^d \partial_j^2$. We know that the classical Weyl law covered in Section~\ref{S:NCG} (equation~\eqref{eq:WeylLaw}) gives for the Laplacian $\Delta_\Omega$ on a bounded domain $\Omega \subseteq \R^d$ with Dirichlet boundary conditions, that
\begin{align*}
    \Tr(\chi_{(-\infty,\lambda]}(-\Delta_\Omega)) &\sim \frac{\vol(\mathbb{S}^{d-1})}{d(2\pi)^{d}}\vol(\Omega) \lambda^{\frac{d}{2}}, \quad \lambda \to \infty.
\end{align*}
It might therefore not come as a surprise that we have the limit
\[
\lim_{R\to \infty} \frac{1}{|B(x_0,R)|} \Tr(\chi_{[-\infty,\lambda)}(-\Delta) M_{\chi_{B(x_0,R)}} ) =  \frac{\vol(\mathbb{S}^{d-1})}{d(2\pi)^{d}} \lambda^{\frac{d}{2}}, \quad \lambda>0,
\]
where we now have an exact equality instead of an asymptotic identity. See~\cite{Strichartz2012} for a precise proof. In fact, the Laplacian admits a DOS $\nu_{-\Delta}$ which satisfies
\begin{equation}\label{eq:IntegratedDOS}
    \nu_{-\Delta}(-\infty, \lambda]  = \frac{\vol(\mathbb{S}^{d-1})}{d(2\pi)^{d}} \lambda^{\frac{d}{2}}, \quad \lambda >0,
\end{equation}
and more generally,
\begin{equation}\label{eq:DOSMeasure}
d\nu_{-\Delta}(\lambda) = \frac{\vol(\mathbb{S}^{d-1})}{2(2\pi)^{d}} \lambda^{\frac{d}{2}-1} d\lambda, \quad \lambda > 0,
\end{equation}
where $d\lambda$ is the Lebesgue measure on $\R$, see e.g.~\cite{Simon1982,Strichartz2012, AMSZ}. The function~\eqref{eq:IntegratedDOS} is sometimes called the integrated DOS.

The following argument was presented in~\cite{AzamovMcDonald2022}. Recall Connes' integral formula (Theorem~\ref{T:ConnesIntegral}), which gives for any extended limit $\omega \in \ell_\infty^*$,
\[
\Tr_\omega(M_f (1-\Delta)^{-\frac{d}{2}}) = \frac{\vol(\mathbb{S}^{d-1})}{d(2\pi)^d}
\int_{\R^d}f(x)\, dx, \quad f\in C_c(\R^d).
\]
Since the Dixmier trace is invariant under unitary transformations, we can perform a Fourier transform on the left-hand side to deduce that
\[
\Tr_\omega(f(-i\nabla) M_{\langle x \rangle}^{-d}) =\frac{\vol(\mathbb{S}^{d-1})}{d(2\pi)^d} \int_{\R^d}f(x) \,dx, \quad f \in C_c(\R^d).
\]
Suppose now that $f$ is radially symmetric and $f(x) = g(|x|^2)$ for a continuous compactly supported function $g \in C_c[0,\infty)$. Then, $f(-i\nabla) = g(-\Delta)$, and switching to spherical coordinates we get
\begin{align*}
\Tr_\omega(g(-\Delta) M_{\langle x \rangle}^{-d}) &=\frac{\vol(\mathbb{S}^{d-1})}{d(2\pi)^d} \int_{\R^d}g(|x|^2) \,dx\\
&=\frac{\big(\vol(\mathbb{S}^{d-1})\big)^2}{d(2\pi)^d} \int_{0}^\infty g(r^2) r^{d-1} \,dr\\
&=\frac{\big(\vol(\mathbb{S}^{d-1})\big)^2}{2d(2\pi)^d} \int_{0}^\infty g(\lambda) \lambda^{\frac{d}{2}-1} \,d\lambda.
\end{align*}
Hence, we find that
\begin{equation}\label{eq:DixDOSLaplace}
\Tr_\omega(g(-\Delta) M_{\langle x \rangle}^{-d}) = \frac{\vol(\mathbb{S}^{d-1})}{d} \int_{\R} g(\lambda) \,d\nu_{-\Delta}(\lambda), \quad g \in C_c(\R),
\end{equation}
which is a Dixmier trace formula for the DOS of the Laplacian.
The results of~\cite{AzamovMcDonald2022} extend the identity~\eqref{eq:DixDOSLaplace} to Schr\"odinger operators $-\Delta + M_V$ which admit a DOS and where $V \in L_\infty(\R^d)$ is real-valued.

Chapters~\ref{Ch:DOSDiscrete} and~\ref{Ch:DOSManifolds} in this thesis focus on extending formula~\eqref{eq:DixDOSLaplace} to discrete metric spaces and manifolds respectively, based on the papers~\cite{AHMSZ,HekkelmanMcDonald2024}.
The DOS in these settings has previously been studied, as discrete spaces can serve as a model or a discrete approximation~\cite{AizenmanWarzel2015, AlexanderOrbach1982, BourgainKlein2013, CarmonaLacroix1990, ChayesChayes1986, Hof1993, KirschMuller2006, PasturFigotin1992, Veselic2005, Wegner1981}, and manifolds allow studying the DOS in even greater generality~\cite{AdachiSunada1993, LenzPeyerimhoff2008, LenzPeyerimhoff2004, PeyerimhoffVeselic2002, Veselic2008}.

Furthermore, a link between the DOS and Dixmier traces has previously appeared in the influential work by Bellissard, van Elst and Schulz-Baldes~\cite{BellissardVanElst1994}, which provides an NCG explanation of the quantum Hall effect. Noting the relation between Dixmier traces and $\zeta$-function residues~\cite{LSZVol1}, a result by Bourne and Prodan~\cite[Lemma~6.1]{BourneProdan2018} also bears some resemblance to formula~\eqref{eq:DixDOSLaplace}.

A notorious aspect of the DOS is that the existence of the DOS cannot be guaranteed in general situations. One advantage of the Dixmier trace formula~\eqref{eq:DixDOSLaplace} is that the Dixmier trace on the left-hand side is guaranteed to be well-defined even if the DOS itself has not been shown to exist. In fact, it can be interpreted as a generalisation of the definition of the DOS. This is unlikely to be any good if this `DOS' depends on the choice of extended limit, so one might at least require $g(H)M_{\langle x \rangle}^{-d}$ to be Dixmier measurable. In Chapter~\ref{Ch:DOSDiscrete} we will confirm on discrete spaces that this is a strictly weaker requirement than existence of the DOS, but in other settings and for Schr\"odinger type operators it is not yet precisely known what the relation is between Dixmier measurability and existence of the DOS.

Finally, the Dixmier trace formula allows questions regarding the DOS to be answered with operator theoretical machinery. We will see examples of this in Chapters~\ref{Ch:DOSDiscrete} and~\ref{Ch:DOSManifolds}, where questions like translation invariance of the DOS are answered in this manner. We will also see a connection between the Dixmier trace formula for the DOS and index theory, namely, Roe's index theorem on open manifolds.

\section{Preliminary material}\label{S:IntroPreliminaries}
This section serves as a brief overview of common notation, convention, and definitions used throughout the rest of this dissertation. 
The covered material is standard and can be found in~\cite{LSZVol1, LMSZVol2, Simon2005}. Some of the content has been copied from~\cite{HekkelmanMcDonald2024}.

The natural numbers are by convention $\N = \{0,1,2,\ldots\}$. For functions $f, g : \R \to \R_{>0}$ (or with other appropriate domains and co-domains) we write
\[
f(x) \sim g(x), \quad x\to \infty,
\]
to mean that
\[
\lim_{x\to \infty}\frac{f(x)}{g(x)} = 1.
\]

All Hilbert spaces are assumed to be complex separable Hilbert spaces, denoted $\mathcal{H}$, on which the inner product $\langle \cdot, \cdot\rangle$ is linear in its second component. The set of bounded operators on $\Hc$ is denoted by $B(\mathcal{H})$ and the ideal of compact operators by $K(\mathcal{H}).$ The operator norm on $B(\mathcal{H})$ is denoted $\|\cdot\|.$ For any compact operator $T \in K(\mathcal{H})$, an eigenvalue sequence $\lambda(T) = \{\lambda(k,T)\}_{k=0}^\infty$ is a sequence of the eigenvalues of $T$ listed with algebraic multiplicity, ordered such that $ \{|\lambda(k,T)|\}_{k=0}^\infty$ is non-increasing. The singular value sequence $\mu(T) = \{\mu(k,T)\}_{k=0}^\infty$ of $T$ is defined by
\[
\mu(k,T) :=\lambda(k,|T|), \quad k \geq 0.
\]
Equivalently,
\[
    \mu(k,T) = \inf\{\|T-R\|\;:\;\mathrm{rank}(R)\leq k\}.
\]
Let $\ell_\infty$ denote the space of complex-valued bounded sequences indexed by $\mathbb{N}.$ For $x\in \ell_\infty$, we will denote by $\mu(x) = \{\mu(k,x)\}_{k=0}^\infty \in \ell_\infty$ the decreasing rearrangement of $ \{|x_k|\}_{k=0}^\infty.$ This is consistent with the notation for the singular value sequence of an operator in the sense that if $\mathrm{diag}(x)$ is the operator given by a diagonal matrix with entries $\{x_k\}_{k=0}^\infty,$ then $\mu(\mathrm{diag}(x)) = \mu(x).$

For $p,q\in (0,\infty)$, we define the Lorentz sequence spaces $\ell_{p,q}$, $\ell_{p,\infty}$ and $\ell_{\infty, q}$ as the spaces of sequences $x\in \ell_\infty$ such that
\[
\| x \|_{p,q} := \left( \sum_{k=0}^\infty (k+1)^{\frac{q}{p}-1} \mu(k,x)^q \right)^{\frac{1}{q}}<\infty,
\]
\[
\| x \|_{p, \infty} := \sup_{k\geq 0}\, (k+1)^{\frac{1}{p}} \mu(k,x) <\infty,
\]
and
\[
\| x \|_{\infty, q} := \left( \sum_{k=0}^\infty (k+1)^{-1} \mu(k,x)^q \right)^{\frac{1}{q}}<\infty,
\]
respectively. The space $\ell_{\infty, \infty}$ is defined as $\ell_{\infty, \infty} := \ell_\infty$, and $\ell_{p,p}$ is denoted as $\ell_p$. Using the previously defined singular value sequences, we give the definition of the Lorentz ideals $\mathcal{L}_{p,q}$ for $p,q \in (0,\infty]$ as the quasi-normed spaces of compact operators $T$ such that
\[
\|T\|_{p,q} := \| \mu(T)\|_{\ell_{p,q}} <\infty.
\]
Like for the sequence spaces, $\mathcal{L}_{p,p}$ is denoted as $\mathcal{L}_p$, and $\mathcal{L}_{\infty, \infty} := B(\mathcal{H})$. Indeed, consistent with these definitions, the operator norm $\| \cdot \|$ on $B(\mathcal{H})$ is sometimes denoted by $\| \cdot \|_\infty$ if confusion might arise with other norms. 

The spaces $\mathcal{L}_{p,q}$, $0 < p,q \leq \infty$ form two-sided ideals in $B(\Hc)$. 
The ideals $\mathcal{L}_p$ are called Schatten classes, $\mathcal{L}_1$ is the familiar ideal of trace-class operators (on which we can define the usual operator trace $\mathrm{Tr}$), and $\mathcal{L}_{1,\infty}$ is called the ideal of weak trace-class operators. The ideal $\mathcal{L}_2$ is called the Hilbert--Schmidt class.
All of these quasi-norms have the property that, for $p,q \in (0,\infty],$
\[
\|ABC\|_{p,q} \leq \|A\|_{\infty}  \|B\|_{p,q}  \|C\|_{\infty}, \quad B \in \mathcal{L}_{p,q}, A, C \in B(\mathcal{H}).
\]

In similar spirit, we define the following function spaces.
\begin{defn}\label{D:LpSpaces}
    Let $(X, \Sigma, \nu)$ be a measure space. We write $L_\infty(X)$ for the quotient space
    \[
     L_\infty(X) := \{ f:X\to \C \text{ measurable} \; | \; \|f\|_\infty<\infty\}\big/\sim,
    \]
    where $\|\cdot\|_\infty$ indicates the essential supremum, and $f \sim g$ if $f(x) = g(x)$ $\nu$-almost everywhere.
    We define similarly for $0<p<\infty$,
    \[
    L_p(X) := \Big\{ f:X\to \C \text{ measurable}  \; | \; \|f\|_p:=\bigg(\int_X |f|^p\,d\nu\bigg)^{\frac1p}<\infty \Big\}\Big/\sim.
    \]
\end{defn}

There is a way to define the spaces $L_{p,q}(X)$ as well, and to furthermore see the spaces $L_{p,q}(X)$, $\mathcal{L}_{p,q}$ and $\ell_{p,q}$ as instances of a more general integration theory on von Neumann algebras~\cite{DPS2023}. We will not delve into that matter in this thesis though.

Let $\mathcal{J}$ be a two-sided ideal in $B(\Hc)$. A linear functional $\phi: \mathcal{J} \rightarrow \mathbb{C}$ is called a trace if it satisfies
\[
\phi(BT ) = \phi(TB), \quad T \in \mathcal{J}, B \in B(\Hc).
\]
This condition is equivalent with requiring
\[
\phi(U^*TU) = \phi(T), \quad T \in \mathcal{J}, U\in U(\Hc),
\]
where $U(\Hc)$ are the unitary operators on $\Hc$. Traces $\phi$ on $\mathcal{J}$ that vanish on finite-rank operators called singular traces.

All traces on $\mathcal{L}_{1,\infty}$ are singular traces. In contrast with the situation on $\mathcal{L}_1$, continuous traces on $\mathcal{L}_{1,\infty}$ are far from unique. A particular kind of continuous traces can be defined on $\mathcal{L}_{1,\infty}$ which are called Dixmier traces, as covered in Section~\ref{S:ConnesIntegral}, repeated here for convenience.
\begin{defn}
    An extended limit $\omega \in (\ell_\infty(\N))^*$ is a continuous positive linear functional on $\ell_\infty(\N)$ such that $\omega(1) = 1$, and for each sequence $x \in \ell_\infty(\N)$ that converges to zero, we have $\omega(x) = 0$.

    Associated with each extended limit $\omega$, we can define the Dixmier trace $\Tr_\omega: \mathcal{L}_{1,\infty} \to \C$ by
    \begin{align*}
    \Tr_\omega(A):= \omega \bigg(\bigg\{\frac{1}{\log(n+2)} \sum_{k=0}^n \lambda(k,A)\bigg\}_{n=0}^\infty \bigg).
    \end{align*}
    In general, the value of $\Tr_\omega(A)$ may depend on the choice of $\omega$. If it does not, we call $A$ Dixmier measurable~\cite{LSZVol1}.
\end{defn}
More can be read about singular and Dixmier traces in~\cite{LSZVol1,LMSZVol2}, for the topic of measurability see~\cite{LSZVol1, UsachevThesis, SukochevUsachev2013}.

\begin{defn}\label{D: V-modulated}
    Given an operator $0 \leq V\in B(\Hc)$, an operator $A \in B(\Hc)$ is said to be $V$-modulated if
    \begin{equation}\label{def V-modulated}
        \sup_{t>0} t^{\frac{1}{2}}\|A(1+tV)^{-1}\|_2<\infty.
    \end{equation}
\end{defn}

As can be seen from the definition, 
a $V$-modulated operator is necessarily Hilbert--Schmidt.
The importance of $V$-modulated operators comes from the following theorem, see~\cite[Theorem 7.1.3]{LSZVol1}. We will make use of this theorem in Chapters~\ref{Ch:QE} and~\ref{Ch:DOSDiscrete}.
\begin{thm}\label{T: Modulated}
If $V \in \mathcal{L}_{1,\infty}(\Hc)$ is strictly positive, $T$ is $V$-modulated and $\{e_k\}_{k=0}^\infty$ is an orthonormal basis
such that $Ve_k = \mu(k,V)e_k$, then as $n\to \infty$,
\begin{equation}\label{original_expectation_values}
    \sum_{k=0}^n \lambda(k,T) = \sum_{k=0}^n \langle e_k,Te_k\rangle +O(1).
\end{equation}
\end{thm}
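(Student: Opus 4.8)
The plan is to compare the sum of eigenvalues of $T$ with the sum of diagonal entries $\langle e_k, T e_k\rangle$ by exploiting the $V$-modulation hypothesis, which controls the ``off-diagonal mass'' of $T$ relative to the eigenbasis of $V$. First I would recall the standard fact (going back to essentially the same circle of ideas as Weyl's and Lidskii's inequalities) that for a compact operator, the partial sums $\sum_{k=0}^n \lambda(k,T)$ can be written as $\mathrm{Tr}(PT)$ for a suitably chosen rank-$(n+1)$ projection $P$ — more precisely, using the Ringrose/Schur triangular form one can find a nested family of invariant subspaces so that the eigenvalues appear on the diagonal. The difference between $\sum_{k=0}^n \lambda(k,T)$ and $\sum_{k=0}^n \langle e_k, T e_k\rangle = \mathrm{Tr}(P_n T)$, where $P_n$ is the spectral projection of $V$ onto its $n+1$ largest eigenvalues, is then controlled by how far $P_n$ is from the ``right'' triangulating projection.

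The key estimate is the following: writing $P_n$ for the projection onto $\mathrm{span}\{e_0,\dots,e_n\}$, I would show that the $V$-modulation bound \eqref{def V-modulated} gives
\[
\|T(1-P_n)\|_1 = O(1), \qquad \|(1-P_n)T\|_1 = O(1), \quad n\to\infty,
\]
or rather the slightly weaker trace-norm control on the relevant corners that suffices. Indeed, since $Ve_k = \mu(k,V)e_k$ and $V\in\mathcal{L}_{1,\infty}$, we have $\mu(k,V)\sim c/(k+1)$, so choosing $t = t_n \asymp (k+1)$ at scale $n$ and using $\|A(1+t_nV)^{-1}\|_2 = O(t_n^{-1/2})$ together with $\|(1+t_nV)(1-P_n)\|_\infty = O(1)$ (on the tail, $t_n\mu(k,V)$ is bounded) yields $\|T(1-P_n)\|_2 = O((n+1)^{-1/2})$; combined with the rank estimate this is the standard route (as in the proof of \cite[Theorem 7.1.3]{LSZVol1}) to showing that $T$ behaves, up to a Hilbert--Schmidt error of controlled size, like a block-triangular operator with respect to the filtration $\{P_n\}$.

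From there the argument is bookkeeping: let $U$ be a unitary that triangulates $T$ with respect to the filtration $\{P_n\}$ in the sense that $U^*P_nU T U^* = $ upper-triangular, so that $\sum_{k=0}^n\lambda(k,T)$ equals the trace of the compression of the triangular form; then estimate the commutator terms $[P_n, \cdot]$ arising from replacing the triangulating filtration by $\{P_n\}$ itself, each of which is trace-class with uniformly bounded trace norm by the corner estimate above. Summing, one gets $\sum_{k=0}^n\lambda(k,T) - \sum_{k=0}^n\langle e_k,Te_k\rangle = O(1)$, which is \eqref{original_expectation_values}.

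The main obstacle I expect is making the ``triangularisation'' step honest: a general $V$-modulated $T$ need not be triangular in the $\{e_k\}$ basis, so one cannot simply read the eigenvalues off the diagonal, and one must instead produce the right nested invariant subspaces and show that passing between them and the spectral subspaces of $V$ costs only $O(1)$ in trace norm uniformly in $n$. This is precisely where the quantitative strength of the $V$-modulation condition \eqref{def V-modulated} — as opposed to mere Hilbert--Schmidt membership — is essential, and getting the uniformity in $n$ right (rather than an $o(n)$ or $O(\log n)$ error, which would be useless for the Dixmier-trace application) is the crux of the matter.
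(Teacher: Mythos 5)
The paper itself gives no proof of this statement—it is quoted verbatim from the literature with the citation [LSZVol1, Theorem~7.1.3]—so your proposal has to be measured against that standard proof (Kalton--Lord--Potapov--Sukochev; LSZ Chapter~7). Your first half does match its opening moves: taking $t=n+1$ in the modulation condition and using $\|(1+(n+1)V)(1-P_n)\|_\infty=O(1)$ gives $\|T(1-P_n)\|_2=O((n+1)^{-1/2})$, and hence $\|P_nT(1-P_n)\|_1\le\|P_n\|_2\,\|T(1-P_n)\|_2=O(1)$. But the trace-norm claims you display first are false: already for $T=V$ one has $\|V(1-P_n)\|_1=\sum_{k>n}\mu(k,V)\to\infty$, and since the modulation condition is one-sided (it controls $T$ multiplied by functions of $V$ on the right), nothing whatsoever is controlled about $(1-P_n)T$; the only corner that is uniformly trace-bounded is $P_nT(1-P_n)$.

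The genuine gap is precisely the step you flag at the end and then do not supply: getting from the corner estimate to $\sum_{k=0}^n\lambda(k,T)=\mathrm{Tr}(P_nTP_n)+O(1)$. ``Let $U$ be a unitary that triangulates $T$ with respect to the filtration $\{P_n\}$'' is not a meaningful device—the Ringrose nest that exhibits the eigenvalues of $T$ comes from the Riesz subspaces of $T$ itself, not from the spectral projections of $V$, and no single unitary intertwines the two uniformly in $n$—and ``estimate the commutator terms, each trace-class with uniformly bounded trace norm'' is exactly the assertion that needs proof, since only one off-diagonal corner is small. The actual argument deletes the controlled corner $P_nT(1-P_n)$ to obtain a block lower-triangular operator whose eigenvalues are those of $P_nTP_n$ together with those of $(1-P_n)T(1-P_n)$, and then needs two further nontrivial ingredients, both uniform in $n$: a Lidskii/Ringrose-type lemma showing that a perturbation of uniformly bounded trace norm changes the eigenvalue partial sums of a compact operator only by $O(1)$ (for non-normal operators, partial sums of eigenvalues ordered by modulus are not a variational quantity, so this is not simply Lipschitz continuity in $\|\cdot\|_1$), and an accounting—using $\mu(k,T)=O(1/k)$, which itself has to be deduced from modulation, together with the decay of the tail block—that selecting the $n+1$ largest-modulus eigenvalues of the block-triangular operator rather than exactly the eigenvalues of $P_nTP_n$ costs only $O(1)$. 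None of this is in your sketch, so as it stands the proposal establishes the easy estimates but not the theorem.
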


Note that if $0 \leq V \in \mathcal{L}_{1,\infty}(\Hc)$, then $V$ is automatically $V$-modulated~\cite[Lemma 7.3.4]{LSZVol1}, and if $A$ is bounded and $T$ is $V$-modulated, then $AT$ is $V$-modulated,
which directly follows from \eqref{def V-modulated}.

All manifolds in this thesis are smooth, oriented manifolds. 
The unique volume form on an (oriented) Riemannian manifold $(M,g)$ is written as $\nu_g$, which is given in local coordinates as
\[
\nu_g = \sqrt{\abs{\det(g)}} dx^1 \wedge \cdots \wedge dx^d,
\]
where $d$ is the dimension of $M$. We then define $L_2(M)$ as in Definition~\ref{D:LpSpaces}. On $L_2(M)$ we will often consider the Laplace--Beltrami operator $\Delta_g$, which is given in local coordinates~as
\begin{align}\label{eq:Laplacian}
\Delta_g f = \frac{1}{\sqrt{\abs{\det(g)}}} \partial_i \Big( \sqrt{\abs{\det(g)}} g^{ij} \partial_j f \Big), \quad f \in C_c^\infty(M).
\end{align}
When not mentioned otherwise, we will consider this operator on $L_2(M)$ as the Friedrichs extension of 
~\eqref{eq:Laplacian}~\cite[Chapter~10]{Schmudgen2012}.
Explicitly, we put
\[
\dom \Delta_g = H^2(M),
\]
where $H^k(M)$ are the standard Sobolev spaces on Riemannian manifolds, see e.g.~\cite[Chapter~7]{TriebelTFSII}.
With this domain, $\Delta_g$ is a negative self-adjoint operator on $L_2(M)$.

    \part{The Joys of MOIs}
    \chapter{Functional calculus for abstract pseudodifferential operators}
\label{Ch:FunctCalc}
{\setlength{\epigraphwidth}{\widthof{Take care, and work harder.}}
\epigraph{Take care, and work harder.}{Fedor Sukochev}}
This chapter has appeared in slightly modified form as part of the preprint~\cite{MOOIs}, joint work with Edward McDonald and Teun van Nuland. We thank Dmitriy Zanin for help with Proposition~\ref{P:Invertop}. The review of the abstract pseudodifferential calculus in Section~\ref{SS:Review} contains established results, the results and definitions in the other sections are novel. The main results of this chapter are the functional calculus for positive-order pseudodifferential operators in Theorem~\ref{T:MainFunctCalc}, and the zero-order case in  Theorem~\ref{T:FunctCalcOp0}.

Much of the motivation for this chapter has already been covered in Sections~\ref{S:PSDOs} and~\ref{S:MOIs}. We will look at a scale of Hilbert spaces $\{\Hc_s\}_{s\in \R}$ defined by a single operator $\Theta$, which is a very plain generalisation of Sobolev spaces. On this scale, operators can be defined which map in some well-defined and consistent way
\[
T: \Hc^{s+m} \to \Hc^s, \quad s \in \R,
\]
which forms a  bare-bones abstract pseudodifferential calculus. We will develop a functional calculus for these operators, which will therefore be applicable to any pseudodifferential calculus. Though this is interesting in its own right, we are motivated by developing multiple operator integrals for these operators in Chapter~\ref{Ch:MOIs}, which in turn is motivated by applications in noncommutative geometry. 

Usually, defining a functional calculus for a pseudodifferential calculus involves elliptic operators~\cite{Strichartz1972,MeladzeShubin1987,Bony2013}. We too will follow this strategy, although it breaks down for zero-order operators. For the latter, we can instead directly apply work by Davies~\cite{Davies1995a,Davies1995c}.

\section{Abstract pseudodifferential calculus}

\subsection{Review}\label{SS:Review}
First, the details of the abstract pseudodifferential calculus that we use.  See also~\cite{KreinPetunin1966,Daletskii1967,ConnesMoscovici1995,Uuye2011}.

\begin{defn}\label{def:pseudocalc1}
    Let $\Theta$ be a possibly unbounded invertible positive self-adjoint operator on a separable Hilbert space $\Hc$. Define the Hilbert spaces $\Hc^s := \overline{\dom \Theta^s}^{\| \cdot \|_s}$ for $s \in \mathbb{R}$ where $\|\phi\|_{s} : = \| \Theta^s \phi\|$ -- noting that taking this completion is unnecessary for $s\geq 0$. We write $\Hc^\infty := \bigcap_{s \geq 0} \Hc^s$, which is dense in $\Hc$. 
\end{defn}

This definition dates back to work by S. G. Krein, see the reviews~\cite{Mitjagin1961,KreinPetunin1966} and references therein. Further developments were made in~\cite{Daletskii1967,Bonic1967}. Furthermore, the space $\Hc^\infty$ is a `countably Hilbert space' in the terminology of~\cite[Section~I.3]{GelfandVilenkin1964}.

A quick first observation is that the embedding $\Hc^{s} \hookrightarrow \Hc^{t}$ is continuous for all $s \geq t$.
For $s > 0$, there is a pairing between $\Hc^s$ and $\Hc^{-s}$ given by
\[
\langle u, v\rangle_{(\Hc^s, \Hc^{-s})} :=\langle  \Theta^s u, \Theta^{-s} v \rangle_\Hc, \quad u \in \Hc^s, v\in \Hc^{-s}.
\]
This pairing identifies $\Hc^{-s}$ with the (continuous) anti-linear dual space of $\Hc^s$ and vice-versa.

The space $\Hc^\infty = \bigcap_{s\in \R} \Hc^s$ is a Fr\'echet space equipped with the norms $\| \cdot \|_s$, $s\in \mathbb{R}$. By construction, $\Hc^\infty \subseteq \Hc^s$ for any $s \in \R$, and in fact $\Hc^\infty$ is dense in $\Hc^s$. Since a subspace of a separable metric space is itself separable it follows that every $\Hc^s$ admits an orthonormal basis consisting of vectors in $\Hc^\infty$.

We define $\Hc^{-\infty}$ as the continuous anti-linear dual space of $\Hc^\infty$, which can be identified with
\begin{equation}\label{eq:distributions}
    \Hc^{-\infty} = \bigcup_{s\in \mathbb{R}} \Hc^{s}.
\end{equation}
This is an $LF$-space, in the sense of~\cite[Chapter~13]{Treves1967}. From this perspective $\Hc^\infty$ can be interpreted as a Schwartz space and $\Hc^{-\infty}$ as a space of distributions. In case $\Theta^{-1} \in \mathcal{L}_s$ for some $s>0$, we have that the Gelfand triples $\Hc^\infty \subset \Hc \subset \Hc^{-\infty}$ and $\Hc^{s} \subset \Hc \subset \Hc^{-s}$ are `rigged Hilbert spaces' in the terminology of~\cite[Section~I.4]{GelfandVilenkin1964}.

Given $u \in \Hc^\infty$ and $v \in \Hc^{-\infty}$ it follows from~\eqref{eq:distributions} that $v \in \Hc^{-s}$ for some particular $s \in \R$. It is immediate that $u \in \Hc^s$, and we have
\[
\langle u, v \rangle_{(\Hc^{\infty}, \Hc^{-\infty})} = \langle u, v \rangle_{(\Hc^{s}, \Hc^{-s})}.
\]

\begin{prop}\label{P:Interpolation}
    The Sobolev spaces $\Hc^s$ in Definition~\ref{def:pseudocalc1} form an exact interpolation scale. That is, let $s_0 \leq s_1$, $r_0,r_1\in \R,$ and let $0<\theta<1.$ Set
        \[
            s_\theta := (1-\theta)s_0+\theta s_1,\quad r_\theta = (1-\theta)r_0+\theta r_1.
        \]
        If $T$ is a bounded linear map 
        \[
            T:\Hc^{s_0}\to \Hc^{r_0},\quad T|_{\Hc^{s_1}}:\Hc^{s_1}\to \Hc^{r_1},
        \]
        then $T|_{\Hc^{s_\theta}}$ is bounded from $\Hc^{s_\theta}$ to $\Hc^{r_{\theta}}$ for every $\theta.$ Moreover we have
        \[
            \|T\|_{\Hc^{s_\theta}\to \Hc^{r_{\theta}}} \leq \|T\|_{\Hc^{s_0}\to \Hc^{r_0}}^{1-\theta}\|T\|_{\Hc^{s_1}\to \Hc^{r_1}}^{\theta}.
        \]
\end{prop}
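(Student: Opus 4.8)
The plan is to deduce the statement from the classical three-lines (complex interpolation) argument, transported to the model space $\Hc = \Hc^{0}$ by the unitaries implementing the scale; alternatively one could quote the known fact that a Hilbert scale is an exact interpolation scale, but the direct argument is short and self-contained.

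First I would reduce to an estimate for bounded operators on $\Hc$. Put $M_j := \|T\|_{\Hc^{s_j}\to\Hc^{r_j}}$. For $\xi\in\Hc^{\infty}$ we have $\Theta^{-s_j}\xi\in\Hc^{\infty}\subseteq\Hc^{s_j}$, so $T\Theta^{-s_j}\xi\in\Hc^{r_j}$ and $\|\Theta^{r_j}T\Theta^{-s_j}\xi\|_{\Hc} = \|T\Theta^{-s_j}\xi\|_{r_j}\leq M_j\|\Theta^{-s_j}\xi\|_{s_j} = M_j\|\xi\|_{\Hc}$; hence $A_j := \Theta^{r_j}T\Theta^{-s_j}$, defined on $\Hc^{\infty}$, extends to a bounded operator on $\Hc$ of norm $\leq M_j$. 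Next set $s(z) := (1-z)s_0 + zs_1$ and $r(z) := (1-z)r_0 + zr_1$ for $z$ in the closed strip $\overline{S} = \{0\leq\Re z\leq 1\}$, so that $s(\theta) = s_\theta$, $r(\theta) = r_\theta$, and define $A(z) := \Theta^{r(z)}T\Theta^{-s(z)}$ on $\Hc^{\infty}$. Since $\Theta^{-s(z)}\phi\in\Hc^{\infty}$ for $\phi\in\Hc^{\infty}$, the vector $T\Theta^{-s(z)}\phi$ lies in $\Hc^{r_0}\cap\Hc^{r_1} = \Hc^{\max(r_0,r_1)}$, and $\Re r(z)\leq\max(r_0,r_1)$ for $z\in\overline{S}$, so $A(z)\phi$ is well defined in $\Hc$. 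The target estimate is $\|A(\theta)\|_{\Hc\to\Hc}\leq M_0^{1-\theta}M_1^{\theta}$.

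Then I would fix $\phi,\psi\in\Hc^{\infty}$ and analyse $f(z) := \langle A(z)\phi,\psi\rangle_{\Hc}$. Writing $A(z) = \Theta^{r(z)-r_i}\,\bigl(\Theta^{r_i}T\Theta^{-s_i}\bigr)\,\Theta^{s_i - s(z)}$ with $i = 1$ when $r_1\geq r_0$ and $i = 0$ otherwise, every power of $\Theta$ occurring is either unitary (an imaginary power), or a bounded non-positive power, or a fixed positive power applied to a vector of $\Hc^{\infty}$; together with the fact that $z\mapsto\langle\Theta^{z}\xi,\eta\rangle$ is entire for $\xi\in\Hc^{\infty}$, $\eta\in\Hc$ (the spectral integral $\int\lambda^{z}\,d\langle E_\lambda\xi,\eta\rangle$ converges locally uniformly and may be differentiated under the integral sign), this shows that $f$ is holomorphic on the interior of $S$, continuous on $\overline{S}$, and bounded there. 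On the left edge, $A(iy) = \Theta^{iy(r_1-r_0)}A_0\Theta^{-iy(s_1-s_0)}$ with both flanking powers unitary, so $|f(iy)|\leq M_0\|\phi\|\,\|\psi\|$; symmetrically $|f(1+iy)|\leq M_1\|\phi\|\,\|\psi\|$. Hadamard's three-lines theorem then gives $|f(\theta)|\leq M_0^{1-\theta}M_1^{\theta}\|\phi\|\,\|\psi\|$, and, taking the supremum over $\phi,\psi$ in the unit ball of the dense subspace $\Hc^{\infty}\subseteq\Hc$, we obtain $\|\Theta^{r_\theta}T\Theta^{-s_\theta}\phi\|_{\Hc}\leq M_0^{1-\theta}M_1^{\theta}\|\phi\|_{\Hc}$ for all $\phi\in\Hc^{\infty}$.

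Finally I would transfer this estimate back to the scale. Given $u\in\Hc^{s_\theta}$, choose $\phi_n\in\Hc^{\infty}$ with $\phi_n\to\Theta^{s_\theta}u$ in $\Hc$; then $u_n := \Theta^{-s_\theta}\phi_n\to u$ in $\Hc^{s_\theta}$, hence in $\Hc^{s_0}$, hence $Tu_n\to Tu$ in $\Hc^{\min(r_0,r_1)}$, while $\Theta^{r_\theta}Tu_n = \Theta^{r_\theta}T\Theta^{-s_\theta}\phi_n$ is Cauchy in $\Hc$ with limit $g$ satisfying $\|g\|\leq M_0^{1-\theta}M_1^{\theta}\|u\|_{s_\theta}$; since $r_\theta\geq\min(r_0,r_1)$ and $\Theta^{r_\theta - \min(r_0,r_1)}$ is closed, it follows that $Tu\in\Hc^{r_\theta}$ with $\Theta^{r_\theta}Tu = g$, so $\|Tu\|_{r_\theta}\leq M_0^{1-\theta}M_1^{\theta}\|u\|_{s_\theta}$, which is the asserted exact interpolation inequality. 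I expect the main obstacle to be the bookkeeping in the middle step: checking holomorphy and, especially, the uniform boundedness of $f$ on the closed strip while tracking exactly which unbounded powers of $\Theta$ act on which vectors (this is what forces the case split on the sign of $r_1 - r_0$), together with the closed-graph/density argument passing from $\Hc^{\infty}$ to all of $\Hc^{s_\theta}$; once these soft-analytic points are settled, the interpolation inequality itself drops out of the three-lines theorem.
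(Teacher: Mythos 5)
Your argument is correct. All the essential points check out: the reduction to the bounded operators $A_j=\Theta^{r_j}T\Theta^{-s_j}$ on $\Hc$, the choice of the analytic family $A(z)=\Theta^{r(z)}T\Theta^{-s(z)}$ with the case split ensuring that the outer power of $\Theta$ always has non-positive real part (hence is bounded, since $\Theta$ is boundedly invertible) while the inner power acts on $\Hc^\infty$, the unitarity of the imaginary powers giving the edge bounds, and the closed-operator/density argument transferring the estimate from $\Hc^\infty$ back to all of $\Hc^{s_\theta}$, which is needed because for negative orders the spaces are completions rather than subspaces of $\Hc$. (Two harmless bookkeeping points: with the paper's convention that $\langle\cdot,\cdot\rangle$ is linear in the \emph{second} slot, your $f(z)=\langle A(z)\phi,\psi\rangle$ is antiholomorphic rather than holomorphic, so one should either swap the slots or apply the three-lines theorem to $z\mapsto f(\bar z)$; and the symbols $\Theta^{r_j}$ acting on elements of $\Hc^{r_j}$ with $r_j<0$ must be read as the canonical isometric identification $\Hc^{r}\to\Hc^{0}$ rather than the unbounded operator on $\Hc$ — both are cosmetic.)

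Your route differs from the paper's in presentation rather than in substance. The paper disposes of the proposition by citation: it refers to Krein--Petunin, or observes that after diagonalising $\Theta$ by the spectral theorem each $\Hc^s$ becomes a weighted $L_2$-space and the claim is the Stein--Weiss interpolation theorem with change of measure. What you have written is, in effect, a direct operator-theoretic proof of that special case of Stein--Weiss: the Hadamard three-lines argument applied to $\Theta^{r(z)}T\Theta^{-s(z)}$, which is exactly the mechanism hidden inside the cited results. The citation buys brevity and offloads the soft-analytic checks (holomorphy, boundedness on the strip, passage from a dense core) to the literature; your version buys self-containedness, avoids the identification with weighted $L_2$-spaces altogether, and is stylistically consistent with the paper itself, which uses the same three-line device with a basis expansion and Morera's theorem in the proof of Proposition~\ref{P:ReplaceTheta}. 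Either is acceptable; yours could serve as an inline proof if one did not wish to cite.
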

\begin{proof}
    See~\cite{KreinPetunin1966}. Or, after identifying $\Hc^s$ with a weighted $L_2$-space through the spectral theorem, this follows from the Stein--Weiss interpolation theorem for $L_p$-spaces~\cite[Theorem~5.4.1]{BerghLofstrom1976}.
\end{proof}

\begin{defn}\label{D:AbstractPSDO}
    We say that a linear operator $A: \Hc^\infty \to \Hc^\infty$ is in the class $\operatorname{op}^r(\Theta)$ (it has analytic order $\leq r$) if $A$ extends to a continuous operator
    \[
    \overline{A}^{s+r,r}: \Hc^{s+r} \to \Hc^{s}
    \]
    for all $s\in \mathbb{R}$. If no confusion can arise, we often write
    \[
    A: \Hc^{s+r} \to \Hc^s.
    \]
    Furthermore, $\op(\Theta) := \bigcup_{r\in \mathbb{R}} \op^r(\Theta)$, and $\op^{-\infty}(\Theta):= \bigcap_{r\in \mathbb{R}}\op^r(\Theta)$. We define $\OP^r(\Theta)\subseteq \op^r(\Theta)$ as those $A \in \op^r(\Theta)$ for which $\delta^n_\Theta(A) \in \op^r(\Theta)$ for each $n \geq 0$, where $\delta_\Theta(A) := [\Theta, A]$. 
\end{defn}

Since  $\Hc^{s} \hookrightarrow \Hc^{t}$ is continuous for all $s \geq t$, it follows that $\op^r(\Theta) \subseteq \op^t(\Theta)$ for $r \leq t$.
Both $\op(\Theta)$ and $\OP(\Theta):= \bigcup_{r \in \R} \OP^r(\Theta)$ form a filtered algebra, as $\op^r(\Theta) \cdot \op^t(\Theta) \subseteq \op^{r+t}(\Theta)$ and $\OP^r(\Theta)\cdot \OP^t(\Theta) \subseteq \OP^{r+t}(\Theta)$.

Commonly, this paradigm is also used when dealing with unbounded operators
\[
T: \dom(T) \to \Hc.
\]
In this case, one writes $T \in \op^r(\Theta)$ if $\Hc^\infty \subseteq \dom(T)$, $T(\Hc^\infty) \subseteq \Hc^\infty$, and
\[
T|_{\Hc^\infty} \in \op^r(\Theta).
\]
Conversely, for $T \in \op^r(\Theta)$ with $r \geq 0$,
\[
\overline{T}^{r,0}: \Hc^r \subseteq \Hc \to \Hc
\]
can be interpreted as an unbounded operator.
Furthermore, note that $\op^{-r}(\Theta) \subseteq B(\Hc)$ for $r \geq 0$ in the sense that for $A \in \op^{-r}(\Theta)$ we have $\overline{A}^{-r,0}|_{\Hc} \in B(\Hc)$. Similarly if $\Theta^{-1}\in \mathcal{L}_s(\Hc)$, a Schatten class, then $\op^{-s}(\Theta) \subseteq \mathcal{L}_{1}(\Hc)$.

\subsection{Adjoints}
\label{S:AppEllipticAdj}
Here we discuss various ways of defining the adjoint in $\op^r(\Theta)$. 
\begin{defn}
    Let $A \in \op^r(\Theta)$ so that $A$ extends to a bounded operator
    \[
    A: \Hc^{s+r} \to \Hc^{s}
    \]
for all $s\in \mathbb{R}$.
\begin{enumerate}
    \item The adjoint of $A$ as an endomorphism of the topological vector space $\Hc^\infty$ we denote 
\[
A^\dag : \Hc^{-\infty} \to \Hc^{-\infty}
\]
defined by the identity
\[
\langle A u, v \rangle_{(\Hc^\infty, \Hc^{-\infty})} = \langle u, A^\dag v \rangle_{(\Hc^{\infty}, \Hc^{-\infty})}, \quad u \in \Hc^\infty, v\in \Hc^{-\infty}.
\]
\item In similar fashion we denote the adjoint
\[
A'^{_s}: \Hc^{-s} \to \Hc^{-s-r}
\]
defined by the relevant identity
\[
\langle Au, v\rangle_{(\Hc^s, \Hc^{-s})} = \langle u, A'^{_s}v \rangle_{(\Hc^{s+r}, \Hc^{-s-r})}, \quad u \in \Hc^{s+r}, v\in \Hc^{-s}.
\]
\item We define the Hermitian adjoint
\[
A^{\flat_s}: \Hc^{s} \to \Hc^{s+r}
\]
via the identity
\[
\langle Au, v \rangle_{\Hc^s} = \langle u, A^{\flat_s} v \rangle_{\Hc^{s+r}}, \quad u \in \Hc^{s+r}, v\in \Hc^s.
\]
\item In case $r \geq 0$, the map
\[
A: \Hc^{s+r} \subseteq \Hc^s \to \Hc^s
\]
  is an unbounded operator on the Hilbert space $\Hc^s$, so we define another Hermitian adjoint
\[
A^{*_s}: \mathcal{D}_s \to \Hc^{s},
\]
with domain
\[
\mathcal{D}_s := \{u \in \Hc^s \ |\  \exists v \in \Hc^s \forall \phi \in \Hc^{s+r} : \langle u, T\phi \rangle_{\Hc^s} = \langle v , \phi \rangle_{\Hc^{s}} \},
\]
such that
\[
\langle Au, v \rangle_{\Hc^s} = \langle u, A^{*_s} v \rangle_{\Hc^{s}}, \quad u \in \Hc^{s+r}, v\in \mathcal{D}_s.
\]
\end{enumerate}
\end{defn}

These adjoints are related in the following way.
\begin{prop}\label{P:opadjoints}
    Let $A \in \op^r(\Theta)$. Then, for all $s \in \R$,
    \begin{enumerate}
        \item $A'^{_s} = A^\dag \big|_{\Hc^{-s}}$;
        \item $A^{\flat_s}= \Theta^{-2s-2r} A^\dag \Theta^{2s}\big|_{\Hc^s}$.
    \end{enumerate}
    If $r\geq 0$, 
    \begin{enumerate}
        \item[3.] $A^{*_s} = \Theta^{-2s} A^\dag \Theta^{2s}\big|_{\mathcal{D}_s}$.
    \end{enumerate}
\end{prop}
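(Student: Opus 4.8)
The plan is to establish part (1) first and then obtain (2) and (3) as consequences, the common mechanism being the identification of $\Hc^{-t}$ with the continuous dual of $\Hc^t$ via the pairing $\langle\cdot,\cdot\rangle_{(\Hc^t,\Hc^{-t})}$, together with the elementary facts that $\Theta^c$ maps $\Hc^t$ isometrically onto $\Hc^{t-c}$, that the pairings on different scales agree on their common domain, and that $\Hc^\infty$ is dense in every $\Hc^t$. The single computational identity I would use throughout is the \emph{Riesz relation}
\[
\langle a,b\rangle_{\Hc^t}=\langle a,\Theta^{2t}b\rangle_{(\Hc^t,\Hc^{-t})},\qquad a,b\in\Hc^t,
\]
which follows from $\langle a,b\rangle_{\Hc^t}=\langle\Theta^t a,\Theta^t b\rangle_\Hc=\langle\Theta^t a,\Theta^{-t}(\Theta^{2t}b)\rangle_\Hc$ via the functional calculus for $\Theta$, and exhibits $\Theta^{2t}\colon\Hc^t\to\Hc^{-t}$ as the Riesz isomorphism.

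For (1), I would fix $v\in\Hc^{-s}$. For $u\in\Hc^\infty$ the defining identity of $A^\dag$ reads $\langle u,A^\dag v\rangle_{(\Hc^\infty,\Hc^{-\infty})}=\langle Au,v\rangle_{(\Hc^\infty,\Hc^{-\infty})}$, and since all pairings agree on $\Hc^\infty$ the right-hand side equals $\langle Au,v\rangle_{(\Hc^s,\Hc^{-s})}$. Because $A\colon\Hc^{s+r}\to\Hc^s$ is bounded and $v\in\Hc^{-s}$, the map $u\mapsto\langle Au,v\rangle_{(\Hc^s,\Hc^{-s})}$ is a continuous linear functional on $\Hc^{s+r}$, hence is represented by a unique element of $\Hc^{-s-r}$, which is by definition $A'^{_s}v$. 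Thus $\langle u,A^\dag v\rangle=\langle u,A'^{_s}v\rangle$ for all $u$ in the dense subspace $\Hc^\infty$, and non-degeneracy of the pairing forces $A^\dag v=A'^{_s}v\in\Hc^{-s-r}$.

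For (2), $A^{\flat_s}$ is the Hilbert-space adjoint of the bounded operator $A\colon\Hc^{s+r}\to\Hc^s$. Taking $u\in\Hc^{s+r}$, $v\in\Hc^s$, I would use the Riesz relation and then (1) with $\Theta^{2s}v\in\Hc^{-s}$ to get
\[
\langle Au,v\rangle_{\Hc^s}=\langle Au,\Theta^{2s}v\rangle_{(\Hc^s,\Hc^{-s})}=\langle u,A^\dag\Theta^{2s}v\rangle_{(\Hc^{s+r},\Hc^{-s-r})}.
\]
Since $A^\dag\Theta^{2s}v\in\Hc^{-s-r}$ one may write $A^\dag\Theta^{2s}v=\Theta^{2s+2r}\big(\Theta^{-2s-2r}A^\dag\Theta^{2s}v\big)$ with $\Theta^{-2s-2r}A^\dag\Theta^{2s}v\in\Hc^{s+r}$, and the Riesz relation at level $s+r$ rewrites the last pairing as $\langle u,\Theta^{-2s-2r}A^\dag\Theta^{2s}v\rangle_{\Hc^{s+r}}$; uniqueness of the Hilbert adjoint then gives $A^{\flat_s}=\Theta^{-2s-2r}A^\dag\Theta^{2s}\big|_{\Hc^s}$. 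Statement (3) is the same computation with $A$ now read as the densely defined unbounded operator $\Hc^{s+r}\subseteq\Hc^s\to\Hc^s$ on the \emph{single} Hilbert space $\Hc^s$, so both Riesz maps are $\Theta^{2s}$: for $u\in\Hc^{s+r}$ and $v\in\mathcal{D}_s$ the identity $\langle Au,v\rangle_{\Hc^s}=\langle u,A^\dag\Theta^{2s}v\rangle_{(\Hc^{s+r},\Hc^{-s-r})}$ from before still holds, while by definition of $\mathcal{D}_s$ there is $w=A^{*_s}v\in\Hc^s$ with $\langle Au,v\rangle_{\Hc^s}=\langle u,w\rangle_{\Hc^s}=\langle u,\Theta^{2s}w\rangle_{(\Hc^{s+r},\Hc^{-s-r})}$, the last step using consistency of pairings (legitimate since $\Hc^{-s}\subseteq\Hc^{-s-r}$ for $r\geq 0$). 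Non-degeneracy of the $(\Hc^{s+r},\Hc^{-s-r})$-pairing forces $\Theta^{2s}w=A^\dag\Theta^{2s}v$, i.e. $A^{*_s}v=w=\Theta^{-2s}A^\dag\Theta^{2s}v$.

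The main obstacle is not any single estimate but the careful bookkeeping of which spaces the intermediate objects live in: $A^\dag v$ lies a priori only in $\Hc^{-\infty}$, and $\Theta^{-2s}A^\dag\Theta^{2s}v$ lands a priori only in $\Hc^{s-r}$, which is strictly larger than $\Hc^s$ when $r>0$. One must verify that the claimed formulas genuinely produce elements of the asserted spaces; in (3) this membership in $\Hc^s$ is not automatic and is exactly what the hypothesis $v\in\mathcal{D}_s$ supplies. Once the functional calculus for $\Theta$ and the isometry $\|\Theta^c\phi\|_{t-c}=\|\phi\|_t$ are invoked, the algebraic manipulations with powers of $\Theta$ are routine.
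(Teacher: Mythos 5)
Your proposal is correct and follows essentially the same route as the paper: both express $\langle Au,v\rangle$ in two ways via the duality pairings, use the identification $\Theta^{2t}\colon\Hc^t\to\Hc^{-t}$ (your ``Riesz relation''), and invoke density of $\Hc^\infty$ together with non-degeneracy of the pairing to equate the adjoints with $A^\dag$. The only cosmetic difference is that you derive (2) from (1), whereas the paper compares the defining identities for $A^{\flat_s}$ and $A^\dag$ directly on $u\in\Hc^\infty$; the underlying mechanism is the same.
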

\begin{proof}
    \begin{enumerate}
        \item Take $u\in \Hc^{\infty}\subseteq \Hc^{s+r}$ and $v\in \Hc^{-s} \subseteq \Hc^{-\infty}$. Then
        \begin{align*}
           \langle Au, v \rangle_{(\Hc^s, \Hc^{-s})} &= \langle u, A'^{_s} v\rangle_{(\Hc^{s+r}, \Hc^{-s-r})}\\
           &= \langle u, A'^{_s} v \rangle_{(\Hc^{\infty}, \Hc^{-\infty})}.
        \end{align*}
        We also have
        \begin{align*}
            \langle Au, v \rangle_{(\Hc^s, \Hc^{-s})} &  = \langle Au, v \rangle_{(\Hc^\infty, \Hc^{-\infty})}\\
            &= \langle u, A^\dag v \rangle_{(\Hc^{\infty}, \Hc^{-\infty})}.
        \end{align*}
        Hence it follows that
        \[
        A'^{_s} v = A^\dag v \in \Hc^{-\infty}, \quad v \in \Hc^{-s}.
        \]
        
        \item Take $u \in \Hc^{\infty}$, $v\in \Hc^{s}$. Then on the one hand,
        \begin{align*}
            \langle Au, v \rangle_{\Hc^s} &= \langle Au, \Theta^{2s} v \rangle_{(\Hc^\infty, \Hc^{-\infty})}\\
            &= \langle u, A^\dag \Theta^{2s} v\rangle_{(\Hc^{\infty}, \Hc^{-\infty})},
        \end{align*}
        and on the other hand
        \begin{align*}
            \langle Au, v \rangle_{\Hc^s} &= \langle u, A^{\flat_s} v \rangle_{\Hc^{s+r}}\\
            &= \langle u, \Theta^{2s+2r} A^{\flat_s} v \rangle_{(\Hc^{\infty}, \Hc^{-\infty})}.
        \end{align*}
        We therefore find
        \[
        A^{\flat_s} = \Theta^{-2s-2r} A^\dag \Theta^{2s}\big|_{\Hc^s}.
        \]
        \item Take $u \in \Hc^\infty$, $v \in \mathcal{D}_s \subseteq \Hc^s \subseteq \Hc^{-\infty}$. Then
        \begin{align*}
            \langle Au, v\rangle_{\Hc^s} &= \langle Au, \Theta^{2s} v \rangle_{(\Hc^\infty, \Hc^{-\infty})}\\
            &= \langle u, A^\dag \Theta^{2s} v \rangle_{(\Hc^\infty, \Hc^{-\infty})},
        \end{align*}
        and
        \begin{align*}
            \langle Au, v\rangle_{\Hc^s} &= \langle u, A^{*_s} v\rangle_{\Hc^s}\\
            & =\langle u, \Theta^{2s} A^{*_s} v \rangle_{(\Hc^\infty, \Hc^{-\infty})}.
        \end{align*}
        Hence
        \[
        A^{*_s} = \Theta^{-2s} A^\dag \Theta^{2s} \big|_{\mathcal{D}_s}. \qedhere
        \]
    \end{enumerate}
\end{proof}

An important takeaway from this proposition is that if $A : \Hc^\infty \to \Hc^\infty$, we have a priori that
\[
A^\dag: \Hc^{-\infty} \to \Hc^{-\infty},
\]
but if $A \in \op^r(\Theta)$ we have in fact that $A^\dag \in \op^r(\Theta)$ (or, more precisely, $A^\dag \big|_{\Hc^\infty} \in \op^r(\Theta)$). 

It is now also clear that the Hermitian adjoints $A^{\flat_s}$ and $A^{*_s}$ in general cannot be regarded as operators in $\op(\Theta)$, as the operators $A^{\flat_s}$ and $A^{\flat_t}$ do not agree on the intersection $\Hc^s \cap \Hc^t$ for $s\not = t$, and the same holds for $A^{*_s}$.

\begin{prop}\label{P:symmetric}
    If $A \in \op^r(\Theta)$, $r \geq 0$, then 
    \[
 A: \Hc^r\subseteq \Hc^0 \to \Hc^0
 \] is symmetric if and only if $A = A^\dag$. 
\end{prop}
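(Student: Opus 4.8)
The plan is to read the statement off Proposition~\ref{P:opadjoints}, which already identifies the various adjoints of $A$, together with one elementary compatibility fact about the duality pairings. Throughout, recall $\Hc^0 = \Hc$, and that for $r \geq 0$ we regard $A \in \op^r(\Theta)$ as the unbounded operator on $\Hc$ with domain $\dom(A) = \Hc^r$; symmetry of $A$ then means $\langle Au, v\rangle_\Hc = \langle u, Av\rangle_\Hc$ for all $u, v \in \Hc^r$.

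First I would record the basic identity obtained by specialising to $s = 0$: by definition the adjoint $A'^{_0} : \Hc^0 \to \Hc^{-r}$ satisfies $\langle Au, v\rangle_\Hc = \langle u, A'^{_0} v\rangle_{(\Hc^r, \Hc^{-r})}$ for $u \in \Hc^r$ and $v \in \Hc$, and by Proposition~\ref{P:opadjoints}(1) one has $A'^{_0} = A^\dag\big|_{\Hc^0}$. Moreover, since $A^\dag \in \op^r(\Theta)$ (as noted after Proposition~\ref{P:opadjoints}), it maps $\Hc^r$ boundedly into $\Hc$, so $A^\dag v \in \Hc$ whenever $v \in \Hc^r$. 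The second ingredient is that the pairing $\langle \cdot, \cdot\rangle_{(\Hc^r, \Hc^{-r})}$ restricts to the inner product of $\Hc$ on $\Hc^r \times \Hc$: for $u \in \Hc^r$ and $w \in \Hc \subseteq \Hc^{-r}$ one has $\langle u, w\rangle_{(\Hc^r, \Hc^{-r})} = \langle \Theta^r u, \Theta^{-r} w\rangle_\Hc = \langle u, w\rangle_\Hc$, using self-adjointness of $\Theta^r$ and $\Theta^{-r} w \in \dom \Theta^r$. Combining the two, I obtain that for all $u, v \in \Hc^r$,
\[
\langle Au, v\rangle_\Hc = \langle u, A^\dag v\rangle_\Hc .
\]

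With this identity both implications are short. If $A = A^\dag$ (as elements of $\op^r(\Theta)$, hence also after extending continuously from the dense subspace $\Hc^\infty$ to $\Hc^r$), then $A^\dag v = Av$ for every $v \in \Hc^r$, and the displayed identity becomes precisely $\langle Au, v\rangle_\Hc = \langle u, Av\rangle_\Hc$, i.e.\ $A$ is symmetric. Conversely, if $A$ is symmetric, then the displayed identity and symmetry together give $\langle u, A^\dag v\rangle_\Hc = \langle u, Av\rangle_\Hc$ for all $u, v \in \Hc^r$; since $\Hc^r$ is dense in $\Hc$ and both $A^\dag v$ and $Av$ lie in $\Hc$, this forces $A^\dag v = Av$ for every $v \in \Hc^r$, in particular on $\Hc^\infty$, which is what ``$A = A^\dag$'' means for operators in $\op^r(\Theta)$.

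I do not anticipate a genuine obstacle: the mathematical content is already packaged in Proposition~\ref{P:opadjoints}, and what remains is bookkeeping — being careful about which of the several pairings ($\Hc$-inner product, $(\Hc^r, \Hc^{-r})$-duality, $(\Hc^\infty, \Hc^{-\infty})$-duality) is in force at each step, and invoking the compatibility between the original $A^\dag : \Hc^{-\infty} \to \Hc^{-\infty}$ and its $\op^r(\Theta)$-extension. A slightly different packaging, via the standard criterion that an unbounded operator is symmetric iff it is contained in its Hilbert-space adjoint $A^{*_0}$, together with Proposition~\ref{P:opadjoints}(3), would work equally well but requires discussing the domain $\mathcal{D}_0$.
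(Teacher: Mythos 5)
Your proof is correct and follows essentially the same route as the paper's: both rest on the defining duality identity $\langle Au,v\rangle = \langle u, A^\dag v\rangle$ (yours packaged through Proposition~\ref{P:opadjoints} and the compatibility of the $(\Hc^r,\Hc^{-r})$ pairing with the $\Hc$ inner product, the paper's directly through the $(\Hc^\infty,\Hc^{-\infty})$ pairing) followed by a density argument. The only cosmetic difference is that the paper uses density of $\Hc^\infty$ in $\Hc^r$ where you use density of $\Hc^r$ in $\Hc$; both are fine.
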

\begin{proof}
Suppose that $A = A^\dag$. Let $u\in \Hc^\infty, v \in \Hc^{r}$. Then
    \begin{align*}
        \langle Au, v \rangle_{\Hc^0} &= \langle Au, v\rangle_{(\Hc^\infty, \Hc^{-\infty})}\\
        &= \langle u, A^\dag v \rangle_{(\Hc^\infty, \Hc^{-\infty})}\\
        &= \langle u, Av \rangle_{\Hc^0}.
    \end{align*}
 By density of $\Hc^\infty \subseteq \Hc^r$, the above equality holds for $u \in \Hc^r$ as well, and hence $A: \Hc^r\subseteq \Hc^0 \to \Hc^0$ is symmetric.

 On the other hand, if $ A: \Hc^r\subseteq \Hc^0 \to \Hc^0$
 is symmetric, then for $u,v \in \Hc^\infty,$ 
 \begin{align*}
     \langle u, A^\dag v \rangle_{(\Hc^\infty, \Hc^{-\infty})} &= \langle Au, v\rangle_{\Hc^0} \\
    &= \langle u, Av \rangle_{\Hc^0}\\
    &= \langle u, Av \rangle_{(\Hc^\infty, \Hc^{-\infty})},
 \end{align*}
 showing that $A^\dag v = Av \in \Hc^\infty$ which implies that $A=A^\dag \in \op^r(\Theta)$.
\end{proof}

\subsection{Elliptic operators}
\label{SS:EllipticOps}
To prepare the way for a functional calculus on this Hilbert scale, we will define a notion of ellipticity called $\Theta$-ellipticity in this subsection. We then show that for $A \in \op^r(\Theta)$, $r \geq 0$, $\Theta$-elliptic and symmetric we have that $A$ as an operator on $\Hc$ is self-adjoint with domain $\Hc^r$. Furthermore, if $A$ is invertible in an appropriate sense, then $A^{-1} \in \op^{-r}(\Theta)$. 

\begin{defn}\label{D:ThetaElliptic}
    We say that an operator $A \in \op^r(\Theta)$ is $\Theta$-elliptic if there exists a parametrix for $A$ of order $-r$, that is, there exists an operator $P \in \op^{-r}(\Theta)$ such that
\begin{align*}
    AP &= 1_{\Hc^{\infty}} + R_1;\\
    PA &= 1_{\Hc^{\infty}} + R_2,
\end{align*}
where $R_1, R_2 \in \op^{-\infty}(\Theta) = \bigcap_{s \in \R} \op^s(\Theta)$.
\end{defn}
We emphasise that the notion of $\Theta$-ellipticity depends on $\Theta$ and on the order $r \in \R$. 

\begin{rem}
In the definition of $\Theta$-ellipticity above, we could have equivalently required the formally weaker condition of $A$ having a right-parametrix $P_1 \in \op^{-r}(\Theta)$ and a left-parametrix $P_2 \in \op^{-r}(\Theta)$,
\begin{align*}
    AP_1 &= 1_{\Hc^{\infty}} + R_1;\\
    P_2A &= 1_{\Hc^{\infty}} + R_2,
\end{align*}
where $R_1, R_2 \in \op^{-\infty}(\Theta)$. Namely, it is not difficult to deduce that this would imply $P_1 - P_2 \in \op^{-\infty}(\Theta)$.
\end{rem}

We will now provide a lemma which lets us formally weaken the condition of $\Theta$-ellipticity in Definition~\ref{D:ThetaElliptic}.

For a pseudodifferential operator $T\in\op(\Theta)$, we say that there exists an asymptotic expansion 
\[
T \sim \sum_{k=0}^\infty T_k,
\]
if
\[
T - \sum_{k=0}^N T_k \in \op^{m_N}(\Theta), \quad m_N\downarrow -\infty.
\]

    \begin{lem}[Borel lemma]\label{L:Borel}
        Let $\{A_k\}_{k=0}^\infty$ be a sequence of linear operators from $\Hc^\infty$ to $\Hc^\infty$ for which $A_k \in \op^{m_k}(\Theta)$ such that $m_k\downarrow -\infty$
        as $k\to\infty$. There exists a linear operator $A\in \op^{m_0}(\Theta)$ such that
        \begin{equation*}
            A \sim \sum_{k=0}^\infty A_k.
        \end{equation*}
    \end{lem}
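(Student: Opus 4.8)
The plan is to run the classical ``telescoping with spectral cut-offs'' argument, adapted to the scale $\{\Hc^s\}$. Fix once and for all a smooth $\phi:[0,\infty)\to[0,1]$ with $\phi\equiv 0$ on $[0,1]$ and $\phi\equiv 1$ on $[2,\infty)$, and for positive parameters $t_k$ to be chosen (large) below set $B_k:=A_k\,\phi(\Theta/t_k)$, where $\phi(\Theta/t_k)$ is defined by the Borel functional calculus for $\Theta$. Since $\phi(\Theta/t_k)$ commutes with every power of $\Theta$, it is bounded on each $\Hc^s$, preserves $\Hc^\infty$, and lies in $\op^0(\Theta)$, so $B_k\in\op^{m_k}(\Theta)$. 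The first (routine) point is that $A_k-B_k=A_k\bigl(1-\phi(\Theta/t_k)\bigr)\in\op^{-\infty}(\Theta)$: the spectral calculus yields $\|\Theta^N(1-\phi(\Theta/t_k))\|\le(2t_k)^N$ for all $N\ge0$, so $1-\phi(\Theta/t_k)\in\op^{-N}(\Theta)$ for every $N$, and composing with $A_k\in\op^{m_k}(\Theta)$ keeps us in $\op^{-\infty}(\Theta)$ by the filtered-algebra property. Hence replacing the $A_k$ by the $B_k$ alters every finite partial sum only by an element of $\op^{-\infty}(\Theta)$, and it suffices to build $A\in\op^{m_0}(\Theta)$ with $A-\sum_{k=0}^N B_k\in\op^{m_{N+1}}(\Theta)$ for all $N$.

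The estimate doing the work is $\|\Theta^{-c}\phi(\Theta/t_k)\|\le t_k^{-c}$ for $c\ge0$, since $\lambda\mapsto\lambda^{-c}\phi(\lambda/t_k)$ vanishes on $\{\lambda\le t_k\}$ and is non-increasing beyond it. This gives, for integer $a$ and any index $q$ with $m_q\ge m_k$,
\[
\|B_k\|_{\Hc^{a+m_q}\to\Hc^a}\ \le\ \|A_k\|_{\Hc^{a+m_k}\to\Hc^a}\,\bigl\|\phi(\Theta/t_k)\bigr\|_{\Hc^{a+m_q}\to\Hc^{a+m_k}}\ \le\ \|A_k\|_{\Hc^{a+m_k}\to\Hc^a}\,t_k^{m_k-m_q}.
\]
Because $m_k\downarrow-\infty$, for each $k$ only finitely many pairs $(a,q)$ satisfy $|a|\le k$, $0\le q\le k$ and $m_q>m_k$; I would therefore choose $t_k\ge k$ large enough that $\|A_k\|_{\Hc^{a+m_k}\to\Hc^a}\,t_k^{m_k-m_q}\le 2^{-k}$ for each such pair. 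With this single diagonal choice in hand, for every fixed integer $a$ and every fixed $N\ge0$ the tail $\sum_{k>N}\|B_k\|_{\Hc^{a+m_{N+1}}\to\Hc^a}$ decomposes (taking $q=N+1$) into the finitely many $k$ with $m_k=m_{N+1}$, the finitely many $k<\max(N+1,|a|)$, and a remainder dominated by $\sum_k2^{-k}$; so it is finite, and likewise $\sum_{k\ge0}\|B_k\|_{\Hc^{a+m_0}\to\Hc^a}<\infty$ (taking $q=0$). Thus $R_N:=\sum_{k>N}B_k$ and $A:=\sum_{k\ge0}B_k$ converge in operator norm from $\Hc^{n+m_{N+1}}\to\Hc^n$, resp.\ $\Hc^{n+m_0}\to\Hc^n$, for every integer $n$, and on $\Hc^\infty$ the limit operator is the pointwise sum of the $B_k$.

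To pass from integer to arbitrary real indices I would invoke Proposition~\ref{P:Interpolation}: for $n\le s\le n+1$ the two Cauchy estimates on $\Hc^{n+m_{N+1}}\to\Hc^n$ and $\Hc^{n+1+m_{N+1}}\to\Hc^{n+1}$ interpolate to one on $\Hc^{s+m_{N+1}}\to\Hc^s$, so the partial sums are Cauchy there as well. This yields $A\in\op^{m_0}(\Theta)$ and $R_N\in\op^{m_{N+1}}(\Theta)$, and then
\[
A-\sum_{k=0}^N A_k\ =\ \sum_{k=0}^N(B_k-A_k)\ +\ R_N\ \in\ \op^{-\infty}(\Theta)+\op^{m_{N+1}}(\Theta)\ =\ \op^{m_{N+1}}(\Theta),
\]
with $m_{N+1}\downarrow-\infty$, which is precisely $A\sim\sum_{k\ge0}A_k$.

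I expect the only real obstacle to be the bookkeeping in selecting the $t_k$: a single diagonal choice has to control all Sobolev exponents and all truncation levels simultaneously, and the jump from integer to real exponents must be routed through the interpolation scale (Proposition~\ref{P:Interpolation}) rather than attempted directly for each real $s$.
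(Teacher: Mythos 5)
Your proposal is correct and follows essentially the same route as the paper: you truncate each $A_k$ by a spectral cut-off in $\Theta$ (your $A_k\phi(\Theta/t_k)$ is the paper's $A_k(1-\eta(\varepsilon_k\Theta))$ with $\varepsilon_k=1/t_k$), make a single diagonal choice of parameters so the truncated series and all its tails converge on the integer Sobolev scale with the right orders, pass to real exponents via Proposition~\ref{P:Interpolation}, and absorb the discarded pieces $A_k(1-\phi(\Theta/t_k))$ into $\op^{-\infty}(\Theta)$. Your bookkeeping with general comparison orders $m_q$ is just a slightly more explicit version of the paper's tail estimate, so no substantive difference.
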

    \begin{proof}
        Let $\eta \in C^\infty_c(\R)$ be equal to $1$ in a neighbourhood of zero, and let $\{\varepsilon_k\}_{k=0}^\infty$ be a sequence of positive numbers tending
        to zero in a manner to be determined shortly. Formally we define
        \begin{equation*}
            A := \sum_{k=0}^\infty A_k(1-\eta(\varepsilon_k \Theta)).
        \end{equation*}
        We will prove that $\{\varepsilon_k\}_{k=0}^\infty$ can be chosen such that this series makes sense and $A\in \op^{m_0}(\Theta)$ with the desired asymptotic expansion. 
        
        Let $\xi\in \Hc^\infty$. Then for every $k \geq 0$ and $n\in \Z$, we have
        \begin{align*}
            \|A_k(1-\eta(\varepsilon_k \Theta))\xi\|_{\Hc^{n}} &\leq \|A_k\|_{\Hc^{n+m_k}\to \Hc^n}\|(1-\eta(\varepsilon_k\Theta))\xi\|_{\Hc^{n+m_k}}\\
            &\leq \|A_k\|_{\Hc^{n+m_k}\to \Hc^n}\|1-\eta(\varepsilon_k\Theta)\|_{\Hc^{n+m_0}\to \Hc^{n+m_k}}\|\xi\|_{\Hc^{n+m_0}}.
        \end{align*}
        Let $a>0$ be a number such that $a < \Theta$. The norm of $1-\eta(\varepsilon_k \Theta)$ from $\Hc^{n+m_0}$ to $\Hc^{n+m_k}$ is determined by functional calculus as
        \begin{equation*}
            \sup_{t > a} t^{m_k-m_0} (1-\eta(\varepsilon_k t)) \leq \varepsilon_k^{m_0 - m_k} \sup_{s > 0} s^{m_k - m_0}  (1-\eta(s)) \leq C_\eta\varepsilon_k.
        \end{equation*}
        for some constant $C_\eta$, and for $k$ sufficiently large so that $m_0 - m_k \geq 1$. Now we choose $\varepsilon_k$ sufficiently small such that
        \begin{equation*}
            0 \leq \varepsilon_kC_\eta\max_{|n|\leq k}\{\|A_k\|_{\Hc^{n+m_k}\to \Hc^{n}}\} < 2^{-k}.
        \end{equation*}
        With this choice of sequence $\{\varepsilon_k\}_{k=0}^\infty$, we have just proved that the series
        \begin{equation*}
            A\xi = \sum_{k=0}^\infty A_k(1-\eta(\varepsilon_k \Theta))\xi
        \end{equation*}
        converges in every $\Hc^{n}$, and defines a bounded linear operator
        \begin{equation*}
            A:\Hc^{n+m_0}\to \Hc^{n},\quad n\in \Z.
        \end{equation*}
        Since this holds for every $n\in \Z$, it follows that $A:\Hc^\infty\to \Hc^\infty$ and by interpolation (Proposition~\ref{P:Interpolation}) $A \in \op^{m_0}(\Theta)$.
        Note that with this fixed choice of $\{\varepsilon_k\}_{k=0}^\infty$, we have proved the stronger result that the `tail' of $A$
        \begin{equation*}
            \sum_{k=N+1}^\infty A_k(1-\eta(\varepsilon_k \Theta))
        \end{equation*}
        converges in every $\Hc^s$ and defines a linear operator in $\op^{m_{N+1}}(\Theta)$.
        
        Now we prove that $A$ has the desired asymptotic expansion.
        For every $N>0$ we have
        \begin{equation*}
            A-\sum_{k=0}^N A_k = -\sum_{k=0}^N A_k\eta(\varepsilon_k \Theta) + \sum_{k=N+1}^\infty A_k(1-\eta(\varepsilon_k \Theta))
        \end{equation*} 
        Since $\eta$ is compactly supported, it is easy to see that the first summand has order $-\infty$ for every $N\geq 0$, and the second summand
        has order at most $m_{N+1}$ due to the result just proved. 
    \end{proof}

\begin{cor}\label{C:EllipticParametrix}
    Suppose that $A \in \op^r(\Theta)$ has an inverse $B\in \op^{-r}(\Theta)$ up to order $-1$. That is, 
        \begin{align*}
            AB = 1_{\Hc^\infty}+R_1;\\
            BA = 1_{\Hc^\infty}+R_2
        \end{align*} 
        where $R_1, R_2$ have order $-1$. Then $A$ is $\Theta$-elliptic.
\end{cor}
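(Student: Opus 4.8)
The plan is to construct a genuine parametrix by the classical Neumann series trick, with the Borel lemma (Lemma~\ref{L:Borel}) supplying the convergence needed to sum a formal geometric series inside $\op(\Theta)$. Since $R_1\in\op^{-1}(\Theta)$ and $\op(\Theta)$ is a filtered algebra, $R_1^k\in\op^{-k}(\Theta)$, so the orders $m_k=-k$ decrease to $-\infty$; Lemma~\ref{L:Borel} then produces $S\in\op^0(\Theta)$ with $S\sim\sum_{k=0}^\infty(-1)^k R_1^k$. A telescoping computation gives, for every $N$,
\[
(1_{\Hc^\infty}+R_1)\sum_{k=0}^N(-1)^kR_1^k = 1_{\Hc^\infty}-(-1)^{N+1}R_1^{N+1},
\]
and since $S-\sum_{k=0}^N(-1)^kR_1^k\in\op^{-N-1}(\Theta)$ while $1_{\Hc^\infty}+R_1\in\op^0(\Theta)$ does not raise orders, we conclude $(1_{\Hc^\infty}+R_1)S-1_{\Hc^\infty}\in\op^{m}(\Theta)$ for all $m$, i.e.\ $(1_{\Hc^\infty}+R_1)S-1_{\Hc^\infty}\in\op^{-\infty}(\Theta)$.

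Next I would set $P:=BS$. Since $B\in\op^{-r}(\Theta)$ and $S\in\op^0(\Theta)$, we have $P\in\op^{-r}(\Theta)$, and
\[
AP = (AB)S = (1_{\Hc^\infty}+R_1)S = 1_{\Hc^\infty}+R_1',\qquad R_1'\in\op^{-\infty}(\Theta),
\]
so $P$ is a right parametrix of order $-r$. I would then repeat the argument on the other side: from $R_2\in\op^{-1}(\Theta)$ obtain $S'\in\op^0(\Theta)$ with $S'\sim\sum_{k=0}^\infty(-1)^kR_2^k$; the mirror telescoping identity $\bigl(\sum_{k=0}^N(-1)^kR_2^k\bigr)(1_{\Hc^\infty}+R_2)=1_{\Hc^\infty}-(-1)^{N+1}R_2^{N+1}$ yields $S'(1_{\Hc^\infty}+R_2)-1_{\Hc^\infty}\in\op^{-\infty}(\Theta)$, and putting $P':=S'B\in\op^{-r}(\Theta)$ gives $P'A=S'(BA)=S'(1_{\Hc^\infty}+R_2)=1_{\Hc^\infty}+R_2'$ with $R_2'\in\op^{-\infty}(\Theta)$, a left parametrix of order $-r$.

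Finally I would reconcile the one-sided parametrices. Writing
\[
P' = P'(AP-R_1') = (P'A)P - P'R_1' = (1_{\Hc^\infty}+R_2')P - P'R_1' = P + (R_2'P - P'R_1'),
\]
and noting $R_2'P-P'R_1'\in\op^{-\infty}(\Theta)$, we get $P-P'\in\op^{-\infty}(\Theta)$. Hence $PA = P'A+(P-P')A = 1_{\Hc^\infty}+R_2'+(\text{order }-\infty)$ lies in $1_{\Hc^\infty}+\op^{-\infty}(\Theta)$, and together with $AP=1_{\Hc^\infty}+R_1'$ this exhibits $P\in\op^{-r}(\Theta)$ as a two-sided parametrix for $A$ with smoothing remainders, which is exactly $\Theta$-ellipticity in the sense of Definition~\ref{D:ThetaElliptic}. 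I do not expect a serious obstacle here — the genuine content is already packaged in the Borel lemma; the only points requiring care are that $R_1,R_2$ need not commute with $B$ or with each other, which forces the separate left/right constructions and the final matching step, and that composing with the fixed operator $1_{\Hc^\infty}+R_i\in\op^0(\Theta)$ does not increase analytic order.
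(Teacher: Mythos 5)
Your proposal is correct and follows essentially the same route as the paper: a Neumann series for $(1_{\Hc^\infty}+R_i)^{-1}$ summed via the Borel lemma (the paper applies Lemma~\ref{L:Borel} directly to $\sum_j(-1)^jBR_1^j$ rather than factoring out $B$), producing one-sided parametrices on each side. The only difference is presentational: you carry out the left/right reconciliation explicitly, whereas the paper delegates that step to the remark following Definition~\ref{D:ThetaElliptic}.
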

\begin{proof}
    Since $R_1^j$ has order at most $-j$, we can use the Borel lemma to construct an operator $B'$ such that
    \begin{equation*}
            B' \sim \sum_{k=0}^\infty (-1)^jBR_1^j.
        \end{equation*}
        Then $AB' -1$ has order $-\infty$. Similarly we can construct a left inverse.
\end{proof}

    The condition for $\Theta$-ellipticity in Corollary~\ref{C:EllipticParametrix} corresponds to a definition of ellipticity by Guillemin in an abstract pseudodifferential calculus~\cite{Guillemin1985}.

\begin{prop}\label{P:InvertElliptic}
    Let $A \in \op^r(\Theta)$ be $\Theta$-elliptic. If the bounded extension
    \[
    A: \Hc^{s_0+r} \to \Hc^{s_0}
    \]
    admits a bounded inverse
    \[
    A^{-1}: \Hc^{s_0} \to \Hc^{s_0+r},
    \]
    for a specific $s_0 \in \R$, then $A^{-1}\big|_{\Hc^\infty} \in \op^{-r}(\Theta)$. Simply writing $A^{-1} = A^{-1}\big|_{\Hc^\infty}$, we have that
    \[
    A^{-1} A = A A^{-1} = 1_{\Hc^\infty}.
    \]
\end{prop}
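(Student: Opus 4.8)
The plan is to exploit the parametrix $P \in \op^{-r}(\Theta)$ furnished by $\Theta$-ellipticity, with $AP = 1_{\Hc^\infty} + R_1$ and $PA = 1_{\Hc^\infty} + R_2$ where $R_1, R_2 \in \op^{-\infty}(\Theta)$, and to propagate the single boundedness statement $A^{-1}\colon \Hc^{s_0} \to \Hc^{s_0+r}$ along the whole Sobolev scale. First I would record two ``resolvent-type'' identities valid for every $u \in \Hc^\infty$. Applying the bounded map $A^{-1}\colon\Hc^{s_0}\to\Hc^{s_0+r}$ to the identity $A P u = u + R_1 u$ of vectors in $\Hc^\infty \subseteq \Hc^{s_0}$, and using that $A^{-1}A$ is the identity on $\Hc^{s_0+r} \ni Pu$, gives $A^{-1}u = Pu - A^{-1}R_1 u$. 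Dually, the operator identity $PA = 1 + R_2$ holds a priori only on $\Hc^\infty$, but both sides extend to bounded operators $\Hc^{s_0+r}\to\Hc^{s_0+r}$ (since $R_2 \in \op^{-\infty}(\Theta)\subseteq\op^{0}(\Theta)$) that agree on the dense subspace $\Hc^\infty$, hence agree on all of $\Hc^{s_0+r}$; feeding in $v = A^{-1}u$ and using that $AA^{-1}$ is the identity on $\Hc^{s_0}$ yields $A^{-1}u = Pu - R_2 A^{-1}u$.

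Next I would use these identities to establish the estimate $\|A^{-1}u\|_{s+r} \lesssim \|u\|_s$ (with a constant depending on $s$) for all $u \in \Hc^\infty$ and all $s \in \R$, splitting into two regimes. For $s \le s_0$ I use $A^{-1}u = Pu - A^{-1}R_1 u$: since $P \in \op^{-r}(\Theta)$ one has $\|Pu\|_{s+r}\lesssim\|u\|_s$; since $R_1 \in \op^{-\infty}(\Theta)\subseteq\op^{s-s_0}(\Theta)$ it is bounded $\Hc^s \to \Hc^{s_0}$, so $A^{-1}R_1 u \in \Hc^{s_0+r}$ with $\|A^{-1}R_1u\|_{s_0+r}\lesssim\|R_1u\|_{s_0}\lesssim\|u\|_s$, and the continuous embedding $\Hc^{s_0+r}\hookrightarrow\Hc^{s+r}$ (which holds because $s\le s_0$) converts this into the desired bound on $\|A^{-1}R_1u\|_{s+r}$. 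For $s \ge s_0$ I use $A^{-1}u = Pu - R_2A^{-1}u$ symmetrically: again $\|Pu\|_{s+r}\lesssim\|u\|_s$, while $\|A^{-1}u\|_{s_0+r}\lesssim\|u\|_{s_0}\le\|u\|_s$ via the embedding $\Hc^s\hookrightarrow\Hc^{s_0}$, and $R_2 \in \op^{-\infty}(\Theta)$ is bounded $\Hc^{s_0+r}\to\Hc^{s+r}$, so $\|R_2A^{-1}u\|_{s+r}\lesssim\|u\|_s$. By density of $\Hc^\infty$ in $\Hc^s$, $A^{-1}|_{\Hc^\infty}$ then extends to a bounded map $\Hc^s \to \Hc^{s+r}$ for every $s$, and these extensions are mutually consistent since they all restrict to $A^{-1}|_{\Hc^\infty}$; by Definition~\ref{D:AbstractPSDO} this means $A^{-1}|_{\Hc^\infty} \in \op^{-r}(\Theta)$. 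In particular $A^{-1}$ maps $\Hc^\infty$ into $\Hc^\infty$.

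Finally, writing $A^{-1} = A^{-1}|_{\Hc^\infty}$, for $u \in \Hc^\infty$ the vector $Au$ lies in $\Hc^\infty \subseteq \Hc^{s_0}$, so applying the bounded inverse $\Hc^{s_0}\to\Hc^{s_0+r}$ and using $u \in \Hc^{s_0+r}$ gives $A^{-1}Au = u$; likewise $A^{-1}u \in \Hc^\infty \subseteq \Hc^{s_0+r}$ gives $AA^{-1}u = u$. Hence $A^{-1}A = AA^{-1} = 1_{\Hc^\infty}$.

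I expect the only genuine obstacle to be the bookkeeping in the two-regime estimate: the a priori inverse is anchored at the single exponent $s_0$, so one must climb upward in regularity using the left parametrix (whose smoothing remainder $R_2$ sits after $A^{-1}$) and downward using the right parametrix (whose remainder $R_1$ sits before $A^{-1}$), exploiting in each case that the remainders are infinitely smoothing and that the relevant Sobolev embedding points the right way. The density argument upgrading $PA = 1 + R_2$ from $\Hc^\infty$ to $\Hc^{s_0+r}$ is routine but worth spelling out.
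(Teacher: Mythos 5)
Your proof is correct, and it reaches the conclusion by a slightly different route than the paper. The paper's proof uses a single two-sided sandwich identity: for $x \in \Hc^\infty$ it writes $A^{-1}x = (PA - R_2)A^{-1}(AP - R_1)x = PAPx - R_2Px - PR_1x + R_2A^{-1}R_1x$; the first three terms visibly lie in $\op^{-r}(\Theta)$, and the only term still containing $A^{-1}$ is flanked on both sides by the smoothing remainders, so $R_2A^{-1}R_1 \in \op^{-\infty}(\Theta)$ simply because $R_1$ maps every $\Hc^t$ into $\Hc^{s_0}$, the hypothesis sends $\Hc^{s_0}$ into $\Hc^{s_0+r}$, and $R_2$ maps $\Hc^{s_0+r}$ into every $\Hc^s$. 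This makes the argument uniform in $s$ and dispenses entirely with your case split $s\le s_0$ versus $s\ge s_0$ and the attendant direction-of-embedding bookkeeping. Your version, based on the two one-sided identities $A^{-1} = P - A^{-1}R_1$ and $A^{-1} = P - R_2A^{-1}$, is a valid alternative and has the merit of making explicit two points the paper's computation uses tacitly: the density/continuity step needed to apply $PA = 1 + R_2$ to vectors of $\Hc^{s_0+r}$ (the paper applies $PA - R_2$ to $A^{-1}x \in \Hc^{s_0+r}$ without comment), and the elliptic-regularity fact that $A^{-1}$ preserves $\Hc^\infty$. On that last point, your write-up asserts membership in $\op^{-r}(\Theta)$ before noting $A^{-1}(\Hc^\infty)\subseteq\Hc^\infty$, whereas Definition~\ref{D:AbstractPSDO} asks for the latter first; this is only an ordering issue, since your identity $A^{-1}u = Pu - R_2A^{-1}u$ already yields $A^{-1}u \in \Hc^\infty$ for $u \in \Hc^\infty$, so there is no gap. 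The final verification of $A^{-1}A = AA^{-1} = 1_{\Hc^\infty}$ is the same in both treatments.
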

\begin{proof}
    Let $P$ be a parametrix of $A$ and take $x \in \Hc^\infty$, so that
    \begin{align*}
        A^{-1}x &= (PA - R_2) A^{-1}(AP - R_1) x\\
        &= PAPx - R_2 P x- PR_1 x + R_2 A^{-1} R_1 x.
    \end{align*}
    Observe that for $y \in \Hc^t$, $t\in \R$, we have $A^{-1}R_1 y \in \Hc^{s_0+r}$, so that $R_2 A^{-1}R_1 y  \in \Hc^\infty$.
    Hence,
    \[
    R_2 A^{-1} R_1 \in \op^{-\infty}(\Theta),
    \]
    and therefore
    \begin{align*}
        A^{-1} &= PAP - R_2 P - PR_1 + R_2 A^{-1} R_1 \\
        &\in \op^{-r}(\Theta) + \op^{-\infty}(\Theta) = \op^{-r}(\Theta). \qedhere
    \end{align*}
\end{proof}

\begin{prop}\label{P:EllipticClosed}
    Let $A \in \op^r(\Theta)$ be $\Theta$-elliptic of order $r \geq 0$. Then the unbounded operator
    \[
    A: \Hc^{s+r}\subseteq \Hc^s \to \Hc^s
    \]
    is closed for each $s \in \R$.
\end{prop}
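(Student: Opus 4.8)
The plan is to show that on the domain $\Hc^{s+r}$ the graph norm of $A$, namely $u \mapsto \|u\|_{\Hc^s} + \|Au\|_{\Hc^s}$, is equivalent to the $\Hc^{s+r}$-norm. Since $\Hc^{s+r}$ is complete, this is exactly what is needed for $A\colon \Hc^{s+r}\subseteq\Hc^s\to\Hc^s$ to be closed. One inequality is immediate: because $A\in\op^r(\Theta)$, the map $A\colon\Hc^{s+r}\to\Hc^s$ is bounded, so the graph norm is dominated by $(1+\|A\|_{\Hc^{s+r}\to\Hc^s})\|\cdot\|_{\Hc^{s+r}}$. The real content is the reverse \emph{a priori} (elliptic) estimate
\[
\|u\|_{\Hc^{s+r}}\leq C\big(\|u\|_{\Hc^s}+\|Au\|_{\Hc^s}\big),\quad u\in\Hc^{s+r},
\]
which I would extract from a parametrix.

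Concretely, let $P\in\op^{-r}(\Theta)$ be a parametrix for $A$, so $PA = 1_{\Hc^\infty}+R_2$ with $R_2\in\op^{-\infty}(\Theta)$, available by $\Theta$-ellipticity. For $u\in\Hc^\infty$ we then have $u = PAu - R_2 u$. Since $P$ extends to a bounded map $\Hc^s\to\Hc^{s+r}$ and, because $\op^{-\infty}(\Theta)\subseteq\op^{-r}(\Theta)$, so does $R_2$, this gives
\[
\|u\|_{\Hc^{s+r}}\leq\|P\|_{\Hc^s\to\Hc^{s+r}}\,\|Au\|_{\Hc^s}+\|R_2\|_{\Hc^s\to\Hc^{s+r}}\,\|u\|_{\Hc^s},\quad u\in\Hc^\infty.
\]
Each term here is continuous in the $\Hc^{s+r}$-topology (using that $A\colon\Hc^{s+r}\to\Hc^s$ is bounded and that $\Hc^{s+r}\hookrightarrow\Hc^s$ is continuous), and $\Hc^\infty$ is dense in $\Hc^{s+r}$, so the inequality propagates to all $u\in\Hc^{s+r}$, yielding the desired estimate.

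Finally I would conclude closedness directly. Suppose $u_n\in\Hc^{s+r}$ with $u_n\to u$ in $\Hc^s$ and $Au_n\to v$ in $\Hc^s$. Applying the a priori estimate to $u_n-u_m$ shows $(u_n)$ is Cauchy in the Hilbert space $\Hc^{s+r}$, hence converges there to some $w$; continuity of $\Hc^{s+r}\hookrightarrow\Hc^s$ forces $w=u$, so $u\in\Hc^{s+r}$, and boundedness of $A\colon\Hc^{s+r}\to\Hc^s$ gives $Au=\lim_n Au_n=v$. Thus $A$ is closed on $\Hc^s$ for every $s\in\R$. I do not anticipate a genuine obstacle: the only point that requires care is that the parametrix identity holds a priori only on $\Hc^\infty$, so the elliptic estimate must be established there and then extended by density; the hypothesis $r\geq0$ is used precisely to guarantee $\Hc^{s+r}\subseteq\Hc^s$, so that viewing $A$ as an unbounded operator on $\Hc^s$ makes sense in the first place.
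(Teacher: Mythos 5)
Your proof is correct and follows essentially the same route as the paper: both establish the equivalence of the graph norm of $A$ with the $\Hc^{s+r}$-norm, with one direction from boundedness of $A:\Hc^{s+r}\to\Hc^s$ and the other from the parametrix identity $u=PAu-R_2u$, and then deduce closedness from completeness of $\Hc^{s+r}$. Your added care about first proving the elliptic estimate on $\Hc^\infty$ and extending by density, and the explicit Cauchy-sequence conclusion, are fine elaborations of the same argument.
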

\begin{proof}
    Define the graph norm of $A$ on $\dom(A) = \Hc^{s+r}$ as 
    \[
    \| x \|_{G(A)} := \|Ax \|_{s} + \| x\|_s, \quad x \in \Hc^{s+r}.
    \]
    By definition, $A$ is a closed operator if and only if $\dom(A)$ is complete with respect to this graph norm. We will show that for $\Theta$-elliptic operators, the graph norm is equivalent to $\| \cdot \|_{s+r}$, which immediately implies that $\Hc^{s+r}$ is complete with respect to the graph norm. First, we have that
    \begin{align*}
        \|Ax \|_s + \|x\|_s & \leq \|A\|_{\Hc^{s+r}\to \Hc^s} \|x\|_{s+r} + \|\Theta^{-r}\|_{\Hc^s \to \Hc^s} \|\Theta^r x \|_s\\
        & \lesssim \| x\|_{s+r}.
    \end{align*}
    Next, let $P$ be a parametrix for $A$, and take $x \in \Hc^\infty$ so that
    \begin{align*}
        \| x \|_{s+r} & \leq \| PA x \|_{s+r} + \| R_2 x\|_{s+r}\\
        & \leq \| P \|_{\Hc^s \to \Hc^{s+r}} \| Ax \|_{s} + \| R_2 \|_{\Hc^s \to \Hc^{s+r}} \| x \|_s\\
        &\lesssim \|Ax \|_s + \|x\|_s.
    \end{align*}
    The assertion of the proposition is now immediate.
\end{proof}

Elliptic operators have a property which is often called elliptic regularity, or maximal subellipticity.
\begin{prop}\label{P:ellipticreg}
    Let $A \in \op^r(\Theta)$ be a $\Theta$-elliptic operator. If $x \in \Hc^{-\infty}$ is such that $Ax \in \Hc^s$ for an $s \in \R$, then $x \in \Hc^{s+r}$.
\end{prop}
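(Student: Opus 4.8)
The plan is to run the usual parametrix argument, the only real content being that an identity valid on $\Hc^\infty$ propagates to the whole Hilbert scale by density and continuity.

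First I would fix a parametrix $P\in\op^{-r}(\Theta)$ for $A$ as in Definition~\ref{D:ThetaElliptic}, so that $PA=1_{\Hc^\infty}+R_2$ with $R_2\in\op^{-\infty}(\Theta)$. Since $x\in\Hc^{-\infty}$, by~\eqref{eq:distributions} we have $x\in\Hc^{t}$ for some $t\in\R$, and $A\in\op^r(\Theta)$ extends to a continuous map $A\colon\Hc^{t}\to\Hc^{t-r}$; taking the union of these extensions over $t$ gives a well-defined action of $A$ on $\Hc^{-\infty}$, and $Ax$ denotes the corresponding element of $\Hc^{-\infty}$. Likewise $P$ and $R_2$ act consistently on the whole scale. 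Now $P\circ A$ and $1+R_2$ both extend to continuous operators $\Hc^{t}\to\Hc^{t}$ which agree on the dense subspace $\Hc^\infty\subseteq\Hc^{t}$ (density from Definition~\ref{def:pseudocalc1}), so
\[
PAx = x + R_2 x \quad\text{as elements of }\Hc^{t}.
\]

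Next I would feed in the hypothesis $Ax\in\Hc^{s}$. Because all the bounded extensions of $P$ furnished by $P\in\op^{-r}(\Theta)$ agree on $\Hc^\infty$ and the embeddings $\Hc^{u}\hookrightarrow\Hc^{u'}$ ($u\ge u'$) are continuous, the element $P(Ax)\in\Hc^{-\infty}$ does not depend on which extension is used; applying in particular the extension $P\colon\Hc^{s}\to\Hc^{s+r}$ yields $P(Ax)\in\Hc^{s+r}$. On the other hand $R_2$ has order $-\infty$, hence $R_2 x\in\Hc^{w}$ for every $w\in\R$, in particular $R_2 x\in\Hc^{s+r}$. Combining this with the displayed identity (all three elements being the same when viewed in $\Hc^{-\infty}$) gives
\[
x = PAx - R_2 x = P(Ax) - R_2 x \in \Hc^{s+r},
\]
which is the assertion.

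The argument is short, and the only point requiring care — hence the "main obstacle", though it is purely bookkeeping — is verifying that $PAx$, $P(Ax)$ and $R_2 x$ really are the \emph{same} elements once regarded inside $\Hc^{-\infty}$, i.e.\ that the various bounded extensions of $A$, $P$, $R_2$ attached to them by $A,P,R_2\in\op(\Theta)$ are mutually compatible. This is immediate from the density of $\Hc^\infty$ in each $\Hc^{u}$ together with continuity of the scale embeddings, so no genuine difficulty arises; no appeal to ellipticity beyond the mere existence of the parametrix $P$ and the error term $R_2\in\op^{-\infty}(\Theta)$ is needed.
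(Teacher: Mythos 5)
Your proposal is correct and follows essentially the same route as the paper: apply the left-parametrix identity $PA = 1 + R_2$ to $x$, note $P(Ax)\in\Hc^{s+r}$ and $R_2x\in\Hc^{s+r}$, and conclude $x = PAx - R_2x \in \Hc^{s+r}$. The extra bookkeeping you supply about compatibility of the extensions across the Sobolev scale is implicit in the paper's conventions but is correctly handled.
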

\begin{proof}
    Take $x \in \Hc^{-\infty}$, and suppose that $Ax \in \Hc^{s}$. Then $ PAx\in \Hc^{s+r}$,
    which implies that
    \[
    x = PAx - R_2x \in \Hc^{s+r}.\qedhere
    \]
\end{proof}

Finally we will now show that if $A \in \op^r(\Theta)$, $r \geq 0$ is $\Theta$-elliptic and symmetric, then $A$ is self-adjoint with domain $\Hc^r$. 
Recall from Section~\ref{S:AppEllipticAdj} that $A^\dag|_{\Hc^\infty} \in \op^{r}(\Theta)$, and for any $s\in \R$, we have that
\begin{equation}\label{eq:DaggerAdj}
    \langle A u, v \rangle_{(\Hc^s, \Hc^{-s})} = \langle u, A^\dag v \rangle_{(\Hc^{s+r}, \Hc^{-s-r})}, \quad u \in \Hc^{s+r}, v\in \Hc^{-s}.
\end{equation}
And, by Proposition~\ref{P:symmetric}, for $r \geq 0$ we have that $A = A^\dag$ if and only if $\overline{A}^{r,0}: \Hc^r \subseteq \Hc \to \Hc$ is symmetric.

\begin{prop}\label{P:EllipticSelfAdj}
    Let $A \in \op^r(\Theta)$, $r \geq 0$, be a $\Theta$-elliptic and symmetric operator. Then $A$
    is self-adjoint with domain $\Hc^r$.
\end{prop}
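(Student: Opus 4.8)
The plan is to compute the Hilbert-space adjoint $A^{*_0}$ of the unbounded operator $\overline{A}^{r,0}\colon\Hc^r\subseteq\Hc\to\Hc$ and to show it coincides with $\overline{A}^{r,0}$ itself. One inclusion is free: since $A$ is symmetric, Proposition~\ref{P:symmetric} gives $A=A^\dag$, and then by definition of the Hilbert-space adjoint every $u\in\Hc^r$ lies in $\dom(A^{*_0})=\mathcal{D}_0$ with $A^{*_0}u=\overline{A}^{r,0}u$; moreover $\overline{A}^{r,0}$ is already closed by Proposition~\ref{P:EllipticClosed} (and is in any case automatically closed once self-adjointness is known). So the entire content is the reverse inclusion $\mathcal{D}_0\subseteq\Hc^r$.

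For this I would invoke Proposition~\ref{P:opadjoints}(3) with $s=0$, which identifies $A^{*_0}=A^\dag\big|_{\mathcal{D}_0}$, where $A^\dag\colon\Hc^{-\infty}\to\Hc^{-\infty}$ is the transpose. Hence for $u\in\mathcal{D}_0\subseteq\Hc$ one has $A^\dag u=A^{*_0}u\in\Hc$. Since $A$ is symmetric, $A=A^\dag$ as operators on $\Hc^\infty$, and this equality passes to the extensions to $\Hc^{-\infty}$ by density of $\Hc^\infty$ in each $\Hc^s$ together with continuity of the extensions; thus $A^\dag u$ is simply $A$, regarded through its bounded extension $\Hc^0\to\Hc^{-r}$, applied to $u$. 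In other words $u\in\Hc^{-\infty}$ satisfies $Au\in\Hc^0$.

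Elliptic regularity now closes the argument: by Proposition~\ref{P:ellipticreg}, applied with index $s=0$, the conditions $u\in\Hc^{-\infty}$ and $Au\in\Hc^0$ force $u\in\Hc^{0+r}=\Hc^r$. Therefore $\dom(A^{*_0})=\mathcal{D}_0\subseteq\Hc^r=\dom(\overline{A}^{r,0})$; combined with the reverse inclusion above and the fact that $A^{*_0}$ restricts to $\overline{A}^{r,0}$ on $\Hc^r$, we conclude $A^{*_0}=\overline{A}^{r,0}$, i.e.\ $A$ is self-adjoint with domain $\Hc^r$.

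I expect the only real friction to be bookkeeping the several notions of adjoint and pairing involved — the transpose $A^\dag$ on the distribution space $\Hc^{-\infty}$, the Hilbert-space adjoint $A^{*_0}$ on $\Hc$, and the canonical $\op$-extension of $A$ — and checking that the identifications among $\langle\cdot,\cdot\rangle_{(\Hc^s,\Hc^{-s})}$, $\langle\cdot,\cdot\rangle_{(\Hc^\infty,\Hc^{-\infty})}$ and the inner product of $\Hc$ are applied consistently. One can alternatively bypass Proposition~\ref{P:opadjoints}(3) and argue by hand: for $u\in\mathcal{D}_0$ and all $\phi\in\Hc^\infty$ one has $\langle\phi,Au\rangle_{(\Hc^r,\Hc^{-r})}=\langle A\phi,u\rangle_{\Hc}=\langle\phi,A^{*_0}u\rangle_{\Hc}$, and density of $\Hc^\infty$ in $\Hc^r$ together with the duality $(\Hc^r,\Hc^{-r})$ yields $Au=A^{*_0}u\in\Hc$. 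Either way, once these identifications are in place the proof reduces to a single application of elliptic regularity.
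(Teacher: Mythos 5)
Your proof is correct and follows essentially the same route as the paper: both reduce the statement to showing $\dom(A^{*_0})\subseteq\Hc^r$ by identifying $A^{*_0}$ with the extension of $A$ via the $\dagger$-adjoint and symmetry (Propositions~\ref{P:opadjoints} and~\ref{P:symmetric}), and then conclude with elliptic regularity (Proposition~\ref{P:ellipticreg}). In fact, your ``by-hand'' alternative with the pairings $\langle\cdot,\cdot\rangle_{(\Hc^r,\Hc^{-r})}$ is exactly the computation written out in the paper's proof.
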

\begin{proof}
    To prove that $A$ is self-adjoint, we need to show that the Hermitian adjoint of the closed operator $A: \Hc^{r} \subseteq \Hc^0 \to \Hc^0$, writing $A^{*_0}:= \big(\overline{A}^{r,0} \big)^*$,
    \[
    A^{*_0}: \dom(A^{*_0}) \subseteq \Hc^0 \to \Hc^0,
    \]
    has domain $\dom(A^{*_0}) = \Hc^r$. 
    Recall that, by definition,
    \begin{align*}
        \dom(A^{*_0}) := \{u \in \Hc^0 : \exists v \in \Hc^0 \text{ such that } \forall \phi \in \Hc^r \ \langle u, A\phi \rangle_{\Hc^0}=\langle v, \phi \rangle_{\Hc^0} \}.
    \end{align*}
    If $u,v \in \Hc^0$ and $\phi \in \Hc^r$, then by~\eqref{eq:DaggerAdj} and Proposition~\ref{P:symmetric},
    \begin{align*}
        \langle u, A\phi \rangle_{\Hc^0} &=\langle A^\dag u, \phi \rangle_{(\Hc^{-r}, \Hc^{r})}\\
        &= \langle A u, \phi \rangle_{(\Hc^{-r}, \Hc^{r})};\\
        \langle v, \phi \rangle_{\Hc^0} &= \langle v, \phi \rangle_{(\Hc^{-r}, \Hc^{r})}.
    \end{align*}
    Since $\Hc^{r}$ separates the points of $\Hc^{-r}$, we have that for $u, v \in \Hc^0$,
    \[
    \langle A u, \phi \rangle_{(\Hc^{-r}, \Hc^{r})} = \langle v, \phi \rangle_{(\Hc^{-r}, \Hc^{r})}, \quad \forall \phi \in \Hc^{r},
    \]
    if and only if
    \[
    Au = v \in \Hc^{-r}.
    \]
    Hence, 
    \begin{align*}
        \dom(A^{*_0}) &= \{u \in \Hc^0 : \exists v \in \Hc^0 \text{ such that }  A u = v \} \\
        &= \{u \in \Hc^0 :  A u \in \Hc^0\}.
    \end{align*}
    By elliptic regularity (Proposition~\ref{P:ellipticreg}) it follows that $\dom(A^{*_0}) = \Hc^r,$
    completing the proof.
\end{proof}

\section{Functional calculus for elliptic operators}
\label{SS:FunctCalc}
Due to the results in the previous section, we know that for a $\Theta$-elliptic symmetric operator $A \in \op^r(\Theta)$, $r \geq 0$, the operator $\overline{A}^{r,0}: \Hc^r \subseteq \Hc \to \Hc$ is self-adjoint. Hence, we can ask when the operator $f(\overline{A}^{r,0})$ defined via the Borel functional calculus is an operator in~$\op(\Theta)$. An obstacle for a naive approach is that $f(\overline{A}^{s+r,s})$ is not easily defined for $s \not =0$ as an operator on $\Hc^s$, as $\overline{A}^{s+r,s}$ is generally \textit{not} normal or symmetric as an operator on $\Hc^s$ when $\overline{A}^{r,0}$ is on $\Hc$. Indeed, Proposition~\ref{P:opadjoints} tells us that this is only the case when
\[
A=\Theta^{-2s}A\Theta^{2s}.
\]
The main idea in this section is the following. A functional calculus for the self-adjoint $\Theta$ itself as a pseudodifferential operator is not difficult to construct. Namely, for measurable $f: \R \to \C$, one can define $f(\Theta)$ as an unbounded operator on $\Hc$~\cite[Theorem~5.9]{Schmudgen2012}, with operator norms
\[
\|f(\Theta)\|_{\Hc^{s+m}\to\Hc^s} = \|\Theta^s f(\Theta)\Theta^{-s-m}\|_{\Hc \to \Hc} = \|f(x)x^{-m}\|_{L_\infty(E)},
\]
which gives a quick and appropriate condition for $f(\Theta)\in \op^m(\Theta)$ (here, $E$ is the spectral measure of $\Theta$, see Definition~\ref{def:LinftySpectral} below for a precise definition of $L_\infty(E)$). We can further exploit this by the insight that for a $\Theta$-elliptic symmetric operator $A\in \op^r(\Theta)$, $r>0$, in fact the norms $\|\Theta^s \xi\|$ and $\|\langle A\rangle^{\frac{s}{r}}\xi\|$ are equivalent, where $\langle x\rangle:=(1+|x|^2)^{\frac12}$. If we can provide that
\[
\Hc^s(\Theta) = \Hc^s(\langle A \rangle^{\frac{1}{r}}), \quad \op^s(\Theta) =\op^s(\langle A \rangle^{\frac{1}{r}}),
\]
we can then obtain a functional calculus for $A$ on the Sobolev spaces $\Hc^s(\langle A \rangle^{\frac{1}{r}})$ as easily as for $\Theta$ above, which lets us conclude that $f(A) \in \op(\Theta)$.

\begin{prop}\label{P:ReplaceTheta}
    Let $A\in \op^r(\Theta)$, $r>0$ be $\Theta$-elliptic and symmetric. Then $\langle A \rangle^{\frac{1}{r}}:=(1+A^2)^{\frac{1}{2r}}$ extends to an invertible positive self-adjoint operator on $\Hc$, and 
    \[
    \dom \Theta^s = \dom \big(\langle A \rangle^{\frac{1}{r}}\big)^s.
    \]
    The norms $\| \Theta^s \xi \|_{\Hc}$ and $\|\langle A \rangle^{\frac{s}{r}} \xi \|_{\Hc}$ are equivalent on this subspace of $\Hc$. Therefore, $\Theta$ and $\langle A \rangle^{\frac{1}{r}}$ define the same Sobolev scale
    \[
    \Hc^s(\Theta) = \Hc^s\big(\langle A \rangle^{\frac{1}{r}}\big),
    \]
    and we have
    \[
    \op^t(\Theta)  = \op^t \big(\langle A \rangle^{\frac{1}{r}}\big).
    \]
\end{prop}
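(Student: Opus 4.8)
The plan is to reduce the whole statement to two facts about the operator $B := 1+A^2$, namely $B\in\op^{2r}(\Theta)$ and $B^{-1}\in\op^{-2r}(\Theta)$, and then transfer these to $\Lambda:=\langle A\rangle^{1/r}=B^{1/(2r)}$ by scale-comparison. First the easy part: since $A$ is $\Theta$-elliptic, symmetric and of order $r\ge 0$, Proposition~\ref{P:EllipticSelfAdj} gives that $A$ is self-adjoint on $\Hc$ with domain $\Hc^r$; hence $A^2$ is self-adjoint with domain $\Hc^{2r}$ (the inclusion $\Hc^{2r}\subseteq\dom A^2$ is immediate, the reverse is elliptic regularity, Proposition~\ref{P:ellipticreg}), so $B=1+A^2$ is positive self-adjoint with $B\ge 1$, $0$ lies in its resolvent set, and $\langle A\rangle^{1/r}:=B^{1/(2r)}$ is a well-defined positive self-adjoint operator with spectrum in $[1,\infty)$, in particular boundedly invertible. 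That settles the first sentence; it remains to show $\|\Theta^s\xi\|\simeq\|\Lambda^s\xi\|$ for all $\xi\in\Hc^\infty$ and all $s\in\R$, from which the claims about $\dom\Theta^s$, $\Hc^s$ and $\op^t$ all follow.

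The heart of the argument is $B^{-1}\in\op^{-2r}(\Theta)$. Membership $B\in\op^{2r}(\Theta)$ is clear, since $A=A^\dag\in\op^r(\Theta)$ by Proposition~\ref{P:symmetric} and $\op(\Theta)$ is a filtered algebra. For the inverse I would first check that $B$ is $\Theta$-elliptic of order $2r$: if $P\in\op^{-r}(\Theta)$ is a parametrix of $A$, then $A^2P^2 = 1 + (R_1+AR_1P)$ and $P^2A^2 = 1 + (R_2+PR_2A)$ with the error terms in $\op^{-\infty}(\Theta)$, so $A^2$ is $\Theta$-elliptic with parametrix $P^2$; then $(1+A^2)P^2 = 1+S$ with $S\in\op^{-2r}(\Theta)$, and because $-2r<0$ the Neumann series $\sum_{j\ge 0}(-1)^jS^j$ converges, via the Borel lemma (Lemma~\ref{L:Borel}, exactly as in Corollary~\ref{C:EllipticParametrix}), to some $C\in\op^0(\Theta)$, yielding a right parametrix $P^2C\in\op^{-2r}(\Theta)$; a left parametrix is obtained symmetrically. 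Now the bounded extension $\overline{B}^{2r,0}\colon\Hc^{2r}\to\Hc^0$ is bijective — injective as $B\ge 1$, surjective since $0$ is in the resolvent set of the self-adjoint $B$ with $\dom B=\Hc^{2r}$ — hence has bounded inverse by the open mapping theorem, so Proposition~\ref{P:InvertElliptic} with $s_0=0$ gives $B^{-1}=B^{-1}|_{\Hc^\infty}\in\op^{-2r}(\Theta)$ with $B^{-1}B=BB^{-1}=1_{\Hc^\infty}$.

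With $B\in\op^{2r}(\Theta)$ and $B^{-1}\in\op^{-2r}(\Theta)$ in hand I would deduce the norm equivalence in two stages. For $s=2rk$, $k\in\Z$, composing the bounded maps $B^{\pm|k|}$ between $\Hc^0$ and $\Hc^{2r|k|}$ coming from $B^{\pm1}\in\op^{\pm 2r}(\Theta)$, together with $B^{k}B^{-k}=1_{\Hc^\infty}$, gives $\|\Theta^{2rk}\xi\|\simeq\|B^{k}\xi\|=\|\Lambda^{2rk}\xi\|$ on $\Hc^\infty$. For a general $s\in\R$, pick $k$ with $2rk\le s\le 2r(k+1)$ and write $s=(1-\theta)2rk+\theta\,2r(k+1)$; since both families $\{\Hc^{\bullet}(\Theta)\}$ and $\{\Hc^{\bullet}(\Lambda)\}$ are exact complex-interpolation scales (Proposition~\ref{P:Interpolation} and its proof via the spectral theorem), the identity map — bounded $\Hc^{2rk}(\Theta)\to\Hc^{2rk}(\Lambda)$ and $\Hc^{2r(k+1)}(\Theta)\to\Hc^{2r(k+1)}(\Lambda)$ by the previous case — is bounded $\Hc^{s}(\Theta)\to\Hc^{s}(\Lambda)$, and symmetrically, giving $\|\Theta^s\xi\|\simeq\|\Lambda^s\xi\|$ for every $s$. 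Since $\Hc^\infty$ is a common core and these spaces sit compatibly inside $\Hc^{-\infty}$, this upgrades to $\dom\Theta^s=\dom\langle A\rangle^{s/r}$ for $s\ge 0$ and, by completion, to $\Hc^s(\Theta)=\Hc^s(\langle A\rangle^{1/r})$ for all $s$; then $\op^t(\Theta)=\op^t(\langle A\rangle^{1/r})$ is immediate, membership in $\op^t$ depending only on boundedness $\Hc^{s+t}\to\Hc^s$, hence only on the scale up to norm equivalence.

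The main obstacle is the second paragraph: upgrading the elementary observation that $B$ is a positive self-adjoint operator of ``order $2r$'' to the genuine statement $B^{-1}\in\op^{-2r}(\Theta)$, i.e.\ establishing that $1+A^2$ is $\Theta$-elliptic and invertible \emph{within} the calculus — this is where $\Theta$-ellipticity of $A$, the Borel lemma, and Proposition~\ref{P:InvertElliptic} all come into play. A secondary technical point is the interpolation step: as $2r$ need not be rational, the lattice $2r\Z$ does not exhaust $\R$, so one genuinely needs interpolation rather than mere composition to cover all $s$, and some care is required with the completions defining $\Hc^s$ for $s<0$.
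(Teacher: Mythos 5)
Your proposal is correct, and its first half — showing $1+A^2$ is $\Theta$-elliptic via the parametrix $P^2$, then invoking self-adjointness with domain $\Hc^{2r}$ and Proposition~\ref{P:InvertElliptic} to get $(1+A^2)^{-1}\in\op^{-2r}(\Theta)$, hence $(1+A^2)^k\in\op^{2kr}(\Theta)$ for all $k\in\Z$ — is exactly the paper's argument. Where you diverge is the passage to non-integer exponents: the paper reduces everything to boundedness on $\Hc$ of the single operator $(1+A^2)^{\alpha}\Theta^{-2\alpha r}$ and proves this by a hand-rolled Hadamard three-line argument on $F(z)=\langle x,(1+A^2)^{mz}\Theta^{-2mzr}y\rangle$ for $x,y\in\Hc^\infty$, spending most of its effort on verifying holomorphy (basis expansion, Fubini, Morera) and the boundary bounds; you instead establish the norm equivalence at the lattice points $s\in 2r\Z$ and interpolate the identity map between the two Hilbert scales. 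The two routes are morally the same (complex interpolation of Hilbert scales is the three-line theorem in disguise), but yours buys brevity at the cost of two points you should make explicit: Proposition~\ref{P:Interpolation} as stated concerns maps within a single scale $\{\Hc^s(\Theta)\}$, so interpolating the identity from $\{\Hc^s(\Theta)\}$ to $\{\Hc^s(\langle A\rangle^{1/r})\}$ requires the two-scale form of the Stein--Weiss/complex-interpolation theorem, which the proof of that proposition (spectral theorem plus weighted $L_2$) does deliver but the statement does not; and for $2rk\leq 0$ the ``identity map'' must be read as the continuous extension from $\Hc^\infty$, with closedness of $B^{k}$ and density of $\Hc^\infty$ in graph norm used to promote the equivalence from $\Hc^\infty$ to the full spaces — bookkeeping the paper sidesteps by working throughout with pairings of fixed vectors in $\Hc^\infty$. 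Finally, note that both you and the paper apply Corollary~\ref{C:EllipticParametrix} with an error term of order $-2r$, which for $r<\tfrac12$ is larger than the ``$-1$'' in its statement; this is harmless, since the Borel-lemma argument only needs the error to have strictly negative order, but it is worth a remark rather than a silent citation.
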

\begin{proof}
The first statement follows from Proposition~\ref{P:EllipticSelfAdj}. For the remaining statements, it suffices to prove that 
    \[
    (1+A^2)^{\alpha} \Theta^{-2\alpha r}
    \]
    extends to a bounded operator on $\Hc$ for all $\alpha \in \R$, as this would imply for $\xi \in \Hc^\infty$,
    \[
    \|\langle A \rangle^{\frac{s}{r}} \xi \|_{\Hc} \leq \| \langle A \rangle ^{\frac{s}{r}} \Theta^{-s}\|_\infty \| \Theta^s \xi\|_{\Hc} \lesssim \| \Theta^s \xi\|_{\Hc},
    \]
    and an analogous estimate in the other direction.

Let $P$ be a parametrix for $A$, so that $AP = 1 + R$ with $R \in \op^{-\infty}(\Theta)$. Since
    \[
    (1+A^2)P^2 = P^2 + A(1+R)P = 1 + P^2 + R + ARP
    \]
    and $P^2 + R + ARP \in \op^{-2r}(\Theta)$ and similarly for $(1+A^2)P^2$, it follows that the operator $1+A^2$ is also $\Theta$-elliptic due to Corollary~\ref{C:EllipticParametrix}. Since $A$ is self-adjoint with domain $\Hc^r$, applying Proposition~\ref{P:InvertElliptic} gives that  $(1+A^2)^{-1} \in \op^{-2r}(\Theta)$. We therefore have $(1+A^2)^k \in \op^{2kr}(\Theta)$, $k \in \Z$. This in turn gives that $\Hc^\infty \subseteq \dom (1+A^2)^z$ for any $z\in \C$.
    
    We use the Hadamard three-line theorem, so define the function
    \[
    F(z) := \langle x, (1+A^2)^{mz} \Theta^{-2 mz r} y \rangle_{\Hc}, \quad z \in \C,
    \]
    where $m \in \Z$ and $x, y \in \Hc^\infty$ are fixed.     
    Let $\{e_n\}_{n\in \N}\subseteq \Hc^\infty$ be an orthonormal basis of $\Hc$, then
    \begin{equation}\label{E:basis_expansion_of_F}
    F(z) = \sum_{n=0}^\infty \langle x, (1+A^2)^{mz} e_n \rangle_\Hc \langle e_n , \Theta^{-2mzr} y\rangle_{\Hc}.
    \end{equation}
    Using the dominated convergence theorem, it can be seen that $z \mapsto  \langle x, (1+A^2)^{mz} e_n \rangle_\Hc$ and $z \mapsto \langle e_n , \Theta^{-2mzr} y\rangle_{\Hc}$ are continuous maps. Applying the Cauchy--Schwarz inequality to the series \eqref{E:basis_expansion_of_F} yields
    \[
        \sum_{n=0}^\infty |\langle x,(1+A^2)^{mz}e_n\rangle_{\Hc}||\langle e_n,\Theta^{-2mzr}y\rangle_{\Hc}| \leq \|(1+A^2)^{m\overline{z}}x\|_{\Hc}\|\Theta^{-2mzr}y\|_{\Hc},
    \]
    which is uniformly bounded on compact subsets of $\C$ due to the continuity of the right-hand side. 
    We can therefore apply the dominated convergence theorem again to deduce that $F(z)$ is a continuous function itself. 
    Furthermore, this uniform boundedness yields through Fubini's theorem that if $\gamma$ is a closed loop
    in $\mathbb{C}$ then
    \[
        \int_{\gamma} F(z)\,dz = \sum_{n=0}^\infty \int_{\gamma}\langle x,(1+A^2)^{mz}e_n\rangle_{\Hc}\langle e_n,\Theta^{-2mzr}y\rangle_{\Hc}\,dz.
    \]
    Using Fubini's theorem once more,
    we have that 
    \[
    \int_\gamma F(z)\, dz = \sum_{n=0}^\infty  \int_{\sigma(\Theta^{-2r})}\int_{\sigma(1+A^2)} \int_{\gamma} (\lambda \mu)^{mz} \, dz \langle x,dE^{1+A^2}e_n\rangle_{\Hc} \langle e_n,dE^{\Theta^{-2r}}y\rangle_{\Hc} = 0,
    \] 
    so that we can conclude by Morera's theorem that $F(z)$ is holomorphic.    
    
    Since $1+A^2$ and $\Theta$ are positive operators and
    \[
    \sup_{x > 0} |x^{it}| = 1,
    \]
    it follows from the Borel functional calculus that for $s \in \R,$
    \begin{align*}
        |F(is)| &= |\langle  (1+A^2)^{-ims} x, \Theta^{-2 imsr} y \rangle_{\Hc}|\\
        &\leq \| x \|_{\Hc} \|y\|_{\Hc}.
    \end{align*} 
    Likewise,
    \begin{align*}
        |F(1+is)| &= |\langle  (1+A^2)^{-ims} x, (1+A^2)^m \Theta^{-2mr} \Theta^{-2imsr} y \rangle_{\Hc}|\\
        &\leq \| (1+A^2)^m \Theta^{-2mr} \|_{\Hc \to \Hc} \|x \|_{\Hc} \|y\|_{\Hc},
    \end{align*}
    which we know to be finite since $m$ is an integer.

    The Hadamard three-line theorem (see e.g. \cite[Lemma 1.1.2]{BerghLofstrom1976}) now gives that for $\alpha \in (0,1)$,
    \begin{align*}
        |F(\alpha)| &\leq \max_{s\in \R} |F(\alpha + is)|\\
        &\leq \big( \max_{s\in \R} |F(is)|\big)^{(1-\alpha)} \big( \max_{s\in \R} |F(1 + is)|\big)^\alpha\\
        &\leq  \| (1+A^2)^m \Theta^{-2mr} \|_{\Hc \to \Hc}^\alpha \|x\|_{\Hc} \|y\|_{\Hc}.
    \end{align*}
    Hence, with $\alpha \in (0,1)$ and $m \in \Z$,
    \begin{align*}
        \|(1+A^2)^{m\alpha} \Theta^{-2m \alpha r}\|_{\Hc\to \Hc} \leq  \| (1+A^2)^m \Theta^{-2mr} \|_{\Hc \to \Hc}^\alpha,
    \end{align*}
    which proves that $(1+A^2)^{m\alpha} \Theta^{-2m\alpha r}$ extends to a bounded operator on $\Hc$ for all $m\in \Z$, $\alpha \in [0,1]$.
\end{proof}

\begin{defn}\label{def:LinftySpectral}
Let $E$ be a spectral measure on $\R$ with the Borel sigma algebra. For a Borel measurable function $f:\R\to\C$ we define the essential supremum seminorm
$$\|f\|_{L_\infty(E)}:=\sup\{y\in\R~:~E(|f|^{-1}((y,\infty)))=0\},$$
which defines $L_\infty(E)$ in the usual way, namely as the quotient of the set of measurable functions with finite seminorm, by the set of those of zero seminorm. In the same way we define $L_\infty^\beta(E)$ by the seminorm
$$\|f\|_{L_\infty^\beta(E)}:=\|x\mapsto f(x)\langle x\rangle^{-\beta}\|_{L_\infty(E)},$$
for any $\beta\in\R$, where $\langle x \rangle = (1+|x|^2)^{1/2}$.
\end{defn}

\begin{thm}\label{T:MainFunctCalc}
   Let $A\in \op^r(\Theta)$, $r >0$, be $\Theta$-elliptic and symmetric, and let $E$ denote its spectral measure. If $f \in L^\beta_\infty(E), \beta \in \R,$ then 
        \[
        f(\overline{A}^{r,0}) \in \op^{\beta r}(\Theta),
        \]
        and we simply write $f(A) := f(\overline{A}^{r,0})$.
        More precisely,
        \[
        \|f(A)\|_{\Hc^{s+\beta r} \to \Hc^s} \leq C_{s,A} \|f \|_{L^\beta_\infty(E)}.
        \]
\end{thm}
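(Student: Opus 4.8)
The plan is to reduce everything to the case where the reference operator is $\langle A\rangle^{1/r}$ itself, for which the functional calculus is essentially tautological. By Proposition~\ref{P:ReplaceTheta}, the operator $\Theta_A := \langle A\rangle^{1/r} = (1+A^2)^{1/(2r)}$ is positive, self-adjoint and invertible, and defines the same Sobolev scale and the same classes $\op^t$ as $\Theta$, with $\|\Theta^s\xi\|$ and $\|\Theta_A^s\xi\|$ equivalent on $\Hc^s$ for every $s$ (with constants depending on $s$ and $A$); in particular $\Hc^\infty(\Theta) = \Hc^\infty(\Theta_A)$. It therefore suffices to prove that $f(A)\in\op^{\beta r}(\Theta_A)$ with $\|f(A)\|_{\Hc^{s+\beta r}(\Theta_A)\to\Hc^s(\Theta_A)}\leq\|f\|_{L_\infty^\beta(E)}$; transporting this estimate back through the two norm equivalences (in degrees $s$ and $s+\beta r$) produces the constant $C_{s,A}$ in the statement.

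The core of the argument then takes place entirely inside the Borel functional calculus of $\bar A := \overline{A}^{r,0}$. First I would check that $f(A)$ maps $\Hc^\infty$ into $\Hc^\infty$: since $|f(x)|\leq\|f\|_{L_\infty^\beta(E)}\langle x\rangle^\beta$ $E$-a.e.\ and $\Hc^\infty = \bigcap_{t\geq 0}\dom\langle A\rangle^{t}$ (relative to $\Theta_A$) is contained in $\dom\langle A\rangle^{z}$ for every $z\in\C$, one has $\Hc^\infty\subseteq\dom f(A)$, and for $\xi\in\Hc^\infty$ and $s\geq 0$ the element $\langle A\rangle^{s/r}f(A)\xi = g_s(A)\xi$ is well defined because $|g_s(x)|\lesssim\langle x\rangle^{s/r+\beta}$; hence $f(A)\xi\in\bigcap_{s\geq 0}\dom\langle A\rangle^{s/r}=\Hc^\infty$. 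Now fix $s\in\R$ and $\xi\in\Hc^\infty$, and set $\eta := \langle A\rangle^{(s+\beta r)/r}\xi\in\Hc^\infty$, so that $\|\eta\|_\Hc = \|\Theta_A^{s+\beta r}\xi\|_\Hc = \|\xi\|_{s+\beta r}$ and $\langle A\rangle^{-(s+\beta r)/r}\eta = \xi$. By multiplicativity of the functional calculus (applicable since $\eta\in\Hc^\infty$ lies in the domain of each of the functions-of-$\bar A$ involved),
\[
\|f(A)\xi\|_s = \big\|\langle A\rangle^{s/r}f(A)\langle A\rangle^{-(s+\beta r)/r}\eta\big\|_\Hc = \|h(A)\eta\|_\Hc, \qquad h(x):= f(x)\langle x\rangle^{-\beta}.
\]
Since $h(A)\in B(\Hc)$ with $\|h(A)\|_{B(\Hc)} = \|h\|_{L_\infty(E)} = \|f\|_{L_\infty^\beta(E)}$, this gives $\|f(A)\xi\|_s\leq\|f\|_{L_\infty^\beta(E)}\|\xi\|_{s+\beta r}$ for all $\xi\in\Hc^\infty$. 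As $\Hc^\infty$ is dense in $\Hc^{s+\beta r}$, $f(A)$ extends to a bounded map $\Hc^{s+\beta r}\to\Hc^s$ of norm $\leq\|f\|_{L_\infty^\beta(E)}$; since this holds for every $s$ and $f(A)$ preserves $\Hc^\infty$, we conclude $f(A)\in\op^{\beta r}(\Theta_A)$, and the reduction of the first paragraph finishes the proof.

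The only delicate point — and the place I expect to need care — is the functional-calculus bookkeeping in the second paragraph: one must know that $\Hc^\infty$ is contained in $\dom\langle A\rangle^{z}$ and invariant under $\langle A\rangle^{z}$ for all $z\in\C$, and that the composite $\langle A\rangle^{s/r}\circ f(A)\circ\langle A\rangle^{-(s+\beta r)/r}$ really agrees with $h(A)$ on all of $\Hc^\infty$ (not merely on the smaller natural domain of the composition). All of this follows from facts already assembled in the proof of Proposition~\ref{P:ReplaceTheta} — namely that $(1+A^2)^{k}\in\op^{2kr}(\Theta)$ for every $k\in\Z$, whence $\Hc^\infty\subseteq\dom(1+A^2)^{z}$ for all $z\in\C$ — combined with the spectral theorem for the self-adjoint operator $\bar A$. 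Everything else is routine.
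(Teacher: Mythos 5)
Your proposal is correct and follows essentially the same route as the paper: reduce via Proposition~\ref{P:ReplaceTheta} to the reference operator $\langle A\rangle^{1/r}$, observe that on that scale the estimate reduces to $\|f(\overline{A}^{r,0})\langle A\rangle^{-\beta}\|_{\Hc\to\Hc}=\|f\|_{L^\beta_\infty(E)}$ because all operators involved are functions of the same self-adjoint operator, and then transport the bound back to the $\Theta$-Sobolev norms, which is where $C_{s,A}$ appears. Your extra bookkeeping (invariance of $\Hc^\infty$ and the domain checks via $(1+A^2)^k\in\op^{2kr}(\Theta)$) is sound and merely makes explicit what the paper leaves implicit.
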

\begin{proof}
    Let $A \in \op^r(\Theta)$, $r>0$ be $\Theta$-elliptic and symmetric. Using Proposition~\ref{P:ReplaceTheta}, we replace $\Theta$ by $\langle A \rangle^{\frac{1}{r}}$ so that $A \in \op^r(\langle A \rangle^{\frac{1}{r}})$ is $\Theta$-elliptic and symmetric. By Proposition~\ref{P:EllipticSelfAdj}, the operator
    \[
    \overline{A}^{r,0}: \Hc^r(\langle A \rangle^{\frac{1}{r}}) \subseteq \Hc \to \Hc
    \]
    is self-adjoint; we denote its spectral measure by $E$. Then for $f \in L^\beta_\infty(E)$, using Borel functional calculus to define $f(\overline{A}^{r,0})$, we have
    \begin{align*}
        \|\langle A \rangle^{\frac{s}{r}} f(\overline{A}^{r,0})\langle A \rangle^{-\frac{s}{r} - \beta}\|_{\Hc \to \Hc} = \| f(\overline{A}^{r,0})\langle A \rangle^{- \beta}\|_{\Hc \to \Hc} = \|f\|_{L^\beta_\infty(E)}<\infty,
    \end{align*}
    which shows that $f(\overline{A}^{r,0})|_{\Hc^\infty} \in \op^{\beta r}(\langle A \rangle^{\frac{1}{r}})$. Converting this estimate back into an estimate on the spaces $\Hc^s(\Theta)$ introduces the constant $C_{s,A}$.
\end{proof}

Theorem~\ref{T:MainFunctCalc} has a converse in the following sense. If $A \in \op^r(\Theta)$, $r>0$ is an arbitrary $\Theta$-elliptic symmetric operator and if $f: \R \to \C$ is such that $f(A) \in \op^{\beta r}(\Theta)$, then the proof of Proposition~\ref{P:ReplaceTheta} gives that $f(A) (1+A^2)^{-\beta/2}$ is a bounded operator on $\Hc$. This happens if and only if $f \langle x \rangle^{-\beta} \in L^0_\infty(E)$~\cite[Theorem~5.9]{Schmudgen2012}, i.e. $f\in L^\beta_\infty(E)$.

\begin{cor}\label{C:IndepNorm}
    If $A\in \op^r(\Theta), r>0$ is symmetric and $\Theta$-elliptic and if $f :\R \to \C$ is a bounded Borel measurable function, then for any $t\in \R$ we have
    \[
    \|f(tA)\|_{\Hc^s \to \Hc^s} \leq C_{s,A} \sup_{x\in \R}|f(x)|,
    \]
    independent of $t\in \R$.
\end{cor}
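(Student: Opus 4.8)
The plan is to deduce this directly from Theorem~\ref{T:MainFunctCalc} by absorbing the scaling parameter $t$ into the \emph{function} rather than into the operator, so that the functional-calculus estimate is always applied to one fixed operator. First I would fix $s\in\R$ and, for each $t\in\R$, introduce the auxiliary function $g_t\colon\R\to\C$, $g_t(x):=f(tx)$. Since $f$ is bounded and Borel measurable and $x\mapsto tx$ is continuous, $g_t$ is again bounded and Borel measurable, and clearly $\sup_{x\in\R}|g_t(x)|\leq\sup_{x\in\R}|f(x)|$ for every $t$ (for $t=0$ the function $g_0$ is the constant $f(0)$).

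Next I would invoke the composition rule for the Borel functional calculus of the self-adjoint operator $\overline{A}^{r,0}$ (as in~\cite[Theorem~5.9]{Schmudgen2012}): because $g_t=f\circ(t\,\cdot\,)$, one has $g_t(\overline{A}^{r,0})=f\big(t\,\overline{A}^{r,0}\big)=f(tA)$. Thus the whole family $\{f(tA)\}_{t\in\R}$ is the family $\{g_t(A)\}_{t\in\R}$, obtained from the functional calculus of the \emph{same} operator $A$.

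Now I would apply Theorem~\ref{T:MainFunctCalc} with $\beta=0$ to the fixed operator $A$. Since each $g_t$ is bounded it lies in $L_\infty^0(E)=L_\infty(E)$, where $E$ denotes the spectral measure of $\overline{A}^{r,0}$, and $\|g_t\|_{L_\infty^0(E)}\leq\sup_{x\in\R}|g_t(x)|\leq\sup_{x\in\R}|f(x)|$. The theorem gives $g_t(A)\in\op^0(\Theta)$, hence a bounded operator on each $\Hc^s$, with
\[
\|f(tA)\|_{\Hc^s\to\Hc^s}=\|g_t(A)\|_{\Hc^s\to\Hc^s}\leq C_{s,A}\,\|g_t\|_{L_\infty^0(E)}\leq C_{s,A}\sup_{x\in\R}|f(x)|,
\]
where $C_{s,A}$ is the constant produced by Theorem~\ref{T:MainFunctCalc} for $A$; it depends only on $s$ and $A$ and not on $t$, since $t$ never entered the operator side of the estimate.

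There is essentially no serious obstacle here; the only point that requires care is precisely this last one: one must push the scaling $t$ into the symbol and apply the functional-calculus estimate to $A$ itself, rather than applying Theorem~\ref{T:MainFunctCalc} directly to $tA$, since then the resulting constant $C_{s,tA}$ would a priori degenerate as $t\to0$ or $t\to\infty$. One also uses implicitly that $C_{s,A}\geq1$ — which follows by taking $f\equiv1$ in Theorem~\ref{T:MainFunctCalc} — so that the degenerate case $t=0$, where $f(0\cdot A)=f(0)1_{\Hc}$, is covered by the same inequality.
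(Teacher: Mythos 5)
Your proposal is correct and follows essentially the same route as the paper: the paper's proof also sets $f_t(x):=f(tx)$ and estimates $\|f(tA)\|=\|f_t(A)\|$ via the functional-calculus bound of Theorem~\ref{T:MainFunctCalc} applied to the fixed operator $A$, so that only the $t$-independent constant $C_{s,A}$ (from passing between the $\Theta$- and $\langle A\rangle^{1/r}$-Sobolev norms) appears. Your extra remarks on the composition rule and the $t=0$ case are fine but not needed beyond what the paper records.
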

\begin{proof}
    Like in the proof Theorem~\ref{T:MainFunctCalc}, we have
    \begin{align*}
        \|f(tA)\|_{\Hc^s(\langle A \rangle^{\frac{1}{r}}) \to \Hc^s(\langle A \rangle^{\frac{1}{r}})} & = \|f_t\|_{L^0_\infty(E)} \leq \sup_{x\in \R}|f(x)|,
    \end{align*}
    where we wrote $f_t(x) := f(tx)$.
\end{proof}

We say that two normal (potentially unbounded) operators $A, B$ strongly commute if all their respective spectral projections commute.

The functional calculus constructed in Theorem~\ref{T:MainFunctCalc} can easily be extended to a larger class of operators. For example, on $\R^d$  we have that $i\frac{d}{dx}$ is not $(1-\Delta)^{\frac12}$-elliptic, but it does commute strongly with a $(1-\Delta)^{\frac12}$-elliptic symmetric operator, namely $(1-\Delta)^{\frac12}$ itself. The following proposition shows that a functional calculus for $i\frac{d}{dx}$ does exist in $\op(1-\Delta)^{\frac12}$ for this reason.
\begin{prop}\label{P:BigFunctCalc}
    Let $A$ be a self-adjoint operator on $\Hc$ with spectral measure $E$. If there exists a $\Theta$-elliptic symmetric operator $H \in \op^h(\Theta)$, $h>0$ such that $A$ strongly commutes with $\overline{H}^{h,0}: \Hc^h \subseteq \Hc \to \Hc$, then for $f \in L^0_\infty(E)$ we have that $f(A) \in \op^0(\Theta)$. If $A \in \op^r(\Theta)$ itself for some $r \in \R$, we have that $f(A) \in \op^{\beta r}(\Theta)$ for $f \in L^\beta_\infty(E)$, $\beta \geq 0$.
\end{prop}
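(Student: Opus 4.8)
The plan is to reduce everything to the single operator $B := \langle H \rangle^{1/h} = (1+H^2)^{1/(2h)}$. Since $H \in \op^h(\Theta)$ is $\Theta$-elliptic, symmetric and of positive order $h$, Proposition~\ref{P:ReplaceTheta} applies to $H$: the operator $B$ is positive, invertible and self-adjoint, and it defines the \emph{same} Sobolev scale and the same pseudodifferential classes as $\Theta$, i.e. $\Hc^s(\Theta) = \Hc^s(B)$ and $\op^t(\Theta) = \op^t(B)$ for all $s,t \in \R$. Moreover, because $A$ strongly commutes with $\overline{H}^{h,0}$, it strongly commutes with $B$, hence with every Borel function of $B$, in particular with $B^s$ for all $s \in \R$. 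From here on I work with $B$ in place of $\Theta$ and freely commute functions of $A$ past powers of $B$.

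\emph{The case $f \in L^0_\infty(E)$.} Then $f(A)$ is bounded on $\Hc$ with $\|f(A)\|_{\Hc\to\Hc} = \|f\|_{L^0_\infty(E)}$, and it commutes with every $B^s$; hence it preserves $\dom B^s = \Hc^s(B)$ for $s \ge 0$, and in particular $\Hc^\infty$. For $\xi \in \Hc^\infty$ and any $s \in \R$,
\[
\|f(A)\xi\|_{\Hc^s(B)} = \|B^s f(A)\xi\|_\Hc = \|f(A)\,B^s\xi\|_\Hc \le \|f\|_{L^0_\infty(E)}\,\|\xi\|_{\Hc^s(B)},
\]
where for $s<0$ the middle identity holds in $\Hc$ since there $B^s$ is bounded and commutes with the bounded operator $f(A)$. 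By density of $\Hc^\infty$ in each $\Hc^s(B)$, $f(A)$ extends to a bounded map $\Hc^s(B)\to\Hc^s(B)$ for every $s$, so $f(A) \in \op^0(B) = \op^0(\Theta)$.

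\emph{The case $A \in \op^r(\Theta)$, $f \in L^\beta_\infty(E)$, $\beta \ge 0$.} Factor through the Borel calculus of $A$:
\[
f(A) = g(A)\,\langle A\rangle^\beta, \qquad g := f\,\langle \cdot \rangle^{-\beta} \in L^0_\infty(E),
\]
an identity valid on $\Hc^\infty \subseteq \dom\langle A\rangle^\beta$ because $g$ is bounded. By the previous case $g(A) \in \op^0(\Theta)$, so it suffices to show $\langle A\rangle^\beta \in \op^{\beta r}(B)$. From $A \in \op^r(B)$ the operator $A B^{-r}$ is bounded on $\Hc$; passing to the joint spectral calculus of the strongly commuting pair $(A,B)$, whose joint spectral measure lives on $\R\times\sigma(B)$ with $\sigma(B)\subseteq[1,\infty)$ (as $B = \langle H\rangle^{1/h}\ge 1$), this reads $|a| \le C_0\,b^r$ a.e. Combined with $b\ge1$ this gives $\langle a\rangle^2 = 1+a^2 \le (1+C_0^2)\,b^{2r}$, hence $\langle a\rangle^\beta \le (1+C_0^2)^{\beta/2} b^{\beta r}$, i.e. $\langle A\rangle^\beta B^{-\beta r}$ is bounded on $\Hc$. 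Since $\langle A\rangle^\beta$ commutes with all $B^s$, the same commuting computation as above yields $\|\langle A\rangle^\beta\xi\|_{\Hc^s(B)} \le \|\langle A\rangle^\beta B^{-\beta r}\|_{\Hc\to\Hc}\,\|\xi\|_{\Hc^{s+\beta r}(B)}$ for $\xi\in\Hc^\infty$, so $\langle A\rangle^\beta\in\op^{\beta r}(B)$, and therefore $f(A) = g(A)\langle A\rangle^\beta \in \op^0(\Theta)\cdot\op^{\beta r}(\Theta) \subseteq \op^{\beta r}(\Theta)$ by the filtered-algebra property.

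\emph{Main obstacle.} There is essentially no hard analysis left once the reduction to $B$ is in place, since the difficult estimate was already carried out in Proposition~\ref{P:ReplaceTheta}; the care needed is purely bookkeeping. One must check that $f(A)$ and $\langle A\rangle^\beta$ really map $\Hc^\infty$ into itself and that commuting them past $B^s$ is legitimate for negative $s$ (this is where one uses that $\Hc^s(B)$ is defined as a completion and that $B^s$ is then bounded), and one must note that the inequality $\langle a\rangle \lesssim b^{r}$ genuinely exploits $b \ge 1$, so the natural range of the second assertion is $\beta r \ge 0$; for $r \le 0$ the operator $A$ is already bounded and $L^\beta_\infty(E) = L^0_\infty(E)$, so the first part gives $f(A)\in\op^0(\Theta)$ directly.
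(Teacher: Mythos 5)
Your proof is correct, and its first half follows essentially the paper's own route: both arguments invoke Proposition~\ref{P:ReplaceTheta} to replace $\Theta$ by $B=\langle H\rangle^{1/h}$, so that $A$ strongly commutes with the operator generating the Sobolev scale, and then obtain $f(A)\in\op^0(\Theta)$ for $f\in L^0_\infty(E)$ by commuting $f(A)$ past powers of $B$ (the paper does this for integer exponents and then interpolates via Proposition~\ref{P:Interpolation}; you do it directly for all real $s$, which is equally valid and slightly cleaner). Where you genuinely diverge is the second assertion: the paper re-runs the Hadamard three-line argument of Proposition~\ref{P:ReplaceTheta} with $A$ in place of $1+A^2$'s elliptic source to get $(1+A^2)^{\alpha}\Theta^{-2\alpha r}$ bounded for $\alpha\ge0$ and then factors $f(A)$ as you do, whereas you exploit the strong commutativity a second time: in the joint spectral calculus of the commuting pair $(A,B)$, boundedness of $AB^{-r}$ yields the pointwise bound $|a|\le C_0 b^{r}$, hence $\langle a\rangle^{\beta}\lesssim b^{\beta r}$ using $b\ge1$ and $\beta\ge0$. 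This replaces complex interpolation by an elementary measure-theoretic argument, and it is available precisely because $A$ and $B$ commute — the hypothesis the three-line machinery in Proposition~\ref{P:ReplaceTheta} is designed to do without. The one step you assert rather than prove, namely that $\|A\xi\|\le C\|B^{r}\xi\|$ on $\Hc^\infty$ forces $|a|\le C b^{r}$ a.e.\ for the joint spectral measure, deserves a line (apply the joint spectral projection onto $\{|a|>(C+\varepsilon)b^{r},\,|a|\le N,\,b\le N\}$, whose range lies in $\Hc^\infty$ since $b$ is bounded there, and derive a contradiction unless the projection vanishes), but it is standard. Finally, your caveat about $r\le0$ is apt rather than a defect: for $\beta r<0$ the literal conclusion fails (take $f\equiv1$, since the identity is not in $\op^{\beta r}(\Theta)$ for $\beta r<0$), and the paper's own argument likewise requires $r\ge0$ because $1+A^2\in\op^{2r}(\Theta)$ fails for $r<0$; so restricting the second assertion to $r\ge0$ is the correct scope for both proofs.
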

\begin{proof}
    In light of Proposition~\ref{P:ReplaceTheta}, we can assume without loss of generality that $H = \Theta$. If $f \in L^0_\infty(E)$, then $f(A):\Hc \to \Hc$ is a bounded operator, and for $\xi \in \Hc^\infty$ we have
    \[
    \| f(A) \xi\|_{\Hc^k} = \|\Theta^k f(A) \xi\|_{\Hc} \leq \|f(A)\|_\infty \|\xi\|_{\Hc^k}, \quad k \in \Z, 
    \]
    which shows that $f(A) \in \op^0(\Theta)$ through interpolation (Proposition~\ref{P:Interpolation}). The second part of the proposition is proved similarly, after the observation that the Hadamard three-line argument in the proof of Proposition~\ref{P:ReplaceTheta} goes through for $A$ if $m \in \N$, i.e.\\ $(1+A^2)^\alpha \Theta^{-2\alpha r}$ is bounded for $\alpha \geq 0$. 
\end{proof}

Let us now compare the functional calculus of Theorem~\ref{T:MainFunctCalc} with other examples. For classical pseudodifferential operators, pseudodifferential operators in the Beals--Fefferman calculus, and a generalisation thereof on manifolds, general functions in $f\in L_\infty^\beta(E)$ are much too rough to guarantee that $f(T)$ is again a pseudodifferential operator of the same class~\cite{Strichartz1972,Bony2013}. To emphasise, our results provide that for such rough functions $f$, the operator $f(T)$ is well-defined and maps boundedly between appropriate Sobolev spaces, but nothing more. The typical function class that allows concluding that $f(T)$ is again a pseudodifferential operator of the right type, is the following~\cite{Strichartz1972,Bony2013}. 

\begin{defn}\label{def:Sbeta}
    For $I\subseteq \R$ an interval and $\beta \in \R$, we define $S^\beta(I)$ as the class of smooth functions $f: I \to \C$ such that
\[
\|f\|_{S^\beta(I), k} := \sup_{x\in I} |f^{(k)}(x)| \langle x \rangle^{k-\beta} < \infty, \quad k \in \N.
\]
The quantities above are seminorms.
\end{defn}

From this perspective, the operator class $\OP(\Theta)$ (recall Definition~\ref{D:AbstractPSDO}) behaves more like typical pseudodifferential operators.

\begin{thm}\label{T:FunctCalcOPEll}
    Let $A\in \OP^r(\Theta)$, $r >0$, be $\Theta$-elliptic and symmetric, with spectrum $\sigma(A)$. If $f \in S^\beta(\sigma(A)), \beta \in \R,$ then the operator $f(A) \in \op^{\beta r}(\Theta)$ defined in Theorem~\ref{T:MainFunctCalc}, satisfies 
        \[
        f(A) \in \bigcap_{\varepsilon>0}\OP^{\beta r+\varepsilon}(\Theta).
        \]
\end{thm}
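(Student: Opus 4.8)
The plan is to check the defining condition of $\OP^{\beta r+\varepsilon}(\Theta)$ directly: that $\delta_\Theta^n(f(A))\in\op^{\beta r+\varepsilon}(\Theta)$ for every $n\ge 0$ and every $\varepsilon>0$. The case $n=0$ is Theorem~\ref{T:MainFunctCalc} (even with $\varepsilon=0$), so the substance is controlling the iterated commutators. Everything rests on a Helffer--Sjöstrand (almost analytic) representation of $f(A)$ by resolvents, because commutators with $\Theta$ pass through such a representation cheaply.

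\emph{Core case $\beta\in(-1,0)$.} By Proposition~\ref{P:EllipticSelfAdj} the operator $\overline A^{r,0}$ is self-adjoint on $\Hc$, with $\sigma(A)\subseteq\R$. Since $f\in S^\beta(\sigma(A))$, extend $f$ to $\R$ within $S^\beta$ and pick an almost analytic extension $\tilde f$, supported in $\{|\operatorname{Im}z|\le c\langle\operatorname{Re}z\rangle\}$, with $|\bar\partial\tilde f(z)|\le C_N|\operatorname{Im}z|^N\langle\operatorname{Re}z\rangle^{\beta-1-N}$ for all $N$. The Helffer--Sjöstrand formula gives $f(A)=\tfrac1\pi\int_{\C}\bar\partial\tilde f(z)\,(z-A)^{-1}\,d\lambda(z)$, convergent as a Bochner integral since $\beta<0$. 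By Proposition~\ref{P:ReplaceTheta} I measure $\op$-orders using $\langle A\rangle^{1/r}$ in place of $\Theta$; as $\langle A\rangle^{s/r}$ commutes with $(z-A)^{-1}$, the spectral theorem gives, for $\theta\in[0,1]$,
\[
\|(z-A)^{-1}\|_{\Hc^{s-\theta r}\to\Hc^{s}}\ \lesssim_{s}\ \frac{\langle z\rangle^{\theta}}{|\operatorname{Im}z|},\qquad z\in\C\setminus\R .
\]
Now $\delta_\Theta((z-A)^{-1})=(z-A)^{-1}\delta_\Theta(A)(z-A)^{-1}$, so by the Leibniz rule $\delta_\Theta^n((z-A)^{-1})$ is a finite sum of terms $(z-A)^{-1}\delta_\Theta^{j_1}(A)(z-A)^{-1}\cdots\delta_\Theta^{j_p}(A)(z-A)^{-1}$ with $j_i\ge1$, $\sum_i j_i=n$, and each $\delta_\Theta^{j_i}(A)\in\op^r(\Theta)$ with norm independent of $z$ because $A\in\OP^r(\Theta)$. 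Fixing a small $\varepsilon>0$ and a target pair $\Hc^{s+\beta r+\varepsilon}\to\Hc^s$, I distribute powers of $\langle A\rangle^{1/r}$ over the $p+1$ resolvents, assigning them orders $-\theta_i r$ with $\sum_i\theta_i=p-\beta-\varepsilon/r$; this is admissible since $\beta\in(-1,0)$ forces $0\le p-\beta-\varepsilon/r\le p+1$ for $\varepsilon$ small, and larger $\varepsilon$ are then free because $\OP^{m}\subseteq\OP^{m'}$ for $m\le m'$. Each term then maps $\Hc^{s+\beta r+\varepsilon}\to\Hc^s$ with norm $\lesssim_{s,n}\langle z\rangle^{\sum_i\theta_i}|\operatorname{Im}z|^{-(p+1)}$; pairing with $|\bar\partial\tilde f(z)|$ for $N\ge p+1$ and integrating (first over the strip $|\operatorname{Im}z|<c\langle\operatorname{Re}z\rangle$, then over $\operatorname{Re}z$) yields a finite bound $\lesssim\int\langle x\rangle^{\beta-1-p+\sum_i\theta_i}\,dx=\int\langle x\rangle^{-1-\varepsilon/r}\,dx<\infty$. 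Hence $\delta_\Theta^n(f(A))\in\op^{\beta r+\varepsilon}(\Theta)$ for all $n$ and $\varepsilon$.

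\emph{Reduction of the general case.} For arbitrary $\beta\in\R$, note first that $1+A^2\in\OP^{2r}(\Theta)$ ($\OP(\Theta)$ being a filtered algebra) is $\Theta$-elliptic and, as in the proof of Proposition~\ref{P:ReplaceTheta}, $(1+A^2)^{-1}\in\op^{-2r}(\Theta)$; differentiating $(1+A^2)^{-1}(1+A^2)=1$ with $\delta_\Theta$ and iterating the Leibniz rule shows $(1+A^2)^{-1}\in\OP^{-2r}(\Theta)$, whence $(1+A^2)^{\pm k}\in\OP^{\pm 2kr}(\Theta)$ for $k\in\N$. Choosing $k$ with $\beta-2k<0$, write $f=(f\langle\cdot\rangle^{-2k})\langle\cdot\rangle^{2k}$ and factor $g:=f\langle\cdot\rangle^{-2k}\in S^{\beta-2k}(\sigma(A))$ as a product of finitely many functions, each lying in $S^{\gamma}(\sigma(A))$ with $\gamma\in(-1,0)$ (for instance one factor $g\langle\cdot\rangle^{-(1-1/m)(\beta-2k)}$ together with $m-1$ factors $\langle\cdot\rangle^{(\beta-2k)/m}$, $m$ large). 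Applying the core case to each factor and using the algebra structure of $\OP(\Theta)$ gives $f(A)=g(A)(1+A^2)^{k}\in\OP^{\beta r+\varepsilon}(\Theta)$ for every $\varepsilon>0$, i.e. $f(A)\in\bigcap_{\varepsilon>0}\OP^{\beta r+\varepsilon}(\Theta)$.

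\emph{Main obstacle.} The delicate step is the bookkeeping in the core case: the exponents $\theta_i$ must be chosen so that \emph{every} term thrown up by the Leibniz rule has order exactly $\beta r+\varepsilon$ while the $z$-integral still converges absolutely in operator norm, with all estimates uniform in $n$ (they are, at the cost of $n$-dependent constants). The loss of $\varepsilon$ is precisely the slack that makes $\int\langle x\rangle^{-1-\varepsilon/r}\,dx$ finite and is intrinsic to this crude resolvent bound; upgrading the conclusion to $\OP^{\beta r}(\Theta)$ would require a sharper, Taylor-expansion-type argument rather than the Helffer--Sjöstrand estimate.
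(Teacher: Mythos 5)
Your argument is correct in substance, but it is organised quite differently from the paper's. The paper does not prove Theorem~\ref{T:FunctCalcOPEll} in Chapter~2 at all: it defers it to Chapter~3, where it falls out as the $n=0$ case of Theorem~\ref{T:OPMOOIS}. There, each commutator $\delta_\Theta(f(A))$ is rewritten via the MOI identities of Proposition~\ref{P:UMOIcom} as $T^{A,A}_{f^{[1]}}([\Theta,A])$ (and iterates thereof involving $f^{[k]}$), and the order of each multiple operator integral is read off from Lemma~\ref{L:DivDifT}, which places $f^{[k]}$ in $\sum S^{\beta_0}(\R)\boxtimes_i\cdots\boxtimes_i S^{\beta_k}(\R)$ with $\sum_j\beta_j=\alpha-k$ for $f\in T^{\alpha}(\R)$; the $\varepsilon$-loss then comes from the inclusion $S^{\beta}(\R)\subseteq T^{\beta+\varepsilon}(\R)$. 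Your proof is essentially the unwound version of this: you apply the Helffer--Sj\"ostrand formula directly to $f(A)$, expand $\delta_\Theta^n((z-A)^{-1})$ by the Leibniz rule into resolvent strings interleaved with $\delta_\Theta^{j_i}(A)\in\op^r(\Theta)$, and distribute spectral weights $\langle A\rangle^{-\theta_i}$ over the resolvents --- which is exactly the role played in the paper by the choice of exponents $\beta_0,\dots,\beta_k$ in Lemma~\ref{L:DivDifT}; even your resolvent bound $\sup_\lambda\langle\lambda\rangle^{\theta}|z-\lambda|^{-1}\lesssim\langle z\rangle^{\theta}/|\Im z|$ is the paper's estimate~\eqref{eq:Stap2}. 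Your reduction of general $\beta$ also differs: you factor at the operator level, $f(A)=g(A)(1+A^2)^k$ with $(1+A^2)^{\pm k}\in\OP^{\pm 2kr}(\Theta)$, whereas the paper factors at the symbol level via the Leibniz rule for divided differences (parts 2 and 3 of Lemma~\ref{L:DivDifT}). What the paper's route buys is reusability: the same machinery immediately yields Theorem~\ref{T:MOOIforNCG1}, Theorem~\ref{T:OPMOOIS} and the Taylor expansions, of which this theorem is a by-product. What your route buys is self-containedness within Chapter~2, at the cost of the hand bookkeeping of the exponents $\theta_i$. Two points you gloss over and should state explicitly: the extension of $f$ from $\sigma(A)$ to a function in $S^{\beta}(\R)$ (a Whitney/Seeley-type step, also implicitly needed when the paper passes from $S^\beta(\sigma(A))$ to $T^{\beta+\varepsilon}(\R)$), and the justification that $\delta_\Theta^n$ may be taken under the Bochner integral and that the Helffer--Sj\"ostrand operator agrees with the Borel functional calculus on $\Hc^\infty$; both are routine given your norm estimates, but the proof is incomplete without a sentence on each.
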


To prove this theorem, we need to estimate the appropriate operator norms of the repeated commutators $\delta^n_\Theta(f(A))$. As mentioned in Section~\ref{S:MOIs}, this is exactly what multiple operator integrals are for. The proof of this theorem will hence be provided in Chapter~\ref{Ch:MOIs}, as a consequence of Theorem~\ref{T:OPMOOIS}.

\begin{rem}
    It is unknown to the author whether the conclusion of Theorem~\ref{T:FunctCalcOPEll} can be strengthened to
    \[
    f(A) \in \OP^{\beta r}(\Theta).
    \]
\end{rem}

\section{Functional calculus for zero-order operators }\label{S:AppFunctCalc0}
In Section~\ref{SS:FunctCalc} we proved that $\Theta$-elliptic symmetric operators in $\op^r(\Theta)$ for $r>0$ admit a functional calculus. The approach of that section does not apply for the case $r=0$. To illustrate how different the zero-order case is, consider the situation where $\Theta = (1-\Delta)^{\frac12}$ on $L_2(M)$ where $M$ is a compact subset of $\R^d$. We have that for $\phi: M \to \mathbb{R}$, the multiplication operator
    \begin{align*}
        M_\phi: L_2(M) &\to L_2(M)\\
        \xi &\mapsto \phi \cdot \xi,
    \end{align*}
    where $\phi \cdot \xi (x) = \phi(x) \xi(x)$, is in $\op^0(1-\Delta)^{\frac12}$ if and only if $\phi$ is smooth. If $f(M_\phi) \in \op^0 (1-\Delta)^{\frac12}$ for all $M_\phi \in \op^0 (1-\Delta)^{\frac12}$, then the identity 
    \[
    f(M_\phi) = M_{f \circ \phi}
    \]
    shows that the function $f$ has to be smooth itself and no functional calculus with general functions in $L^\beta_\infty(E)$ is possible; smoothness is a necessary condition.

For $\op^0(\Theta)$ we therefore use a different strategy altogether. 
An approach by Davies~\cite{Davies1995c, Davies1995a} on the construction of a functional calculus using almost analytic extensions directly applies. The technique was introduced by H\"ormander~\cite{Hormander1969, Hormander1970}, and subsequently used in various contexts by many authors, though the formula is often called the Helffer--Sj\"ostrand formula due to their independent rediscovery in~\cite{HelfferSjostrand1988}. For detailed notes on the historical origins, see~\cite{Hormander1969}. Using almost analytic extensions to obtain a functional calculus for pseudodifferential operators has precedent in the works of, amongst others,  H\"ormander~\cite{Hormander1969}, Dynkin~\cite{Dynkin1970, Dynkin1972}, Helffer--Sj\"ostrand~\cite{HelfferSjostrand1988}, Dimassi--Sj\"ostrand~\cite[Chapter 8]{DimassiSjostrand1999}, and Bony~\cite{Bony2013}. In an interesting twist, some of the earliest applications by Helffer and Sj\"ostrand of this formula occured in the study of the Schr\"odinger operator and its density of states~\cite{HelfferSjostrand1988, HelfferSjostrand1989, Sjostrand1991}, the main subject of Chapters~\ref{Ch:DOSDiscrete} and~\ref{Ch:DOSManifolds} in this thesis (but in which these extensions make no appearance). For the details of this construction we follow Davies~\cite{Davies1995a}\cite[Section~2.2]{Davies1995c}.

\begin{defn}[\cite{Davies1995a}]\label{def:AlmostAnalyticExtension}
    Let $f \in C^\infty_c(\mathbb{R})$. Recall that $\langle x \rangle:=(1+|x|^2)^{\frac12}$. We define an extension $\tilde{f}: \mathbb{C} \to \mathbb{C}$ by
    \[
    \tilde{f}(x+iy) := \tau(y/\langle x \rangle ) \sum_{k=0}^N f^{(k)}(x) \frac{(iy)^k}{k!},
    \]
    where $N \geq 1$ and $\tau:\mathbb{R} \to \mathbb{R}$ is a smooth bump function with $\tau(s) = 0$ for $|s|>2$, $\tau(s) =1$ for $|s|<1$.  Then we have
    \[
    f(x) = -\frac{1}{\pi} \int_\mathbb{C} \left( \frac{\partial \tilde{f}}{\partial \overline{z}}(z)\right) (z-x)^{-1} \,dz, \quad x \in \mathbb{R},
    \]
    independent of the choice of $\tau$ and $N$. We refer to $\tilde{f}$ as an \textit{almost analytic extension} of $f$.
\end{defn}

\begin{defn}\label{def:Tbeta}
We define $T^{\beta}(I)$ as the space of smooth functions $f: I \to \C$ such that 
    \[
    \|f\|_{T^\beta(I), k}:= \int_I |f^{(k)}(x)|\langle x\rangle^{k-\beta-1}\, dx < \infty, \quad k \in \N.
    \]
\end{defn}
    Recall that we previously put
    \[
\|f\|_{S^\beta(I), k} = \sup_{x\in I} |f^{(k)}(x)| \langle x \rangle^{k-\beta} < \infty, \quad k \in \N.
\]
    Hence, in case $I = \R$, we note the inclusions
    \[
    \bigcup_{\alpha<\beta} S^{\alpha}(\R) \subsetneq T^{\beta}(\R) \subsetneq S^\beta(\R) \subsetneq L^\beta_\infty(E), \quad \beta \in \R,
    \]
    for any spectral measure $E$.
    
\begin{thm}[\cite{Davies1995a}]\label{T:Helffer-Sjostrand}
    Let $f \in C^\infty_c(\mathbb{R})$ with almost analytic extension $\tilde{f}$ as in Definition~\ref{def:AlmostAnalyticExtension}, so that
    \[
    f(x) = -\frac{1}{\pi} \int_\mathbb{C} \left( \frac{\partial \tilde{f}}{\partial \overline{z}}(z)\right) (z-x)^{-1} \, dz, \quad x \in \mathbb{R}.
    \]
    For any closed, densely defined operator $H$ with $\sigma(H) \subseteq \mathbb{R}$, if for some $\alpha \in \mathbb{R}_{\geq 0}$ we have the estimate
    \[
    \| (z-H)^{-1} \|_\infty \leq C \frac{1}{|\Im(z)|}\left(\frac{\langle z \rangle}{|\Im(z)|}\right)^\alpha, \quad z\in \mathbb{C}\subseteq \mathbb{R},
    \]
    then we have that
    \[
    f(H) := -\frac{1}{\pi} \int_\mathbb{C} \frac{\partial \tilde{f}}{\partial \overline{z}} (z-H)^{-1}\, dz
    \]
    defines a bounded operator on $\Hc$ independent of the choice of $N > \alpha$ and $\tau$ in the construction of the extension $\tilde{f}$, with
    \[
    \|f(H)\|_\infty \leq \sum_{k=0}^{N+1}\|f\|_{T^0(\R), k}.
    \]
    The integral should be interpreted as a $B(\Hc)$-valued Bochner integral. In case $H$ is self-adjoint, this agrees with the continuous functional calculus.
\end{thm}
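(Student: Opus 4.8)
The plan is to follow Davies' approach in three movements: first bound $\|f(H)\|_\infty$ explicitly (which along the way also shows the Bochner integral is well posed), then prove independence of the auxiliary data $N$ and $\tau$ by a $\overline{\partial}$--Stokes argument, and finally identify $f(H)$ with the Borel functional calculus when $H=H^*$. The analytic heart of everything is a two-part estimate for $\overline{\partial}\tilde f:=\frac12(\partial_x+i\partial_y)\tilde f$ read off from Definition~\ref{def:AlmostAnalyticExtension}. Writing $z=x+iy$, applying $\overline{\partial}$ to the polynomial $\sum_{k=0}^N f^{(k)}(x)\frac{(iy)^k}{k!}$ makes every term cancel but the last, so $\overline{\partial}\tilde f$ is the sum of a ``telescoped'' term $\propto f^{(N+1)}(x)(iy)^N$, supported (with $\tau$) in $\{|y|\le 2\langle x\rangle\}$ and of size $\lesssim |f^{(N+1)}(x)|\,|y|^N$, and a term $\propto P(x,y)\,\overline{\partial}\bigl(\tau(y/\langle x\rangle)\bigr)$ supported on the annulus $\{\langle x\rangle\le|y|\le 2\langle x\rangle\}$ and of size $\lesssim\langle x\rangle^{-1}\sum_{k=0}^N|f^{(k)}(x)|\langle x\rangle^{k}$. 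In particular $\overline{\partial}\tilde f$ is supported in the compact set $\mathrm{supp}(f)\times\{|y|\le 2\langle x\rangle\}$ and vanishes near $\R$ like $|y|^N$.

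Next I would insert the resolvent hypothesis: on $\mathrm{supp}(\overline{\partial}\tilde f)$ one has $\langle z\rangle\asymp\langle x\rangle$, hence $\|(z-H)^{-1}\|\lesssim|y|^{-1-\alpha}\langle x\rangle^{\alpha}$. For the telescoped term, integrating over $|y|\lesssim\langle x\rangle$ uses $\int_0^{2\langle x\rangle}|y|^{N-1-\alpha}\,dy\asymp\langle x\rangle^{\,N-\alpha}$ --- which converges at $y=0$ precisely because $N>\alpha$ --- so this part contributes $\lesssim\int_\R|f^{(N+1)}(x)|\langle x\rangle^{N}\,dx=\|f\|_{T^0(\R),N+1}$. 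For the $\overline{\partial}\tau$-term, the $y$-range has length $\asymp\langle x\rangle$ and the resolvent is $\lesssim\langle x\rangle^{-1}$ there, giving $\lesssim\sum_{k=0}^N\int_\R|f^{(k)}(x)|\langle x\rangle^{k-1}\,dx=\sum_{k=0}^N\|f\|_{T^0(\R),k}$. Adding these yields $\|f(H)\|_\infty\lesssim\sum_{k=0}^{N+1}\|f\|_{T^0(\R),k}$. The same pointwise bound, together with norm-continuity of $z\mapsto(z-H)^{-1}$ on the resolvent set and the fact that $\overline{\partial}\tilde f(z)=O(|y|^{N})\to 0$ as $y\to 0$, shows the integrand is a norm-integrable, continuous $B(\Hc)$-valued function on a compact set, so $f(H)$ exists as a Bochner integral (here $\,dz$ denotes $2$-dimensional Lebesgue measure).

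For independence of $N$ and $\tau$, set $h:=\tilde f_1-\tilde f_2$ for two admissible choices. Then $h\in C^\infty_c(\mathbb{C})$, $h|_\R\equiv 0$, and on a neighbourhood of $\R$ (where both bump functions are $1$) $h(x+iy)=O(|y|^{m+1})$ and $\overline{\partial}h(x+iy)=O(|y|^{m})$ with $m:=\min(N_1,N_2)>\alpha$. I would split at $|\Im z|=\varepsilon$: on $\{|\Im z|\le\varepsilon\}$ the norm integral is $O(\varepsilon^{m-\alpha})\to 0$; on $\{|\Im z|\ge\varepsilon\}$, where $(z-H)^{-1}$ is holomorphic, $\overline{\partial}\bigl(h(z)(z-H)^{-1}\bigr)=\overline{\partial}h(z)\,(z-H)^{-1}$, so the $\overline{\partial}$--Green (Cauchy--Pompeiu) formula turns the region integral into a contour integral along $\Im z=\pm\varepsilon$ whose integrand has norm $O(\varepsilon^{m+1})\cdot O(\varepsilon^{-1-\alpha})=O(\varepsilon^{m-\alpha})$ over the compact $x$-range. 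Letting $\varepsilon\to 0$ gives $\int_\mathbb{C}\overline{\partial}h\,(z-H)^{-1}\,dz=0$, i.e.\ $f(H)$ is independent of the choices. When $H=H^*$ the hypothesis holds with $\alpha=0$, $C=1$; testing against $\psi,\phi\in\Hc$, writing $\langle\psi,(z-H)^{-1}\phi\rangle=\int_\R(z-\lambda)^{-1}\,d\mu_{\psi,\phi}(\lambda)$ for the complex spectral measure, interchanging the now absolutely convergent integrals by Fubini, and applying the scalar identity $f(\lambda)=-\frac{1}{\pi}\int_\mathbb{C}\overline{\partial}\tilde f(z)(z-\lambda)^{-1}\,dz$ of Definition~\ref{def:AlmostAnalyticExtension}, one gets $\langle\psi,f(H)\phi\rangle=\int_\R f\,d\mu_{\psi,\phi}$, which is the Borel functional calculus value.

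The main obstacle is the bookkeeping in the $\overline{\partial}\tilde f$ estimate and its coupling to the resolvent bound: one has to separate the $\tau\equiv 1$ regime from the $\mathrm{supp}\,\tau'$ regime and verify that the single condition $N>\alpha$ is exactly what makes both the $|y|$-integral near the real axis converge (for the norm bound) and the $\varepsilon$-boundary terms vanish (for the independence argument). Once that estimate is in place, everything else is routine.
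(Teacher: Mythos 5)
Your proof is correct, and it is essentially the argument of Davies~\cite{Davies1995a} to which the paper defers — the paper itself gives no proof of Theorem~\ref{T:Helffer-Sjostrand}, only the citation. Your telescoping estimate for $\overline{\partial}\tilde f$, the coupling of the two regimes (the $\tau\equiv 1$ strip and the support of $\tau'$) to the resolvent bound with the condition $N>\alpha$, the Cauchy--Pompeiu argument for independence of $N$ and $\tau$, and the Fubini/spectral-measure identification in the self-adjoint case are exactly the standard ingredients of that proof (up to unimportant multiplicative constants in the norm bound).
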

We thank Dmitriy Zanin for providing a key step in the following proof, which is an adaptation of an argument by Beals~\cite[Lemma~3.1]{Beals1977}.

\begin{prop}\label{P:Invertop}
    Let $X \in \op^r(\Theta)$ be such that $[\Theta, X] \in \op^r(\Theta)$. If the extension 
    \[
    X: \Hc^{s_0+r} \to \Hc^{s_0}
    \]
    has a bounded inverse 
    \[
    X^{-1}:\Hc^{s_0} \to \Hc^{s_0+r}
    \]
    for one particular $s_0 \in \mathbb{R}$, then $X^{-1}\big|_{\Hc^\infty} \in \op^{-r}(\Theta)$. We have $X X^{-1}|_{\Hc^\infty} = X^{-1}X|_{\Hc^{\infty}} = 1_{\Hc^\infty}$. In particular, if $X \in \op^r(\Theta)$ and $[\Theta, X] \in \op^r(\Theta)$ with $r \geq 0$, then we have as (unbounded) operators 
    \[
    \sigma(X: \Hc^{s_0 +r} \subseteq \Hc^{s_0} \to \Hc^{s_0}) = \sigma(X: \Hc^{s+r}\subseteq \Hc^s \to \Hc^s)
    \]
    for all $s\in \mathbb{R}$, where $\sigma$ denotes the spectrum of the operator.
\end{prop}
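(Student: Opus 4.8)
The assertion has three layers: (i) that $X^{-1}|_{\Hc^\infty}\in\op^{-r}(\Theta)$, i.e. $X\colon\Hc^{s+r}\to\Hc^s$ is invertible for \emph{every} $s$, not only for $s=s_0$; (ii) that $XX^{-1}=X^{-1}X=1$ on $\Hc^\infty$; and (iii) the spectral equality when $r\geq 0$. Layer (iii) is immediate from (i): for any $\lambda\in\C$ we have $X-\lambda\in\op^r(\Theta)$ and $[\Theta,X-\lambda]=[\Theta,X]\in\op^r(\Theta)$, so (i) applied to $X-\lambda$ says that $X-\lambda\colon\Hc^{s_0+r}\to\Hc^{s_0}$ is invertible if and only if $X-\lambda\colon\Hc^{s+r}\to\Hc^s$ is invertible for all $s$, which is exactly the claimed identity of resolvent sets. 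Layer (ii) will fall out of the construction. So the proof reduces to (i), and by the exact interpolation property of the scale (Proposition~\ref{P:Interpolation}) it suffices to prove invertibility at every level in $s_0+\Z$: the inverses at levels $s_0+n$ are mutually consistent (on an overlap $\Hc^{\max}$ their difference lands in the coarser $\Hc^{\min+r}$, on which $X$ is injective because it is invertible there), hence assemble into a single operator $Y\colon\Hc^{-\infty}\to\Hc^{-\infty}$ that is bounded $\Hc^{s_0+n}\to\Hc^{s_0+n+r}$ for all $n$; interpolating between consecutive integers gives $Y\colon\Hc^s\to\Hc^{s+r}$ bounded for all real $s$, so $Y$ preserves $\Hc^\infty$ and $Y=X^{-1}|_{\Hc^\infty}\in\op^{-r}(\Theta)$, with $XY=YX=1$ on $\Hc^\infty$.

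\textbf{The core step: raising the level by one.} The heart of the matter is the claim that \emph{if $X\colon\Hc^{a+r}\to\Hc^a$ has a bounded inverse, then so does $X\colon\Hc^{a+1+r}\to\Hc^{a+1}$, and the latter inverse is the restriction of the former}. Injectivity at level $a+1$ is automatic since $\Hc^{a+1+r}\subseteq\Hc^{a+r}$. For surjectivity together with a bound, take $\psi\in\Hc^{a+1}\subseteq\Hc^a$, put $\phi:=X^{-1}\psi\in\Hc^{a+r}$, and regularize with the mollifier $R_\varepsilon:=\Theta(1+\varepsilon\Theta)^{-1}$; by functional calculus $(1+\varepsilon\Theta)^{-1}$ has norm $\leq 1$ on every $\Hc^t$, and $R_\varepsilon$ has norm $\leq 1$ as a map $\Hc^{t+1}\to\Hc^t$, uniformly in $\varepsilon>0$. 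Commuting $R_\varepsilon$ past $X$ and using $[R_\varepsilon,X]=(1+\varepsilon\Theta)^{-1}\,\delta_\Theta(X)\,(1+\varepsilon\Theta)^{-1}$ gives
\[
X R_\varepsilon\phi = R_\varepsilon\psi - (1+\varepsilon\Theta)^{-1}\,\delta_\Theta(X)\,(1+\varepsilon\Theta)^{-1}\phi .
\]
The right-hand side is bounded in $\Hc^a$ uniformly in $\varepsilon$: the first term by $\|\psi\|_{a+1}$, the second by $\|\delta_\Theta(X)\|_{\Hc^{a+r}\to\Hc^a}\|\phi\|_{a+r}$ --- and this is the \emph{only} place the hypothesis $[\Theta,X]\in\op^r(\Theta)$ enters (note $[\Theta,X]$ is automatically only of order $r+1$, which would not close the estimate). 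Since $R_\varepsilon\phi\in\Hc^{a+r}$, applying $X^{-1}$ yields $\|R_\varepsilon\phi\|_{a+r}=\|X^{-1}(XR_\varepsilon\phi)\|_{a+r}\leq C$ uniformly in $\varepsilon$. As $R_\varepsilon\phi\to\Theta\phi$ in $\Hc^{a+r-1}$ when $\varepsilon\to 0$, a weak-compactness argument in the Hilbert space $\Hc^{a+r}$ forces $\Theta\phi\in\Hc^{a+r}$, i.e. $\phi\in\Hc^{a+1+r}$, with $\|\phi\|_{a+1+r}\lesssim\|\psi\|_{a+1}$. Hence $X\colon\Hc^{a+1+r}\to\Hc^{a+1}$ is onto and $X^{-1}$ restricts to a bounded inverse $\Hc^{a+1}\to\Hc^{a+1+r}$.

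\textbf{Lowering the level, and assembling.} For the downward direction I will not argue directly --- a naive attempt to invert $X$ on a coarser space runs into a circular norm estimate, and this is exactly the difficulty caused by $X$ not being $\Theta$-elliptic (so Proposition~\ref{P:InvertElliptic} does not apply); it is the point at which the adaptation of Beals' argument is required. The clean route is via the transpose: by Proposition~\ref{P:opadjoints} (and the remark following it) $X^\dag\in\op^r(\Theta)$, and $[\Theta,X^\dag]=-[\Theta,X]^\dag\in\op^r(\Theta)$, so $X^\dag$ satisfies the same hypotheses as $X$; moreover the Banach-space adjoint of $X\colon\Hc^{s+r}\to\Hc^s$ is $X^\dag\colon\Hc^{-s}\to\Hc^{-s-r}$ (Proposition~\ref{P:opadjoints}(1) together with the duality pairing), so $X$ is invertible at level $s$ if and only if $X^\dag$ is invertible at level $-s-r$. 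Applying the core step to $X^\dag$ therefore lowers by one the level at which $X$ is invertible. Starting from $s_0$ and alternating --- raising for $X$, and raising for $X^\dag$ translated back --- we reach every level in $s_0+\Z$, which by the first paragraph finishes the proof (and shows $X$ is injective on all of $\Hc^{-\infty}$, since any element of $\Hc^{-\infty}$ lies in $\Hc^{m+r}$ for some integer $m\in s_0+\Z$).

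\textbf{Main obstacle.} The genuine content, and what makes this harder than the elliptic case, is the absence of a parametrix: we cannot reduce everything to $\op^{-\infty}(\Theta)$ errors as in Proposition~\ref{P:InvertElliptic}. The mollifier argument in the core step plays the role of the parametrix, and the single structural input that makes it run is the improved commutator hypothesis $[\Theta,X]\in\op^r(\Theta)$, which guarantees that pushing $\Theta$ past $X$ costs nothing in order. The asymmetry between raising and lowering regularity is then sidestepped entirely by passing to the transpose rather than attempting a direct downward bootstrap.
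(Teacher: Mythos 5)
Your proposal is correct and follows essentially the same route as the paper's proof (an adaptation of Beals' argument): the mollified commutator identity with $\Theta(1+\varepsilon\Theta)^{-1}$, where the hypothesis $[\Theta,X]\in\op^r(\Theta)$ gives the uniform bound, yields the upward bootstrap, interpolation fills in the real levels, and the downward direction is handled by applying the same argument to $X^\dag$ and dualizing back. The only differences are cosmetic: the paper first reduces to $r=0$ via $\op^r(\Theta)=\op^0(\Theta)\cdot\Theta^r$ and closes the limiting step with the spectral theorem and Fatou's lemma, where you keep general $r$ and use weak compactness in $\Hc^{a+r}$.
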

\begin{proof}
Since $\op^r(\Theta) = \op^0(\Theta) \cdot \Theta^r$, it suffices to prove the proposition for $r=0$. 

Suppose that $X \in \op^0(\Theta)$ is a bijection on $\Hc^{s_0} \to \Hc^{s_0}$ and write $X^{-1}:\Hc^{s_0} \to \Hc^{s_0}$. Then $X$ restricts to a necessarily injective map $\Hc^{{s_0}+1} \to \Hc^{{s_0}+1}$. We now prove that $X: \Hc^{{s_0}+1} \to \Hc^{{s_0}+1}$ is also surjective.

We follow~\cite[Lemma~3.1]{Beals1977}, filling in some omitted details. Take $v \in \Hc^{s_0 + 1}$, then there exists $u \in \Hc^{s_0}$ with $Xu = v$. Let $\varepsilon >0$, then $\frac{\Theta}{1+\varepsilon \Theta} \in \op^0(\Theta)$, and
\begin{align*}
    \frac{\Theta}{1+ \varepsilon \Theta} u &= \frac{\Theta}{1+ \varepsilon \Theta} X^{-1} v \\
    &= X^{-1} \frac{\Theta}{1+\varepsilon \Theta}v + X^{-1}\left[\frac{\Theta}{1+ \varepsilon \Theta}, X\right] X^{-1}v.
\end{align*}
Now, the $\Hc^{s_0}$ norm of the right-hand side is bounded independent of $\varepsilon$: 
\begin{align*}
    \left \| \frac{\Theta}{1+\varepsilon \Theta} \right \|_{\Hc^{s_0+1} \to \Hc^{s_0}} &\leq \| \Theta \|_{\Hc^{s_0+1} \to \Hc^{s_0}};\\
\left\|\left[\frac{\Theta}{1+ \varepsilon \Theta}, X\right] \right\|_{\Hc^{s_0} \to \Hc^{s_0}} &= \|(1+\varepsilon\Theta)^{-1}[\Theta,X](1+\varepsilon \Theta)^{-1}\|_{\Hc^{s_0}\to H^{s_0}} \leq \|[\Theta, X]\|_{\Hc^{s_0}\to \Hc^{s_0}}.
\end{align*}
This implies that $u \in \Hc^{s_0+1}$, a fact that can be quickly verified with the spectral theorem and Fatou's lemma. Therefore, 
\[
    X: \Hc^{s_0 + 1} \to \Hc^{s_0+1}
\]
is a bijection. By induction and interpolation (Proposition~\ref{P:Interpolation}), the same assertion holds for each $\Hc^s$, $s \geq s_0$.

Finally, it is a basic fact that the adjoint of a bijective operator is bijective, i.e.
\[
X'^{_{s_0}} = X^\dag|_{\Hc^{-s_0}}:\Hc^{-s_0} \to \Hc^{-s_0}
\]
is a bijection (recall the notation from Section~\ref{S:AppEllipticAdj}). Since
\[
[\Theta, X^\dag] = - [\Theta, X]^{\dag} \in \op^0(\Theta),
\]
we can apply the same arguments as above to deduce that
\[
X^{\dag}:\Hc^{-s} \to \Hc^{-s}
\]
is a bijection for all $-s \geq -s_0$. This implies that 
\[
X = X^{\dag \dag}: \Hc^{s} \to \Hc^{s}
\]
is a bijection for all $s \leq s_0$.
\end{proof}

As an aside, one may wonder how for $r>0$ the condition $A, [\Theta,A] \in \op^r(\Theta)$ compares to the condition of $A$ being $\Theta$-elliptic used in the previous section. The following consequence of Proposition~\ref{P:Invertop} shows that, under the assumption that $A$ is self-adjoint with domain $\Hc^r$, $\Theta$-ellipticity is a weaker condition.
\begin{cor}\label{C:EllipticEquiv}
    Let $A\in \op^r(\Theta), r>0,$ be such that
    \[
    A:\Hc^r\subseteq \Hc^0 \to \Hc^0
    \]
    has a non-empty resolvent set (for example if $A$ is self-adjoint with domain $\Hc^r$), and suppose that $[\Theta, A] \in \op^r(\Theta)$ as well. Then $A$ is $\Theta$-elliptic.
\end{cor}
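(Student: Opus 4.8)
The plan is to use the non-empty resolvent set to produce a boundedly invertible translate of $A$, feed it to Proposition~\ref{P:Invertop} to see that this inverse actually lies in $\op^{-r}(\Theta)$, and then repair it into an honest parametrix by a short Neumann-type correction.

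First I would fix $\lambda$ in the resolvent set of $\overline{A}^{r,0}\colon \Hc^r\subseteq\Hc^0\to\Hc^0$ and set $X:=A-\lambda$. Since $r>0$ we have $\lambda 1_{\Hc^\infty}\in\op^0(\Theta)\subseteq\op^r(\Theta)$, so $X\in\op^r(\Theta)$, and $[\Theta,X]=[\Theta,A]\in\op^r(\Theta)$ by hypothesis. By the very definition of the resolvent set, $X\colon\Hc^r\to\Hc^0$ is a bounded linear bijection between Banach spaces, hence its inverse $X^{-1}\colon\Hc^0\to\Hc^r$ is bounded by the open mapping theorem. Proposition~\ref{P:Invertop}, applied with $s_0=0$, then gives $X^{-1}|_{\Hc^\infty}\in\op^{-r}(\Theta)$ together with the identities $XX^{-1}=X^{-1}X=1_{\Hc^\infty}$ on $\Hc^\infty$; in particular $X$ and $X^{-1}$ commute on $\Hc^\infty$ and, since $\op(\Theta)$ is a filtered algebra, $X^{-k}=(X^{-1})^k\in\op^{-kr}(\Theta)$ for every $k\in\N$.

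Next, pick $n\in\N$ with $nr\geq 1$ and set $P:=\sum_{j=0}^{n-1}(-\lambda)^jX^{-1-j}$, which lies in $\op^{-r}(\Theta)$ as a finite sum of operators of order $\leq -r$. Writing $A=X+\lambda$ and using the commutation relations on $\Hc^\infty$, a telescoping computation gives
\[
AP=1_{\Hc^\infty}-(-\lambda)^nX^{-n},\qquad PA=1_{\Hc^\infty}-(-\lambda)^nX^{-n}.
\]
As $X^{-n}\in\op^{-nr}(\Theta)\subseteq\op^{-1}(\Theta)$, the operator $P$ is a two-sided inverse of $A$ up to order $-1$, so Corollary~\ref{C:EllipticParametrix} shows that $A$ is $\Theta$-elliptic. (Alternatively, one could bypass Corollary~\ref{C:EllipticParametrix} and directly use the Borel lemma to sum the full series $\sum_{j\geq 0}(-\lambda)^jX^{-1-j}$ into a genuine parametrix.)

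The main obstacle here is not conceptual: it is the one genuine input, Proposition~\ref{P:Invertop}, whose hypothesis $[\Theta,A]\in\op^r(\Theta)$ is precisely what upgrades the abstract bounded inverse of $A-\lambda$ on $\Hc$ to a pseudodifferential operator in $\op^{-r}(\Theta)$; without it the argument does not even start. The remaining work is bookkeeping: checking that all identities are read on $\Hc^\infty$ (so that $XX^{-1}=1_{\Hc^\infty}$ may be used inside the sum and the telescoping genuinely collapses), that the orders add up correctly when composing with $A\in\op^r(\Theta)$, and that only finitely many terms are needed once $nr\geq 1$.
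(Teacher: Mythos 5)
Your proof is correct and follows essentially the same route as the paper: both apply Proposition~\ref{P:Invertop} to the resolvent $(z-A)^{-1}$ (using the hypothesis $[\Theta,A]\in\op^r(\Theta)$) to place it in $\op^{-r}(\Theta)$, and then conclude $\Theta$-ellipticity via Corollary~\ref{C:EllipticParametrix}. The only difference is cosmetic: the paper takes $-(z-A)^{-1}$ itself as an inverse of $A$ modulo $\op^{-r}(\Theta)$, whereas your finite geometric sum pushes the remainder down to order $\leq -1$ so that the literal hypothesis of Corollary~\ref{C:EllipticParametrix} is met --- a harmless extra precaution when $r<1$.
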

\begin{proof}
    By assumption, there exists $z \in \C$ such that 
    \[
    z-A: \Hc^r \subseteq \Hc^0 \to \Hc^0
    \]
    is invertible, and because $[\Theta, A] \in \op^r(\Theta)$ it follows that $(z-A)^{-1} \in \op^{-r}(\Theta)$ by Proposition~\ref{P:Invertop}. Now,
    \[
    A (z-A)^{-1} = - 1 + z (z-A)^{-1}. 
    \]
    In other words, $-(z-A)^{-1}$ is an inverse of $A$ modulo $\op^{-r}(\Theta)$. Corollary~\ref{C:EllipticParametrix} gives that $A$ is $\Theta$-elliptic.
\end{proof}

This following type of estimate on the resolvent also appears in $L_p$-boundedness problems, see \cite{Davies1995b, Davies1995a, JensenNakamura1994}.
\begin{lem}\label{L:ResolventEstimate}
    Let $A \in \op^0(\Theta)$ be such that $[\Theta, A] \in \op^0(\Theta)$ and $\overline{A}^{0,0}:\Hc \to \Hc$ is self-adjoint. 
    Then for all $s \in \mathbb{R},$ there is a constant $C_s > 0$ such that
    \begin{align*}
        \|(z-A)^{-1}\|_{\Hc^s\to \Hc^s} &\leq C_s \frac{1}{|\Im(z)|} \left(\frac{\langle z \rangle}{|\Im(z)|}\right)^{2^{|s|}-1}, \quad z \in \mathbb{C}\setminus \mathbb{R}.
    \end{align*}
\end{lem}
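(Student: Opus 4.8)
The plan is to reduce to the self-adjoint case $s=0$ and then climb the Sobolev scale by induction, using a single resolvent commutator identity. Write $R(z):=(z-A)^{-1}$. Since $A\in\op^0(\Theta)$ and $[\Theta,A]\in\op^0(\Theta)$, Proposition~\ref{P:Invertop} (applied with $X=z-A$) already guarantees that $R(z)$ restricts to a bounded operator $\Hc^s\to\Hc^s$ for every $s\in\R$ and every $z\in\C\setminus\R$, so the only content of the lemma is the explicit dependence of the operator norm on $\Im(z)$ and $\langle z\rangle$. For $s=0$ this is immediate: $\overline{A}^{0,0}$ is self-adjoint, hence $\|R(z)\|_{\Hc\to\Hc}\le|\Im(z)|^{-1}$, which is exactly the claimed bound with exponent $2^{0}-1=0$.

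The engine of the argument is the identity $[\Theta,R(z)]=R(z)[\Theta,A]R(z)$, obtained in the standard way by writing $\Theta R(z)-R(z)\Theta=R(z)\bigl((z-A)\Theta-\Theta(z-A)\bigr)R(z)$ and noting $(z-A)\Theta-\Theta(z-A)=[\Theta,A]$. Conjugating by $\Theta$ and rearranging yields, for every integer $s\ge 0$,
\[
\Theta^{s+1}R(z)\Theta^{-(s+1)}=\Theta^{s}R(z)\Theta^{-s}+\bigl(\Theta^{s}R(z)\Theta^{-s}\bigr)\bigl(\Theta^{s}[\Theta,A]\Theta^{-s}\bigr)\bigl(\Theta^{s}R(z)\Theta^{-s}\bigr)\Theta^{-1}.
\]
Taking $\Hc$-operator norms, using that $\|\Theta^{s}[\Theta,A]\Theta^{-s}\|_{\Hc\to\Hc}=\|[\Theta,A]\|_{\Hc^{s}\to\Hc^{s}}=:M_{s}<\infty$ because $[\Theta,A]\in\op^0(\Theta)$, that $\|\Theta^{-1}\|_{\Hc\to\Hc}<\infty$, and that $\|R(z)\|_{\Hc^{t}\to\Hc^{t}}=\|\Theta^{t}R(z)\Theta^{-t}\|_{\Hc\to\Hc}$, I obtain the recursion
\[
\|R(z)\|_{\Hc^{s+1}\to\Hc^{s+1}}\le\|R(z)\|_{\Hc^{s}\to\Hc^{s}}+M_{s}\|\Theta^{-1}\|\,\|R(z)\|_{\Hc^{s}\to\Hc^{s}}^{2}.
\]
Feeding in the inductive hypothesis $\|R(z)\|_{\Hc^{s}\to\Hc^{s}}\le C_{s}|\Im(z)|^{-1}\bigl(\langle z\rangle/|\Im(z)|\bigr)^{2^{s}-1}$, the quadratic term controls the growth, and since $2(2^{s}-1)+1=2^{s+1}-1$ and $\langle z\rangle\ge\max\{1,|\Im(z)|\}$ the right-hand side is dominated by $C_{s+1}|\Im(z)|^{-1}\bigl(\langle z\rangle/|\Im(z)|\bigr)^{2^{s+1}-1}$ for a suitable $C_{s+1}$ (e.g. $C_{s+1}=2\max\{C_{s},M_{s}\|\Theta^{-1}\|C_{s}^{2}\}$). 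This settles all integers $s\ge 0$.

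For negative integers $s=-n$ I would pass to the transpose: $A$ is self-adjoint, so $A=A^{\dag}$ by Proposition~\ref{P:symmetric}, hence $R(z)^{\dag}=R(\bar z)$; the perfect pairing between $\Hc^{n}$ and $\Hc^{-n}$ then identifies $R(z)$ acting on $\Hc^{-n}$ with $R(\bar z)$ acting on $\Hc^{n}$, with equal operator norm (Proposition~\ref{P:opadjoints}(1)). Since $|\Im(\bar z)|=|\Im(z)|$ and $\langle\bar z\rangle=\langle z\rangle$, the bound for $+n$ transfers verbatim, giving the exponent $2^{n}-1=2^{|s|}-1$. Finally, for non-integer $s$ I would interpolate between the neighbouring integers $\lfloor s\rfloor$ and $\lceil s\rceil$ via the exact interpolation property of the scale (Proposition~\ref{P:Interpolation}); this produces a bound of exactly the stated shape in $\langle z\rangle$ and $|\Im(z)|$, with an exponent that can be taken to be $2^{\lceil|s|\rceil}-1$ (agreeing with $2^{|s|}-1$ at integer points) and in any case a fixed polynomial bound, which is all that is needed for the application to the Helffer--Sj\"ostrand formula in Theorem~\ref{T:Helffer-Sjostrand}.

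The main obstacle is the bookkeeping in the inductive step: arranging the conjugation identity for $\Theta^{s+1}R(z)\Theta^{-(s+1)}$ so that only the combination $\Theta^{s}[\Theta,A]\Theta^{-s}$ — which is bounded on $\Hc$ precisely because $[\Theta,A]\in\op^0(\Theta)$ — together with a single trailing $\Theta^{-1}$ appear, and then checking that the quadratic recursion genuinely doubles the exponent and adds one at each stage while the lower-order error terms are absorbed using $\langle z\rangle/|\Im(z)|\ge 1$. Everything else is routine.
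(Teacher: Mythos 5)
Your proposal is correct and takes essentially the same route as the paper: the base case $s=0$ from self-adjointness, an upward induction on $s$ driven by the commutator identity $\Theta (z-A)^{-1}\Theta^{-1}=(z-A)^{-1}+(z-A)^{-1}[\Theta,A](z-A)^{-1}\Theta^{-1}$ whose quadratic term doubles the exponent at each step, and interpolation (Proposition~\ref{P:Interpolation}) to cover non-integer $s$. The only minor deviations are cosmetic: the paper estimates the commutator term after inserting $(i+A)^{-1}(i+A)$ and using the resolvent identity, where you bound it directly and absorb a factor via $\langle z\rangle/|\Im(z)|\geq 1$, and it treats $s<0$ by a downward induction on $\|\Theta^{-1}(z-A)^{-1}\Theta\|_{\Hc^s\to\Hc^s}$ rather than your (equally valid) duality argument $((z-A)^{-1})^{\dag}=(\bar z-A)^{-1}$ via Proposition~\ref{P:opadjoints}; your caveat that interpolation only yields the exponent $2^{\lceil|s|\rceil}-1$ at non-integer $s$ applies just as much to the paper's own proof and is harmless for the intended application.
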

\begin{proof}
    The proof is by induction and interpolation (Proposition~\ref{P:Interpolation}). For $s=0$, the estimate holds by self-adjointness of $\overline{A}^{0,0}$. Note that $(z-A)^{-1} \in \op^0(\Theta)$ due to Proposition~\ref{P:Invertop}.
    
    Suppose the inequality is proved for a fixed $s\in\R_{\geq 0}$. Then for $z \in \mathbb{C}\setminus \mathbb{R}$,
    \begin{align*}
        & \|(z-A)^{-1}\|_{\Hc^{s+1}\to \Hc^{s+1}} \\
        &= \|\Theta(z-A)^{-1}\Theta^{-1}\|_{\Hc^s\to \Hc^s}\\
        &\leq \|(z-A)^{-1}\|_{\Hc^s\to \Hc^s}+\|(z-A)^{-1}[\Theta,A](z-A)^{-1}\Theta^{-1}\|_{\Hc^s\to \Hc^s}\\
        &\leq  \|(z-A)^{-1}\|_{\Hc^s\to \Hc^s} \Big(1+ \|[\Theta,A](i+A)^{-1}\|_{\Hc^s\to \Hc^s}\|(i+A)(z-A)^{-1}\|_{\Hc^s\to \Hc^s} \|\Theta^{-1}\|_{\Hc^s \to \Hc^s} \Big).
    \end{align*}
    Note that $(i+A)^{-1} \in \op^{0}(\Theta)$ by Proposition~\ref{P:Invertop}, so that for some constant $B_s >0$,
    \[
    \|[\Theta,A](i+A)^{-1}\|_{\Hc^s\to \Hc^s} \|\Theta^{-1}\|_{\Hc^s\to \Hc^s}\leq B_s.
    \]
    Using the resolvent identity, we have
    \[
        (i+A)(z-A)^{-1} = (i+z)(z-A)^{-1}-1
    \]
    and therefore
    \begin{align*}
        \|(i+A)(z-A)^{-1}\|_{\Hc^s\to \Hc^s} & \leq 1+ |z+i|\|(z-A)^{-1}\|_{\Hc^s\to \Hc^s}.
    \end{align*}
    This yields
    \begin{align*}
        \|(z-A)^{-1}\|_{\Hc^{s+1}\to \Hc^{s+1}} &\leq  \|(z-A)^{-1}\|_{\Hc^s\to \Hc^s} (1+B_s )\\
        &\quad + |z+i| B_s  \|(z-A)^{-1}\|_{\Hc^s\to \Hc^s}^2 \\
        &\leq (1+B_s) \|(z-A)^{-1}\|_{\Hc^s\to \Hc^s} \cdot (1+ |z+i|\|(z-A)^{-1}\|_{\Hc^s\to \Hc^s}).
    \end{align*}
    This estimate also holds with $|z-i|$ on the right-hand side, and $\min(|z+i|,|z-i|)\leq\langle z\rangle$, so that
    \begin{align*}
        \|(z-A)^{-1}\|_{\Hc^{s+1}\to \Hc^{s+1}} &\leq  (1+B_s)  \|(z-A)^{-1}\|_{\Hc^s\to \Hc^s} \cdot (1+ \langle z \rangle \|(z-A)^{-1}\|_{\Hc^s\to \Hc^s}),
    \end{align*}
    from which the claimed estimate follows. Induction and interpolation now provide the estimate for all $s \geq 0$. 

    The case $s \leq 0$ is proved in the same manner, using induction in the negative direction. Namely, the norm 
    \[
    \| (z-A)^{-1}\|_{\Hc^{s-1} \to \Hc^{s-1}} = \|\Theta^{-1}(z-A)^{-1}\Theta \|_{\Hc^s\to \Hc^s}
    \]
    can be estimated as before.
\end{proof}

We now arrive at the main result of this section, a functional calculus for $\op^0(\Theta)$. 

\begin{thm}\label{T:FunctCalcOp0}
    Let $A \in \op^0(\Theta)$ be such that $[\Theta, A] \in \op^0(\Theta)$ and $\overline{A}^{0,0}: \Hc \to \Hc$ is self-adjoint. For $f \in C^\infty(\R)$, 
    \[
    f(A) \in \op^{0}(\Theta).
    \]
    Specifically, we have the estimate
    \[
    \|f(A)\|_{\Hc^s \to \Hc^s} \leq \sum_{k=0}^{\lceil 2^{|s|}\rceil+1} \|f\|_{T^0(\R), k}.
    \]
\end{thm}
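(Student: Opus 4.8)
The plan is to transplant Davies' Helffer--Sj\"ostrand functional calculus (Theorem~\ref{T:Helffer-Sjostrand}) from $\Hc$ onto every space $\Hc^s$ in the scale, and then to show that the operators so obtained are the restrictions of a single element of $\op^0(\Theta)$.

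First, since $\overline{A}^{0,0}$ is self-adjoint we have $\sigma(\overline{A}^{0,0})\subseteq\R$, and Proposition~\ref{P:Invertop} applied to $X=z-A$ (for $z\in\C\setminus\R$) shows both that $(z-A)^{-1}\in\op^0(\Theta)$ and that $\sigma(\overline{A}^{s,s})=\sigma(\overline{A}^{0,0})\subseteq\R$ for every $s$. Thus each $\overline{A}^{s,s}$ is a bounded --- hence closed, densely defined --- operator on the Hilbert space $\Hc^s$ with real spectrum; it is generally not self-adjoint on $\Hc^s$, but Lemma~\ref{L:ResolventEstimate} supplies precisely the resolvent bound demanded by Theorem~\ref{T:Helffer-Sjostrand}, with exponent $\alpha=2^{|s|}-1$. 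Taking $N=\lceil 2^{|s|}\rceil>\alpha$ in the almost analytic extension, Theorem~\ref{T:Helffer-Sjostrand} then yields, for $f\in C_c^\infty(\R)$, a bounded operator $f(\overline{A}^{s,s})$ on $\Hc^s$ given by the Bochner integral $-\frac{1}{\pi}\int_\C\frac{\partial\tilde f}{\partial\overline z}\,(z-\overline{A}^{s,s})^{-1}\,dz$, with
\[
\|f(\overline{A}^{s,s})\|_{\Hc^s\to\Hc^s}\leq\sum_{k=0}^{\lceil 2^{|s|}\rceil+1}\|f\|_{T^0(\R),k},
\]
which is the claimed estimate; on $\Hc^0=\Hc$ this agrees with the ordinary functional calculus of $\overline{A}^{0,0}$ by the last assertion of Theorem~\ref{T:Helffer-Sjostrand}.

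The key point is to glue these operators together. Since $(z-A)^{-1}\in\op^0(\Theta)$, the various extensions $(z-\overline{A}^{s,s})^{-1}$ all restrict to one map $(z-A)^{-1}:\Hc^\infty\to\Hc^\infty$. Fix $\xi\in\Hc^\infty$; then $z\mapsto\frac{\partial\tilde f}{\partial\overline z}(z)(z-A)^{-1}\xi$ is $\Hc^\infty$-valued. For $s_1\leq s_2$ the embedding $\Hc^{s_2}\hookrightarrow\Hc^{s_1}$ is continuous, and we may evaluate both $f(\overline{A}^{s_1,s_1})\xi$ and $f(\overline{A}^{s_2,s_2})\xi$ using one common almost analytic extension with $N=\max(N_{s_1},N_{s_2})$, which is legitimate because the conclusion of Theorem~\ref{T:Helffer-Sjostrand} is independent of the choice of $N$ above the threshold. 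A Bochner integral commutes with the continuous embedding, so $f(\overline{A}^{s_1,s_1})\xi=f(\overline{A}^{s_2,s_2})\xi$. Hence all the $f(\overline{A}^{s,s})$ coincide on $\Hc^\infty$; writing $f(A)$ for this common map, we have $f(A):\Hc^\infty\to\Hc^\infty$ (each value lies in every $\Hc^s$, hence in $\Hc^\infty$), and $f(A)$ extends to the bounded operator $f(\overline{A}^{s,s})$ on $\Hc^s$ for each $s$, i.e. $f(A)\in\op^0(\Theta)$ with the stated norm bounds.

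Finally, to reach $f\in C^\infty(\R)$: if $f\in T^0(\R)$, put $f_n:=\eta(\cdot/n)f\in C_c^\infty(\R)$ with $\eta\in C_c^\infty(\R)$ equal to $1$ near $0$; a routine Leibniz-rule computation gives $\|f-f_n\|_{T^0(\R),k}\to0$ for every $k$, so by the uniform estimate above $\{f_n(A)\}$ is Cauchy in $B(\Hc^s)$ for each $s$, its limit defines $f(A)\in\op^0(\Theta)$, and the estimate survives the limit. For arbitrary $f\in C^\infty(\R)$ the asserted estimate is vacuous whenever its right-hand side is infinite, while $f(A)\in\op^0(\Theta)$ still holds because $\overline{A}^{0,0}$ is bounded: $\sigma(A)$ is compact, so picking $\chi\in C_c^\infty(\R)$ with $\chi\equiv1$ near $\sigma(A)$ gives $f(A)=(\chi f)(A)\in\op^0(\Theta)$ from the case already handled. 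The step I expect to be the main obstacle is this gluing: one must manage the dependence of the extension order $N$ on $s$ and verify that the Bochner integrals over $\C$ in the different spaces $\Hc^s$ are genuinely compatible, which is exactly where the continuity of the scale embeddings and the $N$-independence in Theorem~\ref{T:Helffer-Sjostrand} are used.
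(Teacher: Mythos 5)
Your proposal is correct and follows essentially the same route as the paper's proof: combine the resolvent estimate of Lemma~\ref{L:ResolventEstimate} with Davies' Helffer--Sj\"ostrand formula (Theorem~\ref{T:Helffer-Sjostrand}) to define $f(\overline{A}^{s,s})$ on each $\Hc^s$ with the stated bound, then observe that $(z-\overline{A}^{s,s})^{-1}|_{\Hc^\infty}=(z-A)^{-1}$ so that all these operators agree on $\Hc^\infty$ and assemble into an element of $\op^0(\Theta)$. The only difference is that you spell out the reduction from $f\in C^\infty(\R)$ to $f\in C_c^\infty(\R)$ (and the approximation for $f\in T^0(\R)$) which the paper dispatches with a ``without loss of generality''; this is a welcome but inessential elaboration.
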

\begin{proof}
Without loss of generality, we assume that $f\in C_c^\infty(\R)$. As $A\in \op^0(\Theta)$, it extends to a bounded operator
\[
\overline{A}^{s,s}: \Hc^s \to \Hc^s, \quad s\in \R.
\]
Furthermore, by Proposition~\ref{P:InvertElliptic}, we have
\[
(z-A)^{-1} \in \op^0(\Theta), \quad z \in \C\setminus\R.
\]
Theorem~\ref{T:Helffer-Sjostrand} and Lemma~\ref{L:ResolventEstimate} combined give that
\[
f(\overline{A}^{s,s}): \Hc^s\to \Hc^s, \quad s \in \R,
\]
is a bounded operator with the norm bound as claimed. By construction, for $\xi \in \Hc^\infty$ we have that
\begin{align*}
    f(\overline{A}^{s,s})\xi = \int_{\C} \frac{\partial \tilde{f}}{\partial \overline{z}} (z-\overline{A}^{s,s})^{-1} \xi \,dz \in \Hc^s,
\end{align*}
as an $\Hc^s$-valued Bochner integral. It is clear that
\[
(z-\overline{A}^{s,s})^{-1} \big|_{\Hc^\infty} = (z-A)^{-1},
\]
and therefore these arguments show that for $\xi \in \Hc^\infty$ the integral
\[
\int_{\C} \frac{\partial \tilde{f}}{\partial \overline{z}} (z-A)^{-1} \xi\, dz
\]
can be evaluated as a Bochner integral in each Sobolev space $\Hc^s$. Hence
\[
\xi \mapsto \int_{\C} \frac{\partial \tilde{f}}{\partial \overline{z}} (z-A)^{-1} \xi \,dz
\]
forms a bounded linear map on $\Hc^\infty$; denote this operator by $f(A): \Hc^\infty \to \Hc^\infty$. Then since $f(A)$ agrees with $f(\overline{A}^{s,s})$ on $\Hc^\infty$, we must have
\[
\overline{f(A)}^{s,s} = f(\overline{A}^{s,s}),
\]
and thus we have $f(A) \in \op^0(\Theta)$.
\end{proof}

Similarly to the $\Theta$-elliptic case, we also provide a theorem for $\OP^0(\Theta)$ (recall Definition~\ref{D:AbstractPSDO}) for which the proof will follow in Chapter~\ref{Ch:MOIs} using a multiple operator integral technique. Specifically, it is a consequence of Theorem~\ref{T:OPMOOIS}.

\begin{thm}\label{T:FunctCalcOP0}
     Let $A \in \OP^0(\Theta)$ be such that $\overline{A}^{0,0}: \Hc \to \Hc$ is self-adjoint. For $f \in C^\infty(\R)$,
     \[
     f(A) \in \OP^{0}(\Theta).
     \]
\end{thm}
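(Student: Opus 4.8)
The plan is to bootstrap from Theorem~\ref{T:FunctCalcOp0}. Since $A\in\OP^0(\Theta)$ in particular satisfies $[\Theta,A]\in\op^0(\Theta)$, that theorem already gives $f(A)\in\op^0(\Theta)$, realised by the Helffer--Sj\"ostrand integral $f(A)=-\tfrac1\pi\int_\C \frac{\partial \tilde{f}}{\partial \overline{z}}\,(z-A)^{-1}\,dz$, convergent as a Bochner integral in every $\Hc^s$ once the order $N$ in the almost analytic extension is large enough (Theorem~\ref{T:Helffer-Sjostrand}, Lemma~\ref{L:ResolventEstimate}, Proposition~\ref{P:Invertop}). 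As in the proof of Theorem~\ref{T:FunctCalcOp0} we may assume $f\in C^\infty_c(\R)$, since $\overline{A}^{0,0}$ is bounded and self-adjoint and hence has compact spectrum. To prove $f(A)\in\OP^0(\Theta)$ it remains to show that each iterated commutator $\delta_\Theta^n(f(A))$ again lies in $\op^0(\Theta)$, i.e. extends to a bounded operator on each $\Hc^s$.

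First I would compute $\delta_\Theta^n(f(A))$ by commuting $\delta_\Theta=\mathrm{ad}_\Theta$ through the integral. For $\xi\in\Hc^\infty$ one expands $\delta_\Theta^n(f(A))\xi$ as the finite sum $\sum_j\binom{n}{j}(-1)^{n-j}\Theta^j f(A)\Theta^{n-j}\xi$, and each term is handled by pulling the bounded operator $\Theta^j:\Hc^{s+j}\to\Hc^s$ out of a Bochner integral that converges in $\Hc^{s+j}$ provided $N$ is chosen large. Reassembling and using that $\delta_\Theta$ is a derivation on $\op(\Theta)$ together with the resolvent identity $\delta_\Theta((z-A)^{-1})=(z-A)^{-1}\delta_\Theta(A)(z-A)^{-1}$, the Leibniz rule yields, for each $n$, a finite sum
\[
\delta_\Theta^n(f(A))=-\frac{1}{\pi}\sum c_{k_1,\dots,k_m}\int_\C \frac{\partial \tilde{f}}{\partial \overline{z}}\,(z-A)^{-1}\delta_\Theta^{k_1}(A)(z-A)^{-1}\cdots\delta_\Theta^{k_m}(A)(z-A)^{-1}\,dz,
\]
the sum running over $m\ge1$, $k_i\ge1$, $k_1+\dots+k_m=n$ (plus the term $f(A)$ when $n=0$), with $m+1$ resolvent factors. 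Crucially, $f(A)$, hence $\delta_\Theta^n(f(A))$, is independent of $N$, so for each fixed pair $(s,n)$ we may use the representation with whatever $N$ is convenient for the estimate.

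It then remains to bound each summand on $\Hc^s$. Because $A\in\OP^0(\Theta)$, every $\delta_\Theta^{k_i}(A)$ lies in $\op^0(\Theta)$, so $\|\delta_\Theta^{k_i}(A)\|_{\Hc^s\to\Hc^s}<\infty$; by Lemma~\ref{L:ResolventEstimate}, $\|(z-A)^{-1}\|_{\Hc^s\to\Hc^s}\le C_s|\Im z|^{-1}(\langle z\rangle/|\Im z|)^{2^{|s|}-1}$, so the product of the $m+1$ resolvents has $\Hc^s$-operator norm $\lesssim_s \langle z\rangle^{(m+1)(2^{|s|}-1)}|\Im z|^{-(m+1)2^{|s|}}$. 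Since $f\in C^\infty_c$, the extension $\tilde f$ is supported in a bounded region of $\C$ (so $\langle z\rangle$ is bounded there), and on it one has the standard bound $|\frac{\partial \tilde{f}}{\partial \overline{z}}(z)|\lesssim_N |\Im z|^N$ for small $|\Im z|$. Choosing $N\ge (n+1)\,2^{|s|}$ makes the integrand's $\Hc^s$-operator norm dominated by an integrable function of $z$ on that region, so each summand converges absolutely in $B(\Hc^s)$ and extends the corresponding operator $\Hc^\infty\to\Hc^\infty$ to a bounded operator on $\Hc^s$. Carrying this out for every $s\in\R$ gives $\delta_\Theta^n(f(A))\in\op^0(\Theta)$, and since $n$ is arbitrary, $f(A)\in\OP^0(\Theta)$.

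The main obstacle is the two-sided bookkeeping in the last step: the polynomial-in-$\langle z\rangle$, negative-power-of-$|\Im z|$ growth of a product of $m+1$ resolvents worsens with both $|s|$ and $n$, and one must check it is always defeated by the vanishing order of $\frac{\partial \tilde{f}}{\partial \overline{z}}$ near the real axis — which works only because the Helffer--Sj\"ostrand operator is independent of $N$, allowing $N$ to be enlarged per estimate. A secondary technical point is the rigorous justification, via convergence of the defining integrals in the stronger norms $\Hc^{s+n}$, that the $n$-fold commutator with $\Theta$ genuinely passes through the Bochner integral term by term. (Alternatively, the whole argument can be organised through the multiple-operator-integral calculus of Chapter~\ref{Ch:MOIs}, which is the route taken there.)
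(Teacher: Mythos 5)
Your argument is correct, but it is organised differently from the paper. The paper does not prove Theorem~\ref{T:FunctCalcOP0} inside the functional-calculus chapter at all: it defers it to the MOI chapter and obtains it as a special case of Theorem~\ref{T:OPMOOIS}, where the commutator is computed by Proposition~\ref{P:UMOIcom} as $[\Theta,f(A)]=T^{A,A}_{f^{[1]}}([\Theta,A])$, higher commutators produce MOIs with symbols $f^{[m]}$ and arguments $\delta_\Theta^{k_i}(A)$, and boundedness on each $\Hc^s$ comes from Lemma~\ref{L:DivDifT} together with Theorem~\ref{T:MOOIsop0}. Your proof is the same analytic mechanism with the MOI packaging stripped away: since $f^{[m]}$ has the almost-analytic resolvent-product representation, the MOI $T^{A,\dots,A}_{f^{[m]}}(\delta_\Theta^{k_1}(A),\dots)$ is exactly your integral $\int_\C \frac{\partial\tilde f}{\partial\overline{z}}(z-A)^{-1}\delta_\Theta^{k_1}(A)(z-A)^{-1}\cdots\,dz$, and both routes ultimately rest on the resolvent commutator identity, Lemma~\ref{L:ResolventEstimate}, the vanishing of $\frac{\partial\tilde f}{\partial\overline{z}}$ to order $N$ near $\R$, and the freedom to enlarge $N$ for each fixed pair $(s,n)$ — a point you correctly identified as essential, along with the reduction to $f\in C^\infty_c(\R)$ and the justification (via the binomial expansion and convergence of the defining integrals in the stronger Sobolev norms) of passing $\delta_\Theta^n$ through the Bochner integral. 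What the two approaches buy is different: yours is self-contained within the functional-calculus chapter and more elementary, needing nothing beyond Theorem~\ref{T:Helffer-Sjostrand}, Proposition~\ref{P:Invertop} and Lemma~\ref{L:ResolventEstimate}; the paper's route requires the MOI machinery but in exchange yields the fully multilinear statement of Theorem~\ref{T:OPMOOIS} (arbitrary arguments $X_i\in\OP^{r_i}(\Theta)$, and the parallel $\Theta$-elliptic case), of which the present theorem is then an immediate corollary. I see no gap in your proposal.
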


Theorem~\ref{T:FunctCalcOp0} and Theorem~\ref{T:FunctCalcOP0} are consistent with~\cite[Theorem~3]{Bony2013}, which provides a functional calculus for zero-order pseudodifferential operators on manifolds for $f \in C^\infty(\R)$. Since our pseudodifferential calculus is vastly more general, our results are both more general and less specific than that in~\cite{Bony2013}.
    \chapter{Multiple operator integrals as abstract pseudodifferential operators}
\label{Ch:MOIs}
{\setlength{\epigraphwidth}{\widthof{Thanks again, and work even harder!!!}}
\epigraph{Thanks again, and work even harder!!!}{Fedor Sukochev}}
Like Chapter~\ref{Ch:FunctCalc}, the content and exposition in this chapter has appeared previously as part of the preprint~\cite{MOOIs}, joint work with Edward McDonald and Teun van Nuland. For this chapter, we are indebted to discussions with Fedor Sukochev and Dmitriy Zanin. The main result of this chapter, Theorem~\ref{T:MainMOIConstruction}, is a construction of multiple operator integrals adapted to the abstract pseudodifferential calculus of Connes and Moscovici. The other main results of this chapter are a more specific version of this MOI construction in Theorem~\ref{T:MOOIforNCG1}, a noncommutative Taylor expansion in Theorem~\ref{T:CombiExpansion}, and a result on the existence of asymptotic trace expansions in spectral triples in Corollary~\ref{C:AsympExpNCG}.

Due to the results in Chapter~\ref{Ch:FunctCalc}, we have a functional calculus for abstract pseudodifferential operators $A \in \op(\Theta)$. As explained in Section~\ref{S:MOIs}, it would be incredibly useful in the field of noncommutative geometry if we can define multiple operator integrals (MOIs) for this abstract pseudodifferential calculus. There are two closely related constructions we could employ here: Peller's approach~\cite{Peller2006,Peller2016} and Azamov--Carey--Dodds--Sukochev's~\cite{ACDS}. The former works with a very general class of symbol functions, constructing MOIs for operators on Hilbert spaces. The latter considers a more restricted class of functions and achieves stronger results as a consequence (Fr\'echet versus Gateaux differentiability), and notably achieves this in the more general setting of semi-finite von Neumann algebras. Since we wish to keep our symbol functions as general as possible and the results in Peller's work suffice for our purposes,  we will use Peller's approach as a blueprint to construct MOIs of pseudodifferential operators.

In Section~\ref{S:AsympExp} we will provide asymptotic expansions for these MOIs including a noncommutative Taylor expansion, which we will apply in Section~\ref{S:AsympExpNCG} to noncommutative geometry, answering a previously open question on the existence of asymptotic expansions of heat traces in spectral triples.

Throughout this chapter, we fix a positive invertible operator $\Theta$ on the separable Hilbert space $\Hc$, which yields a scale of separable Hilbert spaces $\{\Hc^s\}_{s\in \R}$ by Definition~\ref{def:pseudocalc1}.

\section{Operator integrals}
We start with standard definitions and results on measurability and integrability of operator valued functions.
\begin{defn}
    Let $\Hc_0, \Hc_1$ be separable Hilbert spaces and let $(\Omega, \Sigma, \nu)$ be a measure space with complex measure. A function $f: \Omega \to B(\Hc_1, \Hc_0)$ is called weak operator topology measurable (weakly measurable for short) if for all $\eta \in \Hc_0$, $\xi \in \Hc_1$ the complex-valued function
    \[
    \omega \mapsto \langle \eta, f(\omega) \xi \rangle_{\Hc_0}, \quad \omega \in \Omega,
    \]
    is measurable. Similarly, $f$ is said to be weak operator topology integrable if for all $\xi$ and $\eta$ the above map is integrable.
\end{defn}

\begin{lem}{\cite[Lemma~1.4.2]{LMSZVol2}}\label{L:PettisIntegral}
Let $\Hc_0, \Hc_1, (\Omega, \Sigma, \nu)$ be as above, and let $f: \Omega \to B(\Hc_1, \Hc_0)$ be weakly measurable. Then the norm function
    \[
    \omega \mapsto \|f(\omega)\|_{\Hc_1 \to \Hc_0}, \quad \omega \in \Omega,
    \]
    is measurable. If moreover
    \[
    \int_\Omega \| f(\omega)\|_{\Hc_1 \to \Hc_0} \, d|\nu|(\omega) < \infty,
    \]
    then there exists a unique $I_f \in  B(\Hc_1, \Hc_0)$ such that
    \[
    \langle \eta, I_f \xi \rangle_{\Hc_0} = \int_\Omega \langle \eta, f(\omega)\xi \rangle_{\Hc_0} \, d\nu(\omega), \quad \eta \in \Hc_0, \xi \in \Hc_1,
    \]
    and
    \[
    \| I_f \|_{\Hc_1 \to \Hc_0} \leq \int_\Omega \| f(\omega)\|_{\Hc_1 \to \Hc_0} \,d|\nu|(\omega).
    \]
    We then write $I_f = \int_\Omega f(\omega) \,d\nu(\omega)$.
\end{lem}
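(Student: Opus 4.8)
The plan is to establish the two assertions separately, both reducing to elementary Hilbert-space arguments once separability is exploited. For the measurability of the norm function, I would fix countable dense subsets $\{\eta_m\}_{m\in\N}$ and $\{\xi_n\}_{n\in\N}$ of the closed unit balls of $\Hc_0$ and $\Hc_1$ respectively, which exist by separability. For each fixed $\omega$, continuity of the inner product gives
\[
\|f(\omega)\|_{\Hc_1\to\Hc_0} = \sup_{m,n\in\N} |\langle \eta_m, f(\omega)\xi_n\rangle_{\Hc_0}|.
\]
By weak measurability each $\omega\mapsto |\langle \eta_m, f(\omega)\xi_n\rangle_{\Hc_0}|$ is measurable, and a countable supremum of measurable $[0,\infty]$-valued functions is measurable, so $\omega\mapsto\|f(\omega)\|_{\Hc_1\to\Hc_0}$ is measurable.

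Next, assume $\int_\Omega \|f(\omega)\|_{\Hc_1\to\Hc_0}\,d|\nu|(\omega)<\infty$. For $\eta\in\Hc_0$, $\xi\in\Hc_1$, the pointwise bound $|\langle\eta,f(\omega)\xi\rangle_{\Hc_0}|\le \|f(\omega)\|_{\Hc_1\to\Hc_0}\|\eta\|\,\|\xi\|$ together with the integrability hypothesis shows that $\omega\mapsto\langle\eta,f(\omega)\xi\rangle_{\Hc_0}$ is $\nu$-integrable, so the expression
\[
B(\eta,\xi):=\int_\Omega\langle\eta,f(\omega)\xi\rangle_{\Hc_0}\,d\nu(\omega)
\]
defines a sesquilinear form, linear in $\xi$ and anti-linear in $\eta$, satisfying $|B(\eta,\xi)|\le \big(\int_\Omega\|f(\omega)\|_{\Hc_1\to\Hc_0}\,d|\nu|(\omega)\big)\|\eta\|\,\|\xi\|$. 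By the Riesz representation theorem for bounded sesquilinear forms between Hilbert spaces, there is a unique $I_f\in B(\Hc_1,\Hc_0)$ with $B(\eta,\xi)=\langle\eta,I_f\xi\rangle_{\Hc_0}$ for all $\eta,\xi$, and $\|I_f\|_{\Hc_1\to\Hc_0}\le\int_\Omega\|f(\omega)\|_{\Hc_1\to\Hc_0}\,d|\nu|(\omega)$. Uniqueness is immediate: if $J$ also represents $B$, then $\langle\eta,(I_f-J)\xi\rangle_{\Hc_0}=0$ for all $\eta,\xi$ forces $J=I_f$.

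\textbf{Main obstacle.} There is no deep difficulty here; the only points needing care are that $\nu$ is a complex (hence signed) measure, so one must pass through its total variation $|\nu|$ to deduce the integrability of $\omega\mapsto\langle\eta,f(\omega)\xi\rangle_{\Hc_0}$ and to obtain the norm bound, and that the Riesz step should be invoked in the version valid for operators between two (possibly distinct) Hilbert spaces with the anti-linear slot on $\Hc_0$. If one wishes to avoid quoting the sesquilinear-form version, one can instead fix $\xi\in\Hc_1$, observe that $\eta\mapsto\overline{B(\eta,\xi)}$ is a bounded linear functional on $\Hc_0$ of norm at most $\big(\int_\Omega\|f(\omega)\|\,d|\nu|\big)\|\xi\|$, apply the ordinary Riesz representation theorem to obtain a vector $I_f\xi\in\Hc_0$, and then verify directly that $\xi\mapsto I_f\xi$ is linear and bounded with the asserted norm estimate.
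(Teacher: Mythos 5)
Your proof is correct. Note that the paper does not supply its own argument for this lemma: it is quoted verbatim from the cited reference \cite[Lemma~1.4.2]{LMSZVol2}, so there is nothing internal to compare against. Your two steps — reducing the norm to a countable supremum $\sup_{m,n}|\langle\eta_m,f(\omega)\xi_n\rangle_{\Hc_0}|$ over dense sequences in the unit balls (this is where separability is used, and it is indeed assumed in the surrounding definition), and then representing the bounded sesquilinear form $B(\eta,\xi)=\int_\Omega\langle\eta,f(\omega)\xi\rangle_{\Hc_0}\,d\nu(\omega)$ via Riesz — constitute the standard construction of the weak (Pettis-type) integral and match what the cited source does. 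Your handling of the complex measure through $|\nu|$ and your remark on the anti-linear slot (the paper's inner product is linear in the second argument, so $B$ is anti-linear in $\eta$, consistent with your fixed-$\xi$ Riesz alternative) are both accurate, and uniqueness is immediate as you say.
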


\begin{prop}{\cite[Lemma~3.11]{DoddsDodds2020II}}\label{P:WeakMeasFunctCalc}
    Let $(\Omega, \Sigma, \nu)$ be a $\sigma$-finite measure space, let $a:\R \times \Omega \to \C$ be measurable and bounded, and let $H$ be an (unbounded) self-adjoint operator on $\Hc$. Then
    \[
    \omega \mapsto a(H,\omega)
    \]
    is weakly measurable.
\end{prop}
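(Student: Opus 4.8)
\medskip

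The plan is to reduce the statement to the scalar case via the spectral theorem for $H$, and then to a routine monotone-class / simple-function approximation argument. First I would fix vectors $\xi, \eta \in \Hc$ and consider the complex measure $\mu_{\eta,\xi}$ on $\R$ defined by $\mu_{\eta,\xi}(B) := \langle \eta, E_H(B)\xi\rangle$, where $E_H$ is the spectral measure of the self-adjoint operator $H$; this is a finite complex Borel measure of total variation at most $\|\eta\|\|\xi\|$. By the functional calculus, for each fixed $\omega \in \Omega$ we have
\[
\langle \eta, a(H,\omega)\xi\rangle = \int_\R a(\lambda,\omega)\, d\mu_{\eta,\xi}(\lambda),
\]
so the task is to show that $\omega \mapsto \int_\R a(\lambda,\omega)\,d\mu_{\eta,\xi}(\lambda)$ is $\Sigma$-measurable whenever $a: \R\times\Omega \to \C$ is (jointly) measurable and bounded.

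\medskip

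The next step is a standard approximation argument. Since the claim is linear in $a$ and stable under bounded pointwise limits (dominated convergence applied to the finite measure $|\mu_{\eta,\xi}|$ on the $\R$-variable lets one pass such limits through the integral, giving measurability of the limit as a pointwise limit of measurable functions of $\omega$), it suffices to verify the claim for $a = \chi_{B \times F}$ with $B \in \mathcal{B}(\R)$ and $F \in \Sigma$, by the functional monotone class theorem applied to the product $\sigma$-algebra $\mathcal{B}(\R)\otimes\Sigma$. For such an indicator, $\int_\R \chi_{B\times F}(\lambda,\omega)\,d\mu_{\eta,\xi}(\lambda) = \chi_F(\omega)\,\mu_{\eta,\xi}(B)$, which is manifestly a measurable function of $\omega$. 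Here $\sigma$-finiteness of $(\Omega,\Sigma,\nu)$ (or merely the existence of a countable generating algebra, which we may invoke since $\Hc$ is separable and the relevant $\sigma$-algebras can be taken countably generated) is what makes the monotone class theorem applicable in the form needed.

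\medskip

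Finally, since $\xi$ and $\eta$ were arbitrary, this shows $\omega \mapsto a(H,\omega)$ is weak operator topology measurable, as claimed. The only mild subtlety — and the step I would be most careful about — is the interchange of the $\lambda$-integral with pointwise limits in $\omega$ and the precise form of the monotone class argument on the product $\sigma$-algebra: one must make sure the class of functions $a$ for which the conclusion holds is genuinely closed under the operations required (containing a generating $\pi$-system, closed under bounded monotone limits and uniform limits, and a vector space). Boundedness of $a$ together with finiteness of $|\mu_{\eta,\xi}|$ makes all the relevant dominated-convergence steps immediate, so no real analytic difficulty arises; the content is purely measure-theoretic bookkeeping. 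This is exactly the argument behind \cite[Lemma~3.11]{DoddsDodds2020II}, which we may simply cite.
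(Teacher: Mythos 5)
Your argument is correct. Note, however, that the paper itself gives no proof at all here: it simply cites \cite[Lemma~3.11]{DoddsDodds2020II} and remarks that, although that lemma is stated for bounded $H$, "the unbounded case follows with the same proof." Your proposal supplies exactly the missing content: fixing $\xi,\eta$, writing $\langle \eta, a(H,\omega)\xi\rangle = \int_\R a(\lambda,\omega)\,d\mu_{\eta,\xi}(\lambda)$ with $\mu_{\eta,\xi}(B)=\langle\eta,E_H(B)\xi\rangle$, and running a functional monotone class argument on $\mathcal{B}(\R)\otimes\Sigma$ starting from rectangles $\chi_{B\times F}$, with dominated convergence (against the finite measure $|\mu_{\eta,\xi}|$) giving closure under bounded pointwise limits. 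A pleasant feature of your route is that it makes transparent why boundedness of $H$ is irrelevant: only the finite complex spectral measures $\mu_{\eta,\xi}$ ever enter, so the bounded/unbounded distinction the paper has to comment on simply disappears. One small correction: neither $\sigma$-finiteness of $(\Omega,\Sigma,\nu)$ nor separability of $\Hc$ is needed for the monotone class step — the functional monotone class theorem applies to any measurable space — so your parenthetical invoking countable generation via separability of $\Hc$ is both unnecessary and slightly misplaced ($\sigma$-finiteness is carried in the hypothesis for use elsewhere, e.g.\ in later Fubini-type manipulations, not for this measurability claim). This does not affect the validity of the proof.
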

\begin{proof}
Though~\cite[Lemma~3.11]{DoddsDodds2020II} is only formulated for bounded $H$, the unbounded case follows with the same proof. 
\end{proof}

\begin{lem}\label{L:WeaklyMeasComp}
    Let $\Hc_0, \ldots, \Hc_{2n+1}$ be separable Hilbert spaces, $X_i \in B(\Hc_{2i}, \Hc_{2i-1})$, and let $f_i:\Omega \to B(\Hc_{2i+1}, \Hc_{2i} )$ be weakly measurable functions. Then
    \begin{align*}
        \Omega & \to B(\Hc_{2n+1}, \Hc_0)\\
        \omega &\mapsto f_0(\omega) X_1 f_1(\omega) \cdots X_n f_n(\omega)
    \end{align*}
    is weakly measurable. Furthermore, if 
    \[
    \int_\Omega \| f_0(\omega)\|_{\Hc_1 \to \Hc_0} \cdots \|f_n(\omega)\|_{\Hc_{2n+1}\to \Hc_{2n}}\, d|\nu|(\omega) < \infty,
    \]
    the map
    \begin{align*}
    B(\Hc_{1}, \Hc_{0}) \times \cdots \times B(\Hc_{2n-1}, \Hc_{2n-2}) &\to B(\Hc_{2n}, \Hc_0)\\
    (X_1, \ldots, X_n) \quad &\mapsto \int_\Omega f_0(\omega) X_1f_1(\omega) \cdots X_n f_n(\omega)\, d\nu(\omega),
\end{align*}
whose existence follows from Lemma~\ref{L:PettisIntegral}, is $so$-continuous when restricted to the unit ball in each argument $B(\Hc_{2i}, \Hc_{2i-1})$.
\end{lem}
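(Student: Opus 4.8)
The plan is to treat the two assertions separately. For the \emph{weak measurability} of the product map, I would first reduce to the case $n=1$ and then induct on $n$, or equivalently argue directly by interposing orthonormal bases. Fix $\eta \in \Hc_0$ and $\xi \in \Hc_{2n+1}$. Choose orthonormal bases $\{e^{(i)}_k\}_{k}$ of each intermediate Hilbert space $\Hc_i$ ($1 \leq i \leq 2n$). Inserting resolutions of the identity between consecutive factors, write
\[
\langle \eta, f_0(\omega) X_1 f_1(\omega) \cdots X_n f_n(\omega)\xi\rangle_{\Hc_0}
= \sum_{k_1,\ldots,k_{2n}} \langle \eta, f_0(\omega) e^{(1)}_{k_1}\rangle \langle e^{(1)}_{k_1}, X_1 e^{(2)}_{k_2}\rangle \cdots \langle e^{(2n)}_{k_{2n}}, f_n(\omega)\xi\rangle,
\]
where the factors involving $X_i$ are constants and the factors involving $f_i(\omega)$ are measurable in $\omega$ by hypothesis. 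Hence each summand is measurable, and each partial sum over a finite index set is measurable. To pass to the limit I would invoke the norm-measurability from Lemma~\ref{L:PettisIntegral} together with a dominated-convergence argument: the tail of the series is controlled in absolute value by a product of $\|f_i(\omega)\|$ times norms of $X_i$ (via Cauchy--Schwarz applied pairwise over the basis indices), so the partial sums converge pointwise in $\omega$ to the stated quantity, which is therefore measurable as a pointwise limit of measurable functions. Once weak measurability is established, Lemma~\ref{L:PettisIntegral} applied to the weakly measurable function $\omega \mapsto f_0(\omega)X_1 f_1(\omega)\cdots X_n f_n(\omega)$, whose norm is integrable by the product estimate $\|f_0(\omega)X_1\cdots X_n f_n(\omega)\|_{\Hc_{2n+1}\to\Hc_0} \leq \prod_i \|f_i(\omega)\| \prod_i \|X_i\|$ and the integrability hypothesis, yields the existence of the Bochner-type integral defining the map $(X_1,\ldots,X_n)\mapsto \int_\Omega f_0(\omega)X_1 f_1(\omega)\cdots X_n f_n(\omega)\,d\nu(\omega)$.

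For the \emph{strong-operator continuity} on the product of unit balls, I would argue one variable at a time, exploiting multilinearity. Suppose $X_i^{(\alpha)} \to X_i$ in the strong operator topology with $\|X_i^{(\alpha)}\|\leq 1$ and $\|X_i\|\leq 1$; by a telescoping decomposition it suffices to handle convergence in a single slot, say the $j$-th, with the others fixed of norm $\leq 1$. Fix $\eta\in\Hc_0$, $\xi\in\Hc_{2n}$. For each $\omega$, the integrand difference paired with $\eta,\xi$ is
\[
\langle \eta, f_0(\omega) X_1 \cdots f_{j-1}(\omega)(X_j^{(\alpha)}-X_j)f_j(\omega)\cdots X_n f_n(\omega)\xi\rangle_{\Hc_0},
\]
which tends to $0$ pointwise in $\omega$: the vector $g_\omega := f_j(\omega)X_{j+1}\cdots X_n f_n(\omega)\xi$ is a fixed element of $\Hc_{2j}$, so $(X_j^{(\alpha)}-X_j)g_\omega \to 0$ in norm, and the remaining bounded factors preserve this. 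Moreover the integrand difference is dominated in absolute value by $2\|\eta\|\|\xi\|\prod_i\|f_i(\omega)\|$ (using $\|X_j^{(\alpha)}-X_j\|\leq 2$ and $\|X_i\|\leq 1$ elsewhere), which is integrable by hypothesis. Dominated convergence then gives $\langle \eta, (I^{(\alpha)} - I)\xi\rangle \to 0$, i.e.\ $I^{(\alpha)}\to I$ in the weak operator topology; since the weak and strong operator topologies agree on bounded sets for sequences only in the sense of testing against individual vectors, and here we are proving $so$-continuity which is precisely convergence $I^{(\alpha)}\xi \to I\xi$ in norm --- so I would instead take $\eta$ arbitrary and conclude weak convergence $I^{(\alpha)}\xi\rightharpoonup I\xi$, then upgrade to norm convergence by a separate argument, or simply observe that $so$-continuity here means $(X_1,\ldots,X_n)\mapsto I\xi$ is continuous into $\Hc_0$ with its norm, and run the dominated convergence directly on $\|f_0(\omega)X_1\cdots(X_j^{(\alpha)}-X_j)\cdots f_n(\omega)\xi\|_{\Hc_0} \to 0$.

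The main obstacle I anticipate is the last point: getting genuine \emph{norm} (strong operator) continuity rather than merely weak operator continuity. The clean way around it is to apply dominated convergence to the scalar function $\omega \mapsto \|f_0(\omega)X_1 f_1(\omega)\cdots(X_j^{(\alpha)}-X_j)f_j(\omega)\cdots f_n(\omega)\xi\|_{\Hc_0}$ directly: this is measurable (by the norm-measurability half of Lemma~\ref{L:PettisIntegral} applied to the weakly measurable integrand), it is pointwise dominated by $2\prod_i\|f_i(\omega)\|\cdot\|\xi\|$ which is integrable, and it tends to $0$ pointwise in $\omega$ because $(X_j^{(\alpha)}-X_j)$ applied to the fixed vector $g_\omega$ converges to $0$ in norm and all the surrounding operators ($X_i$ of norm $\leq 1$ and the bounded $f_i(\omega)$) are norm-continuous. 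Then
\[
\Big\|\int_\Omega f_0(\omega)X_1\cdots(X_j^{(\alpha)}-X_j)\cdots f_n(\omega)\xi\,d\nu(\omega)\Big\|_{\Hc_0} \leq \int_\Omega \big\|f_0(\omega)X_1\cdots(X_j^{(\alpha)}-X_j)\cdots f_n(\omega)\xi\big\|_{\Hc_0}\,d|\nu|(\omega) \to 0,
\]
which is exactly $so$-continuity in the $j$-th slot; iterating over $j=1,\ldots,n$ via the standard telescoping (each step legitimate since all operators stay in their unit balls) completes the proof.
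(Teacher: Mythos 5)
Your proposal follows essentially the same route as the paper: the paper disposes of the first assertion by citing the fact that pointwise products of weakly measurable functions are weakly measurable, and of the second by citing the joint strong-operator continuity of multiplication restricted to unit balls together with the dominated convergence theorem for Bochner integrals; you prove both of those cited facts inline. Your treatment of the $so$-continuity — telescoping one slot at a time and applying dominated convergence to the scalar function $\omega\mapsto\|f_0(\omega)X_1\cdots(X_j^{(\alpha)}-X_j)f_j(\omega)\cdots X_nf_n(\omega)\xi\|_{\Hc_0}$, dominated by $2\|\xi\|\prod_i\|f_i(\omega)\|$ — is exactly the standard proof of the cited continuity statement and is correct; your closing remark rightly identifies that one must run the argument on the norm of the integrand applied to $\xi$ rather than against a fixed $\eta$.

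One technical wrinkle in your measurability argument: the simultaneous expansion over all multi-indices $(k_1,\ldots,k_{2n})$ is not absolutely convergent in general, because the entrywise absolute values $|\langle e^{(2i-1)}_{k}, X_i e^{(2i)}_{l}\rangle|$ of a bounded operator need not define a bounded bilinear form, so ``Cauchy--Schwarz applied pairwise'' does not in fact dominate the tail, and the dominated-convergence justification you sketch for that series does not go through as stated. The repair is the other route you already name: reduce to two factors at a time, grouping each constant $X_i$ with an adjacent $f_{i-1}(\omega)$ or $f_i(\omega)$ (the product with a constant operator is trivially weakly measurable), and insert a single resolution of the identity between the two weakly measurable factors; then $\sum_k\langle (f_0(\omega)X_1)^*\eta,e_k\rangle\langle e_k,f_1(\omega)\xi\rangle$ converges absolutely by Cauchy--Schwarz, each term is measurable, and induction on $n$ finishes the first assertion with no dominated convergence needed — pointwise limits of measurable partial sums suffice. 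A final cosmetic point: since the unit ball of $B(\Hc)$ is metrizable in the strong operator topology for separable $\Hc$, checking $so$-continuity along sequences, as your dominated-convergence argument requires, is legitimate.
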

\begin{proof}
    The first part of the lemma is a consequence of the fact that the pointwise product of weakly measurable functions is weakly measurable, see~\cite[Lemma~3.7]{DoddsDodds2020II}.
    
    The $so$-continuity follows from the joint continuity of the multiplication
    \[
     (X_1, \ldots, X_n) \mapsto  a_0(H_0, \omega) X_1 a_1(H_1, \omega) \cdots X_n a_n(H_n, \omega)
    \]
    in the strong operator topology when restricting to the unit balls~\cite[Section~I.3.2]{Blackadar2006}, in combination with the Dominated Convergence Theorem for the Bochner integral of Hilbert space-valued functions~\cite[Corollary~III.6.16]{DunfordSchwartzI}.
\end{proof}

Recall the definition of $L^\beta_\infty(E)$ in Definition~\ref{def:LinftySpectral}, and that we use the notation $\| \cdot \|_\infty$ for the essential supremum.

\begin{lem}\label{L:EssSupMeasurable}
    Let $(\Omega, \Sigma, \nu)$ be a $\sigma$-finite measure space, let $a:\R \times \Omega \to \C$ be measurable and bounded, and let $E$ be a spectral measure on $\Hc$. Then the functions
    \begin{align*}
        \omega &\mapsto \|a(\cdot, \omega)\|_\infty, \qquad        \omega \mapsto \|a(\cdot, \omega)\|_{L^0_\infty(E)}
    \end{align*}
    are measurable.
\end{lem}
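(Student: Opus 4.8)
The plan is to reduce both statements to a single fact about essential suprema with respect to \emph{scalar} measures and then apply Tonelli's theorem. Write $g_1(\omega):=\|a(\cdot,\omega)\|_\infty$, the essential supremum with respect to Lebesgue measure $\lambda$ on $\R$, and $g_2(\omega):=\|a(\cdot,\omega)\|_{L^0_\infty(E)}$, which by Definition~\ref{def:LinftySpectral} equals $\|a(\cdot,\omega)\|_{L_\infty(E)}$. For $g_1$ one has $\|h\|_{L_\infty(\lambda)}=\sup_{k\in\N}\|h\|_{L_\infty(\lambda|_{[-k,k]})}$ for every bounded Borel $h:\R\to\C$. For $g_2$, fix a countable dense subset $\{\xi_n\}_{n\in\N}$ of $\Hc$ and let $\mu_n(S):=\langle\xi_n,E(S)\xi_n\rangle_\Hc=\|E(S)\xi_n\|_\Hc^2$, a finite positive Borel measure on $\R$; since each $E(S)$ is a projection and $\{\xi_n\}$ is dense, $E(S)=0$ if and only if $\mu_n(S)=0$ for all $n$, and consequently $\|h\|_{L_\infty(E)}=\sup_{n\in\N}\|h\|_{L_\infty(\mu_n)}$ for every bounded Borel $h$. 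Since a countable supremum of measurable functions is measurable, it suffices to prove that for an arbitrary finite positive Borel measure $\mu$ on $\R$ the function $\omega\mapsto\|a(\cdot,\omega)\|_{L_\infty(\mu)}$ is $\Sigma$-measurable.

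To establish this I would argue via sub-level sets. Because $a$ is bounded, $\omega\mapsto\|a(\cdot,\omega)\|_{L_\infty(\mu)}$ is $[0,\infty)$-valued, and for every $c\geq 0$ one has $\|a(\cdot,\omega)\|_{L_\infty(\mu)}\leq c$ if and only if $\mu(\{x\in\R:|a(x,\omega)|>c\})=0$ --- here one uses that $\{|a(\cdot,\omega)|>c'\}\uparrow\{|a(\cdot,\omega)|>c\}$ as $c'\downarrow c$ together with continuity of $\mu$ from below. Reading ``$a:\R\times\Omega\to\C$ measurable'' as joint $\mathcal{B}(\R)\otimes\Sigma$-measurability, the map $(x,\omega)\mapsto\mathbf{1}_{\{|a(x,\omega)|>c\}}$ is $\mathcal{B}(\R)\otimes\Sigma$-measurable, so Tonelli's theorem shows that
\[
\omega\longmapsto\mu(\{x:|a(x,\omega)|>c\})=\int_\R\mathbf{1}_{\{|a(x,\omega)|>c\}}\,d\mu(x)
\]
is $\Sigma$-measurable. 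Hence $\{\omega:\|a(\cdot,\omega)\|_{L_\infty(\mu)}\leq c\}$ is measurable for every $c\geq 0$; as the sets $[0,c]$ generate the Borel $\sigma$-algebra of $[0,\infty)$, the function $\omega\mapsto\|a(\cdot,\omega)\|_{L_\infty(\mu)}$ is measurable, and the lemma follows.

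The step I expect to be the main (mild) obstacle is the reduction from the projection-valued essential supremum $\|\cdot\|_{L_\infty(E)}$ to the scalar essential suprema $\|\cdot\|_{L_\infty(\mu_n)}$: this is where separability of $\Hc$ is used, namely to produce a countable family of vectors whose associated scalar spectral measures jointly detect all $E$-null sets, and one must check the equivalence $E(S)=0$ $\iff$ $\mu_n(S)=0$ for all $n$. Everything after that --- the identification of the $L_\infty(\mu)$-norm with the vanishing of a super-level measure, and the Tonelli argument --- is routine, but it does rely essentially on joint measurability of $a$: for a merely separately measurable $a$ the integrand above need not be jointly measurable and measurability of the essential supremum can fail.
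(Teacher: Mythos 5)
Your proof is correct and follows the Fubini--Tonelli route that the paper's (very terse) proof indicates, with separability of $\Hc$ entering exactly where the paper says it is vital --- your countable dense family $\{\xi_n\}$ and the scalar measures $\mu_n(S)=\langle\xi_n,E(S)\xi_n\rangle$ make that point precise. The paper also sketches an alternative argument (combining Lemma~\ref{L:PettisIntegral} with Proposition~\ref{P:WeakMeasFunctCalc}, i.e.\ reading $\|a(\cdot,\omega)\|_{L^0_\infty(E)}$ as the operator norm of the weakly measurable map $\omega\mapsto a(H,\omega)$), but your argument is essentially a worked-out version of its first suggestion.
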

\begin{proof}
    Both claims can be proved with the Fubini--Tonelli Theorem or by combining Lemma~\ref{L:PettisIntegral} and Proposition~\ref{P:WeakMeasFunctCalc}. It is vital that $\| \cdot \|_\infty$ is the essential supremum, and that $E$ is a spectral measure on a \textit{separable} Hilbert space as pointed out in~\cite[Remark~4.1.3]{Nikitopoulos2023}.
\end{proof}

\section{Multiple operator integrals as abstract pseudodifferential operators}\label{S:Basics}
In this section we construct multiple operator integrals following Peller~\cite{Peller2006, Peller2016}. His MOIs are constructed with symbols $\phi$ in integral projective tensor products \\$L_\infty(E_0)\hat{\otimes}_i \cdots \hat{\otimes}_i L_\infty(E_n)$ for spectral measures $E_0, \ldots, E_n$. A precise study of this integral tensor product can be found in~\cite{Nikitopoulos2023}.
We first generalise this integral tensor product.

\begin{defn}\label{def:ProjIntBox}
    Let $\Gamma_0, \ldots, \Gamma_n$ be function spaces of bounded measurable functions $\R \to \R$ equipped with (semi)norms $\|\cdot \|_{\Gamma_i, k}$, $k \in \N$.
    We define $\Gamma_0 \boxtimes_i \cdots \boxtimes_i \Gamma_n$ as the set of functions $\phi: \mathbb{R}^{n+1} \to \mathbb{C}$ for which there exists a decomposition
    \begin{equation}\label{eq:boxrep}
        \phi(\lambda_0, \ldots, \lambda_n) = \int_\Omega a_0(\lambda_0, \omega) \cdots a_n(\lambda_n, \omega) \,d\nu(\omega)
    \end{equation}
    where $(\Omega, \nu)$ is a $\sigma$-finite measure space, $a_i : \mathbb{R} \times \Omega \to \mathbb{C}$ is measurable, $a_i(\cdot, \omega) \in \Gamma_i$, the functions $\omega \mapsto \|a_i(\cdot, \omega)\|_{\Gamma_i, k}$ are measurable for each $i$ and $k$, and
    \begin{equation}\label{eq:boxnorm}
        \int_\Omega \|a_0(\cdot, \omega)\|_{\Gamma_0, k_0} \cdots \|a_n(\cdot, \omega)\|_{\Gamma_n, k_n}\, d|\nu|(\omega) < \infty.
    \end{equation}
    We define the seminorm
    \[
    \| \phi \|_{\Gamma_0 \boxtimes_i \cdots \boxtimes_i \Gamma_n, k_0, \ldots, k_n}, \quad k_0, \ldots, k_n \in \N
    \]
    to be the infimum of the quantity~\eqref{eq:boxnorm} over all representations~\eqref{eq:boxrep}.
\end{defn}

\begin{rem}
We have that 
\[
\Gamma_0 \otimes \cdots\otimes \Gamma_n \subseteq \Gamma_0 \boxtimes_i \cdots \boxtimes_i \Gamma_n,
\]
where $\otimes$ denotes the algebraic tensor product for topological vector spaces. We refrain from answering the question whether the space in Definition~\ref{def:ProjIntBox} is in general the completion of the algebraic tensor product under the given seminorms. 
\end{rem}

Note that due to Lemma~\ref{L:EssSupMeasurable}, if $a: \R \times \Omega \to \C$ is measurable and $a(\cdot, \omega) \in S^\beta(\R)$, we have that $\omega \mapsto \|a(\cdot, \omega)\|_{S^\beta(\R), k}$ is measurable, and the same claim holds for $T^\beta(\R)$ and $L_\infty^\beta(E)$ (recall the definitions of the spaces $S^\beta(\R)$, $T^\beta(\R)$ and $L_\infty^\beta(E)$ respectively in Definition~\ref{def:Sbeta}, Definition~\ref{def:Tbeta} and Definition~\ref{def:LinftySpectral}). Hence the construction in Definition~\ref{def:ProjIntBox} can be applied for these spaces without this extra assumption.

For $L_\infty^\beta(E)$ with $E$ a spectral measure, this gives the integral projective tensor product
\[
L_\infty^{\beta_0}(E_0) \boxtimes_i \cdots \boxtimes_i L_\infty^{\beta_n}(E_n) = L_\infty^{\beta_0}(E_0) \hat{\otimes}_i \cdots \hat{\otimes}_i L_\infty^{\beta_n}(E_n),
\]
which appears in particular for $\beta_0 = \cdots = \beta_n = 0$ in the works by Peller~\cite[pp.6, 7]{Peller2006}.

\begin{rem}\label{rem:SipvL}
    Observe that
    \[
    S^{\beta_0}(\R) \boxtimes_i \cdots \boxtimes_i S^{\beta_n}(\R) \subseteq  L_\infty^{\beta_0}(E_0) \hat{\otimes}_i \cdots \hat{\otimes}_i L_\infty^{\beta_n}(E_n)
    \]
    no matter what spectral measures $E_i$ are taken. 
\end{rem}

\begin{prop}\label{P:BoxProdProd}
If $\phi \in S^{\alpha_0}(\R) \boxtimes_i \cdots \boxtimes_i S^{\alpha_n}(\R)$ and $\psi \in S^{\beta_0}(\R) \boxtimes_i \cdots \boxtimes_i S^{\beta_n}(\R)$, then
\[
\Phi(\lambda_0, \ldots, \lambda_n):= \phi(\lambda_0, \ldots, \lambda_n) \psi(\lambda_0, \ldots, \lambda_n)\in S^{\alpha_0 + \beta_0}(\R) \boxtimes_i \cdots \boxtimes_i S^{\alpha_n + \beta_n}(\R).
\]
An analogous statement holds for the spaces $L^\beta_\infty(E)$.
\end{prop}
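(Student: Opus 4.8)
The plan is to exploit the defining representations of the two factors and multiply them pointwise, packaging the product into a single representation over the product measure space. Concretely, suppose $\phi$ has a representation
\[
\phi(\lambda_0,\ldots,\lambda_n) = \int_{\Omega} a_0(\lambda_0,\omega)\cdots a_n(\lambda_n,\omega)\,d\nu(\omega),
\]
with $a_i(\cdot,\omega)\in S^{\alpha_i}(\R)$, and $\psi$ has a representation
\[
\psi(\lambda_0,\ldots,\lambda_n) = \int_{\Omega'} b_0(\lambda_0,\omega')\cdots b_n(\lambda_n,\omega')\,d\nu'(\omega'),
\]
with $b_i(\cdot,\omega')\in S^{\beta_i}(\R)$. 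Then by Tonelli's theorem (both factors absolutely convergent in the sense of \eqref{eq:boxnorm}), the product $\Phi=\phi\psi$ is represented over the product measure space $(\Omega\times\Omega', \nu\otimes\nu')$ by the functions $c_i(\lambda_i,(\omega,\omega')):=a_i(\lambda_i,\omega)b_i(\lambda_i,\omega')$. The first step is therefore to check that this $c_i$ lies in $S^{\alpha_i+\beta_i}(\R)$ for each fixed $(\omega,\omega')$: this is the submultiplicativity of the $S^\beta$ scale under pointwise products, which follows from the Leibniz rule since
\[
|(fg)^{(k)}(x)|\langle x\rangle^{k-\alpha-\beta}\leq \sum_{j=0}^k\binom{k}{j}\big(|f^{(j)}(x)|\langle x\rangle^{j-\alpha}\big)\big(|g^{(k-j)}(x)|\langle x\rangle^{k-j-\beta}\big),
\]
giving $\|fg\|_{S^{\alpha+\beta}(\R),k}\leq 2^k\|f\|_{S^\alpha(\R),k}\|g\|_{S^\beta(\R),k}$ (or a similar explicit constant).

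The second step is the measurability bookkeeping required by Definition~\ref{def:ProjIntBox}: one needs $(\omega,\omega')\mapsto\|c_i(\cdot,(\omega,\omega'))\|_{S^{\alpha_i+\beta_i}(\R),k}$ to be measurable on the product space. This follows from the submultiplicativity estimate above together with the fact that $\omega\mapsto\|a_i(\cdot,\omega)\|_{S^{\alpha_i}(\R),j}$ and $\omega'\mapsto\|b_i(\cdot,\omega')\|_{S^{\beta_i}(\R),j}$ are measurable (a hypothesis built into the representations, and automatic for $S^\beta$ by Lemma~\ref{L:EssSupMeasurable}), since sums and products of measurable functions pulled back along the coordinate projections are measurable. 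Then the finiteness condition \eqref{eq:boxnorm} for $\Phi$ is obtained by applying the submultiplicativity estimate inside the integral and invoking Tonelli to split
\[
\int_{\Omega\times\Omega'}\prod_{i=0}^n\|c_i(\cdot,(\omega,\omega'))\|_{S^{\alpha_i+\beta_i},k_i}\,d(\nu\otimes\nu') \;\lesssim\; \Big(\int_\Omega\prod_i\|a_i(\cdot,\omega)\|_{S^{\alpha_i},k_i}\,d|\nu|\Big)\Big(\int_{\Omega'}\prod_i\|b_i(\cdot,\omega')\|_{S^{\beta_i},k_i}\,d|\nu'|\Big)<\infty,
\]
which in fact yields the quantitative bound $\|\Phi\|_{\ldots,k_0,\ldots,k_n}\lesssim\|\phi\|_{\ldots,k_0,\ldots,k_n}\|\psi\|_{\ldots,k_0,\ldots,k_n}$ after taking infima over representations.

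For the final sentence — the analogous statement for $L^\beta_\infty(E)$ — the same argument goes through verbatim: the $L^\beta_\infty(E)$ seminorms are also submultiplicative in the sense $\|fg\|_{L^{\alpha+\beta}_\infty(E)}\leq\|f\|_{L^\alpha_\infty(E)}\|g\|_{L^\beta_\infty(E)}$ (immediate from the definition, since $\langle x\rangle^{-\alpha-\beta}=\langle x\rangle^{-\alpha}\langle x\rangle^{-\beta}$ and the essential supremum is submultiplicative), there are no higher-order seminorms to track, and measurability is handled by Lemma~\ref{L:EssSupMeasurable}. I do not expect any serious obstacle here; the only mild subtlety is ensuring $\sigma$-finiteness of the product measure space, which holds since a product of two $\sigma$-finite spaces is $\sigma$-finite, and making sure the absolute-convergence hypotheses are strong enough to license the Tonelli interchange — but both are built into Definition~\ref{def:ProjIntBox}. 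The routine part is simply carrying the constant $2^{k}$ (or $\prod_i 2^{k_i}$) through the estimates, which I would not belabour.
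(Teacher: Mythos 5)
Your proposal is correct and follows essentially the same route as the paper: combine the two representations over the product measure space via Tonelli/Fubini, and use the Leibniz rule to place each product $a_i(\cdot,\omega)b_i(\cdot,\sigma)$ in $S^{\alpha_i+\beta_i}(\R)$ with controlled seminorms, the $L^\beta_\infty(E)$ case being easier still. The only slip is your simplified bound $\|fg\|_{S^{\alpha+\beta}(\R),k}\leq 2^k\|f\|_{S^{\alpha}(\R),k}\|g\|_{S^{\beta}(\R),k}$: the seminorms $\|\cdot\|_{S^{\alpha}(\R),j}$ are not monotone in $j$, so you cannot replace the lower-order factors in the Leibniz sum by the top-order ones; however, the sum-form estimate $\sum_{j=0}^{k}\binom{k}{j}\|f\|_{S^{\alpha}(\R),j}\|g\|_{S^{\beta}(\R),k-j}$ that you wrote first is exactly what the paper uses, and since Definition~\ref{def:ProjIntBox} demands finiteness of \eqref{eq:boxnorm} for every tuple of seminorm indices, each cross term integrates finitely and the conclusion (which makes no quantitative claim) is unaffected.
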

\begin{proof}
    According to Definition~\ref{def:ProjIntBox}, we can find $\sigma$-finite measure spaces $(\Omega, \nu)$, $(\Sigma, \mu)$ and measurable functions $a_i: \R \times \Omega \to \C$, $b_i: \R \times \Sigma \to \C$ such that
    \begin{align*}
        \phi(\lambda_0, \ldots, \lambda_n) &= \int_{\Omega} a_0(\lambda_0, \omega) \cdots a_n(\lambda_n, \omega)\, d\nu(\omega);\\
        \psi(\lambda_0, \ldots, \lambda_n) &= \int_{\Sigma} b_0(\lambda_0 ,\sigma) \cdots b_n(\lambda_n, \sigma) \, d\mu(\sigma).
    \end{align*}
As observed above, the maps
    \[
    \omega \mapsto \| a_i(\cdot, \omega)\|_{S^{\alpha_i}(\R), k}
    \]
    are measurable, similarly for the functions $b_i$.
    
    Using Tonelli's theorem,
    \begin{align*}
        \int_{\Omega\times \Sigma} \big|a_0(\lambda_0, \omega)b_0(\lambda_0, \sigma)&\cdots a_n(\lambda_n, \omega)b_n(\lambda_n, \sigma)\big| \,d(\nu\times \mu)(\omega, \sigma)\\
        &\leq \langle \lambda_0 \rangle^{\alpha_0+\beta_0} \cdots \langle \lambda_n \rangle^{\alpha_n + \beta_n} \int_{\Omega}  \| a_0(\cdot, \omega)\|_{S^{\alpha_0},0} \cdots \|a_n(\cdot, \omega)\|_{S^{\alpha_n}, 0} \, d\nu(\omega) \\
        &\quad \times \int_{\Sigma}\|b_0(\cdot, \sigma)\|_{S^{\beta_0}, 0} \cdots \|b_n(\cdot, \sigma)\|_{S^{\beta_n},0} \, d\mu( \sigma)<\infty.
    \end{align*}
    Hence, by Fubini's theorem
    \[
    \Phi(\lambda_0, \ldots, \lambda_n) = \int_{\Omega \times \Sigma} a_0(\lambda_0, \omega)b_0(\lambda_0, \sigma) \cdots a_n(\lambda_n, \omega) b_n(\lambda_n, \sigma)\, d(\nu \times \mu)(\omega, \sigma).
    \]
    The fact that $\Phi \in S^{\alpha_0+\beta_0}(\R) \boxtimes_i \cdots \boxtimes_i S^{\alpha_n + \beta_n}(\R)$ now follows from the computation
    \[
    \| a_k(\cdot, \omega) b_k(\cdot, \sigma)\|_{S^{\alpha_k + \beta_k}, m} \leq \sum_{j=0}^m \binom{m}{j} \|a_k(\cdot, \omega)\|_{S^{\alpha_k}, j} \| b_k(\cdot, \sigma)\|_{S^{\beta_k}, m-j}.
    \]
\end{proof}

The following theorem is the first main result of this chapter. It is a construction of multiple operator integrals adapted to the abstract pseudodifferential calculus by Connes and Moscovici which we have seen in Chapter~\ref{Ch:FunctCalc}.

\begin{thm}\label{T:MainMOIConstruction}
Let $H_i \in \op^{h_i}(\Theta)$, $h_i > 0$, $i=0, \ldots, n$, be $\Theta$-elliptic symmetric operators with spectral measures $E_i$, and let $\phi\in L_\infty^{\beta_0}(E_0)  \boxtimes_i \cdots  \boxtimes_i  L_\infty^{\beta_n}(E_n)$, given with explicit representation
\[
\phi(\lambda_0,\ldots,\lambda_n)=\int_\Omega a_0(\lambda_0,\omega)\cdots a_n(\lambda_n,\omega)\, d\nu(\omega), \quad \lambda_i \in \sigma(H_i), i=0,\ldots, n.
\] 
Then the integral
\[
T_\phi^{H_0,\ldots,H_n}(X_1,\ldots,X_n)\psi:=\int_\Omega a_0(H_0,\omega)X_1 a_1(H_1,\omega)\cdots X_n a_n(H_n,\omega)\psi\,d\nu(\omega), \quad \psi\in\Hc^\infty,
\]
for $X_1,\ldots,X_n\in\op(\Theta)$, converges as a Bochner integral in $\Hc^s$ for every $s \in \R$, and defines an $n$-multilinear map 
\[
T_\phi^{H_0,\ldots,H_n}:\op^{r_1}(\Theta)\times\cdots\times\op^{r_n}(\Theta)\to\op^{\sum_j r_j + \sum_j \beta_j h_j}(\Theta)
\]
depending on $\Omega$ and the functions $a_0,\ldots,a_n$ only through the symbol~$\phi$. For $s\in \R$ we have the estimate
   \begin{align*}
        \big \|T_\phi^{H_0,\ldots, H_n}(X_1, \ldots,X_n)\big\|_{\Hc^{s+\sum_{j}r_j + \sum_j \beta_j h_j} \to \Hc^{s}} \lesssim \prod_{j=1}^n \|X_j\|_{\Hc^{s_{j}+r_j} \to \Hc^{s_{j}}} \|\phi\|_{L_\infty^{\beta_0}(E_0) \boxtimes_i \cdots \boxtimes_i L_\infty^{\beta_n}(E_n)},
    \end{align*}
    for some specific $s_1, \ldots, s_n \in \R$ depending on $h_j, r_j, \beta_j$.
\end{thm}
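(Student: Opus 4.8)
The strategy is to carry out Peller's construction~\cite{Peller2006,Peller2016} inside the Sobolev scale $\{\Hc^s\}_{s\in\R}$, using the functional calculus of Theorem~\ref{T:MainFunctCalc} to control the factors $a_i(H_i,\omega)$. Write $R:=\sum_{j=1}^n r_j+\sum_{j=0}^n\beta_j h_j$ and fix $s\in\R$. Reading the product $a_0(H_0,\omega)X_1a_1(H_1,\omega)\cdots X_na_n(H_n,\omega)$ from right to left, each $a_i(H_i,\omega)$ lies in $\op^{\beta_ih_i}(\Theta)$ by Theorem~\ref{T:MainFunctCalc} and each $X_j$ in $\op^{r_j}(\Theta)$, so one can read off intermediate exponents $s_1,\ldots,s_n$, depending only on $h_j,r_j,\beta_j$ (explicitly $s_j=s+\sum_{i=0}^{j-1}\beta_ih_i+\sum_{i=1}^{j-1}r_i$), along which the product maps $\Hc^{s+R}\to\Hc^s$ with
\[
\big\|a_0(H_0,\omega)X_1\cdots X_na_n(H_n,\omega)\big\|_{\Hc^{s+R}\to\Hc^s}\leq\Big(\prod_{i=0}^n C_{i,s}\Big)\prod_{j=1}^n\|X_j\|_{\Hc^{s_j+r_j}\to\Hc^{s_j}}\prod_{i=0}^n\|a_i(\cdot,\omega)\|_{L_\infty^{\beta_i}(E_i)},
\]
where the constants $C_{i,s}$ are those furnished by Theorem~\ref{T:MainFunctCalc}. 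Integrating this inequality over $\Omega$ and taking the infimum over all decompositions~\eqref{eq:boxrep} of $\phi$ produces precisely the asserted norm estimate.

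To make the defining integral converge as a Bochner integral in $\Hc^s$ I would apply Lemma~\ref{L:PettisIntegral}, which requires weak measurability of $\omega\mapsto a_0(H_0,\omega)X_1\cdots X_na_n(H_n,\omega)$ as a $B(\Hc^{s+R},\Hc^s)$-valued map. By Lemma~\ref{L:WeaklyMeasComp} it suffices to check that each $\omega\mapsto a_i(H_i,\omega)$ is weakly measurable between the two relevant Sobolev spaces. For this I would invoke Proposition~\ref{P:ReplaceTheta}: replacing $\Theta$ by $\langle H_i\rangle^{1/h_i}$ leaves the spaces $\Hc^t$ and the classes $\op^t$ unchanged (up to equivalent norms), and with respect to $\langle H_i\rangle^{1/h_i}$ the sandwiched operator $\langle H_i\rangle^{a/h_i}a_i(H_i,\omega)\langle H_i\rangle^{-b/h_i}$ is a function of the single self-adjoint operator $H_i$, namely of the $E_i$-essentially bounded symbol $\langle\cdot\rangle^{-\beta_i}a_i(\cdot,\omega)$. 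Hence $\omega\mapsto\langle H_i\rangle^{a/h_i}a_i(H_i,\omega)\langle H_i\rangle^{-b/h_i}$ is weakly measurable into $B(\Hc)$ by Proposition~\ref{P:WeakMeasFunctCalc}, and composing with the fixed bounded maps $\langle H_i\rangle^{\mp a/h_i}$, $\langle H_i\rangle^{\pm b/h_i}$ transfers the weak measurability of $\omega\mapsto a_i(H_i,\omega)$ to the Sobolev spaces. Lemma~\ref{L:PettisIntegral} then yields, for every $s\in\R$, a bounded operator $\Hc^{s+R}\to\Hc^s$; since $\Hc^{s}\hookrightarrow\Hc^{s'}$ continuously for $s\geq s'$, these are compatible and restrict to a single map $T_\phi^{H_0,\ldots,H_n}(X_1,\ldots,X_n):\Hc^\infty\to\Hc^\infty$ lying in $\op^R(\Theta)$. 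Multilinearity in $(X_1,\ldots,X_n)$ is immediate from the formula.

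The one genuinely delicate point — as already in Peller's Hilbert-space construction — is that the operator just built depends only on $\phi$, not on the chosen representation $(\Omega,\nu,a_0,\ldots,a_n)$. I would establish this by pairing against $\Hc^\infty$-vectors and collapsing to the spectral resolutions $E_0,\ldots,E_n$: for $\eta,\psi\in\Hc^\infty$, substituting $a_i(H_i,\omega)=\int_{\sigma(H_i)}a_i(\lambda_i,\omega)\,dE_i(\lambda_i)$ and using that $\eta,\psi\in\Hc^\infty=\Hc^\infty(\langle H_i\rangle^{1/h_i})$ makes the scalar measures $\langle\cdot\rangle^{2N}\,d\langle\eta,E_i(\cdot)\eta\rangle$ and $\langle\cdot\rangle^{2N}\,d\langle\psi,E_i(\cdot)\psi\rangle$ finite for every $N$, one may invoke Fubini's theorem — legitimised by $\int_\Omega\prod_i\|a_i(\cdot,\omega)\|_{L_\infty^{\beta_i}(E_i)}\,d|\nu|<\infty$ together with those weight bounds — to obtain
\[
\big\langle\eta,T_\phi^{H_0,\ldots,H_n}(X_1,\ldots,X_n)\psi\big\rangle_{\Hc}=\int_{\sigma(H_0)\times\cdots\times\sigma(H_n)}\!\!\!\phi(\lambda_0,\ldots,\lambda_n)\,d\big\langle\eta,E_0(\lambda_0)X_1E_1(\lambda_1)\cdots X_nE_n(\lambda_n)\psi\big\rangle.
\]
The right-hand side is manifestly independent of the decomposition, and since $\Hc^\infty$ is dense in each $\Hc^s$ this pins the operator down uniquely. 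I expect the technical heart of the argument to be exactly this interchange of the $\Omega$-integral with the $(n+1)$-fold spectral integration in the presence of the possibly unbounded $X_j$; I would handle it either by the measure-theoretic bookkeeping for integral projective tensor products (following~\cite{Nikitopoulos2023} and Peller), or, when convenient, by first approximating the $X_j$ by operators in $\op^{-\infty}(\Theta)$ and the symbol $\phi$ by cut-offs and passing to the limit using the norm estimate already proved.
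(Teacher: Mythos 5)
Your construction of the operator, the choice of intermediate exponents $s_j$, the use of Theorem~\ref{T:MainFunctCalc} for the factors $a_i(H_i,\omega)$, the measurability argument via Proposition~\ref{P:WeakMeasFunctCalc}, and the norm estimate obtained by integrating over $\Omega$ and taking the infimum over representations all match the paper's proof in substance. (One small point: Lemma~\ref{L:PettisIntegral} only gives a weak integral; to get the asserted \emph{Bochner} convergence in $\Hc^s$ you still need to invoke Pettis' measurability theorem, as the paper does, though this is routine given separability.)

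The genuine gap is in the independence of the chosen representation, which you yourself identify as the technical heart. The displayed identity
\[
\big\langle\eta,T_\phi^{H_0,\ldots,H_n}(X_1,\ldots,X_n)\psi\big\rangle_{\Hc}=\int\phi(\lambda_0,\ldots,\lambda_n)\,d\big\langle\eta,E_0(\lambda_0)X_1E_1(\lambda_1)\cdots X_nE_n(\lambda_n)\psi\big\rangle
\]
cannot be justified by Fubini as you claim: the set function $(\Lambda_0,\ldots,\Lambda_n)\mapsto\langle\eta,E_0(\Lambda_0)X_1E_1(\Lambda_1)\cdots X_nE_n(\Lambda_n)\psi\rangle$ is, already for $n=1$ and bounded $X_1$, only a (poly)bimeasure and in general has infinite total variation on the product $\sigma$-algebra, so the right-hand side is not a Lebesgue integral against a genuine measure and there is no Fubini theorem to invoke. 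The obstruction comes from the operators $X_j$ spliced between the spectral measures, not from growth in $\lambda$, so the smoothness of $\eta,\psi\in\Hc^\infty$ and the weight bounds do not repair it; this is precisely why Peller and~\cite{ACDS} avoid the product-measure formulation. Your fallback of approximating the $X_j$ by operators in $\op^{-\infty}(\Theta)$ and ``passing to the limit using the norm estimate already proved'' also does not work as stated: finite-rank (or smoothing) operators are not dense in operator norm in $B(\Hc^{s_j+r_j},\Hc^{s_j})$, only $so$-dense on bounded sets, and the norm estimate gives no control under strong convergence. The paper closes exactly this gap by first verifying representation-independence for the finite-rank operators $\theta_{\eta,\xi}\in\op^{-\infty}(\Theta)$ (where the integral collapses to products of scalar spectral measures, via the computation of~\cite[Lemma~4.3]{ACDS}) and then extending to general $X_j$ using the $so$-density of the span of such operators in the unit ball together with the $so$-continuity of the multilinear map on unit balls established in Lemma~\ref{L:WeaklyMeasComp}. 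If you replace your Fubini step and norm-limit argument by this rank-one reduction plus $so$-continuity, the proof goes through.
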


For $\Theta = 1_{\Hc}$ and $\beta_0 = \cdots = \beta_n = 0$, it is immediate that $a_j(H_j, \omega) \in \op(1_{\Hc})= B(\Hc)$ and
\[
\|a_j(H_j, \omega)\|_{\Hc^{s} \to \Hc^s} = \|a_j(\cdot, \omega)\|_{L^0_\infty(E_j)},
\]
and the above theorem reduces to Peller's construction of MOIs~\cite{Peller2006, Peller2016}. Hence, this theorem is a strict generalisation of his results.
The proof is a subtle modification of (aspects of) the proofs presented in~\cite{ACDS, Peller2006}.

\begin{proof}[Proof of Theorem~\ref{T:MainMOIConstruction}]
Fix the $\Theta$-elliptic symmetric operators $H_i \in \op^{h_i}(\Theta)$, $h_i > 0$, $i=0, \ldots, n$, with spectral measures $E_i$, and the function $\phi:\R^{n+1}\to\C$
\[
\phi(\lambda_0,\ldots,\lambda_n)=\int_\Omega a_0(\lambda_0,\omega)\cdots a_n(\lambda_n,\omega)\, d\nu(\omega),
\]
where $(\Omega,\nu)$ is a finite measure space and the functions $a_j:\R\times\Omega\to\C$ are measurable and such that $(x,\omega)\mapsto a_j(x,\omega)\langle x \rangle^{-\beta_j}$ is $E_j \times \nu$-a.e. bounded for $\beta_j \in \R$. 

By Theorem~\ref{T:MainFunctCalc}, we have that $\Hc^\infty \subseteq \dom a_j(H_j, \omega)$. Now fix $\omega \in \Omega$ and take $\eta, \xi \in \Hc^\infty$. Then~\cite[Theorem~4.13]{Schmudgen2012} gives that
\[
a_j(H_j, \omega) \xi = \lim_{n\to \infty} a_j(H_j, \omega) \chi_{[-n, n]}(H_j) \xi,
\]
where $\chi_{[-n,n]}$ is the indicator function of the interval $[-n,n]$, because $\operatorname{ess\,sup}_{|\lambda|\leq n} |a_j(\lambda, \omega)| < \infty$. Now Proposition~\ref{P:WeakMeasFunctCalc} gives that
\[
\omega \mapsto \langle \eta, a_j(H_j,\omega) \xi \rangle_{\Hc^s} =  \lim_{n\to\infty}\langle \Theta^{2s} \eta, a_j(H_j, \omega) \chi_{[-n,n]}(H_j) \xi \rangle_{\Hc}
\]
is measurable for all $s\in \R$.

Let now $X_i \in \op^{r_i}(\Theta)$, $i = 1, \ldots, n$. 
 Fix $s \in \R$ and define $s_0, \ldots, s_{2n+1} \in \R$ with
 \[
 s_0:=s, \quad s_{2n+1} := s + \sum_{i=0}^{n} \beta_i h_i + \sum_{i=1}^n r_i,
 \]
 so that
    the operators $a_j(H_j, \omega)$ and $X_j$ extend to bounded operators
    \begin{align*}
        a_j(H_j, \omega) &\in B(\Hc^{s_{2j+1}}, \Hc^{s_{2j}}),\\
        X_j &\in B(\Hc^{s_{2j}}, \Hc^{s_{2j-1}}).
    \end{align*}
    By the previous argument, 
    \[
    \omega \mapsto a_j(H_j, \omega) \in B(\Hc^{s_{2j+1}}, \Hc^{s_{2j}})
    \]
    is weakly measurable since $\Hc^{\infty}$ is dense in both $\Hc^{s_{2j}}$ and $\Hc^{s_{2j+1}}$.
    
    Using the functional calculus in Theorem~\ref{T:MainFunctCalc},
    \[
    \|a_j(H_j,\omega)\|_{\Hc \indices{^{s_{2j+1}}}\to\Hc\indices{^{s_{2j}}}}\leq C_{s,H_j}\|a_j(\cdot,\omega)\|_{L^{\beta_j}_{\infty}(E_j)},
    \]
    and so we have that
    \begin{align*}
        \int_\Omega \|a_0(H_0, \omega )X_1 a_1(H_1,\omega) &\cdots X_n a_n(H_n, \omega) \|_{\Hc^{s_{2n+1}} \to \Hc^{s_0}}\,d\nu(\omega)\\
        &\lesssim \prod_{j=1}^n \|X_j\|_{\Hc^{s_{2j}} \to \Hc^{s_{2j-1}}}  \int_\Omega \prod_{j=0}^n \| a_j(\cdot, \omega)\|_{L^{\beta_j}_\infty(E_j)} \, d|\nu|(\omega) < \infty,
    \end{align*}
    where Lemma~\ref{L:EssSupMeasurable} ensures the right-hand side is defined. This is a finite quantity since $a_j(x, \omega) \langle x \rangle^{-\beta_j} $ is $E_j \times \nu-a.e.$ bounded and $\nu$ is a finite measure space. Therefore, Lemma~\ref{L:PettisIntegral}
    provides that
    \[
    \int_\Omega a_0(H_0, \omega )X_1 a_1(H_1,\omega) \cdots X_n a_n(H_n, \omega)\, d\nu(\omega)
    \]
    defines an operator in the weak sense in $B(\Hc^{s_{2n+1}}, \Hc^{s_0})$ with
    \begin{equation}\label{eq:MOIOperatorNorm}
    \begin{split}
        \bigg \|\int_\Omega a_0(H_0, \omega )X_1 a_1(H_1,\omega) &\cdots X_n a_n(H_n, \omega) \,d\nu(\omega)\bigg\|_{\Hc^{s_{2n+1}} \to \Hc^{s_0}}\\
        &\lesssim \prod_{j=1}^n \|X_j\|_{\Hc^{s_{2j}} \to \Hc^{s_{2j-1}}} \int_\Omega \prod_{j=0}^n \| a_j(\cdot, \omega)\|_{L^{\beta_j}_\infty(E_j)} \,d|\nu|(\omega).
    \end{split}
    \end{equation}
    With Pettis' theorem~\cite[Propositions~1.9 and~1.10]{VakhaniaTarieladze1987}, it now follows that for $\psi \in \Hc^{s_{2n+1}}$, 
    \[
    \omega \mapsto a_0(H_0, \omega )X_1 a_1(H_1,\omega) \cdots X_n a_n(H_n, \omega) \psi \in \Hc^{s}
    \]
    is Bochner integrable in $\Hc^s$. This holds in particular for $\psi \in \Hc^\infty$, and as $s \in \R$ was taken arbitrarily it follows that for $\psi \in \Hc^\infty$,
    \[
    \int_\Omega  a_0(H_0, \omega )X_1 a_1(H_1,\omega) \cdots X_n a_n(H_n, \omega) \psi \, d\nu(\omega) \in \Hc^\infty.
    \]
    It is therefore clear that we have a well-defined operator
    \[
    \int_\Omega  a_0(H_0, \omega )X_1 a_1(H_1,\omega) \cdots X_n a_n(H_n, \omega)  \, d\nu(\omega) \in \op^{ \sum_j r_j + \sum_j \beta_j h_j}(\Theta).
    \]
    
The claim that this operator is independent of the chosen representation of $\phi$
\[
\phi(\lambda_0,\ldots,\lambda_n)=\int_\Omega a_0(\lambda_0,\omega)\cdots a_n(\lambda_n,\omega)\, d\nu(\omega)
\]
permits taking the infimum over all representations on the right-hand side of the estimate~\eqref{eq:MOIOperatorNorm} giving the norm estimate in the assertion of this theorem. This independence follows from the proof of~\cite[Lemma~4.3]{ACDS}. Namely, given $\eta, \xi \in \Hc^\infty$, it is easy to check that $\theta_{\eta, \xi}:\Hc^\infty \to \Hc^\infty$ defined by
\[
\theta_{\eta, \xi} (\psi) := \langle \eta, \psi \rangle_{\Hc} \xi, \quad \psi \in \Hc^\infty,
\]
is an element of $\op^{-\infty}(\Theta)$.
The computations in~\cite[Lemma~4.3]{ACDS} give that, for $\eta_k, \xi_k \in \Hc^\infty$, the integral
\[
\int_\Omega a_0(H_0, \omega) \theta_{\eta_1, \xi_1} a_1(H_1, \omega) \cdots \theta_{\eta_n, \xi_n} a_n(H_n, \omega)\, d\nu(\omega) \in B(\Hc)
\]
does not depend on the chosen representation of $\phi$, and so neither does
\[
\int_\Omega a_0(H_0, \omega) \theta_{\eta_1, \xi_1} a_1(H_1, \omega) \cdots \theta_{\eta_n, \xi_n} a_n(H_n, \omega)\, d\nu(\omega) \bigg|_{\Hc^\infty} \in \op^{-\infty}(\Theta).
\]
The $so$-density of the span of $\{\theta_{\eta, \xi} : \eta, \xi\in \Hc^\infty\}$ in $B(\Hc^{s_{2i}}, \Hc^{s_{2i-1}})$ combined with Lemma~\ref{L:WeaklyMeasComp} concludes the proof.
\end{proof}

\begin{prop}
    The MOI constructed in Theorem~\ref{T:MainMOIConstruction} is linear in its symbol:
    \[
    T_{\alpha\phi + \beta \psi}^{H_0,\ldots, H_n}(X_1, \ldots, X_n) = \alpha T_{\phi}^{H_0,\ldots, H_n}(X_1, \ldots, X_n) + \beta T_{\psi}^{H_0,\ldots, H_n}(X_1, \ldots, X_n), \quad \alpha, \beta \in \C.
    \]
\end{prop}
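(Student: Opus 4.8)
The plan is to realise $\alpha\phi+\beta\psi$ through a single representation of the type appearing in Definition~\ref{def:ProjIntBox} and then invoke the representation-independence already built into Theorem~\ref{T:MainMOIConstruction}. First I would arrange for $\phi$ and $\psi$ to live in a common space: if $\phi\in L_\infty^{\beta_0}(E_0)\boxtimes_i\cdots\boxtimes_i L_\infty^{\beta_n}(E_n)$ and $\psi\in L_\infty^{\gamma_0}(E_0)\boxtimes_i\cdots\boxtimes_i L_\infty^{\gamma_n}(E_n)$, then since $L_\infty^{\beta}(E)\subseteq L_\infty^{\beta'}(E)$ for $\beta\le\beta'$, both symbols lie in the box product with exponents $\max(\beta_i,\gamma_i)$; so without loss of generality $\beta_i=\gamma_i$ for every $i$, all three MOIs target the same space $\op^{\sum_j r_j+\sum_j\beta_j h_j}(\Theta)$, and the identity to be proved is between operators in that space.

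Next I would fix representations $\phi=\int_\Omega a_0(\lambda_0,\omega)\cdots a_n(\lambda_n,\omega)\,d\nu(\omega)$ and $\psi=\int_\Sigma b_0(\lambda_0,\sigma)\cdots b_n(\lambda_n,\sigma)\,d\mu(\sigma)$ as in Definition~\ref{def:ProjIntBox}, and combine them over the disjoint union $\Omega':=\Omega\sqcup\Sigma$ with the $\sigma$-finite measure $\nu':=\nu\sqcup\mu$. Define $c_i:\R\times\Omega'\to\C$ by $c_i|_{\R\times\Omega}=a_i$ and $c_i|_{\R\times\Sigma}=b_i$ for $1\le i\le n$, and $c_0|_{\R\times\Omega}=\alpha a_0$, $c_0|_{\R\times\Sigma}=\beta b_0$, absorbing the scalars into the zeroth factor. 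One checks directly that each $c_i$ is measurable with $c_i(\cdot,\omega')\in L_\infty^{\beta_i}(E_i)$, that $\omega'\mapsto\|c_i(\cdot,\omega')\|_{L_\infty^{\beta_i}(E_i)}$ is measurable (being measurable on each of the two pieces), and that the integrability bound~\eqref{eq:boxnorm} holds because it holds for $\{a_i\}$ over $\Omega$ and $\{b_i\}$ over $\Sigma$. Splitting the integral over $\Omega'$ into the two pieces gives
\[
\int_{\Omega'} c_0(\lambda_0,\omega')\cdots c_n(\lambda_n,\omega')\,d\nu'(\omega') = \alpha\,\phi(\lambda_0,\ldots,\lambda_n) + \beta\,\psi(\lambda_0,\ldots,\lambda_n),
\]
so $\{c_i\}$ is a valid representation of $\alpha\phi+\beta\psi$.

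Finally I would evaluate $T^{H_0,\ldots,H_n}_{\alpha\phi+\beta\psi}(X_1,\ldots,X_n)$ using this particular representation -- legitimate by the representation-independence in Theorem~\ref{T:MainMOIConstruction} -- and exploit additivity of the Bochner integral over a disjoint union. For $\xi\in\Hc^\infty$ and any $s\in\R$, the proof of Theorem~\ref{T:MainMOIConstruction} shows that $\omega\mapsto a_0(H_0,\omega)X_1a_1(H_1,\omega)\cdots X_na_n(H_n,\omega)\xi$ is Bochner integrable in $\Hc^s$, and likewise for the $b_i$ over $\Sigma$; hence the integral over $\Omega'$ splits, and pulling $\alpha$ (resp. $\beta$) out of the $\Omega$-part (resp. $\Sigma$-part) gives
\[
T^{H_0,\ldots,H_n}_{\alpha\phi+\beta\psi}(X_1,\ldots,X_n)\xi = \alpha\,T^{H_0,\ldots,H_n}_\phi(X_1,\ldots,X_n)\xi + \beta\,T^{H_0,\ldots,H_n}_\psi(X_1,\ldots,X_n)\xi.
\]
Since $\xi\in\Hc^\infty$ is arbitrary and elements of $\op(\Theta)$ are determined by their action on $\Hc^\infty$, the claimed operator identity follows. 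I expect no real obstacle: the only point needing attention is the routine verification that the disjoint-union data $\{c_i,\Omega',\nu'\}$ meets the hypotheses of Definition~\ref{def:ProjIntBox}, after which the statement is immediate from additivity of the Bochner integral and the already-established independence of the chosen representation.
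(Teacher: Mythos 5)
Your proposal is correct and follows essentially the same route as the paper's own proof: decompose $\alpha\phi+\beta\psi$ over the disjoint union $\Omega\sqcup\Sigma$ of the two representing measure spaces (absorbing the scalars into one factor), then use the representation-independence from Theorem~\ref{T:MainMOIConstruction} together with additivity of the Bochner integral. The paper only sketches this; you have simply spelled out the routine verifications.
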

\begin{proof}
    If both $\phi, \psi: \R^{n+1}\to \C$ have an integral representation of the required form over measure spaces $\Omega$ and $\Sigma$ respectively, then $\alpha \phi + \beta \psi$ can be decomposed appropriately as an integral over the disjoint union $\Omega \sqcup \Sigma$. The assertion then follows by elementary arguments.
\end{proof}

\begin{rem}
    The MOI constructed in Theorem~\ref{T:MainMOIConstruction} is independent of the operator $\Theta$ defining the abstract pseudodifferential calculus in the following sense. If $H_i$ and $X_i$ are operators on $\Hc$ such that $X_i |_{\Hc^\infty} \in \op^{r_i}(\Theta)$ and $a_i(H_i, \omega)|_{\Hc^{\infty}(\Theta)} \in \op^{\beta_i h_i}(\Theta)$ satisfying the conditions of Theorem~\ref{T:MainMOIConstruction}, then the proof of Theorem~\ref{T:MainMOIConstruction} shows that
    we can define $T_\phi^{H_0,\ldots, H_n}(X_1,\ldots, X_n): \Hc^{\sum_i r_i + \sum_i \beta_i h_i} \to \Hc$ by
    \begin{align*}
    T_\phi^{H_0, \ldots, H_n}(X_1,\ldots, X_n) \psi = \int_\Omega a_0(H_0, \omega) 
 V_1 a_1(H_1, \omega) \cdots V_n a_n(H_n, \omega) &\psi\, d\nu(\omega) \in \Hc, \\&\psi \in \Hc^{\sum_i r_i + \sum_i \beta_i h_i},
    \end{align*}
    which is a map that, apart from the definition of its domain, does not depend on $\Theta$.
\end{rem}

Furthermore, using the functional calculus for $\op^0(\Theta)$ in Theorem~\ref{T:FunctCalcOp0}, we can similarly define multiple operator integrals for $\op^0(\Theta)$.

\begin{thm}\label{T:MOOIsop0}
    Let $H_i, [\Theta, H_i]\in \op^0(\Theta)$, $i=0, \ldots, n$, be such that each $\overline{H_i}^{0,0}$ is self-adjoint. For $\phi \in T^0(\R) \boxtimes_i \cdots \boxtimes_i T^0(\R)$ with corresponding representation
    \[
\phi(\lambda_0,\ldots,\lambda_n)=\int_\Omega a_0(\lambda_0,\omega)\cdots a_n(\lambda_n,\omega) \, d\nu(\omega),
\]
the integral
\[
T_\phi^{H_0,\ldots,H_n}(X_1,\ldots,X_n)\psi:=\int_\Omega a_0(H_0,\omega)X_1 a_1(H_1,\omega)\cdots X_n a_n(H_n,\omega)\psi\,d\nu(\omega), \quad \psi\in\Hc^\infty
\]
for $X_1,\ldots,X_n\in\op(\Theta)$, converges as a Bochner integral in $\Hc^s$ for every $s \in \R$, and defines an $n$-multilinear map $T_\phi^{H_0,\ldots,H_n}:\op^{r_1}(\Theta)\times\cdots\times\op^{r_n}(\Theta)\to\op^{\sum_j r_j}(\Theta)$.
\end{thm}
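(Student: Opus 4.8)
The plan is to transcribe the proof of Theorem~\ref{T:MainMOIConstruction} almost verbatim, replacing every appeal to the functional calculus of Theorem~\ref{T:MainFunctCalc} by one to the zero-order functional calculus of Theorem~\ref{T:FunctCalcOp0}. The substitution is legitimate because $T^0(\R)\subseteq C^\infty(\R)$, so for each $\omega$ the functions $a_i(\cdot,\omega)$ are smooth, and the hypotheses $H_i,[\Theta,H_i]\in\op^0(\Theta)$ with $\overline{H_i}^{0,0}$ self-adjoint are precisely those of Theorem~\ref{T:FunctCalcOp0}. That theorem then gives $a_i(H_i,\omega)\in\op^0(\Theta)$ with
\[
\|a_i(H_i,\omega)\|_{\Hc^s\to\Hc^s}\ \leq\ \sum_{k=0}^{\lceil 2^{|s|}\rceil+1}\|a_i(\cdot,\omega)\|_{T^0(\R),k},\qquad s\in\R .
\]
In particular each factor $a_i(H_i,\omega)$ preserves every Sobolev space $\Hc^s$; this is the structural reason the box product is built on $T^0(\R)$ here rather than on a space $L^{\beta}_\infty(E)$ as in Theorem~\ref{T:MainMOIConstruction}, the only price being that the number of $T^0$-seminorms on the right-hand side grows with $|s|$, though for fixed $s$ only finitely many are ever needed.

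First I would check weak measurability of $\omega\mapsto a_i(H_i,\omega)$ as a $B(\Hc^s)$-valued map: for $\eta,\xi\in\Hc^\infty$ one has $\langle\eta,a_i(H_i,\omega)\xi\rangle_{\Hc^s}=\langle\Theta^{2s}\eta,a_i(H_i,\omega)\xi\rangle_{\Hc}$, which is measurable by Proposition~\ref{P:WeakMeasFunctCalc}, and since $\Hc^\infty$ is dense in $\Hc^s$ and, for each fixed $\omega$, $a_i(H_i,\omega)$ is bounded on $\Hc^s$, the general case follows by passing to pointwise (in $\omega$) limits of measurable functions; measurability of $\omega\mapsto\|a_i(\cdot,\omega)\|_{T^0(\R),k}$ is guaranteed by Lemma~\ref{L:EssSupMeasurable}, as recorded after Definition~\ref{def:ProjIntBox}. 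Then, fixing $s\in\R$ and $X_j\in\op^{r_j}(\Theta)$, I would introduce auxiliary indices $s_0=s$, $s_{2n+1}=s+\sum_{j=1}^n r_j$ and, in between, $s_{2i}=s_{2i+1}$ together with $s_{2j-1}=s_{2j}-r_j$, exactly as in the proof of Theorem~\ref{T:MainMOIConstruction} with all $\beta_i h_i=0$, so that $a_i(H_i,\omega)\in B(\Hc^{s_{2i+1}},\Hc^{s_{2i}})$ and $X_j\in B(\Hc^{s_{2j}},\Hc^{s_{2j-1}})$. Using Lemma~\ref{L:WeaklyMeasComp} for weak measurability of the product, and the displayed estimate combined with condition~\eqref{eq:boxnorm} of Definition~\ref{def:ProjIntBox} applied with the indices $k_i=\lceil 2^{|s_{2i}|}\rceil+1$, I obtain
\[
\int_\Omega\big\|a_0(H_0,\omega)X_1a_1(H_1,\omega)\cdots X_na_n(H_n,\omega)\big\|_{\Hc^{s_{2n+1}}\to\Hc^{s_0}}\,d|\nu|(\omega)\ <\ \infty ,
\]
so Lemma~\ref{L:PettisIntegral} produces an operator in $B(\Hc^{s_{2n+1}},\Hc^{s_0})$, and Pettis' theorem upgrades this to a Bochner integral in $\Hc^s$ for every $\psi\in\Hc^{s_{2n+1}}$, in particular for $\psi\in\Hc^\infty$. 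As $s$ was arbitrary, the resulting map sends $\Hc^\infty$ into $\Hc^\infty$ and hence defines an element of $\op^{\sum_j r_j}(\Theta)$; multilinearity in $(X_1,\ldots,X_n)$ is immediate.

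It remains to establish independence of the chosen representation of $\phi$, which I would handle exactly as at the end of the proof of Theorem~\ref{T:MainMOIConstruction}: the rank-one maps $\theta_{\eta,\xi}(\psi)=\langle\eta,\psi\rangle_{\Hc}\,\xi$ with $\eta,\xi\in\Hc^\infty$ lie in $\op^{-\infty}(\Theta)$; the computation of~\cite[Lemma~4.3]{ACDS} shows $\int_\Omega a_0(H_0,\omega)\theta_{\eta_1,\xi_1}a_1(H_1,\omega)\cdots\theta_{\eta_n,\xi_n}a_n(H_n,\omega)\,d\nu(\omega)$ does not depend on the decomposition of $\phi$; and the $so$-density of the span of such operators in the unit balls of the relevant $B(\Hc^{s_{2i}},\Hc^{s_{2i-1}})$, together with the $so$-continuity in Lemma~\ref{L:WeaklyMeasComp}, propagates this to the general case.

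I do not expect a genuine obstacle: the whole content of the zero-order case is carried by Theorem~\ref{T:FunctCalcOp0} (itself resting on the Helffer--Sj\"ostrand / almost-analytic-extension machinery and Lemma~\ref{L:ResolventEstimate}), while the measurability lemmas, the Pettis/Bochner mechanics, and the representation-independence argument transfer unchanged. The one point to state carefully is that, unlike in Theorem~\ref{T:MainMOIConstruction}, the functional-calculus bound costs a Sobolev-level-dependent number of $T^0(\R)$-seminorms; since Definition~\ref{def:ProjIntBox} requires~\eqref{eq:boxnorm} for every tuple $(k_0,\ldots,k_n)$, this causes no difficulty, and one should record the resulting operator norm estimate in terms of the seminorms $\|\phi\|_{T^0(\R)\boxtimes_i\cdots\boxtimes_i T^0(\R),k_0,\ldots,k_n}$ for these indices, in analogy with the estimate in Theorem~\ref{T:MainMOIConstruction}.
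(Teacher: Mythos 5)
Your proposal is correct and follows essentially the same route as the paper: the paper's own proof simply declares itself identical to that of Theorem~\ref{T:MainMOIConstruction}, with the bound $\|a_j(H_j,\omega)\|_{\Hc^s\to\Hc^s}\leq\sum_{k=0}^{\lceil 2^{|s|}\rceil+1}\|a_j(\cdot,\omega)\|_{T^0(\R),k}$ from Theorem~\ref{T:FunctCalcOp0} replacing the $L^\beta_\infty(E)$ functional-calculus estimate, and with integrability supplied by the fact that the $\boxtimes_i$-condition holds for every tuple $(k_0,\ldots,k_n)$. Your additional remarks on weak measurability, the Pettis/Bochner step, and representation independence are exactly the details the paper carries over unchanged from Theorem~\ref{T:MainMOIConstruction}.
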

\begin{proof}
    By definition of $T^0(\R) \boxtimes_i \cdots \boxtimes_i T^0(\R)$ we have for all $k_0, \ldots, k_n \in \N$,
    \[
    \int_\Omega \|a_0(\cdot, \omega)\|_{T^0(\R), k_0} \cdots \|a_n(\cdot, \omega)\|_{T^0(\R), k_n} \, d|\nu|(\omega) < \infty.
    \]
    The proof of the theorem is then identical to the proof of Theorem~\ref{T:MainMOIConstruction}, using that
    \[
    \|a_j(H_j, \omega)\|_{\Hc^s \to \Hc^s} \leq \sum_{k=0}^{\lceil 2^{|s|}\rceil+1} \|a_j(\cdot, \omega)\|_{T^0(\R), k},
    \]
    see Theorem~\ref{T:FunctCalcOp0}.
\end{proof}

\section{Divided differences}\label{S:FunctSpac}
The conditions that appear in Theorem~\ref{T:MainMOIConstruction} and Theorem~\ref{T:MOOIsop0} on the symbol $\phi: \R^{n+1}\to \C$ need to be analysed more closely in order to prepare these multiple operator integral constructions for practical applications. The main result of this section is Lemma~\ref{L:DivDifT}, which gives that for $f \in T^\beta(\R)$, $\beta \in \R$, the divided difference $f^{[n]}$ has an integral representation satisfying the conditions of the MOI construction for $\Theta$-elliptic operators in Theorem~\ref{T:MainMOIConstruction}, and that for $f\in C^\infty(\R)$ the divided difference $f^{[n]}$ can be used in the MOIs for zero-order operators in Theorem~\ref{T:MOOIsop0}.

First of all, for functions $\phi:\R^{n+1} \to \C$ it is an equivalent condition to admit a representation
\[
\phi(\lambda_0,\ldots,\lambda_n)=\int_\Omega a_0(\lambda_0,\omega)\cdots a_n(\lambda_n,\omega)\, d\nu(\omega)
\]
for a finite complex measure space (technically: of finite variation) $(\Omega,\nu)$ and measurable functions $a_j:\R\times\Omega\to\C$ such $(x,\omega)\mapsto a_j(x,\omega)\langle x \rangle^{-\beta_j}$ is $E_j \times \nu$-a.e. bounded for $\beta_j \in \R$,
or a representation
\[
\phi(\lambda_0,\ldots,\lambda_n)=\int_\Sigma b_0(\lambda_0,\sigma)\cdots b_n(\lambda_n,\sigma) \, d\mu(\sigma),
\]
where $\Sigma$ is a $\sigma$-finite measure space, $b_j: \R \times \Omega \to \C$ measurable, and
\[
\int_\Sigma \|b_0(\cdot, \sigma)\|_{L^{\beta_0}_\infty(E_0)} \cdots \|b_n(\cdot, \sigma)\|_{L^{\beta_n}_\infty(E_n)}\, d|\mu|(\sigma)<\infty.
\]
Namely, given the second representation, a representation of the first type can be obtained by putting~\cite[p.48]{SkripkaTomskova2019}
\[
a_j(\lambda_j, \sigma) := \frac{b_j(\lambda_j, \sigma)}{\|b_j(\cdot, \sigma)\|_{L_\infty^{\beta_j}(E_j)}}, \quad \nu(\sigma):= \|b_0(\cdot, \sigma)\|_{L^{\beta_0}_\infty(E_0)} \cdots \|b_n(\cdot, \sigma)\|_{L^{\beta_n}_\infty(E_n)}\mu(\sigma).
\]

We now again use the construction of an almost analytic extension (see Definition~\ref{def:AlmostAnalyticExtension}) to provide an explicit integral representation for $f\in T^\beta(\R)$. Recall the definition of the product $\boxtimes_i$, Definition~\ref{def:ProjIntBox}.

\begin{lem}\label{L:DivDifT}
\begin{enumerate}
    \item Take $n \in \N$, let $\alpha$ be some real number with $-1 \leq \alpha \leq n$, and consider any collection of real numbers $-1 \leq \beta_0, \ldots, \beta_n \leq 0$ such that $\sum \beta_j = \alpha - n$. Then
    \[
f \in T^\alpha(\R) \Rightarrow f^{[n]} \in S^{\beta_0}(\R) \boxtimes_i \cdots \boxtimes_i S^{\beta_n}(\R),
\]
where for each $k_0, \ldots, k_n \in \N$ we have
\begin{align*}
\|f^{[n]}(\lambda_0, \ldots, \lambda_n)\|_{S^{\beta_0}(\R) \boxtimes_i \cdots \boxtimes_i S^{\beta_n}(\R), k_0, \ldots, k_n} \lesssim \sum_{r=0}^{n+\sum_{j=0}^n k_j+2} \| f \|_{T^\alpha(\R), r}.
\end{align*}

\item Let $\alpha \leq n$. Then
\[
f \in T^\alpha(\R) \Rightarrow f^{[n]} \in \sum_{\substack{\beta_0, \ldots, \beta_n \leq 0 \\ \sum \beta_j= \alpha -n}}S^{\beta_0}(\R) \boxtimes_i \cdots \boxtimes_i S^{\beta_n}(\R).
\]
For each component $\phi \in S^{\beta_0}(\R) \boxtimes_i \cdots \boxtimes_i S^{\beta_n}(\R)$ in the (finite) decomposition, we have
\begin{align*}
\|\phi\|_{S^{\beta_0}(\R) \boxtimes_i \cdots \boxtimes_i S^{\beta_n}(\R), k_0, \ldots, k_n} \lesssim \sum_{r=0}^{n+\sum_{j=0}^n k_j+2} \| f \|_{T^\alpha(\R), r}.
\end{align*}

\item Let $\alpha \geq n$. Then
\[
f \in T^\alpha(\R) \Rightarrow f^{[n]} \in \sum_{ \sum \beta_j= \alpha -n}S^{\beta_0}(\R) \boxtimes_i \cdots \boxtimes_i S^{\beta_n}(\R).
\]
For each component $\phi \in S^{\beta_0}(\R) \boxtimes_i \cdots \boxtimes_i S^{\beta_n}(\R)$ in the (finite) decomposition, we have
\begin{align*}
\|\phi\|_{S^{\beta_0}(\R) \boxtimes_i \cdots \boxtimes_i S^{\beta_n}(\R), k_0, \ldots, k_n} \lesssim \sum_{r=0}^{n+\sum_{j=0}^n k_j+2} \| f \|_{T^\alpha(\R), r}.
\end{align*}
\end{enumerate}
\end{lem}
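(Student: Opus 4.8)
The three parts of Lemma~\ref{L:DivDifT} are variations on a single construction, so the plan is to establish part (1) carefully via an explicit integral representation of $f^{[n]}$ built from an almost analytic extension, and then deduce parts (2) and (3) by splitting the weight $\langle\lambda\rangle^\alpha$ across the $n+1$ variables. First I would recall the classical integral formula for the divided difference in terms of a contour integral: for $f\in C_c^\infty(\R)$ with almost analytic extension $\tilde f$ (Definition~\ref{def:AlmostAnalyticExtension}),
\[
f^{[n]}(\lambda_0,\ldots,\lambda_n) = -\frac{1}{\pi}\int_{\C}\frac{\partial\tilde f}{\partial\overline z}(z)\,(z-\lambda_0)^{-1}\cdots(z-\lambda_n)^{-1}\,dz,
\]
which follows from the single-variable Helffer--Sj\"ostrand formula of Theorem~\ref{T:Helffer-Sjostrand} together with the resolvent-identity characterisation of divided differences recalled in Section~\ref{S:MOIs}. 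This realises $f^{[n]}$ in the form~\eqref{eq:boxrep} with $\Omega=\C$, $d\nu(z) = -\frac1\pi\frac{\partial\tilde f}{\partial\overline z}(z)\,dz$, and $a_j(\lambda_j,z) = (z-\lambda_j)^{-1}$.

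The heart of the argument is then the bookkeeping of three things: (i) the $S^{\beta_j}$-seminorms of the functions $\lambda_j\mapsto (z-\lambda_j)^{-1}$, (ii) the decay of $\frac{\partial\tilde f}{\partial\overline z}$ near the real axis, and (iii) the integrability of their product against $dz$. For (i), since $\partial_{\lambda}^{k}(z-\lambda)^{-1} = k!\,(z-\lambda)^{-k-1}$, and on the support of the extension one has $|\Im z|\lesssim \langle\lambda\rangle$ (from the cutoff $\tau(y/\langle x\rangle)$), a direct estimate gives $\|(z-\cdot)^{-1}\|_{S^{\beta_j}(\R),k_j}\lesssim |\Im z|^{-1-k_j}\langle z\rangle^{-\beta_j}$ when $\beta_j\le 0$ — here the sign condition $\beta_j\le 0$ is exactly what lets us absorb the $\langle\lambda\rangle^{-\beta_j}$ weight using $\langle\lambda\rangle\gtrsim|\Im z|$ or $\langle\lambda\rangle\lesssim\langle z\rangle$ on the relevant region. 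For (ii), the standard computation with the almost analytic extension of order $N$ yields $\left|\frac{\partial\tilde f}{\partial\overline z}(x+iy)\right|\lesssim \sum_{k\le N+1}|f^{(k)}(x)|\,|y|^{k-1}\cdot(\text{cutoff terms})$, plus a term from differentiating $\tau(y/\langle x\rangle)$ supported on $|y|\sim\langle x\rangle$. Multiplying (i) and (ii), integrating $dy$ first over the region $|y|\le 2\langle x\rangle$ and then $dx$, and choosing $N$ large enough (depending on $n$ and $\sum k_j$) so the $y$-integral converges, produces a bound of the form $\sum_{r\le n+\sum k_j+2}\|f\|_{T^\alpha(\R),r}$ — the $T^\alpha$-seminorms appearing because $\int |f^{(k)}(x)|\langle x\rangle^{k-\alpha-1}\,dx = \|f\|_{T^\alpha(\R),k}$, and the constraint $\sum\beta_j = \alpha-n$ is precisely the exponent that makes the powers of $\langle x\rangle$ match up after collecting one factor $\langle z\rangle^{-\beta_j}$ from each variable and the $\langle x\rangle^{k-1}$ from the extension. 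Extending from $C_c^\infty$ to general $f\in T^\alpha(\R)$ is a routine density/cutoff argument, or one notes the representation and estimates make sense verbatim once the integrals converge.

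For parts (2) and (3), when $\alpha\le n$ one can still write $\alpha - n = \sum_{j=0}^n\beta_j$ with every $\beta_j\le 0$, but not uniquely, so $f^{[n]}$ lands in a finite sum of the spaces $S^{\beta_0}\boxtimes_i\cdots\boxtimes_i S^{\beta_n}$ indexed by a choice of how to distribute the "$-$" weight; each summand inherits the same seminorm bound from part (1). When $\alpha\ge n$ some $\beta_j$ must be positive, and then the estimate $\|(z-\cdot)^{-1}\|_{S^{\beta_j},k_j}\lesssim|\Im z|^{-1-k_j}\langle z\rangle^{-\beta_j}$ fails for that variable; instead I would multiply and divide by a fixed smooth positive weight, e.g.\ write $f^{[n]} = \sum_{\sum\beta_j = \alpha-n}\big(\prod_j\langle\lambda_j\rangle^{\gamma_j}\big)\cdot g^{[n]}$-type decompositions — more concretely, use Proposition~\ref{P:BoxProdProd} to multiply the representation of $f^{[n]}$ (obtained for $f\in T^0$, say, after peeling off polynomial-type factors) by symbols $\langle\lambda_j\rangle^{\beta_j}\in S^{\beta_j}(\R)$, shifting the weights onto the $a_j$ at the cost of shifting $\alpha$; combined with the $T^\alpha\subseteq T^0$-type relations this reduces part (3) to part (1) with a modified $\alpha$. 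The expected main obstacle is precisely this last point: cleanly handling positive $\beta_j$ while keeping the seminorm bound in terms of $\|f\|_{T^\alpha(\R),r}$ with the stated range of $r$, since the naive resolvent estimate degenerates and one must route the weights through Proposition~\ref{P:BoxProdProd} and the inclusions among the $S^\beta$, $T^\beta$ spaces without losing track of how many derivatives of $f$ are consumed. The rest — choosing $N$, the $dy$-integration, identifying the $T^\alpha$-seminorms — is careful but routine.
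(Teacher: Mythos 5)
Your part (1) is essentially the paper's proof: the same representation $f^{[n]}(\lambda_0,\ldots,\lambda_n)=\frac{(-1)^n}{\pi}\int_\C \frac{\partial\tilde f}{\partial\overline z}(z)\,(z-\lambda_0)^{-1}\cdots(z-\lambda_n)^{-1}\,dz$ obtained from the almost analytic extension, the same two estimates (on the $S^{\beta}$-seminorms of $\lambda\mapsto(z-\lambda)^{-1}$ and on $\partial_{\overline z}\tilde f$), the same choice of large $N$, and density of $C_c^\infty$ in $T^\alpha$. One slip in the sketch: the claimed bound $\|(z-\cdot)^{-1}\|_{S^{\beta_j},k_j}\lesssim|\Im z|^{-1-k_j}\langle z\rangle^{-\beta_j}$ is too strong; testing at $\lambda=\Re z$ shows an extra factor $\bigl(1+\langle z\rangle/|\Im z|\bigr)^{k_j}$ is unavoidable, and it is precisely this factor that forces $N\geq n+1+\sum k_j$ and accounts for the range $r\leq n+\sum_j k_j+2$ in the final estimate. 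This does not affect the validity of your stated conclusion for part (1).

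The genuine gap is in parts (2) and (3). For (2), when $\alpha<-1$ you cannot ``inherit the bound from part (1)'': any choice of $\beta_j\leq 0$ with $\sum\beta_j=\alpha-n<-n-1$ forces some $\beta_j<-1$, and for such $\beta_j$ the resolvent factor is not even in $S^{\beta_j}(\R)$ (indeed $\sup_{\lambda}\langle\lambda\rangle^{-\beta_j}|z-\lambda|^{-1}=\infty$ since $-\beta_j>1$); moreover part (1) is only proved for $-1\leq\alpha\leq n$ and $-1\leq\beta_j\leq 0$. For (3) you correctly identify the obstacle but your proposed fix does not resolve it: multiplying the integral representation of $f^{[n]}$ by symbols $\langle\lambda_j\rangle^{\beta_j}$ via Proposition~\ref{P:BoxProdProd} changes the function, not merely its representation --- $\prod_j\langle\lambda_j\rangle^{\beta_j}\cdot g^{[n]}$ is not the divided difference of anything related to $f$ in an obvious way. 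The missing device, which is how the paper handles both (2) and (3), is to factor $f=g\cdot(x+i)^{\mp k}$ with $g\in T^\beta(\R)$, $-1\leq\beta\leq 0$, and apply the Leibniz rule for divided differences, $f^{[n]}(\lambda_0,\ldots,\lambda_n)=\sum_{l=0}^n g^{[l]}(\lambda_0,\ldots,\lambda_l)\,\bigl((x+i)^{\mp k}\bigr)^{[n-l]}(\lambda_l,\ldots,\lambda_n)$: the very negative (resp.\ positive) weights then come from the explicitly computable divided differences of $(x+i)^{\mp k}$, while each $g^{[l]}$ is covered by part (1). Without this (or an equivalent mechanism) your argument establishes only the case $-1\leq\alpha\leq n$ of the lemma.
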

\begin{proof}
\begin{enumerate}
    \item For $g \in C^\infty_c(\mathbb{R})$ with almost analytic extension $\tilde{g}$, we have
\[
g(x) = -\frac{1}{\pi} \int_{\mathbb{C}} \frac{\partial \tilde{g}}{\partial \overline{z}} (z-x)^{-1}\,dz,
\]
and hence
\begin{align}\label{eq:divdifcompact}
g^{[n]}(\lambda_0, \ldots, \lambda_n) = \frac{(-1)^{n}}{\pi} \int_{\mathbb{C}} \frac{\partial \tilde{g}}{\partial \overline{z}} (z-\lambda_0)^{-1} \cdots (z-\lambda_n)^{-1}\, dz.
\end{align}

Now take $f \in T^\alpha(\R)$ with $\alpha \leq n$ and with almost analytic extension $\tilde{f}$. Directly from Definition~\ref{def:AlmostAnalyticExtension}, writing $\sigma(z) := \tau(\frac{\Im(z)}{\langle\Re(z)\rangle})$, it follows that (cf. \cite[Section~2.2]{Davies1995b})
\begin{align*}
    \frac{\partial \tilde{f}}{\partial \overline{z}} = \frac{1}{2} \bigg(\sum_{r=0}^N f^{(r)}(\Re(z)) \frac{(i\Im(z))^r}{r!} \bigg)(\sigma_x(z) + i\sigma_y(z) ) + \frac{1}{2} f^{(N+1)}(\Re(z)) \frac{(i\Im(z))^N}{n!}\sigma(z).
\end{align*}
We define
\[
U:=\{z \in \C : \langle \Re(z) \rangle < |\Im(z)| < 2 \langle \Re(z) \rangle \}, \qquad V:=\{z \in \C : 0 \leq |\Im(z)| < 2 \langle \Re(z) \rangle \},
\]
and note that the support of $\sigma$ is contained in $V$, while the support of $\sigma_x$ and $\sigma_y$ are contained in $U$. More precisely,
\[
|\sigma_x(z) + i \sigma_y(z)| \lesssim \frac{1}{\langle \Re(z) \rangle} \chi_{U}(z).
\]
Therefore we have the estimate~\cite[Lemma~1]{Davies1995a}\begin{align*}
    \int_{\mathbb{C}} \bigg| &\frac{\partial \tilde{f}}{\partial \overline{z}}\bigg| |z-\lambda_0|^{-1} \cdots |z-\lambda_n|^{-1}\,dz\\
    & \lesssim \sum_{r=0}^N \int_U |f^{(r)}(\Re(z))| |\Im(z)|^{r-n-1} \langle \Re(z) \rangle^{-1} \,dz + \int_V |f^{(N+1)}(\Re(z))| |\Im(z)|^{N-n-1} \,dz\\
    &\lesssim \sum_{r=0}^{N+1} \int_{\R} |f^{(r)}(x)|\langle x \rangle^{r-n-1}\,dx = \sum_{r=0}^{N+1} \|f\|_{T^{n}(\R), r},
\end{align*}
where the last estimate (integration over the imaginary direction) is justified when $N \geq n+1$.

Hence the integral
\begin{align*}
    \frac{(-1)^{n}}{\pi} \int_{\mathbb{C}} \frac{\partial \tilde{f}}{\partial \overline{z}} (z-\lambda_0)^{-1} \cdots (z-\lambda_n)^{-1}\, dz
\end{align*}
converges.

Since~\cite[Lemma~6]{Davies1995a} gives that $C^\infty_c(\R)$ is dense in $T^\alpha(\R)$, the Lebesgue dominated convergence theorem then gives that the identity~\eqref{eq:divdifcompact} extends to all $f \in T^\alpha(\R)$, $\alpha \leq n$, i.e.
\[
f^{[n]}(\lambda_0, \ldots, \lambda_n) = \frac{(-1)^{n}}{\pi} \int_{\mathbb{C}} \frac{\partial \tilde{f}}{\partial \overline{z}} (z-\lambda_0)^{-1} \cdots (z-\lambda_n)^{-1}\,dz.
\]
In order to show that this is a decomposition as described in Definition~\ref{def:ProjIntBox}, with $a_j(\lambda_j, z) = (z-\lambda_j)^{-1}$, we will now estimate the expressions
\begin{equation}\label{eq:Sbetanorm}\begin{split}
    \|(z-\cdot)^{-1}\|_{S^{\beta}(\R), k}&=\sup_{\lambda \in \mathbb{R}} \langle \lambda \rangle^{k-\beta} \bigg|\frac{\partial^{k}}{\partial \lambda^{k}} (z-\lambda)^{-1}\bigg| \\
    &\lesssim \bigg(\sup_{\lambda \in \mathbb{R}} \langle \lambda \rangle^{-\beta} |z-\lambda|^{-1}\bigg) \cdot \bigg(\sup_{\lambda \in \mathbb{R}} \langle \lambda \rangle^{k} |z-\lambda|^{-k}\bigg).
\end{split}
\end{equation}

Note that, for $\lambda\in \R$ and $z \in \C \setminus \R$,
\begin{align}\begin{split}\label{eq:Stap1}
    \frac{\langle \lambda \rangle}{|z-\lambda|} = \frac{|\lambda \pm i|}{|z-\lambda|} &\leq 1 + \frac{|z\pm i|}{|z-\lambda|}\\
    &\leq 1+ \frac{\langle z \rangle }{|\Im(z)|},
\end{split}
\end{align}
as  $\min(|z+i|,|z-i|)\leq\langle z\rangle$. Next, we estimate
\[
\sup_{\lambda \in \R} \langle \lambda \rangle^{-\beta} |z-\lambda|^{-1}
\]
for $-1 \leq \beta \leq 0$. We estimate the supremum over $\lambda > 1$, $|\lambda| \leq 1$ and $\lambda < -1$ separately. First, for $|\lambda| \leq 1$ we have $1 \leq \langle \lambda \rangle \leq \sqrt{2}$, and so
\[
\sup_{|\lambda|\leq 1} \langle \lambda \rangle^{-\beta} |z-\lambda|^{-1} 
\lesssim \frac{1}{|\Im(z)|}.
\]
For $\lambda > 1$, we have $\langle \lambda \rangle \leq \sqrt{2} \lambda$, so that
\begin{align*}
    \sup_{\lambda > 1} \langle \lambda \rangle^{-\beta} |z-\lambda|^{-1} &\lesssim 
    \sup_{\lambda > 1-\Re(z)}  \frac{(\lambda+\Re(z))^{-\beta}}{\big(\lambda^2+\Im(z)^2\big)^{\frac12}}.
\end{align*}
Writing $v = (\lambda, \Im(z)) \in \R^2$, then by using Cauchy--Schwarz for the inner product on $\R^2$, we have
\begin{align*}
    \sup_{\lambda > 1-\Re(z)}  \frac{(\lambda+\Re(z))^{-\beta}}{\big(\lambda^2+\Im(z)^2\big)^{\frac12}} & = \sup_{\lambda > 1-\Re(x)}  \frac{(v \cdot (1, \frac{\Re(z)}{\Im(z)}))^{-\beta}}{\|v\|}\\
     &\leq \sup_{\lambda > 1-\Re(z)}  \frac{\| (1, \frac{\Re(z)}{\Im(z)})\|^{-\beta}}{\|v\|^{1+\beta}}\\
    &\leq  \frac{|z|^{-\beta}}{ |\Im(z)|}.
\end{align*}

For $\lambda < -1$ we have a similar estimate, and hence combined we have
\begin{equation}\label{eq:Stap2}
    \sup_{\lambda \in \R}\langle \lambda \rangle^{-\beta} |z-\lambda|^{-1} \lesssim  \frac{1}{|\Im(z)|} \max(1, |z|^{-\beta}) \leq \frac{\langle z \rangle^{-\beta}}{|\Im(z)|}.
\end{equation}

Combining~\eqref{eq:Sbetanorm},~\eqref{eq:Stap1} and~\eqref{eq:Stap2} we get an estimate 
\[
\sup_{\lambda \in \mathbb{R}} \langle \lambda \rangle^{k-\beta} \bigg|\frac{\partial^{k}}{\partial \lambda^{k}} (z-\lambda)^{-1}\bigg| \lesssim \frac{\langle z \rangle^{-\beta}}{|\Im(z)|} \bigg(1+\frac{\langle z \rangle}{|\Im(z)|}\bigg)^k.
\]

Let $-1 \leq \beta_0, \ldots, \beta_n \leq 0$. Taking the inequality above and proceeding as before with $N \geq n+1+\sum_{j=0}^n k_j$, we have
\begin{align*}
    \int_{\mathbb{C}} \bigg| \frac{\partial \tilde{f}}{\partial \overline{z}}\bigg|& \left(\sup_{\lambda_0 \in \mathbb{R}} \langle \lambda_0 \rangle^{k_0-\beta_0} \bigg|\frac{\partial^{k_0}}{\partial \lambda_0^{k_0}} (z-\lambda_0)^{-1}\bigg| \right) \cdots \left(\sup_{\lambda_n \in \mathbb{R}} \langle \lambda_n \rangle^{k_n-\beta_n} \bigg|\frac{\partial^{k_n}}{\partial \lambda_n^{k_n}} (z-\lambda_0)^{-1}\bigg| \right)\,dz \\
    &\lesssim \sum_{r=0}^{N+1}\int_{\mathbb{R}} |f^{(r)}(x)|  \langle x \rangle^{r-n-1-\sum_{j=0}^n \beta_j} \,dx < \infty.
\end{align*}
This converges in particular for $\sum_{j=0}^n \beta_j = \alpha - n$. Since $-n-1 \leq \sum_{j=0}^n \beta_j \leq 0$, this choice is possible if $-1 \leq \alpha \leq n$. We have therefore proved for $-1 \leq \alpha \leq n$, and $-1 \leq \beta_0, \ldots, \beta_n \leq 0$ such that $\sum_{j=0}^n \beta_j = \alpha - n$, that 
\begin{align*}
\|f^{[n]}(\lambda_0, \ldots, \lambda_n)&\|_{S^{\beta_0}(\R) \boxtimes_i \cdots \boxtimes_i S^{\beta_n}(\R), k_0, \ldots, k_n} \\
&= \int_{\mathbb{C}} \|  (z-\cdot)^{-1} \|_{S^{\beta_0}(\R), k_0} \cdots \|(z-\cdot)^{-1} \|_{S^{\beta_n}(\R), k_n} \bigg| \frac{\partial \tilde{f}}{\partial \overline{z}}\bigg|\,dz \\
&\lesssim \sum_{r=0}^{n+\sum_{j=0}^n k_j+2} \| f \|_{T^n(\R), r} \leq \sum_{r=0}^{n+\sum_{j=0}^n k_j+2} \|f\|_{T^\alpha(\R),r}.
\end{align*}

\item For $f \in T^{\alpha}(\R)$, $-1 \leq \alpha \leq n$, we have by the first part of the lemma that for each $n \in \N$,
\[
f^{[n]} \in S^{\beta_0}(\R) \boxtimes_i \cdots \boxtimes_i S^{\beta_n}(\R)
\]
where each $\beta_j$ can be chosen to lie in the interval $[-1, 0]$, and $\sum \beta_j = \alpha - n$. 

For $f \in T^\alpha(\R)$ with $\alpha \leq -1$, we can write $f  = g \cdot (x+i)^{-k}$ where $g \in T^\beta(\R)$, $-1 \leq \beta \leq 0$ and $k \in \N$.
The Leibniz rule for divided differences dictates
\[
f^{[n]}(\lambda_0, \ldots, \lambda_n) = \sum_{l=0}^{n} g^{[l]}(\lambda_0, \ldots, \lambda_l) \big((x+i)^{-k}\big)^{[n-l]}(\lambda_l, \ldots, \lambda_n).
\]
From part 1 and the explicit form of the divided differences of $\big((x+i)^{-k}\big)^{[n]}$ we therefore conclude that each term is an element of
\[
\sum_{\substack{\beta_0, \ldots, \beta_n \leq 0 \\ \sum \beta_j= \alpha -k-n}}S^{\beta_0}(\R) \boxtimes_i \cdots \boxtimes_i S^{\beta_n}(\R),
\]
with the required estimate of norms.

\item This follows analogously to assertion 2, by analysing $\big(g(x+i)^k\big)^{[n]}$ for $g \in T^{\alpha}(\R)$ with $-1 \leq \alpha \leq 0$.
\qedhere\end{enumerate}
\end{proof}

\begin{rem}\label{rem:fToLinfty}
    The proof of Lemma~\ref{L:DivDifT} in fact shows that if $f \in C^{n+2}(\R)$ such that $\| f\|_{T^\beta(\R), k} < \infty$ for $k=0, \ldots, n+2$, then given any spectral measures $E_0, \ldots, E_n$, we have
    \[
    \| f^{[n]} \|_{L_\infty^{\beta_0}(E_0) \hat{\otimes}_i \cdots \hat{\otimes}_i L_\infty^{\beta_n}(E_n)} \leq \|f^{[n]}\|_{S^{\beta_0}(\R) \boxtimes_i \cdots \boxtimes_i S^{\beta_n}(\R), 0, \ldots, 0} \leq \sum_{k=0}^{n+2} \| f\|_{T^\beta(\R), k} < \infty.
    \]
    For $n = 0$, the space of functions that satisfy this condition closely resembles the space $\mathfrak{F}_m(\R)$ used in~\cite{CareyGesztesy2016} in the context of double operator integrals.
\end{rem}

\begin{thm}\label{T:MOOIforNCG1}
    Let $H_0, \ldots, H_n$ be such that each $H_i \in \op^{h}(\Theta), h > 0,$ is $\Theta$-elliptic and symmetric, and let $f \in C^{n+2}(\R)$ such that $\|f\|_{T^\beta(\R), k} <\infty$, $k=0, \ldots, n+2$ for some $\beta \in \R$. Then for any $X_i\in \op^{r_i}(\Theta)$, $i=1,\ldots, n$,
     \[
     T^{H_0, \ldots, H_n}_{f^{[n]}}(X_1, \ldots, X_n) \in \op^{r+(\beta-n)h}(\Theta),
     \]
     with the estimate
     \[
      \| T^{H_0, \ldots, H_n}_{f^{[n]}}(X_1, \ldots, X_n)\|_{\Hc^{s+r+ (\beta-n)h} \to \Hc^s} \leq C_{s,H_0, \ldots, H_n} \left(\sum_{k=0}^{n+2} \|f\|_{T^\beta(\R), k} \right) \prod_{i=1}^n \|X_i\|_{\Hc^{s_i+r_i} \to \Hc^{s_i}} 
     \]
     for some $s_1, \ldots, s_n \in \R$.    
\end{thm}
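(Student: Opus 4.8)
The statement is the synthesis of the divided-difference analysis of Lemma~\ref{L:DivDifT} (in the form of Remark~\ref{rem:fToLinfty}) with the multiple operator integral construction of Theorem~\ref{T:MainMOIConstruction}; the analytic work has already been carried out, so the task is to match up exponents. Write $r=\sum_{i=1}^n r_i$ and let $E_0,\ldots,E_n$ be the spectral measures of the $\Theta$-elliptic symmetric operators $H_0,\ldots,H_n\in\op^h(\Theta)$. \emph{Step 1 (represent $f^{[n]}$).} By Remark~\ref{rem:fToLinfty} --- whose content is exactly the almost-analytic-extension computation of Lemma~\ref{L:DivDifT} carried out only up to order $k=n+2$ --- the hypothesis $f\in C^{n+2}(\R)$, $\|f\|_{T^\beta(\R),k}<\infty$ for $k=0,\ldots,n+2$, produces a finite decomposition
\[
f^{[n]}=\sum_\phi \phi,\qquad \phi\in L_\infty^{\beta_0^\phi}(E_0)\hat{\otimes}_i\cdots\hat{\otimes}_i L_\infty^{\beta_n^\phi}(E_n),
\]
with $\sum_{j=0}^n\beta_j^\phi=\beta-n$ for each term (a single term with all $\beta_j\in[-1,0]$ when $-1\le\beta\le n$, and otherwise the finite decomposition coming from the Leibniz-rule factorization $f=g\cdot(x+i)^{\pm k}$ in parts~2 and~3 of Lemma~\ref{L:DivDifT}), and with $\|\phi\|_{L_\infty^{\beta_0^\phi}(E_0)\hat{\otimes}_i\cdots\hat{\otimes}_i L_\infty^{\beta_n^\phi}(E_n)}\lesssim\sum_{k=0}^{n+2}\|f\|_{T^\beta(\R),k}$. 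The representation underlying each $\phi$ is the integral over $\C$ against $\tfrac{(-1)^n}{\pi}\,\partial_{\overline z}\tilde f\,dz$ with factors $(z-\lambda_j)^{-1}$; this is $\sigma$-finite, and the renormalization described at the start of Section~\ref{S:FunctSpac} turns it, without changing its value, into a representation of the finite-measure, a.e.-bounded form demanded in Theorem~\ref{T:MainMOIConstruction}, so membership in the integral projective tensor product is all that is needed.

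\emph{Step 2 (apply the MOI construction).} Apply Theorem~\ref{T:MainMOIConstruction} to each component $\phi$, with $h_0=\cdots=h_n=h$ and $X_i\in\op^{r_i}(\Theta)$. Since $\sum_i\beta_i^\phi=\beta-n$, this gives
\[
T^{H_0,\ldots,H_n}_\phi(X_1,\ldots,X_n)\in\op^{\sum_i r_i+\sum_i\beta_i^\phi h}(\Theta)=\op^{r+(\beta-n)h}(\Theta),
\]
together with
\[
\|T^{H_0,\ldots,H_n}_\phi(X_1,\ldots,X_n)\|_{\Hc^{s+r+(\beta-n)h}\to\Hc^s}\lesssim\Big(\prod_{i=1}^n\|X_i\|_{\Hc^{s_i+r_i}\to\Hc^{s_i}}\Big)\,\|\phi\|_{L_\infty^{\beta_0^\phi}(E_0)\hat{\otimes}_i\cdots\hat{\otimes}_i L_\infty^{\beta_n^\phi}(E_n)}
\]
for suitable $s_1,\ldots,s_n\in\R$ depending on $h,r_i,\beta_i^\phi$; the implicit constant absorbs the functional-calculus constants $C_{s,H_j}$ of Theorem~\ref{T:MainFunctCalc} that enter the proof of Theorem~\ref{T:MainMOIConstruction}.

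\emph{Step 3 (recombine), and the main point of care.} Every component carries the same total order $r+(\beta-n)h$, so summing over the finitely many $\phi$ using linearity of the MOI in its symbol (the proposition immediately following Theorem~\ref{T:MainMOIConstruction}) yields $T^{H_0,\ldots,H_n}_{f^{[n]}}(X_1,\ldots,X_n)=\sum_\phi T^{H_0,\ldots,H_n}_\phi(X_1,\ldots,X_n)\in\op^{r+(\beta-n)h}(\Theta)$, and combining the estimates of Steps~1 and~2 --- folding the number of terms and all implicit constants into a single $C_{s,H_0,\ldots,H_n}$ --- gives the asserted inequality. There is no deep obstacle here: the only thing genuinely requiring attention is the exponent bookkeeping, namely checking that the splitting $\beta-n=\sum_j\beta_j$ is always realizable within the regimes covered by Lemma~\ref{L:DivDifT} (directly for $-1\le\beta\le n$, via the $(x+i)^{\pm k}$ device otherwise) and that each term of the resulting finite decomposition is homogeneous of the \emph{same} order $r+(\beta-n)h$, so that the sum is a single element of $\op^{r+(\beta-n)h}(\Theta)$ with a uniform estimate --- together with the (already dispatched) observation that one may freely pass between $\sigma$-finite and finite representations of the symbol.
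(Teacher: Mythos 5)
Your proposal is correct and follows essentially the same route as the paper, whose proof is precisely the combination of Lemma~\ref{L:DivDifT} (via Remark~\ref{rem:fToLinfty}) with Theorem~\ref{T:MainMOIConstruction}; you have simply spelled out the exponent bookkeeping and the finite decomposition/linearity step that the paper leaves implicit. No gaps.
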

\begin{proof}
    Lemma~\ref{L:DivDifT} and Remark~\ref{rem:fToLinfty}, combined with Theorem~\ref{T:MainMOIConstruction}.
\end{proof}

\begin{thm}\label{T:MOOIforNCG2}
    Let $H_i, [\Theta, H_i] \in \op^0(\Theta)$, $i=0, \ldots, n$, be such that each $\overline
    {H_i}^{0,0}:\Hc \to \Hc$ is self-adjoint, and let $f\in C^\infty(\R)$. Then
     \[
     T^{H_0, \ldots, H_n}_{f^{[n]}}(X_1, \ldots, X_n) \in \op^{\sum_j r_j}(\Theta).
     \]  
\end{thm}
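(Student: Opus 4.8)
The plan is to run the proof of Theorem~\ref{T:MOOIforNCG1} in the zero-order setting, i.e. to replace the functional calculus and multiple operator integral construction for $\Theta$-elliptic operators by their zero-order counterparts, Theorem~\ref{T:FunctCalcOp0} and Theorem~\ref{T:MOOIsop0}. By Theorem~\ref{T:MOOIsop0}, once we know that for $X_i\in\op^{r_i}(\Theta)$ the symbol $f^{[n]}$ admits a representation of the form required in Definition~\ref{def:ProjIntBox} exhibiting it as an element of $T^0(\R)\boxtimes_i\cdots\boxtimes_i T^0(\R)$ (when restricted to $\sigma(\overline{H_0}^{0,0})\times\cdots\times\sigma(\overline{H_n}^{0,0})$), the conclusion $T^{H_0,\ldots,H_n}_{f^{[n]}}(X_1,\ldots,X_n)\in\op^{\sum_j r_j}(\Theta)$ follows immediately. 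So the whole content is an analogue of Lemma~\ref{L:DivDifT} and Remark~\ref{rem:fToLinfty} for the space $T^0(\R)\boxtimes_i\cdots\boxtimes_i T^0(\R)$.

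First I would reduce to $f\in C_c^\infty(\R)$. Since each $H_i\in\op^0(\Theta)$ extends to a bounded self-adjoint operator $\overline{H_i}^{0,0}$ on $\Hc$, the spectra $\sigma(\overline{H_i}^{0,0})$ are compact, so one may choose $\chi\in C_c^\infty(\R)$ equal to $1$ on a neighbourhood of $\bigcup_i\sigma(\overline{H_i}^{0,0})$; then $f^{[n]}$ and $(\chi f)^{[n]}$ agree on $\prod_i\sigma(\overline{H_i}^{0,0})$, and the multiple operator integral depends on the symbol only through its restriction to this product, exactly as in the reduction used in the proof of Theorem~\ref{T:FunctCalcOp0}. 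For $f\in C_c^\infty(\R)$ I would then invoke the almost-analytic-extension representation already established as equation~\eqref{eq:divdifcompact} in the proof of Lemma~\ref{L:DivDifT}: with $\tilde f$ as in Definition~\ref{def:AlmostAnalyticExtension} (depending on a parameter $N$),
\[
f^{[n]}(\lambda_0,\ldots,\lambda_n)=\frac{(-1)^n}{\pi}\int_\C\frac{\partial\tilde f}{\partial\overline{z}}\,(z-\lambda_0)^{-1}\cdots(z-\lambda_n)^{-1}\,dz.
\]
This is the candidate $\boxtimes_i$-decomposition over $\Omega=\C$ with finite complex measure $\tfrac{(-1)^n}{\pi}\tfrac{\partial\tilde f}{\partial\overline{z}}(z)\,dz$ and $a_j(\lambda_j,z)=(z-\lambda_j)^{-1}$; the measurability of $z\mapsto\|(z-\cdot)^{-1}\|_{T^0(\R),k}$ is routine by Tonelli, so the point is the integrability condition~\eqref{eq:boxnorm}, i.e.
\[
\int_\C\Bigl|\frac{\partial\tilde f}{\partial\overline{z}}(z)\Bigr|\prod_{j=0}^n\|(z-\cdot)^{-1}\|_{T^0(\R),k_j}\,dz<\infty\quad\text{for all }k_0,\ldots,k_n\in\N.
\]

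To prove this I would estimate $\|(z-\cdot)^{-1}\|_{T^0(\R),k}=k!\int_\R|z-\lambda|^{-(k+1)}\langle\lambda\rangle^{k-1}\,d\lambda$ using $\int_\R|z-\lambda|^{-2}\,d\lambda=\pi/|\Im(z)|$ together with the pointwise bound $\langle\lambda\rangle/|z-\lambda|\le 1+\langle z\rangle/|\Im(z)|$ from~\eqref{eq:Stap1} (for $k\ge 1$), and a Cauchy--Schwarz bound for the $k=0$ seminorm, obtaining $\|(z-\cdot)^{-1}\|_{T^0(\R),k}\lesssim_k|\Im(z)|^{-1}(1+\langle z\rangle/|\Im(z)|)^k$. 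One then splits the $z$-integral exactly as in Lemma~\ref{L:DivDifT}: on the region near the real axis, $\partial_{\overline{z}}\tilde f$ vanishes to order $N$ in $|\Im(z)|$ while $\Re(z)$ stays in the compact set $\operatorname{supp}f$, so the integrand is $O(|\Im(z)|^{N-n-1-\sum_j k_j})$ and choosing $N\ge n+1+\sum_j k_j$ makes it integrable; on the remaining region $\langle\Re(z)\rangle\le|\Im(z)|<2\langle\Re(z)\rangle$ the quantity $|\Im(z)|$ is bounded away from $0$ and $\infty$ on the support of $\partial_{\overline{z}}\tilde f$, so one is integrating a bounded function over a bounded set. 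This gives $f^{[n]}\in T^0(\R)\boxtimes_i\cdots\boxtimes_i T^0(\R)$, and Theorem~\ref{T:MOOIsop0} finishes the proof.

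The main obstacle is exactly this $T^0$-seminorm estimate on $(z-\cdot)^{-1}$ and the convergence of the resulting $\C$-integral: unlike the $S^\beta$-bounds in the proof of Lemma~\ref{L:DivDifT}, the $T^0$-seminorm is an $L^1$-type quantity, so one must integrate in $\lambda$ before integrating in $z$, and the $k=0$ case needs a slightly different (Cauchy--Schwarz) argument than $k\ge1$; the power of $|\Im(z)|$ one loses must be reabsorbed by increasing the vanishing order $N$ of the almost analytic extension, which is harmless since $N$ is a free parameter. Apart from this, everything is a transcription of arguments already present in the excerpt.
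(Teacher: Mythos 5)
Your proof is correct and takes essentially the same route as the paper: the paper's own proof of this theorem is just a citation of Lemma~\ref{L:DivDifT}/Remark~\ref{rem:fToLinfty} together with the zero-order MOI construction (Theorem~\ref{T:MOOIsop0}), i.e.\ exactly the almost-analytic resolvent decomposition you spell out, with the reduction to $f\in C_c^\infty(\R)$ via the compact spectra left implicit as in the proof of Theorem~\ref{T:FunctCalcOp0}. The only cosmetic imprecision is your claimed global bound $\|(z-\cdot)^{-1}\|_{T^0(\R),0}\lesssim |\Im(z)|^{-1}$ (Cauchy--Schwarz gives $|\Im(z)|^{-1/2}$, which is larger for big $|\Im(z)|$), but this is harmless since $|\Im(z)|$ is bounded on the support of $\partial\tilde f/\partial\overline{z}$, exactly where you use it.
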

\begin{proof}
    Lemma~\ref{L:DivDifT} and Remark~\ref{rem:fToLinfty}, combined with Theorem~\ref{T:MainMOIConstruction}.
\end{proof}

Due to Lemma~\ref{L:DivDifT}, the divided difference $f^{[n]}$ for $f \in C^\infty(\R)$ is in particular a permitted symbol for Theorem~\ref{T:MOOIsop0}.

\section{MOI identities}
\label{S:MOIIdentities}
The most important identities for our applications of our multiple operator integrals are the following. These are generalisations of the identities
\begin{align*}
    [f(H),X] &= T^{H,H}_{f^{[1]}}([H,X]);\\
    f(H+X)-f(H)&= T^{H+X, H}_{f^{[1]}}(X),
\end{align*}
seeing that $f(H) = T_{f^{[0]}}^H()$.

\begin{prop}\label{P:UMOIcom}
Let $a, X_1,\ldots,X_n \in\op(\Theta)$, let $H_i\in \op^{h_i}(\Theta)$, $h_i > 0$ be symmetric and $\Theta$-elliptic and let $f \in T^\beta(\R)$, $\beta \in \R$. Then
\begin{align}
T^{H_0, \ldots, H_n}_{f^{[n]}}(X_1,\ldots,X_j,aX_{j+1},\ldots,X_n)&-T_{f^{[n]}}^{H_0, \ldots, H_n}(X_1,\ldots,X_ja,X_{j+1},\ldots,X_n)\label{eq:comeq1}\\
&=T_{f^{[n+1]}}^{H_0, \ldots, H_j, H_j, \ldots, H_n}(X_1,\ldots,X_j,[H_j,a],X_{j+1},\ldots,X_n);\notag\\
    T^{H_0, \ldots, H_n}_{f^{[n]}}(aX_1,\ldots,X_n) - a T^{H_0, \ldots, H_n}_{f^{[n]}}&(X_1,\ldots,X_n) \\
    &= T^{H_0, H_0, H_1, \ldots, H_n}_{f^{[n+1]}}([H_0,a],X_1,\ldots,X_n) ;\notag\\
    T^{H_0, \ldots, H_n}_{f^{[n]}}(X_1,\ldots,X_n)a -  T^{H_0, \ldots, H_n}_{f^{[n]}}&(X_1,\ldots,X_na)\\ &= T^{H_0, \ldots, H_n, H_n}_{f^{[n+1]}}(X_1,\ldots,X_n,[H_n,a]).\notag
\end{align}
Moreover, for $A \in \op^a(\Theta), a>0$, $B \in \op^b(\Theta), b>0$ symmetric and $\Theta$-elliptic,
\begin{align}\label{eq:SuperscriptDiff}
    \begin{split}
        &T^{H_0, \ldots, H_{j-1}, A, H_{j+1}, \ldots, H_n}_{f^{[n]}}(X_1, \ldots, X_n) - T^{H_0, \ldots, H_{j-1}, B, H_{j+1}, \ldots, H_n}_{f^{[n]}}(X_1, \ldots, X_n)\\
        &=T^{H_0, \ldots, H_{j-1}, A, B, H_{j+1}, \ldots, H_n}_{f^{[n+1]}}(X_1, \ldots, X_j, A-B, X_{j+1}, \ldots, X_n).
    \end{split}
\end{align}
The same assertions hold for self-adjoint $H_i, A, B\in \op^0(\Theta)$ such that $[\Theta,H_i], [\Theta,A], [\Theta,B] \in \op^0(\Theta)$ and with $f\in C^\infty(\R)$.
\end{prop}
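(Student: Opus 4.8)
\textbf{Proof plan for Proposition~\ref{P:UMOIcom}.}

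The plan is to reduce all of these identities to a single algebraic mechanism at the level of the integrand functions $a_i(\cdot,\omega)$ appearing in an integral representation of $f^{[n]}$, and then transport the resulting operator identities through the Bochner integral. The starting point is the recursive definition of the divided difference, which gives for fixed $\lambda_j$ the pointwise identity
\[
f^{[n]}(\lambda_0,\ldots,\lambda_n) = f^{[n-1]}(\lambda_0,\ldots,\hat{\lambda_j},\ldots,\lambda_n) - (\lambda_j - \lambda_{j-1}) f^{[n]}(\lambda_0,\ldots,\lambda_n)
\]
(in various arrangements), which after manipulation yields the telescoping relations underlying~\eqref{eq:comeq1}--\eqref{eq:SuperscriptDiff}. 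Concretely, I would first record that if $f^{[n+1]}$ has representation $\int_\Omega b_0(\lambda_0,\omega)\cdots b_{n+1}(\lambda_{n+1},\omega)\,d\nu(\omega)$, then by the definition of divided differences one obtains a compatible representation of $f^{[n]}$ by collapsing two adjacent slots and inserting a factor $\lambda_{j}-\lambda_{j-1}$; this is the function-level shadow of the operator identity. Since $[H_j,a]$ is the operator that implements ``multiplication by $\lambda_j - \lambda_{j-1}$'' sandwiched between functional calculi of $H_j$ — precisely, $a_j(H_j,\omega)\, a\, a_{j+1}(H_j,\omega) - a_j(H_j,\omega)\,a\,a_{j+1}(H_j,\omega)$ rearranges via $H_j a - a H_j = [H_j,a]$ — the first three displayed identities follow by inserting $[H_j,a]$ in the appropriate slot and reindexing.

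The execution would go as follows. Fix a representation $\phi = f^{[n]}$, $\phi(\lambda) = \int_\Omega a_0(\lambda_0,\omega)\cdots a_n(\lambda_n,\omega)\,d\nu(\omega)$ of the type guaranteed by Lemma~\ref{L:DivDifT} (for the $\Theta$-elliptic case) or the analogous representation for the zero-order case. For the first identity in~\eqref{eq:comeq1}, apply Theorem~\ref{T:MainMOIConstruction} (resp.\ Theorem~\ref{T:MOOIsop0}) to write both sides as Bochner integrals over $\Omega$ in every $\Hc^s$; inside the integral, for each fixed $\omega$, use
\[
a_j(H_j,\omega)\, a X_{j+1} - a_j(H_j,\omega) X_j a \ \text{(with appropriate slot bookkeeping)}
\]
no — more carefully, the integrand on the left is
\[
a_0(H_0,\omega)X_1\cdots X_j\, a\, a_{j}(H_j,\omega)\cdots - a_0(H_0,\omega)X_1\cdots (X_j a)\, a_{j}(H_j,\omega)\cdots,
\]
and the key pointwise computation is that $a\, g(H_j) - g(H_j)\, a = T^{H_j,H_j}_{g^{[1]}}([H_j,a])$ for a single function $g$, applied with the representation of $g^{[1]}$ induced from that of $f^{[n]}$ via the divided-difference recursion. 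One then checks that this induced representation is exactly (up to reindexing) a valid representation of $f^{[n+1]}$ on the operators $H_0,\ldots,H_j,H_j,\ldots,H_n$, using Lemma~\ref{L:DivDifT} again to verify the norm/measurability hypotheses so that Theorem~\ref{T:MainMOIConstruction} applies to the right-hand side. The other two commutator identities are the $j=0$ and $j=n$ boundary cases, with $a$ pulled out past the leftmost/rightmost functional calculus respectively. For~\eqref{eq:SuperscriptDiff}, the mechanism is the second classical identity $g(A) - g(B) = T^{A,B}_{g^{[1]}}(A-B)$ lifted in the same way: replace the $j$-th functional-calculus factor $a_j(A,\omega)$ by $a_j(B,\omega)$ and express the difference via the divided difference in the $j$-th variable, again matching against a representation of $f^{[n+1]}$ with $A,B$ in adjacent slots. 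The zero-order case is verbatim the same argument with Theorem~\ref{T:FunctCalcOp0}/Theorem~\ref{T:MOOIsop0} in place of the elliptic functional calculus and $C^\infty(\R)$ in place of $T^\beta(\R)$, since $C^\infty$ is closed under the operations used and $C^\infty_c$ is dense.

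The main obstacle I expect is the bookkeeping of integral representations: one must verify that the representation of $f^{[n+1]}$ produced by ``splitting'' a representation of $f^{[n]}$ via the divided-difference recursion genuinely satisfies the integrability condition~\eqref{eq:boxnorm} (with the correct weights $\beta_j$ summing to $\alpha - (n+1)$) so that $T^{\ldots}_{f^{[n+1]}}$ is well-defined in the first place — this is where Lemma~\ref{L:DivDifT} and the independence-of-representation statement in Theorem~\ref{T:MainMOIConstruction} do the real work. A secondary, but genuinely delicate, point is justifying that the pointwise operator identities inside the Bochner integral can be integrated term by term: this is fine because each piece converges as a Bochner integral in every $\Hc^s$ by the theorems cited, so the integral is additive and the slot-insertion of $[H_j,a]$ (itself in $\op(\Theta)$, since $H_j\in\op^{h_j}(\Theta)$ or $[\Theta,H_j]\in\op^0(\Theta)$ and $a\in\op(\Theta)$) lands in the hypothesis of the MOI construction. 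Once these verifications are in place, the identities are formal consequences of the divided-difference recursion and require no further analysis.
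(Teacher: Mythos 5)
Your plan is correct, and its mathematical core is the same as the paper's: everything hinges on the divided-difference splitting identity (in the paper's notation, $F_{j+1}-F_j=(\lambda_j-\lambda_{j+1})f^{[n+1]}$, where $F_j$ omits the $j$-th variable from $f^{[n]}$) together with the fact that the commutator $[H_j,a]$ implements multiplication of the symbol by $\lambda_j-\lambda_{j+1}$ in the relevant slot, and linearity of $\phi\mapsto T_\phi$ in the symbol. The difference is one of packaging: the paper never opens an integral representation — it works purely at the symbol level, using linearity plus the two structural rules that a symbol constant in one variable collapses adjacent arguments and that inserting $H_j$ corresponds to multiplying the symbol by $\lambda_j$ — whereas you push the same identity inside a fixed representation of $f^{[n]}$, commute $a$ past each factor $a_j(H_j,\omega)$ via a nested DOI (equivalently, for the resolvent factors of Lemma~\ref{L:DivDifT}, just the resolvent identity), and then must verify that the resulting product-measure representation is an admissible representation of $f^{[n+1]}$ before invoking the independence-of-representation part of Theorem~\ref{T:MainMOIConstruction}. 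That verification is exactly the bookkeeping you flag as the main obstacle; the paper's symbol-level route sidesteps it (at the cost of leaving the collapse/insertion rules implicit, which in a fully detailed write-up require the same kind of representation-level check). Two small points: the recursion you state at the outset is garbled as written — the correct function-level fact is that the first divided difference of $f^{[n]}$ in its $j$-th variable is $f^{[n+1]}$ — but your subsequent use of it is the right one; and well-definedness of all the MOIs appearing in \eqref{eq:comeq1}--\eqref{eq:SuperscriptDiff} should be cited from Theorem~\ref{T:MOOIforNCG1} (elliptic case) and Theorem~\ref{T:MOOIsop0} (zero-order case), as you essentially do.
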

\begin{proof}
We prove equation~\ref{eq:comeq1} in the $\Theta$-elliptic case, the other identities and the zero-order case follow analogously. For $f\in T^\beta(\R)$, the multiple operator integrals appearing in the equation are then well-defined, see Theorem~\ref{T:MOOIforNCG1}.

Write
\begin{align*}
    F_j(\lambda_0, \ldots, \lambda_{n+1}) &:= f^{[n]}(\lambda_0, \ldots, \lambda_{j-1}, \lambda_{j+1}, \ldots \lambda_{n+1}),
\end{align*}
and observe that
\[
F_{j+1}(\lambda_0, \ldots, \lambda_{n+1}) - F_j(\lambda_0, \ldots, \lambda_{n+1}) = (\lambda_j - \lambda_{j+1}) f^{[n+1]}(\lambda_0, \ldots, \lambda_{n+1}).
\]
Hence, 
\begin{align*}
    T_{f^{[n+1]}}^{H_0, \ldots, H_j, H_j, \ldots, H_n}&(X_1,\ldots,X_j,[H_j,a],X_{j+1},\ldots,X_n)\\
    &= T_{(\lambda_j - \lambda_{j+1}) f^{[n+1]}}^{H_0, \ldots, H_j, H_j, \ldots, H_n}(X_1,\ldots,X_j,a,X_{j+1},\ldots,X_n)\\
    &= T_{F_{j+1}}^{H_0, \ldots, H_j, H_j, \ldots, H_n}(X_1,\ldots,X_j,a,X_{j+1},\ldots,X_n) \\
    &\quad - T_{F_j}^{H_0, \ldots, H_j, H_j, \ldots, H_n}(X_1,\ldots,X_j,a,X_{j+1},\ldots,X_n) \\
    &= T^{H_0, \ldots, H_n}_{f^{[n]}}(X_1,\ldots,X_j,aX_{j+1},\ldots,X_n)\\
    &\quad -T_{f^{[n]}}^{H_0, \ldots, H_n}(X_1,\ldots,X_ja,X_{j+1},\ldots,X_n). \qedhere
\end{align*}
\end{proof}

With these identities in hand, we can show that the MOI constructed in the previous section is an element of $\OP(\Theta)$ if all its components are and the symbol is a divided difference.

\begin{thm}\label{T:OPMOOIS}
     Let $H_0, \ldots, H_n$ be such that each $H_i \in \OP^{h}(\Theta), h > 0,$ is symmetric and $\Theta$-elliptic, and let $f \in T^\beta(\R)$ for some $\beta \in \R$. For operators $X_i \in \OP^{r_i}(\Theta)$, $r:= \sum_{i=1}^n r_i$, we have that
     \[
     T_{f^{[n]}}^{H_0, \ldots, H_n}(X_1, \ldots, X_n) \in \OP^{h(\beta-n)+ r}(\Theta).
     \]
     Similarly, if each $H_i$ instead is such that $H_i, [\Theta, H_i] \in \OP^0(\Theta)$ and $H_i$ is self-adjoint, then for any $f \in C^\infty(\R)$ we have that
     \[
     T_{f^{[n]}}^{H_0, \ldots, H_n}(X_1, \ldots, X_n) \in \OP^{r}(\Theta).
     \]
\end{thm}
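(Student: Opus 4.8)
The plan is to proceed by induction on the number $N$ of commutators $\delta_\Theta$ that we apply to the multiple operator integral, using the MOI identities of Proposition~\ref{P:UMOIcom} to rewrite each $\delta_\Theta$-commutator of an MOI as a finite sum of MOIs whose symbols are again divided differences (of the same $f$, but of one higher order) and whose arguments are commutators $\delta_\Theta^{k}(X_i)$ of the original operators, together with "new" arguments of the form $[\Theta,H_j] = \delta_\Theta(H_j)$. Concretely, writing $T = T_{f^{[n]}}^{H_0,\dots,H_n}(X_1,\dots,X_n)$, I would first establish the key commutation formula
\begin{equation}\label{eq:deltaMOI}
\delta_\Theta\!\left(T_{f^{[n]}}^{H_0,\dots,H_n}(X_1,\dots,X_n)\right)
= \sum_{i=1}^n T_{f^{[n]}}^{H_0,\dots,H_n}(X_1,\dots,\delta_\Theta(X_i),\dots,X_n)
+ \sum_{j=0}^n T_{f^{[n+1]}}^{H_0,\dots,H_j,H_j,\dots,H_n}(X_1,\dots,[\Theta,H_j],\dots,X_n),
\end{equation}
where in the $j$-th term of the second sum the new argument $[\Theta,H_j]$ is inserted in position $j$ (for $j=0$ it is inserted on the far left, for $j=n$ on the far right). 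This is exactly the telescoping consequence of the three displayed identities in Proposition~\ref{P:UMOIcom} applied with $a = \Theta$: one expands $\delta_\Theta(T) = \Theta T - T\Theta$, moves $\Theta$ past each argument $X_i$ one at a time, and collects the "defect" terms; each defect is one of the MOIs on the right of~\eqref{eq:comeq1} (or its left/right analogue) with $a=\Theta$, and since $[\Theta,\Theta]=0$ all purely-$\Theta$ terms cancel. One must check that $\Theta$ itself is an admissible argument, i.e. $\Theta \in \op^1(\Theta)$ (clear) and, for the zero-order case, that $\Theta$ can legitimately be used — here instead one uses $(1+\varepsilon\Theta)^{-1}\Theta$ or simply notes that the identity~\eqref{eq:comeq1} is an identity of operators on $\Hc^\infty$ and $\Theta:\Hc^\infty\to\Hc^\infty$, so the manipulation is valid regardless of orders.

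Granting~\eqref{eq:deltaMOI}, the induction runs as follows. Assume each $X_i \in \OP^{r_i}(\Theta)$, so $\delta_\Theta^k(X_i) \in \op^{r_i}(\Theta)$ for all $k$, and each $H_j \in \OP^h(\Theta)$, so $\delta_\Theta^k(H_j) \in \op^h(\Theta)$ for all $k \geq 0$; in particular $[\Theta,H_j] = \delta_\Theta(H_j) \in \op^h(\Theta)$, and moreover $\delta_\Theta^k([\Theta,H_j]) = \delta_\Theta^{k+1}(H_j) \in \op^h(\Theta)$ as well, so $[\Theta,H_j] \in \OP^h(\Theta)$. The base case $N=0$ is precisely Theorem~\ref{T:MOOIforNCG1} (respectively Theorem~\ref{T:MOOIforNCG2}): $T_{f^{[n]}}^{H_0,\dots,H_n}(X_1,\dots,X_n) \in \op^{h(\beta-n)+r}(\Theta)$ (resp. $\op^r(\Theta)$). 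For the inductive step, applying $\delta_\Theta$ once and invoking~\eqref{eq:deltaMOI}: the first sum consists of MOIs of exactly the same shape (symbol $f^{[n]}$, $n+1$ operators $H_j\in\OP^h$, arguments in $\op^{r_i}(\Theta)$ with one replaced by $\delta_\Theta(X_i)\in\op^{r_i}(\Theta)$), hence each lies in $\op^{h(\beta-n)+r}(\Theta)$ by Theorem~\ref{T:MOOIforNCG1} again; the second sum consists of MOIs with symbol $f^{[n+1]}$, with $n+2$ operators $H_0,\dots,H_j,H_j,\dots,H_n$ (all still in $\OP^h$, $\Theta$-elliptic, symmetric), and with $n+1$ arguments, the extra one being $[\Theta,H_j] \in \op^h(\Theta)$. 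By Theorem~\ref{T:MOOIforNCG1} this term lies in $\op^{h(\beta-(n+1)) + r + h}(\Theta) = \op^{h(\beta-n)+r}(\Theta)$ — the order bookkeeping works out exactly because raising the divided-difference order by one costs $-h$ while the new argument of analytic order $h$ gives back $+h$. Thus $\delta_\Theta(T) \in \op^{h(\beta-n)+r}(\Theta)$. Iterating, and observing that at every stage the operators produced are finite sums of MOIs of the two shapes just described (with the list of $H$'s growing but all entries remaining in $\OP^h(\Theta)$, $\Theta$-elliptic and symmetric, the symbol remaining a divided difference $f^{[m]}$ of the fixed $f$, and the arguments remaining finite iterated $\delta_\Theta$-commutators of the $X_i$'s or of $\Theta$-commutators of the $H_j$'s, all of the appropriate analytic order), we conclude $\delta_\Theta^N(T) \in \op^{h(\beta-n)+r}(\Theta)$ for every $N$, which is the definition of $T \in \OP^{h(\beta-n)+r}(\Theta)$. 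The zero-order case is identical with Theorem~\ref{T:MOOIforNCG2} in place of Theorem~\ref{T:MOOIforNCG1} and with the order $h(\beta-n)+r$ replaced throughout by $r$; here one needs $f \in C^\infty(\R)$ so that all divided differences $f^{[m]}$ remain admissible symbols, which holds by Lemma~\ref{L:DivDifT}.

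The main obstacle I anticipate is making formula~\eqref{eq:deltaMOI} rigorous — specifically, justifying that one may "pass $\Theta$ through the MOI" and that the resulting telescoping manipulation is an identity of honest operators on $\Hc^\infty$ rather than merely a formal symbol-level computation. The cleanest route is to avoid ever multiplying by the unbounded $\Theta$ inside the Bochner integral and instead derive~\eqref{eq:deltaMOI} purely from the already-established algebraic identities of Proposition~\ref{P:UMOIcom} (with $a = \Theta|_{\Hc^\infty}$, noting $\Theta$ maps $\Hc^\infty \to \Hc^\infty$ so all terms are well-defined operators $\Hc^\infty \to \Hc^\infty$), together with the basic observation $T_{f^{[n]}}^{H_0,\dots}(\dots)\cdot\Theta - \Theta\cdot T_{f^{[n]}}^{H_0,\dots}(\dots) = -\delta_\Theta(T_{f^{[n]}}^{\dots}(\dots))$ as operators on $\Hc^\infty$. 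A secondary point requiring care is confirming that in the zero-order setting $\Theta$ is genuinely permissible as the "$a$" in Proposition~\ref{P:UMOIcom}: the proposition as stated requires $a \in \op(\Theta)$, and $\Theta \in \op^1(\Theta)$, so this is fine — but one should double-check that the commutators $[\Theta, H_j]$ it produces are exactly the $\op^0$-operators hypothesized, which is guaranteed by the assumption $[\Theta,H_j] \in \OP^0(\Theta)$. No deep new estimate is needed beyond what Theorems~\ref{T:MOOIforNCG1} and~\ref{T:MOOIforNCG2} already provide; the content is entirely in the combinatorial/algebraic bookkeeping and the careful order accounting.
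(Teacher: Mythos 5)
Your proposal is correct and follows essentially the same route as the paper: the paper also applies Proposition~\ref{P:UMOIcom} with $a=\Theta$ to write $[\Theta, T^{H_0,\dots,H_n}_{f^{[n]}}(X_1,\dots,X_n)]$ as a sum of MOIs with symbol $f^{[n]}$ or $f^{[n+1]}$ whose arguments are $[\Theta,X_i]$ or $[\Theta,H_j]$, and then invokes the order estimates (Lemma~\ref{L:DivDifT} with Theorem~\ref{T:MainMOIConstruction}, i.e.\ Theorems~\ref{T:MOOIforNCG1}/\ref{T:MOOIforNCG2}) together with the cancellation $h(\beta-(n+1))+r+h=h(\beta-n)+r$. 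The only difference is presentational: the paper spells out the case $n=1$ with a single commutator and declares higher commutators and general $n$ analogous, whereas you write the general telescoped identity and run the induction explicitly, which is exactly the bookkeeping the paper leaves implicit.
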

\begin{proof}
     We focus on the $\Theta$-elliptic case, the zero-order case follows similarly. Taking $n=1$ to ease notation, using Proposition~\ref{P:UMOIcom} gives that
     \[
     [\Theta, T^{H_0, H_1}_{f^{[1]}}(X_1)] = T^{H_0, H_0, H_1}_{f^{[2]}}([\Theta, H_0],X_1) + T^{H_0, H_1}_{f^{[1]}}([\Theta,X_1]) + T^{H_0, H_1, H_1}_{f^{[2]}}(X_1, [\Theta, H_1]).
     \]
     As $[\Theta, H_i]\in \op^h(\Theta)$ and $[\Theta, X_1] \in \op^{r_1}(\Theta)$, Lemma~\ref{L:DivDifT} combined with Theorem~\ref{T:MainMOIConstruction} gives that
     \[
     T^{H_0, H_0, H_1}_{f^{[2]}}([\Theta, H_0],X_1), T^{H_0,H_1}_{f^{[1]}}([\Theta,X_1]), T^{H_0, H_1, H_1}_{f^{[2]}}(X_1, [\Theta, H_1]) \in \op^{h(\beta-n)+r}(\Theta).
     \]
     Higher commutators and $n > 1$ follow analogously.
\end{proof}

We have now proven Theorem~\ref{T:FunctCalcOPEll} and Theorem~\ref{T:FunctCalcOP0} which were claimed in the previous chapter: they are special cases of Theorem~\ref{T:OPMOOIS}. 

In the setting that $\Theta^{-1} \in \mathcal{L}_s$, $s>0$, it is immediate from Theorem~\ref{T:MainMOIConstruction} and Theorem~\ref{T:MainFunctCalc} that for $H_0, \ldots, H_n \in \op^{h}(\Theta), h >0$ symmetric and $\Theta$-elliptic, and $f\in T^\beta(\R)$, the multiple operator integral
\begin{equation}\label{eq:TypicalMOI}
    T_{f^{[n]}}^{H_0, \ldots, H_n}(X_1, \ldots, X_n) \in \op^{(\beta-n)h + r}(\Theta)
\end{equation}
can be considered to be a trace-class operator on $\Hc$ if $\beta$ is small enough. Namely, we have
\[
\|A\|_{1} \leq \|\Theta^{-s}\|_1 \|A\|_{\Hc^0 \to \Hc^{s}}.
\]

\section{Asymptotic expansions}
\label{S:AsympExp}
Through the identities proved in the previous section, the theory of multiple operator integrals lends itself well for establishing asymptotic expansions of operators. As an immediate example, we prove a noncommutative Taylor expansion for pseudodifferential operators.

\begin{thm}\label{T:Taylor}
     Let $f \in T^\beta(\R)$, $H \in \op^h(\Theta)$, $h>0$, $\Theta$-elliptic and symmetric, and let $V \in \op^r(\Theta)$ be symmetric. If the order of the perturbation $V$ is strictly smaller than that of $H$, i.e. $r < h$, we have
    \begin{align}\label{eq:Taylor asymptotic formula}
    f(H + V) \sim &\sum_{n=0}^\infty T^{H}_{f^{[n]}}(V, \ldots, V),
    \end{align}
    in the sense that
    \begin{align*}
        &f(H+V) - \sum_{n=0}^N T^{H}_{f^{[n]}}(V, \ldots, V)\in \op^{m_N}(\Theta)
    \end{align*}
    with $m_N \downarrow -\infty$.
\end{thm}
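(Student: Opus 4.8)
The plan is to prove the expansion by iterating the perturbation identity~\eqref{eq:SuperscriptDiff} (more precisely its specialisation where all superscripts are equal and we change one copy of $H$ to $H+V$), which plays the role of the resolvent identity in this abstract setting. Concretely, the key recursion is
\begin{align*}
    T^{H+V, H, \ldots, H}_{f^{[n]}}(V,\ldots,V) - T^{H, H, \ldots, H}_{f^{[n]}}(V,\ldots,V) = T^{H+V, H, H, \ldots, H}_{f^{[n+1]}}(V, V, \ldots, V),
\end{align*}
obtained by applying~\eqref{eq:SuperscriptDiff} with $A = H+V$, $B = H$ in the first slot. Starting from $f(H+V) = T^{H+V}_{f^{[0]}}()$ and unfolding this identity $N+1$ times, I would obtain the exact (finite) Taylor formula with remainder
\begin{align*}
    f(H+V) = \sum_{n=0}^{N} T^{H, \ldots, H}_{f^{[n]}}(V, \ldots, V) + T^{H+V, H, \ldots, H}_{f^{[N+1]}}(V, \ldots, V),
\end{align*}
where the remainder MOI has $N+2$ superscripts (one copy of $H+V$, then $N+1$ copies of $H$) and $N+1$ arguments equal to $V$. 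So the only substantive thing to check is that this remainder term has analytic order tending to $-\infty$ as $N \to \infty$.

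For the order estimate, first I need that $H + V$ is again $\Theta$-elliptic, symmetric, and of order $h$: symmetry and membership in $\op^h(\Theta)$ are clear since $V \in \op^r(\Theta) \subseteq \op^h(\Theta)$ as $r < h$, and $\Theta$-ellipticity of $H+V$ follows from Corollary~\ref{C:EllipticParametrix}, because a parametrix $P$ of $H$ satisfies $(H+V)P = 1 + R_1 + VP$ with $VP \in \op^{r-h}(\Theta) \subseteq \op^{-\varepsilon}(\Theta)$, i.e.\ $P$ inverts $H+V$ up to an operator of negative order (and one can promote this to order $-1$, or just apply the Borel lemma as in the proof of Corollary~\ref{C:EllipticParametrix}). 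Granting this, Theorem~\ref{T:MOOIforNCG1} applies to the remainder: with $n = N+1$ arguments each in $\op^r(\Theta)$, the MOI $T^{H+V, H, \ldots, H}_{f^{[N+1]}}(V, \ldots, V)$ lies in $\op^{(\beta - (N+1))h + (N+1)r}(\Theta) = \op^{\beta h + (N+1)(r-h)}(\Theta)$. Since $r - h < 0$, the exponent $m_{N+1} := \beta h + (N+1)(r - h) \downarrow -\infty$ as $N \to \infty$, which is exactly the asymptotic statement claimed. (One should also note $f \in T^\beta(\R)$ guarantees $\|f\|_{T^\beta(\R), k} < \infty$ for all $k$, in particular for $k = 0, \ldots, N+3$, so Theorem~\ref{T:MOOIforNCG1} is legitimately applicable at every level $N$.)

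The main obstacle — and the step deserving the most care — is justifying the telescoping at the level of honest operators on $\Hc^\infty$ rather than merely formally. Each application of identity~\eqref{eq:SuperscriptDiff} from Proposition~\ref{P:UMOIcom} requires that $f \in T^\beta(\R)$ (so all MOIs in sight are well-defined via Theorem~\ref{T:MOOIforNCG1}) and that the operators serving as superscripts are symmetric, $\Theta$-elliptic, of positive order — which is why establishing $\Theta$-ellipticity of $H+V$ up front is essential, and why the hypothesis $r < h$ (not merely $r \le h$, and not $r < 0$) is used twice: once to keep $H+V$ of the same order $h$ as $H$, and once to make the remainder orders decrease. I would also remark that identity~\eqref{eq:SuperscriptDiff} as stated changes one superscript and inserts the \emph{difference} of the two operators as a new argument in the corresponding slot; here that difference is exactly $(H+V) - H = V$, which is why the argument pattern stays $(V, \ldots, V)$ at every stage. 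A minor additional check is that $T^{H}_{f^{[0]}}() = f(H)$ and $T^{H+V}_{f^{[0]}}() = f(H+V)$ in the sense of the functional calculus of Theorem~\ref{T:MainFunctCalc}, which is immediate from the $n=0$ case of the construction in Theorem~\ref{T:MainMOIConstruction}. With these points settled the proof is essentially the two displayed identities above plus the order bookkeeping.
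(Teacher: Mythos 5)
Your proposal is correct and follows essentially the same route as the paper: telescoping the perturbation identity~\eqref{eq:SuperscriptDiff} with $A=H+V$, $B=H$ to get the exact formula with remainder $T^{H+V,H,\ldots,H}_{f^{[N+1]}}(V,\ldots,V)$, and then applying Theorem~\ref{T:MOOIforNCG1} together with $r<h$ to send the remainder's order to $-\infty$. Your extra verification that $H+V$ is itself symmetric and $\Theta$-elliptic (via the parametrix of $H$ and the Borel-lemma argument of Corollary~\ref{C:EllipticParametrix}) is a point the paper uses implicitly without spelling out, so it strengthens rather than deviates from the argument.
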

\begin{proof}
Using the last part of Proposition~\ref{P:UMOIcom} with $A = H+V$, $B = H$, we have
\begin{align*}
    f(H+V) - f(H) &= T^{H+V}_{f^{[0]}}() - T^H_{f^{[0]}}()\\
    &= T_{f^{[1]}}^{H+V, H}(V).
\end{align*}
Repeating the argument, we get
    \begin{align*}
            f(H+V) - &\sum_{n=0}^N  T^{H}_{f^{[n]}}(V, \ldots, V) = T_{f^{[N+1]}}^{H+V, H, \ldots, H}(V, \ldots, V).
    \end{align*}
Now, if $r < h$, Theorem~\ref{T:MOOIforNCG1} gives that
\[
T_{f^{[N+1]}}^{H+V, H, \ldots, H}(V, \ldots, V) \in \op^{(\beta-N-1)h+Nr}(\Theta),
\]
with 
\[
(\beta-N-1)h+Nr = N(r-h) + (\beta-1)h \downarrow -\infty.\qedhere
\]
\end{proof}

Note that, if $H$ and $V$ are commuting operators, \eqref{eq:Taylor asymptotic formula} recovers the classic Taylor expansion formula $f(H+V)\sim\sum_{n=0}^\infty \frac{f^{(n)}(H)}{n!}V^n$. Hence, the expansion~\eqref{eq:Taylor asymptotic formula} can be interpreted as a type of noncommutative Taylor expansion. 
Each term in the expansion can itself be expanded as follows. Recall that we write $\delta_H(X) := [H,X]$, $\delta_H^n(X) := \delta_H(\cdots \delta_H(\delta_H(X)) \cdots)$.

\begin{prop}\label{P:Expansion} Let $X_i \in \op^{r_i}(\Theta)$, $H\in \op^h(\Theta)$, $h >0$ symmetric and $\Theta$-elliptic, and $f\in T^\beta(\R)$. Then
    \begin{align*}
          T^{H}_{f^{[n]}}(X_1, \dots, X_n) &= \sum_{m=0}^{N}  \sum_{m_1 + \dots + m_n = m} \frac{C_{m_1, \dots, m_n}}{(n+m)!} \delta_H^{m_1}(X_1) \cdots \delta_H^{m_n}(X_n) f^{(n+m)}(H) \\
        &\quad +   S^n_{N}(X_1, \dots, X_n),
    \end{align*}
    where 
    \[
    C_{m_1, \dots, m_n} :=  \prod_{j=1}^{n} \binom{j+m_1 + \dots + m_{j}-1}{m_j}
    \]
    and the remainder $   S^n_{N}(X_1, \dots, X_n)$ is a sum of terms of the form
    \[
 \delta_H^{m_1}(X_1)\cdots \delta_H^{m_k}(X_{k})   T^{H}_{f^{[n+N+1]}}(1, \ldots, 1, \delta_H^{N+1-m_1 - \cdots - m_k}(X_{k+1}), 1, \ldots, 1, X_{k+2}, \ldots, X_n).
\]
If the commutators $\delta_H^k(X_j)$ have a lower order than the expected $r_j+kh$, explicitly if
    \[
    \delta_H^k(X_j) \in \op^{r_j+k(h-\varepsilon)}(\Theta)
    \]
    for some $\varepsilon > 0$, then the above gives an asymptotic expansion
    \[
        T^{H}_{f^{[n]}}(X_1, \dots, X_n) \sim \sum_{m=0}^{\infty}  \sum_{m_1 + \dots + m_n = m} \frac{C_{m_1, \dots, m_n}}{(n+m)!} \delta_H^{m_1}(X_1) \cdots \delta_H^{m_n}(X_n) f^{(n+m)}(H),
    \]
    in the sense that the remainder term 
    \[
    S^n_{N}(X_1, \ldots, X_n) \in \op^{m_N}(\Theta)
    \]
    with $m_N = \sum_j r_j + (\beta-n)h-\varepsilon (N+1) \downarrow -\infty$.
\end{prop}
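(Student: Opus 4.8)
The plan is to prove the expansion by induction on the length $n$ of the multiple operator integral, peeling one commutator at a time using the identities of Proposition~\ref{P:UMOIcom}. The base case is $n=0$: here $T^H_{f^{[0]}}() = f(H)$, and there is nothing to expand, so $S^0_N = 0$. For the inductive step, I would start from the first identity in Proposition~\ref{P:UMOIcom}, rewritten in the single-operator form
\[
T^H_{f^{[n]}}(X_1, \ldots, X_n) = X_1\, T^H_{f^{[n-1]}}(X_2, \ldots, X_n) - T^{H,H,\ldots,H}_{f^{[n+1]}}([H,X_1], X_2, \ldots, X_n),
\]
where I have used the second identity of Proposition~\ref{P:UMOIcom} with $a = X_1$ (all superscripts equal to $H$). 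Iterating this on the first slot $N+1$ times produces a telescoping sum: the leading terms collect factors $\delta_H^{k}(X_1)$ multiplying shorter MOIs $T^H_{f^{[n-1+k]}}(1,\ldots,1,X_2,\ldots,X_n)$ with a string of $k$ ones inserted, and the final term is the remainder piece of the stated form. Then one applies the inductive hypothesis to each of these shorter MOIs (noting that the symbol $f^{[n-1+k]}$ restricted to arguments $(1,\ldots,1,X_2,\ldots,X_n)$ still has the divided-difference form required, since $f^{[m]}(\lambda,\ldots,\lambda,\mu_1,\ldots) $ appears naturally, and $\delta_H(1) = 0$ kills the inserted slots in all subsequent expansions).

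The combinatorial heart is matching the coefficients. After the double induction, the term with a prescribed multidegree $(m_1,\ldots,m_n)$ arises in several ways, and I would verify that the total coefficient is $\frac{1}{(n+m)!}\prod_{j=1}^n \binom{j+m_1+\cdots+m_j-1}{m_j}$ by a bookkeeping argument: the factor $\binom{j+m_1+\cdots+m_j-1}{m_j}$ counts the number of ways the $m_j$ derivatives landing on the $j$-th slot can be interleaved among the $j + (m_1+\cdots+m_{j-1})$ "positions" available when that slot is processed (the original $j$ slots plus the ones inserted by earlier peeling). The normalization $\frac{1}{(n+m)!}$ comes from repeatedly using $f^{[k]}(\lambda,\ldots,\lambda) = \frac{1}{k!}f^{(k)}(\lambda)$ together with the single identity $T^H_{(\lambda_{i}-\lambda_{i+1})g^{[k+1]}}(\ldots) $-type reductions; I would check this against the small cases $n=1$ (where it must reduce to $T^H_{f^{[1]}}(X) \sim \sum_m \frac{1}{(1+m)!}\delta_H^m(X) f^{(1+m)}(H)$, a known commutator expansion) and $n=2$.

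For the order bookkeeping and the asymptotic claim: each term $\delta_H^{m_1}(X_1)\cdots\delta_H^{m_n}(X_n) f^{(n+m)}(H)$ lies in $\op^{\sum_j r_j + m(h-\varepsilon) - (n+m)h + \text{(correction)}}$... more carefully, under the hypothesis $\delta_H^k(X_j) \in \op^{r_j + k(h-\varepsilon)}(\Theta)$ one gets $\delta_H^{m_1}(X_1)\cdots\delta_H^{m_n}(X_n) \in \op^{\sum_j r_j + m(h-\varepsilon)}(\Theta)$, while $f^{(n+m)} \in S^{\beta - n - m}(\R) \subseteq L^{\beta-n-m}_\infty(E)$ (using $f \in T^\beta(\R)$ and Remark~\ref{rem:fToLinfty}-type reasoning), so $f^{(n+m)}(H) \in \op^{(\beta-n-m)h}(\Theta)$ by Theorem~\ref{T:MainFunctCalc}; the product has order $\sum_j r_j + m(h-\varepsilon) + (\beta-n-m)h = \sum_j r_j + (\beta-n)h - m\varepsilon$, so these terms themselves form an asymptotic series. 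For the remainder $S^n_N$: by construction it is a finite sum of terms of the displayed form, each of which, via Theorem~\ref{T:MOOIforNCG1} applied to $T^H_{f^{[n+N+1]}}$ with $\delta_H^{N+1-m_1-\cdots-m_k}(X_{k+1}) \in \op^{r_{k+1}+(N+1-\cdots)(h-\varepsilon)}(\Theta)$ and the other nontrivial arguments $X_{k+2},\ldots,X_n$, together with the prefactor $\delta_H^{m_1}(X_1)\cdots\delta_H^{m_k}(X_k) \in \op^{r_1+\cdots+r_k + (m_1+\cdots+m_k)(h-\varepsilon)}(\Theta)$, has order at most $\sum_j r_j + (\beta-n)h - \varepsilon(N+1)$. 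Hence $S^n_N \in \op^{m_N}(\Theta)$ with $m_N = \sum_j r_j + (\beta-n)h - \varepsilon(N+1) \downarrow -\infty$, which is exactly the asserted asymptotic expansion. The main obstacle I expect is purely combinatorial: correctly tracking how inserted trivial slots ($1$'s) interact with later rounds of peeling and confirming that $\delta_H(1)=0$ cleanly removes them, so that the coefficient count genuinely collapses to the binomial product rather than something messier; the analytic order estimates, by contrast, are routine given Theorems~\ref{T:MainMOIConstruction}, \ref{T:MOOIforNCG1} and Lemma~\ref{L:DivDifT}.
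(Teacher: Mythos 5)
Your overall strategy (commute each argument to the left with Proposition~\ref{P:UMOIcom}, count interleavings to obtain the binomial product, estimate the remainder via Theorem~\ref{T:MOOIforNCG1}) is the same as the paper's, and your order bookkeeping for the terms of the series and for $S^n_N$ is correct. However, the identity you base the induction on is false, and not merely up to sign. Setting an argument of a MOI equal to $1$ merges two spectral variables; it does not lower the order of the divided difference. Applying the second identity of Proposition~\ref{P:UMOIcom} with $a=X_1$ to $T^H_{f^{[n]}}(1,X_2,\dots,X_n)$ gives
\[
T^{H}_{f^{[n]}}(X_1,X_2,\dots,X_n)=X_1\,T^{H}_{f^{[n]}}(1,X_2,\dots,X_n)+T^{H}_{f^{[n+1]}}(\delta_H(X_1),1,X_2,\dots,X_n),
\]
whereas you wrote $X_1\,T^{H}_{f^{[n-1]}}(X_2,\dots,X_n)$ minus a term $T^{H}_{f^{[n+1]}}([H,X_1],X_2,\dots,X_n)$ which does not even have the right number of arguments for the symbol $f^{[n+1]}$. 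Already for $n=1$ your identity would give the leading term $Xf(H)$, while the correct one is $Xf'(H)$, because $T^H_{f^{[1]}}(1)=f'(H)$, not $f(H)$. The same confusion produces the off-by-one in your description of the iterated terms ($\delta_H^k(X_1)\,T^H_{f^{[n-1+k]}}$ with $k$ ones, instead of the correct $\delta_H^k(X_1)\,T^H_{f^{[n+k]}}$ with $k+1$ ones), which would destroy the final normalisation $T^H_{f^{[n+m]}}(1,\dots,1)=\tfrac{1}{(n+m)!}f^{(n+m)}(H)$ that yields the factor $\tfrac{1}{(n+m)!}$ in the statement.

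There is also a structural gap: the objects produced by peeling are of the form $T^H_{f^{[n+m]}}(1,\dots,1,X_2,\dots,X_n)$ --- fewer nontrivial arguments but \emph{more} slots and a \emph{higher} divided difference --- so they are not instances of the proposition for $n-1$ arguments, and an induction on $n$ that applies the proposition itself as inductive hypothesis does not close. One needs a strengthened auxiliary statement, parametrised by the number $j$ of leading trivial slots, of the form
\[
T^{H}_{f^{[n+j]}}(\underbrace{1,\dots,1}_{j},X_1,\dots,X_n)=\sum_{m=0}^{N}\binom{j+m}{m}\,\delta_H^{m}(X_1)\,T^{H}_{f^{[n+j+m]}}(\underbrace{1,\dots,1}_{j+1+m},X_2,\dots,X_n)+R^{n}_{j,N},
\]
proved by induction on $N$ by commuting $\delta_H^m(X_1)$ past the $j$ leading slots (this is Lemma~\ref{L:Commute1} in the paper); iterating it over the successive arguments then gives exactly the coefficient $\prod_{j}\binom{j+m_1+\cdots+m_j-1}{m_j}$ that your interleaving count predicts, together with remainder terms of the stated shape. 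So your combinatorial intuition and your analytic estimates are sound, but the proof as written fails at its first displayed identity and needs this intermediate lemma to make the induction well-founded.
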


The proof of this proposition is a lengthy combinatorial exercise, the computations of which are standard and appear in many proofs of the local index formula~\cite[Lemma~6.11]{CPRS1}\cite[Equation~(71)]{ConnesMoscovici1995}\cite[Lemma~2.12]{Higson2003}, see also~\cite{Daletskii1998,Paycha2011}. The novelty is that they can be performed in the very general context of (unbounded) MOIs. In effect, Proposition~\ref{P:Expansion} is a generalisation of the cited results.

\begin{defn}
    The multiset coefficient $\multinom{n}{k}$ for $n, k \in \N$ is defined as 
    \[
    \multinom{n}{k} := \binom{n+k-1}{k}.
    \]
\end{defn}

\begin{lem}\label{L:Commute1} For $f\in T^\beta(\R)$, $H\in \op^h(\Theta)$, $h> 0$ symmetric and $\Theta$-elliptic, $X_i \in \op^{r_i}(\Theta)$, we have
\begin{align*}
    &T^{H} _{f^{[n+j]}}(\underbrace{1, \dots, 1}_{j}, X_1, \dots, X_n ) \\
    &= \sum_{m=0}^{N}  \multinom{m+1}{j} \delta_H^{m}(X_1) T^{H}_{f^{[n+j+m]}}(\underbrace{1, \dots, 1}_{j+1+m}, X_2, \dots, X_n) + R^n_{j,N}(X_1, \dots, X_n),
\end{align*}
where 
\[
R^n_{j,N}(X_1, \dots, X_n) :=  \sum_{l=0}^{j} \multinom{N+1}{l} T^{H}_{f^{[n+j+N+1]}}(\underbrace{1, \dots, 1}_{j-l}, \delta_H^{N+1}(X_1), \underbrace{1, \dots, 1}_{N+1+l}, X_2, \dots, X_n).
\]
\end{lem}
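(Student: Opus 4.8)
The plan is to prove Lemma~\ref{L:Commute1} by iterating the basic ``superscript-difference'' identity~\eqref{eq:SuperscriptDiff} from Proposition~\ref{P:UMOIcom}, which in the special case $A = B = H$ degenerates but in combination with the commutator identities~\eqref{eq:comeq1} yields the crucial relation
\[
T^{H}_{f^{[n+j]}}(\underbrace{1, \dots, 1}_{j}, X_1, X_2, \dots, X_n) - T^{H}_{f^{[n+j]}}(\underbrace{1, \dots, 1}_{j-1}, X_1, 1, X_2, \dots, X_n) = T^{H}_{f^{[n+j+1]}}(\underbrace{1, \dots, 1}_{j}, \delta_H(X_1), 1, X_2, \dots, X_n).
\]
Indeed, setting $a = X_1$ and all the explicit arguments to $1$ in~\eqref{eq:comeq1}, the left-hand side collapses (since $1 \cdot X_1 = X_1 \cdot 1$ on the level of arguments, but the \emph{positions} differ) and one reads off that moving $X_1$ one slot to the left past a copy of $1$ costs a factor $\delta_H(X_1)$ and raises the divided-difference order by one. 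So the first step is to record this ``one-slot move'' identity cleanly, being careful about which of the three displayed identities in Proposition~\ref{P:UMOIcom} is being invoked and that all multiple operator integrals involved are well-defined by Theorem~\ref{T:MOOIforNCG1} (for $f \in T^\beta(\R)$) or Theorem~\ref{T:MOOIforNCG2} (zero-order case).

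Second, I would iterate: to push the leftmost genuine argument $X_1$ from position $j+1$ all the way to the front (position $1$) one applies the one-slot move $j$ times, but each intermediate term where $X_1$ sits somewhere in the middle must itself be re-expanded, and the bookkeeping of how many $1$'s sit on either side produces the multiset coefficients. Concretely, I expect an induction on $j$ (with $n$ and the $X_i$ fixed) to be the cleanest: the case $j=0$ is trivial, and the inductive step feeds one application of the one-slot move into the lemma at level $j-1$. The combinatorial identity that needs to be verified along the way is the Vandermonde/hockey-stick type relation
\[
\sum_{p=0}^{m} \multinom{p+1}{j-1} = \multinom{m+1}{j}, \qquad \sum_{l} \multinom{N+1}{l}\cdot(\text{something}) \leadsto \multinom{N+1}{l},
\]
i.e. $\sum_{p} \binom{p+j-1}{j-1} = \binom{m+j}{j}$, which is exactly the standard hockey-stick identity for binomial coefficients and accounts for the claimed coefficient $\multinom{m+1}{j}$ in front of $\delta_H^m(X_1)$ in the main sum.

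Third, the remainder term $R^n_{j,N}$: after $N+1$ total commutator-productions the factor $\delta_H^{N+1}(X_1)$ has been fully extracted, but it may sit at $j-l$ different residual positions (for $l = 0, \dots, j$) with a block of $1$'s of length depending on $l$ trailing it, and the coefficient $\multinom{N+1}{l}$ is again produced by the same hockey-stick bookkeeping applied to the truncated sums. I would assemble $R^n_{j,N}$ by collecting exactly those terms of the iteration in which $X_1$ has been differentiated $N+1$ times and has not yet reached the front, and check that what remains are precisely the multiple operator integrals $T^{H}_{f^{[n+j+N+1]}}(1,\dots,1,\delta_H^{N+1}(X_1),1,\dots,1,X_2,\dots,X_n)$ with $j-l$ leading ones. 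That each such term lies in the expected operator ideal — so that ``remainder'' is meaningful — follows from Theorem~\ref{T:MOOIforNCG1}: $\delta_H^{N+1}(X_1) \in \op^{r_1 + (N+1)h}(\Theta)$ in general, and one gets a genuinely lower-order remainder precisely under the hypothesis (invoked in Proposition~\ref{P:Expansion}, not here) that $\delta_H^k(X_1) \in \op^{r_1 + k(h-\varepsilon)}(\Theta)$.

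The main obstacle I anticipate is purely combinatorial: keeping track of the \emph{positions} of the $1$'s relative to the differentiated arguments through the iteration, and matching the resulting nested sums of binomial coefficients against the compact closed forms $\multinom{m+1}{j}$ and $\multinom{N+1}{l}$. It is the kind of calculation that is ``standard'' (as the excerpt itself notes, it appears in many proofs of the local index formula, e.g.~\cite[Lemma~6.11]{CPRS1}) but error-prone; the safe route is a careful induction on $j$ so that at each stage one only has to fold in a single application of the one-slot move and one instance of the hockey-stick identity, rather than trying to resum everything at once. No analytic subtlety arises beyond invoking the already-established well-definedness and multilinearity of the MOIs from Theorem~\ref{T:MainMOIConstruction}, Theorem~\ref{T:MOOIforNCG1} and the identities of Proposition~\ref{P:UMOIcom}.
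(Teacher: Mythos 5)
Your basic tool is the right one: the ``one-slot move'' you write down is exactly identity~\eqref{eq:comeq1} of Proposition~\ref{P:UMOIcom} with $a=X_1$ and both neighbouring arguments equal to $1$ (identity~\eqref{eq:SuperscriptDiff} plays no role here), and the multiset/hockey-stick combinatorics is the correct bookkeeping. The genuine gap is in the induction scheme. The one-slot move gives
\begin{align*}
T^{H}_{f^{[n+j]}}(\underbrace{1,\dots,1}_{j},X_1,\dots,X_n)
&=T^{H}_{f^{[n+j]}}(\underbrace{1,\dots,1}_{j-1},X_1,1,X_2,\dots,X_n)\\
&\quad+T^{H}_{f^{[n+j+1]}}(\underbrace{1,\dots,1}_{j},\delta_H(X_1),1,X_2,\dots,X_n),
\end{align*}
and only the first term is covered by your level-$(j-1)$ hypothesis; the correction term still carries $j$ leading $1$'s, so a pure induction on $j$ never reaches it. It has to be handled by the statement at the \emph{same} $j$, with argument $\delta_H(X_1)$ and truncation order $N-1$. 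Likewise the base case $j=0$ is not trivial: it is the full order-$N$ pull-out expansion $T^{H}_{f^{[n]}}(X_1,\dots,X_n)=\sum_{m=0}^{N}\delta_H^m(X_1)T^{H}_{f^{[n+m]}}(1,\dots,1,X_2,\dots,X_n)+T^{H}_{f^{[n+N+1]}}(\delta_H^{N+1}(X_1),1,\dots,1,X_2,\dots,X_n)$, which itself requires the $(N+1)$-fold iteration of the second identity of Proposition~\ref{P:UMOIcom}. So what you actually need is a joint induction, e.g.\ on $j+N$ or on $N$ uniformly in $j$, not an induction on $j$ alone.

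Once set up as a joint induction your route does close: the main-sum coefficients combine via the Pascal recurrence $\multinom{m+1}{j}=\multinom{m}{j}+\multinom{m+1}{j-1}$, and the remainder coefficients via $\multinom{N+1}{l}=\multinom{N+1}{l-1}+\multinom{N}{l}$, recovering exactly the constants in Lemma~\ref{L:Commute1}. For comparison, the paper sidesteps the issue by fixing $j$ and inducting on $N$: the base case $N=0$ is obtained by applying Proposition~\ref{P:UMOIcom} $j+1$ times to move $X_1$ all the way to the front, and in the inductive step each remainder term is re-expanded by moving $\delta_H^{N}(X_1)$ forward, the coefficients being collected with the single identity $\sum_{l=0}^{j}\multinom{N}{l}=\multinom{N+1}{j}$ after a relabelling. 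Your analytic remarks are fine: well-definedness of all the integrals follows from Theorem~\ref{T:MOOIforNCG1} (resp.\ Theorem~\ref{T:MOOIforNCG2}), and you are right that the hypothesis $\delta_H^k(X_1)\in\op^{r_1+k(h-\varepsilon)}(\Theta)$ belongs to Proposition~\ref{P:Expansion}, not to this lemma, which is a purely algebraic identity.
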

\begin{proof}
    Multiset coefficients have the property that
    \[
    \sum_{l=0}^j \multinom{m}{l} = \multinom{m+1}{j}.
    \]
    The assertion of the lemma follows by induction on $N$. For $N=0$,
    \begin{align*}
        &T^{H}_{f^{[n+j]}}(\underbrace{1, \dots, 1}_{j}, X_1, \dots, X_n )\\
        &= X_1 T^{H}_{f^{[n+j]}}(\underbrace{1, \dots, 1}_{j+1}, X_2, \dots, X_n ) + \sum_{l=0}^j T^{H}_{f^{[n+j+1]}}(\underbrace{1, \dots, 1}_{j-l}, \delta_H(X_1), \underbrace{1, \dots, 1}_{1+l}, X_2, \dots, X_n)
    \end{align*}
    by applying Proposition~\ref{P:UMOIcom} $j+1$ times on $X_1$.

    Suppose that the assertion holds for $N-1$. Then 
    \begin{align*}
         &T^{H}_{f^{[n+j]}}(\underbrace{1, \dots, 1}_{j}, X_1, \dots, X_n ) \\
         &= \sum_{m=0}^{N-1}  \multinom{m+1}{j} \delta_H^m(X_1) T^{H}_{f^{[n+j+m]}}(\underbrace{1, \dots, 1}_{j+1+m}, X_2, \dots, X_n) \\
    & \quad +  \sum_{l=0}^{j} \multinom{N}{l} T^{H}_{f^{[n+j+N]}}(\underbrace{1, \dots, 1}_{j-l}, \delta_H^{N}(X_1), \underbrace{1, \dots, 1}_{N+l}, X_2, \dots, X_n)\\
    &=^*  \sum_{m=0}^{N-1} \multinom{m+1}{j} \delta_H^m(X_1) T^{H}_{f^{[n+j+m]}}(\underbrace{1, \dots, 1}_{j+1+m}, X_2, \dots, X_n) \\
    & \quad +  \sum_{l=0}^{j} \multinom{N}{l} \delta_H^{N}(X_1) T^{H}_{f^{[n+j+N]}}(\underbrace{1, \dots, 1}_{N+1+j}, X_2, \dots, X_n)\\
    & \quad +  \sum_{l=0}^{j} \multinom{N}{l} \sum_{k=0}^{j-l} T^{H}_{f^{[n+j+N+1]}}(\underbrace{1, \dots, 1}_{j-l-k}, \delta_H^{N+1}(X_1), \underbrace{1, \dots, 1}_{N+1+l+k}, X_2, \dots, X_n),
    \end{align*}
    where in the step marked with $*$ we applied Proposition~\ref{P:UMOIcom} $j-l$ times on $\delta_H^{N}(X_1)$. Continuing on, 
    \begin{align*}
 &T^{H}_{f^{[n+j]}}(\underbrace{1, \dots, 1}_{j}, X_1, \dots, X_n )\\
    &=  \sum_{m=0}^{N-1} \multinom{m+1}{j} \delta_H^m(X_1) T^{H}_{f^{[n+j+m]}}(\underbrace{1, \dots, 1}_{j+1+m}, X_2, \dots, X_n) \\
    & \quad +   \multinom{N+1}{j} \delta_H^{N}(X_1) T^{H}_{f^{[n+j+N]}}(\underbrace{1, \dots, 1}_{N+1+j}, X_2, \dots, X_n)\\
    & \quad +  \sum_{l=0}^{j} \sum_{k=0}^{j-l}  \multinom{N}{l} T^{H}_{f^{[n+j+N+1]}}(\underbrace{1, \dots, 1}_{j-l-k}, \delta_H^{N+1}(X_1), \underbrace{1, \dots, 1}_{N+1+l+k}, X_2, \dots, X_n).
    \end{align*}
 In the last sum, relabel $r:= k+l$, so that
\begin{align*}
    &T^{H}_{f^{[n+j]}}(\underbrace{1, \dots, 1}_{j}, X_1, \dots, X_n ) \\
    &= \sum_{m=0}^{N}  \multinom{m+1}{j} \delta_H^m(X_1) T^{H}_{f^{[n+j+m]}}(\underbrace{1, \dots, 1}_{j+1+m}, X_2, \dots, X_n) \\
    & \quad +  \sum_{r=0}^{j} \sum_{l=0}^{r}  \multinom{N}{l} T^{H}_{f^{[n+j+N+1]}}(\underbrace{1, \dots, 1}_{j-r}, \delta_H^{N+1}(X_1), \underbrace{1, \dots, 1}_{N+1+r}, X_2, \dots, X_n)\\
    &= \sum_{m=0}^{N} \multinom{m+1}{j} \delta_H^m(X_1) T^{H}_{f^{[n+j+m]}}(\underbrace{1, \dots, 1}_{j+1+m}, X_2, \dots, X_n) \\
    & \quad +  \sum_{r=0}^{j}   \multinom{N+1}{r} T^{H}_{f^{[n+j+N+1]}}(\underbrace{1, \dots, 1}_{j-r}, \delta_H^{N+1}(X_1), \underbrace{1, \dots, 1}_{N+1+r}, X_2, \dots, X_n).
\end{align*}
This concludes the induction step.
\end{proof}

\begin{proof}[Proof of Proposition~\ref{P:Expansion}]
Apply Lemma~\ref{L:Commute1} first to the first entry of $T^{H}_{f^{[n]}}(X_1, \dots, X_n)$,
\begin{align*}
    T^{H}_{f^{[n]}}(X_1, \dots, X_n ) &= \sum_{m_1=0}^{N}  \multinom{m_1+1}{0} \delta_H^{m_1}(X_1) T^{H}_{f^{[n+m_1]}}(\underbrace{1, \dots, 1}_{m_1+1}, X_2, \dots, X_n) \\
    &\quad + R^{n}_{0,N}(X_1, \dots, X_n).
\end{align*}
Apply Lemma~\ref{L:Commute1} once more, expanding up to order $N-m_1$ instead of $N$.
\begin{align*}
    &T^{H}_{f^{[n]}}(X_1, \dots, X_n ) \\
    &= \sum_{m_1=0}^{N} \sum_{m_2=0}^{N-m_1} \multinom{m_1+1}{0} \multinom{m_2+1}{m_1+1} \delta_H^{m_1}(X_1) \delta_H^{m_2}(X_2) T^{H}_{f^{[n+m_1+m_2]}}(\underbrace{1, \dots, 1}_{2+m_1+m_2}, X_3, \dots, X_n)\\
    & \quad + \sum_{m_1=0}^{N}\multinom{m_1+1}{0}  \delta_H^{m_1}(X_1) R^{n-1}_{1+m_1,N-m_1}(X_2, \dots, X_n) + R^{n}_{0,N}(X_1, \dots, X_n).
\end{align*}
Repeating gives the formula
\begin{align*}
    &T^{H}_{f^{[n]}}(X_1, \dots, X_n )\\
    &= \sum_{m=0}^{N}  \sum_{m_1 + \dots + m_n = m} \prod_{j=1}^n  \multinom{m_j+1}{j-1+m_1+\dots + m_{j-1}}\delta_H^{m_1}(X_1) \cdots \delta_H^{m_n}(X_n) T^{H}_{f^{[n+m]}}(\underbrace{1, \dots, 1}_{n+m})\\
    & \quad + S^n_{N}(X_1, \dots X_n),
\end{align*}
where
\begin{align*}
     S^n_{N}(X_1, \dots X_n)&:=   \sum_{k=0}^{n-1} \sum_{m_1 + \dots + m_k \leq N} \prod_{j=1}^{k}  \multinom{m_j+1}{j-1+m_2+\dots + m_{j-1}} \\
     &\quad \times \delta_H^{m_1}(X_1)\cdots \delta_H^{m_k}(X_{k})  R^{n-k}_{k+m_1+\dots+m_{k},N-m_1-\dots-m_{k}}(X_{k+1}, \dots, X_n).
\end{align*}

The observation that 
\[
\multinom{n}{k} = \multinom{k+1}{n-1},
\]
and the definition
\[
\multinom{n}{k} = \binom{n+k-1}{k},
\]
finishes the proof of the proposition.
\end{proof}

Putting these results together, we now have a noncommutative Taylor expansion.

\begin{thm}\label{T:CombiExpansion}
    Let $f \in T^\beta(\R)$, $H \in \op^h(\Theta)$, $h>0$, $\Theta$-elliptic and symmetric, and let $V \in \op^r(\Theta)$, $r < h$, be symmetric. Assume that 
    \[
    \delta_H^k(V) \in \op^{r+k(h-\varepsilon)}(\Theta)
    \]
    for some $\varepsilon > 0$. Then    
    we have the noncommutative Taylor expansion
    \[
    f(H+V) \sim  \sum_{n,m=0}^\infty \sum_{m_1 + \dots + m_n = m} \frac{C_{m_1, \dots, m_n}}{(n+m)!} \delta_H^{m_1}(V) \cdots \delta_H^{m_n}(V) f^{(n+m)}(H).
    \]
\end{thm}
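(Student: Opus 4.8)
The plan is to combine the two asymptotic expansions established just before this theorem: the noncommutative Taylor expansion of Theorem~\ref{T:Taylor}, which expresses $f(H+V)$ as an asymptotic series of multiple operator integrals $T^H_{f^{[n]}}(V,\ldots,V)$, and Proposition~\ref{P:Expansion}, which further expands each single MOI $T^H_{f^{[n]}}(V,\ldots,V)$ into derivatives $f^{(n+m)}(H)$ with coefficients $C_{m_1,\ldots,m_n}$. The only genuinely new point is to check that these two layered expansions can be interleaved into a single asymptotic expansion with order tending to $-\infty$, and to identify the constants.

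First I would invoke Theorem~\ref{T:Taylor}: since $r<h$, for every $N$ we have
\[
f(H+V) - \sum_{n=0}^N T^H_{f^{[n]}}(V,\ldots,V) \in \op^{m_N}(\Theta), \quad m_N = N(r-h)+(\beta-1)h \downarrow -\infty.
\]
Next, I would apply Proposition~\ref{P:Expansion} to each term $T^H_{f^{[n]}}(V,\ldots,V)$ with $0 \le n \le N$. The hypothesis $\delta_H^k(V) \in \op^{r+k(h-\varepsilon)}(\Theta)$ is exactly the ``lower order commutators'' condition in that proposition, so for each $n$ and each truncation parameter $M$ we get
\[
T^H_{f^{[n]}}(V,\ldots,V) = \sum_{m=0}^{M} \sum_{m_1+\cdots+m_n=m} \frac{C_{m_1,\ldots,m_n}}{(n+m)!}\,\delta_H^{m_1}(V)\cdots\delta_H^{m_n}(V)\,f^{(n+m)}(H) + S^n_M(V,\ldots,V),
\]
with remainder $S^n_M(V,\ldots,V) \in \op^{nr+(\beta-n)h-\varepsilon(M+1)}(\Theta)$. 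Adding these up over $n \le N$ and collecting all terms of total homogeneity above some threshold, one sees that the combined tail — the Taylor remainder from Theorem~\ref{T:Taylor} plus the finitely many Proposition~\ref{P:Expansion} remainders $S^n_M$ — can be made to lie in $\op^{m}(\Theta)$ for arbitrarily negative $m$ by choosing $N$ and $M$ large enough. This yields the claimed asymptotic expansion, where the grading of the asymptotic series is by total order $nr + (n+m)(\text{loss from }f\text{-derivatives and commutators})$; one should double-check that for fixed target order only finitely many pairs $(n,m)$ (and finitely many compositions $m_1+\cdots+m_n=m$) contribute, which follows since each term $\delta_H^{m_1}(V)\cdots\delta_H^{m_n}(V)f^{(n+m)}(H)$ has order at most $nr + m(h-\varepsilon) + (\beta-n-m)h = n(r-h) + (\beta - \varepsilon' m)\cdots$, decreasing in both $n$ and $m$.

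The main obstacle I anticipate is purely bookkeeping: matching the indexing so that the double sum $\sum_{n,m}$ in the statement is genuinely an asymptotic expansion in the sense of Section~\ref{S:AsympExp}, i.e. verifying that the partial sums obtained by truncating Theorem~\ref{T:Taylor} at level $N$ and each Proposition~\ref{P:Expansion} at a level $M=M(N)$ chosen appropriately (say $M$ large enough that $\varepsilon(M+1)$ dominates the relevant gaps) differ from $f(H+V)$ by an operator of order $\downarrow -\infty$. Since an asymptotic expansion is an equivalence class of partial-sum sequences, it suffices to exhibit one such cofinal family of partial sums; the reindexing ``$\sum_{n=0}^{\infty}\sum_{m=0}^{\infty}$'' is then just a formal rewriting of ``$\sum_{n=0}^{\infty}\big(\text{expansion of }T^H_{f^{[n]}}(V,\ldots,V)\big)$''. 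No new estimates beyond those in Theorem~\ref{T:Taylor} and Proposition~\ref{P:Expansion} are needed, and the constants $C_{m_1,\ldots,m_n} = \prod_{j=1}^n \binom{j+m_1+\cdots+m_j-1}{m_j}$ are inherited verbatim from Proposition~\ref{P:Expansion}. Finally I would remark, as the paper does, that if $H$ and $V$ commute all commutators $\delta_H^k(V)$ with $k\ge 1$ vanish, only the $n=0$ (equivalently all $m_j=0$) terms survive with $C_{\emptyset}=1$, and the expansion collapses to $\sum_{n} \frac{f^{(n)}(H)}{n!}V^n$, the classical Taylor series.
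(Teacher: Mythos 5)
Your proposal is correct and follows essentially the same route as the paper: truncate Theorem~\ref{T:Taylor} at level $N$, expand each $T^H_{f^{[n]}}(V,\ldots,V)$ via Proposition~\ref{P:Expansion} up to level $M$, and observe that the Taylor remainder lies in $\op^{N(r-h)+(\beta-1)h}(\Theta)$ while each $S^n_M$ lies in $\op^{nr+(\beta-n)h-\varepsilon(M+1)}(\Theta)$, so the total remainder order tends to $-\infty$ as $(N,M)\to(\infty,\infty)$. The bookkeeping you flag is exactly all the paper does, and the constants $C_{m_1,\ldots,m_n}$ are indeed inherited verbatim from Proposition~\ref{P:Expansion}.
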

\begin{proof}
    For $N, M > 0$, Theorem~\ref{T:Taylor} and Proposition~\ref{P:Expansion} give
    \begin{align*}
        f(H+V) &= \sum_{n=0}^{N} \sum_{m=0}^{M} \sum_{m_1 + \dots + m_n = m} \frac{C_{m_1, \dots, m_n}}{(n+m)!} \delta_H^{m_1}(V) \cdots \delta_H^{m_n}(V) f^{(n+m)}(H)\\
        &\quad + \op^{N(r-h)+(\beta-1) h}(\Theta) + \sum_{n=0}^N \op^{nr + (\beta-n) h-\varepsilon(M+1)}(\Theta).
    \end{align*}
    As $(N,M) \to (\infty, \infty)$, we see that the order of the remainder decreases to $-\infty$.
\end{proof}

The noncommutative Taylor expansion \eqref{eq:Taylor asymptotic formula} features prominently in~\cite{Paycha2007} for classical pseudodifferential operators on manifolds, and in~\cite{Daletskii1998,Paycha2011} in a more abstract sense. For bounded operators on Banach spaces, it has been studied in~\cite{HartmannLesch2024}. For related expansions for bounded operators $V$, see also
~\cite{Hansen,Suijlekom2011,Skripka2018,vNvS21a}.

\section{Trace expansions and NCG}
\label{S:AsympExpNCG}
The observation in the previous section that the local index formula is closely connected to the noncommutative Taylor expansion in Theorem~\ref{T:CombiExpansion} leads us naturally to the topic of asymptotic expansions of trace formulas. In various contexts of noncommutative geometry and beyond, expansions are studied of the kind
\begin{equation}\label{eq:AsympExp1}
    \Tr(f(tH+tV)) \underset{t\downarrow 0}{\sim} \sum_{k=0}^\infty c_k t^{r_k},
\end{equation}
for an increasing sequence $r_k \uparrow \infty$ and constants $c_k \in \mathbb{C}$, which means that as $t \downarrow 0$
\[
\Tr (f(tH+tV)) = \sum_{k=0}^N c_k t^{r_k} + O(t^{r_{N+1}})
\]
for every $N \in \R$. Or, more generally (c.f.~\cite{EcksteinIochum2018}),
\begin{equation}\label{eq:AsympExp2}
     \Tr(f(tH+tV)) \underset{t\downarrow 0}{\sim} \sum_{k=0}^\infty \rho_k(t),
\end{equation}
where $\rho_k(t) = O(t^{r_k})$ and \[
\Tr(f(tH+tV)) = \sum_{k=0}^N \rho_k(t) + O(t^{r_{N+1}}).
\]

To study the asymptotic expansion of expressions like
\[
\Tr(a e^{-t(D+V)^2}),
\]
we will use a modified version of Theorem~\ref{T:Taylor} and Proposition~\ref{P:Expansion}. 
For this purpose, we need to make a more detailed analysis of norm bounds of MOIs and of the remainder in the Taylor expansion in Theorem~\ref{T:Taylor}.

\begin{prop}\label{P:TraceEstimatewitht}
Let $H_i \in \op^h(\Theta)$ for a fixed $h > 0$ be symmetric and $\Theta$-elliptic operators. If $X_i \in \op^{r_i}(\Theta)$, $r:= \sum_{i=1}^n r_i$, $f \in T^\alpha(\mathbb{R})$ with $\alpha \leq n$, $t\leq 1$, then
    \[
    \| T^{tH_0, \ldots, tH_n}_{f^{[n]}}(X_1, \ldots, X_n) \|_{\Hc^{q+r+(\alpha-n)h} \to \Hc^q} \lesssim t^{\alpha-n}, \quad q\in \R.
    \]
Rephrased, if $f \in T^{\frac{u-r}{h}+n}$, $u\leq r$, then
\[
    \| T^{tH_0, \ldots, tH_n}_{f^{[n]}}(X_1, \ldots, X_n) \|_{\Hc^{q+u} \to \Hc^q} \lesssim t^{\frac{u-r}{h}}, \quad q\in \R.
    \]
\end{prop}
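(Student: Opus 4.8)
The plan is to unwind the multiple operator integral into its explicit resolvent representation and to extract the power of $t$ resolvent-by-resolvent, via the scaling identity $(z - tH_j)^{-1} = t^{-1}(z/t - H_j)^{-1}$. The tempting shortcut --- writing $T^{tH_0, \ldots, tH_n}_{f^{[n]}}(X_1, \ldots, X_n) = t^{-n} T^{H_0, \ldots, H_n}_{(f_t)^{[n]}}(X_1, \ldots, X_n)$ with $f_t(x) := f(tx)$ and feeding $f_t$ into Theorem~\ref{T:MOOIforNCG1} --- does not work: the zero-order seminorm $\|f_t\|_{T^\alpha(\R),0}$ fails to scale like $t^\alpha$ (it can even blow up logarithmically as $t \downarrow 0$, e.g.\ when $\alpha = 0$), so the sharp power of $t$ is only visible at the level of the individual resolvents.

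First I would note that for $0 < t \le 1$ each $tH_i$ lies in $\op^h(\Theta)$, is symmetric, and is $\Theta$-elliptic (if $P$ is a parametrix for $H_i$ then $t^{-1}P$ is a parametrix for $tH_i$), so all of the machinery of Chapters~\ref{Ch:FunctCalc} and~\ref{Ch:MOIs} applies. I would treat the range $-1 \le \alpha \le n$ first, reducing the case $\alpha < -1$ to it by factoring $f = g\cdot(x+i)^{-m}$ with $g \in T^{\beta}(\R)$, $-1 \le \beta \le 0$, and applying the Leibniz rule for divided differences exactly as in the proof of Lemma~\ref{L:DivDifT}(2) --- the divided differences of $(x+i)^{-m}$ contribute only factors of strictly negative order carrying the expected extra powers of $t$. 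For $-1 \le \alpha \le n$ pick $\beta_0, \ldots, \beta_n \in [-1,0]$ with $\sum_j \beta_j = \alpha - n$ and invoke the almost analytic extension representation of the divided difference from the proof of Lemma~\ref{L:DivDifT} (with $N \ge n+1$ in the construction of $\tilde f$),
\[
f^{[n]}(\lambda_0, \ldots, \lambda_n) = \frac{(-1)^n}{\pi}\int_{\C}\frac{\partial\tilde f}{\partial\overline z}(z-\lambda_0)^{-1}\cdots(z-\lambda_n)^{-1}\,dz, \quad \lambda_j \in \sigma(tH_j).
\]
By the representation-independence established in Theorem~\ref{T:MainMOIConstruction}, this identifies the MOI with the Bochner integral $\frac{(-1)^n}{\pi}\int_{\C}\frac{\partial\tilde f}{\partial\overline z}(z-tH_0)^{-1}X_1(z-tH_1)^{-1}\cdots X_n(z-tH_n)^{-1}\,dz$ in each $\Hc^s$.

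The key estimate is that, for $\beta \in [-1,0]$, $z \in \C\setminus\R$, $0 < t \le 1$ and $q \in \R$,
\[
\|(z-tH_j)^{-1}\|_{\Hc^{q+\beta h}\to\Hc^q} = \frac{1}{t}\|(z/t-H_j)^{-1}\|_{\Hc^{q+\beta h}\to\Hc^q} \le \frac{C_{q,H_j}}{t}\cdot\frac{\langle z/t\rangle^{-\beta}}{|\Im(z/t)|} = C_{q,H_j}\frac{\langle z/t\rangle^{-\beta}}{|\Im z|}\le C_{q,H_j}\frac{t^\beta\langle z\rangle^{-\beta}}{|\Im z|},
\]
where the inequality is Theorem~\ref{T:MainFunctCalc} applied to the \emph{fixed} operator $H_j$ together with $\|(\zeta-\cdot)^{-1}\|_{L^\beta_\infty(E_{H_j})} \le \sup_{\mu\in\R}|\zeta-\mu|^{-1}\langle\mu\rangle^{-\beta} \lesssim \langle\zeta\rangle^{-\beta}/|\Im\zeta|$ from~\eqref{eq:Stap2}, and the last step uses $\langle z/t\rangle \le \langle z\rangle/t$ for $t\le 1$. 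It is crucial to apply the functional calculus to $H_j$ and not to $tH_j$: the constant $C_{s,tH_j}$ of Theorem~\ref{T:MainFunctCalc} degenerates as $t\downarrow 0$, and all of the $t$-dependence must be carried by the explicit resolvent norm. Telescoping the Sobolev indices through the alternating product --- each $(z-tH_j)^{-1}$ shifting the index by $\beta_j h$ and each $X_i$ by $r_i$, so that the composite maps $\Hc^{q+r+(\alpha-n)h} \to \Hc^q$ --- and pulling the fixed constants $\|X_i\|_{\Hc^{s_i+r_i}\to\Hc^{s_i}}$ outside, I obtain
\[
\big\|T^{tH_0,\ldots,tH_n}_{f^{[n]}}(X_1,\ldots,X_n)\big\|_{\Hc^{q+r+(\alpha-n)h}\to\Hc^q} \lesssim \int_{\C}\Big|\frac{\partial\tilde f}{\partial\overline z}\Big|\prod_{j=0}^n\frac{t^{\beta_j}\langle z\rangle^{-\beta_j}}{|\Im z|}\,dz = t^{\alpha-n}\int_{\C}\Big|\frac{\partial\tilde f}{\partial\overline z}\Big|\frac{\langle z\rangle^{n-\alpha}}{|\Im z|^{n+1}}\,dz.
\]

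It then remains to bound the last integral independently of $t$. I would do this by the same splitting over the regions $U = \{\langle\Re z\rangle < |\Im z| < 2\langle\Re z\rangle\}$ and $V = \{|\Im z| < 2\langle\Re z\rangle\}$ used in the proof of Lemma~\ref{L:DivDifT}, using that $\langle z\rangle \asymp \langle\Re z\rangle$ on the support of $\partial\tilde f/\partial\overline z$, that $|\Im z|^{N-n-1}$ is integrable in the imaginary direction once $N \ge n+1$, and that $\|f\|_{T^n(\R),r} \le \|f\|_{T^\alpha(\R),r}$ for $\alpha \le n$; this gives $\int_{\C}|\partial\tilde f/\partial\overline z|\,\langle z\rangle^{n-\alpha}|\Im z|^{-(n+1)}\,dz \lesssim \sum_{r=0}^{N+1}\|f\|_{T^\alpha(\R),r} < \infty$, whence the claimed bound $\lesssim t^{\alpha-n}$. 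The rephrased statement is then just the substitution $\alpha = \frac{u-r}{h}+n$, for which $r+(\alpha-n)h = u$ and $\alpha-n = \frac{u-r}{h}$. The main obstacle is conceptual rather than computational: recognising that one must not black-box the functional-calculus / MOI norm bounds with the rescaled symbol $(f_t)^{[n]}$, but instead distribute the scaling $z \mapsto z/t$ across the $n+1$ resolvents, each of which then contributes its own factor $t^{\beta_j}$; once that is set up, everything reduces to estimates already carried out in Lemma~\ref{L:DivDifT}.
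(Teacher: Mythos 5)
Your proof is correct, but it takes a genuinely different route from the paper's. The paper never reopens the Helffer--Sj\"ostrand/resolvent representation at this stage: it uses Lemma~\ref{L:DivDifT} as a black box to write $f^{[n]}$ as a finite sum of terms in $S^{\beta_0}(\R)\boxtimes_i\cdots\boxtimes_i S^{\beta_n}(\R)$ with $\beta_j\le 0$, $\sum_j\beta_j=\alpha-n$, factors each summand as $\prod_j(\lambda_j+i)^{\beta_j}\cdot\psi$ with $\psi$ of weight zero, and then absorbs the weights into the MOI arguments, $T^{tH_0,\ldots,tH_n}_{\phi}(X_1,\ldots,X_n)=T^{tH_0,\ldots,tH_n}_{\psi}\bigl((tH_0+i)^{\beta_0}X_1(tH_1+i)^{\beta_1},\ldots,X_n(tH_n+i)^{\beta_n}\bigr)$; the weight-zero MOI is bounded uniformly in $t$ by Corollary~\ref{C:IndepNorm} together with Theorem~\ref{T:MainMOIConstruction}, and each factor contributes $\|(tH_j+i)^{\beta_j}\|_{\Hc^{q_j+\beta_jh}\to\Hc^{q_j}}\lesssim\sup_x|(tx+i)^{\beta_j}|\langle x\rangle^{-\beta_j}\lesssim t^{\beta_j}$. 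You instead extract the powers of $t$ resolvent-by-resolvent inside the almost analytic representation via $(z-tH_j)^{-1}=t^{-1}(z/t-H_j)^{-1}$ and $\langle z/t\rangle\le\langle z\rangle/t$, and then redo the integral estimate of Lemma~\ref{L:DivDifT}; the underlying principle (distribute the total weight $\alpha-n$ over the $n+1$ slots and harvest $t^{\beta_j}$ per slot, with all constants tied to the fixed $H_j$) is the same, but the execution differs. What the paper's factorization buys is that it treats all $\alpha\le n$ in one stroke, since the weights $\beta_j\le 0$ need not lie in $[-1,0]$, and it reuses Lemma~\ref{L:DivDifT} and Corollary~\ref{C:IndepNorm} without re-deriving any integral bounds; your version is more hands-on and makes the mechanism of the $t$-gain completely explicit at the resolvent level (and your opening remark about why rescaling the symbol $f_t(x)=f(tx)$ cannot work is a fair observation), but it forces you to repeat the $U/V$-splitting estimates and to run the extra reduction $f=g\cdot(x+i)^{-m}$ for $\alpha<-1$, a step you only sketch and which requires the same product-representation bookkeeping as Proposition~\ref{P:BoxProdProd} to justify inserting the factors $(tH_j+i)^{-m_j}$ into the operator integral. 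Both arguments are sound and yield the stated bound, including the rephrased form obtained by substituting $\alpha=\frac{u-r}{h}+n$.
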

\begin{proof}
Lemma~\ref{L:DivDifT} gives that
\[
      f^{[n]}\in \sum_{\beta_0 + \cdots + \beta_n = \alpha-n} S^{\beta_0}(\R) \boxtimes_i \cdots \boxtimes_i S^{\beta_n}(\R),
\]
where each $\beta_i \leq 0$ since $\alpha \leq n$ (recall Definition~\ref{def:ProjIntBox} for this notation).
   Consider one of the summands, $\phi \in S^{\beta_0}(\R) \boxtimes_i \cdots \boxtimes_i S^{\beta_n}(\R)$. Then 
   \[
   \phi(\lambda_0, \ldots, \lambda_n) = (\lambda_0 + i)^{\beta_0} \cdots (\lambda_n + i)^{\beta_n} \cdot \psi(\lambda_0, \ldots, \lambda_n),
   \]
   for a function $\psi \in S^0(\R) \boxtimes_i \cdots \boxtimes_i S^0(\R)$, and thus, by Remark~\ref{rem:SipvL} and Theorem~\ref{T:MainMOIConstruction},
\begin{align*}
    T^{tH_0, \ldots, tH_n}_{\phi}&(X_1, \ldots, X_n)\\
    &=  T^{tH_0, \ldots, tH_n}_{\psi}((tH_0+i)^{\beta_0} X_1 (tH_1+i)^{\beta_1}, X_2(tH_2+i)^{\beta_2},\ldots, X_n (tH_n+i)^{\beta_n}).
\end{align*}
Corollary~\ref{C:IndepNorm} and Theorem~\ref{T:MainMOIConstruction} give that
\begin{align*}
    &\|  T^{tH_0, \ldots, tH_n}_{\psi}((tH_0+i)^{\beta_0}  X_1 (tH_1+i)^{\beta_1}, X_2(tH_2+i)^{\beta_2},\ldots, X_n (tH_n+i)^{\beta_n}) \|_{\Hc^{q+r+(\alpha-n)h}\to\Hc^q} \\
    &\lesssim\| (tH_0+i)^{\beta_0}\|_{\Hc^{q_0 + \beta_0h} \to \Hc^{q_0}} \cdots \| (tH_n+i)^{\beta_n}\|_{\Hc^{q_n+\beta_nh}\to \Hc^{q_n}},
\end{align*}
for $q_i$ some real numbers. Theorem~\ref{T:MainFunctCalc} gives that 
\[
\|(tH_j+i)^{\beta_ij}\|_{\Hc^{q_j+\beta_jh}\to \Hc^{q_j}} \lesssim \sup_{x\in \R} |(tx+i)^{\beta_j} | \langle x\rangle^{-\beta_j} \lesssim t^{\beta_j}.
\]
Therefore, 
\begin{align*}
    \|  T^{tH_0, \ldots, tH_n}_{\phi}(X_1, X_2,\ldots, X_n )& \|_{\Hc^{q+r+(\alpha-n)h} \to \Hc^q} \lesssim t^{\beta_0 + \cdots + \beta_k}= t^{\alpha-n}.\qedhere
\end{align*}
\end{proof}

\begin{prop}\label{P:TaylorTrace}Let $\Theta^{-1} \in \mathcal{L}_{s}$, $s > 0$, $f \in T^\beta(\R)$, $H \in \op^h(\Theta), h> 0$ $\Theta$-elliptic and symmetric and $V \in \op^r(\Theta)$ symmetric. Let $h > r \geq 0$ and $\beta < -\frac{s}{h}$.
    For every $N \in \N$, we have as $t \downarrow 0$,
    \[
    \Tr(f(tH+tV)) = \sum_{n=0}^{N} t^{n} \Tr( T^{tH}_{f^{[n]}}(V, \ldots, V) ) + O(t^{(N+1)(1-\frac{r}{h}) - \frac{s}{h}}).
    \]
\end{prop}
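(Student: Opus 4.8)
\textbf{Proof plan for Proposition~\ref{P:TaylorTrace}.}
The strategy is to apply the Taylor expansion machinery of Theorem~\ref{T:Taylor}, but tracking powers of $t$ carefully, and then to convert operator-norm bounds on the remainder into trace-norm bounds using the hypothesis $\Theta^{-1}\in\mathcal{L}_s$. First I would note that by the identities in Proposition~\ref{P:UMOIcom} applied with $A = t(H+V)$, $B = tH$ (repeatedly, as in the proof of Theorem~\ref{T:Taylor}), one gets the exact finite expansion
\[
f(tH+tV) = \sum_{n=0}^{N} T^{tH}_{f^{[n]}}(tV,\ldots,tV) + T^{t(H+V),tH,\ldots,tH}_{f^{[N+1]}}(tV,\ldots,tV),
\]
and by multilinearity $T^{tH}_{f^{[n]}}(tV,\ldots,tV) = t^n T^{tH}_{f^{[n]}}(V,\ldots,V)$, which accounts for the $t^n$ prefactor in the stated formula. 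So the whole content is the estimate on the remainder term $R_N := T^{t(H+V),tH,\ldots,tH}_{f^{[N+1]}}(tV,\ldots,tV) = t^{N+1} T^{t(H+V),tH,\ldots,tH}_{f^{[N+1]}}(V,\ldots,V)$.

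Next I would estimate $\|R_N\|_1$. Since $\|A\|_1 \leq \|\Theta^{-s}\|_1\,\|A\|_{\Hc^0\to\Hc^s}$ (as recorded after~\eqref{eq:TypicalMOI}), it suffices to bound $\|T^{t(H+V),tH,\ldots,tH}_{f^{[N+1]}}(V,\ldots,V)\|_{\Hc^0\to\Hc^s}$. Here I would invoke Proposition~\ref{P:TraceEstimatewitht} with the $n$ there equal to $N+1$, all $H_i\in\op^h(\Theta)$ symmetric $\Theta$-elliptic (note $H+V$ is still $\Theta$-elliptic symmetric of order $h$ since $r<h$, e.g.\ by the parametrix argument or Corollary~\ref{C:EllipticParametrix}), each $X_i = V\in\op^r(\Theta)$ so that $r_{\mathrm{tot}} = (N+1)r$, and $\alpha = \beta$. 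The proposition requires $\alpha\leq n$, i.e.\ $\beta\leq N+1$, which holds for $N$ large (and one can reduce to this case, or simply note $\beta<-s/h<0\le N+1$). It gives
\[
\|T^{t(H+V),tH,\ldots,tH}_{f^{[N+1]}}(V,\ldots,V)\|_{\Hc^{q+(N+1)r+(\beta-N-1)h}\to\Hc^q}\lesssim t^{\beta-N-1}
\]
for any $q\in\R$; choosing $q=s$ and $q + (N+1)r + (\beta-N-1)h = 0$ — i.e.\ reading off from the "rephrased" form with $u = -s$ — requires $f\in T^{\frac{-s-(N+1)r}{h}+(N+1)}(\R)$ and yields $\|\cdot\|_{\Hc^0\to\Hc^s}\lesssim t^{(-s-(N+1)r)/h}$. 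Combining, $\|R_N\|_1 \lesssim t^{N+1}\cdot t^{(-s-(N+1)r)/h} = t^{(N+1)(1-r/h)-s/h}$, which is exactly the claimed error order, so $\Tr(R_N) = O(t^{(N+1)(1-r/h)-s/h})$ and each term $\Tr(T^{tH}_{f^{[n]}}(V,\ldots,V))$ is a well-defined trace since the MOI lies in $\op^{(\beta-n)h+nr}(\Theta)$ with $(\beta-n)h+nr < -s$ for $\beta<-s/h$ (using $r<h$), hence is trace-class.

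The main obstacle I anticipate is bookkeeping: matching the Sobolev-index shifts so that Proposition~\ref{P:TraceEstimatewitht} is applied with exactly the exponents that make the output land in $\Hc^0\to\Hc^s$, and checking that the hypothesis $f\in T^\beta(\R)$ (together with $\beta<-s/h$ and the fixed but arbitrary $N$) supplies enough regularity — one needs $f^{[N+1]}$ to be a valid MOI symbol, which by Lemma~\ref{L:DivDifT} and Remark~\ref{rem:fToLinfty} requires $f\in C^{N+3}(\R)$ with the relevant $T^\beta$-seminorms finite; this is implicit in $f\in T^\beta(\R)\subseteq C^\infty(\R)$. A minor subtlety is the $\alpha\leq n$ constraint in Proposition~\ref{P:TraceEstimatewitht}, which forces $N$ large enough that $\beta\leq N+1$; since the statement is "for every $N\in\N$", for small $N$ one can either absorb finitely many extra terms (they are lower order is automatic once we know the estimate for large $N$) or observe that $\beta<0$ already guarantees $\beta\le N+1$. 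Beyond that, the argument is a direct assembly of Theorem~\ref{T:Taylor}, Proposition~\ref{P:TraceEstimatewitht}, and the trace-ideal inequality.
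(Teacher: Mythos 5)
Your proposal is correct and follows essentially the same route as the paper: take the exact finite expansion from the proof of Theorem~\ref{T:Taylor}, extract the factor $t^{n}$ by multilinearity, and bound the remainder in trace norm via $\Theta^{-s}\in\mathcal{L}_1$ together with Proposition~\ref{P:TraceEstimatewitht} applied with $u=-s$ (the paper uses the embedding $T^\beta(\R)\subseteq T^{(N+1)(1-\frac{r}{h})-\frac{s}{h}}(\R)$, which disposes of the regularity/$\alpha\leq n$ bookkeeping you flag). The only cosmetic difference is that you estimate the $\Hc^0\to\Hc^s$ norm where the paper uses $\Hc^{-s}\to\Hc^0$; these are interchangeable here.
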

\begin{proof}
    The proof of Theorem~\ref{T:Taylor} gives that
    \[
    f(tH + tV) = \sum_{n=0}^{N} T_{f^{[n]}}^{tH}(tV, \ldots, tV) + T_{f^{[N+1]}}^{tH + tV, tH, \ldots, tH}(tV, \ldots, tV).
    \]
    The condition $\beta < -\frac{s}{h}$ assures that all terms on the left and right-hand side are trace-class (cf. \eqref{eq:TypicalMOI}). Furthermore, we have $f \in T^\beta(\R)\subseteq T^{(N+1)(1-\frac{r}{h})-\frac{s}{h}}(\R)$ so that Proposition~\ref{P:TraceEstimatewitht} provides that
    \begin{align*}
        \|T_{f^{[N+1]}}^{tH + tV, tH, \ldots, tH}&(tV, \ldots, tV)\|_1 \\
        &\lesssim t^{N+1} \|T_{f^{[N+1]}}^{tH + tV, tH, \ldots, tH}(V, \ldots, V)\|_{\Hc^{-s}\to\Hc^0}\\
        &\lesssim t^{(N+1)(1-\frac{r}{h}) - \frac{s}{h}}.\qedhere
    \end{align*}
\end{proof}

This proposition makes it clear that to determine the coefficients of asymptotic expansions of the type~\eqref{eq:AsympExp1} or~\eqref{eq:AsympExp2}, it suffices to study the asymptotic expansions of the multiple operator integral
\[
\Tr( T^{tH}_{f^{[n]}}(V, \ldots, V) ),
\]
which we do with Proposition~\ref{P:Expansion}.

\begin{thm}\label{T:GeneralAsymp}
    Let $\Theta^{-1} \in \mathcal{L}_{s}$, $s > 0$, $f \in T^\beta(\R)$, $H\in \op^h(\Theta)$, $h>0$ symmetric and $\Theta$-elliptic, $V \in \op^r(\Theta)$ symmetric. If $h > r \geq0 $, $\beta \leq -\frac{s}{h}$, and $\delta_H^n(V) \in \op^{r+n(h-\varepsilon)}(\Theta)$, then $\Tr(f(tH+tV)) $ admits an asymptotic expansion as $t\downarrow 0$ of type~\eqref{eq:AsympExp2} given by
    \begin{align*}
        \Tr(f(tH+tV)) = \sum_{n=0}^{N} \sum_{m=0}^{N}   \sum_{m_1 + \dots + m_n = m} t^{n+m}\frac{C_{m_1, \dots, m_n}}{(n+m)!} \Tr\big( \delta_H^{m_1}(V) \cdots \delta_H^{m_n}(V) & f^{(n+m)}(tH)\big) \\
        &+ O(t^{m_N}),
    \end{align*}
    where $m_N := (N+1)\min\big(\frac{\varepsilon}{h}, (1-\frac{r}{h}) \big) - \frac{s}{h},$ so that $m_N \uparrow \infty$ as $N \to \infty$.
\end{thm}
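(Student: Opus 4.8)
The plan is to combine the two approximation results of this section: Proposition~\ref{P:TaylorTrace}, which reduces $\Tr(f(tH+tV))$ to the multiple operator integrals $\Tr(T^{tH}_{f^{[n]}}(V,\ldots,V))$, and Proposition~\ref{P:Expansion}, which expands each such MOI into commutator monomials plus a remainder $S^n_N$. The one observation that makes the bookkeeping in $t$ work is the scaling $\delta_{tH}(X)=[tH,X]=t\,\delta_H(X)$: applying Proposition~\ref{P:Expansion} with $tH$ in place of $H$ turns every factor $\delta_{tH}^{m_i}(V)$ into $t^{m_i}\delta_H^{m_i}(V)$, and likewise turns the remainder commutator $\delta_{tH}^{N+1-p}(V)$ into $t^{N+1-p}\delta_H^{N+1-p}(V)$, so all powers of $t$ can be pulled out explicitly.

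First I would apply Proposition~\ref{P:TaylorTrace}. Since $\beta\le-\tfrac{s}{h}$ and $h>r$, the operators $T^{tH}_{f^{[n]}}(V,\ldots,V)\in\op^{\beta h-n(h-r)}(\Theta)\subseteq\op^{-s}(\Theta)\subseteq\mathcal{L}_1$ are trace-class, so that the strict inequality in the hypothesis of Proposition~\ref{P:TaylorTrace} (which only serves to guarantee trace-class membership) may be relaxed to $\beta\le-\tfrac{s}{h}$, and
\[
\Tr(f(tH+tV))=\sum_{n=0}^{N}t^{n}\,\Tr\big(T^{tH}_{f^{[n]}}(V,\ldots,V)\big)+O\big(t^{(N+1)(1-\frac{r}{h})-\frac{s}{h}}\big).
\]
Then, for each $n\le N$, I would feed $X_1=\cdots=X_n=V$ and the operator $tH$ into Proposition~\ref{P:Expansion}, expanding to order $N$ in $m$. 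Pulling $t^{m}=t^{m_1+\cdots+m_n}$ out of each monomial, the main part of $t^{n}T^{tH}_{f^{[n]}}(V,\ldots,V)$ becomes exactly $\sum_{m=0}^{N}\sum_{m_1+\cdots+m_n=m}t^{n+m}\tfrac{C_{m_1,\dots,m_n}}{(n+m)!}\,\delta_H^{m_1}(V)\cdots\delta_H^{m_n}(V)\,f^{(n+m)}(tH)$, and there remains the operator $t^{n}S^n_N(V,\ldots,V)$.

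The main obstacle is estimating the trace norm of $t^{n}S^n_N(V,\ldots,V)$ with the correct power of $t$. A typical term of $S^n_N$ has the form $\delta_{tH}^{m_1}(V)\cdots\delta_{tH}^{m_k}(V)\cdot T^{tH}_{f^{[n+N+1]}}(1,\ldots,1,\delta_{tH}^{N+1-p}(V),1,\ldots,1,V,\ldots,V)$ with $p=m_1+\cdots+m_k\le N$; extracting $t$ from the $k+1$ commutators gives an overall factor $t^{n+N+1}$ and leaves $P\cdot M$, where (using $\delta_H^{j}(V)\in\op^{r+j(h-\varepsilon)}(\Theta)$) $P$ is a product of $\delta_H^{m_i}(V)$ of $t$-independent order $kr+p(h-\varepsilon)$ and $M$ is an $(n+N+1)$-slot MOI whose arguments have total order $(n-k)r+(N+1-p)(h-\varepsilon)$. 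I would then bound $\|t^{n}S^n_N\|_1\le\|\Theta^{-s}\|_1\,\|t^{n+N+1}PM\|_{\Hc^{0}\to\Hc^{s}}$, factor $\Hc^{0}\xrightarrow{M}\Hc^{s+\operatorname{ord}P}\xrightarrow{P}\Hc^{s}$, and apply the scaling estimate of Proposition~\ref{P:TraceEstimatewitht} to $M$ with the regularity index $\alpha$ chosen so that the resulting domain Sobolev index is $0$; this is permissible since $f\in T^\beta(\R)\subseteq T^\alpha(\R)$ for all $\alpha\ge\beta$, and one checks $\alpha=n(1-\tfrac{r}{h})+\tfrac{(N+1)\varepsilon}{h}-\tfrac{s}{h}$ satisfies $\beta<\alpha\le n+N+1$. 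The arithmetic collapses to $\|t^{n}S^n_N(V,\ldots,V)\|_1\lesssim t^{\,n(1-\frac{r}{h})+\frac{(N+1)\varepsilon}{h}-\frac{s}{h}}$, worst at $n=0$, so combined with the Taylor remainder $O(t^{(N+1)(1-\frac{r}{h})-\frac{s}{h}})$ this yields the claimed error $O(t^{m_N})$ with $m_N=(N+1)\min(\tfrac{\varepsilon}{h},1-\tfrac{r}{h})-\tfrac{s}{h}$. Finally, a short computation with Theorem~\ref{T:MainFunctCalc} (via $\|g(tH)\|_{\Hc^{q+\gamma h}\to\Hc^{q}}\lesssim t^{\gamma}$ for $\gamma\le0$) shows each main term is $O(t^{\,n(1-\frac{r}{h})+m\frac{\varepsilon}{h}-\frac{s}{h}})$; grouping the main terms by these exponents exhibits the result as an expansion of type~\eqref{eq:AsympExp2}, and $m_N\uparrow\infty$ makes it genuinely asymptotic.
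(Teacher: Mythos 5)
Your proof is correct and follows essentially the same route as the paper's: Proposition~\ref{P:TaylorTrace} followed by Proposition~\ref{P:Expansion} applied to $tH$, pulling the powers of $t$ out of the commutators $\delta_{tH}^{m}(V)=t^{m}\delta_H^{m}(V)$, and bounding the remainder in trace norm via $\Theta^{-s}\in\mathcal{L}_1$, the factorization through the Sobolev scale, and Proposition~\ref{P:TraceEstimatewitht}, which yields exactly the exponents $t^{n(1-\frac{r}{h})+(N+1)\frac{\varepsilon}{h}-\frac{s}{h}}$ and hence $O(t^{m_N})$. The additional care you take in relaxing $\beta<-\frac{s}{h}$ to $\beta\le-\frac{s}{h}$ (trace-class membership of order $-s$ operators) and in checking the $t$-orders of the main terms to confirm the expansion is of type~\eqref{eq:AsympExp2} are points the paper leaves implicit, but they do not alter the argument.
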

\begin{proof}
    Combining Propositions~\ref{P:TaylorTrace} and~\ref{P:Expansion}, we have that
    \begin{align*}
        \Tr(f(tH+tV)) &= \sum_{n=0}^{N} \sum_{m=0}^{N}  t^{n+m} \sum_{m_1 + \dots + m_n = m} \frac{C_{m_1, \dots, m_n}}{(n+m)!} \Tr\big( \delta_H^{m_1}(V) \cdots \delta_H^{m_n}(V) f^{(n+m)}(tH)\big)\\
        &\quad + \sum_{n=0}^{N} t^n \Tr(S^n_{N,t}(V, \ldots, V))+ O(t^{(N+1)(1-\frac{r}{h}) - \frac{s}{h}}),
    \end{align*}
    where $S^n_{N,t}(V, \ldots, V)$ is a sum of terms of the form
    \[
    t^{N+1}   \delta_H^{m_1}(V)\cdots \delta_H^{m_k}(V)   T^{tH}_{f^{[n+N+1]}}(1, \ldots, 1, \delta_H^{N+1-m_1 - \cdots - m_k}(V), 1, \ldots, 1, V, \ldots, V).
    \]
    We then estimate
    \begin{align*}
        &\big \|t^{N+1}   \delta_H^{m_1}(V)\cdots \delta_H^{m_k}(V)   T^{tH}_{f^{[n+N+1]}}(1, \ldots, 1, \delta_H^{N+1-m_1 - \cdots - m_k}(V), 1, \ldots, 1, V, \ldots, V)\big\|_1\\
        &\lesssim t^{N+1} \big \|  \delta_H^{m_1}(V)\cdots \delta_H^{m_k}(V)   T^{tH}_{f^{[n+N+1]}}(1, \ldots, 1, \delta_H^{N+1-m_1 - \cdots - m_k}(V), 1, \ldots, 1, V, \ldots, V)\big\|_{\Hc^{-s}\to\Hc^0}\\
        &\leq t^{N+1} \big\|\delta_H^{m_1}(V)\cdots \delta_H^{m_k}(V) \big\|_{\Hc^{kr+(m_1 + \cdots + m_k)(h-\varepsilon)} \to \Hc^{0}}\\
        &\quad \times \big\| T^{tH}_{f^{[n+N+1]}}(1, \ldots, 1, \delta_H^{N+1-m_1 - \cdots - m_k}(V), 1, \ldots, 1, V, \ldots, V)\big\|_{\Hc^{-s}\to \Hc^{kr+(m_1 + \cdots + m_k)(h-\varepsilon)}}.
    \end{align*}
    Applying Proposition~\ref{P:TraceEstimatewitht} then provides that
    \begin{align*}
        \| S^n_{N, t}(V, \ldots, V)\|_1 &\lesssim t^{N+1-\frac{s+kr+(m_1 + \cdots + m_k)(h-\varepsilon)}{h}-\frac{(n-k)r+(N+1-m_1 - \cdots - m_k)(h-\varepsilon)}{h}}\\
        &\quad = t^{-\frac{s+nr}{h}+(N+1)\frac{\varepsilon}{h}},
    \end{align*}
    and hence
    \begin{align*}
        \sum_{n=0}^{N} t^n \Tr(S^n_{N,t}(V, \ldots, V)) \lesssim \sum_{n=0}^{N} t^{n(1-\frac{r}{h})+(N+1)\frac{\varepsilon}{h}-\frac{s}{h}}.
    \end{align*}
    Defining
    \[
    m_N := (N+1)\min\bigg(\frac{\varepsilon}{h}, \big(1-\frac{r}{h}\big) \bigg) - \frac{s}{h}
    \]
    concludes the proof.
\end{proof}

This expansion in fact partially answers a specific open problem posed by Eckstein and Iochum in~\cite{EcksteinIochum2018}. 
Given a spectral triple $(\Ac, \Hc, D)$ it is a common assumption to require the existence of an asymptotic expansion as $t \downarrow 0$ of
\[
\Tr(ae^{-tD^2}),
\]
where $a \in \Ac$. Their question is whether the existence of asymptotic expansions of
\[
\Tr(ae^{-t(D+V)^2})
\]
can be deduced for suitable perturbations $V$ from this, and whether it could be enough for $\Tr(e^{-tD^2})$ to admit an asymptotic expansion.

First, we address the second part of the question by giving an explicit example where the asymptotic expansion of $\Tr(e^{-tD^2})$ provides no control over the expansions of $\Tr(a e^{-tD^2})$.

\begin{thm}{\cite[Theorem~3.2]{EcksteinIochum2018}}\label{T:ExpansionToZeta}
    For a bounded operator $a$ and invertible positive operator $D$ such that $D^{-1} \in \mathcal{L}_s$, $s >0$, the existence of an asymptotic expansion 
    \[
    \Tr(ae^{-tD^2}) \underset{t \downarrow 0}{\sim} \sum_{k=0}^\infty \rho_k(t),
    \]
    where 
    \[
    \rho_k(t) := \sum_{z \in X_k} \bigg( \sum_{n=0}^d c_{n,k} \log^n t \bigg) t^{-z},
    \]
    with $c_{n,k} \in \C$ and for suitable sets $X_k \subset \C$ (for details, see~\cite[Theorem~3.2]{EcksteinIochum2018}),
    implies the existence of a meromorphic continuation of
    \[
    \zeta_{D^2,a}(s) := \Tr(a |D|^{-2s})
    \]
    to the complex plane, with poles of order at most $d+1$ located at points in $\bigcup_{k=0}^\infty X_k \subset \C$.
\end{thm}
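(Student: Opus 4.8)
The plan is to run the classical Mellin transform argument linking a heat trace to its zeta function, keeping track of the logarithmic powers so as to read off the orders of the poles.

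\textbf{Setup and the Mellin identity.} Since $D$ is positive and invertible with $D^{-1}$ compact, its spectrum is a discrete sequence $0<\lambda_0\le\lambda_1\le\cdots\to\infty$, so $e^{-tD^2}$ is trace-class for every $t>0$, $\|ae^{-tD^2}\|_1\le\|a\|\,\Tr(e^{-tD^2})$, and for $t\ge 1$ one has $\Tr(e^{-tD^2})\le e^{-(t-1)\lambda_0^2}\Tr(e^{-D^2})$, i.e. exponential decay as $t\to\infty$. From $D^{-1}\in\mathcal L_s$ one gets that $\zeta_{D^2,a}(s)=\Tr(a|D|^{-2s})$ converges absolutely on a half-plane $\Re(s)>\sigma_0$. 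On that half-plane the elementary identity $\Gamma(s)\mu^{-s}=\int_0^\infty t^{s-1}e^{-t\mu}\,dt$, combined with Fubini--Tonelli for the operator trace (legitimate because $\int_0^\infty t^{\Re(s)-1}\|ae^{-tD^2}\|_1\,dt<\infty$ there), gives
\[
\Gamma(s)\,\zeta_{D^2,a}(s)=\int_0^\infty t^{s-1}\,\Tr(ae^{-tD^2})\,dt .
\]

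\textbf{Splitting and meromorphic continuation.} Write $\int_0^\infty=\int_0^1+\int_1^\infty$. The tail $\int_1^\infty t^{s-1}\Tr(ae^{-tD^2})\,dt$ is entire in $s$ by the exponential decay above. For the head, fix $N$ and use the hypothesised expansion to write $\Tr(ae^{-tD^2})=\sum_{k=0}^N\rho_k(t)+R_N(t)$ with $R_N(t)=O(t^{r_{N+1}})$ as $t\downarrow0$; then $\int_0^1 t^{s-1}R_N(t)\,dt$ is holomorphic on $\{\Re(s)>-r_{N+1}\}$. The explicit terms are handled by $\int_0^1 t^{w-1}\log^n t\,dt=(-1)^n n!\,w^{-n-1}$ (valid first for $\Re(w)>0$, then by analytic continuation), applied with $w=s-z$:
\[
\int_0^1 t^{s-1}\Big(\sum_{n=0}^d c_{n,k}\log^n t\Big)t^{-z}\,dt=\sum_{n=0}^d c_{n,k}\,\frac{(-1)^n n!}{(s-z)^{n+1}},
\]
meromorphic on $\mathbb C$ with a single pole at $s=z$ of order at most $d+1$. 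Hence $\Gamma(s)\zeta_{D^2,a}(s)$ extends meromorphically to $\{\Re(s)>-r_{N+1}\}$ with poles only at the points of $X_0\cup\cdots\cup X_N$ and orders $\le d+1$.

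\textbf{Conclusion.} Letting $N\to\infty$ and noting that the continuations agree on overlaps by uniqueness of analytic continuation, and that $r_k\uparrow\infty$ so every bounded region meets only finitely many $X_k$, one obtains a meromorphic function on all of $\mathbb C$ with poles contained in $\bigcup_k X_k$ of order $\le d+1$. Finally $1/\Gamma$ is entire, so $\zeta_{D^2,a}(s)=\dfrac{1}{\Gamma(s)}\big(\Gamma(s)\zeta_{D^2,a}(s)\big)$ is meromorphic on $\mathbb C$; multiplying by the entire function $1/\Gamma$, whose zeros sit at $s=0,-1,-2,\dots$, can only cancel or reduce pole orders and never creates new poles, so $\zeta_{D^2,a}$ has poles of order at most $d+1$ located at points of $\bigcup_k X_k$, as claimed. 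The one step that genuinely needs care is the rigorous justification of the Mellin identity — exchanging the operator trace with the $t$-integral — together with pinning down the initial half-plane of absolute convergence in terms of $s$ (the Schatten exponent) and the leading exponent of the expansion, so that the continuation argument is anchored on a nonempty open set; the passage $N\to\infty$ is then pure bookkeeping.
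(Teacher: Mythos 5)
Your argument is correct and is essentially the standard Mellin-transform proof: the paper does not reprove this statement but cites Eckstein--Iochum, whose argument is precisely this splitting of $\Gamma(s)\zeta_{D^2,a}(s)=\int_0^1+\int_1^\infty$ with term-by-term integration of the $t^{-z}\log^n t$ terms, and the thesis itself uses the same technique in the subsequent example with $\Tr(e^{-t|D+a|})$. The step you flag as delicate (justifying the trace/integral interchange on an initial half-plane, which follows from $D^{-1}\in\mathcal{L}_s$ via summing over an eigenbasis of $D$) is handled adequately, so no gap remains.
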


\begin{ex}\label{E:Counterexample}
    Let $\Ac = \ell_{\infty}(\Z_{\geq 1}),$ $\Hc = \ell_2(\Z_{\geq 1})$ where $\Ac$ is represented
    on $\Hc$ by pointwise multiplication, and let $D$ be the diagonal operator on $\Hc$ given by
    \[
        De_n = ne_n,\quad n\geq 1.
    \]
This is a spectral triple for trivial reasons: $\Ac$ acts on $\Hc$ by bounded operators,
and $[D,a]=0$ for all $a\in\Ac.$
Despite being atypical, $(\Ac,\Hc,D)$ satisfies most of the assumptions commonly made in the literature in terms of smoothness or summability.
The algebra $\Ac$ is not separable, but all of the following arguments can be performed in a separable (even finite dimensional) subalgebra of $\Ac.$

It is a classical result that we have the asymptotic expansion
\[
\Tr(e^{-tD^2})= \sum_{n=1}^\infty e^{-tn^2} \underset{t \downarrow 0}{\sim} \frac{\sqrt{\pi}}{2} t^{-\frac{1}{2}} - \frac{1}{2},
\]
see for example~\cite[Lemma~3.1.3]{Gilkey2004}. 
Nonetheless, the functions $\Tr(a|D|^{-2s})$ for $a\in \Ac$ are very badly behaved. For example, let
\[
    a:= \sum_{n=2}^\infty \frac{1}{\log n} e_n \in \Ac,
\]
so that
\[
\zeta_{a,D^2}(s)= \Tr(aD^{-2s}) = \sum_{n=2}^\infty  \frac{1}{\log n}n^{-2s},\quad \Re(s)>\frac{1}{2},
\]
which is holomorphic on $\Re(s) > \frac{1}{2}$. Now,
\begin{align*}
    \frac{d}{ds}\zeta_{a,D^2}(s) &= -2\sum_{n=2}^\infty n^{-2s}  \\
    &=2-2\zeta(2s),\quad \Re(s)>\frac{1}{2},
\end{align*}
where $\zeta$ is the Riemann zeta function which has a simple pole at $1$. Therefore, $\zeta_{a,D^2}(s) + \log(2s-1)$ is the antiderivative of an entire function, which implies that $\zeta_{a,D^2}(s) = -\log(2s-1) + f(s)$ where $f(s)$ is entire~\cite[Theorem~10.14]{PapaRudin}.
We conclude that $\zeta_{a,D^2}$ does not admit a meromorphic extension to the complex plane, and thus $\Tr(ae^{-tD^2})$ does not have an asymptotic expansion as $t\downarrow 0$ of the type in Theorem~\ref{T:ExpansionToZeta}.

An even more pathological example is 
\[
    b := \sum_{n=2}^\infty \frac{\Lambda(n)}{\log(n)}e_n
\]
where $\Lambda$ is the von Mangoldt function which satisfies
\[
\Lambda(n) :=
\begin{cases}
    \log(p) &\text{ if } n=p^k \text{ for } p \text{ prime};\\
    0 &\text{ otherwise}.
\end{cases}
\] 
A classical formula asserts that~\cite[p.4]{Titchmarsh1986}
\[
    \zeta_{b,D}(s)=\Tr(b|D|^{-s}) = \sum_{n=2}^\infty \frac{\Lambda(n)}{\log(n)}n^{-s} = \log \zeta(s), \quad \Re(s) > 1
\]
which is badly behaved at every zero of $\zeta$ and at $s= 1$.
\end{ex}

To answer the first part of the question, we deduce the following.

\begin{cor}\label{C:AsympExpNCG}
    Let $(\Ac,\Hc,D)$ be a regular $s$-summable spectral triple, $s> 0$. Let $V, P \in \mathcal{B}$, $V$ self-adjoint and bounded, where $\mathcal{B}$ is the algebra generated by $\Ac$ and $D$. 
   Then for all $f \in T^{\beta}(\R)$ with $\beta < -s$, the expressions
\begin{align*}
\Tr(f(tD + tV)), \qquad \Tr(Pe^{-t(D+V)^2}) \quad \text{and} \quad \Tr(Pe^{-t|D+V|})
\end{align*}
admit an asymptotic expansion as $t \downarrow 0$ given respectively by
\begin{align*}
    \Tr(f(tD+tV)) = \sum_{n=0}^{N} \sum_{m=0}^{N}   \sum_{m_1 + \dots + m_n = m} t^{n+m} \frac{C_{m_1, \dots, m_n}}{(n+m)!} \Tr\big( \delta_D^{m_1}(V) \cdots \delta_D^{m_n}&(V) f^{(n+m)}(tD)\big)\\
    &+ O(t^{N+1-s}),
\end{align*}
where $C_{m_1, \dots, m_n}$ is the same as in Proposition~\ref{P:Expansion}, 
\begin{align*}
\Tr(Pe^{-t(D+V)^2}) = \sum_{n = 0}^{N} \sum_{m=0}^{N} \sum_{m_1 + \dots + m_n = m}  (-t)^{n+m} \frac{C_{m_1, \dots, m_n}}{(n+m)!} \Tr(PA^{(m_1)}\cdots &A^{(m_n)}\exp(-tD^2)) \\&+ O(t^{\frac{N+1-s}{2}}),
\end{align*}
where $A := DV + VD + V^2$, and $A^{(m)} := \delta^m_{D^2}(A)$, 
and
\begin{align*}
    \Tr(Pe^{-t|D+V|}) = \sum_{n = 0}^{N} \sum_{m=0}^{N} \sum_{m_1 + \dots + m_n = m}  (-t)^{n+m} \frac{C_{m_1, \dots, m_n}}{(n+m)!} \Tr(P\delta_{|D|}^{m_1}(B)\cdots &\delta_{|D|}^{m_n}(B)\exp(-t|D|)) \\
    &+ O(t^{(N+1)(1-\varepsilon) - s}),
\end{align*}
where $B := |D+V| - |D|$ and $\varepsilon > 0$ can be picked arbitrarily small.
\end{cor}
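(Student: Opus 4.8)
The reference operator of the abstract calculus will be $\Theta:=(1+D^2)^{1/2}$. Then $s$-summability reads $\Theta^{-1}\in\mathcal{L}_s$, and regularity is precisely the statement $\Ac,[D,\Ac]\subseteq\OP^0(\Theta)$; consequently $D,|D|\in\OP^1(\Theta)$ and $D^2\in\OP^2(\Theta)$ are $\Theta$-elliptic symmetric operators, the commutators $\delta_\Theta$ and $\delta_{|D|}$ preserve analytic orders whereas $\delta_{D^2}=[D^2,\,\cdot\,]$ raises them by exactly one, and the algebra $\mathcal{B}$ lies in $\OP(\Theta)$, so that $V\in\OP^0(\Theta)$ and $P\in\OP^{k_0}(\Theta)$ for some $k_0\in\R$. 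All three expansions will then be produced by feeding the relevant data into Propositions~\ref{P:TaylorTrace}, \ref{P:Expansion} and~\ref{P:TraceEstimatewitht} — i.e.\ by re-running the proof of Theorem~\ref{T:GeneralAsymp} — after left-multiplying by $P$, which is harmless because $\|PA\|_1\lesssim\|\Theta^{-s}\|_1\,\|A\|_{\Hc^0\to\Hc^{s+k_0}}$ and every remainder has analytic order tending to $-\infty$.

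For the first expansion, $H=D$ has order $h=1$, $V$ has order $r=0<h$, and $\beta<-s=-s/h$: Proposition~\ref{P:TaylorTrace} gives the Taylor sum with error $O(t^{N+1-s})$, Proposition~\ref{P:Expansion} (with $\delta_{tD}^k=t^k\delta_D^k$ and $f^{(n+m)}(tD)$) extracts the displayed main terms, and Proposition~\ref{P:TraceEstimatewitht} disposes of the remainder, exactly as in Theorem~\ref{T:GeneralAsymp}. For the heat expansion, boundedness of $V=V^\ast$ gives $(D+V)^2=D^2+A$ with $A:=DV+VD+V^2\in\OP^1(\Theta)$ symmetric — a perturbation of order $r=1$ of $H=D^2$ of order $h=2>r$ — and $\delta_{D^2}^n(A)\in\OP^{1+n}(\Theta)\subseteq\op^{1+n(2-\varepsilon)}(\Theta)$ for every $0<\varepsilon<1$, so Theorem~\ref{T:GeneralAsymp} applies with the gap parameter taken arbitrarily close to $1$. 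Since $x\mapsto e^{-x}$ (cut off outside $[0,\infty)$, which is legitimate as $\sigma((D+V)^2)\subseteq[0,\infty)$) lies in $T^\beta(\R)$ for all $\beta$, in particular $\beta\le -s/2$, and $f^{(k)}=(-1)^k f$, one gets $f^{(n+m)}(tD^2)=(-1)^{n+m}e^{-tD^2}$ and $A^{(m_i)}=\delta_{D^2}^{m_i}(A)$, and the error is $O(t^{(N+1)\min(\varepsilon/2,1/2)-s/2})=O(t^{(N+1-s)/2})$.

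The last expansion is the one requiring real work: write $|D+V|=|D|+B$ with $B:=|D+V|-|D|$, and the key claim is $B\in\bigcap_{\varepsilon>0}\OP^{\varepsilon}(\Theta)$. To establish it, note that $1+(D+V)^2=\Theta^2+A$ is a perturbation of $\Theta^2\in\OP^2(\Theta)$ by the lower-order $A\in\OP^1(\Theta)$, hence $\Theta$-elliptic of order $2$ by Corollary~\ref{C:EllipticParametrix}, positive and self-adjoint; moreover $B$ satisfies $|D|B+B|D|+B^2=A$, and since $A\in\OP^1(\Theta)$ while $|D|\in\OP^1(\Theta)$ is $\Theta$-elliptic, one solves this relation inside the calculus — the Sylvester operator $X\mapsto|D|X+X|D|$ is invertible modulo lower order on $\OP(\Theta)$, here the functional calculus of Section~\ref{SS:FunctCalc} applied to $\sqrt{\,\cdot\,}$-type functions entering and contributing an unavoidable $\varepsilon$-loss — to conclude $B\in\bigcap_{\varepsilon>0}\OP^{\varepsilon}(\Theta)$. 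Granting this, $H=|D|$ is $\Theta$-elliptic symmetric of order $1$, $B$ is a symmetric perturbation of order $\le\varepsilon$ with $\delta_{|D|}^n(B)\in\bigcap_{\varepsilon>0}\OP^\varepsilon(\Theta)$, and $e^{-\cdot}$ is again an admissible symbol on $\sigma(|D+V|)\subseteq[0,\infty)$; Theorem~\ref{T:GeneralAsymp} with $H=|D|$, perturbation $B$ and $f=e^{-\cdot}$, carrying $P$ on the left, gives the expansion, and optimising over the order of $B$ and the gap parameter yields the error $O(t^{(N+1)(1-\varepsilon)-s})$ for arbitrarily small $\varepsilon>0$. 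The main obstacle is thus entirely concentrated in the analysis of $|D+V|-|D|$ — showing that the perturbed operators remain $\Theta$-elliptic within the abstract calculus and that $B$ has essentially zero analytic order, with the $\varepsilon$-loss from the non-smoothness of $\sqrt{\,\cdot\,}$ accounting for the arbitrarily small $\varepsilon$ in the final bound; the remaining verifications — carrying the fixed factor $P$ through the trace-norm estimates, and the bookkeeping of orders — are routine.
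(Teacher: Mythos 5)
Your treatment of the first two expansions matches the paper: take $\Theta=(1+D^2)^{1/2}$, apply Theorem~\ref{T:GeneralAsymp} (i.e.\ Propositions~\ref{P:TaylorTrace}, \ref{P:Expansion}, \ref{P:TraceEstimatewitht}) to $H=D$, $r=0$, and to $H=D^2$, perturbation $A=DV+VD+V^2\in\OP^1(\Theta)$ with $A^{(m)}\in\OP^{1+m}(\Theta)$, carrying the factor $P$ through the trace estimates. That part is fine.

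The gap is exactly where you locate the ``real work'': your argument for $B:=|D+V|-|D|\in\bigcap_{\varepsilon>0}\OP^{\varepsilon}(\Theta)$ is not a proof. You propose to solve the quadratic identity $|D|B+B|D|+B^2=A$ by inverting the Sylvester map $X\mapsto|D|X+X|D|$ ``modulo lower order'', but nothing in the paper (nor in your sketch) establishes that this map is invertible on the abstract calculus with the claimed one-order gain: the natural inverse $A\mapsto\int_0^\infty e^{-t|D|}Ae^{-t|D|}\,dt$ requires $|D|$ to be bounded away from $0$ (false if $0\in\sigma(D)$ without a finite-rank correction you do not make), the equation is quadratic in $B$ so one needs an iteration plus an a priori bound on $B$ and a uniqueness argument identifying the solution with $|D+V|-|D|$, and you never explain where the $\varepsilon$-loss in your route actually comes from --- you attribute it to ``$\sqrt{\cdot}$-type functions'', which do not appear in the Sylvester scheme you describe. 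The paper's proof uses a different and much shorter mechanism, and it uses an ingredient you omit: since $(1+D^2)^{-1/2}\in\mathcal{L}_s$, the operators $D$ and $D+V$ have \emph{discrete} spectrum, so one may modify $x\mapsto|x|$ near $0$ to a smooth $f$ with $f=|\cdot|$ on $\sigma(D)\cup\sigma(D+V)$ and $f\in S^1(\R)\subseteq T^{1+\varepsilon}(\R)$ for every $\varepsilon>0$; then the double operator integral identity $|D+V|-|D|=T^{D+V,D}_{f^{[1]}}(V)$ (Proposition~\ref{P:UMOIcom}) combined with Theorem~\ref{T:MOOIforNCG1}/Theorem~\ref{T:OPMOOIS} gives at once $|D+V|\in\OP^{1+\varepsilon}(\Theta)$ and $B\in\OP^{\varepsilon}(\Theta)$, and it is the inclusion $S^1(\R)\subseteq T^{1+\varepsilon}(\R)$ that produces the arbitrarily small $\varepsilon$ in the final error $O(t^{(N+1)(1-\varepsilon)-s})$. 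Unless you supply the missing Sylvester-inversion lemma in the calculus (with the spectral-gap and uniqueness issues resolved), your argument for the third expansion does not go through; replacing it by the MOI identity above closes the gap.
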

\begin{proof}
Given a regular $s$-summable spectral triple  $(\Ac,\Hc,D)$, write $\Theta := (1+D^2)^{\frac12}$. Let $V, P \in \mB$, $V$ self-adjoint and bounded, where $\mB$ is the algebra generated by $\Ac$ and $D$. If $f \in T^{\beta}(\R)$ with $\beta < -s$, Theorem~\ref{T:GeneralAsymp} immediately gives that
  \begin{align*}
    \Tr(f(tD+tV)) = \sum_{n=0}^{N} \sum_{m=0}^{N}   \sum_{m_1 + \dots + m_n = m} t^{n+m}\frac{C_{m_1, \dots, m_n}}{(n+m)!} \Tr\big( \delta_D^{m_1}(V) \cdots \delta_D^{m_n}&+(V) f^{(n+m)}(tD)\big) \\&+ O(t^{N+1-s}).
    \end{align*}

Regarding the expansion of $\Tr(Pe^{-t(D+V)^2})$ we have that $D^2 \in \OP^2(\Theta)$, $A \in \OP^1(\Theta)$ since $(\Ac, \Hc, D)$ is regular. As $[D^2, A] = [\Theta^2, A] = \Theta[\Theta, A] + [\Theta,A] \Theta$, we have that $A^{(m)} \in \OP^{1+m}(\Theta)$. Furthermore, $\mB \subseteq \op(\Theta)$. The proof of this corollary is then the same as the proof of Theorem~\ref{T:GeneralAsymp}. Filling in
\[
m_N = (N+1)\min\bigg(\frac{\varepsilon}{h}, \big(1-\frac{r}{h}\big) \bigg) - \frac{s}{h} = \frac{N+1-s}{2}
\]
gives the order of the error term.

For the expansion of $\Tr(Pe^{-t|D+V|})$, while $|D| \in \OP^1(\Theta)$ due to Theorem~\ref{T:MainFunctCalc}, to conclude something similar for $|D+V|$ we have to do more work. Note that $D$ has discrete spectrum since $(1+D^2)^{-\frac12} \in \mathcal{L}_s$. If $V \in \OP^0(\Theta)$ is self-adjoint, then $D+V$ has real discrete spectrum too. Hence we can modify the function $x \mapsto |x|$ slightly on a small neighbourhood around $x=0$ to get a smooth function $f$ which has the property that $f(x) = |x|$ on $\sigma(D+V) \cup \sigma(D)$, and $f \in S^1(\R)$ since the second and higher derivatives of $f$ are all compactly supported. Using Theorem~\ref{T:MOOIforNCG1} and the observation that $S^1(\R) \subseteq T^{1+\varepsilon}(\R)$ for all $\varepsilon > 0$, we have
\[
|D+V| \in \OP^{1+\varepsilon}(\Theta), \ \ |D+V|-|D| = T^{D+V, D}_{f^{[1]}}(V) \in \OP^{\varepsilon}(\Theta).
\]
Therefore, we get
\begin{align*}
    \Tr(Pe^{-t|D+V|}) = \sum_{n = 0}^{N} \sum_{m=0}^{N} \sum_{m_1 + \dots + m_n = m}  (-t)^{n+m} \frac{C_{m_1, \dots, m_n}}{(n+m)!} \Tr(P\delta_{|D|}^{m_1}(B)\cdots& \delta_{|D|}^{m_n}(B)\exp(-t|D|)) \\
    &+ O(t^{(N+1)(1-\varepsilon) - s}),
\end{align*}
where $B := |D+V|-|D|$ and $\varepsilon > 0$ can be chosen arbitrarily small.
\end{proof}

Apart from providing a perturbative expansion of the spectral action, Corollary~\ref{C:AsympExpNCG} shows that if for all $P \in \mB$ we have an expansion
\begin{equation}\label{eq:ExistenceAsExp}
\Tr (P e^{-tD^2}) \underset{t\downarrow 0}{\sim} \sum_{k=0}^\infty c_k(P) t^{r_k}
\end{equation}
for constants $c_k(P) \in \C$, then for all $P \in \mB$ self-adjoint and bounded there exist constants $c_k(P,V) \in \C$ such that
\[
\Tr(P e^{-t(D+V)^2}) \underset{t\downarrow 0}{\sim} \sum_{k=0}^\infty c_k(P, V) t^{r_k}.
\]
Similarly, if each $\Tr (P e^{-tD^2})$ admits an asymptotic expansions of the type in Theorem~\ref{T:ExpansionToZeta}, then $\Tr (P e^{-t(D+V)^2})$ does too.

\begin{rem}
    Corollary~\ref{C:AsympExpNCG} can be modified to work for non-unital spectral triples. Given a spectral triple $(\Ac, \Hc, D)$ with non-unital algebra $\Ac$ writing $\Theta = (1+D^2)^{\frac12}$, if one assumes instead of $\Theta^{-1} \in \mathcal{L}_s$ that there exists $p \geq 1$ such that $a \Theta^{-s} \in \mathcal{L}_1$ for all $a \in \Ac \cup [D,\Ac]$ and $s > p$ as is proposed in~\cite{CGRS2}, then we also have $a \cdot \op^{-s}(\Theta) \in \mathcal{L}_1$ for $s > p$. It follows that as $t\downarrow 0$
    \begin{align*}
    \Tr(af(tD+tV)) = \sum_{n=0}^{N} \sum_{m=0}^{N}  t^{n+m} \sum_{m_1 + \dots + m_n = m} \frac{C_{m_1, \dots, m_n}}{(n+m)!} \Tr\big(a \delta_D^{m_1}(V) \cdots \delta_D^{m_n}&(V) f^{(n+m)}(tD)\big)\\& + O(t^{N+1-p})
    \end{align*}
    for $a \in \Ac \cup [D,\Ac]$, $V \in \mB$ bounded and self-adjoint, and $f \in T^{\beta}(\R)$, $\beta< -p$.
\end{rem}

We can now answer the first part of the question by Eckstein and Iochum. More precisely, in~\cite[Chapter~5]{EcksteinIochum2018} the question is asked when, for a spectral triple $(\Ac, \Hc, D)$, the existence of an asymptotic expansion of $\Tr(e^{-t|D|})$ implies the existence of an expansion of $\Tr(e^{-t|D+V|})$ for a suitable perturbation $V$. Corollary~\ref{C:AsympExpNCG} compared with Example~\ref{E:Counterexample} suggests that this is not generally possible. We illustrate this with the following example.

\begin{ex} 
    Let us revisit Example~\ref{E:Counterexample} where $(\Ac, \Hc, D) = (\ell_\infty(\Z_{\geq 1}), \ell_2(\Z_{\geq 1}), D)$, and $D$ is defined by
    \[
        De_n = ne_n,\quad n\geq 1.
    \]
    We take as before
    \[
    a:= \sum_{n=2}^\infty \frac{1}{\log n} e_n.
    \]
    Noting that $|D+a| - |D| = a$, we can apply Corollay~\ref{C:AsympExpNCG} to get (in this situation we can choose $\varepsilon = 0$)
    \[
    \Tr(e^{-t|D+a|}) = \Tr(e^{-tD}) -t \Tr(ae^{-tD}) + O(t).
    \]
    Taking the Mellin transform we get for $\Re(s)>1$ (see~\cite[Proposition~2.10]{EcksteinIochum2018})
    \begin{align*}
    \Tr(|D+a|^{-s}) &= \frac{1}{\Gamma(s)}\int_0^\infty t^{s-1}\Tr(e^{-t|D+a|})\,dt\\
        &= \frac{1}{\Gamma(s)}\int_0^1 t^{s-1}\Tr(e^{-t|D+a|})\,dt + \frac{1}{\Gamma(s)}\int_1^\infty t^{s-1}\Tr(e^{-t|D+a|})\,dt.
\end{align*}
Since
\[
    \Tr(e^{-t|D+a|)}) \leq \Tr(e^{-tD}) = (e^t-1)^{-1} \leq 2 e^{-t},\quad t\geq 1,
\]
we have that
\[
    s\mapsto \frac{1}{\Gamma(s)}\int_1^\infty t^{s-1}\Tr(e^{-t|D+a|})\,ds
\]
is holomorphic.
It follows that
\begin{align*}
    \Tr(|D+a|^{-s}) &= \frac{1}{\Gamma(s)}\int_0^1 t^{s-1}\Tr(e^{-t|D+a|})\,dt + \mathrm{holo}_{\C}(s)\\
    &=   \frac{1}{\Gamma(s)}\int_0^1 t^{s-1} \Tr(e^{-tD}) -t \Tr(ae^{-tD})\, dt + \mathrm{holo}_{\Re(s)>-1}(s)\\
    &= \zeta_{0,D}(s) - s \zeta_{a,D}(s+1) + \mathrm{holo}_{\Re(s)>-1}(s).
\end{align*}
Since $s \mapsto \zeta_{a,D}(s+1)$ does not extend holomorphically to any punctured neighbourhood of $s=0$ as the computations in Example~\ref{E:Counterexample} show, we conclude that $\zeta_{a,D}(s) = \Tr(|D+a|^{-s})$ does not admit a meromorphic extension to the entire complex plane. By Theorem~\ref{T:ExpansionToZeta}, 
\[
\Tr (e^{-t|D+a|})
\]
does not admit an asymptotic expansion of the type listed in the theorem.
\end{ex}

We conclude by remarking that the existence of an asymptotic expansion of  $\Tr( P e^{-t D^2})$ for $P \in \mB$ of the type in Theorem~\ref{T:ExpansionToZeta} is guaranteed for commutative spectral triples~\cite[Theorem~2.7]{GrubbSeeley1995}, and by the same theorem also for almost commutative spectral triples~\cite[Chapter~10]{Suijlekom2025}.
    \part{Dixmier Trace Formulas}
    \chapter{Connes' integral formula and quantum ergodicity}
\label{Ch:QE}
{\setlength{\epigraphwidth}{0.55\textwidth}
\epigraph{I am extremely impressed by your taste displayed in the opening epigraphs to each chapter.}{Fedor Sukochev}}
This chapter is based on~\cite{QE}, joint work with Edward McDonald. Thanks are extended towards Nigel Higson for helpful comments and suggestions, and Magnus Goffeng for pointing out the condition $[D,A]$ being bounded is sufficient for Lemma~\ref{L:Widom}. We also acknowledge Eric Leichtnam, Qiaochu Ma, and Rapha\"el Ponge for their assistance. The main results of this chapter are a formula for the noncommutative integral in Theorem~\ref{T:Log-CesaroMean}, a noncommutative Szeg\H{o} formula in Theorem~\ref{T:Szego}, and a link between NCG and Quantum Ergodicity in Theorem~\ref{T:MainQE}.

For applications of NCG in physics and numerical computations in NCG, it is important to know how well spectral triples can be approximated by a finite truncation, since this is all we can measure physically or compute numerically. A physical detector in a measurement for example is typically only effective in a certain energy range. Hence  for a spectral triple $(\Ac, \Hc, D)$ it makes sense to consider a spectral projection $P = \chi_I(D)$, and truncate the triple into the form
\[
(P\Ac P, P\Hc, PD).
\]
Now, $P\Ac P$ is not generally an algebra anymore. Connes and Van Suijlekom introduced the concept of operator system spectral triples for this purpose~\cite{ConnesvSuijlekom2021}, a generalisation of usual spectral triples.
\begin{defn}
    A unital operator system spectral triple $(\Ac, \Hc, D)$ consists of a unital space $\Ac$ of bounded operators on a Hilbert space $\Hc$ such that its norm closure is $*$-invariant, $D$ is a self-adjoint operator on $\Hc$ with compact resolvent, and for all $T \in \Ac$ we have that $T \dom(D) \subseteq \dom(D)$ and $[D,T]$ extends to a bounded operator. 
\end{defn}

Developments in this direction were made in~\cite{DAndreaLizzi2014, GlaserStern2020, GlaserStern2021, ConnesvSuijlekom2022, DAndreaLandi2022, Hekkelman2022, GielenvSuijlekom2023, Rieffel2023, LeimbachvSuijlekom2024, Suijlekom2024a, Suijlekom2024b} amongst others.

We will connect this paradigm with Connes' (normalised) noncommutative integral
\begin{equation}\label{eq:NoncomInt}
a \mapsto \frac{\Tr_\omega(a\langle D\rangle ^{-d})}{\Tr_\omega(\langle D\rangle^{-d})}, \quad a \in B(\Hc),
\end{equation}
recall that  $\langle x \rangle := (1+|x|^2)^{\frac{1}{2}}$.
Namely, we will show that given a finite-rank spectral projection $P_\lambda : = \chi_{[-\lambda,\lambda]}(D)$ where $\chi_{[-\lambda,\lambda]}$ is the indicator function of the interval $[-\lambda, \lambda]\subseteq \R$, the functional
\begin{equation}\label{eq:TruncInt}
    P_\lambda a P_\lambda \mapsto \frac{\Tr(P_\lambda a P_\lambda)}{\Tr(P_\lambda)}, \quad a \in B(\Hc),
\end{equation}
approximates the noncommutative integral on spectrally truncated compact spectral triples (Proposition~\ref{P:HeattoInt}, Theorem~\ref{T:Log-CesaroMean}). This is a result in the spirit of~\cite{Stern2019}, where finite-rank approximations of zeta residues are given. We however do not assume the existence of a full asymptotic expansion of the heat trace. Instead, we focus our efforts on the computation of the first term of this expansion, which is the noncommutative integral.

The language involved is closely tied to the field of quantum ergodicity, the inception of which can largely be credited to Shnirelman, Zelditch and Colin de Verdi\`ere~\cite{Shnirelman1974,ColindeVerdiere1985,Zelditch1987}. For reviews of this field, see~\cite{Zelditch2010, Zelditch2017}. Quantum ergodicity is a property of an operator which can mean various things. A common definition is that, given a compact Riemannian manifold $M$ and a positive self-adjoint operator $\Delta$ on $L_2(M)$ with compact resolvent, the operator $\Delta$ is said to be quantum ergodic if for every orthonormal basis $\{e_n\}_{n=0}^\infty$ of $L_2(M)$ consisting of eigenfunctions of $\Delta$ there exists a density one subsequence $J\subseteq \N$ such that for all zero-order pseudodifferential operators $\mathrm{Op}(\sigma)$ with principal symbol $\sigma \in C^\infty(S^*M)$,
\[
\lim_{J \ni j \to \infty}\langle e_{j}, \mathrm{Op}(\sigma) e_{j}\rangle_{L_2(M)} = \int_{S^*M} \sigma \, d\nu,
\]
and where $\nu$ is the measure on the cotangent sphere $S^*M$ induced by the Riemannian metric. In this context, a density one subsequence means that 
\[
\frac{\# J \cap \{0,\ldots, n\}}{n+1} \to 1, \quad n \to \infty.
\]
Quantum ergodicity implies in particular that the functions $|e_j|^2$ become uniformly distributed over $M$ as $J\ni j \to \infty$, in the sense that the measures $|e_j|^2 d\nu_g$ converge to $d\nu_g$ in the weak$*$-topology, see Figure~\ref{F:eigenfunctions}.

\begin{figure}[h!]
\centering
    \includegraphics[width=\linewidth]{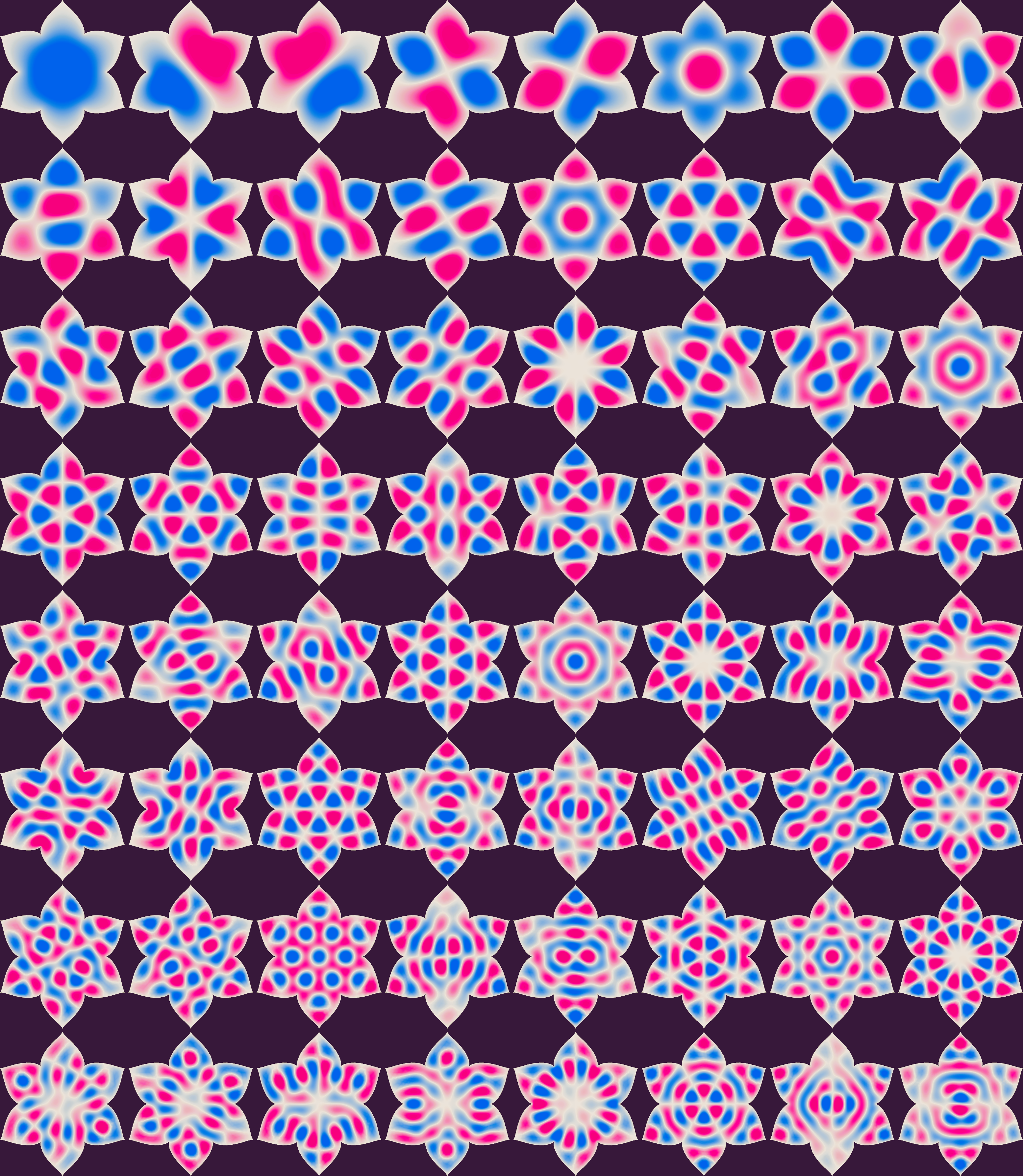}
    \caption{An illustration of 64 eigenfunctions of the Laplacian on a rose-shaped domain with Dirichlet boundary conditions, corresponding to the 64 smallest eigenvalues (counting multiplicities). Generated in Python with the finite-element method, using FEniCSx~\cite{DOLFINx}. Quantum ergodicity would imply that there exists a density one subsequence of eigenfunctions $\{e_j\}_{j\in J}$ along which the measures $|e_j|^2 d\lambda$ converge to the uniform (Lebesgue) measure $d\lambda$ in the weak$^*$-topology.}
    \label{F:eigenfunctions}
\end{figure}

Although quantum ergodicity shares a philosophical link with NCG --- emerging from a functional-analytic approach to ergodic geodesic flow on compact Riemannian manifolds --- there has yet to be made an explicit connection between the two fields, despite their contemporary development. We will show in Section~\ref{S:Ergodicity} that our results on the noncommutative integral on truncated spectral triples provide the means with which the gap can be bridged.

We furthermore propose a straightforward noncommutative generalisation of the property of ergodic geodesic flow on compact Riemannian manifolds for spectral triples, and explore what some results from the field of quantum ergodicity provide in this context. Our definition of ergodicity is known in the study of $C^*$-dynamical systems as uniqueness of the vacuum state, and hence a result by Zelditch~\cite{Zelditch1996} can now be recognised as an NCG version of the classical result by Colin de Verdi\`ere that ergodicity of the geodesic flow implies quantum ergodicity of the Laplace--Beltrami operator~\cite{ColindeVerdiere1985}, see Theorem~\ref{T:MainQE} below.
\newpage
Additionally, we will draw from a result of Widom~\cite{Widom1979} on the asymptotic behaviour of the functional~\eqref{eq:TruncInt}, which directly implies a Szeg\H{o} limit formula for spectral triples that satisfy the Weyl law (Theorem~\ref{T:Szego}). This provides that for all self-adjoint $A \in B(\Hc)$ which map $\dom|D|$ into itself and such that $[D,A]$ is bounded,
\[
\Tr_\omega (\langle D \rangle^{-d}) \cdot \omega \circ M \bigg(  \frac{\Tr(f(P_{\lambda_n} A P_{\lambda_n}))}{\Tr(P_{\lambda_n})}\bigg) = \Tr_\omega (f(A)\langle D\rangle^{-d}), \quad f \in C(\R),\,f(0)=0.
\]
Here, $M: \ell_\infty \to \ell_\infty$ is a logarithmic averaging operator, and $\omega \in \ell_\infty^*$ is an extended limit.
Details are provided in Section~\ref{S:Szego}. Note that we use the short-hand notation $\omega \circ M (a_n)$ for $ \omega \circ M (\{a_n\}_{n=1}^\infty)$. We remark that this result provides the insight that Szeg\H{o} limit theorems can be interpreted as versions of Connes' integral formula.

An outline of this chapter is as follows. We first explore and make precise the relation between the functionals~\eqref{eq:NoncomInt} and~\eqref{eq:TruncInt} in Section~\ref{S:Integration}.
Section~\ref{S:Szego} provides the mentioned Szeg\H{o} limit theorem for NCG. Next, we discuss a way of interpreting the functional~\eqref{eq:TruncInt} when the noncommutative integral~\eqref{eq:NoncomInt} is not defined, for example in $\theta$-summable or $\mathrm{Li}_1$-summable spectral triples. Namely, we relate the functional~\eqref{eq:TruncInt} to a functional that is sometimes called the Fr\"ohlich functional, which has been studied extensively in~\cite{GoffengRennie2019} as a KMS state. Finally, in Section~\ref{S:Ergodicity} we exhibit our study in quantum ergodicity and its relation to NCG through our results on the noncommutative integral.

\section{Integration on truncated spectral triples}
\label{S:Integration}
Let us fix a closed self-adjoint operator $D$ on a separable Hilbert space $\Hc$ such that $\langle D\rangle^{-d}\in \mathcal{L}_{1,\infty}$, where $d>0$ and $\langle x \rangle := (1+|x|^2)^{\frac{1}{2}}$. We fix an extended limit $\omega \in \ell_{\infty}^*$ and assume that $\Tr_\omega(\langle D\rangle^{-d}) > 0$.
We write $P_\lambda := \chi_{[-\lambda, \lambda]}(D)$. This situation is modeled after (compact) $d$-dimensional spectral triples $(\Ac, \Hc, D)$.

We first provide the most straight-forward approach to the noncommutative integral on truncated triples, using standard techniques that are employed in quantum ergodicity~\cite{ColindeVerdiere1985}. We write 
\[
f(t) \sim C t^{-\alpha}
\]
to mean
\[
\lim_{t\to 0} t^{\alpha}f(t) = C.
\]

\begin{prop}\label{P:HeattoInt}
    Let $a \in B(\Hc)$. If there exist constants $C, C(a) \in \R$ with 
    \[
    \Tr(e^{-tD^2}) \sim C t^{-\frac{d}{2}} , \quad \Tr(a e^{-tD^2}) \sim C(a)t^{-\frac{d}{2}},
    \]
    then
    \[
    \Tr_\omega (a \langle D\rangle^{-d}) = \Tr_\omega(\langle D \rangle^{-d}) \lim_{\lambda \to \infty}\frac{\Tr(P_\lambda a P_\lambda)}{\Tr (P_\lambda)}.
    \]
\end{prop}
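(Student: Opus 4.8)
The plan is to connect the three limiting quantities via a Tauberian theorem for the Laplace transform together with the known relationship between Dixmier traces and $\zeta$-function/heat-trace residues. First I would recall that $\langle D\rangle^{-d} \in \mathcal{L}_{1,\infty}$, so $\Tr(e^{-tD^2})$ and $\Tr(ae^{-tD^2})$ are finite for $t>0$; indeed $e^{-tD^2} = e^{-t(\langle D\rangle^2 - 1)}$ and since $\langle D\rangle^{-d}\in\mathcal{L}_{1,\infty}$ the singular values of $e^{-tD^2}$ decay rapidly, putting it in $\mathcal{L}_1$. The hypotheses $\Tr(e^{-tD^2})\sim C t^{-d/2}$ and $\Tr(ae^{-tD^2})\sim C(a)t^{-d/2}$ are precisely Abelian (Laplace-transform) asymptotics for the measures $d\mu(s) := d\,\Tr(\chi_{[0,s]}(D^2))$ and $d\mu_a(s) := d\,\Tr(a\,\chi_{[0,s]}(D^2))$ (the latter a signed/complex measure, so one should split $a$ into real/imaginary and positive/negative parts, or work with the positive measure dominating it). The Hardy--Littlewood--Karamata Tauberian theorem then upgrades these to the counting asymptotics
\[
\Tr(\chi_{[0,s]}(D^2)) \sim \frac{C}{\Gamma(\tfrac{d}{2}+1)} s^{d/2}, \qquad \Tr(a\,\chi_{[0,s]}(D^2)) \sim \frac{C(a)}{\Gamma(\tfrac{d}{2}+1)} s^{d/2}, \quad s\to\infty,
\]
where for the second one the Tauberian step requires a one-sided boundedness condition on the increments --- automatic here since $a$ is bounded and the increments of $\mu_a$ are controlled by $\|a\|$ times the increments of $\mu$.

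With $P_\lambda = \chi_{[-\lambda,\lambda]}(D) = \chi_{[0,\lambda^2]}(D^2)$, these asymptotics read $\Tr(P_\lambda)\sim \frac{C}{\Gamma(d/2+1)}\lambda^d$ and $\Tr(aP_\lambda) \sim \frac{C(a)}{\Gamma(d/2+1)}\lambda^d$. Note $\Tr(P_\lambda a P_\lambda) = \Tr(aP_\lambda)$ by the trace property and $P_\lambda^2 = P_\lambda$. Dividing, the limit $\lim_{\lambda\to\infty}\frac{\Tr(P_\lambda a P_\lambda)}{\Tr(P_\lambda)}$ exists and equals $C(a)/C$. It remains to identify $C$ and $C(a)$ with Dixmier traces. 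For this I would invoke the standard result (e.g.\ \cite[Chapter~8]{LSZVol1}, in the same circle of ideas as Theorem~\ref{T:ConnesIntegral}) that for a positive operator $T\in\mathcal{L}_{1,\infty}$ whose eigenvalue-counting function satisfies $\#\{k : \lambda(k,T) > \varepsilon\} \sim c\,\varepsilon^{-1}$ as $\varepsilon\downarrow 0$, one has $\Tr_\omega(T) = c$ for every extended limit $\omega$ (so $T$ is Dixmier measurable). Applying this with $T = \langle D\rangle^{-d}$: from $\Tr(\chi_{[0,s]}(D^2))\sim \frac{C}{\Gamma(d/2+1)}s^{d/2}$ one gets $\#\{k : \langle\lambda(k,D)\rangle^{-d} > \varepsilon\} = \#\{D^2 \le \varepsilon^{-2/d}-1\} \sim \frac{C}{\Gamma(d/2+1)}\varepsilon^{-1}$, hence $\Tr_\omega(\langle D\rangle^{-d}) = \frac{C}{\Gamma(d/2+1)}$.

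For the numerator, $a\langle D\rangle^{-d}$ is generally not positive, so I would instead use the heat-trace/Dixmier correspondence directly: there is a theorem (again in \cite[Chapter~8]{LSZVol1}, cf.\ the discussion around Connes' trace theorem) stating that if $B\in B(\Hc)$ and $\Tr(Be^{-tD^2}) \sim K t^{-d/2}$ as $t\downarrow 0$, then $B\langle D\rangle^{-d}$ (or equivalently $e^{-D^2}$-regularised, up to a trace-class correction) is Dixmier measurable with $\Tr_\omega(B\langle D\rangle^{-d}) = K/\Gamma(d/2+1)$. Applying this with $B=1$ recovers $\Tr_\omega(\langle D\rangle^{-d}) = C/\Gamma(d/2+1)$ consistently, and with $B=a$ gives $\Tr_\omega(a\langle D\rangle^{-d}) = C(a)/\Gamma(d/2+1)$. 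Combining,
\[
\Tr_\omega(a\langle D\rangle^{-d}) = \frac{C(a)}{\Gamma(d/2+1)} = \frac{C}{\Gamma(d/2+1)}\cdot\frac{C(a)}{C} = \Tr_\omega(\langle D\rangle^{-d})\lim_{\lambda\to\infty}\frac{\Tr(P_\lambda a P_\lambda)}{\Tr(P_\lambda)},
\]
which is the claim.

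The main obstacle I anticipate is the Tauberian step for the \emph{complex} measure $d\mu_a$: the Hardy--Littlewood--Karamata theorem in its classical form needs positivity or at least a one-sided Tauberian condition. The clean way around this is to observe $-\|a\| P_\lambda \le \tfrac12(aP_\lambda + P_\lambda a^*) \le \|a\|P_\lambda$ in the self-adjoint case and more generally decompose $a = a_1 + ia_2$ with $a_j$ self-adjoint, then $a_j = a_j^+ - a_j^-$; each $\Tr(a_j^\pm e^{-tD^2})$ is the Laplace transform of a genuine positive measure, and the monotone increments are dominated by $\|a\|$ times those of $\mu$, so Karamata applies to each piece and one reassembles by linearity. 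A secondary point to be careful about is that the hypothesis only gives the \emph{leading} heat-trace term (no full asymptotic expansion), which is exactly why one cannot route through a meromorphically-continued $\zeta$-function; the Tauberian argument and the measurability criterion phrased purely in terms of the leading term are what make the statement go through with this minimal hypothesis.
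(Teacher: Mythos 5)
Your proposal is correct and follows essentially the same route as the paper: the Hardy--Littlewood (Karamata) Tauberian theorem converts the heat-trace asymptotics into the asymptotics of $\Tr(P_\lambda)$ and $\Tr(aP_\lambda)$, and the constants $C$, $C(a)$ are identified with $\Gamma(\tfrac{d}{2}+1)\Tr_\omega(\langle D\rangle^{-d})$ and $\Gamma(\tfrac{d}{2}+1)\Tr_\omega(a\langle D\rangle^{-d})$ via the heat-trace/Dixmier-trace correspondence of \cite[Corollary~8.1.3]{LSZVol1}. The only caveat is in your Tauberian step for the signed measure: Karamata cannot be applied to the pieces $a_j^{\pm}$ separately, since their individual leading asymptotics are not hypothesised; the clean fix is to apply it to the positive operator $a_j+\|a_j\|$ and subtract the known asymptotics of $\Tr(P_\lambda)$ --- a point the paper's proof also passes over silently.
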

\begin{proof}
    By~\cite[Corollary~8.1.3]{LSZVol1} we have that 
    \[
    C = \Gamma(\frac{d}{2}+1) \Tr_\omega(\langle D\rangle^{-d}), \quad C(a) = \Gamma(\frac{d}{2}+1) \Tr_\omega(a \langle D\rangle^{-d}).
    \]
    Recall that we assume $\Tr_\omega(\langle D\rangle^{-d})>0$. An application of the Hardy--Littlewood Tauberian theorem~\cite[Theorem~XII.5.2]{Feller1971} to the function $\Tr(e^{-tD^2})$ shows that 
    \[
    \Tr(P_\lambda) \sim \Tr_\omega(\langle D\rangle^{-d}) \lambda^{d}, \quad \lambda \to \infty.
    \] Applying the theorem again to the function $\Tr(a e^{-tD^2})$ then gives that $\lim_{\lambda \to \infty}\frac{\Tr(P_\lambda a P_\lambda)}{\Tr (P_\lambda)}$ exists and is equal to $\frac{\Tr_\omega(a\langle D\rangle^{-d})}{\Tr_\omega(\langle D\rangle^{-d})}$. 
\end{proof}

\begin{rem}
    The Hardy-Littlewood Tauberian theorem implies that the condition \\ $\Tr(e^{-tD^2})\sim Ct^{-\frac{d}{2}}$ as $t\to 0$ is equivalent to $\lambda(k,D^2)\sim \widetilde{C} k^{\frac{2}{d}}$ as $k\to\infty$~\cite[Theorem~XII.5.2]{Feller1971}.
    \end{rem}

\begin{defn}\label{D:WeylLaw}
    We say that $D^2$ (as fixed at the start of this section) satisfies a Weyl law if $\Tr(e^{-tD^2})\sim Ct^{-\frac{d}{2}}$, and it satisfies a local Weyl law for an operator $a \in B(\Hc)$ if $ \Tr(a e^{-tD^2}) \sim C(a)t^{-\frac{d}{2}}$. 
\end{defn}

    See~\cite{McDonaldSukochev2022WeylLaw} for an investigation of the validity of the (local) Weyl law for spectral triples, and~\cite{Ponge2023} for an extensive study of Weyl's law in relation to Connes' integral formula. The latter, work by Ponge, answers some questions regarding Weyl laws and the noncommutative integral related to measurability of operators.

Although the local Weyl laws hold for Riemannian manifolds and a wide class of spectral triples~\cite{GrubbSeeley1995, EcksteinIochum2018, McDonaldSukochev2022WeylLaw}, we have seen an example of a spectral triple in which such behaviour does not hold in Chapter~\ref{Ch:MOIs}, see Example~\ref{E:Counterexample}. In the remainder of this section we show what can be deduced without this condition.
We now fix an orthonormal basis $\{e_n\}_{n=0}^\infty$ of eigenvectors of $|D|$, ordered such that the corresponding eigenvalues $\{\lambda_n\}_{n=0}^\infty$ are non-decreasing.

\begin{lem}\label{L:WeakestInt}
    Let $A \in B(\Hc)$. Then
    \[
    \Tr_\omega(A\langle D\rangle^{-d}) = \omega\bigg(\frac{1}{\log(n+2)} \sum_{k=0}^n \langle\lambda_k\rangle^{-d} \langle e_k, A e_k \rangle\bigg).
    \]
    If $D^2$ satisfies Weyl's law, i.e. $\lambda_k \sim C k^{\frac{1}{d}}$, this simplifies to
    \[
    \Tr_\omega (A\langle D\rangle^{-d}) = \Tr_\omega(\langle D\rangle^{-d}) \cdot  \omega\bigg(\frac{1}{\log(n+2)} \sum_{k=0}^n  \frac{\langle e_k, A e_k \rangle}{k+1} \bigg).
    \]
\end{lem}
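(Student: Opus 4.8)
The plan is to prove the first identity directly from the definition of the Dixmier trace together with the ``modulated operator'' machinery recorded in Theorem~\ref{T: Modulated}, and then deduce the second identity by a routine comparison of sequences. First I would observe that $\langle D \rangle^{-d} \in \mathcal{L}_{1,\infty}$ is strictly positive, hence by the remark after Theorem~\ref{T: Modulated} it is $\langle D\rangle^{-d}$-modulated, and therefore $A\langle D\rangle^{-d}$ is $\langle D\rangle^{-d}$-modulated for any $A \in B(\Hc)$ (again by the same remark). The chosen orthonormal basis $\{e_n\}_{n=0}^\infty$ of eigenvectors of $|D|$ with non-decreasing eigenvalues $\{\lambda_n\}$ diagonalises $\langle D\rangle^{-d}$, but I need the eigenvalues $\langle\lambda_k\rangle^{-d}$ of $\langle D\rangle^{-d}$ to be arranged in \emph{non-increasing} order for Theorem~\ref{T: Modulated} to apply; since $\lambda_k$ is non-decreasing, $\langle\lambda_k\rangle^{-d}$ is automatically non-increasing, so the hypothesis $\langle D\rangle^{-d} e_k = \mu(k,\langle D\rangle^{-d}) e_k$ is satisfied. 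Thus Theorem~\ref{T: Modulated} gives
\[
\sum_{k=0}^n \lambda(k, A\langle D\rangle^{-d}) = \sum_{k=0}^n \langle e_k, A\langle D\rangle^{-d} e_k\rangle + O(1) = \sum_{k=0}^n \langle\lambda_k\rangle^{-d}\langle e_k, A e_k\rangle + O(1), \quad n \to \infty.
\]

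Next I would feed this into the definition of the Dixmier trace. By definition,
\[
\Tr_\omega(A\langle D\rangle^{-d}) = \omega\bigg(\bigg\{\frac{1}{\log(n+2)}\sum_{k=0}^n \lambda(k, A\langle D\rangle^{-d})\bigg\}_{n=0}^\infty\bigg),
\]
and since the two sequences $\big\{\tfrac{1}{\log(n+2)}\sum_{k=0}^n \lambda(k,A\langle D\rangle^{-d})\big\}$ and $\big\{\tfrac{1}{\log(n+2)}\sum_{k=0}^n \langle\lambda_k\rangle^{-d}\langle e_k,Ae_k\rangle\big\}$ differ by a sequence that is $O\big(\tfrac{1}{\log(n+2)}\big)$, hence converges to zero, the extended limit $\omega$ assigns them the same value. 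This yields the first claimed identity. A small point to check: $A$ need not be self-adjoint, but Theorem~\ref{T: Modulated} and the modulation estimates only use boundedness of $A$ and positivity of $V = \langle D\rangle^{-d}$, so nothing is lost.

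For the second identity, assume Weyl's law in the form $\lambda_k \sim C k^{1/d}$, equivalently $\langle\lambda_k\rangle^{-d} \sim C^{-d} k^{-1} \sim C^{-d}(k+1)^{-1}$ as $k \to \infty$. I would first note, using $\Tr_\omega(\langle D\rangle^{-d}) = \omega\big(\tfrac{1}{\log(n+2)}\sum_{k=0}^n \langle\lambda_k\rangle^{-d}\big)$ (the case $A = 1$ of the first identity) together with $\langle\lambda_k\rangle^{-d}\sim C^{-d}(k+1)^{-1}$ and $\sum_{k=0}^n (k+1)^{-1} \sim \log(n+2)$, that $\Tr_\omega(\langle D\rangle^{-d}) = C^{-d}$. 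Then, writing $\langle\lambda_k\rangle^{-d} = C^{-d}(k+1)^{-1}(1+\varepsilon_k)$ with $\varepsilon_k \to 0$, I have
\[
\frac{1}{\log(n+2)}\sum_{k=0}^n \langle\lambda_k\rangle^{-d}\langle e_k, Ae_k\rangle = \frac{C^{-d}}{\log(n+2)}\sum_{k=0}^n \frac{\langle e_k, Ae_k\rangle}{k+1} + \frac{C^{-d}}{\log(n+2)}\sum_{k=0}^n \frac{\varepsilon_k \langle e_k, Ae_k\rangle}{k+1}.
\]
The second term tends to zero: $|\langle e_k, Ae_k\rangle| \le \|A\|$, $\varepsilon_k \to 0$, and $\tfrac{1}{\log(n+2)}\sum_{k=0}^n \tfrac{|\varepsilon_k|}{k+1} \to 0$ by a standard Cesàro-type argument (split the sum at a large index). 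Hence $\omega$ kills it, and applying $\omega$ to the display gives $\Tr_\omega(A\langle D\rangle^{-d}) = C^{-d}\,\omega\big(\tfrac{1}{\log(n+2)}\sum_{k=0}^n \tfrac{\langle e_k,Ae_k\rangle}{k+1}\big) = \Tr_\omega(\langle D\rangle^{-d})\,\omega\big(\tfrac{1}{\log(n+2)}\sum_{k=0}^n \tfrac{\langle e_k,Ae_k\rangle}{k+1}\big)$, as required.

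\textbf{Main obstacle.} The only genuinely non-trivial ingredient is the application of Theorem~\ref{T: Modulated}: one must verify carefully that $A\langle D\rangle^{-d}$ is $\langle D\rangle^{-d}$-modulated and that the eigenbasis $\{e_k\}$ is correctly ordered so that $\langle D\rangle^{-d} e_k = \mu(k,\langle D\rangle^{-d}) e_k$. Both are immediate from the remarks following Theorem~\ref{T: Modulated} and from the monotonicity of $\{\lambda_k\}$, so in the end this is a short argument; the rest is bookkeeping with extended limits and elementary asymptotics of the harmonic sum.
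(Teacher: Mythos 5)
Your proposal is correct and follows essentially the same route as the paper, which simply cites \cite[Corollary~7.1.4(c)]{LSZVol1} (itself a corollary of Theorem~\ref{T: Modulated}) for the first identity and \cite[Theorem~7.1.5(a)]{LSZVol1} for the second. Your argument — verifying that $A\langle D\rangle^{-d}$ is $\langle D\rangle^{-d}$-modulated, applying Theorem~\ref{T: Modulated} in the eigenbasis of $|D|$, and then comparing $\langle\lambda_k\rangle^{-d}$ with $C^{-d}(k+1)^{-1}$ under Weyl's law — is exactly the content of those cited results, worked out in full.
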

\begin{proof}
    The first part is~\cite[Corollary~7.1.4(c)]{LSZVol1}, i.e. a corollary of Theorem~\ref{T: Modulated}, the second claim is~\cite[Theorem~7.1.5(a)]{LSZVol1} or~\cite{LordSukochev2011}.
\end{proof}

What appears in the lemma above is the logarithmic mean $M: \ell_\infty \to \ell_\infty$, defined by
\begin{align*}
    M: x &\mapsto \bigg\{\frac{1}{\log(n+2)} \sum_{k=0}^n \frac{x_k}{k+1} \bigg\}_{n=0}^\infty.
\end{align*}
This can be compared with the Ces\`aro mean
\begin{align*}
    C: x &\mapsto \bigg\{\frac{1}{n+1} \sum_{k=0}^n x_k \bigg\}_{n=0}^\infty.
\end{align*}

\begin{lem}\label{L:LogToCes}
    For any sequence $x \in \ell_\infty$, we have
    \[
    (M(x))_n = (M \circ C(x))_n + o(1), \quad n \to \infty.
    \]
\end{lem}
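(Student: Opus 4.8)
The plan is to show that the difference $M(x)_n - (M\circ C)(x)_n$ tends to $0$ at rate $O(1/\log(n+2))$ by a direct computation. First I would unwind the definitions: since $(C(x))_k = \frac{1}{k+1}\sum_{j=0}^k x_j$, we have $(M\circ C)(x)_n = \frac{1}{\log(n+2)}\sum_{k=0}^n \frac{1}{(k+1)^2}\sum_{j=0}^k x_j$, and interchanging the (finite) order of summation rewrites this as $\frac{1}{\log(n+2)}\sum_{j=0}^n x_j \sum_{k=j}^n \frac{1}{(k+1)^2}$. Subtracting from $M(x)_n = \frac{1}{\log(n+2)}\sum_{j=0}^n \frac{x_j}{j+1}$ then yields
\[
M(x)_n - (M\circ C)(x)_n = \frac{1}{\log(n+2)}\sum_{j=0}^n x_j\, w_{j,n}, \qquad w_{j,n} := \frac{1}{j+1} - \sum_{k=j}^n \frac{1}{(k+1)^2}.
\]

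The key estimate is a bound on the weights $w_{j,n}$. Writing $\sum_{k=j}^n (k+1)^{-2} = \sum_{m=j+1}^{n+1} m^{-2}$ and comparing this sum with the integral of $t\mapsto t^{-2}$, one gets for $j\ge 1$ the two-sided bound $\frac{1}{j+1} - \frac{1}{n+2} \le \sum_{m=j+1}^{n+1} m^{-2} \le \frac{1}{j}$, hence $-\frac{1}{j^2} \le w_{j,n} \le \frac{1}{n+2}$ and therefore $|w_{j,n}| \le \frac{1}{j^2} + \frac{1}{n+2}$. The exceptional term $j=0$ is handled by the crude bound $|w_{0,n}| \le 1$ (in fact $1-\frac{\pi^2}{6} \le w_{0,n} \le 0$).

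The final step is purely arithmetic: $\sum_{j=0}^n |w_{j,n}| \le 1 + \sum_{j\ge 1} \frac{1}{j^2} + \sum_{j=1}^n \frac{1}{n+2} \le 2 + \frac{\pi^2}{6}$, a constant independent of $n$, so that
\[
|M(x)_n - (M\circ C)(x)_n| \le \frac{1}{\log(n+2)}\sum_{j=0}^n |x_j|\,|w_{j,n}| \le \left(2 + \frac{\pi^2}{6}\right)\frac{\|x\|_\infty}{\log(n+2)} \longrightarrow 0 \quad (n\to\infty),
\]
which is the asserted $o(1)$ statement. I do not expect any genuine obstacle here; the only points requiring mild care are the Fubini-type reindexing of the double sum and ensuring that the $j=0$ term and the finite upper endpoint $m = n+1$ do not spoil the harmonic-series estimate, both dispatched by the crude bounds above. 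It is worth noting that only boundedness of $x$ is used — convergence is irrelevant — which matches the hypothesis $x\in\ell_\infty$.
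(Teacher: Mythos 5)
Your proof is correct: the interchange of the finite double sum is legitimate, the integral-comparison bounds $\frac{1}{j+1}-\frac{1}{n+2}\le\sum_{m=j+1}^{n+1}m^{-2}\le\frac{1}{j}$ do give $-\frac{1}{j(j+1)}\le w_{j,n}\le\frac{1}{n+2}$ for $j\ge 1$, and the resulting uniform bound $\sum_{j=0}^n|w_{j,n}|\le 2+\frac{\pi^2}{6}$ yields the claim, in fact with the quantitative rate $O\bigl(1/\log(n+2)\bigr)$ rather than just $o(1)$. However, your route differs from the paper's. The paper does not unwind $(M\circ C)(x)_n$ at all; instead it uses the Abel-type identity $\frac{x_k}{k+1}=(C(x))_k-(C(x))_{k-1}+\frac{1}{k+1}(C(x))_{k-1}$ to rewrite $(M(x))_n$ as $(M\circ T\circ C(x))_n+o(1)$, where $T$ is the right shift, and then invokes a cited result that $(M\circ T(a))_n-(M(a))_n=o(1)$ for any bounded sequence $a$. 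That argument is shorter modulo the citation and places the lemma within the standard framework of regular summation methods and shift-invariance of the logarithmic mean; your argument is fully self-contained, avoids the external lemma, and produces an explicit constant and decay rate, using nothing beyond $\|x\|_\infty<\infty$, which matches the hypothesis. Either proof serves the purposes of the chapter equally well.
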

\begin{proof}
    For $x \in \ell_\infty$ and $k \geq 0$ we have
    \begin{align*}
         \frac{x_k}{k+1}  &= \bigg(\frac{1}{k+1}\sum_{l=0}^k x_l\bigg) - \frac{k}{k+1}\bigg(\frac{1}{k}\sum_{l=0}^{k-1} x_l\bigg)\\
         &= (C(x))_k - (C(x))_{k-1} + \frac{1}{k+1} (C(x))_{k-1}.
    \end{align*}
    Hence, as $n\to \infty$
    \begin{align*}
        (M(x))_n &= \frac{1}{\log(n+2)} \sum_{k=0}^n \frac{x_k}{k+1}\\
        &= \frac{1}{\log(n+2)}\bigg( (C(x))_n + \sum_{k=1}^n \frac{1}{k+1} (C(x))_{k-1} \bigg)\\
        &= (M\circ T \circ C(x))_n + o(1),
    \end{align*}
    where $T : (x_0, x_1, x_2, \ldots) \mapsto (0, x_0, x_1, \ldots)$ is the right-shift operator on $\ell_\infty$. Finally, for any bounded sequence $a \in \ell_\infty$, we have that
    \[
    (M\circ T(a))_n - (M(a))_n = o(1), \quad n \to \infty,
    \]
    which can be found in~\cite[Lemma~6.2.12]{LSZVol1}.
\end{proof}

Since both $M$ and $C$ are regular transformations in Hardy's terminology~\cite[Chapter~III]{Hardy1949}, meaning that $M(x)_n \to c $ whenever $x_n \to c$, it is a consequence of Lemma~\ref{L:LogToCes} that for $x \in \ell_\infty$, if $C(x)_n \to c$ then $M(x)_n \to c$ as $n \to \infty$. We introduce one more crucial lemma. Namely, writing $Q_n$ for the projection onto $\{ e_0, \ldots, e_n\}$, we want to switch freely between 
\[
\frac{\Tr(P_\lambda a P_\lambda)}{\Tr(P_\lambda)}, \quad \frac{\Tr(Q_n a Q_n )}{\Tr(Q_n)}.
\]
The first can be written as $\frac{\Tr(Q_{N(\lambda)} a Q_{N(\lambda)} )}{\Tr(Q_{N(\lambda)})},$ where $N(\lambda)$ is the greatest $k\geq 0$ such that $\lambda_k\leq \lambda$, and thus can be interpreted as a subsequence of the second. The following lemma can therefore be applied, which appeared as~\cite[Lemma~4.8]{AHMSZ}, i.e. the published version of Chapter~\ref{Ch:DOSDiscrete}, in a slightly weaker form and in a different context.

\begin{lem}\label{L:AHMSZ}
    Let $\phi: \mathbb{N} \to \R_{> 0}$ be an increasing function such that $\phi(n)\to \infty$ as $n\to \infty$, let $\{a_k\}_{k \in \mathbb{N}} \subseteq \mathbb{R}$ be a sequence such that $\big\{\frac{1}{\phi(n)}\sum_{k=0}^n |a_k| \big\}_{n=0}^\infty$ is bounded, and let $\{k_0, k_1, \dots \}$ be an infinite, increasing sequence of positive integers such that 
    \[
    \lim_{n\to \infty} \frac{\phi(k_{n+1})}{\phi(k_n)} = 1,
    \] 
    and
    \[
    \frac{1}{\phi(k_{n})} \sum_{k=k_{n-1}+1}^{k_{n}} |a_k| = o(1), \quad n\to\infty.
    \]
    Labeling $k_{i_n} := \min\{k_i : k_{i} \geq n \}$,     
    we have that
    \[
    \frac{1}{\phi(n)}\sum_{k=0}^n a_k = \frac{1}{\phi(k_{i_n})}\sum_{k=0}^{k_{i_n}} a_k + o(1), \quad n \to \infty.
    \]
\end{lem}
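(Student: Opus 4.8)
The statement is an elementary fact about averaging sequences, so the plan is to manipulate the two Ces\`aro-type sums directly. Write $S_n := \sum_{k=0}^n a_k$, so that $\frac{1}{\phi(n)}\sum_{k=0}^n a_k = \frac{S_n}{\phi(n)}$ and $\frac{1}{\phi(k_{i_n})}\sum_{k=0}^{k_{i_n}}a_k = \frac{S_{k_{i_n}}}{\phi(k_{i_n})}$. Since $k_{i_n} \geq n$ by definition and $k_{i_n}$ is the \emph{first} index in the subsequence that is $\geq n$, we have $k_{i_n - 1} < n \leq k_{i_n}$ (with the obvious convention for small $n$). The plan is to estimate the difference
\[
\frac{S_{k_{i_n}}}{\phi(k_{i_n})} - \frac{S_n}{\phi(n)}
\]
and show it is $o(1)$.

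\textbf{Key steps.} First I would split the difference as
\[
\frac{S_{k_{i_n}}}{\phi(k_{i_n})} - \frac{S_n}{\phi(n)} = \underbrace{\frac{S_{k_{i_n}} - S_n}{\phi(k_{i_n})}}_{(\mathrm{I})} + \underbrace{S_n\left(\frac{1}{\phi(k_{i_n})} - \frac{1}{\phi(n)}\right)}_{(\mathrm{II})}.
\]
For term $(\mathrm{I})$, note that $S_{k_{i_n}} - S_n = \sum_{k=n+1}^{k_{i_n}} a_k$, and since $k_{i_n-1} < n$, this is dominated in absolute value by $\sum_{k=k_{i_n-1}+1}^{k_{i_n}} |a_k|$, which by the hypothesis $\frac{1}{\phi(k_j)}\sum_{k=k_{j-1}+1}^{k_j}|a_k| = o(1)$ (applied with $j = i_n$, noting $i_n \to \infty$) gives $|(\mathrm{I})| = o(1)$. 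For term $(\mathrm{II})$, I would use that $\big\{\frac{1}{\phi(n)}|S_n|\big\}_n$ is bounded (this follows from the hypothesis that $\big\{\frac{1}{\phi(n)}\sum_{k=0}^n|a_k|\big\}_n$ is bounded, since $|S_n| \leq \sum_{k=0}^n |a_k|$), so that
\[
|(\mathrm{II})| = \frac{|S_n|}{\phi(n)}\cdot \phi(n)\left(\frac{1}{\phi(k_{i_n})} - \frac{1}{\phi(n)}\right) \leq \left(\sup_m \frac{|S_m|}{\phi(m)}\right)\left(1 - \frac{\phi(n)}{\phi(k_{i_n})}\right).
\]
Here I use that $\phi$ is increasing so $\phi(n) \leq \phi(k_{i_n})$ and the bracket is nonnegative. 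It remains to show $\frac{\phi(n)}{\phi(k_{i_n})} \to 1$. Since $\phi$ is increasing and $k_{i_n-1} < n \leq k_{i_n}$, we have
\[
\frac{\phi(k_{i_n-1})}{\phi(k_{i_n})} \leq \frac{\phi(n)}{\phi(k_{i_n})} \leq 1,
\]
and the left side tends to $1$ by the hypothesis $\lim_j \frac{\phi(k_{j+1})}{\phi(k_j)} = 1$ (equivalently $\lim_j \frac{\phi(k_{j-1})}{\phi(k_j)}=1$), again using $i_n \to \infty$. Hence $(\mathrm{II}) = o(1)$ as well, and combining gives the claim.

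\textbf{Main obstacle.} There is no serious obstacle; the only points requiring a little care are the bookkeeping at small $n$ (ensuring $i_n$ is well-defined and the convention when $i_n = 0$ or $k_{i_n-1}$ is undefined — handled by padding the sequence or starting the estimates from a fixed large $N$), and verifying that $i_n \to \infty$ as $n \to \infty$, which follows since $k_{i_n} \geq n$ and the $k_j$ form an increasing sequence, so $k_j \to \infty$ forces $i_n \to \infty$. I would also remark that the boundedness of $\{|S_n|/\phi(n)\}$ is exactly what lets term $(\mathrm{II})$ be controlled purely by the ratio $\phi(n)/\phi(k_{i_n})$, which is the conceptual heart of why the hypothesis $\lim_j \phi(k_{j+1})/\phi(k_j) = 1$ is the right one.
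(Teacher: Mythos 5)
Your proof is correct and is essentially the same argument as the paper's: the paper reduces WLOG to a positive sequence and gives the same two elementary estimates, using the block condition $\frac{1}{\phi(k_{i_n})}\sum_{k=k_{i_n-1}+1}^{k_{i_n}}|a_k|=o(1)$ for the extra terms (your term $(\mathrm{I})$) and the ratio condition together with the boundedness of the averages for the change of normaliser (your term $(\mathrm{II})$). The only blemish is the intermediate display for $(\mathrm{II})$, where $\phi(n)\left(\tfrac{1}{\phi(k_{i_n})}-\tfrac{1}{\phi(n)}\right)$ is nonpositive so the written equality should carry an absolute value; your final bound $\left(\sup_m \tfrac{|S_m|}{\phi(m)}\right)\left(1-\tfrac{\phi(n)}{\phi(k_{i_n})}\right)$ is the correct one and the argument goes through.
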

\begin{proof}
Without loss of generality, we can assume that $\{a_k\}_{k\in \N}$ is a positive sequence. We have \begin{align*}
    \frac{1}{\phi(n)} \sum_{k=1}^n a_k -  \frac{1}{\phi(k_{i_n})} \sum_{k=1}^{k_{i_n}} a_k 
    &\leq  \bigg( \frac{\phi(k_{i_n})}{\phi(k_{i_n-1})}-1\bigg)\frac{1}{\phi(k_{i_n})}\sum_{k=1}^{k_{i_n}} a_k =o(1);\\
    \frac{1}{\phi(k_{i_n})} \sum_{k=1}^{k_{i_n}} a_k-\frac{1}{\phi(n)} \sum_{k=1}^n a_k & \leq \frac{1}{\phi(k_{i_n})} \sum_{k=k_{i_n-1}+1}^{k_{i_n}} a_k =o(1). \qedhere
\end{align*}
\end{proof}

We can now prove the main result of this section.
\begin{thm}\label{T:Log-CesaroMean}
    Let $A \in B(\Hc)$. If $D^2$ satisfies Weyl's law (Definition~\ref{D:WeylLaw}), then
    \[
    \frac{\Tr_\omega(A\langle D\rangle^{-d})}{\Tr_\omega(\langle D\rangle^{-d})} =(\omega \circ M)\big(\langle e_n, A e_n \rangle\big) =(\omega \circ M) \bigg( \frac{\Tr(Q_n A Q_n)}{\Tr(Q_n)} \bigg)=
     (\omega \circ M) \bigg( \frac{\Tr(P_{\lambda_n} A P_{\lambda_n})}{\Tr(P_{\lambda_n})} \bigg).
    \]
    If furthermore $Q$ is an operator with $\bigcap_{n\geq 0}\dom(D^n) \subseteq \dom Q$ such that for some $s\geq -d,$ $Q\langle D\rangle^{-s}$ extends to a bounded operator, e.g. if $Q \in \op^s(\langle D \rangle)$, we have
    \begin{align*}
        \Tr_\omega (Q) &= \omega \bigg(\frac{\Tr(P_{\lambda_n}QP_{\lambda_n})}{\log(\Tr(P_{\lambda_n}))} \bigg), \quad s = -d;\\
        \frac{\Tr_\omega (Q\langle D\rangle^{-s-d})}{\big(\Tr_\omega(\langle D\rangle^{-d})\big)^{\frac{s}{d}+1}} &= \Big(\frac{s}{d}+1\Big) \omega \circ M\bigg(\frac{\Tr(P_{\lambda_n} QP_{\lambda_n})}{\Tr(P_{\lambda_n})^{\frac{s}{d}+1}}\bigg), \quad s > -d.
    \end{align*}
\end{thm}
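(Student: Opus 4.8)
The first displayed chain of equalities is the real content; the last two formulas for $\Tr_\omega(Q)$ and $\Tr_\omega(Q\langle D\rangle^{-s-d})$ are then routine consequences. So the plan is organised around the chain
\[
\frac{\Tr_\omega(A\langle D\rangle^{-d})}{\Tr_\omega(\langle D\rangle^{-d})} = (\omega\circ M)\big(\langle e_n,Ae_n\rangle\big) = (\omega\circ M)\bigg(\frac{\Tr(Q_n A Q_n)}{\Tr(Q_n)}\bigg) = (\omega\circ M)\bigg(\frac{\Tr(P_{\lambda_n}AP_{\lambda_n})}{\Tr(P_{\lambda_n})}\bigg).
\]
The first equality is essentially Lemma~\ref{L:WeakestInt}: under Weyl's law $\lambda_k\sim Ck^{1/d}$ the second displayed formula there reads $\Tr_\omega(A\langle D\rangle^{-d}) = \Tr_\omega(\langle D\rangle^{-d})\cdot\omega(M(\langle e_k,Ae_k\rangle))$, which is exactly the claim after dividing by $\Tr_\omega(\langle D\rangle^{-d})>0$. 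For the second equality, I would note $\Tr(Q_n A Q_n) = \sum_{k=0}^n\langle e_k,Ae_k\rangle$ and $\Tr(Q_n) = n+1$, so $\frac{\Tr(Q_n A Q_n)}{\Tr(Q_n)} = (C(\langle e_k,Ae_k\rangle))_n$ is the Ces\`aro mean of the sequence $\langle e_k,Ae_k\rangle$. Lemma~\ref{L:LogToCes} gives $M(x) = M\circ C(x) + o(1)$ for any bounded $x$ (and $\langle e_k,Ae_k\rangle$ is bounded since $A\in B(\Hc)$), so applying $\omega$ (which kills $o(1)$ sequences, being an extended limit) yields $(\omega\circ M)(\langle e_n,Ae_n\rangle) = (\omega\circ M\circ C)(\langle e_n,Ae_n\rangle) = (\omega\circ M)\big(\tfrac{\Tr(Q_n A Q_n)}{\Tr(Q_n)}\big)$.

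The third equality is where Lemma~\ref{L:AHMSZ} enters. Write $a_k := \langle e_k,Ae_k\rangle$ and $\phi(n) := n+1$; set $k_n := N(\lambda_n)$, the largest index with $\lambda_{N(\lambda_n)}\le\lambda_n$, so that $\{k_n\}$ is an increasing sequence of integers (the subsequence selected by grouping eigenvalues into the spectral windows $[-\lambda_n,\lambda_n]$) and $\frac{\Tr(P_{\lambda_n}AP_{\lambda_n})}{\Tr(P_{\lambda_n})} = \frac{1}{k_n+1}\sum_{k=0}^{k_n}a_k = (C(a))_{k_n}$. I must verify the two hypotheses of Lemma~\ref{L:AHMSZ}: that $\phi(k_{n+1})/\phi(k_n)\to1$ and that $\frac{1}{\phi(k_n)}\sum_{k=k_{n-1}+1}^{k_n}|a_k| = o(1)$. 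Both follow from Weyl's law: since $\lambda_k\sim Ck^{1/d}$, the number of eigenvalues in a window grows polynomially and consecutive windows $[-\lambda_{n-1},\lambda_{n-1}]$, $[-\lambda_n,\lambda_n]$ differ by $k_n-k_{n-1}$ eigenvalues with $k_n-k_{n-1} = o(k_n)$ — indeed $k_n\sim (\lambda_n/C)^d$ and since $\lambda_{n+1}/\lambda_n\to1$ (again by Weyl's law, as $\lambda_n\sim Cn^{1/d}$) we get $k_{n+1}/k_n\to1$; the block sum of $|a_k|$ over $k_{n-1}<k\le k_n$ is bounded by $\|A\|(k_n-k_{n-1}) = o(k_n)$. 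Lemma~\ref{L:AHMSZ} then gives $(C(a))_n = (C(a))_{k_{i_n}} + o(1)$; massaging this through $M$ and $\omega$ (once more using that $M$ is regular and $\omega$ kills $o(1)$) identifies $(\omega\circ M)\big(\tfrac{\Tr(Q_n A Q_n)}{\Tr(Q_n)}\big)$ with $(\omega\circ M)\big(\tfrac{\Tr(P_{\lambda_n}AP_{\lambda_n})}{\Tr(P_{\lambda_n})}\big)$. I expect this step — correctly setting up the index bookkeeping between $n$, $k_n = N(\lambda_n)$, and $k_{i_n}$, and checking that $\omega\circ M$ is insensitive to re-indexing along this subsequence — to be the main obstacle, as it is the place where Weyl's law is genuinely used and where one must be careful that passing to a subsequence does not change the value of the (non-uniquely-defined) functional $\omega\circ M$.

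For the two remaining formulas, suppose $Q\langle D\rangle^{-s}$ is bounded with $s\ge -d$. When $s=-d$: then $Q\langle D\rangle^{d}$ is bounded, equivalently $Q = B\langle D\rangle^{-d}$ with $B := Q\langle D\rangle^{d}\in B(\Hc)$, and $\Tr_\omega(Q) = \Tr_\omega(B\langle D\rangle^{-d})$. Now $\Tr(P_{\lambda_n}QP_{\lambda_n}) = \sum_{k=0}^{k_n}\langle\lambda_k\rangle^{-d}\langle e_k,Be_k\rangle$ and $\log\Tr(P_{\lambda_n})\sim\log(k_n+1)\sim d^{-1}\log\lambda_n^{\,d}$; by Lemma~\ref{L:WeakestInt} (first, unsimplified form, together with the identification of the log-Ces\`aro limit along the subsequence $k_n$ as above) this quotient has $\omega$-value equal to $\Tr_\omega(B\langle D\rangle^{-d}) = \Tr_\omega(Q)$. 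For $s>-d$: write $Q\langle D\rangle^{-s-d} = (Q\langle D\rangle^{-s})\langle D\rangle^{-d}$ with $Q\langle D\rangle^{-s}\in B(\Hc)$, apply the already-proved first chain with $A := Q\langle D\rangle^{-s}$, and convert the logarithmic mean of $\tfrac{\Tr(P_{\lambda_n}(Q\langle D\rangle^{-s})P_{\lambda_n})}{\Tr(P_{\lambda_n})}$ into the stated $(\tfrac sd+1)\,\omega\circ M\big(\tfrac{\Tr(P_{\lambda_n}QP_{\lambda_n})}{\Tr(P_{\lambda_n})^{s/d+1}}\big)$ using Weyl's law $\Tr(P_\lambda)\sim\Tr_\omega(\langle D\rangle^{-d})\lambda^d$ and the fact that on the relevant range of $k\le k_n$ one has $\langle\lambda_k\rangle^{-s}$ comparable to $\big(k/\Tr_\omega(\langle D\rangle^{-d})^{?}\big)^{-s/d}$; a partial-summation/Abel argument (of the same flavour as Lemma~\ref{L:LogToCes}) handles the passage between $\sum\langle\lambda_k\rangle^{-s}\langle e_k,Qe_k/\langle\lambda_k\rangle^{-s}\ldots\rangle$ and the power $\Tr(P_{\lambda_n})^{s/d+1}$ in the denominator, with the constant $\tfrac sd+1$ emerging from $\int_0^1 t^{s/d}\,dt = (\tfrac sd+1)^{-1}$. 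The mention ``e.g. if $Q\in\op^s(\langle D\rangle)$'' is just the observation that such $Q$ automatically satisfies the boundedness hypothesis, by definition of the abstract pseudodifferential calculus applied with $\Theta = \langle D\rangle$.
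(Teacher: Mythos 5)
Your treatment of the first chain of equalities and of the case $s=-d$ follows the paper's own proof essentially step for step: Lemma~\ref{L:WeakestInt} for the first equality; Lemma~\ref{L:LogToCes} together with $\Tr(Q_nAQ_n)=\sum_{k\le n}\langle e_k,Ae_k\rangle$ for the second; and Lemma~\ref{L:AHMSZ} with $\phi(n)=n+1$, $k_n=N(\lambda_n)$, the ratio condition coming from the Tauberian form of Weyl's law and the block-sum bound $\|A\|_\infty\,(N(\lambda_n)-N(\lambda_{n-1}))/N(\lambda_n)=o(1)$ for the third. The re-indexing issue you flag as the main obstacle dissolves once you note that every value $N(\lambda_j)$ is the last index of a multiplicity block, so $k_{i_n}=\min\{N(\lambda_j):N(\lambda_j)\ge n\}=N(\lambda_n)$; hence Lemma~\ref{L:AHMSZ} gives the termwise identity $\frac1{n+1}\sum_{k\le n}\langle e_k,Ae_k\rangle=\frac{\Tr(P_{\lambda_n}AP_{\lambda_n})}{\Tr(P_{\lambda_n})}+o(1)$ and $\omega\circ M$ kills the error. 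Your $s=-d$ argument via $Q=(Q\langle D\rangle^{d})\langle D\rangle^{-d}$, Lemma~\ref{L:WeakestInt} and Lemma~\ref{L:AHMSZ} with the logarithmic weight is also sound and is what the paper means by ``a combination of Lemma~\ref{L:WeakestInt} and Lemma~\ref{L:AHMSZ}''.

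For $s>-d$, however, your sketch glosses over the one genuinely delicate step. The Abel-summation identity does not produce a termwise $o(1)$ error: with $v_n:=(n+1)^{-\frac sd-1}\sum_{k\le n}\langle e_k,Qe_k\rangle$ one finds $\frac1{n+1}\sum_{k\le n}(k+1)^{-\frac sd}\langle e_k,Qe_k\rangle=v_n+\frac sd\,C(v)_n+O\big(\tfrac{\log(n+2)}{n+1}\big)$, and the Ces\`aro discrepancy $C(v)_n-v_n$ need not vanish; the factor $\big(1+\frac sd\big)$ only emerges after applying $M$ and the identity $M\circ C=M+o(1)$ of Lemma~\ref{L:LogToCes}. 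But that lemma concerns sequences in their original indexing, so it cannot be applied to the spectrally re-indexed expressions you reach after first invoking the chain with $A=Q\langle D\rangle^{-s}$; ``comparability of $\langle\lambda_k\rangle^{-s}$ with $(k+1)^{-s/d}$'' alone will not close this. To make your order of operations rigorous you must undo the subsequence passage (again via Lemma~\ref{L:AHMSZ}, using $k_{i_n}=N(\lambda_n)$), carry out the Weyl replacement, the Abel/Taylor computation and Lemma~\ref{L:LogToCes} at the level of full sequences, and only then return to the spectral windows by a \emph{second} application of Lemma~\ref{L:AHMSZ} with the power weight $\phi(n)=(n+1)^{\frac sd+1}$, whose hypotheses require $|\langle e_k,Qe_k\rangle|\lesssim(k+1)^{\frac sd}$ and $\big(N(\lambda_n)^{\frac sd+1}-N(\lambda_{n-1})^{\frac sd+1}\big)/N(\lambda_n)^{\frac sd+1}\to0$. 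That is exactly the paper's argument; your proposal never identifies the power-weighted use of Lemma~\ref{L:AHMSZ} or the role of Lemma~\ref{L:LogToCes} at this point, which is the missing substance (your heuristic $\int_0^1 t^{s/d}\,dt=(\frac sd+1)^{-1}$ for the constant is the right intuition, though).
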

\begin{proof}
    The first equality in the first equation appeared in Lemma~\ref{L:WeakestInt}, the second equality is a consequence of Lemma~\ref{L:LogToCes} and the trivial identity
    \[
     \frac{\Tr(Q_n A Q_n)}{\Tr(Q_n)} = \frac{1}{n+1} \sum_{k=0}^n \langle e_k, A e_k\rangle.
    \] 
    The last equality follows from Lemma~\ref{L:AHMSZ} when taking $\phi(n) = n+1$, since the Weyl law gives that $\frac{N(\lambda_n)}{N(\lambda_{n+1})} \to 1.$ The assumption
    \[
    \frac{1}{N(\lambda_n)} \sum_{k=N(\lambda_{n-1})+1}^{N(\lambda_n)} \langle e_k, A e_k\rangle = o(1), \quad n\to\infty
    \]
    in Lemma~\ref{L:AHMSZ} is satisfied, since
    \[
    \frac{1}{N(\lambda_n)} \sum_{k=N(\lambda_{n-1})+1}^{N(\lambda_n)}| \langle e_k, A e_k\rangle | \leq \|A\|_\infty\frac{N(\lambda_n) - N(\lambda_{n-1})}{N(\lambda_{n})} = o(1), \quad n \to \infty.
    \]

    Now take an operator $Q$ with $\bigcap_{n\geq 0}\dom(D^n) \subseteq \dom Q$ such that $Q\langle D\rangle^{-s}$ extends to a bounded operator. For $s=-d$, the given formula for $\Tr_\omega(Q)$ is a combination of Lemma~\ref{L:WeakestInt} and Lemma~\ref{L:AHMSZ}. For $s \neq -d$, first, due to Weyl's law
    \begin{align*}
       \langle\lambda_k\rangle^{-s-d} = \big(\Tr_\omega(\langle D\rangle^{-d})\big)^{\frac{s}{d}+1} (k+1)^{-\frac{s}{d}-1} + o\big((k+1)^{-\frac{s}{d}-1}\big), \quad k \to \infty
    \end{align*}
    and hence, since $(k+1)^{-\frac{s}{d}}\langle e_k, Qe_k \rangle$ is bounded, we have that
    \begin{align*}
       \langle\lambda_k\rangle^{-s-d} \langle e_k, Q e_k \rangle = \big(\Tr_\omega(\langle D\rangle^{-d})\big)^{\frac{s}{d}+1} (k+1)^{-\frac{s}{d}-1} \langle e_k, Q e_k\rangle + o\big((k+1)^{-1}\big), \quad k \to \infty.
    \end{align*}
    Now applying Lemma~\ref{L:WeakestInt} and then Lemma~\ref{L:LogToCes},
    \begin{align*}
        \Tr_\omega (Q\langle D\rangle^{-s-d}) &=
        \omega \bigg(\frac{1}{\log(n+2)} \sum_{k \leq n} \langle\lambda_k\rangle^{-s-d}\langle e_k, Q e_k \rangle \bigg)\\
        &=\big(\Tr_\omega(\langle D\rangle^{-d})\big)^{\frac{s}{d}+1} \omega \circ M \big( (n+1)^{-\frac{s}{d}}\langle e_n, Q e_n \rangle \big)\\
        &=\big(\Tr_\omega(\langle D\rangle^{-d})\big)^{\frac{s}{d}+1} \omega \circ M \bigg( \frac{1}{n+1} \sum_{k \leq n}(k+1)^{-\frac{s}{d}}\langle e_k, Q e_k \rangle \bigg).
    \end{align*}
    Using Abel's summation formula, as $n \to \infty$
    \begin{align*}
         \frac{1}{n+1} \sum_{k \leq n}(k+1)^{-\frac{s}{d}}\langle e_k, Q e_k \rangle  &= (n+1)^{-\frac{s}{d}-1}\sum_{k \leq n} \langle e_k, Q e_k \rangle\\
         &\quad- \frac{1}{n+1}\sum_{k \leq n-1}\big((k+2)^{-\frac{s}{d}}-(k+1)^{-\frac{s}{d}}\big) \sum_{j \leq k}\langle e_j, Q e_j \rangle.
    \end{align*}
    By Taylor's formula, we have
    \[
        (k+2)^{-\frac{s}{d}}-(k+1)^{-\frac{s}{d}}+\frac{s}{d}(k+1)^{-\frac{s}{d}-1} = \frac{s}{d}\big(\frac{s}{d}+1\big)\int_0^1 (1-\theta)(k+1+\theta)^{-\frac{s}{d}-2}\,d\theta.
    \]
    Therefore    
    \begin{align*}
         &\frac{1}{n+1} \sum_{k \leq n}(k+1)^{-\frac{s}{d}}\langle e_k, Q e_k \rangle \\&=  (n+1)^{-\frac{s}{d}-1}\sum_{k \leq n} \langle e_k, Q e_k \rangle + \frac{s}{d} C\big( \big\{(k+1)^{-\frac{s}{d}-1} \sum_{j \leq k}\langle e_j, Q e_j \rangle\big\}_{k=0}^\infty\big)_n\\
         &\quad -\frac{s}{d}\big(\frac{s}{d}+1\big)\int_0^1 (1-\theta) C\big(\big\{(k+1+\theta)^{-\frac{s}{d}-2}\sum_{j\leq k}\langle e_j,Qe_j\rangle\big\}_{k=0}^\infty\big)_n\,d\theta,
    \end{align*}
    where $C: \ell_\infty \to \ell_\infty$ is the Ces\`aro operator. 
    Since $(j+1)^{-\frac{s}{d}}\langle e_j,Qe_j\rangle$ is bounded and $s>-d$ we have
    \[
        \big|(k+1+\theta)^{-\frac{s}{d}-2}\sum_{j\leq k}\langle e_j,Qe_j\rangle\big|=
            O((k+1)^{-1}),\quad k\to\infty.
    \]
    Thus
    \begin{align*}
        \frac{1}{n+1} \sum_{k \leq n}(k+1)^{-\frac{s}{d}}\langle e_k, Q e_k \rangle & =  (n+1)^{-\frac{s}{d}-1}\sum_{k \leq n} \langle e_k, Q e_k \rangle \\
        &\quad + \frac{s}{d} C\big( \big\{(k+1)^{-\frac{s}{d}-1} \sum_{j \leq k}\langle e_j, Q e_j \rangle\big\}_{k=0}^\infty\big)_n + O\Big(\frac{\log(n+2)}{n+1}\Big).
    \end{align*}   
    Using Lemma~\ref{L:LogToCes} again, we have
    \begin{align*}
        \Tr_\omega (Q\langle D\rangle^{-s-d}) &= \big(\Tr_\omega(\langle D\rangle^{-d})\big)^{\frac{s}{d}+1} \big(1+\frac{s}{d} \big) \omega \circ M\bigg(\frac{1}{(n+1)^{\frac{s}{d}+1}}\sum_{k \leq n} \langle e_k, Q e_k \rangle \bigg).
        \end{align*}
    To apply Lemma~\ref{L:AHMSZ}, taking $\phi(n) = (n+1)^{\frac{s}{d}+1}$ and $k_n = N(\lambda_n)$, we need to check that
    \begin{align*}
        \frac{1}{N(\lambda_n)^{\frac{s}{d}+1}} \sum_{k=N(\lambda_{n-1})+1}^{N(\lambda_n)} |\langle e_k, Q e_k\rangle | &\lesssim \frac{1}{N(\lambda_n)^{\frac{s}{d}+1}}\sum_{k=N(\lambda_{n-1})+1}^{N(\lambda_n)} k^{\frac
    {s}{d}}\\
    &\lesssim \frac{N(\lambda_n)^{\frac{s}{d}+1}-N(\lambda_{n-1})^{\frac{s}{d}+1}}{N(\lambda_n)^{\frac{s}{d}+1}}\\
    &= o(1), \quad n \to \infty.
    \end{align*}
    Hence Lemma~\ref{L:AHMSZ} applies, and we conclude that
    \begin{align*}
        \Tr_\omega (Q\langle D\rangle^{-s-d})    &= \big(\frac{s}{d}+1\big)\big(\Tr_\omega(\langle D\rangle^{-d})\big)^{\frac{s}{d}+1} \omega \circ M\bigg(\frac{\Tr(P_{\lambda_n} QP_{\lambda_n})}{\Tr(P_{\lambda_n})^{\frac{s}{d}+1}}\bigg). \qedhere
    \end{align*}
\end{proof}

As an obvious consequence of Theorem~\ref{T:Log-CesaroMean}, if for $A \in B(\Hc)$
\[
\frac{\Tr(P_\lambda AP_\lambda)}{\Tr(P_\lambda)}
\]
converges, it follows that, provided $D^2$ satisfies Weyl's law, the limit must necessarily be the noncommutative integral of $A$. Furthermore, if the noncommutative integral is independent of $\omega$, meaning that $A\langle D\rangle^{-d}$ is Dixmier measurable (see e.g.~\cite{LSZVol1,LMSZVol2,Ponge2023}) one can replace $\omega \circ M$ by $\lim \circ M$ on the right hand sides of Theorem~\ref{T:Log-CesaroMean}. Finally, with a Weyl law, for self-adjoint $A \in B(\Hc)$ we have
\begin{align*}
\liminf_{k \to \infty} \langle e_k, A e_k \rangle \leq \liminf_{\lambda \to \infty} \frac{\Tr(P_{\lambda} A P_{\lambda})}{\Tr(P_{\lambda})} \leq \frac{\Tr_\omega(A\langle D\rangle^{-d})}{\Tr_\omega(\langle D \rangle^{-d})} \leq \limsup_{\lambda \to \infty} \frac{\Tr(P_{\lambda} A P_{\lambda})}{\Tr(P_{\lambda})} \leq \limsup_{k \to \infty} \langle e_k, A e_k \rangle.  
\end{align*}

All results achieved in this section are different flavours of the observation that the noncommutative integral is the limit point --- in a weak, averaging notion --- of the sequence $\{\langle e_k,  A e_k \rangle \}_{k=0}^\infty$. For the circle $\mathbb{S}^1$ this is not surprising; given $f =\sum_{k=-\infty}^\infty a_k e_k \in L_1(\mathbb{S}^1)$ in Fourier basis, we have for \textit{every} $k \in \Z$
\[
\langle  e_k, M_f e_k \rangle = a_0 = \int_{\mathbb{S}^1} f(t) \, dt.
\]
More generally, Proposition~\ref{P:HeattoInt} combined with Connes' integral formula (Theorem~\ref{T:ConnesIntegral}) and Lemma~\ref{L:AHMSZ} shows that for any $d$-dimensional closed Riemannian manifold $M$ with volume form $\nu_g$ we have that the Ces\`aro mean of the sequence 
\[
\langle e_k, M_f  e_k \rangle, \quad f \in C(M)
\]
converges to $ \int_M f\, d\nu_g$. This fact is precisely what started investigations into quantum ergodicity. Recall that this covers the study of to what extent the matrix elements $\langle e_k,  M_f e_k \rangle$ themselves converge to an integral of $f$. More details will be provided in Section~\ref{S:Ergodicity}.

Previously, in~\cite{LordPotapov2010, LordSukochev2011, LSZVol1} it had already been observed that for spectral triples $(\Ac, \Hc, D)$ where $D^2$ satisfies Weyl's law that if the noncommutative integral
    \[
     \frac{\Tr_\omega(a \langle D\rangle^{-d})}{\Tr_\omega( \langle D\rangle^{-d})}, \quad a \in \Ac,
    \]
    is independent of $\omega$, then
    \[
    \frac{1}{\log(n+2)} \sum_{k=0}^n \frac{\langle  e_k, a e_k\rangle}{k+1}, \quad a \in \Ac,
    \]
    converges as $n\to \infty$, which was interpreted as being related to quantum ergodicity.

In quantum ergodicity and related fields, there is a vast literature on the properties and asymptotics of the operators $P_\lambda a P_\lambda$. Through the results established in this section, the link with Connes' integral formula unlocks this literature for study from the perspective of noncommutative geometry. One result from this cross-pollination is a Szeg\H{o} limit theorem for truncated spectral triples.

\section{Szeg\H{o} limit theorem}\label{S:Szego}
Szeg\H{o} proved various limit theorems concerning determinants of Toeplitz matrices, inspired by a conjecture by P\'olya and after work on these determinants by Toeplitz, Caratheodory and Fej\'er, see~\cite{Szego1915} and references therein. Much later, Widom provided a generalisation of these results with a simplified proof~\cite{Widom1979}, see also~\cite{LaptevSafarov1996} for a version for elliptic selfadjoint (pseudo)differential operators on manifolds without boundary. We now provide a translation of the results of Widom into noncommutative geometry. We thank Magnus Goffeng for pointing out that  instead of requiring that $[|D|,A]$ is bounded, it suffices to assume in the following lemma that $[D,A]$ is bounded.

\begin{lem}[\cite{Widom1979}]\label{L:Widom}
    Let $D^2$ satisfy Weyl's law (Definition~\ref{D:WeylLaw}), and let $A\in B(\Hc)$ and $B \in B(\Hc)$ map $\dom|D|$ into itself, and be such that $[D,A]$ and $[D,B]$ are bounded. 
    Then
    \[
    \lim_{\lambda \to \infty} \frac{\Tr(P_\lambda A (1-P_\lambda) B P_\lambda)}{\Tr(P_\lambda)} = 0.
    \]
\end{lem}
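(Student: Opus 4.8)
The plan is to follow Widom's argument \cite{Widom1979}, reducing everything to a single Hilbert--Schmidt estimate on the off-diagonal block $P_\lambda A(1-P_\lambda)$. First I would write $\Tr(P_\lambda A(1-P_\lambda)BP_\lambda)=\Tr(XY)$ with $X:=P_\lambda A(1-P_\lambda)$ and $Y:=(1-P_\lambda)BP_\lambda$ (both finite rank, hence Hilbert--Schmidt), so that the Cauchy--Schwarz inequality for the Hilbert--Schmidt inner product gives $|\Tr(XY)|\le\|X\|_2\|Y\|_2$. Since $\|Y\|_2\le\|B\|_\infty\sqrt{\Tr P_\lambda}$ trivially, the lemma follows once I show
\[
\|P_\lambda A(1-P_\lambda)\|_2=o\big(\sqrt{\Tr P_\lambda}\,\big),\quad\lambda\to\infty.
\]
Here the Weyl law enters only through the equivalent asymptotic $\Tr P_\lambda=N(\lambda)\sim c\lambda^d$ with $c>0$, which is Definition~\ref{D:WeylLaw} combined with the Hardy--Littlewood Tauberian theorem as in the Remark following it.

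To estimate the block I would fix $\delta>0$ and decompose $1-P_\lambda=(P_{(1+\delta)\lambda}-P_\lambda)+(1-P_{(1+\delta)\lambda})$. For the far piece I use the commutator trick in the eigenbasis $\{e_k\}$ of $|D|$, where $De_k=\epsilon_k\lambda_k e_k$ with $\epsilon_k\in\{\pm1\}$: one has $\langle e_j,[D,A]e_k\rangle=(\epsilon_j\lambda_j-\epsilon_k\lambda_k)\langle e_j,Ae_k\rangle$, which is legitimate precisely because $A$ preserves $\dom|D|=\dom D$ (so $Ae_k\in\dom D$) and $[D,A]$ extends to a bounded operator; this is the point, observed by Goffeng, that boundedness of $[D,A]$ alone suffices and no control of $[|D|,A]$ is needed. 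Since $|\epsilon_j\lambda_j-\epsilon_k\lambda_k|\ge\lambda_k-\lambda_j>\delta\lambda$ whenever $\lambda_j\le\lambda<(1+\delta)\lambda<\lambda_k$, comparing matrix entries gives
\[
\|P_\lambda A(1-P_{(1+\delta)\lambda})\|_2\le\tfrac1{\delta\lambda}\|P_\lambda[D,A](1-P_{(1+\delta)\lambda})\|_2\le\tfrac{\|[D,A]\|_\infty}{\delta\lambda}\sqrt{\Tr P_\lambda},
\]
which is $o(\sqrt{\Tr P_\lambda})$ for fixed $\delta$ because $\tfrac1\lambda\to0$. For the boundary piece I use only boundedness of $A$: $\|P_\lambda A(P_{(1+\delta)\lambda}-P_\lambda)\|_2\le\|A\|_\infty\sqrt{\Tr P_{(1+\delta)\lambda}-\Tr P_\lambda}$, and the Weyl law yields $\frac{\Tr P_{(1+\delta)\lambda}-\Tr P_\lambda}{\Tr P_\lambda}\to(1+\delta)^d-1$.

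Combining the two pieces gives $\limsup_{\lambda\to\infty}\frac{\|P_\lambda A(1-P_\lambda)\|_2}{\sqrt{\Tr P_\lambda}}\le\|A\|_\infty\sqrt{(1+\delta)^d-1}$ for every $\delta>0$; letting $\delta\downarrow0$ forces this $\limsup$ to be $0$, which is exactly the required estimate, and the conclusion then follows from the reduction in the first paragraph. The eigenvalue $0$ (a finite-dimensional eigenspace, since $D$ has compact resolvent) causes no difficulty provided it is absorbed into $\operatorname{ran}P_\lambda$ for every $\lambda$. The only genuine obstacle is the boundary-layer term $P_\lambda A(P_{(1+\delta)\lambda}-P_\lambda)$: a one-term Weyl law gives no decay of $\Tr P_{(1+\delta)\lambda}-\Tr P_\lambda$ relative to $\Tr P_\lambda$ beyond the constant $(1+\delta)^d-1$, so instead of trying to make this term $o(\sqrt{\Tr P_\lambda})$ outright one must exploit the freedom in the scale parameter $\delta$ and send it to $0$ at the very end — this is the heart of Widom's trick, while everything else is routine bookkeeping with Hilbert--Schmidt norms and the eigenbasis of $|D|$.
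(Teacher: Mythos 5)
Your argument is correct, but it differs from the paper's proof in two genuine ways, both worth noting. First, the paper deals with the hypothesis ``$[D,A]$ bounded'' (rather than $[|D|,A]$) by invoking the multiple operator integral results of Chapter~3 (Theorem~\ref{T:MOOIforNCG1} and Proposition~\ref{P:UMOIcom}, or alternatively \cite[Lemma~10.13]{GVF2001}) to conclude that $[\langle D\rangle^{\frac12},A]$ is bounded, and then replaces $D$ by $\langle D\rangle^{\frac12}$ so as to work with a positive operator throughout; you instead exploit the fact that $D$ has an orthonormal eigenbasis (compact resolvent) and observe that in a joint eigenbasis $De_k=\epsilon_k\lambda_k e_k$ one has $|\epsilon_j\lambda_j-\epsilon_k\lambda_k|\geq \lambda_k-\lambda_j$, so the commutator with $D$ already produces the spectral gap needed. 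This is a more elementary, self-contained way of absorbing Goffeng's observation, at the cost of being tied to the discrete-spectrum setting, whereas the paper's reduction recycles machinery it has already built. Second, the two proofs cut the off-diagonal block at different scales: the paper uses an additive window, $1-P_\lambda=(P_{\lambda+N}-P_\lambda)+(1-P_{\lambda+N})$ with $N$ fixed, so that the boundary term is genuinely $o(\Tr P_\lambda)$ by the one-term Weyl law and the far term contributes the small constant $2N^{-2}\|[|D|,B]\|_\infty^2$, with $N\to\infty$ at the end; you use the multiplicative window $(1+\delta)\lambda$, so the roles are swapped --- your far term is $o(\sqrt{\Tr P_\lambda})$ for fixed $\delta$ because the gap $\delta\lambda\to\infty$, while the boundary term contributes $\|A\|_\infty\sqrt{(1+\delta)^d-1}$, killed by letting $\delta\downarrow 0$. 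Both are instances of Widom's trick and both use only the leading-order Weyl asymptotic $\Tr P_\lambda\sim c\lambda^d$ (with $c=\Tr_\omega(\langle D\rangle^{-d})>0$, the standing assumption of the chapter, which you need in order to divide by $\Tr P_\lambda$ in the boundary estimate); your version additionally makes explicit that only one of the two commutators, say $[D,A]$, is actually used, the other factor being handled by $\|(1-P_\lambda)BP_\lambda\|_2\le\|B\|_\infty\sqrt{\Tr P_\lambda}$, which is also implicit in the paper's Cauchy--Schwarz reduction.
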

\begin{proof}
    First, $[D,A]$ being bounded implies that $[\langle D \rangle^{\frac{1}{2}}, A]$ is bounded due to our results in Chapter~\ref{Ch:MOIs}, namely Theorem~\ref{T:MOOIforNCG1} and Proposition~\ref{P:UMOIcom} (alternatively, see~\cite[Lemma~10.13]{GVF2001}). Hence, replacing $D$ by $\langle D \rangle^{\frac{1}{2}}$, we can assume that $D$ is positive and that $[|D|,A]$ and $[|D|, B]$ are bounded. Then, by the Cauchy-Schwarz inequality, an equivalent formulation of the statement is that for every $B$ such that $[|D|,B]$ is bounded, we have
    \[
    \lim_{\lambda \to\infty}\frac{\|P_\lambda B (1-P_\lambda)\|_{HS}^2}{\Tr(P_\lambda)} = 0,
    \]
    where $\| \cdot \|_{HS}$ is the Hilbert--Schmidt norm. The following argument is essentially due to Widom~\cite{Widom1979},  see also \cite[Lemma 3.4]{Guillemin1979}.

    Let $N>0.$ A quick computation~\cite[p.~145]{Widom1979} shows that
    \[
        \|P_{\lambda}B(1-P_{\lambda+N})\|_{HS}^2 \leq N^{-2}\|P_{\lambda}[|D|,B](1-P_{\lambda+N})\|_{HS}^2.
    \]
    By the triangle inequality, we have
    \begin{align*}
        \|P_{\lambda}B(1-P_{\lambda})\|_{HS}^2&\leq 2\|P_{\lambda}B(P_{\lambda+N}-P_{\lambda})\|_{HS}^2+2\|P_{\lambda}B(1-P_{\lambda+N})\|_{HS}^2\\
        &\leq 2\|B\|_{\infty}^2\Tr(P_{\lambda+N}-P_{\lambda})+2N^{-2}\Tr(P_{\lambda})\|[|D|,B]\|_{\infty}^2.
    \end{align*}
    Weyl's law implies that
    \[
        \Tr(P_{\lambda+N}-P_{\lambda}) = o(\Tr(P_{\lambda})),\quad \lambda\to\infty,
    \]
    and hence
    \[
        \limsup_{\lambda\to\infty} \frac{\|P_{\lambda}B(1-P_{\lambda})\|_{HS}^2}{\Tr(P_{\lambda})} \leq 2N^{-2}\|[|D|,B]\|_{\infty}^2.
    \]
    Since $N$ is arbitrary, this completes the proof.
\end{proof}

Following Widom~\cite{Widom1979} further, Lemma~\ref{L:Widom} can be combined with the characterisation of Connes' integral theorem in Theorem~\ref{T:Log-CesaroMean} into a Szeg\H{o} limit theorem.

\begin{thm}\label{T:Szego}
    Let $D^2$ satisfy Weyl's law (Definition~\ref{D:WeylLaw}), and let $A \in B(\Hc)$ be self-adjoint and such that it maps $\dom|D|$ into itself and $[D,A]$ is bounded. 
    Then 
    \[
    \Tr_\omega(\langle D \rangle^{-d})\cdot(\omega \circ M) \bigg( \frac{\Tr(f(P_{\lambda_n} A P_{\lambda_n}))}{\Tr(P_{\lambda_n})}\bigg)= \Tr_\omega(f(A)\langle D\rangle^{-d}), \quad f\in C(\R),\, f(0)=0.
    \]
    If for every positive integer $k$ there is some constant $C_k\in \R$ with
    \[
    \Tr(A^ke^{-tD^2}) \sim C_k t^{-\frac{d}{2}},
    \]
    then for every $f \in C(\R)$ with $f(0)=0$ we have
    \[
    \Tr_\omega(\langle D\rangle^{-d})\lim_{\lambda \to \infty}\frac{\Tr(f(P_{\lambda} A P_{\lambda}))}{\Tr(P_{\lambda})}=\Tr_\omega( f(A)\langle D\rangle^{-d}).
    \]
\end{thm}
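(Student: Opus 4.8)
The plan is to reduce both assertions to the single asymptotic identity
\[
\Tr\big(f(P_{\lambda_n}AP_{\lambda_n})\big) = \Tr\big(P_{\lambda_n}f(A)P_{\lambda_n}\big) + o\big(\Tr(P_{\lambda_n})\big),\qquad n\to\infty,
\]
valid for every $f\in C(\R)$ with $f(0)=0$. Granting this, the first assertion follows by dividing by $\Tr(P_{\lambda_n})$, applying $\omega\circ M$ (which annihilates the $o(1)$ error since $M$ is a regular summation method and $\omega$ an extended limit), and invoking Theorem~\ref{T:Log-CesaroMean} to identify $(\omega\circ M)\big(\Tr(P_{\lambda_n}f(A)P_{\lambda_n})/\Tr(P_{\lambda_n})\big)$ with $\Tr_\omega(f(A)\langle D\rangle^{-d})/\Tr_\omega(\langle D\rangle^{-d})$. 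For the second assertion, the extra hypothesis $\Tr(A^ke^{-tD^2})\sim C_kt^{-d/2}$ together with the Weyl law lets Proposition~\ref{P:HeattoInt} (applied to $a=A^k$) give that $\Tr(P_\lambda A^kP_\lambda)/\Tr(P_\lambda)$ converges, for each $k\geq1$, to $\Tr_\omega(A^k\langle D\rangle^{-d})/\Tr_\omega(\langle D\rangle^{-d})$; hence the same holds for any polynomial $p(A)$ with $p(0)=0$, and by the displayed identity also with $p(P_\lambda AP_\lambda)$ in place of $P_\lambda p(A)P_\lambda$, so that the limit in the statement exists for polynomial $f$ and, after the approximation step below, for all $f\in C(\R)$ with $f(0)=0$.

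To prove the displayed identity I would first treat a monomial $f(x)=x^k$, $k\geq1$. Writing $Q:=P_\lambda$, $Q^\perp:=1-P_\lambda$ and inserting $1=Q+Q^\perp$ in each of the $k-1$ interior slots of $QA^kQ$ yields
\[
QA^kQ = (QAQ)^k + \sum_{\Gamma}\Gamma,
\]
where the sum runs over the $2^{k-1}-1$ words $\Gamma$ in $Q,Q^\perp,A$ that contain at least one interior $Q^\perp$. Fix such a $\Gamma$ and consider its leftmost interior $Q^\perp$: it is immediately preceded by a letter $A$, which in turn is immediately preceded by $Q$ (either an earlier interior slot, which must hold $Q$ by minimality, or the left boundary $Q$). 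Thus $QAQ^\perp$ occurs as a contiguous subword, so $\Gamma = U\,(QAQ^\perp)\,W$ for (possibly trivial) words $U,W$, and by cyclicity $\Tr(\Gamma)=\Tr\big((QAQ^\perp)R\big)$ with $R:=WU$ a product of $k-1$ copies of $A$ together with projections; hence $\|R\|_\infty\leq\|A\|^{k-1}$, and $R$ contains at least one factor $Q$ (a boundary $Q$ of $\Gamma$), so $R$ has rank at most $\Tr(P_\lambda)$ and $\|R\|_{HS}\leq\|A\|^{k-1}(\Tr(P_\lambda))^{1/2}$. The Cauchy--Schwarz inequality for Hilbert--Schmidt operators gives
\[
|\Tr(\Gamma)| \leq \|QAQ^\perp\|_{HS}\,\|R\|_{HS}.
\]
Now Lemma~\ref{L:Widom}, applied with $B=A$ and using $A=A^*$, gives $\|QAQ^\perp\|_{HS}^2 = \Tr\big(P_\lambda A(1-P_\lambda)AP_\lambda\big)=o(\Tr(P_\lambda))$, so $\Tr(\Gamma)=o(\Tr(P_\lambda))$. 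Summing the finitely many error words proves the identity for monomials, hence for all polynomials $p$ with $p(0)=0$ by linearity, and Theorem~\ref{T:Log-CesaroMean} (resp.\ Proposition~\ref{P:HeattoInt}) then yields the asserted formulas for polynomial $f$.

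Finally I would pass to general $f\in C(\R)$ with $f(0)=0$ by uniform approximation. Since $\|P_\lambda AP_\lambda\|_\infty\leq\|A\|$, both $\sigma(A)$ and $\sigma(P_\lambda AP_\lambda)$ lie in $K:=[-\|A\|,\|A\|]$; by the Weierstrass theorem pick polynomials $p_m$ with $p_m(0)=0$ and $\varepsilon_m:=\sup_K|f-p_m|\to0$. The continuous functional calculus gives $\|f(P_\lambda AP_\lambda)-p_m(P_\lambda AP_\lambda)\|_\infty\leq\varepsilon_m$ and $\|f(A)-p_m(A)\|_\infty\leq\varepsilon_m$; since $f(P_\lambda AP_\lambda)-p_m(P_\lambda AP_\lambda)$ is supported on $P_\lambda\Hc$ its trace norm is at most $\varepsilon_m\Tr(P_\lambda)$, so $\Tr(f(P_\lambda AP_\lambda))/\Tr(P_\lambda)$ is within $\varepsilon_m$ of $\Tr(p_m(P_\lambda AP_\lambda))/\Tr(P_\lambda)$ uniformly in $\lambda$, while $|\Tr_\omega(f(A)\langle D\rangle^{-d})-\Tr_\omega(p_m(A)\langle D\rangle^{-d})|\lesssim\varepsilon_m\|\langle D\rangle^{-d}\|_{1,\infty}$ by continuity of the Dixmier trace on $\mathcal{L}_{1,\infty}$. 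As $M$ and $\omega$ are contractive, letting $m\to\infty$ transfers both identities from polynomials to $f$. I expect the monomial step to be the main obstacle: one must organise the combinatorial expansion so that every error word splits off exactly one "off‑diagonal corner" $QAQ^\perp$ — the only piece whose Hilbert--Schmidt norm is genuinely small by Lemma~\ref{L:Widom} — while the complementary factor remains finite rank with operator norm controlled by $\|A\|$.
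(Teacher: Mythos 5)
Your proposal is correct and follows essentially the same route as the paper's proof: Widom's Lemma~\ref{L:Widom} yields $\Tr\big(P_\lambda A^kP_\lambda-(P_\lambda AP_\lambda)^k\big)=o(\Tr(P_\lambda))$, which combined with Theorem~\ref{T:Log-CesaroMean} (resp.\ Proposition~\ref{P:HeattoInt}) gives both assertions for polynomials vanishing at $0$, and a uniform polynomial approximation on $[-\|A\|,\|A\|]$ transfers them to all $f\in C(\R)$ with $f(0)=0$. The only difference is that you spell out the combinatorial insertion of $Q+Q^\perp$ and the Cauchy--Schwarz estimate for the error words, details the paper delegates to Widom's original argument, and your version of these details is sound.
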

\begin{proof}
    We sketch the proof of the stronger identity 
    \begin{equation}\label{eq:StrongerSzego}
    \Tr_\omega(\langle D \rangle^{-d})\cdot(\omega \circ M) \bigg( \frac{\Tr( P_{\lambda_n}f(P_{\lambda_n} A P_{\lambda_n}) P_{\lambda_n})}{\Tr(P_{\lambda_n})}\bigg)= \Tr_\omega(f(A)\langle D\rangle^{-d}), \quad f \in C(\R).
    \end{equation}
    Lemma~\ref{L:Widom} gives that
    \begin{equation}\label{e:Widom_identity}
    \lim_{\lambda \to \infty}\frac{\Tr\big(P_\lambda A^k P_\lambda - (P_\lambda A P_\lambda)^k\big)}{\Tr(P_\lambda)} = 0, \quad k\geq 1,
    \end{equation}
    which implies equation~\eqref{eq:StrongerSzego} for polynomial $f$ through Theorem~\ref{T:Log-CesaroMean}. An application of the Stone--Weierstrass theorem provides an extension to continuous functions.  Details can be found in~\cite{Widom1979}.  

    The second assertion for polynomial functions $f$ is a combination of \eqref{e:Widom_identity} and Proposition~\ref{P:HeattoInt}. If $f$ is a continuous function on $\R$ with $f(0)=0$, let $\varepsilon>0$ and choose a polynomial function $p$ with $p(0)=0$ such that 
    \[
        \|f-p\|_{L_{\infty}([-\|A\|_{\infty},\|A\|_{\infty}])}<\varepsilon.
    \]
    Then 
    \[
        \Big|\frac{\Tr((f-p)(P_{\lambda}AP_{\lambda}))}{\Tr(P_{\lambda})}\Big|< \varepsilon
    \]
    and
    \[
        |\Tr_{\omega}((f-p)(A)\langle D\rangle^{-d})| \leq \varepsilon\|\langle D\rangle^{-d}\|_{1,\infty}.
    \]
    Hence
    \[
        \limsup_{\lambda\to\infty} \Big|\Tr_\omega(\langle D\rangle^{-d})\frac{\Tr( f(P_{\lambda}AP_{\lambda}))}{\Tr(P_{\lambda})}-\Tr_{\omega}(f(A)\langle D\rangle^{-d})\Big| \leq 2\varepsilon \|\langle D\rangle^{-d}\|_{1,\infty}.
    \]
    Since $\varepsilon$ is arbitrary, this implies the result.
\end{proof}

We emphasise that Theorem~\ref{T:Szego} shows that the classical Szeg\H{o} theorems for determinants of Toeplitz matrices and Widom's generalisations thereof can be interpreted as properties of the noncommutative integral on spectral triples and their spectral truncations. 

\section{Fr\"{o}hlich functional}\label{S:Frohlich}
So far, we have considered situations modeled after $d$-dimensional spectral triples, where $\langle D\rangle^{-d} \in \mathcal{L}_{1,\infty}$. There are many examples of spectral triples that do not satisfy this condition, however. Instead, one could consider the property of $\theta$-summability, which says that $\Tr(e^{-tD^2}) < \infty$ for all $t>0$, or $\mathrm{Li}_1$-summability which requires $\Tr(e^{-t|D|}) < \infty$ for $t$ large enough.

For this section, we therefore assume that $D$ is a self-adjoint operator with compact resolvent, but we do not assume Weyl laws. The functional
\[
a \mapsto \lim_{t\to \beta}\frac{\Tr(ae^{-t|D|})}{\Tr(e^{-t|D|})}, \quad a \in B(\Hc),
\]
which is sometimes called the Fr\"{o}hlich functional after \cite{FrohlichGrandjeanRecknagel1995,FrohlichChamseddineFelder1993}, has been studied extensively in the literature~\cite{GoffengMesland2018,GoffengRennie2019}. We highlight the relation between this functional and the one that has been the object of study in this chapter. 

\begin{prop}
    Assume there exists $\beta \geq 0$ such that $\Tr(e^{-t|D|}) < \infty$ for $t > \beta$ and $\lim_{t \searrow \beta}\Tr(e^{-t|D|}) = \infty$. Then, for any extended limit $\omega \in \ell_\infty^*$ there exists an extended limit $\hat{\omega}_{D,\beta}\in \ell_\infty^*$ depending on $D$ and $\beta$ such that
    \[
    \omega \bigg(\frac{\Tr(ae^{- (\beta + \frac{1}{n}) |D|})}{\Tr(e^{-(\beta + \frac{1}{n})|D|})}  \bigg) = \hat{\omega}_{D,\beta} \bigg(\frac{\Tr(P_{\lambda_n} a P_{\lambda_n})}{\Tr(P_{\lambda_n})} \bigg), \quad a \in B(\Hc).
    \]
    Furthermore,
    \[
    \lim_{\lambda \to \infty} \frac{\Tr(P_\lambda a P_\lambda)}{\Tr(P_\lambda)} = \lim_{t\to \beta}\frac{\Tr(ae^{-t|D|})}{\Tr(e^{-t|D|})}, \quad a \in B(\Hc),
    \]
    in the sense that if the LHS limit exists, then the RHS limit exists and the equality holds.
\end{prop}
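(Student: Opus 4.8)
The plan is to put both assertions on a common footing by rewriting the heat traces as Laplace--Stieltjes integrals against the spectral counting data of $|D|$. Write $\Phi(\lambda):=\Tr(P_\lambda)$ and $\Phi_a(\lambda):=\Tr(P_\lambda aP_\lambda)$ for $\lambda\ge 0$; these are step functions that jump at the eigenvalues of $|D|$, and $\Phi$ is bounded on every compact interval because $D$ has compact resolvent. Computing the trace in an eigenbasis $\{e_k\}$ of $|D|$, using $e^{-t\lambda_k}=t\int_{\lambda_k}^\infty e^{-t\lambda}\,d\lambda$, and invoking Fubini (justified by the absolute bound $\sum_k|\langle e_k,ae_k\rangle|e^{-t\lambda_k}\le\|a\|\Tr(e^{-t|D|})<\infty$), one gets for all $t>\beta$
\[
\Tr(ae^{-t|D|})=t\int_0^\infty e^{-t\lambda}\Phi_a(\lambda)\,d\lambda,\qquad \Tr(e^{-t|D|})=t\int_0^\infty e^{-t\lambda}\Phi(\lambda)\,d\lambda.
\]
Setting $g_a(\lambda):=\Phi_a(\lambda)/\Phi(\lambda)$ for $\lambda\ge\lambda_0$ (so $|g_a(\lambda)|\le\|a\|$, the set of values of the step function $g_a$ is exactly $\{g_a(\lambda_n):n\ge 0\}$, and $g_a(\lambda_n)=\Tr(P_{\lambda_n}aP_{\lambda_n})/\Tr(P_{\lambda_n})$), and letting $\mu_t$ be the probability measure on $[\lambda_0,\infty)$ with density proportional to $e^{-t\lambda}\Phi(\lambda)$, this yields the key identity
\[
\frac{\Tr(ae^{-t|D|})}{\Tr(e^{-t|D|})}=\int_{\lambda_0}^\infty g_a\,d\mu_t,\qquad t>\beta.
\]

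The one analytic input beyond this is that $\mu_t$ escapes to infinity as $t\searrow\beta$: for fixed $R$ one has $\int_0^R e^{-t\lambda}\Phi(\lambda)\,d\lambda\le\int_0^R\Phi(\lambda)\,d\lambda<\infty$, while the normalizing constant $\int_0^\infty e^{-t\lambda}\Phi(\lambda)\,d\lambda=\Tr(e^{-t|D|})/t$ tends to $\infty$ (since $\Tr(e^{-t|D|})\to\infty$ and $t$ stays in the bounded set $[\beta,\beta+1]$), so $\mu_t([\lambda_0,R])\to 0$; the same holds along $t_n=\beta+\tfrac1n$. Granting this, the ``furthermore'' statement is immediate: convergence of $\Tr(P_\lambda aP_\lambda)/\Tr(P_\lambda)$ to $c$ as $\lambda\to\infty$ is exactly $g_a(\lambda)\to c$, and splitting $\int g_a\,d\mu_t$ over $[\lambda_0,R]$ and $(R,\infty)$, bounding the first piece by $\|a\|\,\mu_t([\lambda_0,R])$ and the second by $\sup_{\lambda>R}|g_a(\lambda)-c|$, gives $\limsup_{t\searrow\beta}\big|\int g_a\,d\mu_t-c\big|\le\varepsilon$ for every $\varepsilon>0$; hence $\Tr(ae^{-t|D|})/\Tr(e^{-t|D|})\to c$.

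For the first assertion I would build $\hat\omega_{D,\beta}$ by an operator-system extension argument. Consider the unital positive maps $\Psi\colon B(\Hc)\to\ell_\infty$, $\Psi(a)_n:=g_a(\lambda_n)$, and $\Lambda\colon B(\Hc)\to\C$, $\Lambda(a):=\omega\big(\{\int_{\lambda_0}^\infty g_a\,d\mu_{t_n}\}_n\big)$; by the key identity $\Lambda(a)=\omega\big(\{\Tr(ae^{-t_n|D|})/\Tr(e^{-t_n|D|})\}_n\big)$, and $\Lambda$ is a state on $B(\Hc)$. The crucial compatibility fact, which follows from the escaping-measure property together with the observation that the values and cluster behaviour of the step function $g_a$ are captured by $\{g_a(\lambda_n)\}_n$, is: if $\Psi(a)$ agrees with a null sequence up to a nonnegative sequence, then $\liminf_{\lambda\to\infty}g_a(\lambda)\ge 0$, hence $\liminf_n\int g_a\,d\mu_{t_n}\ge 0$ and so $\Lambda(a)\ge 0$; in particular $\Psi(a)\in c_0$ forces $\Lambda(a)=0$. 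Therefore $\rho(\Psi(a)+x):=\Lambda(a)$ ($x\in c_0$) is a well-defined, positive, unital functional on the operator system $\Psi(B(\Hc))+c_0\subseteq\ell_\infty$ that vanishes on $c_0$. Extending $\rho$ to a state $\hat\omega_{D,\beta}$ on $\ell_\infty$ (states on operator subsystems of a $C^*$-algebra extend), the extension still vanishes on $c_0$, so $\hat\omega_{D,\beta}$ is an extended limit, and $\hat\omega_{D,\beta}(\Psi(a))=\Lambda(a)$ for all $a$ — which is the claimed identity.

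The routine parts are the Abel-summation identity, the elementary estimate $\mu_t([\lambda_0,R])\to 0$, and the Abelian comparison in the ``furthermore'' part. The main obstacle is the first assertion: one needs a \emph{single} extended limit intertwining with $\omega$ for \emph{every} $a\in B(\Hc)$ at once, and the only handle on this is the well-definedness and positivity of $\rho$ on $\Psi(B(\Hc))+c_0$, which rests entirely on the escaping-measure lemma and on the fact that the eigenvalue sampling $a\mapsto\{g_a(\lambda_n)\}_n$ loses no information about $g_a$ that the measures $\mu_{t_n}$ can detect.
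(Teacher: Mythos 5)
Your proposal is correct, but it reaches the first assertion by a genuinely different route than the paper. The shared core is the same Abelian mechanism: after an Abel-summation/Laplace rewriting, the heat-trace ratio $\Tr(ae^{-t|D|})/\Tr(e^{-t|D|})$ is an average of the truncation ratios against weights whose total mass $\Tr(e^{-t|D|})$ blows up as $t\searrow\beta$, so the averaging measures escape to infinity; this is exactly what makes both statements work. The paper, however, keeps the discrete Abel-summed form and reads it as an \emph{explicit} positive unital transformation of $\ell_\infty$ (Riesz-type means with weights $\Tr(P_{r_k})\big(e^{-t_n r_k}-e^{-t_n r_{k+1}}\big)$ normalised by $\Tr(e^{-t_n|D|})$, sampled at the indices $N(r_k)$), sets $\hat\omega_{D,\beta}:=\omega\circ(\text{this transformation})$, and invokes Hardy's regularity theorems (Theorem III.2 for the sequence statement, III.5 for the continuous ``furthermore'' part) to see that the transformation maps $c_0$ into $c_0$, hence that $\hat\omega_{D,\beta}$ is an extended limit. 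You instead define the functional $\rho(\Psi(a)+x)=\Lambda(a)$ only on the operator system $\Psi(B(\Hc))+c_0$, verify well-definedness and positivity via the escaping-measure lemma and the fact that the sampled sequence $\{g_a(\lambda_n)\}$ determines the tail behaviour of the step function $g_a$, and then extend to a state on $\ell_\infty$ vanishing on $c_0$ by Hahn--Banach; your ``furthermore'' part is the same Abelian estimate done by hand rather than cited from Hardy. Both arguments are sound (your checks of well-definedness, positivity, unitality, and of the fact that the extension still kills $c_0$, are the right ones). What the paper's route buys is a concrete formula for $\hat\omega_{D,\beta}$ on all of $\ell_\infty$, with no extension choices and a one-line regularity check; what yours buys is independence from the classical summability literature and a unified Laplace-integral treatment of both assertions, at the price of a nonconstructive extension step.
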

\begin{proof}
Write $\{r_k\}_{k=0}^\infty$ for the eigenvalues of $|D|$ counted \textit{without} multiplicity so that $r_0 < r_1 < \cdots$. Observe the identity $r_k = \lambda_{N(r_k)}$ where $N(\lambda):= \# \{k : \lambda_k \leq \lambda \}$ is the spectral counting function of $|D|$ and $\{\lambda_n\}_{n=0}^\infty$ are the eigenvalues of $|D|$ counted \textit{with} multiplicity. 
Then,
\begin{align*}
    \frac{\Tr(ae^{- (\beta + \frac{1}{n}) |D|})}{\Tr(e^{-(\beta + \frac{1}{n})|D|})} &= \frac{1}{\Tr(e^{-(\beta + \frac{1}{n})|D|})} \sum_{k=0}^\infty \big(\sum_{\lambda_j = r_k} \langle e_j, a e_j \rangle\big)e^{-(\beta + \frac{1}{n})r_k}\\
    &= \frac{1}{\Tr(e^{-(\beta + \frac{1}{n})|D|})} \sum_{k=0}^\infty \bigg(\Tr(P_{r_k})\frac{\Tr(P_{r_k}a P_{r_k})}{\Tr(P_{r_k})}\\
    &\quad \quad \quad \quad \quad \quad \quad\quad\quad\quad\quad\quad- \Tr(P_{r_{k-1}}) \frac{\Tr(P_{r_{k-1}} a P_{r_{k-1}})}{\Tr(P_{r_{k-1}})} \bigg)   e^{-(\beta + \frac{1}{n})r_k}.
\end{align*}
Hence if we define $\hat{\omega}_{D,\beta} \in (\ell_\infty)^*$ by
\[
\hat{\omega}_{D,\beta}(b_n) := \omega \bigg( \frac{1}{\Tr(e^{-(\beta + \frac{1}{n})|D|})} \sum_{k=0}^\infty \Big(\Tr(P_{r_k}) b_{N(r_k)}  - \Tr(P_{r_{k-1}}) b_{N(r_{k-1})} \Big)   e^{-(\beta + \frac{1}{n})r_k}  \bigg), \quad b \in \ell_\infty,
\]
we have by construction that 
\[
    \omega \bigg(\frac{\Tr(ae^{-(\beta +  \frac{1}{n}) |D|})}{\Tr(e^{-(\beta + \frac{1}{n})|D|})}  \bigg) = \hat{\omega}_{D,\beta} \bigg(\frac{\Tr(P_{\lambda_n} a P_{\lambda_n})}{\Tr(P_{\lambda_n})} \bigg), \quad a \in B(\Hc).
\]
Crucially, $\hat{\omega}_{D, \beta}$ is an extended limit if and only if $\lim_{t \searrow \beta}\Tr(e^{-t|D|}) = \infty$, see~\cite[Theorem~III.2]{Hardy1949}. The second assertion of the proposition is proved through the continuous version of the cited theorem, namely~\cite[Theorem~III.5]{Hardy1949}.

Finally, if the limit 
\[
    \lim_{\lambda \to \infty} \frac{\Tr(P_\lambda a P_\lambda)}{\Tr(P_\lambda)}
\]
exists, then all extended limits on the sequence $\frac{\Tr(P_{\lambda_n} a P_{\lambda_n})}{\Tr(P_{\lambda_n})}$ coincide, and so we conclude
\[
    \lim_{\lambda \to \infty} \frac{\Tr(P_\lambda a P_\lambda)}{\Tr(P_\lambda)} = \lim_{t\to \beta}\frac{\Tr(ae^{-t|D|})}{\Tr(e^{-t|D|})}, \quad a \in B(\Hc).\qedhere
\]
\end{proof}

Writing $P_D := \chi_{[0,\infty)}$ and applying the above results to $P_D D$ instead of $D$, we have that
\[
\omega \bigg(\frac{\Tr(P_D ae^{- (\beta + \frac{1}{n})D})}{\Tr(P_D e^{-(\beta + \frac{1}{n})D})}  \bigg)= \hat{\omega}_{D,\beta} \bigg(\frac{\Tr(\chi_{[0,\lambda_n]}(D) a \chi_{[0,\lambda_n]}(D))}{\Tr(\chi_{[0,\lambda_n]}(D))} \bigg), \quad a \in B(\Hc),
\]
which is a functional that is extensively studied in~\cite{GoffengRennie2019}. In particular, it defines a KMS state of inverse temperature $\beta$ on the Toeplitz algebra generated by a $\mathrm{Li}_1$-summable spectral triple $(\Ac, \Hc, D)$ satisfying some extra conditions.

\section{Noncommutative ergodicity}\label{S:Ergodicity}
Quantum ergodicity began as a study of geodesic flow on manifolds through abstract operator theoretical language. On a closed Riemannian manifold $(M,g)$ we can define the geodesic flow as a map $G_t: SM \to SM$, where $SM$ is the unit sphere in the tangent bundle of the manifold $M$. For a point $(x,v) \in SM$, one simply takes the unique geodesic $\gamma: \R \to M$ with $\gamma(0) = x$ and $\gamma'(0) = v$, and defines $G_t(x,v):= (\gamma(t), \gamma'(t))$. This flow is said to be ergodic if every measurable function $f \in L_\infty(SM)$ which is fixed by the flow (i.e. $f \circ G_t = f$ almost everywhere) is constant almost everywhere. Equivalently, the geodesic flow can be defined on $S^*M$, the unit sphere in the cotangent bundle.

Let $\{e_k\}_{k=0}^\infty$ be any orthonormal basis of eigenvectors of the Laplace--Beltrami operator $\Delta_g$ and let $P_\lambda := \chi_{[0,\lambda]}(-\Delta_g)$. 
Related to the result derived in Section~\ref{S:Integration}, it is known that for compact Riemannian manifolds we have that
\[
\frac{\Tr(P_\lambda \mathrm{Op}(a) P_\lambda)}{\Tr(P_\lambda)} \to \int_{S^*M}a \, d\nu,
\]
where $a \in C^\infty(S^*M)$ and $\mathrm{Op}(a)$ is a classical pseudodifferential operator with principal symbol $a.$
Colin de Verdi\`ere showed~\cite{ColindeVerdiere1985} that this fact can be used, if $M$ has ergodic geodesic flow, to show that there exists a density one subsequence $\{e_{j}\}_{j \in J}$ of $\{e_k\}_{k=0}^\infty$, meaning that $\frac{\#(J \cap \{0,\ldots, n\})}{n+1} \to 1$, such that
\[
\lim_{J\ni j\to\infty} \langle e_{j}, \mathrm{Op}(a) e_{j}\rangle =\int_{S^*M}a \, d\nu.
\]
This and related properties are called quantum ergodicity of the operator $\Delta_g$. See also Figure~\ref{F:eigenfunctions}.

Before we start to put quantum ergodicity results into a noncommutative geometrical context, let us observe first that our labours in Section~\ref{S:Integration} provide a result in the other direction. The Weyl measure of an operator, which is the relevant measure for quantum ergodicity~\cite[Section~4]{ColindeVerdiereHillairet2018}, clearly admits a Dixmier trace formula.

\begin{defn}\label{D:WeylMeasure}
    Let $M$ be a manifold equipped with a nonvanishing density $\rho$, and let $\Delta$ be a self-adjoint positive operator on $L_2(M,\rho)$ with compact resolvent. Let $\{e_k\}_{k=0}^\infty$ be an orthonormal basis of $L_2(M,\rho)$ consisting of eigenvectors of $\Delta$ with corresponding eigenvalues $\{\lambda_k\}_{k=0}^\infty$. If
\[
\lim_{\lambda \to \infty} \frac{1}{N(\lambda)} \sum_{\lambda_k \leq \lambda} \langle  e_k, M_f e_k \rangle
\]
exists for all $f \in C_c(M)$, then there exists a measure $\mu_\Delta$ such that
\[
\lim_{\lambda \to \infty} \frac{1}{N(\lambda)} \sum_{\lambda_k \leq \lambda} \langle  e_k, M_f e_k \rangle = \int_M f \, d\mu_\Delta.
\]
This measure is called the \textit{local Weyl measure} of $\Delta$. 
\end{defn}

\begin{prop}\label{P:DixTraceWeyl}
    If $\Delta$ as in Definition~\ref{D:WeylMeasure} satisfies Weyl's law
    \[
    \lambda(k, \Delta) \sim C k^\frac{2}{d}
    \]
    for some $0 < d \in \R$ and admits a local Weyl measure $\mu_\Delta$, then
    \begin{equation}\label{eq:DixTraceWeyl}
        \Tr_\omega(M_f (1+\Delta)^{-\frac{d}{2}}) =  \Tr_\omega((1+\Delta)^{-\frac{d}{2}}) \int_M f \, d\mu_\Delta.    
    \end{equation}
\end{prop}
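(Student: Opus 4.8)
The plan is to deduce Proposition~\ref{P:DixTraceWeyl} from Theorem~\ref{T:Log-CesaroMean} by verifying that the hypotheses there are met with $D^2$ replaced by $\Delta$, $A = M_f$, $d$ the given exponent, and $\Hc = L_2(M,\rho)$. First I would observe that the Weyl law assumed here, $\lambda(k,\Delta) \sim C k^{2/d}$, is exactly the eigenvalue formulation that (via the Hardy--Littlewood Tauberian theorem, as noted in the remark after Proposition~\ref{P:HeattoInt}) is equivalent to $\Tr(e^{-t\Delta}) \sim C' t^{-d/2}$, so $\Delta$ ``satisfies Weyl's law'' in the sense of Definition~\ref{D:WeylLaw}. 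In particular $(1+\Delta)^{-d/2} \in \mathcal{L}_{1,\infty}$, and the normalising constant $\Tr_\omega((1+\Delta)^{-d/2})$ is positive and independent of the eigenbasis. Also $M_f \in B(\Hc)$ for $f \in C_c(M)$, so Theorem~\ref{T:Log-CesaroMean} applies and gives, for any extended limit $\omega$,
\[
\frac{\Tr_\omega(M_f(1+\Delta)^{-d/2})}{\Tr_\omega((1+\Delta)^{-d/2})} = (\omega \circ M)\big(\langle e_n, M_f e_n\rangle\big) = (\omega \circ M)\Big(\tfrac{1}{n+1}\sum_{k=0}^n \langle e_k, M_f e_k\rangle\Big).
\]

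Next I would use the hypothesis that $\Delta$ admits a local Weyl measure. By Definition~\ref{D:WeylMeasure} this means $\frac{1}{N(\lambda)}\sum_{\lambda_k \le \lambda}\langle e_k, M_f e_k\rangle \to \int_M f\,d\mu_\Delta$ as $\lambda \to \infty$; since the Weyl law forces $N(\lambda) \to \infty$ and $N(\lambda_{n+1})/N(\lambda_n) \to 1$, Lemma~\ref{L:AHMSZ} (applied with $\phi(n)=n+1$, $a_k = \langle e_k, M_f e_k\rangle$, which is bounded in modulus by $\|M_f\|_\infty$, and $k_n = N(\lambda_n)$, exactly as in the proof of the last equality in Theorem~\ref{T:Log-CesaroMean}) converts this convergence over the $\lambda$-indexed subsequence into convergence of the full Ces\`aro mean:
\[
\frac{1}{n+1}\sum_{k=0}^n \langle e_k, M_f e_k\rangle \longrightarrow \int_M f\,d\mu_\Delta, \quad n\to\infty.
\]

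Finally I would invoke regularity of the transformations $M$ and $\omega$: since the Ces\`aro mean of $\{\langle e_k, M_f e_k\rangle\}$ converges to $c := \int_M f\,d\mu_\Delta$, and $M: \ell_\infty \to \ell_\infty$ is a regular summation method (Lemma~\ref{L:LogToCes} and the remark following it show that $M(x)_n \to c$ whenever the Ces\`aro mean converges to $c$), the sequence $M\big(\{\langle e_n, M_f e_n\rangle\}\big)$ converges to $c$, and then $\omega$, being an extended limit, returns $c$. Substituting back gives
\[
\Tr_\omega(M_f(1+\Delta)^{-d/2}) = \Tr_\omega((1+\Delta)^{-d/2})\int_M f\,d\mu_\Delta,
\]
which is~\eqref{eq:DixTraceWeyl}. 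I do not expect a genuine obstacle here — the work is entirely in lining up the bookkeeping between the eigenbasis counted with multiplicity (used in Theorem~\ref{T:Log-CesaroMean}) and the $N(\lambda)$-indexed partial sums (used in Definition~\ref{D:WeylMeasure}), which is precisely what Lemma~\ref{L:AHMSZ} is designed to handle; the only mild point to check is that $f \in C_c(M)$ indeed yields a bounded multiplication operator and that the local Weyl measure hypothesis is stated for exactly this class of test functions, so no density argument is needed.
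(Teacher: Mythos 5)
Your proposal is correct and follows exactly the route the paper takes: the paper's proof is the one-line citation ``Consequence of Theorem~\ref{T:Log-CesaroMean}'', and your argument simply fills in that consequence — translating the eigenvalue Weyl law into Definition~\ref{D:WeylLaw} via the Tauberian remark, applying Theorem~\ref{T:Log-CesaroMean} with $A=M_f$, and using Lemma~\ref{L:AHMSZ} together with regularity of $M$ and $\omega$ to identify the averaged diagonal sums with $\int_M f\,d\mu_\Delta$. No gaps; the bookkeeping between the multiplicity-counted eigenbasis and the $N(\lambda)$-indexed partial sums is handled just as in the paper's own proof of Theorem~\ref{T:Log-CesaroMean}.
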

\begin{proof}
    Consequence of Theorem~\ref{T:Log-CesaroMean}.
\end{proof}

This is relevant for sub-Riemannian manifolds, in which case one can take $\Delta$ to be the sub-Laplacian and $\mu_\Delta$ is not necessarily the usual volume form on the manifold $M$. Notably, a rescaling of this measure was found very recently in~\cite{KordyukovSukochev2024} to be a spectrally correct sub-Riemannian volume of $M$, additionally providing in that context a generalisation of the above Dixmier trace formula to any normalised continuous trace $\phi$. This measure is studied extensively in this context in~\cite{ColindeVerdiereHillairet2018} as well.

We will now shift our attention to results in quantum ergodicity which are interesting when viewed from the perspective of noncommutative geometry. To start, we provide an analogue of ergodicity of the geodesic flow -- a property a compact Riemannian manifold can have, which we should therefore be able to see as a property of a spectral triple. For this purpose we recall the following construction and theorem by Connes~\cite{Connes1995}. For a spectral triple $(\Ac, \Hc, D)$ we write $\Psi^0$ for the set of operators admitting an asymptotic expansion
    \[
    P = b_0 + b_{-1}\langle D\rangle^{-1} + b_{-2}  \langle D\rangle^{-2} + \cdots, \quad b_j \in \mathcal{B},
    \]
    with $\mathcal{B}$ generated by $\Ac$ and $\delta^n(\Ac)$, where $\delta(a):= [|D|,a]$. The asymptotic expansion is taken in the sense of Chapters~\ref{Ch:FunctCalc} and~\ref{Ch:MOIs}, which means here that the difference between $P$ and the $n$th partial summand is an element of $\op^{-n}(\langle D \rangle)$. 
    Note that we do not include the operators $[D,\Ac]$ in~$\mathcal{B}$. For $A \in B(\Hc)$, we write $\sigma_t(A):= e^{it|D|}Ae^{-it|D|}$, $t \in \R$.
\begin{thm}\label{T:ConnesS*A}
    For a unital regular spectral triple $(\Ac, \Hc, D)$, where regular means that $\delta^n(a) \in B(\Hc)$ for all $a \in \Ac$, $n \in \mathbb{N}$, define
    \[
    S^*\Ac := C^*\bigg(\bigcup_{t\in \R}\sigma_t\big(\Psi^0\big) + K(\Hc)\bigg) \big/ K(\Hc).
    \]
    This $C^*$-algebra comes equipped with automorphisms 
    \[
    \sigma_t(A + K(\Hc)):= e^{it|D|} A e^{-it|D|} + K(\Hc).
    \]
    For $(\Ac, \Hc, D) \simeq (C^\infty(M), L_2(S), D_M)$, the Dirac spectral triple associated with a compact Riemannian spin manifold, we have $S^*\Ac \simeq C(S^*M)$. Furthermore, if $A = Op^F(\sigma) \in \Psi^0$ for $\sigma \in C^\infty(S^*M)$ we have $\sigma_t(A+ K(\Hc)) = \mathrm{Op}^F(\sigma \circ G_t) + K(\Hc)$ where $G_t$ is the geodesic flow on~$S^*M$. 
\end{thm}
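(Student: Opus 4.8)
The plan is to reproduce Connes' argument from~\cite{Connes1995}, splitting it into a formal part (valid for any regular spectral triple) and a microlocal part (specific to the Dirac triple). For the formal part: since $\sigma_t$ is conjugation by the unitary $e^{it|D|}$, it is a $*$-automorphism of $B(\Hc)$ fixing $K(\Hc)$, and because $\sigma_s\circ\sigma_t=\sigma_{s+t}$ the generating set $\bigcup_{t\in\R}\sigma_t(\Psi^0)+K(\Hc)$ is $\sigma_s$-invariant; hence $\sigma_s$ descends to a $*$-automorphism of the quotient $C^*$-algebra $S^*\Ac$, which is unital since $1\in\Ac\subseteq\Psi^0$. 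Strong continuity of $t\mapsto e^{it|D|}$ yields continuity of the resulting $\R$-action on $S^*\Ac$. This disposes of the first two sentences of the theorem.

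Next I would specialise to $(\Ac,\Hc,D)=(C^\infty(M),L_2(S),D_M)$ and identify $\Psi^0$, modulo compacts, with classical pseudodifferential operators of order $\leq 0$ on $L_2(S)$, written $\Psi^{\leq 0}_{cl}(M,S)$. First replace $|D_M|$ by $\Lambda:=(D_M^2+P_0)^{1/2}$, with $P_0$ the finite-rank smoothing projection onto $\ker D_M$: by Seeley's theory of complex powers, $\Lambda$ is an invertible first-order classical $\Psi$DO on $S$ with scalar principal symbol $|\xi|_g\,\mathrm{id}_S$, and $\Lambda-|D_M|$ is smoothing, so this substitution affects neither $\sigma_t$ modulo $K(\Hc)$ nor membership in $\Psi^{\leq 0}_{cl}(M,S)$. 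Multiplication operators $M_f$, $f\in C^\infty(M)$, lie in $\Psi^0_{cl}$ with symbol $\pi^*f$, and by regularity together with the symbol calculus $\delta^n(M_f)\in\Psi^0_{cl}$ with \emph{scalar} principal symbol; explicitly $\sigma_0(\delta(M_f))=\tfrac1i\{|\xi|_g,\pi^*f\}$, homogeneous of degree $0$. Thus the algebra $\mathcal{B}$ generated by $\Ac$ and the $\delta^n(\Ac)$ sits inside the scalar-symbol part of $\Psi^{\leq 0}_{cl}(M,S)$. Finally, for $P\in\Psi^0$ with expansion $b_0+b_{-1}\langle D\rangle^{-1}+\cdots$, already the $n=0$ requirement gives $P-b_0\in\op^{-1}(\langle D\rangle)$, which is compact by Rellich; hence $\Psi^0\subseteq\mathcal{B}+K(\Hc)$, while trivially $\mathcal{B}\subseteq\Psi^0$, so $\Psi^0+K(\Hc)=\mathcal{B}+K(\Hc)$.

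I would then invoke the $C^*$-algebraic description of the principal symbol (Cordes~\cite{Cordes1979}; see the discussion in Section~\ref{S:PSDOs}): the operator-norm closure of $\Psi^{\leq 0}_{cl}(M,S)$ contains $K(\Hc)$, and the principal symbol induces an isomorphism $\overline{\Psi^{\leq 0}_{cl}(M,S)}/K(\Hc)\xrightarrow{\ \sim\ }C(S^*M;\mathrm{End}(S))$. Egorov's theorem — available because $e^{it\Lambda}$ is the Fourier integral operator quantising the Hamiltonian flow of $|\xi|_g$, i.e.\ the unit-speed cogeodesic flow $G_t$ on $S^*M$ — gives $\sigma_t(\Psi^{\leq 0}_{cl})\subseteq\Psi^{\leq 0}_{cl}$ with $\sigma_0(\sigma_t(A))=\sigma_0(A)\circ G_t$. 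Combining the two previous steps, $S^*\Ac$ is isomorphic, via the principal symbol, to the $C^*$-subalgebra of $C(S^*M;\mathrm{End}(S))$ generated by $\{\sigma_0(b)\circ G_t:b\in\mathcal{B},\,t\in\R\}$, and every such function is scalar. A Stone--Weierstrass argument finishes it: the unital $*$-subalgebra generated by $\{\sigma_0(b):b\in\mathcal{B}\}$ alone contains the constants, is conjugation-closed, and separates points of $S^*M$ — distinct base points by some $\pi^*f$, and two unit covectors $\xi\neq\eta$ over the same $x$ by $\tfrac1i\{|\xi|_g,\pi^*f\}(x,\,\cdot\,)=\tfrac1i\,df_x(v_{(\cdot)})$ for a suitable $f$, since the geodesic directions $v_\xi\neq v_\eta$ in $T_xM$. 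Hence the algebra is dense, $S^*\Ac\cong C(S^*M)$, under this identification $\sigma_t$ becomes $f\mapsto f\circ G_t$, and the last assertion $\sigma_t(\mathrm{Op}^F(\sigma)+K(\Hc))=\mathrm{Op}^F(\sigma\circ G_t)+K(\Hc)$ is precisely Egorov's theorem read through the isomorphism.

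The main obstacle is the microlocal input: Egorov's theorem and the identification of $e^{it\Lambda}$ with the Fourier integral operator quantising the geodesic flow, together with the symbol-exact-sequence description of $\overline{\Psi^{\leq 0}_{cl}(M,S)}/K(\Hc)$ — classical but substantial facts. Some care is needed so that replacing $|D_M|$ by the bona fide first-order $\Psi$DO $\Lambda$, and comparing the abstract order filtration $\op^\bullet(\langle D\rangle)$ with the classical symbol order, only ever introduces smoothing (hence compact) errors. Once that is in place, the Poisson-bracket symbol computation and the Stone--Weierstrass separation are routine.
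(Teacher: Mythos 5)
The paper does not actually prove this statement: it is quoted verbatim as a theorem of Connes and cited to~\cite{Connes1995}, with the surrounding text (and the later remark reducing $\Psi^0$ to $\mathcal{B}$ and then to $\Ac$ modulo compacts) only sketching the ingredients. So there is no in-paper proof to compare against; judged on its own, your outline is essentially the standard argument behind Connes' result and I see no genuine gap. The formal part (conjugation by $e^{it|D|}$ preserves $K(\Hc)$ and the generating set, hence descends) is fine; the reduction $\Psi^0+K(\Hc)=\mathcal{B}+K(\Hc)$ via compactness of $\op^{-1}(\langle D\rangle)$ matches the paper's own remark; and the microlocal part (replace $|D_M|$ by $(D_M^2+P_0)^{1/2}$ at the cost of a finite-rank error, observe that $\mathcal{B}$ consists of classical order-$0$ operators with \emph{scalar} principal symbols, use the $C^*$-symbol exact sequence $0\to K\to\overline{\Psi^{\leq 0}_{cl}}\to C(S^*M;\mathrm{End}(S))\to 0$, Egorov's theorem for $e^{it\Lambda}$, and Stone--Weierstrass with $\pi^*f$ separating base points and $\tfrac1i\{|\xi|_g,\pi^*f\}=\tfrac1i df_x(v_\xi)$ separating covectors over a fixed point) is exactly the expected route, with the final assertion about $\mathrm{Op}^F(\sigma)$ being Egorov read through the identification. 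Two small points worth tightening if you write this up: continuity of the action on $S^*\Ac$ does not follow from strong continuity of $e^{it|D|}$ alone but from the norm estimate $\|\sigma_t(b)-b\|\leq |t|\,\|[|D|,b]\|$ on the dense subalgebra generated by $\mathcal{B}$ (the paper's remark after the theorem makes exactly this observation); and in the scalar-symbol claim for $\delta^n(M_f)$ one should note that at each commutator step the relevant symbol brackets vanish because one factor is scalar, so the induction closes.
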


The identity $\sigma_t(\mathrm{Op}^F(\sigma)) = \mathrm{Op}^F(\sigma \circ G_t) + K(\Hc)$ is known as Egorov's theorem, see e.g.~\cite[Section~9.2]{Zelditch2017}\cite[Section~11.1]{Zworski2012}. This provides the basis for interpreting $\sigma_t$ as an analogue of geodesic flow even in the noncommutative case.

In Theorem~\ref{T:ConnesS*A}, it is important that the operators $[D,\Ac]$ are not included in $\Psi^0$. For illustration, in the commutative case the principal symbol of $|D|$ acts as a scalar on the vector bundle $S$, meaning that $\mathcal{B}$ and hence $\Psi^0$ can be regarded as acting on $L_2(M)$ instead of $L_2(S)$. The isomorphism $S^*C^\infty(M) \cong C(S^*M)$ in Theorem~\ref{T:ConnesS*A} is then an extension of the symbol map $\sigma: \Psi^0_{cl} \to C^\infty(S^*M)$ on classical pseudodifferential operators on $M$.

\begin{rem}
    For a (unital) spectral triple, $\langle D\rangle^{-1}$ is compact and hence
    \[
    S^*\Ac = C^*\bigg(\bigcup_{t\in \R}\sigma_t(\mB) + K(\Hc)\bigg) \big/ K(\Hc).
    \]
    Furthermore, since for $b\in \mB$ we have the convergence in norm
    \[
    \lim_{t\to 0} \frac{\sigma_t(b)-b}{t} = [|D|,b],
    \]
    we in fact have
    \[
    S^*\Ac = C^*\bigg(\bigcup_{t\in \R}\sigma_t(\Ac) + K(\Hc)\bigg) \big/ K(\Hc).
    \]
\end{rem}

A few examples of this construction are given in~\cite{GolseLeichtnam1998}. In the context of foliations of manifolds, it has been covered in~\cite{Kordyukov2005}.

The automorphisms $\sigma_t$ provide an action of $\R$ on the $C^*$-algebra $S^*\Ac$, and this noncommutative cotangent sphere is thus an example of a $C^*$-dynamical system.

\begin{defn}
    A $C^*$-dynamical system $(\Ac, G, \alpha)$ consists of a $C^*$-algebra $\Ac$, a locally compact group $G$, and a strongly continuous representation $\alpha: G \to \operatorname{Aut}(\Ac)$. 
\end{defn}

There is a vast literature on $C^*$-dynamical systems, see~\cite[Section~2.7]{BratteliRobinson1987} for a start. In particular it has been a popular object of study in the field of quantum ergodicity, see e.g.~\cite{Zelditch1996}.

In Connes's point of view, the automorphisms $\sigma_t$ on $S^*\Ac$ are the noncommutative analogue of geodesic flow. Recall that for compact manifolds this flow is said to be ergodic if the only measurable functions that are invariant almost everywhere under the geodesic flow are the functions that are constant almost everywhere. This definition is measure-theoretic in nature, and to translate it into a statement on spectral triples we therefore define the noncommutative $L_2$-space on $S^*\Ac$, which corresponds with $L_2(S^*M)$ in the commutative case.

\begin{prop}
    Let $(\Ac, \Hc, D)$ be a unital regular spectral triple where $D^2$ satisfies Weyl's law (Definition~\ref{D:WeylLaw}). The functional
    \[
    \tau(A + K(\Hc)):= \frac{\Tr_\omega(A\langle D\rangle^{-d})}{\Tr_\omega(\langle D \rangle^{-d})}, 
    \]
    defines a finite positive trace on $S^*\Ac$.
\end{prop}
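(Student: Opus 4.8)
The plan is to check, in order, that $\tau$ descends to the quotient $S^*\Ac$, is bounded and linear, is positive, and is tracial; finiteness is then immediate since $\tau(1+K(\Hc))=1$. Since $\mathcal{L}_{1,\infty}$ is an ideal and $\langle D\rangle^{-d}\in\mathcal{L}_{1,\infty}$, the operator $A\langle D\rangle^{-d}$ lies in $\mathcal{L}_{1,\infty}$ for every $A\in B(\Hc)$, so $\Tr_\omega(A\langle D\rangle^{-d})$ is defined. To see that $\tau$ is well defined on the quotient I would show $\Tr_\omega(K\langle D\rangle^{-d})=0$ for compact $K$: from $\mu(2k,K\langle D\rangle^{-d})\le\mu(k,K)\mu(k,\langle D\rangle^{-d})$ with $\mu(k,K)\to 0$ and $\mu(k,\langle D\rangle^{-d})=O(k^{-1})$ one gets $\mu(k,K\langle D\rangle^{-d})=o(k^{-1})$, hence $\frac{1}{\log(n+2)}\sum_{k=0}^n\mu(k,K\langle D\rangle^{-d})\to 0$ and $K\langle D\rangle^{-d}$ lies in the subspace of $\mathcal{L}_{1,\infty}$ annihilated by every Dixmier trace. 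Linearity is clear, and since $|\Tr_\omega(T)|\le C_\omega\|T\|_{1,\infty}$ one has $|\tau(A+K(\Hc))|\le C_\omega\|\langle D\rangle^{-d}\|_{1,\infty}\,\|A\|_\infty/\Tr_\omega(\langle D\rangle^{-d})$; combined with the vanishing on compacts this gives $|\tau(A+K(\Hc))|\le C\,\|A+K(\Hc)\|_{S^*\Ac}$, so $\tau$ is norm continuous on $S^*\Ac$. The normalisation is legitimate because $\Tr_\omega(\langle D\rangle^{-d})>0$, which is forced by Weyl's law (it makes $\mu(k,\langle D\rangle^{-d})$ comparable to $k^{-1}$).

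For positivity, every positive element of the $C^*$-quotient $S^*\Ac$ admits a positive lift $0\le B\in B(\Hc)$, so it suffices to see $\Tr_\omega(B\langle D\rangle^{-d})\ge 0$. Moving $B^{1/2}$ around with the trace property of $\Tr_\omega$ on $\mathcal{L}_{1,\infty}$ gives $\Tr_\omega(B\langle D\rangle^{-d})=\Tr_\omega(B^{1/2}\langle D\rangle^{-d}B^{1/2})$, and $B^{1/2}\langle D\rangle^{-d}B^{1/2}=(\langle D\rangle^{-d/2}B^{1/2})^*(\langle D\rangle^{-d/2}B^{1/2})$ is a positive element of $\mathcal{L}_{1,\infty}$; since Dixmier traces are positive functionals and $\Tr_\omega(\langle D\rangle^{-d})>0$, we conclude $\tau\ge 0$.

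The main point is the trace property. By norm continuity it suffices to prove $\Tr_\omega(AB\langle D\rangle^{-d})=\Tr_\omega(BA\langle D\rangle^{-d})$ for $A,B$ in the $*$-algebra generated by $\bigcup_t\sigma_t(\Psi^0)$, which I lift to polynomial expressions in the operators $\sigma_t(P)$, $P\in\Psi^0$, $t\in\R$. Regularity gives $\Psi^0\subseteq\OP^0(\langle D\rangle)$, and since $e^{\pm it|D|}$ are unitaries commuting with $\langle D\rangle$ one checks $\sigma_t(\OP^0(\langle D\rangle))\subseteq\OP^0(\langle D\rangle)$; as $\OP(\langle D\rangle)$ is a filtered algebra, it is enough to treat $A,B\in\OP^0(\langle D\rangle)$. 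Applying the trace property of $\Tr_\omega$ twice,
\[
\Tr_\omega(AB\langle D\rangle^{-d})-\Tr_\omega(BA\langle D\rangle^{-d})=\Tr_\omega\big(B\,[\langle D\rangle^{-d},A]\big),
\]
so it remains to show $[\langle D\rangle^{-d},A]\in\mathcal{L}_1$ for $A\in\OP^0(\langle D\rangle)$. Here I would use Chapter~\ref{Ch:MOIs}: pick a smooth $f\in T^{-d+\frac12}(\R)$ agreeing with $x\mapsto x^{-d}$ on $\sigma(\langle D\rangle)\subseteq[1,\infty)$; then Proposition~\ref{P:UMOIcom} gives $[\langle D\rangle^{-d},A]=T^{\langle D\rangle,\langle D\rangle}_{f^{[1]}}([\langle D\rangle,A])$, and the crucial point is that $[\langle D\rangle,A]\in\op^0(\langle D\rangle)$ precisely because $A\in\OP^0(\langle D\rangle)$. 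Theorem~\ref{T:MOOIforNCG1} then places this multiple operator integral in $\op^{-d-\frac12}(\langle D\rangle)$, and $\op^{-d-\frac12}(\langle D\rangle)\subseteq\mathcal{L}_1$ since $\langle D\rangle^{-(d+\frac12)}=(\langle D\rangle^{-d})^{(d+\frac12)/d}\in\mathcal{L}_{d/(d+\frac12),\infty}\subseteq\mathcal{L}_1$. Hence $B[\langle D\rangle^{-d},A]\in\mathcal{L}_1$, $\Tr_\omega$ annihilates it, and the trace identity holds on the dense $*$-subalgebra, therefore on all of $S^*\Ac$. Finally $\tau(1+K(\Hc))=1$, so $\tau$ is finite.

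The hard part is the trace property, and within it the quantitative gain ``$[\langle D\rangle,A]$ has order $0$, not $1$'': a naive commutator bound only yields $[\langle D\rangle^{-d},A]\in\mathcal{L}_{1,\infty}$, on which a Dixmier trace need not vanish. This gain is exactly the content of the regularity hypothesis (equivalently, $\Psi^0\subseteq\OP^0(\langle D\rangle)$), processed through the pseudodifferential and MOI calculus of Chapters~\ref{Ch:FunctCalc}--\ref{Ch:MOIs}; Weyl's law plays only the auxiliary role of guaranteeing $\Tr_\omega(\langle D\rangle^{-d})>0$.
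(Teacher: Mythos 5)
Your proof is correct, but it takes a genuinely different route to traciality than the paper does. The paper disposes of the proposition with a remark: traciality follows from Theorem~\ref{T:Log-CesaroMean} (expressing $\Tr_\omega(AB\langle D\rangle^{-d})$ through the logarithmic averages of $\Tr(P_{\lambda_n}ABP_{\lambda_n})/\Tr(P_{\lambda_n})$), Widom's Lemma~\ref{L:Widom} (which lets one insert the missing $P_\lambda$ up to an $o(\Tr(P_\lambda))$ error), and the trivial cyclicity $\Tr(P_\lambda AP_\lambda BP_\lambda)=\Tr(P_\lambda BP_\lambda AP_\lambda)$; this only needs that $[D,\cdot]$ is bounded on a generating set plus the Weyl law, and it fits the spectral-truncation theme of the chapter. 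You instead prove the stronger statement that $[\langle D\rangle^{-d},A]\in\mathcal{L}_1$ for $A\in\OP^0(\langle D\rangle)$, via the functional calculus and MOI machinery of Chapters~\ref{Ch:FunctCalc}--\ref{Ch:MOIs} (extend $x^{-d}$ off $\sigma(\langle D\rangle)$ to $f\in T^{-d+\frac12}(\R)$, write $[f(\langle D\rangle),A]=T^{\langle D\rangle,\langle D\rangle}_{f^{[1]}}([\langle D\rangle,A])$, and use $[\langle D\rangle,A]\in\op^0$ plus Theorem~\ref{T:MOOIforNCG1} and $\op^{-d-\frac12}(\langle D\rangle)\subseteq\mathcal{L}_1$), then conclude by continuity; your preliminary steps (vanishing of $\Tr_\omega$ on $K(\Hc)\langle D\rangle^{-d}$ via $\mu(k)=o(k^{-1})$, positivity via a positive lift and $\Tr_\omega(B\langle D\rangle^{-d})=\Tr_\omega(B^{1/2}\langle D\rangle^{-d}B^{1/2})$, normalisation $\tau(1)=1$) are standard and sound. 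What each approach buys: yours exploits full regularity to get an honest trace-class commutator (and in particular works for any continuous normalised trace, not just Dixmier traces), at the cost of more machinery; the paper's uses weaker hypotheses on the generators and no pseudodifferential calculus. Two small glosses in your write-up that deserve a sentence each if written out: the reduction to the dense $*$-subalgebra tacitly uses that $\OP^0(\langle D\rangle)$ is closed under taking adjoints (true, since $A^\dagger$ agrees with $A^*$ on $\Hc^\infty$ and $\delta_\Theta(A^*)=-\delta_\Theta(A)^*$), and the inclusion $\Psi^0\subseteq\OP^0(\langle D\rangle)$ requires a word on the tails of the asymptotic expansion (take the expansion to order $n\geq k$ when bounding $\delta^k_\Theta$).
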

\begin{proof}
This is well-known. We remark that the traciality of $\tau$ in fact follows from Theorem~\ref{T:Log-CesaroMean}, Widom's Lemma~\ref{L:Widom}, and the trivial identity
\[
\Tr(P_\lambda A P_\lambda B P_\lambda) = \Tr(P_\lambda B P_\lambda A P_\lambda). \qedhere
\]
\end{proof}

\begin{defn}\label{Def:L2SA}
    We define $L_2(S^*\Ac)$ as the Hilbert space $\Hc_\tau$ in the GNS construction $(\pi_\tau, \Hc_\tau)$. Explicitly, writing $I= \{A + K(\Hc) \in S^*\Ac : \tau(A^*A) = 0 \}$, we define
    \[
    L_2(S^*\Ac) := \overline{S^*\Ac / I}^{\|\cdot\|_{L_2}},
    \]
    where the completion is taken in the semi-norm $\| A + I \|_{L_2} = \big(\tau(A^*A)\big)^{\frac{1}{2}}$. The space $L_2(S^*\Ac)$ is a Hilbert space with inner product defined via
    \[
    \langle A + I, B+I \rangle_{L_2} := \tau(B^*A), \quad A,B \in S^*\Ac.
    \]
    \end{defn}
\begin{nota}
    In accordance with the paradigm called the $C^*$-algebraic approach to the principal symbol~\cite{Cordes1979, Dao1, Dao2, Dao3}, we write $\sym$ for the quotient map
    \begin{align*}
     \sym :C^*\bigg(\bigcup_{t\in \R}\sigma_t(\Ac) + K(\Hc)\bigg) &\to S^*\Ac\\
     A \qquad \quad \quad \quad &\mapsto A + K(\Hc),
    \end{align*}
    which is understood as a symbol map. Writing furthermore $\pi$ for the other quotient mapping
    \begin{align*}
        \pi: S^*\Ac &\to L_2(S^*\Ac)\\
        A \quad &\mapsto A+ I,
    \end{align*}
    where $I$ is as in Definition~\ref{Def:L2SA}, we will furthermore use the notation 
    \[
    \syml := \pi \circ \sym :C^*\bigg(\bigcup_{t\in \R}\sigma_t\big(\Ac\big) + K(\Hc)\bigg) \to L_2(S^*\Ac).
    \]
\end{nota}

\begin{ex}\label{ex:L2SAspaces}
    \begin{enumerate}
        \item For the Dirac spectral triple coming from a compact Riemannian spin manifold,  $(C^\infty(M), L_2(S), D_M)$, we have that  $S^*C^\infty(M) \simeq C(S^*M)$ with $\tau_{S^*\Ac} = \int_{S^*M}$. Hence $L_2(S^*C^\infty(M)) \simeq L_2(S^*M)$. The action $\sigma_t$ agrees with the usual geodesic flow.
        \item Given an even dimensional compact Riemannian manifold, and a finite dimensional spectral triple $(\Ac_F, \Hc_F, D_F)$, we have for the almost commutative manifold (see Section~\ref{S:IntroPreliminaries}) 
        \[
        (\Ac := C^\infty(M) \otimes \Ac_F, L_2(S)\otimes \Hc_F, D:=D_M \otimes 1 + \gamma_M \otimes D_F),
        \]
        that 
        $S^*\Ac \simeq C(S^*M)\otimes \Ac_F$ with $\tau_{S^*\Ac} = \int_{S^*M} \otimes \Tr$. Hence $L_2(S^*\Ac) \simeq L_2(S^*M)\otimes HS_F$, where $HS_F$ is simply $\Ac_F$ equipped with the Hilbert--Schmidt norm. The automorphisms $\sigma_t$ act as $\sigma_t^M \otimes 1$, where $\sigma_t^M$ is the usual geodesic flow on $S^*M$. This corrects~\cite[Lemma~2.2]{GolseLeichtnam1998}.
        \item For the noncommutative torus $(C^\infty(\mathbb{T}^d_\theta), L_2(\mathbb{T}^d_\theta) \otimes \mathbb{C}^{N_d}, D)$ (see Section~\ref{S:IntroPreliminaries}), we have that $S^*C(\mathbb{T}_\theta^d) \simeq C(\mathbb{T}^d_\theta) \otimes C(\mathbb{S}^{d-1})$ with $\tau_{S^*\mathbb{T}^d_\theta} = \tau_{\mathbb{T}^d_\theta} \otimes \int_{\mathbb{S}^{d-1}}$ and $\otimes$ is the minimal $C^*$-tensor product. Hence $L_2(S^* \mathbb{T}^d_\theta) \simeq  L_2(\mathbb{T}^d_\theta) \otimes L_2(\mathbb{S}^{d-1})$. The automorphisms $\sigma_t$ act as
        \[
        \sigma_t(u_{k} \otimes g) = u_k \otimes \exp(it\, k \cdot x)g, \quad t\in\R, k \in \mathbb{Z}^d, x\in \mathbb{S}^{d-1}\subseteq \mathbb{R}^d, g \in C(\mathbb{S}^{d-1}).
        \]
        \item Let $\Ac$ be the Toeplitz algebra, i.e. the $C^*$-algebra generated by the shift operator on $\ell_2(\mathbb{N})$, and let $D$ be the operator on $\ell_2(\mathbb{N})$ defined on the standard basis $\{ e_j \}_{j \in \mathbb{N}}$
        \[
        D: e_j \mapsto j e_j, \quad j \in \mathbb{N}.
        \]
        For the spectral triple $(\Ac, \ell_2(\mathbb{N}), D)$, we have $S^*\Ac \simeq C(\mathbb{S}^1)$ with $\tau_{S^*\Ac} = \int_{\mathbb{S}^1}$. Hence $L_2(S^*\Ac) \simeq L_2(\mathbb{S}^1)$. The automorphism $\sigma_t$ is given by rotation. 
        \end{enumerate}
\end{ex}
\begin{proof}
    $(1)$ can be found in~\cite{Connes1995}.\\
    $(2)$: Since $|D| = \sqrt{D_M^2\otimes 1 + 1 \otimes D_F^2}$, it follows that $|D| - |D_M|\otimes 1$ is a compact operator on $L_2(S) \otimes \Hc_F$. We will show this with the double operator integrals from Chapter~\ref{Ch:MOIs}. First, one can omit the kernels of $|D|$ and $|D_M|\otimes 1$ from the Hilbert space as the projection onto the kernel of either operator is finite-rank and thus compact. Both operators have compact resolvent. Hence, after this modification, the function $f(x) = \sqrt{x}$ is smooth on a neighbourhood of the spectra of the operators  $|D|$ and $|D_M|\otimes 1$. Define the Sobolev spaces $\Hc^s := \dom |D_M|^s \otimes 1$, $s \in \R$, and apply Proposition~\ref{P:UMOIcom} and Theorem~\ref{T:MOOIforNCG1} to find that, for $0<\varepsilon < 1$,
\[
|D| - |D_M|\otimes 1 = T^{D^2, D_M^2 \otimes 1}_{f^{[1]}}(1 \otimes D_F^2) \in \op^{-1+\varepsilon}(|D_M|\otimes 1) \subseteq  K(\Hc),
\]
It now follows from Duhamel's formula (see e.g.~\cite[Lemma~5.2]{ACDS}) that
\[
e^{it|D|} - e^{it(|D_M|\otimes 1)} = it \int_0^1 e^{ist|D|} (|D| - |D_M| \otimes 1) e^{i(1-s)t(|D_M|\otimes 1)} \, ds  \in K(\Hc).
\]
Therefore,
\[
\bigcup_{t\in \R}\sigma_t(\mathcal{B}) + K(\Hc) = \bigcup_{t\in \R}\sigma^M_t(\mathcal{B}_M) \otimes \Ac_F + K(\Hc),
\]
where $\sigma_t^M$ is the geodesic flow on $M$ and $\mB_M$ the algebra generated by $\delta_{|D_M|}^n(C^\infty(M)) \subseteq B(L_2(M))$. We conclude that $S^*\Ac \simeq C(S^*M)\otimes \Ac_F$. This proof shows that the action $\sigma_t$ on $ C(S^*M)\otimes \Ac_F  $ is given by $\sigma_t = \sigma_t^M \otimes 1$.
      \\   
    $(3)$: Although the Hilbert space of the spectral triple is $L_2(\mathbb{T}^d_\theta) \otimes \mathbb{C}^{N_d}$, since $|D| = \sqrt{-\Delta} \otimes 1$ (see Section~\ref{S:IntroPreliminaries}) similarly to the manifold case $\mB$ acts trivially on the $\mathbb{C}^{N_d}$-component. We can therefore make the identification $\mB \subseteq B(L_2(\mathbb{T}^d_\theta))$. In fact, we claim that $\overline{\mB}^{\|\cdot\|}$ is a $C^*$-algebra stable under the action $\sigma_t(\cdot) = e^{it\sqrt{-\Delta}} (\cdot) e^{-it\sqrt{-\Delta}}$, and therefore
    \[
    S^*C(\mathbb{T}^d_\theta) \simeq (\overline{\mB}^{\|\cdot\|} + K(L_2(\mathbb{T}^d_\theta)))/K(L_2(\mathbb{T}^d_\theta)).
    \]
    The claim holds since formally $\sigma_t(a) = \sum_{k=0}^\infty \frac{(it)^k}{k!} \delta^k(a)$, and this sum is actually norm convergent for     
    $a \in \mathrm{Poly}(\mathbb{T}^d_\theta) := \mathrm{span}\{u_n\}_{n\in \mathbb{Z}^d}$. Denoting the generated $*$-algebra $\mB_{poly}:= \langle a, \delta^n(a)\rangle_{a \in \mathrm{Poly}(\mathbb{T}^d_\theta)}$ we therefore have
    \[
    \sigma_t : \mB_{poly} \to \overline{\mB}^{\|\cdot\|}.
    \]
    Since $ \mathrm{Poly}(\mathbb{T}^d_\theta)$ is dense in $C^\infty(\mathbb{T}^d_\theta)$ and $\sigma_t$ is an isometry on $B(L_2(\mathbb{T}^d_\theta))$, it is easily seen that this implies that $\sigma_t$ maps $\overline{\mB}^{\|\cdot\|}$ into itself.
    
    By construction $C(\mathbb{T}^d_\theta)$ is represented on $L_2(\mathbb{T}^d_\theta)$ as bounded left-multiplication operators (denote the representation $\pi_1$), and $C(\mathbb{S}^{d-1})$ is as well via the representation
    \[
    \pi_2(g) = g\big(\frac{D_1}{\sqrt{-\Delta}}, \ldots, \frac{D_d}{\sqrt{-\Delta}} \big),\quad g \in C(\mathbb{S}^{d-1}),
    \]
    where $D_i: u_k \mapsto k_i u_k$. It is shown in~\cite{Dao2} that, writing $\Pi(C(\mathbb{T}^{d}_\theta), C(\mathbb{S}^{d-1}) )$ for the $C^*$ -algebra generated by $\pi_1( C(\mathbb{T}^{d}_\theta))$ and $\pi_2( C(\mathbb{S}^{d-1}))$ inside $B(L_2(\mathbb{T}^d_\theta))$, we have
    \[
    \Pi(C(\mathbb{T}^{d}_\theta), C(\mathbb{S}^{d-1}) ) / K(L_2(\mathbb{T}^d_\theta)) \simeq C(\mathbb{T}^{d}_\theta) \otimes C(\mathbb{S}^{d-1}).
    \]    
    To determine that $S^*\mathbb{T}^d_\theta \simeq C(\mathbb{T}^d_\theta) \otimes C(\mathbb{S}^{d-1})$, it therefore suffices to show that 
    \[
    \overline{\mB}^{\|\cdot\|} + K(L_2(\mathbb{T}^d_\theta))= \Pi(C(\mathbb{T}^{d}_\theta), C(\mathbb{S}^{d-1}) ) + K(L_2(\mathbb{T}^d_\theta)) \subseteq B(L_2(\mathbb{T}^d_\theta)).
    \]
    To start, it is immediately obvious that $\pi_1(C(\mathbb{T}^{d}_\theta)) \subseteq \overline{\mB}^{\|\cdot\|}$. Next, the operators $\frac{D_i}{\sqrt{-\Delta}}$ generate $\pi_2(C(\mathbb{S}^{d-1}))$ as a $C^*$-algebra, and we claim that
    \[
    u_{e_j}^*[\sqrt{-\Delta}, u_{e_j}] - \frac{D_j}{\sqrt{-\Delta}} \in K(L_2(\mathbb{T}^d_\theta)),
    \]
    where $e_j \in \mathbb{Z}^d$ is the standard unit vector.
    This would imply
    \[
     \Pi(C(\mathbb{T}^{d}_\theta), C(\mathbb{S}^{d-1}) ) + K(L_2(\mathbb{T}^d_\theta)) \subseteq \overline{\mB}^{\|\cdot\|} + K(L_2(\mathbb{T}^d_\theta)).
    \]
    The claim is proven by writing 
    \begin{align*}
        \bigg(u_{e_j}^*[\sqrt{-\Delta}, u_{e_j}] - \frac{D_j}{\sqrt{-\Delta}}\bigg)u_k &= \bigg(|k+e_j| - |k| - \frac{k_j}{|k|} \bigg) u_{k}.        
    \end{align*}
    Now define 
    \[
    f(t, k):= |k+ t e_j|, \quad k\in \mathbb{Z}^d, t \in \R,
    \]
    and note that its derivatives in the $t$ variable are
    \[
    f'(t,k) = \frac{k_j+t}{|k+te_j|}, \quad f''(t,k) = \frac{|k+te_j|^2 - (k_j+t)^2}{|k+te_j|^3}.
    \]
    Hence 
    \begin{align*}
        \bigg(u_{e_j}^*[\sqrt{-\Delta}, u_{e_j}] - \frac{D_j}{\sqrt{-\Delta}}\bigg)u_k &= (f(1,k) - f(0,k) - f'(0,k))u_k\\
        &= \int_{0}^1 (1-t) f''(t, k) \, dt \cdot u_k.
    \end{align*}
    From the form of $f''(t,k)$ above, we therefore have
    \[
    |f(1,k) - f(0,k) - f'(0,k)| \in c_0(\mathbb{Z}^d),
    \]
    which indeed shows that $u_{e_j}^*[\sqrt{-\Delta}, u_{e_j}] - \frac{D_j}{\sqrt{-\Delta}}$ is a compact operator.

    For the other direction, the above arguments already show that
    \[
    [\sqrt{-\Delta}, u_{e_j}] \in \Pi(C(\mathbb{T}^{d}_\theta), C(\mathbb{S}^{d-1}) ) + K(L_2(\mathbb{T}^d_\theta)).
    \]
    Since by explicit computation
    \[
    u_{e_j}^*\delta_{\sqrt{-\Delta}}^n(u_{e_j}) = (u_{e_j}^*[\sqrt{-\Delta}, u_{e_j}])^n, 
    \]
    and since $u_{e_j}^m = u_{m e_j}$, we have that
    \[
    \delta^n_{\sqrt{-\Delta}}(u_k) \in \Pi(C(\mathbb{T}^{d}_\theta), C(\mathbb{S}^{d-1}) ) + K(L_2(\mathbb{T}^d_\theta)), \quad n \in \mathbb{Z}_{\geq 0}, k \in \mathbb{Z}^d,
    \]
    and hence
    \[
    \overline{\mB}^{\| \cdot \|} \subseteq \Pi(C(\mathbb{T}^{d}_\theta), C(\mathbb{S}^{d-1}) ) + K(L_2(\mathbb{T}^d_\theta)).
    \]   
    In conclusion, $S^*\mathbb{T}^d_\theta \simeq C(\mathbb{T}^d_\theta) \otimes C(\mathbb{S}^{d-1})$. For the automorphism $\sigma_t$, first note that
    \[
    \sigma_t(g(\frac{D_1}{\sqrt{-\Delta}}, \ldots, \frac{D_d}{\sqrt{-\Delta}})) = g(\frac{D_1}{\sqrt{-\Delta}}, \ldots, \frac{D_d}{\sqrt{-\Delta}}), \quad g\in C(\mathbb{S}^{d-1}).
    \]
    Next,
    \begin{align*}
        e^{it\sqrt{-\Delta}} u_{e_j}e^{-it\sqrt{-\Delta}} u_k &= e^{it(|k+e_j| - |k|)} u_{e_j} u_k\\
        &= u_{e_j}\exp(it u_{e_j}^* [\sqrt{-\Delta},u_{e_j}])  u_k\\
        &=u_{e_j}\exp\big(it \frac{D_j}{\sqrt{-\Delta}}\big) u_k + u_{e_j}\bigg(\exp(it u_{e_j}^* [\sqrt{-\Delta},u_{e_j}])  - \exp\big(it \frac{D_j}{\sqrt{-\Delta}}\big) \bigg)  u_k.
    \end{align*}
    We have already seen that $  u_{e_j}^* [\sqrt{-\Delta},u_{e_j}]  -  \frac{D_j}{\sqrt{-\Delta}} \in K(L_2(\mathbb{T}^d_\theta))$, and hence as in the proof of (2) it follows from Duhamel's formula that
    \begin{align*}
    \exp(it u_{e_j}^* [\sqrt{-\Delta},u_{e_j}]) - \exp\big(it \frac{D_j}{\sqrt{-\Delta}}\big) 
    \in K(L_2(\mathbb{T}^d_\theta)).
    \end{align*}
    Thus, we see that
    \[
        \sigma_t(u_{n} \otimes g) = u_n \otimes \exp(it\, n \cdot x)g, \quad t\in\R, n \in \mathbb{Z}^d, x\in \mathbb{S}^{d-1}\subseteq \mathbb{R}^d, g \in C(\mathbb{S}^{d-1}).
    \]
    $(4)$: It is well-known that, after identifying $\ell_2(\mathbb{N})$ with the Hardy space $H^2$, any element in the Toeplitz algebra $\Ac$ can be written as $T_\phi + K$, where $T_\phi$ is the Toeplitz operator with symbol $\phi \in C(\mathbb{S}^1)$ and $K \in K(\ell_2(\mathbb{N}))$, see e.g.~\cite[Section~3.5]{Murphy1990}. By an explicit computation, it can be seen that
    \[
    e^{it|D|} T_\phi e^{-it|D|} = T_{\phi \circ R_t},
    \]
    where $R_t$ is rotation by the angle $t$. Hence $\sigma_t(\Ac) = \Ac$, and 
    \[
    S^*\Ac = \Ac / K(\ell_2(\mathbb{N})) \simeq C(\mathbb{S}^1).
    \]
    For the noncommutative integral, we can use the diagonal formula in Theorem~\ref{T:Log-CesaroMean}, so that for an arbitrary element $T_\phi + K \in \Ac$,
    \[
    \Tr_\omega((T_\phi + K)\langle D\rangle^{-1}) = \omega \circ M (\langle e_k, (T_\phi + K) e_k \rangle ) = \int_{\mathbb{S}^1} \phi(t) \, dt. \qedhere
    \]
\end{proof}

Observe that the automorphism $\sigma_t$ on $S^*\Ac$ extends to a unitary operator $\sigma_t \in B(L_2(S^*\Ac))$. 

\begin{defn}
    We say that $(\Ac, \Hc, D)$ is classically ergodic if for $a \in L_2(S^*\Ac)$, we have $\sigma_t(a) =a$ for all $t\in \R$ if and only if $a = \lambda \cdot 1 \in L_2(S^*\Ac)$ for some $\lambda \in \C$.
\end{defn}

The construction of $L_2(S^*\Ac)$ has now reached its goal; for spectral triples derived from compact Riemannian manifolds, this definition is precisely the usual definition of ergodicity of the geodesic flow.

We now immediately claim the following theorem, the NCG analogue of the classic result in quantum ergodicity by Shnirelman, Zelditch, and Colin de Verdi\`ere~\cite{Shnirelman1974, ColindeVerdiere1985, Zelditch1987}.
\begin{thm}\label{T:MainQE}
    Let  $(\Ac, \Hc, D)$ be a unital regular spectral triple with local Weyl laws for $a \in \Ac$ (Definition~\ref{D:WeylLaw}). Assume that the closure of $\Ac$ in $B(\Hc)$ is separable. If the triple is classically ergodic, then for every basis $\{e_n\}_{n=0}^\infty$ of eigenvectors of $|D|$ there exists a density one subset $J \subseteq \mathbb{N}$ such that
    \[
    \lim_{J\ni j\to \infty}\langle e_j, a e_j \rangle = \frac{\Tr_{\omega}(a \langle D\rangle^{-d})}{\Tr_{\omega}(\langle D\rangle^{-d})}, \quad a \in \Ac.
    \]
\end{thm}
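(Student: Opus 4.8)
The plan is to follow the classical Shnirelman--Zelditch--Colin de Verdi\`ere argument, transposed into the operator-algebraic language set up earlier in the chapter. The key input is Theorem~\ref{T:Log-CesaroMean}, which identifies the noncommutative integral $\tau = \frac{\Tr_\omega(\,\cdot\,\langle D\rangle^{-d})}{\Tr_\omega(\langle D\rangle^{-d})}$ with the Ces\`aro mean of the diagonal matrix elements $\langle e_n, \cdot\, e_n\rangle$, together with Widom's Lemma~\ref{L:Widom} (available since $(\Ac,\Hc,D)$ is regular, so $[D,a]$ is bounded for $a\in\Ac$), and the von Neumann ergodic theorem for the unitary $\sigma_t$ acting on $L_2(S^*\Ac)$. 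First I would reduce to showing that for each self-adjoint $a$ in a fixed countable dense $\ast$-subset of $\overline{\Ac}$, the \emph{variance sum}
\[
\frac{1}{N(\lambda)} \sum_{\lambda_k \le \lambda} \big| \langle e_k, a e_k\rangle - \tau(a) \big|^2 \longrightarrow 0, \quad \lambda \to \infty.
\]
A standard diagonal/subsequence extraction (as in Colin de Verdi\`ere) then produces a single density-one set $J$ along which $\langle e_j, a e_j\rangle \to \tau(a)$ simultaneously for all these $a$, and an $\varepsilon/3$ approximation extends this to all $a\in\Ac$ using $\|a\|_\infty$ control of the matrix elements and separability of $\overline\Ac$.

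The main work is the variance estimate. Here I would exploit invariance: since $|D|e_k = \lambda_k e_k$, we have $\langle e_k, \sigma_t(a) e_k\rangle = \langle e_k, e^{it|D|} a e^{-it|D|} e_k\rangle = \langle e_k, a e_k\rangle$ for all $t$, so $\langle e_k, a e_k\rangle = \langle e_k, a_T e_k\rangle$ where $a_T := \frac{1}{T}\int_0^T \sigma_t(a)\,dt$ is the time-average. By the von Neumann ergodic theorem applied to $\sigma_t$ on $L_2(S^*\Ac)$ and classical ergodicity, $\syml(a_T) \to \tau(a)\cdot 1$ in $L_2(S^*\Ac)$ as $T\to\infty$; equivalently $\tau\big((a_T - \tau(a))^*(a_T - \tau(a))\big) \to 0$. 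Replacing $a$ by $a_T - \tau(a)$, it then suffices to bound the variance sum for an operator $b$ with $\tau(b^*b)$ small. Applying Cauchy--Schwarz in the form $|\langle e_k, b e_k\rangle|^2 \le \langle e_k, b^* P_{\lambda} b\, e_k\rangle + \langle e_k, b^*(1-P_\lambda)b\, e_k\rangle$ (here using that $e_k$ is in the range of $P_\lambda$ for $\lambda_k\le\lambda$), the first term averages to $\tau(b^* b)$ by Theorem~\ref{T:Log-CesaroMean} applied to $b^*b$, while Widom's Lemma~\ref{L:Widom} shows $\frac{1}{N(\lambda)}\Tr\big(P_\lambda b^* (1-P_\lambda) b P_\lambda\big) \to 0$. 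Note Lemma~\ref{L:Widom} requires $[D,b]$ bounded and $b$ mapping $\dom|D|$ into itself: this holds for $b = a_T - \tau(a)$ because $\sigma_t(a)$ preserves these properties and the time-average is a norm-limit of finite combinations, with $[D,\sigma_t(a)] = \sigma_t([D,a])$ uniformly bounded. Putting this together gives
\[
\limsup_{\lambda\to\infty} \frac{1}{N(\lambda)}\sum_{\lambda_k\le\lambda} \big|\langle e_k, a e_k\rangle - \tau(a)\big|^2 \le C\,\tau\big((a_T-\tau(a))^*(a_T-\tau(a))\big),
\]
and letting $T\to\infty$ kills the right-hand side.

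The step I expect to be the main obstacle is the careful bookkeeping around \emph{which} operators are legitimate inputs to Widom's lemma and to Theorem~\ref{T:Log-CesaroMean} after time-averaging. The time-average $a_T$ is a Bochner integral of $\sigma_t(a)$; one must check it genuinely lies in $B(\Hc)$, preserves $\dom|D|$, has bounded commutator with $D$ (with norm $\le \|[D,a]\|_\infty$, since the commutator passes through the integral), and that its image under $\syml$ is the object to which the von Neumann ergodic theorem applies --- i.e. that $\sym(\sigma_t(a)) = \sigma_t(\sym(a))$ is exactly the unitary flow on $L_2(S^*\Ac)$, which is how $\sigma_t$ on $S^*\Ac$ was constructed. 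A secondary subtlety is that the local Weyl law is assumed only for $a\in\Ac$, whereas the argument needs it for products like $b^*b$ with $b = a_T - \tau(a)$; this is handled by observing that Theorem~\ref{T:Log-CesaroMean} only needs $D^2$ to satisfy the (global) Weyl law for the Ces\`aro-mean identity $\tau(A) = (\lim\circ M)(\langle e_n, A e_n\rangle)$ applied to bounded $A$ when $A\langle D\rangle^{-d}$ is already known measurable --- but in the variance estimate we only use the \emph{averaged} version, so a local Weyl law for $b^*b$ is not actually required, only the global one, which follows from the local Weyl law for $1\in\Ac$. I would state this reduction explicitly to avoid circularity.
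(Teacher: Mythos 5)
Your overall strategy --- exploiting $\sigma_t$-invariance of the diagonal matrix elements, the mean ergodic theorem on $L_2(S^*\Ac)$, Widom's Lemma~\ref{L:Widom}, a variance estimate, and a diagonal extraction of a density-one set --- is exactly the mechanism behind the paper's proof, which simply observes that classical ergodicity is the ``unique vacuum'' property and invokes \cite[Lemma~2.1]{Zelditch1996} together with Proposition~\ref{P:HeattoInt}; so in spirit you are unpacking the cited lemma rather than taking a different route. The genuine gap is the step where you claim the first term ``averages to $\tau(b^*b)$ by Theorem~\ref{T:Log-CesaroMean}''. Theorem~\ref{T:Log-CesaroMean} only identifies $\tau(b^*b)$ with $(\omega\circ M)$ applied to the sequence $\Tr(P_{\lambda_n}b^*bP_{\lambda_n})/\Tr(P_{\lambda_n})$; it does not give convergence of these Ces\`aro means, and for a positive operator the only inequality it yields for a fixed extended limit is $\tau(b^*b)\leq\limsup_{\lambda}\Tr(P_\lambda b^*bP_\lambda)/\Tr(P_\lambda)$ --- the wrong direction for your claimed bound $\limsup_\lambda \frac{1}{N(\lambda)}\sum_{\lambda_k\leq\lambda}|\langle e_k,ae_k\rangle-\tau(a)|^2\leq C\,\tau(b^*b)$. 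A bounded positive sequence can have logarithmic means tending to zero while its Ces\`aro means have strictly positive limsup, so no manipulation of the Dixmier-trace identity alone can produce the estimate you need; and replacing the honest limsup by $\omega\circ M$ throughout only evaluates one generalized limit, from which no density-one subsequence can be extracted.

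Consequently your closing remark --- that ``a local Weyl law for $b^*b$ is not actually required, only the global one'' --- is exactly backwards. The variance argument needs genuine Ces\`aro convergence of the averaged eigenvector states on the time-averaged products, i.e.\ local-Weyl-law information (via the Tauberian argument of Proposition~\ref{P:HeattoInt}) for elements such as $a^*\sigma_u(a)$ of the $C^*$-algebra generated by $\bigcup_{t}\sigma_t(\Ac)$, not merely for $a\in\Ac$. This is precisely the hypothesis under which Zelditch's Lemma~2.1 operates, and in the manifold case it is supplied by Egorov's theorem together with the local Weyl law for all zero-order pseudodifferential operators. To repair your write-up you must either verify this convergence on the generated algebra in the present abstract setting or make explicit that the argument is an application of the cited lemma, whose hypothesis is being assumed at that point; as written, the step bounding the averaged $\langle e_k, b^*b\,e_k\rangle$ fails.
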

\begin{proof}
    Classical ergodicity of $(\Ac, \Hc,D)$ means precisely that the $C^*$-dynamical system $(S^*\Ac, \mathbb{R}, \sigma_t)$ has a unique vacuum state in the terminology of~\cite{Zelditch1996}. Hence, due to Proposition~\ref{P:HeattoInt}, the theorem is a consequence of~\cite[Lemma~2.1]{Zelditch1996}.
\end{proof}

This theorem, while its mathematical core is already an established result in quantum ergodicity, gives a fresh perspective on the criterion of a $C^*$-dynamical system having a `unique vacuum state'. And while the vast majority of results in the paper~\cite{Zelditch1996} are formulated for `quantised abelian' $C^*$-dynamical systems, which in our case would mean $S^*\Ac$ is represented as a commutative algebra on $L_2(S^*\Ac)$, the philosophy of noncommutative geometry provides solid reason to study not quantised abelian $C^*$-dynamical systems but ones with a unique vacuum state, as proposed by Zelditch~\cite{Zelditch1996}.

\begin{ex}We continue Example~\ref{ex:L2SAspaces}.
    \begin{enumerate}
        \item The canonical spectral triple corresponding to a compact Riemannian spin manifold, $(C^\infty(M), L_2(S), D_M)$ is classically ergodic if and only if $M$ has ergodic geodesic flow.
        \item Any nontrivial almost commutative manifold $( C^\infty(M) \otimes \Ac_F, L_2(S)\otimes \Hc_F, D_M \otimes 1 + \gamma_M \otimes D_F)$ is not classically ergodic. Note that this corrects~\cite[Corollary~(3.1)]{Zelditch1996}, which was already known to experts to be false.
        \item The noncommutative torus, like the commutative torus, is not classically ergodic.
        \item The spectral triple of the Toeplitz algebra is classically ergodic. See~\cite[Example~(D)]{Zelditch1996} for a generalisation.  
    \end{enumerate}
\end{ex}
\begin{proof} $(1)$: See~\cite{ColindeVerdiere1985}.\\
    $(2)$: Since $\sigma_t$ acts on $L_2(S)\otimes HS_F$ by $\sigma_t^M \otimes 1$, any element of the form $1 \otimes a$ is a fixed point of $\sigma_t$.\\
    $(3)$: It follows from Example~\ref{ex:L2SAspaces} that for any $f \in C(\mathbb{S}^{d-1})$, the element $\syml(1\otimes f) \in L_2(S^*\mathbb{T}^d_\theta)$ is a fixed point of $\sigma_t$.\\
    $(4)$: Since the only rotationally invariant functions in $L_2(\mathbb{S}^1)$ are the constant functions, the claim follows.
\end{proof}

We note that the well-studied examples of spectral triples in noncommutative geometry often possess a high degree of symmetry, and in geometric examples a high degree of symmetry can obstruct ergodicity.

We now conclude the paper by giving some equivalent conditions for classical ergodicity.
First, we invoke von Neumann's mean ergodic theorem.
\begin{prop}\label{P:ErgodicThms}
    For any $a \in L_2(S^*\Ac)$, there exists $a_{avg} \in L_2(S^*\Ac)$ such that, putting 
    \[
    a_T := \frac{1}{T}\int_{0}^T \sigma_t(a) \, dt,
    \]
    we have
    \[
    \lim_{T\to \infty} \| a_T- a_{avg}\|_{L_2} \to 0.
    \]
    Furthermore,
    \[
    \langle 1, a_{avg}\rangle_{L_2} = \langle 1, a\rangle_{L_2},
    \]
    and the map $a\mapsto a_{avg}$ is $L_2$-continuous.
\end{prop}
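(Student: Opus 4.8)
The plan is to recognise this statement as von Neumann's mean ergodic theorem for the strongly continuous one-parameter unitary group $\{\sigma_t\}_{t\in\R}$ acting on the Hilbert space $L_2(S^*\Ac)$, so that the bulk of the work is to set up the hypotheses carefully and then run the standard argument, now in continuous time. Recall that $\tau$ is $\sigma_t$-invariant (since $\langle D\rangle^{-d}$ commutes with $e^{it|D|}$ and $\Tr_\omega$ is invariant under unitary conjugation), which is exactly why $\sigma_t$ lifts to a unitary on $L_2(S^*\Ac)$ with $\sigma_t(\pi(A)) = \pi(\sigma_t(A))$, as noted in the remark preceding the proposition.

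First I would verify that $t\mapsto\sigma_t$ is strongly continuous on $L_2(S^*\Ac)$. Since $\tau$ is a finite positive trace with $\tau(1)=1$, for $A\in S^*\Ac$ one has $\|\sigma_t(\pi(A))-\pi(A)\|_{L_2}^2 = \tau\big((\sigma_t(A)-A)^*(\sigma_t(A)-A)\big) \le \|\sigma_t(A)-A\|^2$, and the right-hand side tends to $0$ as $t\to 0$ because the action of $\R$ on the $C^*$-algebra $S^*\Ac$ is norm-continuous. Strong continuity on the dense subspace $\pi(S^*\Ac)$, together with the uniform bound $\|\sigma_t\|_{L_2\to L_2}=1$, extends strong continuity to all of $L_2(S^*\Ac)$ by the usual $\varepsilon/3$ estimate. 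As a by-product $t\mapsto\sigma_t(a)$ is a bounded continuous $L_2(S^*\Ac)$-valued function, so $a_T = \frac1T\int_0^T\sigma_t(a)\,dt$ is a well-defined Bochner integral with $\|a_T\|_{L_2}\le\|a\|_{L_2}$, and $a\mapsto a_T$ is a contraction on $L_2(S^*\Ac)$ for each $T$.

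Next I would introduce the closed subspace $N:=\{a\in L_2(S^*\Ac):\sigma_t(a)=a\ \forall t\in\R\}$ of invariant vectors and the closure $R$ of the linear span of $\{\sigma_s(b)-b : b\in L_2(S^*\Ac),\ s\in\R\}$, and show the orthogonal decomposition $L_2(S^*\Ac)=N\oplus R$: using $\sigma_t^*=\sigma_{-t}$, a vector $a$ is orthogonal to every $\sigma_s(b)-b$ if and only if $\langle\sigma_{-s}(a)-a,b\rangle_{L_2}=0$ for all $b,s$, i.e. if and only if $a\in N$, so $N=R^{\perp}$. On $N$ the averages are constant, $a_T=a$. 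On a generator $b'=\sigma_s(b)-b$ a telescoping computation gives $a_T(b') = \frac1T\big(\int_T^{T+s}\sigma_t(b)\,dt-\int_0^s\sigma_t(b)\,dt\big)$, hence $\|a_T(b')\|_{L_2}\le 2|s|\,\|b\|_{L_2}/T\to 0$; since $\|a_T\|\le 1$ uniformly in $T$, this extends to all of $R$. Writing $a=P_N a+P_R a$ with $P_N,P_R$ the orthogonal projections onto $N,R$, we conclude $a_T\to P_N a$ in $L_2(S^*\Ac)$, and I set $a_{avg}:=P_N a$.

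Finally, for the two supplementary assertions: $\sigma_t$ fixes $1\in L_2(S^*\Ac)$ because $e^{it|D|}1e^{-it|D|}=1$, so $1\in N$ and therefore $1\perp R$; thus $\langle 1,a\rangle_{L_2}=\langle 1,P_N a\rangle_{L_2}=\langle 1,a_{avg}\rangle_{L_2}$. And $a\mapsto a_{avg}=P_N a$ is an orthogonal projection, so it is $L_2$-continuous with norm $\le 1$. I expect the only genuinely delicate point to be the descent of strong continuity from the $C^*$-algebra $S^*\Ac$ to its GNS completion $L_2(S^*\Ac)$ (and the attendant measurability/integrability of $t\mapsto\sigma_t(a)$); once that is in place, everything else is the textbook mean ergodic argument adapted to a one-parameter flow.
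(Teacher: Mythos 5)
Your proof is correct. The core of your argument and of the paper's coincide in substance — both rest on von Neumann's mean ergodic theorem — but you take a more self-contained route: the paper simply \emph{cites} the mean ergodic theorem for the existence of $a_{avg}$ and the $L_2$-convergence of $a_T$, whereas you re-derive it via the orthogonal decomposition $L_2(S^*\Ac)=N\oplus R$ into invariant vectors and the closed span of coboundaries, and you also supply the verification (which the paper takes for granted in the remark preceding the proposition) that $\sigma_t$ is a strongly continuous unitary group on $L_2(S^*\Ac)$, so that the Bochner integral defining $a_T$ makes sense. The two proofs then diverge on the supplementary assertions: the paper argues softly, using $\sigma_t(1)=1$ and unitarity to get $\langle 1,a_T\rangle_{L_2}=\langle 1,a\rangle_{L_2}$ and then passing to the limit via $|\langle 1,a_{avg}-a_T\rangle_{L_2}|\le\|a_T-a_{avg}\|_{L_2}$, and obtaining continuity from $\|a_T\|_{L_2}\le\|a\|_{L_2}$ in the limit; you instead identify $a_{avg}=P_N a$, from which $\langle 1,a_{avg}\rangle_{L_2}=\langle 1,a\rangle_{L_2}$ (since $1\in N$, $1\perp R$) and $L_2$-continuity are immediate. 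Your route is longer but buys a stronger structural conclusion — $a\mapsto a_{avg}$ is the orthogonal projection onto the fixed-point space, not merely a norm-one map — which is in fact the fact implicitly used later in the proof of Proposition~\ref{P:ClassicErgod}; the paper's route is shorter and avoids redoing textbook material.
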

\begin{proof}
    The existence of $a_{avg}$ and the $L_2$-convergence of $a_T$ to $a_{avg}$ follows from von Neumann's mean ergodic theorem. Next, since $\sigma_t$ is a unitary operator with $\sigma_t(1) = 1$, we have 
    \[
    \langle 1, \sigma_t(a)\rangle_{L_2} = \langle 1, a \rangle_{L_2}.
    \]
    Hence,
    \begin{align*}
        \big|\langle 1, a_{avg}\rangle_{L_2} - \langle 1, a\rangle_{L_2} \big| &\leq \big|\langle 1, a_{avg}\rangle_{L_2} - \langle1, a_T\rangle_{L_2} \big| + \smash{\underbrace{ \big|\langle1,a_T\rangle_{L_2} - \langle1,a\rangle_{L_2} \big|}_{=0}}\\
        &=|\langle 1, a_{avg} - a_T\rangle_{L_2}|\\
        &\leq \| a_T- a_{avg}\|_{L_2},
    \end{align*}
    which converges to $0$ as $T \to \infty$ due to the first part.

    Finally, for the continuity of $a \mapsto a_{avg}$, note that
    \[
    \|a_T\|_{L_2} \leq \frac{1}{T} \int_0^T \|\sigma_t( a) \|_{L_2}\,dt =  \| a\|_{L_2},
    \]
    and taking the limit $T \to \infty$,
    \begin{align*}
        \|a_{avg} \|_{L_2}&\leq \|a\|_{L_2}. \qedhere 
    \end{align*}
\end{proof}

\begin{rem}
    Since $S^*\Ac$ is represented as bounded operators on $L_2(S^*\Ac)$, we can consider the von Neumann algebra $\pi_\tau(S^*\Ac)''$ in $B(L_2(S^*\Ac))$, denoted as $L_\infty(S^*\Ac)$, to which $\tau$ extends as a faithful normal tracial state. 
    We can define the noncommutative $L_p$ spaces $L_p(S^*\Ac):= L_p(\tau)$ for $1 \leq p \leq \infty$ via standard constructions (we recover $L_2(S^*\Ac)$ for $p=2$). This is precisely how the spaces $L_p(\mathbb{T}^d_\theta)$ are constructed~\cite[Section~3.5]{LMSZVol2}. It is possible to show that $\sigma_t: L_p(S^*\Ac) \to L_p(S^*\Ac)$ are isometries for all $1 \leq p \leq \infty$, and the averages in Proposition~\ref{P:ErgodicThms} exist and converge in every $L_p(S^*\Ac)$.
\end{rem}

\begin{prop}\label{P:ClassicErgod}
    Given a unital regular spectral triple  $(\Ac, \Hc, D)$ satisfying Weyl's law, the following are equivalent:
    \begin{enumerate}
        \item the spectral triple is classically ergodic;
        \item for all $a \in L_2(S^*\Ac)$,
    \[
     a_{avg} = \langle 1, a\rangle_{L_2} \cdot 1;
    \]
    \item writing 
    \[
    A_T := \frac{1}{T}\int_{0}^T \sigma_t(A) \, dt, \quad A \in \big \langle \bigcup_{t \in \R} \sigma_t(\mathcal{A}) \big \rangle,
    \]
    where $\big \langle \bigcup_{t \in \R} \sigma_t(\mathcal{A}) \big \rangle$ is the $*$-algebra generated by $\bigcup_{t \in \R} \sigma_t(\mathcal{A})$, for all $A \in \big \langle \bigcup_{t \in \R} \sigma_t(\mathcal{A}) \big \rangle$ we have
    \[
    \Tr_\omega(\langle D\rangle^{-d})^2\cdot \lim_{T \to \infty} \omega \circ M \Big( \langle e_k, |A_T|^2 e_k \rangle \Big) = \Big| \Tr_{\omega} (A \langle D\rangle^{-d})  \Big|^2.
    \]
    \end{enumerate} 
\end{prop}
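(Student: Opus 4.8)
The plan is to prove $(1)\Leftrightarrow(2)$ by abstract Hilbert space considerations on $L_2(S^*\Ac)$, and then $(2)\Leftrightarrow(3)$ by translating the statements about $L_2(S^*\Ac)$ into statements about the Dixmier trace via Theorem~\ref{T:Log-CesaroMean}.

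For $(1)\Leftrightarrow(2)$, the key point is that the map $a\mapsto a_{avg}$ from Proposition~\ref{P:ErgodicThms} is precisely the orthogonal projection $P_{\mathrm{Fix}}$ of $L_2(S^*\Ac)$ onto the closed subspace $\mathrm{Fix}(\sigma):=\{v\in L_2(S^*\Ac)\ :\ \sigma_t v=v \text{ for all }t\in\R\}$. This is the usual conclusion of von Neumann's mean ergodic theorem: from the $L_2$-convergence $a_T\to a_{avg}$ one checks that $a_{avg}\in\mathrm{Fix}(\sigma)$ (since $\|\sigma_s(a_T)-a_T\|_{L_2}\to 0$ as $T\to\infty$) and that $a-a_{avg}\perp\mathrm{Fix}(\sigma)$ (since $\langle v,a-a_T\rangle_{L_2}=\frac{1}{T}\int_0^T\langle v-\sigma_{-t}v,a\rangle_{L_2}\,dt=0$ for $v\in\mathrm{Fix}(\sigma)$). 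As $\tau(1)=1$ we have $\|1\|_{L_2}=1$, so the orthogonal projection $P_{\C\cdot1}$ onto $\C\cdot1$ is $a\mapsto\langle1,a\rangle_{L_2}\cdot1$, and $\C\cdot1\subseteq\mathrm{Fix}(\sigma)$ because $\sigma_t(1)=1$. Now $(1)$ says exactly $\mathrm{Fix}(\sigma)=\C\cdot1$, which forces $a_{avg}=P_{\mathrm{Fix}}(a)=\langle1,a\rangle_{L_2}\cdot1$, i.e. $(2)$; conversely, if $(2)$ holds and $\sigma_t(a)=a$ for all $t$, then $a_T=a$ for all $T$, so $a=a_{avg}=\langle1,a\rangle_{L_2}\cdot1$, and since scalars are always fixed this gives $\mathrm{Fix}(\sigma)=\C\cdot1$, i.e. $(1)$.

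For $(2)\Leftrightarrow(3)$, fix $A\in\langle\bigcup_{t\in\R}\sigma_t(\Ac)\rangle$ and set $a:=\syml(A)=\pi(\sym(A))$. The map $\syml=\pi\circ\sym$ is a contraction into $L_2(S^*\Ac)$ intertwining the two $\R$-actions, so it commutes with Bochner integration and $\syml(A_T)=(\syml(A))_T=a_T$. Since $\sym$ is a $*$-homomorphism and $\tau(\sym(B))=\Tr_\omega(B\langle D\rangle^{-d})/\Tr_\omega(\langle D\rangle^{-d})$, we get
\[
\|a_T\|_{L_2}^2=\tau\big(\sym(A_T)^*\sym(A_T)\big)=\tau\big(\sym(|A_T|^2)\big)=\frac{\Tr_\omega\big(|A_T|^2\langle D\rangle^{-d}\big)}{\Tr_\omega(\langle D\rangle^{-d})},
\]
and as $|A_T|^2=A_T^*A_T$ is a bounded operator, Theorem~\ref{T:Log-CesaroMean} (here the Weyl law hypothesis is used) identifies the right-hand side with $(\omega\circ M)(\langle e_k,|A_T|^2 e_k\rangle)$. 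Letting $T\to\infty$ and using $a_T\to a_{avg}$ in $L_2$ gives $\lim_{T\to\infty}(\omega\circ M)(\langle e_k,|A_T|^2 e_k\rangle)=\|a_{avg}\|_{L_2}^2$. Likewise $\langle1,a\rangle_{L_2}=\tau(\sym(A)^*)=\overline{\Tr_\omega(A\langle D\rangle^{-d})}/\Tr_\omega(\langle D\rangle^{-d})$, so after multiplying through, $(3)$ for this $A$ reads $\|a_{avg}\|_{L_2}^2=|\langle1,a\rangle_{L_2}|^2$. Decomposing $a_{avg}=P_{\mathrm{Fix}}(a)$ orthogonally along $\C\cdot1\subseteq\mathrm{Fix}(\sigma)$ yields
\[
\|a_{avg}\|_{L_2}^2=|\langle1,a\rangle_{L_2}|^2+\big\|P_{\mathrm{Fix}}(a)-\langle1,a\rangle_{L_2}\cdot1\big\|_{L_2}^2,
\]
so $(3)$ for this $A$ is equivalent to $P_{\mathrm{Fix}}(\syml(A))=\langle1,\syml(A)\rangle_{L_2}\cdot1$. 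If $(2)$ holds this is automatic. Conversely, if $(3)$ holds for every $A\in\langle\bigcup_{t\in\R}\sigma_t(\Ac)\rangle$, then the bounded operator $P_{\mathrm{Fix}}-P_{\C\cdot1}$ vanishes on $\syml\big(\langle\bigcup_{t\in\R}\sigma_t(\Ac)\rangle\big)$; but $\sym\big(\langle\bigcup_{t\in\R}\sigma_t(\Ac)\rangle\big)$ is norm-dense in $S^*\Ac$ (it is the image of a dense $*$-subalgebra under a surjective $*$-homomorphism) and $\pi$ has dense range and is $\|\cdot\|_{L_2}$-contractive, so this set is dense in $L_2(S^*\Ac)$, whence $P_{\mathrm{Fix}}=P_{\C\cdot1}$ on all of $L_2(S^*\Ac)$, i.e. $(2)$.

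The step to be most careful about, and the only genuine subtlety, is the bookkeeping in $(2)\Leftrightarrow(3)$: verifying that $\syml$ intertwines the averaging operations, that $\|a_T\|_{L_2}^2$ really coincides with the logarithmic-mean expression $(\omega\circ M)(\langle e_k,|A_T|^2 e_k\rangle)$ of $(3)$ --- which rests both on the multiplicativity of $\sym$ and on applying Theorem~\ref{T:Log-CesaroMean} to the bounded operator $|A_T|^2$ --- and that the density of $\syml\big(\langle\bigcup_{t\in\R}\sigma_t(\Ac)\rangle\big)$ in $L_2(S^*\Ac)$ is strong enough to upgrade $(3)$, which is stated only on a dense subalgebra, to the assertion $(2)$ about all of $L_2(S^*\Ac)$. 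Everything else is a direct invocation of von Neumann's mean ergodic theorem and of Theorem~\ref{T:Log-CesaroMean}.
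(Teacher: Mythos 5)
Your proposal is correct and follows essentially the same route as the paper: $(1)\Leftrightarrow(2)$ via the mean ergodic theorem and the fixed-point property of $a\mapsto a_{avg}$, and $(2)\Leftrightarrow(3)$ by identifying $\Tr_\omega(\langle D\rangle^{-d})\,(\omega\circ M)(\langle e_k,|A_T|^2e_k\rangle)$ with $\Tr_\omega(\langle D\rangle^{-d})\,\|\syml(A)_T\|_{L_2}^2$ through Theorem~\ref{T:Log-CesaroMean}, passing to the limit, using the Pythagorean identity $\|a_{avg}\|_{L_2}^2-|\langle 1,a\rangle_{L_2}|^2=\|a_{avg}-\langle 1,a\rangle_{L_2}\cdot 1\|_{L_2}^2$, and upgrading $(3)\Rightarrow(2)$ by density of $\syml\big(\langle\bigcup_{t\in\R}\sigma_t(\Ac)\rangle\big)$ and $L_2$-continuity of the averaging map. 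The only cosmetic difference is that you phrase the averaging map as the orthogonal projection onto the fixed-point subspace, whereas the paper works with the explicit norm computation, but these are the same argument.
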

\begin{proof}
    $(1) \Leftrightarrow (2)$ is easily seen from the fact that $a_{avg}$ is a fixed point of $\sigma_t$.\\
    Next, if $A \in \big \langle\bigcup_{t \in \R} \sigma_t(\mathcal{A})\big \rangle$, then by Theorem~\ref{T:Log-CesaroMean} it follows that 
    \begin{align*}
    \Tr_\omega(\langle D\rangle^{-d})^2\cdot \omega \circ M \Big( \langle e_k, |A_T|^2 e_k \rangle \Big) &= \Tr_\omega(\langle D\rangle^{-d}) \Tr_\omega(|A_T|^2\langle D\rangle^{-d})\\
    &= \Tr_\omega(\langle D\rangle^{-d})^2 \langle \syml(A_T), \syml(A_T)\rangle_{L_2}.
    \end{align*}
    Since $\syml(A_T) = \syml(A)_T$, Proposition~\ref{P:ErgodicThms} gives that
    \begin{equation}\label{eq:SunadaStep}
    \Tr_\omega(\langle D\rangle^{-d})^2\cdot \lim_{T \to \infty} \omega \circ M \Big( \langle e_k, |A_T|^2 e_k \rangle \Big) = \Tr_\omega(\langle D\rangle^{-d})^2 \langle \syml(A)_{avg}, \syml(A)_{avg}\rangle_{L_2}.
    \end{equation}
    $(2) \Rightarrow (3)$: This now follows from Equation~\eqref{eq:SunadaStep} and the identity $\langle 1, \syml(A) \rangle_{L_2} = \frac{\Tr_{\omega}(A\langle D\rangle^{-d})}{\Tr_\omega(\langle D\rangle^{-d})}$.\\
    $(3) \Rightarrow (2)$: For $a \in L_2(S^*\Ac)$, 
    Proposition~\ref{P:ErgodicThms} gives that $\langle a_{avg},1\rangle_{L_2} = \langle a, 1 \rangle_{L_2}$, and hence
    \begin{align*}
         \|a_{avg} - \langle 1,a\rangle \cdot 1\|_{L_2}^2 
        &=  \langle a_{avg}, a_{avg}\rangle_{L_2} - \langle a_{avg}, 1\rangle_{L_2} \overline{\langle 1,a\rangle_{L_2}} - \langle 1, a_{avg}\rangle _{L_2} \langle 1,a\rangle _{L_2} + |\langle 1,a\rangle_{L_2}|^2 \\
        &= \langle a_{avg}, a_{avg}\rangle_{L_2} - |\langle 1,a\rangle_{L_2}|^2.
    \end{align*}
    Therefore, assumption (3) combined with Equation~\eqref{eq:SunadaStep} gives for all $A \in \big \langle \bigcup_{t \in \R} \sigma_t(\mathcal{A})\big\rangle$,
    \[
    \syml(A)_{avg} = \langle 1, \syml(A)\rangle_{L_2} \cdot 1.
    \]
    The image of $\big \langle \bigcup_{t \in \R} \sigma_t(\mathcal{A})\big\rangle$ under the map $\syml$ being dense in $L_2(S^*\Ac)$, and the map $a \to a_{avg}$ being $L_2$-continuous, we can conclude that
    \[
    a_{avg} = \langle 1, a\rangle_{L_2} \cdot 1
    \]
    for all $a \in L_2(S^*\Ac)$.
\end{proof}
    \chapter{The density of states on discrete spaces}\label{Ch:DOSDiscrete}
{\setlength{\epigraphwidth}{\widthof{Best wishes and remember: WORK HARDER!!!}}
\epigraph{Best wishes and remember: WORK HARDER!!!}{Fedor Sukochev}}
This chapter is an adaptation of~\cite{AHMSZ}, joint work with Nurulla Azamov, Edward McDonald, Fedor Sukochev, and Dmitriy Zanin. The main result in this chapter is a Dixmier trace formula for the density of states on discrete metric spaces,  Theorem~\ref{T: Main}.

Given a metric space $(X,d)$ with a Borel measure, and a self-adjoint operator $H$ on $L_2(X)$, a Dixmier trace formula for the density of states is an equality of two Borel measures on $\R$ that can be associated with $H$. On the one hand, we can fix a weight $w: X \to \C$ such that $M_w \in \mathcal{L}_{1,\infty}$, which implies that $f(H)M_w \in \mathcal{L}_{1,\infty}$ for all $f \in C_c(\R)$, inducing a measure
\begin{equation}\label{eq:Measure1}
\Tr_\omega(f(H)M_w)  =\int_\R f \, d\nu_1, \quad f \in C_c(\R),
\end{equation}
where $\omega \in \ell_\infty^*$ is an extended limit. This measure $\nu_1$ is guaranteed to exist due to the Riesz--Markov--Kakutani theorem, and a priori it might depend on our choice of~$\omega$. On the other hand, we can look more geometrically towards the measure based on the limits
\begin{equation}\label{eq:Measure2}
\lim_{R \to \infty} \frac{1}{|B(x_0, R)|} \Tr(f(H)M_{\chi_{B(x_0, R)}}) = \int_\R f\, d\nu_2, \quad f \in C_c(\R),
\end{equation}
where $x_0 \in X$ is some chosen basepoint and $|B(x_0,R)|$ is the measure of the closed ball $B(x_0, R)$. For the definition of this measure to make sense we must require that the volume of the balls $B(x_0, R)$ is finite for all $R > 0$ and that $f(H)M_{\chi_{B(x_0,R)}} \in \mathcal{L}_1$, but even then the limits in equation~\eqref{eq:Measure2} are not guaranteed to exist. If the limits \textit{do} converge, we say that $H$ admits a density of states (DOS), and we commonly write $\nu_H$ for $\nu_2$.

In the paper~\cite{AMSZ}, it has been proven for the choice $X = \R^d$, $w(x) =(1+|x|^2)^{-\frac d2}$, and $H = -\Delta + M_V$ with $V \in L_\infty(\R^d)$ real-valued, that the measures~\eqref{eq:Measure1} and~\eqref{eq:Measure2} are equal up to a constant depending on $d$ (and independent of $\omega$). Hence, the measure~\eqref{eq:Measure1} in that setting is in fact \textit{independent} of $\omega$. It was explained in Section~\ref{S:DOS} that the case $V=0$ is a Fourier transform of Connes' integral formula (Theorem~\ref{T:ConnesIntegral}), the general case is substantially more work. 

In this chapter, we prove this Dixmier trace formula for countable discrete metric spaces. This comes with an additional condition on the metric space, a condition that ensures the volumes $|B(x_0, R)|$ grow in a regular and sub-exponential manner as $R \to \infty$. Namely, we will require that
\[
\frac{|B(x_0, r_{k+1})|}{|B(x_0, r_{k})|} \to 1, \quad k \to \infty,
\]
where $\{r_k\}_{k=0}^\infty$ is the set $\{d(y,x_0) \; : \; y \in X\}$ (\textit{without} multiplicities) ordered in increasing manner. We will call this Property $(C)$. The main theorem of this chapter is then the following, the proof of which is specific to the discrete case and is based on recent advances in operator theory and the notion of $V$-modulated operators hatched in the theory of singular traces, see~\cite{KaltonLord2013} or~\cite[Section~7.3]{LSZVol1} and Section~\ref{S:IntroPreliminaries}.
\begin{thm}\label{T: Main}
    Let $(X,d_X)$ be a countably infinite discrete metric space such that every ball contains at most finitely many points, and let $x_0 \in X$. Then the image of the map $d_X(\cdot, x_0): X \rightarrow \mathbb{R}_{\geq 0}$ is a collection of isolated points which can be ordered in an increasing way, denote this by $\{r_k\}_{k \in \mathbb{N}} \subseteq \mathbb{R}$. Suppose that \begin{equation}\tag{C}\label{E: Condition}
        \lim_{k\rightarrow \infty}\frac{|B(x_0, r_{k+1})|}{|B(x_0, r_k)|} = 1.
    \end{equation} Then for any positive, radially strictly decreasing function $w:X\to \C$ with $M_w \in \mathcal{L}_{1,\infty}$ we have that for every extended limit $\omega \in \ell_\infty^*$
    \begin{equation} \label{E: general DOS formula}
        \Tr_\omega(TM_w) = \Tr_\omega(M_w)\lim_{k\to \infty} \frac{\Tr(TM_{\chi_{B(x_0,r_k)}})}{|B(x_0,r_k)|},
    \end{equation}
    for all bounded linear operators $T$ on $\ell_2(X)$ for which the limit on the right-hand side exists.

    In particular, if $H$ is a self-adjoint, possibly unbounded, operator on $\ell_2(X)$ admitting a density of states measure $\nu_H$, then for all extended limits $\omega$,
\begin{equation}
    \mathrm{Tr}_\omega(f(H)M_w) = \int_{\mathbb{R}} f \, d\nu_H, \quad f \in C_c(\R).
\end{equation}
\end{thm}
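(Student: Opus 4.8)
The plan is to reduce the statement to a Tauberian-type comparison between the Dixmier trace weighted by $M_w$ and the Birkhoff-type averages over increasing balls, making essential use of the theory of $V$-modulated operators (Theorem~\ref{T: Modulated}) and the combinatorial subsequence lemma (Lemma~\ref{L:AHMSZ}). First I would set up notation: order the points of $X$ as $\{x_j\}_{j=0}^\infty$ so that $d_X(x_j,x_0)$ is non-decreasing, so that $Q_n$, the projection onto $\mathrm{span}\{x_0,\dots,x_n\}$, interpolates the ball projections $M_{\chi_{B(x_0,r_k)}}$ exactly at the indices $n = |B(x_0,r_k)|-1$. Since $w$ is positive and radially strictly decreasing, we may arrange the enumeration so that $w(x_j) = \mu(j, M_w)$, i.e. $\{x_j\}$ is an eigenbasis of $M_w$ realising the singular value sequence in decreasing order; this is the hypothesis needed to apply Theorem~\ref{T: Modulated}.

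The core computation is then: for $T \in B(\ell_2(X))$, the operator $TM_w$ is $M_w$-modulated (as $M_w \in \mathcal{L}_{1,\infty}$ is automatically $M_w$-modulated by \cite[Lemma~7.3.4]{LSZVol1}, and $T$ bounded preserves this), so Theorem~\ref{T: Modulated} gives
\[
\sum_{j=0}^n \lambda(j, TM_w) = \sum_{j=0}^n w(x_j)\langle x_j, Tx_j\rangle + O(1), \quad n\to\infty.
\]
Dividing by $\log(n+2)$ and applying an extended limit $\omega$ expresses $\Tr_\omega(TM_w)$ as $\omega$ evaluated on the logarithmic averages of $\{w(x_j)\langle x_j,Tx_j\rangle\}$. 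The same applied to $T = 1$ gives $\Tr_\omega(M_w) = \omega(\{\frac{1}{\log(n+2)}\sum_{j\le n} w(x_j)\}_n)$. The remaining task is to show that when $\lim_k \Tr(TM_{\chi_{B(x_0,r_k)}})/|B(x_0,r_k)| = c$ exists, the logarithmic average of $w(x_j)\langle x_j,Tx_j\rangle$ equals $c$ times that of $w(x_j)$, in the sense that $\omega$ cannot tell them apart. Here I would follow the structure used in Theorem~\ref{T:Log-CesaroMean}: pass from logarithmic to Cesàro averages via Lemma~\ref{L:LogToCes} (or rather its analogue — since the weights $w(x_j)$ are not equal one needs a weighted Abel-summation argument, exactly as in the $s>-d$ case of Theorem~\ref{T:Log-CesaroMean} with $(j+1)^{-s/d}$ replaced by $w(x_j)$), then invoke Property~\eqref{E: Condition} to feed Lemma~\ref{L:AHMSZ} with $\phi(n) = \sum_{j\le n} w(x_j)$ and the subsequence $k_n = |B(x_0,r_n)|-1$, checking the hypothesis
\[
\frac{1}{\phi(k_n)}\sum_{k=k_{n-1}+1}^{k_n} w(x_k)|\langle x_k, Tx_k\rangle| \le \|T\|\,\frac{\phi(k_n)-\phi(k_{n-1})}{\phi(k_n)} = o(1),
\]
which is precisely where \eqref{E: Condition} (ensuring $\phi(k_n)/\phi(k_{n-1})\to 1$) is used. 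This yields \eqref{E: general DOS formula}.

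For the final assertion about the density of states: given a self-adjoint $H$ admitting a DOS measure $\nu_H$, by definition $\lim_{R\to\infty}|B(x_0,R)|^{-1}\Tr(f(H)M_{\chi_{B(x_0,R)}}) = \int_\R f\,d\nu_H$ for $f\in C_c(\R)$, and restricting to the radii $R = r_k$ gives exactly the limit appearing on the right of \eqref{E: general DOS formula} with $T = f(H)$. One must note $f(H)M_{\chi_{B(x_0,r_k)}}$ is trace-class and $f(H)$ is bounded, so the hypothesis of \eqref{E: general DOS formula} is met; applying it and dividing by $\Tr_\omega(M_w)$ (which is positive and finite since $M_w\in\mathcal{L}_{1,\infty}$ with $w$ strictly decreasing, hence $\Tr_\omega(M_w)>0$), and then rescaling $w$ so that $\Tr_\omega(M_w)=1$ — or simply absorbing the constant — gives the stated formula. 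The main obstacle I anticipate is the weighted Abel-summation step: unlike in Theorem~\ref{T:Log-CesaroMean} where the weights are the explicit power $(k+1)^{-s/d-1}$ with controllable discrete derivatives, here $w(x_j)$ is a general radially decreasing sequence in $\ell_{1,\infty}$, so one needs monotonicity of $w$ together with the $O(j^{-1})$ bound on $w(x_j)$ (from $M_w\in\mathcal{L}_{1,\infty}$) to control $\sum_j (w(x_{j+1})-w(x_j))\sum_{l\le j}\langle x_l,Tx_l\rangle$ and show the Cesàro/logarithmic averages of $w(x_j)\langle x_j,Tx_j\rangle$ and of $c\cdot w(x_j)$ agree up to $o(1)$. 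A secondary subtlety is verifying that the enumeration $\{x_j\}$ can be chosen simultaneously compatible with the radial ordering (so the ball projections are $Q_{k_n}$) and with the decreasing rearrangement of $w$ — this works because $w$ is assumed radially strictly decreasing, so both orderings coincide on each sphere up to permutation within the sphere, which does not affect any of the sums.
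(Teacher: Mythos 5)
Your proposal follows essentially the same route as the paper's proof: Theorem~\ref{T: Modulated} (modulated operators, using that $\{\delta_v\}$ diagonalises $M_w$ with the radial strict decrease giving the singular-value ordering) reduces $\mathrm{Tr}_\omega(TM_w)$ to logarithmic means of $w(x_j)\langle x_j,Tx_j\rangle$, a weighted Abel-summation/Toeplitz argument (the paper's Lemma~\ref{toeplitz_corollary}) transfers Ces\`aro convergence of the unweighted diagonal to the $w$-weighted sums, and Lemma~\ref{L:AHMSZ} together with condition~\eqref{E: Condition} bridges ball truncations and full partial sums. The only point to tighten is your application of Lemma~\ref{L:AHMSZ}: the paper applies it with $\phi(n)=n+1$ to the unweighted diagonal values, upgrading the ball-average hypothesis to full Ces\`aro convergence \emph{before} the weighted Toeplitz step, whereas your choice $\phi(n)=\sum_{j\le n}w(x_j)$ would need convergence of the $w$-weighted ball averages as input, which the hypothesis does not directly provide — reordering these two steps as in the paper closes this.
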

Because $\{r_k\}_{k=0}^\infty$ denotes all possible distances $d_X(x_0, \cdot)$ in the discrete space $X$, the limit on the right-hand side of equation~\eqref{E: general DOS formula} exists if and only if the continuous limit 
\[
\lim_{R\to \infty} \frac{\Tr(TM_{\chi_{B(x_0,R)}})}{|B(x_0,R)|}
\] exists, and these limits are necessarily equal. This is therefore in line with the definition of the density of states~\eqref{eq:Measure2}.

In comparison with the Euclidean case, we do not have to make any assumptions on the operator $T$ beyond the existence of the existence of the limit~\eqref{E: general DOS formula}, whereas~\cite{AMSZ} is specific to the case $T = f(-\Delta + M_V)$. Furthermore,~\cite{AMSZ} has the requirement $d\geq 2$ on the dimension of the space $\R^d$, an analogue of which is not necessary in the discrete setting. Finally, Theorem~\ref{T: Main} holds for any choice of radially symmetric strictly decreasing function $w : X \to \C$ such that $M_w \in \mathcal{L}_{1,\infty}$ (the latter can be written more briefly as $w \in \ell_{1,\infty}(X)$). 
In particular, we can always pick
\begin{equation*}
    w(x) := \frac{1}{1+|B(x_0,d_X(x_0,x))|},\quad x \in X,
\end{equation*}
for which $\Tr_\omega(M_w) = 1$, see Corollary~\ref{C:MwTrace}. This provides that the constant of proportionality in~\eqref{E: general DOS formula}, $\Tr_\omega(M_w)$, for this choice of weight $w$ is independent of the choice of~$\omega$. Hence, for this choice of $w$, when $T = f(H)$ for an operator $H$ admitting a density of states, this ensures that the measure~\eqref{eq:Measure1} is independent of the choice of $\omega$ as in the Euclidean case.

By definition, independence of the measure~\eqref{eq:Measure1} with respect to $\omega$ is equivalent to the operators $f(H)M_w$ being Dixmier measurable. Let us conclude this introduction with an example illustrating that this is a strictly weaker requirement than $H$ admitting a density of states.

\begin{ex}
Let $X = \mathbb{N}$ with the usual metric $d(x,y) = |x-y|$, and take the base-point $x_0 = 0$. On $\ell_2(\N)$, a diagonal operator $H(e_n) = \lambda_n e_n$ admits a density of states if the limits
\[
\lim_{n\to \infty} \frac{1}{|B(x_0,n)|} \Tr(f(H)M_{\chi_{B(x_0,n)}}) =  \lim_{n\to\infty}\frac{1}{n+1}\sum_{k=0}^n f(\lambda_k), \quad f\in C_c(\R),
\]
exist.

With the choice $w(n):= \frac{1}{n+1}$, we have that $M_w \in \mathcal{L}_{1,\infty}$. Then, for $f\in C(\R)$, we have that $f(H)M_w \in \mathcal{L}_{1,\infty} $ is Dixmier measurable if and only if the limit
\[
\lim_{n\to\infty}\frac{1}{\log(2+n)} \sum_{k=0}^n \lambda(k,f(H)M_w)
\] 
exists~\cite[Theorem~9.1.2(c)]{LSZVol1}, where $\lambda(f(H)M_w)$ is an eigenvalue sequence of $f(H)M_w$ ordered in decreasing modulus. Since $f(H)M_w$ is $M_w$-modulated (see Section~\ref{S:IntroPreliminaries} or~\cite{KaltonLord2013} and~\cite[Section~7.3]{LSZVol1}), we have through Theorem~\ref{T: Modulated} that 
\[
 \sum_{k=0}^n \lambda(k,f(H)M_w) = \sum_{k=0}^n \frac{f(\lambda_k)}{k+1} + O(1), \quad n\to\infty.
\]
Hence, $f(H)M_w$ is Dixmier measurable if and only if the limit
\[
\lim_{n\to\infty} \frac{1}{\log(n+2)}\sum_{k=0}^n \frac{f(\lambda_k)}{k+1}, 
\]
exists.

With the choice
\begin{equation*}
\lambda_n := 
\begin{cases}
\begin{aligned}
0, \quad&n \in [2^{2m}+1, 2^{2m+1}], &&m=0,1,2,\dots,\\
1, \quad&n \in [2^{2m-1}+1, 2^{2m}], &&m=1,2,3,\dots,
\end{aligned}
\end{cases}
\end{equation*}
the diagonal operator $H$ does not admit a density of states, but $f(H)M_w$ is Dixmier measurable for all $f \in C(\R)$.
\end{ex}
\begin{proof}
    Observe that $\frac{1}{n+1}\sum_{k=0}^n \lambda_n$ does not converge as $n\to \infty$, since 
\begin{align*}
\frac{1}{2^{2m}+1} \sum_{k=0}^{2^{2m}} \lambda_n &= \frac{1}{2^{2m}+1}\sum_{k=0}^{2m}(-2)^k\\
&= \frac{1}{2^{2m}+1}\left(\frac{1-(-2)^{2m+1}}{1+2}\right)\\
&\to \frac{2}{3};\\
\frac{1}{2^{2m+1}+1} \sum_{k=0}^{2^{2m+1}} \lambda_n &= \frac{1}{2^{2m+1}+1}\sum_{k=0}^{2m}(-2)^k\\
&= \frac{1}{2^{2m+1}+1}\left(\frac{1-(-2)^{2m+1}}{1+2}\right)\\
&\to \frac{1}{3},
\end{align*}
where we used the closed-form formula for the sum of a geometric series. Hence $H$ cannot admit a density of states. 

Next, we show that $\frac{1}{\log(2+n)}\sum_{k=0}^n \frac{\lambda_k}{k+1}$ converges as $n\to \infty$. For convenience, we will prove the convergence of $\frac{1}{\log(n)}\sum_{k=1}^n \frac{\lambda_k}{k}$, taking $n>2$, and we will use that the harmonic number $H(n)=\sum_{k=1}^n \frac{1}{k}$ has the expansion $H(n) = \log(n)+\gamma+\frac{1}{2n}+O\left(\frac{1}{n^2}\right)$, where $\gamma$ is the Euler-Mascheroni constant~\cite[Section~1.2.7]{Knuth1997}. For $m\geq 1$,
\begin{align*}
\frac{1}{\log(2^{2m})}\sum_{k=1}^{2^{2m}}\frac{\lambda_k}{k} &= \frac{1}{2m\log(2)}\left(1+\sum_{k=1}^{m} H(2^{2k})-H(2^{2k-1})\right)\\
&= \frac{1}{2m\log(2)}\sum_{k=1}^{m} \log(2) (2k-(2k-1))+\\
& \quad + \frac{1}{2m\log(2)}+\frac{1}{2m\log(2)}\sum_{k=1}^{m} \frac{1}{4k} - \frac{1}{4k-2} + O\left(\frac{1}{k^2}\right)\\
&\rightarrow \frac{1}{2},
\end{align*}
where for the last term we used that for any sequence that converges to zero, its Ces\`aro mean also converges to zero. Likewise,\begin{align*}
\frac{1}{\log(2^{2m+1})}\sum_{k=1}^{2^{2m+1}}\frac{\lambda_k}{k} &= \frac{1}{(2m+1)\log(2)}\left( 1+ \sum_{k=1}^{m} H(2^{2k})-H(2^{2k-1})\right)\\
&= \frac{m}{2m+1}+ \frac{1}{(2m+1)\log(2)}+\\
& \quad +\frac{1}{(2m+1)\log(2)}\sum_{k=1}^{m} \frac{1}{4k} - \frac{1}{4k-2} + O\left(\frac{1}{k^2}\right)\\
&\rightarrow \frac{1}{2}.
\end{align*}
Note that $\frac{1}{\log(n)}\sum_{k=1}^n \frac{\lambda_k}{k}$ increases monotonically on $n\in [2^{2m-1}+1, 2^{2m}]$ and decreases monotonically on $n\in[2^{2m}+1,2^{2m+1}]$. Therefore the above shows that $\frac{1}{\log(n)}\sum_{k=1}^n \frac{\lambda_k}{k} \rightarrow \frac{1}{2}$, and hence also $\frac{1}{\log(n)}\sum_{k=1}^n \frac{1-\lambda_k}{k} \rightarrow \frac{1}{2}$. 
Hence, for $f\in C(\R)$,
\[
\frac{1}{\log(2+n)} \sum_{k=0}^n \frac{f(\lambda_k)}{k}\to \frac{f(0)+f(1)}{2}, \quad n \to \infty,
\] 
which thus shows that $f(H)M_w$ is Dixmier measurable.
\end{proof}

Do note that the above example is not a (discrete) Schr\"odinger type operator. It is unknown whether these two conditions still differ when restricting to such operators. This is also an open question for the Euclidean case,
and in~\cite{AMSZ} it was conjectured that these conditions are in fact equivalent for Schr\"odinger operators on Euclidean space.

The structure of the rest of the chapter is as follows. First we will discuss Property~\eqref{E: Condition} in Section~\ref{S: Metric Condition} and give examples of spaces where it is satisfied. In a way, we will return to the origin of the density of states, and show that crystals are included, the original source of the concept of a DOS (see~\cite{HoddesonBaym1987} and a very early use of the DOS in this context in 1929 by F. Bloch~\cite{Bloch1929}). Other concrete examples taken from physics will be considered too, demonstrating that condition~\eqref{E: Condition} appearing in this theorem is very natural.
Section~\ref{S: Proof} is the heart of the chapter where we prove the main result, Theorem~\ref{T: Main}. Finally, in 
Section~\ref{S: Translation} we apply the Dixmier trace formula for the DOS to provide a new proof of the equivariance under translations for the DOS on lattices.

\section{Metric condition}
\label{S: Metric Condition}
The main theorem of this chapter is applicable to countably infinite discrete metric spaces such that every ball contains at most finitely many points and that also satisfy property~\eqref{E: Condition} holds. Namely, we require that \begin{equation}\tag{C}
    \lim_{k\rightarrow \infty}\frac{|B(x_0, r_{k+1})|}{|B(x_0, r_k)|} = 1,
\end{equation} where $\{r_k\}_{k\in \mathbb{N}}$ is the increasing sequence created by ordering the set $\{d_X(x_0, y) : y \in X\}$ in increasing manner (which results in a sequence $r_k \rightarrow \infty$ since every ball in $X$ contains at most finitely many points).

First, observe that property~\eqref{E: Condition} is a condition on the so-called \textit{crystal ball sequence} $\{|B(x_0, r_k)|\}_{k \in \mathbb{N}}$ of the metric space $X$~\cite{ConwaySloane1997}, or alternatively after defining $S(x_{0}, r_k) := B(x_0, r_k) \setminus B(x_0, r_{k-1})$ it is a condition on the \textit{coordination sequence} $\{|S(x_0, r_k)|\}_{k \in \mathbb{N}}$~\cite{Brunner1979, ConwaySloane1997, OKeeffe1995}. 

To build some intuition, consider the following comment by J.E. Littlewood. Upon encountering the condition $\lim_{n\rightarrow \infty}\frac{\lambda_{n+1}}{\lambda_n} = 1$ he remarks~\cite{Littlewood1911}: ``[This condition is] satisfied when $\lambda_n$ is any function of less order than $e^{\varepsilon n}$ for all values of $\varepsilon$, which increases in a regular manner. When, however, $\lambda_n > e^{\varepsilon n}$, the theorem breaks down altogether.'' This observation is apt, indeed our restriction on the metric space $X$ is a strictly stronger assumption than sub-exponential growth of the sequence $\{|B(x_0, r_k)|\}_{k \in \mathbb{N}}$ (with respect to $k$), but exactly what kind of regular growth plus subexponential growth would imply condition~\eqref{E: Condition} is difficult to pin down.

There is an equivalent description of property~\eqref{E: Condition}, the proof of which can be found in a recent paper by Cipriani and Sauvageot~\cite[Proposition~2.9]{CiprianiSauvageot2021}. The proposition they prove is slightly different, but the given proof is immediately applicable to the following.

\begin{prop}\label{P: CiprianiAsymptote}
    Let $(X,d_X)$ be an infinite, discrete metric space such that each ball contains at most finitely many points, choose some point $x_0\in X$ and order $\{d_X(x_0, y) : y \in X\}$ in increasing manner to define the sequence $\{r_k\}_{k\in \mathbb{N}}$. Then \[\lim_{k\rightarrow \infty}\frac{|B(x_0, r_{k+1})|}{|B(x_0, r_k)|} = 1\] if and only if \[|B(x_0, r)| \sim \varphi(r)\] for some continuous function $\varphi: \mathbb{R}_+ \rightarrow \mathbb{R}_+$.
\begin{proof}
The proof is exactly the same as in~\cite[Proposition~2.9]{CiprianiSauvageot2021} after replacing $N_L(x)$ by $|B(x_0, r)|$ and $M_k$ by $|B(x_0, r_{k})|$.
\end{proof}
\end{prop}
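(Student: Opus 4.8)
\textbf{Plan of proof for Proposition~\ref{P: CiprianiAsymptote}.}

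The claim is a purely elementary statement about the step function $F(r):=|B(x_0,r)|$, which is non-decreasing, integer-valued, and piecewise constant with jumps exactly at the points $r_k$. Write $M_k:=|B(x_0,r_k)|=F(r_k)$ for the value of this step function on the interval $[r_k,r_{k+1})$. The plan is to prove the two implications separately, the non-trivial direction being that the ratio condition $M_{k+1}/M_k\to 1$ produces a \emph{continuous} asymptote $\varphi$.

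For the easy direction: suppose $F(r)\sim \varphi(r)$ for some continuous $\varphi:\mathbb R_+\to\mathbb R_+$. Evaluating the asymptotic equivalence along the subsequence $r=r_k$ and $r=r_{k+1}$ gives $M_k\sim\varphi(r_k)$ and $M_{k+1}\sim\varphi(r_{k+1})$, so
\[
\frac{M_{k+1}}{M_k}=\frac{M_{k+1}}{\varphi(r_{k+1})}\cdot\frac{\varphi(r_{k+1})}{\varphi(r_k)}\cdot\frac{\varphi(r_k)}{M_k}.
\]
The outer two factors tend to $1$; for the middle factor one uses that $F\sim\varphi$ forces $\varphi(r_{k+1})/\varphi(r_k)\to 1$, since $\varphi(r_{k+1})/\varphi(r_k)$ is squeezed between $F(r_{k+1}^-)/F(r_k)=M_k/M_k=1$-type bounds — more precisely $M_k=F(r)$ for $r\in[r_k,r_{k+1})$, so $\varphi(r)/M_k\to 1$ uniformly enough along that interval that taking $r\uparrow r_{k+1}$ and comparing with $\varphi(r_{k+1})\sim M_{k+1}$ gives the conclusion. (This is exactly the reasoning transcribed from~\cite[Proposition~2.9]{CiprianiSauvageot2021}.)

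For the substantive direction: assume $M_{k+1}/M_k\to 1$. The idea is to interpolate the points $(r_k,M_k)$ by an explicit continuous increasing function and show $F\sim\varphi$. Concretely, define $\varphi$ to be the piecewise log-linear (or piecewise linear) interpolant: on $[r_k,r_{k+1}]$ set
\[
\log\varphi(r):=\log M_k+\frac{r-r_k}{r_{k+1}-r_k}\bigl(\log M_{k+1}-\log M_k\bigr),
\]
which is continuous, strictly increasing, and satisfies $\varphi(r_k)=M_k$. For $r\in[r_k,r_{k+1})$ we have $F(r)=M_k$ while $\varphi(r)$ lies between $M_k$ and $M_{k+1}$, so
\[
1\le \frac{\varphi(r)}{F(r)}\le\frac{M_{k+1}}{M_k}\xrightarrow[k\to\infty]{}1,
\]
and since $r\to\infty$ forces $k\to\infty$ (each ball is finite, so $r_k\to\infty$), this gives $\varphi(r)/F(r)\to 1$, i.e. $F(r)\sim\varphi(r)$. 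One should also record that $\varphi$ takes values in $\mathbb R_+$ (true as soon as $M_0\ge 1$, which holds since $x_0\in B(x_0,r_0)$) and that it is genuinely defined on all of $\mathbb R_+$ — extend it to $[0,r_0]$ by any continuous positive increasing function matching $\varphi(r_0)=M_0$.

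\textbf{Main obstacle.} There is no deep obstacle here; the only point requiring a little care is the bookkeeping in the easy direction, where one must convert the hypothesis $F\sim\varphi$ into the ratio statement $\varphi(r_{k+1})/\varphi(r_k)\to 1$ without circularity — the cleanest route is to note $M_k\le\varphi(r)\,(1+o(1))\le M_k\,(1+o(1))$ for $r\in[r_k,r_{k+1})$ and let $r\uparrow r_{k+1}$, using continuity of $\varphi$ at $r_{k+1}$ together with $\varphi(r_{k+1})\sim M_{k+1}$. Since the statement and its proof are essentially identical to~\cite[Proposition~2.9]{CiprianiSauvageot2021} after the substitutions $N_L(x)\leadsto|B(x_0,r)|$, $M_k\leadsto|B(x_0,r_k)|$, I would simply present the interpolation argument above and cite that reference for the details.
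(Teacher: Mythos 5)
Your proof is correct: the piecewise (log-)linear interpolation through the points $(r_k, |B(x_0,r_k)|)$ together with the squeeze $1\le\varphi(r)/F(r)\le M_{k+1}/M_k$ handles the substantive direction, and your limiting argument $r\uparrow r_{k+1}$ for the converse is sound. This is essentially the same argument the paper relies on — its proof simply defers to \cite[Proposition~2.9]{CiprianiSauvageot2021}, whose interpolation argument you have reconstructed — so there is nothing to add.
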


\begin{rem}
    For the Cayley graph of the free group $\mathbb{F}_2$ we have $r_k = k$, $|B(x_0, k)| = 2^k$ and hence $|B(x_0, r)| = 2^{\lfloor r \rfloor} $, but \[\frac{|B(x_0, r)|}{2^r} = 2^{\lfloor r \rfloor-r}\] which does not converge as $r\rightarrow \infty$. This illustrates that $|B(x_0, r)| \sim \varphi(r)$ for some continuous function $\varphi$ is a stronger assumption that one might expect.
\end{rem}

In the same paper, another condition is given which is sufficient for property~\eqref{E: Condition} to be satisfied~\cite[Proposition~2.8]{CiprianiSauvageot2021}.

\begin{prop}\label{P: CondC}
    Let $X$ be a metric space as in Proposition~\ref{P: CiprianiAsymptote}. If $|B(x_0, r_k)| \sim f(k)$ (letting now $k\rightarrow \infty$ over the integers) for a function $f\in C^1(0,\infty)$ such that $\frac{f'(x)}{f(x)} \rightarrow 0$ as $x\rightarrow \infty$, then \[\lim_{k\rightarrow \infty}\frac{|B(x_0, r_{k+1})|}{|B(x_0, r_k)|} = 1.\] In particular, if $|B(x_0, r_k)|$ is a polynomial in $k$, then $\frac{|B(x_0, r_{k+1})|}{|B(x_0, r_k)|}\rightarrow 1$ as $k\rightarrow \infty$.
\begin{proof}
See~\cite[Proposition~2.8]{CiprianiSauvageot2021}.
\end{proof}
\end{prop}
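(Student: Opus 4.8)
The statement to prove is Proposition~\ref{P: CondC}, which asserts that if $|B(x_0, r_k)| \sim f(k)$ for a $C^1$ function $f$ on $(0,\infty)$ with $f'(x)/f(x) \to 0$ as $x \to \infty$, then the crystal ball ratio $|B(x_0,r_{k+1})|/|B(x_0,r_k)| \to 1$, and in particular this holds when $|B(x_0,r_k)|$ is a polynomial in $k$.

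\medskip

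The plan is to reduce everything to the asymptotic behaviour of the function $f$ itself. First I would observe that since $|B(x_0,r_k)| \sim f(k)$, we have
\[
\frac{|B(x_0,r_{k+1})|}{|B(x_0,r_k)|} = \frac{|B(x_0,r_{k+1})|}{f(k+1)} \cdot \frac{f(k+1)}{f(k)} \cdot \frac{f(k)}{|B(x_0,r_k)|},
\]
and the first and third factors tend to $1$ by hypothesis, so it suffices to show $f(k+1)/f(k) \to 1$. For this I would write $\log f(k+1) - \log f(k) = \int_k^{k+1} \frac{f'(x)}{f(x)}\,dx$ by the fundamental theorem of calculus (valid since $f \in C^1$ and, for $k$ large, $f$ is eventually nonzero — note $f$ must be eventually positive since it is asymptotic to a sequence of positive integers, so $f(x) > 0$ for $x$ large). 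Then $\left|\log f(k+1) - \log f(k)\right| \leq \sup_{x \in [k,k+1]} |f'(x)/f(x)| \to 0$ as $k \to \infty$ by the assumption $f'(x)/f(x) \to 0$. Exponentiating gives $f(k+1)/f(k) \to 1$, which completes the first part.

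\medskip

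For the second assertion, when $|B(x_0,r_k)|$ is a polynomial in $k$ — say $|B(x_0,r_k)| = P(k)$ for a polynomial $P$ with positive leading coefficient (positivity forced again by the fact that $|B(x_0,r_k)|$ counts points and grows) — I would simply apply the first part with $f = P$. Indeed $P \in C^1(0,\infty)$, and $P'(x)/P(x) \to 0$ as $x \to \infty$ since $\deg P' = \deg P - 1 < \deg P$; this is an elementary computation (the ratio behaves like $(\deg P)/x$). One subtlety: the hypothesis "$|B(x_0,r_k)|$ is a polynomial in $k$" should be read as holding for all sufficiently large $k$, or exactly; either way the argument goes through since we only care about $k \to \infty$. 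Strictly speaking one could even cite~\cite[Proposition~2.8]{CiprianiSauvageot2021} directly, but the proof above is short enough to include.

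\medskip

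I do not anticipate any serious obstacle here — the main point is just the logarithmic-derivative trick turning the multiplicative statement $f(k+1)/f(k) \to 1$ into the additive statement about $\int_k^{k+1} f'/f \to 0$. The only things to be careful about are: (i) ensuring $f$ (resp.\ $P$) is eventually nonzero so that $\log f$ and $f'/f$ make sense on the relevant range, which follows from $f$ being asymptotic to the positive integer sequence $|B(x_0,r_k)|$; and (ii) correctly chaining the three-factor decomposition above so that the "$\sim$" hypothesis is used cleanly. Both are routine, so this proposition is essentially a direct corollary of the stated reference together with a one-line calculus estimate.
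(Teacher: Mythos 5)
Your proof is correct. Note that the paper itself offers no argument at all here --- it simply defers to \cite[Proposition~2.8]{CiprianiSauvageot2021} --- so your write-up is a self-contained substitute rather than a variant of an in-paper proof; the logarithmic-derivative device ($\log f(k+1)-\log f(k)=\int_k^{k+1} f'/f$, bounded by $\sup_{[k,k+1]}|f'/f|\to 0$) is the standard route and is essentially the content of the cited result. The three-factor reduction via $|B(x_0,r_k)|\sim f(k)$ and the polynomial specialisation ($P'/P\sim \deg P/x\to 0$) are both handled correctly. One small point worth tightening: asymptotic equivalence with the positive sequence $|B(x_0,r_k)|$ only controls $f$ at integer arguments, so by itself it does not rule out $f$ vanishing somewhere in $[k,k+1]$; what saves you is that the hypothesis $f'(x)/f(x)\to 0$ already presupposes $f$ nonvanishing on a ray $[X_0,\infty)$, whence by continuity $f$ has constant sign there, and the positive integer values force $f>0$ on that ray. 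With that one-line remark added, the argument is complete.
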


To be used later on, we also provide the following lemma.

\begin{lem}\label{L: CondC}
    Let $X$ be a metric space as in Proposition~\ref{P: CiprianiAsymptote}. If there exist constants $C_1, C_2, d$ and $R$ such that for $r_k > R$ we have \begin{equation}\label{E: Coord seq cond1}
    C_1 k^d < |S(x_0, r_k)| < C_2 k^d,
    \end{equation} then $X$ has property~\eqref{E: Condition}.
\begin{proof}
    If $C_1 k^d < |S(x_0, r_k)| < C_2 k^d$, we can deduce that also $|B(x_0, r_k)| \geq \frac{C_1}{d+1} k^{d+1} + O(k^d)$ for $r_k > R$, and therefore \begin{equation}\label{E: Folner}
        \lim_{k\rightarrow \infty}\frac{|S(x_0, r_{k+1})|}{|B(x_0, r_k)|} = 0,
    \end{equation} which is equivalent with \[\lim_{k\rightarrow \infty}\frac{|B(x_0, r_{k+1})|}{|B(x_0, r_k)|} = 1. \qedhere\] 
\end{proof}
\end{lem}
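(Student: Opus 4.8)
The plan is to prove Lemma~\ref{L: CondC} by reducing it to Property~\eqref{E: Condition} via the intermediate estimate \eqref{E: Folner}, following the chain of implications already sketched in the statement. First I would establish a lower bound on the ball volumes. Since $|B(x_0, r_k)| = |B(x_0,R)| + \sum_{j : R < r_j \leq r_k} |S(x_0, r_j)|$ and each summand with $r_j > R$ satisfies $|S(x_0, r_j)| > C_1 j^d$, a comparison of the sum $\sum_{j=0}^k j^d$ with the integral $\int_0^k x^d\,dx = \frac{k^{d+1}}{d+1}$ gives
\[
|B(x_0, r_k)| \geq \frac{C_1}{d+1} k^{d+1} + O(k^d), \quad k \to \infty.
\]
This is the only genuinely computational step, and it is routine.

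Next I would combine this lower bound with the upper bound on $|S(x_0, r_{k+1})| < C_2 (k+1)^d$ from \eqref{E: Coord seq cond1} to obtain
\[
\frac{|S(x_0, r_{k+1})|}{|B(x_0, r_k)|} \leq \frac{C_2 (k+1)^d}{\frac{C_1}{d+1} k^{d+1} + O(k^d)} = O\!\left(\frac{1}{k}\right) \xrightarrow{k\to\infty} 0,
\]
which is exactly \eqref{E: Folner}. Finally, since $|B(x_0, r_{k+1})| = |B(x_0, r_k)| + |S(x_0, r_{k+1})|$, dividing through by $|B(x_0, r_k)|$ yields
\[
\frac{|B(x_0, r_{k+1})|}{|B(x_0, r_k)|} = 1 + \frac{|S(x_0, r_{k+1})|}{|B(x_0, r_k)|} \to 1,
\]
which is Property~\eqref{E: Condition}, completing the proof.

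I do not anticipate any real obstacle here; the argument is elementary once one observes that a polynomial lower bound of degree $d$ on the coordination sequence forces a polynomial lower bound of degree $d+1$ on the crystal ball sequence, so that the ratio $|S|/|B|$ decays like $1/k$. The only point requiring a little care is making the $O(k^d)$ error term in the ball-volume estimate precise — but since we only need an asymptotic lower bound of the correct leading order, even a crude bound such as $|B(x_0,r_k)| \geq \frac{C_1}{d+1}k^{d+1}$ for $k$ large suffices, and the details can be dispatched in one line by citing the integral comparison.
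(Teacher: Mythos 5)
Your proposal is correct and follows essentially the same route as the paper's proof: lower-bound the crystal ball sequence by $\frac{C_1}{d+1}k^{d+1}+O(k^d)$ via the coordination-sequence bound, deduce that $|S(x_0,r_{k+1})|/|B(x_0,r_k)|\to 0$, and conclude Property~(C) from $|B(x_0,r_{k+1})|=|B(x_0,r_k)|+|S(x_0,r_{k+1})|$. The only difference is that you spell out the integral comparison and the use of the upper bound $|S(x_0,r_{k+1})|<C_2(k+1)^d$, which the paper leaves implicit.
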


As a final general comment, when writing condition~\eqref{E: Condition} in the manner of Equation \eqref{E: Folner}, it resembles a type of F\o lner condition. In particular, it is reminiscent of work by Adachi and Sunada on the DOS on amenable groups where a closely related property is the subject of interest, namely Property (P) in~\cite[Proposition~1.1]{AdachiSunada1993}, also compare with~\cite[Lemma~3.2]{AdachiSunada1993}.

\subsection{Solid matter}
Consider any kind of rigid matter whose atoms or molecules are arranged in Euclidean space in such a way that it can be described by a tiling of that space. To be precise, we mean a tiling generated by only a finite selection of different tiles, with each type of tile having a fixed arrangement of atoms within (at least 1). Any crystal can be described in this a way using only one tile by considering its underlying Bravais lattice~\cite[Chapter~4]{Ashcroft1976}, but the definition above includes quasicrystals~\cite{ShechtmanBlech1984, LevineSteinhardt1984}. For the approach of quasicrystals by tilings see for example~\cite{Hof1995, Jaric1989, Nelson1986}. Specifically,~\cite{Hof1993} establishes the existence of the integrated DOS, which is the existence of the function $\lambda \mapsto \nu_H(-\infty,\lambda)$, for every self-adjoint vertex-pattern-invariant operator on aperiodic self-similar tilings.

If we define the set $X$ of the metric space $(X,d_X)$ as the atoms or molecules of the material and impose the induced Euclidean metric, then we claim that this space has property~\eqref{E: Condition}.

\begin{prop}
    Let $X$ be a discrete subset of $\mathbb{R}^d$ with the inherited Euclidean metric, such that $X$ can be defined by a tiling as described above. Then $(X,d_X)$ has property~\eqref{E: Condition}.
\begin{proof}
Without loss of generality, assume that the diameters of the tiles are all less than $1$. Hence $r_{k+1} \in (r_k, r_k +2]$,
and therefore it suffices to show that \[\frac{|B(x_0, r_k +2) \setminus B(x_0, r_k)|}{|B(x_0, r_k)|} \xrightarrow{k \rightarrow \infty} 0.\]
Now, the number of vertices contained in $B(x_0, r_k +2) \setminus B(x_0, r_k)$ is bounded from above by some constant times $(r_k)^{d-1}$: if the smallest tile has volume $V$, and each tile contains at most $n$ atoms, and $\tilde{B}(x_0, r_k +2)$ denotes the ball in $\mathbb{R}^d$, then there can be at most $n \frac{|\tilde{B}(x_0, r_k +2) \setminus \tilde{B}(x_0, r_k)|}{V}$ vertices in $B(x_0, r_k +2) \setminus B(x_0, r_k)$, which is bounded by $C_1(r_k)^{d-1}$. 

If the volume of the biggest tile is $W$, the number of tiles that are \textit{fully} contained in $B(x_0, r_k)$ is similarly bounded from below by $\frac{|\tilde{B}(x_0, r_k - 1)|}{W}$ because we assumed that the diameter of the tiles is less than 1. Recall that we assumed that each tile contains at least one atom. Then the number of atoms in $B(x_0, r_k)$ can be bounded from below by $\frac{|\tilde{B}(x_0, r_k - 1)|}{W}$, which is of the form $C_2 (r_k)^d + O((r_k)^{d-1})$. Hence indeed \[0 \leq \frac{|B(x_0, r_k +2) \setminus B(x_0, r_k)|}{|B(x_0, r_k)|} \leq \frac{C_1(r_k)^{d-1}}{C_2 (r_k)^{d} + O((r_k)^{d-1})} \xrightarrow{k \rightarrow \infty} 0,\] and we see that this metric space satisfies condition~\eqref{E: Condition}.
\end{proof}
\end{prop}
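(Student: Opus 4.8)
The plan is to reduce the convergence of the ratio $|B(x_0,r_{k+1})|/|B(x_0,r_k)|$ to an estimate on the number of atoms in a thin Euclidean shell relative to the number of atoms in a full Euclidean ball. I would first normalise by assuming, without loss of generality, that every tile has diameter less than $1$; then any two consecutive distances satisfy $r_{k+1}\in(r_k,r_k+2]$, so it suffices to show that the annulus count $|B(x_0,r_k+2)\setminus B(x_0,r_k)|$ is negligible compared with $|B(x_0,r_k)|$. Equivalently, in the language of Lemma~\ref{L: CondC}, I want to verify a F\o lner-type bound, or directly bound the ratio and invoke the elementary fact that $|S(x_0,r_{k+1})|/|B(x_0,r_k)|\to 0$ is equivalent to property~\eqref{E: Condition}.

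The key steps, in order: (i) bound the shell count from above by a Euclidean volume computation. Writing $\tilde B$ for balls in $\mathbb{R}^d$, the annulus $\tilde B(x_0,r_k+2)\setminus\tilde B(x_0,r_k)$ has volume $O(r_k^{d-1})$; since each tile has volume at least $V>0$ and contains at most $n$ atoms, the number of atoms it can contain is at most $n\,|\tilde B(x_0,r_k+2)\setminus\tilde B(x_0,r_k)|/V\le C_1 r_k^{d-1}$ (every tile meeting the annulus is contained in a slightly larger annulus, still of volume $O(r_k^{d-1})$). (ii) Bound the ball count from below: every tile fully contained in $\tilde B(x_0,r_k-1)$ lies inside $B(x_0,r_k)$ because tiles have diameter $<1$; the number of such tiles is at least $|\tilde B(x_0,r_k-1)|/W$ where $W$ is the maximal tile volume, and each tile contains at least one atom, giving $|B(x_0,r_k)|\ge C_2 r_k^{d}+O(r_k^{d-1})$. (iii) Combine:
\[
0\le \frac{|B(x_0,r_k+2)\setminus B(x_0,r_k)|}{|B(x_0,r_k)|}\le \frac{C_1 r_k^{d-1}}{C_2 r_k^{d}+O(r_k^{d-1})}\xrightarrow{k\to\infty}0,
\]
which yields $|B(x_0,r_{k+1})|/|B(x_0,r_k)|\to 1$ since $B(x_0,r_{k+1})\subseteq B(x_0,r_k+2)$.

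The main obstacle — though it is more bookkeeping than a genuine difficulty — is the careful handling of tiles that straddle the boundary spheres: one must be sure that every tile meeting the annulus $B(x_0,r_k+2)\setminus B(x_0,r_k)$ is contained in a Euclidean annulus whose volume is still $O(r_k^{d-1})$ (this uses diameter $<1$ to widen by a bounded amount), and symmetrically that tiles fully inside $\tilde B(x_0,r_k-1)$ genuinely contribute atoms inside $B(x_0,r_k)$. Once the geometry is pinned down, the inequality chain above is immediate. Note that, unlike the analysis in~\cite{AMSZ}, no lower bound on $d$ is needed here; the argument works verbatim for $d=1$ as well, since all we use is that a shell has one dimension less than a ball.
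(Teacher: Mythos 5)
Your proposal is correct and follows essentially the same argument as the paper: normalise the tile diameter to be less than $1$ so that $r_{k+1}\in(r_k,r_k+2]$, bound the shell count above by $C_1 r_k^{d-1}$ via the minimal tile volume and the maximal number of atoms per tile, bound the ball count below by $C_2 r_k^d+O(r_k^{d-1})$ via the maximal tile volume and the fact that each tile contains at least one atom, and compare. The boundary-straddling bookkeeping you flag is handled the same way in the paper, so there is nothing further to add.
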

\subsection{Crystals}\label{SS: Crystals}
In another approach, we can take the atoms of crystals as the vertices of a graph and define our discrete metric space $(X,d_X)$ as this graph with shortest-path metric (as a first observation, note that for any graph with the shortest-path metric, we have $r_k = k$). Common choices for such a construction are the contact graph~\cite{ConwaySloane1997} and the Voronoi graph~\cite[p.~33]{ConwaySloane1993}. See for example the very recent paper~\cite{PradodeOliveira2021} based on the model~\cite{deOliveiraPrado2005} which shows the existence of the DOS measure on $\mathbb{Z}$ for a suitable family of Dirac operators, the very recent~\cite{PapaefstathiouRobaina2021} or~\cite{Pastur1980} for an older result.

For crystals specifically, such a graph $\Gamma$ comes with a free $\mathbb{Z}^d$ action such that the quotient graph $\Gamma/\mathbb{Z}^d$ is finite. This is a simple observation by considering the underlying Bravais lattice of any crystal~\cite[Chapter~4]{Ashcroft1976}. A recent advancement by Y. Nakamura,  R. Sakamoto, T. Masea and J. Nakagawa~\cite{NakamuraSakamoto2021} concerns exactly such (even possibly directed) graphs $\Gamma$ with a free $\mathbb{Z}^d$ action such that $\Gamma/\mathbb{Z}^d$ is finite. Namely, these authors have proven that the coordination sequence $\{|S(x_0, k)|\}_{k \in \mathbb{N}}$ is then of quasi-polynomial type, by which they mean the following. A quasi-polynomial is defined as a function $p: \N \rightarrow \mathbb{Z}$ with $p(k) = c_m(k) k^m + c_{m-1} k^{m-1}+ \cdots + c_0(k)$ where $c_i(k)$ are all periodic with an integral period. Equivalently, it is a function such that for some integer $N$ \[p(k) = \begin{cases} p_0(k) & k = 0 \text{ mod } N\\
p_1(k) & k = 1 \text{ mod } N\\
&\vdots\\
p_{N-1}(k) & k = N-1 \text{ mod } N,\\
\end{cases}\]
where $p_0, \dots, p_{N-1}$ are polynomials. A function of quasi-polynomial type is then defined as a function $f: \N \rightarrow \mathbb{Z}$ such that there exists some quasi-polynomial $p$ and an integer $M$ such that $f(n) = p(n)$ for all $n \geq M$. See also~\cite[Section~4.4]{Stanley1986}.

\begin{prop}
Let $\Gamma$ be a graph with a free $\mathbb{Z}^d$ action such that the quotient graph $\Gamma/\mathbb{Z}^d$ is finite. Then $(\Gamma, d_\Gamma)$, where we take $d_\Gamma$ as the shortest-path metric, has property~\eqref{E: Condition}.
\end{prop}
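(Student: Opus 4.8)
The plan is to show that the coordination sequence $\{|S(x_0,k)|\}_{k\in\mathbb N}$ of $\Gamma$ is of quasi-polynomial type, and then deduce property~\eqref{E: Condition} from this. For the first part I would simply invoke the result of Nakamura--Sakamoto--Masea--Nakagawa~\cite{NakamuraSakamoto2021} cited just above: since $\Gamma$ carries a free $\mathbb Z^d$-action with finite quotient $\Gamma/\mathbb Z^d$, the coordination sequence $\{|S(x_0,k)|\}$ coincides for all large $k$ with a quasi-polynomial $p(k)$. Note that $r_k=k$ for any graph with the shortest-path metric, so property~\eqref{E: Condition} is the statement that $|B(x_0,k+1)|/|B(x_0,k)|\to 1$ as $k\to\infty$.

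The next step is to analyse the degree of this quasi-polynomial and pass from $S$ to $B$. Write $p(k)=c_m(k)k^m+\cdots+c_0(k)$ with the $c_i$ periodic of common integer period $N$. Because $\Gamma$ is an infinite connected graph, $|S(x_0,k)|\geq 1$ for all $k$, so $p$ is not eventually zero and its leading periodic coefficient $c_m$ is not identically zero; hence there are constants $0<C_1\le C_2$ and $R$ with $C_1 k^m\le |S(x_0,k)|\le C_2 k^m$ for $k>R$ (using that $c_m(k)$ takes only finitely many values, all of the same sign and with the nonzero ones bounded away from $0$ — one has to rule out $c_m(k)=0$ for some residues, which only lowers the exponent on a subprogression; a cleaner route is to observe that summing $p$ gives $|B(x_0,k)|=\sum_{j\le k}|S(x_0,j)|$, itself eventually a quasi-polynomial whose leading term grows like a genuine positive power $k^{m+1}$ since the partial sums of a nonnegative eventually-quasi-polynomial sequence grow polynomially of degree exactly one higher). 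Either way we land in the hypothesis of Lemma~\ref{L: CondC}: there are $C_1,C_2,d:=m,R$ with $C_1k^d<|S(x_0,r_k)|<C_2k^d$ for $r_k>R$, and Lemma~\ref{L: CondC} then yields property~\eqref{E: Condition} directly.

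Alternatively — and perhaps more robustly — I would argue via $|B(x_0,k)|$ directly: it is eventually a quasi-polynomial $P(k)$ in $k$, and since the partial-sum sequence of the nonnegative sequence $|S(x_0,j)|$ is nondecreasing and unbounded, $P(k)\to\infty$ and $P(k+1)-P(k)=|S(x_0,k+1)|$ is $O(k^{m})=o(P(k))$ because $P$ has degree $m+1\ge 1$ with positive leading (periodic) coefficient. This gives $|B(x_0,k+1)|/|B(x_0,k)|=1+|S(x_0,k+1)|/|B(x_0,k)|\to 1$.

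The main obstacle is the mild bookkeeping around quasi-polynomials: a quasi-polynomial can vanish identically on some residue classes mod $N$ while having top-degree $m$ on others, so one must be slightly careful in asserting a two-sided bound $C_1k^m\lesssim|S(x_0,k)|\lesssim C_2k^m$ for \emph{all} large $k$. This is precisely why routing through $|B(x_0,k)|=\sum_{j\le k}|S(x_0,j)|$ is preferable: the summation washes out the residue-class irregularities, the resulting sequence is genuinely comparable to $k^{m+1}$, and then Lemma~\ref{L: CondC} (or the direct estimate above) closes the argument without needing uniform lower bounds on $|S(x_0,k)|$ itself. Everything else is a citation to~\cite{NakamuraSakamoto2021} plus Lemma~\ref{L: CondC}.
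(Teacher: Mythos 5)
Your proposal is correct and follows essentially the same route as the paper: both cite~\cite{NakamuraSakamoto2021} for quasi-polynomiality of the coordination sequence and then deduce property~\eqref{E: Condition} from $|S(x_0,k+1)|=O(k^{t})$ together with $|B(x_0,k)|\gtrsim k^{t+1}$, so that $|S(x_0,k+1)|/|B(x_0,k)|\to 0$. The one step you leave informal --- that the partial sums grow with degree exactly one higher --- is precisely what the paper makes explicit, by summing the maximal-degree constituent polynomial $p_r$ over its own residue class $k\equiv r \pmod N$ to get the lower bound $C_3k^{t+1}+O(k^t)$, thereby sidestepping (as you correctly anticipated) any need for a uniform lower bound on $|S(x_0,k)|$ itself and hence for Lemma~\ref{L: CondC}.
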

\begin{proof} 
By~\cite[Theorem~1.1]{NakamuraSakamoto2021} we have that $|S(x_0, k)|$ is a quasi-polynomial, denote this quasi-polynomial by $p(k)$ and its constituent polynomials by $p_1, \dots, p_{N-1}$. Suppose that the polynomial $p_r$ is one with maximal degree, i.e. $\deg(p_i) \leq \deg(p_r) = t$ for all $i=0, \dots, N-1$. Then there exist some constants $C_1, C_2$ and $L$ such that $p(k) \leq C_1 k^t$ for all $k \geq L$, and also $p_r(k) = C_2 k^t + O(k^{t-1})$. Now take $k \in \mathbb{Z}_{\geq 0}$ arbitrary, and define $a \in \mathbb{Z}_{\geq 0}$ as the smallest positive integer such that $k = mN+r +a$ for some $m \in \mathbb{Z}$. It follows that $a \in \{0, \dots, N-1\}$, and hence note that $mN = k-r-a \geq k-2N$, i.e. $\frac{k}{N} -2\leq m \leq \frac{k}{N}$.
\begin{align*}
    \sum_{n=1}^k p(n) & \geq \sum_{n=0}^m p_r(nN + r)\\
    & = \sum_{n=0}^m C_2(nN + r)^t + O((m+1) (nN + r)^{t-1})\\
    & = C_2\sum_{n=0}^m (nN)^d + O(k^{t})\\
    & = C_3 k^{t+1} + O(k^t).
\end{align*}
Therefore we have that 
\[
0 \leq \frac{p(k+1)}{\sum_{n=1}^{k} p(n)} \leq \frac{C_1 (k+1)^t}{C_3 k^{t+1} + O(k^t)} \xrightarrow{k \rightarrow \infty} 0,
\] 
i.e. $\frac{|S(x_0, k+1)|}{|B(x_0, k)|}$ converges to zero as $k\rightarrow \infty$.
\end{proof}

\subsection{The integer lattice}
From the previous two subsections it follows that respectively $(\mathbb{Z}^d, \norm{\cdot}_2)$ and $(\mathbb{Z}^d, \norm{\cdot}_1)$ have property~\eqref{E: Condition}, where we define \[\norm{v}_p := \left(\sum_{i=1}^d |v_i|^p\right)^{1/p}\]for $1 \leq p < \infty$ and for $p=\infty$ \[\norm{v}_\infty:= \sup_{i=1, \dots, d} \abs{v_i}.\]
Even though the base space of these two metric spaces is the same, the difference between these is the domain undergoing the thermodynamic limit in the definition of the DOS, which can make a difference as demonstrated in~\cite{AzamovMcDonald2022}. As mentioned, the existence of the DOS in the case $\mathbb{Z}$ has been established for a suitable family of Dirac operators~\cite{deOliveiraPrado2005}. Another example is that, using $(\mathbb{Z}^d, \norm{\;\cdot\;}_\infty)$ as a model, the existence of a surface DOS was established for a quantum model with a surface~\cite{EnglischKirsch1988}.

In fact, $(\mathbb{Z}^d, \norm{\cdot}_p)$ has property~\eqref{E: Condition} for all $1 \leq p \leq \infty$. This is simply because in the metric space $(\mathbb{Z}^d, \norm{\cdot}_p)$ we have $\abs{B(0,r)} = V_p(d) r^d + O(r^{d-1})$ where $V_p(d)$ denotes the volume of the $\ell_p$ unit ball in $\mathbb{R}^d$, and Proposition~\ref{P: CiprianiAsymptote} then implies condition~\eqref{E: Condition}.

\subsection{Quasicrystals}
Analogously to Subsection~\ref{SS: Crystals} one can consider graphs constructed from quasicrystals, but these will be aperiodic by definition~\cite{Jaric1989, Nelson1986}. For some investigations of the DOS on quasicrystals, see~\cite{Hof1993, Hof1995}.

On a case-by-case basis, there are some aperiodic tilings for which condition~\eqref{E: Condition} can be expected to hold on their vertex graph. Firstly, a Penrose tiling. This proposition is entirely based on recent results by A. Shutov and A. Maleev~\cite{ShutovMaleev2015, ShutovMaleev2018}.

\begin{prop}
    Consider a 2D Penrose tiling in the construction of~\cite{ShutovMaleev2015}, which is a Penrose tiling with five-fold symmetry with respect to a chosen origin 0. Consider the graph induced by this tiling (i.e. with the same vertices and edges as the tiles) and take this graph with the shortest-path metric as the definition of the metric space $(X,d_X)$. Then this metric space has property~\eqref{E: Condition}.
\begin{proof}
For this particular tiling, Shutov and Maleev showed~\cite{ShutovMaleev2018} that \[|S(0, k)| = C(n)n + o(n)\] where $C(n)$, denoting $\tau = (1+\sqrt{5})/2$, takes a value between $10 \tau^{-2} \approx 3.8$ and $10 \tau^{-2} + (5/2)\tau^{-1} \approx 5.4$ depending on $n$. By Lemma~\ref{L: CondC}, we can then conclude that the vertex graph of this Penrose tiling satisfies condition~\eqref{E: Condition}. 
\end{proof}
\end{prop}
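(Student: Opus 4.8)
The plan is to reduce the statement directly to Lemma~\ref{L: CondC}. First I would record the structural facts about the metric space: for a connected graph equipped with the shortest-path metric, every distance $d_X(x_0,\cdot)$ is a non-negative integer, so the increasing enumeration of $\{d_X(x_0,y):y\in X\}$ is simply $r_k = k$. Consequently the annulus $S(x_0,r_k) = B(x_0,k)\setminus B(x_0,k-1)$ coincides with the combinatorial sphere of radius $k$ in the graph, and $\{|S(x_0,r_k)|\}_{k\in\N}$ is exactly the coordination sequence of the Penrose vertex graph about the fixed five-fold centre $0$, while $|B(x_0,r_k)| = \sum_{j=0}^{k}|S(x_0,r_j)|$ is the crystal ball sequence.

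Next I would invoke the combinatorial result of Shutov and Maleev~\cite{ShutovMaleev2018}, which for precisely this tiling and this centre gives $|S(0,k)| = C(k)\,k + o(k)$ as $k\to\infty$, where $C(k)$ takes values in the interval $[\,10\tau^{-2},\ 10\tau^{-2}+\tfrac52\tau^{-1}\,]$ with $\tau=(1+\sqrt5)/2$. Since $10\tau^{-2}>0$, this yields constants $0<C_1<C_2$ and an $R>0$ with $C_1 k < |S(0,k)| < C_2 k$ for all $k>R$; that is, the coordination sequence satisfies the two-sided polynomial bound~\eqref{E: Coord seq cond1} with exponent $d=1$ (the case $r_k=k$ already lining the hypotheses of Lemma~\ref{L: CondC} up with the present situation).

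Finally I would apply Lemma~\ref{L: CondC}: the bound $C_1 k < |S(x_0,r_k)| < C_2 k$ forces $|B(x_0,r_{k+1})|/|B(x_0,r_k)|\to 1$, i.e.\ property~\eqref{E: Condition}. Concretely the lemma integrates the lower bound to get $|B(0,k)|\gtrsim k^2$, so that $|S(0,k+1)|/|B(0,k)|$ has numerator $O(k)$ and denominator growing at least like $k^2$, hence tends to $0$, which is equivalent to~\eqref{E: Condition}. This completes the argument.

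There is essentially no genuine obstacle internal to this proof: all the analytic content sits in the Shutov--Maleev estimate for the coordination sequence. The only points requiring a little care are (i) observing that $r_k=k$, so that ``sphere of radius $r_k$'' really is the combinatorial sphere whose cardinality Shutov and Maleev compute, and (ii) noting that the oscillation of the prefactor $C(k)$ is harmless, since it is bounded above and below by strictly positive constants, so the hypothesis of Lemma~\ref{L: CondC} --- which only asks for such constants --- is met without needing $|S(0,k)|$ to be asymptotic to a single polynomial. If one instead wished to route through Proposition~\ref{P: CiprianiAsymptote}, the oscillation would be a mild nuisance (one would have to exhibit a continuous $\varphi$ with $|B(0,r)|\sim\varphi(r)$, built by interpolating the partial sums of the coordination sequence), but going through Lemma~\ref{L: CondC} sidesteps this entirely.
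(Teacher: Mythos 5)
Your proposal is correct and follows essentially the same route as the paper: cite the Shutov--Maleev estimate $|S(0,k)| = C(k)k + o(k)$ with $C(k)$ bounded between positive constants, deduce the two-sided bound $C_1 k < |S(0,k)| < C_2 k$ for large $k$, and apply Lemma~\ref{L: CondC} with $d=1$ to obtain property~\eqref{E: Condition}. The extra observations you make (that $r_k=k$ for the shortest-path metric and that the oscillation of $C(k)$ is harmless for the lemma's hypotheses) are exactly the implicit steps in the paper's argument, spelled out.
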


\begin{rem}
Similar asymptotic behaviour can be expected when taking a different base point, or different Penrose tilings, but this is still an open problem. Numerical data supports this conjecture for Penrose tilings~\cite{BaakeGrimm2006}, as well as that this metric condition will be satisfied by aperiodic tilings like the Ammann-Beenker tiling~\cite{BaakeGrimm2006}, a certain class of quasi-periodic self-similar tilings~\cite{ShutovMaleev2010} and two-dimensional quasiperiodic Ito–Ohtsuki tilings~\cite{ShutovMaleev2008}. To this author's knowledge, no quasiperiodic tiling is known that can serve as a counter-example to property~\eqref{E: Condition}.
\end{rem}

\subsection{Percolation}
A successful model for studying conduction properties of a crystal with impurities via the density of states is percolation (in this context also called quantum percolation)~\cite{AlexanderOrbach1982, ChayesChayes1986, KirschMuller2006, Veselic2005, YakuboNakayama1987}, see also~\cite[Section~13.2]{Grimmett1999} for a general approach. In broadest generality, percolation describes the study of a statistical procedure on a graph, which means adding, removing or otherwise manipulating edges or vertices based on some probabilistic method~\cite[Chapter~1]{Grimmett1999}.

Let us focus on so-called bond percolation on the graph $\mathbb{Z}^d$. Choosing some chance $0 \leq p \leq 1$, we declare each edge on the graph $\mathbb{Z}^d$ to be \textit{open} with probability $p$ (in an independent manner), and closed with probability $1-p$. It is a well-known fact that there exists a phase transition at a critical probability $p_c(d)$~\cite[Chapter~1]{Grimmett1999}. Namely, for $p>p_c(d)$ there exists almost surely a unique infinite cluster of vertices connected by open edges, while for $p<p_c(d)$ almost surely all clusters of vertices that are connected by open edges are finite.

In the super-critical region, meaning $p > p_c(d)$, one can wonder if our metric condition holds on the infinite cluster, meaning that we take $(X,d_X)$ as the infinite cluster with induced shortest-path metric. This turns out to be true, which follows quite directly from a result by R. Cerf and M. Th\'eret~\cite{CerfTheret2016}.

\begin{prop}
    Let $X$ be the (almost surely unique) infinite cluster on $\mathbb{Z}^d$ after super-critical bond percolation (with $p > p_c(d)$), and let $d_X$ be the shortest path metric on $X$. Then $(X,d_X)$ has property~\eqref{E: Condition}.
\begin{proof}
We first set up the exact situation as in~\cite{CerfTheret2016}. Denote the set of edges in our graph $\mathbb{Z}^d$ by $\mathbb{E}^d$, and consider a family of i.i.d. random variables $(t(e), e\in \mathbb{E}^d)$ taking values in $[0, \infty]$ (including $\infty$), with common distribution $F$. To be very precise, for each variable $t(e)$ we take $[0,\infty]$ as sample space, we define a $\sigma$-algebra by declaring $A \subseteq [0,\infty]$ measurable if $A \setminus \{\infty\}$ is Lebesgue measurable in $\mathbb{R}$, and $F: [0,\infty] \rightarrow [0,1]$ is a measurable function that defines the distribution of the variable $t(e)$.

These variables can be interpreted as the time it takes to travel along the corresponding edge. Then consider the random extended metric on $\mathbb{Z}^d$ by defining for $x, y\in \mathbb{Z}^d$ \[T(x, y) = \inf\{\sum_{e \in \gamma} t(e) : \gamma \text{ is a path from } x \text{ to } y \}.\] Note that if the distribution $F$ is such that $F(\{1\}) = p$, $F(\{\infty\}) = 1-p$, $T(x,y)$ is always either a positive integer or infinite and defines precisely the usual induced shortest-path metric on $\mathbb{Z}^d$ after bond percolating on $\mathbb{Z}^d$ with chance $p$. Also observe that generally, if $F([0,\infty)) > p_c(d)$, there exists almost surely a unique infinite connected cluster of vertices~\cite{CerfTheret2016}.

Now define $B(x_0, t) := \{y \in \mathbb{Z}^d : T(x_0 ,y)\leq t\}$. Then by~\cite[Theorem~5(ii)]{CerfTheret2016}, if $F([0,\infty)) > p_c(d)$, $F(\{0\}) < p_c(d)$ and $x_0$ is a vertex on the infinite cluster, as $t\rightarrow \infty$ we have almost surely \[\frac{|B(x_0, t)|}{t^d} \rightarrow C\] for some constant $C \in \mathbb{R}$. Returning to the distribution $F(\{1\}) = p$, $F(\{\infty\}) = 1-p$, we can then conclude that \[\frac{|B(x_0, k+1)|}{|B(x_0, k)|} \frac{k^d}{(k+1)^d} \rightarrow 1 \] as $k\rightarrow \infty$, and hence \[\frac{|B(x_0, k+1)|}{|B(x_0, k)|} \rightarrow 1. \qedhere\] \end{proof}
\end{prop}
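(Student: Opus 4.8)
The final statement asks to verify that the infinite supercritical bond-percolation cluster $X\subseteq\mathbb{Z}^d$, equipped with the graph (shortest-path, a.k.a.\ chemical) metric $d_X$, satisfies Property~\eqref{E: Condition}, i.e.\ that the volumes of chemical balls centered at a fixed cluster point $x_0$ grow sub-exponentially in a regular way, $|B(x_0,r_{k+1})|/|B(x_0,r_k)|\to 1$ as $k\to\infty$. Since $X$ is a connected subgraph of $\mathbb{Z}^d$, the possible distances from $x_0$ are exactly the nonnegative integers, so $r_k=k$ and the condition to check is simply $|B(x_0,k+1)|/|B(x_0,k)|\to 1$. The plan is to deduce this from the known \emph{shape theorem} for first-passage percolation / chemical distance on the supercritical cluster, which gives a deterministic limiting volume asymptotic $|B(x_0,k)|\sim Ck^d$ almost surely. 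Once that asymptotic is in hand, Property~\eqref{E: Condition} follows immediately from Proposition~\ref{P: CiprianiAsymptote} (or even more directly, since $(k+1)^d/k^d\to 1$).

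Concretely, I would proceed as follows. First, set up the general first-passage percolation framework exactly as in the paper excerpt: i.i.d.\ edge weights $(t(e))_{e\in\mathbb{E}^d}$ valued in $[0,\infty]$ with common law $F$, and the induced random extended metric $T(x,y)=\inf_\gamma\sum_{e\in\gamma}t(e)$. Observe that the specialization $F(\{1\})=p$, $F(\{\infty\})=1-p$ recovers precisely the graph metric on the bond-percolation cluster: $T(x,y)$ is the chemical distance when $x,y$ lie in the same open cluster, and $+\infty$ otherwise. For $p>p_c(d)$ there is a.s.\ a unique infinite cluster, and for $x_0$ on it, $T(x_0,\cdot)$ restricted to that cluster is a genuine (finite) metric. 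Second, quote the volume shape theorem in the form stated in the excerpt: by \cite[Theorem~5(ii)]{CerfTheret2016}, provided $F([0,\infty))>p_c(d)$ and $F(\{0\})<p_c(d)$, one has almost surely $|B(x_0,t)|/t^d\to C$ for some constant $C=C(F,d)\in(0,\infty)$, where $B(x_0,t)=\{y:T(x_0,y)\le t\}$. Our distribution $F(\{1\})=p$, $F(\{\infty\})=1-p$ meets both hypotheses when $p>p_c(d)$: indeed $F([0,\infty))=p>p_c(d)$ and $F(\{0\})=0<p_c(d)$. Third, specialize $t$ to integer values: since the chemical distance takes only integer values, $B(x_0,k)$ for integer $k$ is the chemical ball of the introduction, and the shape theorem gives $|B(x_0,k)|\sim Ck^d$ as $k\to\infty$, a.s. Fourth, conclude: $|B(x_0,k+1)|/|B(x_0,k)| \sim (C(k+1)^d)/(Ck^d) = (1+1/k)^d \to 1$, so Property~\eqref{E: Condition} holds (alternatively one invokes Proposition~\ref{P: CiprianiAsymptote} with $\varphi(r)=Cr^d$).

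\textbf{Main obstacle.} The genuinely substantial content is the volume shape theorem itself (the existence of the deterministic limit shape and the resulting polynomial volume growth on the supercritical cluster), but this is exactly the external input \cite[Theorem~5(ii)]{CerfTheret2016} we are permitted to cite, so the proof reduces to correctly checking that our percolation model is a legitimate special case of their first-passage percolation setup and that their hypotheses ($F([0,\infty))>p_c$ and $F(\{0\})<p_c$) are satisfied. The only mildly delicate points are: (i) confirming that allowing $F$ to put mass at $\infty$ is covered by the cited theorem (it is, in their framework edges of infinite weight are precisely the closed edges), and (ii) noting that the shape theorem is an almost-sure statement, so Property~\eqref{E: Condition} holds for a.e.\ realization of the cluster — which is the natural and intended reading here. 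No further estimates are needed; everything else is the elementary observation $r_k=k$ and $(1+1/k)^d\to1$.
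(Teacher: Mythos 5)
Your proposal is correct and follows essentially the same route as the paper's own proof: cast the percolated lattice as a special case of the Cerf--Th\'eret first-passage percolation setup with $F(\{1\})=p$, $F(\{\infty\})=1-p$, invoke their Theorem~5(ii) to get $|B(x_0,t)|/t^d\to C$ almost surely, and conclude that the ratio of consecutive ball volumes tends to $1$. Your explicit verification of the hypotheses ($F([0,\infty))=p>p_c(d)$, $F(\{0\})=0<p_c(d)$) and the observation $r_k=k$ are welcome details that the paper leaves implicit.
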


\begin{rem}
Site percolation is similar to bond percolation, but as the name suggests we would assign each vertex to be open with probability $p$ and closed with probability $1-p$~\cite[Section~1.6]{Grimmett1999}. Arguably, this is a more physically suitable model of an alloy. The argument used above to prove the cited result for bond percolation can be used for site percolation as well, but this author is not aware of an explicit demonstration.
\end{rem}

\section{Dixmier trace formula for the DOS}\label{S: Proof}
In this section we prove Theorem~\ref{T: Main} based on a series of lemmas. 

To start off, we will demonstrate the existence of a function $w$ such that $M_w$ has positive Dixmier trace as required by Theorem~\ref{T: Main}.
For that purpose we first need a lemma generalising the fact that $\sum_{k=1}^n \frac{1}{k} \sim \log(n)$.
\begin{lem}\label{L:general_harmonic_series}
Let $\{a_k\}_{k\in \mathbb{N}}$ be a divergent, increasing sequence of strictly positive real numbers, such that $\lim_{k\to \infty} \frac{a_{k+1}}{a_k}=1$.  We use the convention that $a_{-1} = 0$. Then 
\[
\sum_{j=0}^k \frac{a_j - a_{j-1}}{a_j} \sim \log(a_k), \quad k \to \infty.
\]
\end{lem}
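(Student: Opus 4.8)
The plan is to prove the asymptotic equivalence by a standard integral comparison, squeezing $\sum_{j=0}^k \frac{a_j-a_{j-1}}{a_j}$ between $\log(a_k)$ up to lower-order corrections. First I would dispose of the $j=0$ term: by the convention $a_{-1}=0$ it equals $\frac{a_0-a_{-1}}{a_0}=1$, which is harmless since $\{a_k\}$ is increasing and divergent, hence $a_k\to\infty$ and $\log(a_k)\to\infty$, so a bounded additive error disappears after dividing by $\log(a_k)$. For $j\geq 1$, integrating $t\mapsto 1/t$ over $[a_{j-1},a_j]$ and using that the integrand is decreasing gives the two-sided estimate
\[
\frac{a_j-a_{j-1}}{a_j}\;\leq\;\log(a_j)-\log(a_{j-1})\;\leq\;\frac{a_j-a_{j-1}}{a_{j-1}}\;=\;\frac{a_j}{a_{j-1}}\cdot\frac{a_j-a_{j-1}}{a_j}.
\]

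From the left inequality, summing telescopically over $1\leq j\leq k$ yields $\sum_{j=1}^k \frac{a_j-a_{j-1}}{a_j}\leq \log(a_k)-\log(a_0)$, so that $\limsup_k \tfrac{1}{\log(a_k)}\sum_{j=0}^k \frac{a_j-a_{j-1}}{a_j}\leq 1$ (the finiteness of $\log(a_0)$ uses $a_0>0$). For the matching lower bound I would invoke the hypothesis $a_{j+1}/a_j\to 1$: given $\varepsilon>0$, choose $N$ so that $a_j/a_{j-1}<1+\varepsilon$ for all $j>N$; then the right-hand inequality above gives $\frac{a_j-a_{j-1}}{a_j}\geq \frac{1}{1+\varepsilon}\big(\log(a_j)-\log(a_{j-1})\big)$ for $j>N$. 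Summing over $N<j\leq k$ and telescoping,
\[
\sum_{j=0}^k\frac{a_j-a_{j-1}}{a_j}\;\geq\;\frac{1}{1+\varepsilon}\big(\log(a_k)-\log(a_N)\big),
\]
so $\liminf_k \tfrac{1}{\log(a_k)}\sum_{j=0}^k \frac{a_j-a_{j-1}}{a_j}\geq \frac{1}{1+\varepsilon}$. Letting $\varepsilon\downarrow 0$ gives $\liminf\geq 1$, and together with the $\limsup$ bound this establishes $\sum_{j=0}^k \frac{a_j-a_{j-1}}{a_j}\sim \log(a_k)$.

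I do not anticipate a genuine obstacle here: the argument is a routine Riemann-sum/telescoping estimate. The only points needing a little care are the treatment of the initial term under the convention $a_{-1}=0$, ensuring $\log(a_k)\to\infty$ so that normalising by $\log(a_k)$ is legitimate, and the order of quantifiers in the $\varepsilon$-argument for the lower bound (the cut-off $N$ depends on $\varepsilon$ but not on $k$, so $\log(a_N)$ is a fixed constant as $k\to\infty$).
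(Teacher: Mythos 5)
Your proposal is correct and follows essentially the same route as the paper: the upper bound via the lower Riemann sum of $\int \frac{dx}{x}$ and telescoping, and the lower bound via the hypothesis $a_{k+1}/a_k\to 1$ with an $\varepsilon$--$N$ cut-off comparing to the upper Riemann sum. Your bookkeeping for the lower bound (a per-term factor $\frac{1}{1+\varepsilon}$ for $j>N$, then dropping the non-negative initial terms) is a slightly tidier packaging of the same estimate the paper carries out by splitting the correction sum $\sum_j (a_j-a_{j-1})\bigl(\frac{1}{a_{j-1}}-\frac{1}{a_j}\bigr)$ at the cut-off, but there is no substantive difference.
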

\begin{proof}
Note first that $\sum_{j=0}^k \frac{a_j - a_{j-1}}{a_j}$ equals 1 plus a lower Riemann sum of the integral $\int_{a_0}^{a_k} \frac{dx}{x}$. Hence, \[\sum_{j=0}^k \frac{a_j - a_{j-1}}{a_j} \leq \log(a_k) + 1 - \log(a_0).\]

Next, since $\lim_{k\to \infty} \frac{a_{k+1}}{a_k}=1$, for $\varepsilon >0$ we can choose $K$ such that for $k> K$ we have $\frac{a_k}{a_{k-1}} < 1+\varepsilon$, which can be rearranged to $\frac{1}{a_{k-1}} - \frac{1}{a_k} < \frac{\varepsilon}{a_k}$. Note that $\log(a_k) - \log(a_0) \leq \sum_{j=1}^{k} \frac{a_j - a_{j-1}}{a_{j-1}}$ since it is an upper Riemann sum of the integral $\int_{a_0}^{a_k} \frac{dx}{x}$. Therefore, for $k>K$,
\begin{align*}
    \log(a_k) - \log(a_0) &\leq \sum_{j=1   }^{k} \frac{a_j - a_{j-1}}{a_{j-1}}\\
    &= \sum_{j=1}^{k} \frac{a_j - a_{j-1}}{a_{j}} + \sum_{j=1}^{K} (a_j - a_{j-1}) \left(\frac{1}{a_{j-1}} - \frac{1}{a_j} \right) \\
    &\quad+ \sum_{j=K+1}^k(a_j - a_{j-1}) \left(\frac{1}{a_{j-1}} - \frac{1}{a_j} \right)\\
    &< (1+\varepsilon) \sum_{j=1}^{k} \frac{a_j - a_{j-1}}{a_{j}} + \sum_{j=1}^{K} (a_j - a_{j-1}) \left(\frac{1}{a_{j-1}} - \frac{1}{a_j} \right).
\end{align*}
Dividing by $\log(a_k)$ (assuming without loss of generality that $a_k \neq 1$) and taking the $\liminf$, we get 
\[
1\leq (1+\varepsilon) \liminf_{k\to \infty}  \frac{1}{\log(a_k)} \sum_{j=1}^{k} \frac{a_j - a_{j-1}}{a_{j}}.
\] 
Combined with our earlier estimate, we can conclude that 
\[
\sum_{j=0}^k \frac{a_j - a_{j-1}}{a_j} \sim \log(a_k), \quad k \to \infty. \qedhere
\]
\end{proof}

We will now make use of Lemma~\ref{L:AHMSZ}, which originally appeared in a slightly weaker form in the paper this chapter is based on~\cite[Lemma~4.8]{AHMSZ}. For convenience, let us restate it here. 

\begin{lem}\label{L: subsequence summation}
    Let $\phi: \mathbb{N} \to \R_{> 0}$ be an increasing function such that $\phi(n)\to \infty$ as $n\to \infty$, let $\{a_k\}_{k \in \mathbb{N}} \subseteq \mathbb{R}$ be a sequence such that $\big\{\frac{1}{\phi(n)}\sum_{k=0}^n |a_k| \big\}_{n=0}^\infty$ is bounded, and let $\{k_0, k_1, \dots \}$ be an infinite, increasing sequence of positive integers such that 
    \[
    \lim_{n\to \infty} \frac{\phi(k_{n+1})}{\phi(k_n)} = 1,
    \] 
    and
    \[
    \frac{1}{\phi(k_{n})} \sum_{k=k_{n-1}+1}^{k_{n}} |a_k| = o(1), \quad n\to\infty.
    \]
    Labeling $k_{i_n} := \min\{k_i : k_{i} \geq n \}$,     
    we have that
    \[
    \frac{1}{\phi(n)}\sum_{k=0}^n a_k = \frac{1}{\phi(k_{i_n})}\sum_{k=0}^{k_{i_n}} a_k + o(1), \quad n \to \infty.
    \]
\end{lem}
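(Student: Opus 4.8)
\textbf{Proof plan for Lemma~\ref{L: subsequence summation}.}

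The plan is to reduce immediately to the case of a positive sequence $\{a_k\}$, since we may split $a_k = a_k^+ - a_k^-$ and the hypothesis $\frac{1}{\phi(n)}\sum_{k=0}^n |a_k| = O(1)$ together with the telescoping-block hypothesis $\frac{1}{\phi(k_n)}\sum_{k=k_{n-1}+1}^{k_n}|a_k| = o(1)$ pass to $a_k^\pm$ (because $0 \le a_k^\pm \le |a_k|$). So assume $a_k \ge 0$ for all $k$. First I would fix $n$ and observe that by the definition of $k_{i_n}$ we have $k_{i_n - 1} < n \le k_{i_n}$, so that the two partial sums $\sum_{k=0}^n a_k$ and $\sum_{k=0}^{k_{i_n}} a_k$ differ by exactly $\sum_{k=n+1}^{k_{i_n}} a_k$, which is a (sub)sum of the block $\sum_{k=k_{i_n-1}+1}^{k_{i_n}} a_k$; also the two normalising factors $\phi(n)$ and $\phi(k_{i_n})$ satisfy $\phi(k_{i_n-1}) \le \phi(n) \le \phi(k_{i_n})$ by monotonicity of $\phi$.

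With these two observations the estimate is a short two-line sandwich, exactly as written in the statement of the lemma. On one side,
\begin{align*}
    \frac{1}{\phi(n)} \sum_{k=0}^n a_k - \frac{1}{\phi(k_{i_n})} \sum_{k=0}^{k_{i_n}} a_k
    &\le \frac{1}{\phi(n)} \sum_{k=0}^n a_k - \frac{1}{\phi(k_{i_n})} \sum_{k=0}^{n} a_k\\
    &= \Big( \frac{\phi(k_{i_n})}{\phi(n)} - 1 \Big)\frac{1}{\phi(k_{i_n})} \sum_{k=0}^{n} a_k\\
    &\le \Big( \frac{\phi(k_{i_n})}{\phi(k_{i_n-1})} - 1 \Big)\frac{1}{\phi(k_{i_n})} \sum_{k=0}^{k_{i_n}} a_k,
\end{align*}
where the first inequality drops the nonnegative tail $\sum_{k=n+1}^{k_{i_n}} a_k$ from the second sum and uses $\phi(n) \le \phi(k_{i_n})$; the middle equality is algebra; and the last inequality uses $\phi(n) \ge \phi(k_{i_n-1})$ and monotonicity to enlarge the sum back up to $k_{i_n}$. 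The bracket tends to $0$ because $i_n \to \infty$ as $n \to \infty$ and $\phi(k_{m+1})/\phi(k_m) \to 1$ by hypothesis, while the remaining factor $\frac{1}{\phi(k_{i_n})}\sum_{k=0}^{k_{i_n}} a_k$ is bounded (it is a subsequence of the bounded sequence $\frac{1}{\phi(n)}\sum_{k=0}^n a_k$). Hence this difference is $o(1)$. On the other side,
\begin{align*}
    \frac{1}{\phi(k_{i_n})} \sum_{k=0}^{k_{i_n}} a_k - \frac{1}{\phi(n)} \sum_{k=0}^n a_k
    &\le \frac{1}{\phi(k_{i_n})}\sum_{k=0}^{k_{i_n}} a_k - \frac{1}{\phi(k_{i_n})}\sum_{k=0}^{n} a_k\\
    &= \frac{1}{\phi(k_{i_n})} \sum_{k=n+1}^{k_{i_n}} a_k\\
    &\le \frac{1}{\phi(k_{i_n})} \sum_{k=k_{i_n-1}+1}^{k_{i_n}} a_k = o(1),
\end{align*}
using $\phi(n) \le \phi(k_{i_n})$, then that $k_{i_n-1} < n$ so the index range $\{n+1,\dots,k_{i_n}\}$ is contained in the block $\{k_{i_n-1}+1,\dots,k_{i_n}\}$, and finally the block hypothesis (with $n$ replaced by $i_n$, legitimate since $i_n \to \infty$). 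Combining the two one-sided bounds gives $\big|\frac{1}{\phi(n)}\sum_{k=0}^n a_k - \frac{1}{\phi(k_{i_n})}\sum_{k=0}^{k_{i_n}} a_k\big| = o(1)$, which is the claim for nonnegative $a_k$, and then the general case follows by applying this to $a_k^+$ and $a_k^-$ and subtracting.

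The only genuinely delicate point — really the one thing to be careful about rather than a true obstacle — is the bookkeeping around the index $i_n$: one must check that $i_n \to \infty$ (clear, since the $k_m$ are strictly increasing so each fixed $k_m$ is eventually exceeded by $n$), so that the hypotheses phrased ``as $n \to \infty$'' may be invoked with the argument $i_n$, and that the inequality $k_{i_n - 1} < n \le k_{i_n}$ is exactly the defining property of $k_{i_n} = \min\{k_i : k_i \ge n\}$. Everything else is elementary manipulation of Riemann-type sums together with monotonicity of $\phi$; no analytic input beyond the two stated hypotheses is needed.
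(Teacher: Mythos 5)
Your proposal is correct and is essentially the paper's own argument: the same reduction to nonnegative $a_k$, followed by the same two one-sided sandwich estimates bounding the difference by $\big(\tfrac{\phi(k_{i_n})}{\phi(k_{i_n-1})}-1\big)\tfrac{1}{\phi(k_{i_n})}\sum_{k\le k_{i_n}}a_k$ on one side and by $\tfrac{1}{\phi(k_{i_n})}\sum_{k=k_{i_n-1}+1}^{k_{i_n}}a_k$ on the other. You merely spell out the intermediate steps (the $a_k^\pm$ splitting, $k_{i_n-1}<n\le k_{i_n}$, and $i_n\to\infty$) that the paper leaves implicit.
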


\begin{cor}\label{C:MwTrace}
Let $(X,d_X)$ be a countably infinite discrete metric space such that every ball contains at most finitely many points, satisfying property (C). Let $x_0 \in X$, and define $w(x) := (1+|B(x_0, d_X(x,x_0))|)^{-1}$. Then $w \in \ell_{1,\infty(X)}$, $M_w$ is Dixmier measurable, and $\mathrm{Tr}_\omega (M_w) = 1$ for all extended limits $\omega \in \ell_\infty^*$.
\end{cor}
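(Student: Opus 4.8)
The plan is to verify the three claims in order: first that $w\in\ell_{1,\infty}(X)$, then that $M_w$ is Dixmier measurable, then that $\mathrm{Tr}_\omega(M_w)=1$. Throughout, write $r_k$ for the increasing enumeration of $\{d_X(y,x_0):y\in X\}$, $B_k:=B(x_0,r_k)$, and observe that $w$ takes the constant value $(1+|B_k|)^{-1}$ on the sphere $S(x_0,r_k)=B_k\setminus B_{k-1}$, which has $|B_k|-|B_{k-1}|$ points. Ordering the values of $w$ in decreasing order, the singular value sequence $\mu(M_w)$ is obtained by listing, for each $k$, the value $(1+|B_k|)^{-1}$ with multiplicity $|S(x_0,r_k)|=|B_k|-|B_{k-1}|$ (using the convention $|B_{-1}|=0$). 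In particular $\mu(j,M_w)=(1+|B_k|)^{-1}$ precisely when $|B_{k-1}|\le j<|B_k|$, which immediately gives $\mu(j,M_w)\le (1+j)^{-1}$, hence $\|M_w\|_{1,\infty}=\sup_j(j+1)\mu(j,M_w)\le 1<\infty$, so $w\in\ell_{1,\infty}(X)$.

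Next I would compute the Dixmier trace directly from its definition as $\mathrm{Tr}_\omega(M_w)=\omega\bigl(\{\tfrac{1}{\log(n+2)}\sum_{j=0}^n\mu(j,M_w)\}_{n}\bigr)$, since $M_w\ge0$. I would first show that along the distinguished subsequence $n=|B_k|-1$ the Cesàro-type averages converge to $1$: indeed
\[
\sum_{j=0}^{|B_k|-1}\mu(j,M_w)=\sum_{l=0}^{k}\frac{|B_l|-|B_{l-1}|}{1+|B_l|}.
\]
By Property (C) the sequence $a_l:=|B_l|$ is increasing, divergent, with $a_{l+1}/a_l\to1$, so Lemma~\ref{L:general_harmonic_series} (after absorbing the harmless $+1$ in the denominator, which contributes a bounded error) gives $\sum_{l=0}^{k}\frac{|B_l|-|B_{l-1}|}{1+|B_l|}\sim\log|B_k|\sim\log(|B_k|-1+2)$ as $k\to\infty$. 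Hence the averaged sequence tends to $1$ along the subsequence $n_k:=|B_k|-1$.

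To upgrade this to convergence of the full sequence $\frac{1}{\log(n+2)}\sum_{j=0}^n\mu(j,M_w)$, I would apply Lemma~\ref{L: subsequence summation} with $\phi(n):=\log(n+2)$, $a_j:=\mu(j,M_w)$, and the subsequence $k_n:=|B_n|-1$. The boundedness of $\{\phi(n)^{-1}\sum_{j\le n}|a_j|\}$ follows from the estimate $a_j\le(j+1)^{-1}$ just established; the condition $\phi(k_{n+1})/\phi(k_n)\to1$ is exactly $\log|B_{n+1}|/\log|B_n|\to1$, which is a consequence of Property (C) (since $|B_{n+1}|/|B_n|\to1$ forces $\log|B_{n+1}|-\log|B_n|\to0$ while $\log|B_n|\to\infty$); and the block condition $\phi(k_n)^{-1}\sum_{j=k_{n-1}+1}^{k_n}|a_j|\to0$ reduces to $\tfrac{1}{\log|B_n|}\cdot\tfrac{|B_n|-|B_{n-1}|}{1+|B_n|}\to0$, which holds because the single term is $O(1/\log|B_n|)$. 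Lemma~\ref{L: subsequence summation} then yields $\frac{1}{\log(n+2)}\sum_{j=0}^n\mu(j,M_w)=\frac{1}{\log|B_{i_n}+1|}\sum_{j=0}^{k_{i_n}}\mu(j,M_w)+o(1)\to1$, so the sequence inside $\omega$ converges to $1$; since any extended limit agrees with the ordinary limit on convergent sequences, $\mathrm{Tr}_\omega(M_w)=1$ for every $\omega$, and in particular $M_w$ is Dixmier measurable. The main obstacle is the bookkeeping in matching the discrete index $j$ of the singular value sequence against the ball counts $|B_k|$ and verifying cleanly that the $+1$ shifts and the logarithmic normalisations do not disturb the asymptotics; once the two lemmas are lined up correctly this is routine.
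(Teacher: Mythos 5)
Your proof is correct and follows essentially the same route as the paper: apply Lemma~\ref{L:general_harmonic_series} to get the logarithmic asymptotics of the partial sums of $\mu(M_w)$ along the subsequence of indices $|B_k|-1$, then transfer to the full sequence via Lemma~\ref{L: subsequence summation}, so every extended limit gives $1$. The only cosmetic difference is that the paper applies the harmonic-series lemma directly to $a_k=1+|B(x_0,r_k)|$, which sidesteps your ``+1 is a bounded error'' step (your claim is nonetheless true, e.g.\ by the telescoping bound $\sum_{l\ge1}\frac{|B_l|-|B_{l-1}|}{|B_l|(1+|B_l|)}\le\frac{1}{|B_0|}$).
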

\begin{proof}
Applying Lemma~\ref{L:general_harmonic_series} to the sequence $a_k = 1+|B(x_0, r_k)|$, we obtain
\[
\lim_{k\to \infty} \frac{1}{\log(2+|B(x_0, r_k)|)} \sum_{j=1}^k |\partial B(x_0, r_j)| (1+|B(x_0, r_j)|)^{-1} = 1.
\] 
By Lemma~\ref{L: subsequence summation}, it follows that
\[
\lim_{k\to \infty} \frac{1}{\log(2+k)} \sum_{j=1}^k \mu(k, M_w) = 1.\qedhere
\]
\end{proof}

We now present a modified Toeplitz lemma, which follows from much the same proof as in Shiryaev~\cite[Chapter IV, \S 3, Lemma 1]{Shiryaev1996}.

\begin{lem}[Toeplitz lemma]\label{toeplitz_lemma}
    Let $\{c_n\}_{n=0}^\infty$ be a sequence of non-negative numbers, and let $\{z_n\}_{n=0}^\infty$ be a sequence of complex numbers such that $z_n\rightarrow L \in \Cplx$. 
    If $d_n = \sum_{k=0}^n c_k$ diverges, then:
    \begin{equation*}
        \sum_{k=0}^n c_k z_k = L d_n + o(d_n).
    \end{equation*} 
    as $n\to \infty$.
\end{lem}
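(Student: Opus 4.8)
This Toeplitz-type lemma is a mild variant of the standard statement, so the plan is to follow the classical proof while being careful about the non-negativity assumption on the $c_k$ and the fact that the limit $L$ may be complex. First I would reduce to the real case: write $z_n = x_n + iy_n$ and $L = a + ib$ with $x_n \to a$, $y_n \to b$, and observe that $\sum_{k=0}^n c_k z_k = \sum_{k=0}^n c_k x_k + i\sum_{k=0}^n c_k y_k$, so it suffices to prove the statement for real sequences converging to a real limit. A further harmless reduction is to subtract off the limit: setting $w_k := z_k - L$, we have $w_k \to 0$, and $\sum_{k=0}^n c_k z_k = L d_n + \sum_{k=0}^n c_k w_k$, so the claim becomes precisely that $\sum_{k=0}^n c_k w_k = o(d_n)$ whenever $w_k \to 0$ and $d_n = \sum_{k=0}^n c_k \to \infty$ with $c_k \geq 0$.

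The main estimate is then a standard $\varepsilon/2$ splitting argument. Fix $\varepsilon > 0$ and choose $N$ so that $|w_k| < \varepsilon$ for all $k > N$. For $n > N$ split the sum as
\[
\Bigl| \sum_{k=0}^n c_k w_k \Bigr| \leq \sum_{k=0}^N c_k |w_k| + \sum_{k=N+1}^n c_k |w_k| \leq \sum_{k=0}^N c_k |w_k| + \varepsilon \sum_{k=N+1}^n c_k \leq \sum_{k=0}^N c_k |w_k| + \varepsilon d_n,
\]
where in the last two steps I use $c_k \geq 0$ and $\sum_{k=N+1}^n c_k \leq d_n$. The first term is a fixed finite constant (depending only on $N$, hence on $\varepsilon$), so dividing by $d_n$ and using $d_n \to \infty$ gives $\limsup_{n\to\infty} d_n^{-1} \bigl| \sum_{k=0}^n c_k w_k \bigr| \leq \varepsilon$. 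Since $\varepsilon > 0$ is arbitrary, $\sum_{k=0}^n c_k w_k = o(d_n)$, which is the reduced claim. Unwinding the reductions yields $\sum_{k=0}^n c_k z_k = L d_n + o(d_n)$.

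There is no real obstacle here; the only point requiring a little care is that non-negativity of $c_k$ (rather than the usual Toeplitz normalization $c_{n,k} \geq 0$, $\sum_k c_{n,k} = 1$) is exactly what makes the bound $\sum_{k=N+1}^n c_k \leq d_n$ legitimate and keeps all the partial sums controlled, and that the divergence of $d_n$ is what kills the fixed head term $\sum_{k=0}^N c_k|w_k|$ after normalization. The passage to the complex case is purely formal via the real and imaginary parts. I would present the proof essentially as the display above, preceded by the two one-line reductions.
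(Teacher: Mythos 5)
Your proof is correct and follows essentially the same route as the paper: subtract the limit, split the sum at a fixed index $N$ beyond which $|z_k - L| < \varepsilon$, bound the tail by $\varepsilon d_n$ using non-negativity of the $c_k$, and let divergence of $d_n$ kill the fixed head term. The preliminary reduction to real sequences is harmless but unnecessary, since the same estimate works directly with $|z_k - L|$ for complex $z_k$, exactly as in the paper.
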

\begin{proof}
    Let $\varepsilon > 0$ and take $K>0$ sufficiently large such that if $k > K$ then $|z_k - L| < \varepsilon$. For any $n> K,$ 
    rewriting the left hand side of the equality above as
    \begin{align*}
        \sum_{k=0}^n c_k z_k &= \sum_{k=0}^n c_k L 
                           +\sum_{k=0}^K c_k(z_k - L) + \sum_{k=K+1}^n c_k(z_k - L),
    \end{align*}
    we see that 
    \begin{equation*}
        \left|\frac{1}{d_n}\sum_{k=0}^n c_k(z_k - L) \right| \leq \frac{1}{d_n}\sum_{k=0}^K c_k|z_k - L| + \varepsilon.
    \end{equation*}
    Since $d_n\to \infty$ as $n\to \infty,$ it follows that:
    \begin{equation*}
        \limsup_{n\to \infty}\left|\frac{1}{d_n} \sum_{k=0}^n c_k(z_k - L) \right| \leq \varepsilon.
    \end{equation*}
    Since $\varepsilon$ is arbitrary, we have:
    \begin{equation*}
        \sum_{k=0}^n c_k(z_k - L)  = o(d_n)
    \end{equation*}
    and this completes the proof.
\end{proof}

\begin{lem}\label{toeplitz_corollary}
    Let $\{x_k\}_{k=0}^\infty$ and $L \in \Cplx$ be such that as $n\to\infty$,
    \begin{equation*}
        \sum_{k=0}^n x_k = L n+o(n).
    \end{equation*}
    Let $\{a_n\}_{n=0}^\infty$ be a sequence of non-negative numbers such that:
    \begin{enumerate}
        \item $\{a_n\}_{n=0}^\infty$ is non-increasing;
        \item $\sup_{k\geq 0} ka_k < \infty$, that is, the sequence is in $\ell_{1,\infty}$;
        \item $b_n := \sum_{k=0}^n a_k$ diverges.
    \end{enumerate}
    Then   
    \begin{equation*}
        \sum_{k=0}^n a_kx_k = L b_n+o(b_n)
    \end{equation*}
    as $n\to \infty$.
\end{lem}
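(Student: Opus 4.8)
The plan is to use summation by parts (Abel summation) to reduce the weighted sum $\sum_{k=0}^n a_k x_k$ to the unweighted partial sums $S_k := \sum_{j=0}^k x_j = Lk + o(k)$, and then apply the Toeplitz lemma (Lemma~\ref{toeplitz_lemma}) to handle the resulting sum. First I would set $S_n := \sum_{k=0}^n x_k$, so that by hypothesis $S_n = Ln + o(n)$, and write $x_k = S_k - S_{k-1}$ with the convention $S_{-1} = 0$. Abel summation then gives
\[
\sum_{k=0}^n a_k x_k = a_n S_n + \sum_{k=0}^{n-1}(a_k - a_{k+1}) S_k = a_n S_n + \sum_{k=0}^{n-1}(a_k - a_{k+1})(Lk + o(k)).
\]
Since $a_k \in \ell_{1,\infty}$, we have $a_n S_n = O(a_n n) = O(1)$, which is $o(b_n)$ because $b_n$ diverges; so this boundary term is negligible.

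Next I would analyze $\sum_{k=0}^{n-1}(a_k - a_{k+1}) k$ and $\sum_{k=0}^{n-1}(a_k - a_{k+1}) \varepsilon_k$ where $\varepsilon_k = o(k)$, i.e. $\varepsilon_k = k \eta_k$ with $\eta_k \to 0$. For the main term, another summation by parts (or a direct telescoping estimate) shows $\sum_{k=0}^{n-1}(a_k - a_{k+1})k = \sum_{k=1}^{n} a_k - n a_n = b_n - a_0 - n a_n = b_n + O(1)$, using monotonicity of $\{a_k\}$ and again $n a_n = O(1)$. Thus the main contribution is $L b_n + o(b_n)$. For the error term, the key observation is that the nonnegative numbers $c_k := (a_k - a_{k+1}) k$ (nonnegative by monotonicity of $a_k$) satisfy $\sum_{k=0}^{n-1} c_k = b_n + O(1)$ which diverges, so Lemma~\ref{toeplitz_lemma} applies to the sequence $z_k := \eta_k \to 0$, giving $\sum_{k=0}^{n-1} c_k \eta_k = o(b_n)$. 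Putting these pieces together yields $\sum_{k=0}^n a_k x_k = L b_n + o(b_n)$.

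The main obstacle I anticipate is the bookkeeping around the "$o(k)$" term: one must be careful that the Toeplitz lemma is applied to a genuinely convergent sequence ($\eta_k \to 0$) multiplied by nonnegative weights with divergent partial sums, and that the rewriting $\varepsilon_k = k\eta_k$ is legitimate (for $k = 0$ we just set the term to zero, harmlessly). A secondary point to check carefully is that $c_k = (a_k - a_{k+1})k \geq 0$ and that $\sum_{k=0}^{n-1} c_k$ indeed equals $b_n + O(1)$ rather than something that could fail to diverge — this is exactly where hypotheses (1) (monotonicity), (2) ($\ell_{1,\infty}$, giving $na_n = O(1)$), and (3) (divergence of $b_n$) are all used simultaneously. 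Everything else is routine Abel summation and the already-established Toeplitz lemma.
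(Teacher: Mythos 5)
Your proof is correct and follows essentially the same route as the paper's: both use Abel summation to pass to the partial sums of $x_k$ and then apply the Toeplitz lemma with the nonnegative weights $k$ times successive differences of $a_k$, the only (cosmetic) difference being that you split $S_k = Lk + o(k)$ and apply the Toeplitz lemma with limit $0$ to the error, whereas the paper applies it directly to $y_k/k \to L$. The bookkeeping you flag (nonnegativity of $(a_k - a_{k+1})k$, the identity $\sum_{k=0}^{n-1}(a_k-a_{k+1})k = b_n - a_0 - na_n$, and the $k=0$ term) all checks out.
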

\begin{proof}
    Let $y_n = \sum_{k=0}^{n-1} x_k$ with $y_{0}=0$. Abel's summation formula gives
    \begin{align*}
        \sum_{k=0}^n a_kx_k &= \sum_{k=0}^n a_k(y_{k+1}-y_{k})\\
                            &= y_{n+1}a_n-\sum_{k=1}^n (a_k-a_{k-1})y_{k}\\
                            &= y_{n+1}a_n+\sum_{k=1}^n \frac{y_k}{k}\cdot k(a_{k-1}-a_{k}).
    \end{align*}
    By assumption (1) we have $a_{k-1} \geq a_k$ so the sequence $c_k := k(a_{k-1}-a_{k})$ is non-negative, and moreover as $n\to\infty$,
    \begin{equation*}
        \sum_{k=1}^n c_k = b_{n-1}-na_n\rightarrow \infty,
    \end{equation*}
    since by assumption (3) $b_n \to \infty$ and by assumption (2) $na_n$ is bounded.
    Therefore Lemma~\ref{toeplitz_lemma} applies to $c_k$ and $z_k:= \frac{y_k}{k}$, since by assumption $\lim_{k\to\infty} \frac{y_k}{k} = L$, and hence it follows that
    \begin{equation*}
        \sum_{k=1}^n \frac{y_k}{k}\cdot k(a_{k-1}-a_k) = L (b_{n-1}-na_n)+o(b_{n-1}-na_n)
    \end{equation*} 
    as $n\to\infty$.
    Thus,
    \begin{align*}
        \sum_{k=0}^n a_kx_k  & = \frac{y_{n+1}}{n}\cdot na_n + L(b_{n-1}-na_n)+o(b_{n-1}-na_n)  \\
           & = L b_{n-1}+o(b_{n-1}-na_n),
    \end{align*}
    where in the last equality we have absorbed the vanishing term $\brs{\frac{y_{n+1}}{n} - L} na_n$ into $o(b_{n-1}-na_n).$ 
    Since by assumption (2) the sequence $\{na_n\}_{n=1}^\infty$ is bounded, it follows that:
    \begin{equation*}
        \sum_{k=0}^n a_kx_k = L b_n+o(b_n).\qedhere
    \end{equation*}
\end{proof}

We could also write the result of Lemma~\ref{toeplitz_corollary} as:
\begin{equation*}
    \lim_{n\to \infty} \frac{\sum_{k=0}^n x_k}{\sum_{k=0}^n 1} = \lim_{n\to \infty} \frac{\sum_{k=0}^n a_kx_k}{\sum_{k=0}^n a_k}
\end{equation*}
whenever the left hand side exists and $\{a_k\}_{k=0}^\infty$ satisfies the stated assumptions.

\bigskip
Our next aim is to prove Lemma~\ref{expectation_values_lemma}, which is the crux of the proof of Theorem~\ref{T: Main}.
The proof is based on the notion of a $V$-modulated operator from~\cite{KaltonLord2013} or \cite[Section 7.3]{LSZVol1}, see also Section~\ref{S:IntroPreliminaries}. 
Recall that, since $0 < W \in \mathcal{L}_{1,\infty}$, we have that  $TW$ is $W$-modulated for all $T \in B(\Hc)$.
Applying Theorem~\ref{T: Modulated}, we have as $n\to\infty$,
\begin{equation}\label{expectation_values_sum}
    \sum_{k=0}^n \lambda(k,TW) = \sum_{k=0}^n \langle e_k,Te_k\rangle \mu(k,W) + O(1).
\end{equation}

\begin{lem}\label{expectation_values_lemma}
    Let $0 < W \in \mathcal{L}_{1,\infty}(\Hc) \setminus \mathcal{L}_1(\Hc),$ and let $\{e_k\}_{k=0}^\infty$ be an orthonormal basis such that $We_k= \mu(k,W)e_k$ for all $k\geq 0$. If $T$ is a bounded operator such that
    \begin{equation*}
        \sum_{k=0}^n \langle e_k, Te_k\rangle = Ln+o(n),\quad n\to\infty
    \end{equation*}
    where $L \in \Cplx$
    then:
    \begin{equation} \label{F: the first assertion}
        \sum_{k=0}^n \lambda(k,TW) = L\left(\sum_{k=0}^n \mu(k,W)\right)+o\left(\sum_{k=0}^n \mu(k,W)\right).
    \end{equation}
    Moreover,
    \begin{equation*}
        \Tr_\omega(TW) = \Tr_\omega(W)L
    \end{equation*}
    for all extended limits $\omega$.
\end{lem}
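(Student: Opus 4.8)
\textbf{Proof plan for Lemma~\ref{expectation_values_lemma}.}

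The statement has two assertions: the asymptotic formula \eqref{F: the first assertion} for the partial sums of the eigenvalue sequence $\lambda(TW)$, and the consequence that $\Tr_\omega(TW) = L\,\Tr_\omega(W)$. The plan is to derive the first from the Toeplitz-type machinery just set up (Lemma~\ref{toeplitz_corollary}) combined with the $V$-modulated operator estimate \eqref{expectation_values_sum}, and then to deduce the second by a routine application of the extended limit $\omega$.

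First I would establish \eqref{F: the first assertion}. Set $a_k := \mu(k,W)$ and $x_k := \langle e_k, Te_k\rangle$. The hypothesis on $T$ says precisely $\sum_{k=0}^n x_k = Ln + o(n)$. The sequence $\{a_k\}_{k=0}^\infty$ is non-increasing since $\mu(W)$ is a singular value sequence; it lies in $\ell_{1,\infty}$ because $W \in \mathcal{L}_{1,\infty}$, so $\sup_k (k+1)\mu(k,W) < \infty$; and $b_n := \sum_{k=0}^n \mu(k,W)$ diverges because $W \notin \mathcal{L}_1$. Thus all three hypotheses of Lemma~\ref{toeplitz_corollary} are met, and it yields
\[
\sum_{k=0}^n \mu(k,W)\langle e_k, Te_k\rangle = L b_n + o(b_n), \quad n \to \infty.
\]
Now invoke \eqref{expectation_values_sum} (which is Theorem~\ref{T: Modulated} applied to $TW$, itself $W$-modulated since $T$ is bounded), giving $\sum_{k=0}^n \lambda(k,TW) = \sum_{k=0}^n \mu(k,W)\langle e_k, Te_k\rangle + O(1)$. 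Since $b_n \to \infty$, the $O(1)$ term is absorbed into $o(b_n)$, and \eqref{F: the first assertion} follows.

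Next I would deduce the Dixmier trace identity. By definition $\Tr_\omega(TW) = \omega\big(\{\frac{1}{\log(n+2)}\sum_{k=0}^n \lambda(k,TW)\}_n\big)$ and $\Tr_\omega(W) = \omega\big(\{\frac{1}{\log(n+2)}\sum_{k=0}^n \mu(k,W)\}_n\big)$. Writing $\sum_{k=0}^n \lambda(k,TW) = L b_n + \varepsilon_n$ with $\varepsilon_n = o(b_n)$, we get
\[
\frac{1}{\log(n+2)}\sum_{k=0}^n \lambda(k,TW) = L\cdot\frac{b_n}{\log(n+2)} + \frac{\varepsilon_n}{\log(n+2)}.
\]
The sequence $\{b_n/\log(n+2)\}_n$ is bounded (this is exactly the statement $W \in \mathcal{L}_{1,\infty}$, i.e. $b_n = O(\log n)$), so $\{\varepsilon_n/\log(n+2)\}_n = o(b_n)/\log(n+2)$ tends to $0$, hence $\omega$ annihilates it. By linearity and continuity of $\omega$, $\Tr_\omega(TW) = L\,\omega\big(\{b_n/\log(n+2)\}_n\big) = L\,\Tr_\omega(W)$.

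The main obstacle, such as it is, is bookkeeping rather than anything deep: one must be careful that all three regularity hypotheses of Lemma~\ref{toeplitz_corollary} genuinely hold for $a_k = \mu(k,W)$ — in particular that $W \notin \mathcal{L}_1$ is what guarantees divergence of $b_n$ and is therefore a necessary hypothesis (the lemma is stated with $W \in \mathcal{L}_{1,\infty}\setminus\mathcal{L}_1$) — and that the error term $o(b_n)$ dominates the $O(1)$ coming from $V$-modulation, which again uses $b_n\to\infty$. Everything else is a direct assembly of results already proved.
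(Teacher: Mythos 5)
Your proposal is correct and follows essentially the same route as the paper's proof: apply Lemma~\ref{toeplitz_corollary} with $a_k=\mu(k,W)$ and $x_k=\langle e_k,Te_k\rangle$, combine with the $W$-modulated estimate \eqref{expectation_values_sum}, and then divide by $\log(n+2)$ and use that $W\in\mathcal{L}_{1,\infty}$ makes the error term $o(1)$ before applying $\omega$. The extra detail you give on checking the hypotheses of Lemma~\ref{toeplitz_corollary} and absorbing the $O(1)$ term is exactly the bookkeeping the paper does implicitly.
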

\begin{proof}
By the assumption $0 < W \in \mathcal{L}_{1,\infty} \setminus \mathcal{L}_1,$
    Lemma~\ref{toeplitz_corollary} applies with $a_k = \mu(k,W)$ and $x_k = \langle e_k,Te_k\rangle$, so
    \begin{equation*}
        \sum_{k=0}^n \langle e_k,Te_k\rangle\mu(k,W) = L\left(\sum_{k=0}^n \mu(k,W)\right)+o\left(\sum_{k=0}^n \mu(k,W)\right),\, n\to\infty.
    \end{equation*}
    Thus \eqref{expectation_values_sum} yields the first equality \eqref{F: the first assertion}.

    To obtain the result concerning Dixmier traces, we divide both sides of \eqref{F: the first assertion} by $\log(n+2)$ to get:
    \begin{equation*}
        \frac{1}{\log(n+2)}\sum_{k=0}^n \lambda(k,TW)  = L\left(\frac{1}{\log(n+2)}\sum_{k=0}^n \mu(k,W)\right)+o(1).
    \end{equation*}
    Thus if $\omega$ is an extended limit,
    \begin{equation*}
        \omega\left(\left\{\frac{1}{\log(n+2)}\sum_{k=0}^n \lambda(k,TW)\right\}_{n=0}^\infty\right) = L\Tr_\omega(W).
    \end{equation*}
    The left hand side is exactly the Dixmier trace $\Tr_\omega(TW)$.
\end{proof}

\begin{rem}
The result of Lemma~\ref{expectation_values_lemma} can be written in a different way. 
We could say that:
\begin{equation}\label{feq}
    \Tr_\omega(TW) = \Tr_\omega(W) \lim_{n\to \infty} \frac{1}{n+1}\sum_{k=0}^n \langle e_k,Te_k\rangle
\end{equation}
whenever the right hand side exists.
\end{rem}

Now Lemma~\ref{L: subsequence summation} combined with \eqref{feq} immediately implies the following proposition.

\begin{prop}\label{general_DOS_proposition}
    If $T$ is a bounded linear operator and $0 < W \in \mathcal{L}_{1,\infty}$ such that \[\lim_{n\to\infty} \frac{|\{ k\geq 0 : \mu(k,W) \geq \varepsilon_{n+1} \}|}{|\{ k\geq 0 : \mu(k,W) \geq \varepsilon_n \}|} = 1.\] for some decreasing, strictly positive sequence $\varepsilon_n \to 0$, then for all extended limits $\omega$ we have 
    \begin{equation}\label{abstract_DOS}
        \Tr_\omega(TW) = \Tr_\omega(W)\lim_{n\to \infty} \frac{\Tr(T\chi_{[\varepsilon_n,\infty)}(W))}{\Tr(\chi_{[\varepsilon_n,\infty)}(W))},
    \end{equation}
    whenever the limit on the right hand side exists.
\end{prop}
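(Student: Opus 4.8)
The plan is to reduce Proposition~\ref{general_DOS_proposition} to Lemma~\ref{expectation_values_lemma} by identifying the right-hand side of~\eqref{abstract_DOS} with the Ces\`aro limit $\lim_{n\to\infty} \frac1{n+1}\sum_{k=0}^n \langle e_k, Te_k\rangle$ appearing in~\eqref{feq}, and then invoking Lemma~\ref{L: subsequence summation} to bridge the gap between the averages over $\{0,\dots,n\}$ and the averages over the ``jump'' indices determined by the thresholds $\varepsilon_n$. First I would fix an orthonormal basis $\{e_k\}_{k=0}^\infty$ with $We_k = \mu(k,W)e_k$, and set $N_n := \#\{k\geq 0 : \mu(k,W)\geq \varepsilon_n\}$, so that $\chi_{[\varepsilon_n,\infty)}(W)$ is exactly the projection onto $\mathrm{span}\{e_0,\dots,e_{N_n-1}\}$ and hence
\[
\frac{\Tr(T\chi_{[\varepsilon_n,\infty)}(W))}{\Tr(\chi_{[\varepsilon_n,\infty)}(W))} = \frac{1}{N_n}\sum_{k=0}^{N_n-1}\langle e_k, Te_k\rangle.
\]
Note $N_n \to \infty$ since $W\notin\mathcal L_1$ forces $\mu(k,W)>0$ for all $k$, and the hypothesis $N_{n+1}/N_n\to 1$ is precisely condition~\eqref{E: Condition} transported to this setting.

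Next I would apply Lemma~\ref{L: subsequence summation} with $\phi(n) = n+1$, $a_k = \langle e_k, Te_k\rangle$, and the increasing sequence $\{k_n\} = \{N_n - 1\}$ (discarding repetitions and re-indexing so it is strictly increasing). The boundedness of $\big\{\frac1{n+1}\sum_{k=0}^n |a_k|\big\}$ is immediate from $|a_k|\leq \|T\|_\infty$. The ratio condition $\phi(k_{n+1})/\phi(k_n) = N_{n+1}/N_n \to 1$ is the hypothesis. The block condition
\[
\frac{1}{N_n}\sum_{k=N_{n-1}}^{N_n - 1} |\langle e_k, Te_k\rangle| \leq \|T\|_\infty\,\frac{N_n - N_{n-1}}{N_n} = o(1)
\]
follows again from $N_{n}/N_{n-1}\to 1$. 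Lemma~\ref{L: subsequence summation} then yields that if the subsequential average $\frac{1}{N_n}\sum_{k=0}^{N_n-1}\langle e_k,Te_k\rangle$ converges to some $L$, then the full average $\frac{1}{n+1}\sum_{k=0}^n\langle e_k,Te_k\rangle$ converges to the same $L$. Conversely this also shows the full Ces\`aro limit, if it exists, agrees with the subsequential one; either way, when the right-hand side of~\eqref{abstract_DOS} exists and equals $L$, the hypothesis of~\eqref{feq} (equivalently of Lemma~\ref{expectation_values_lemma}) is met with that same $L$.

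Finally I would feed this back into Lemma~\ref{expectation_values_lemma} (in the form~\eqref{feq}): since $0<W\in\mathcal L_{1,\infty}$ and, because $W\notin\mathcal L_1$, also $W\in\mathcal L_{1,\infty}\setminus\mathcal L_1$, the lemma gives $\Tr_\omega(TW) = \Tr_\omega(W)\cdot L$ for every extended limit $\omega$, which is exactly~\eqref{abstract_DOS}. I do not anticipate a serious obstacle here — the content is entirely in the already-established Lemmas~\ref{expectation_values_lemma} and~\ref{L: subsequence summation}, and the only mild care needed is the bookkeeping that passes from the spectral-threshold description of $\chi_{[\varepsilon_n,\infty)}(W)$ to the counting function $N_n$ and the verification that $N_n$ inherits the growth-ratio condition; the strictness of the decrease of $\varepsilon_n$ and positivity of $W$ ensure $N_n$ is well-defined, finite, and increasing to infinity.
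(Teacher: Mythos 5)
Your proof is correct and is essentially the paper's own argument: the paper proves this proposition precisely by combining Lemma~\ref{L: subsequence summation} with \eqref{feq} (Lemma~\ref{expectation_values_lemma}), after the same identification of $\Tr(T\chi_{[\varepsilon_n,\infty)}(W))/\Tr(\chi_{[\varepsilon_n,\infty)}(W))$ with the Ces\`aro averages along $N_n=\Tr(\chi_{[\varepsilon_n,\infty)}(W))$ and the same verification that $N_n$ inherits the ratio condition. The only point you gloss over is that $W\notin\mathcal{L}_1$ is not among the hypotheses; this is harmless, since if $W$ is trace-class both sides of \eqref{abstract_DOS} vanish and the statement is trivial, which is exactly how the paper dismisses that case in the proof of Theorem~\ref{T: Main}.
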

This proposition gives us the density of states formula for the discrete case. The idea is that in $\ell_2(X)$ we consider the basis $\{\delta_v\}_{v \in X}$ of indicator functions
of points $p \in X$, and $W$ is an operator of pointwise multiplication by a function which is \emph{radially decreasing} with respect to some point $x_0 \in X$, so that $\chi_{[\varepsilon_n,\infty)}(W)$ is the indicator function of a ball, and the limit $n\to \infty$ is equivalent to taking a limit over balls with radius going to infinity. This is where property~\eqref{E: Condition} as discussed in Section~\ref{S: Metric Condition} comes in, as this ensures that the premise of Proposition \ref{general_DOS_proposition} is satisfied.

\begin{proof}[Proof of Theorem~\ref{T: Main}]
    Let $W = M_w$, where $w:X\to \C$ is a positive, radially strictly decreasing function with $M_w \in \mathcal{L}_{1,\infty}$. If $W$ is trace-class, the theorem is trivial as both sides are zero. So, assume that $0 < W \in \mathcal{L}_{1,\infty}(\Hc) \setminus \mathcal{L}_1(\Hc)$. Then $\{\delta_v\}_{v \in X}$ is a basis of normalised eigenvectors for $W$, with eigenvalue corresponding to $\delta_v$ equal to $w(v)$. By assumption $w(v)$
    is a strictly decreasing function of $d_X(x_0,v)$, and hence the sets
       $\{v\; \colon \;w(v)\geq \delta\}$
    are balls. In fact, if we define $\varepsilon_n = \tilde{\mu}(n, W)$, where $\tilde{\mu}(n, W)$ is the $n$th largest singular value of $W$ counted \textit{without} multiplicities, we have that \[|\{ k\geq 0 : \mu(k,W) \geq \varepsilon_n \}|=|\{v\; \colon \;w(v)\geq \varepsilon_n\}| = |B(x_0, r_n)|,\] and we have assumed that \[\lim_{n\rightarrow \infty}\frac{|B(x_0, r_{k+1})|}{|B(x_0, r_k)|} = 1.\] Hence due to Lemma~\ref{expectation_values_lemma} and Lemma~\ref{L: subsequence summation}, we can then conclude that
    \begin{align*}
        \Tr_\omega(TW) &= \Tr_\omega(W)\lim_{k\to \infty} \frac{1}{|B(x_0,r_k)|}\sum_{v \in B(x_0,r_k)} \langle \delta_v,T\delta_v\rangle
    \end{align*}
    since we have assumed that the limit on the right exists.
    Observing that the sum on the right side is equal to $\Tr(TM_{\chi_{B(0,r_k)}})$
    gives \eqref{E: general DOS formula}. \end{proof}

\begin{rem}
If we again denote $\tilde{\mu}(n, M_w)$ as the $n$th largest singular value of $M_w$ counted \textit{without} multiplicities, define $m_n$ as the multiplicity corresponding to this singular value and also define $M_n := \sum_{k=1}^n m_k$, then we have effectively imposed \[\lim_{n\rightarrow \infty} \frac{M_{n+1}}{M_n}=1.\] Now compare this to the paper by Cipriani and Sauvageot~\cite{CiprianiSauvageot2021} also referenced in Section~\ref{S: Metric Condition}. They study densely defined, nonnegative, unbounded, self-adjoint operators with exactly such a property for the multiplicities of its eigenvalues, and our $(M_w)^{-1}$ would fit such a description. The link between the DOS as considered in this chapter and the spectral weight those authors define remains unclear as of yet.
\end{rem}

\section{Problems of measurability}
\label{S: measurability}
This section is mostly the work of Edward McDonald. Going back to the Toeplitz lemma~\ref{toeplitz_lemma}, it is possible to get better behaviour of the convergence $\frac{1}{b_n}\sum_{k=0}^n a_kx_k\to L$ 
by assuming faster convergence of $x_k\to L$. For example, we have the following lemma with an obvious proof. 
\begin{lem} \label{L: unused lemma}
    Let $a_n$ and $b_n$ satisfy the same assumptions as Lemma~\ref{toeplitz_lemma}. 
    Let $x_n \in \mathbb{C}, \ n=0,1,\ldots.$ 
    If $x_k\rightarrow L$ sufficiently fast such that $\{a_k|x_k - L|\}_{k=0}^\infty \in \ell_1$, then:
    \begin{equation*}
        \sum_{k=0}^n a_kx_k = L b_n + O(1).
    \end{equation*}
\end{lem}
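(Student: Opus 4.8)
The statement to prove is Lemma~\ref{L: unused lemma}: under the assumptions of Lemma~\ref{toeplitz_lemma} on $\{a_n\}$ and $\{b_n\}$ (that is, $a_n \geq 0$ and $b_n = \sum_{k=0}^n a_k$ diverges), if $x_k \to L$ fast enough that $\{a_k|x_k-L|\}_{k=0}^\infty \in \ell_1$, then $\sum_{k=0}^n a_k x_k = Lb_n + O(1)$.

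The plan is to reduce immediately to the quantity $\sum_{k=0}^n a_k(x_k - L)$, exactly as in the proof of the Toeplitz lemma~\ref{toeplitz_lemma}. Writing $x_k = L + (x_k - L)$ and using $b_n = \sum_{k=0}^n a_k$, one has
\[
\sum_{k=0}^n a_k x_k = L b_n + \sum_{k=0}^n a_k(x_k - L).
\]
So it suffices to show the second term is $O(1)$ in $n$. But by hypothesis the sequence $\{a_k|x_k - L|\}_{k=0}^\infty$ is in $\ell_1$, hence
\[
\Bigl| \sum_{k=0}^n a_k(x_k - L) \Bigr| \leq \sum_{k=0}^n a_k|x_k - L| \leq \sum_{k=0}^\infty a_k|x_k - L| < \infty,
\]
which is a bound independent of $n$. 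This immediately gives $\sum_{k=0}^n a_k x_k - Lb_n = O(1)$, completing the argument.

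There is essentially no obstacle here — the lemma is, as the excerpt notes, one "with an obvious proof," and the point is purely that absolute summability of the weighted deviations upgrades the $o(b_n)$ conclusion of Lemma~\ref{toeplitz_lemma} to $O(1)$. The only things worth stating carefully are that the partial sums $\sum_{k=0}^n a_k|x_k-L|$ are monotone nondecreasing (since $a_k \geq 0$) and therefore bounded by their limit, and that divergence of $b_n$ is not even needed for this particular conclusion (it is inherited from the shared hypotheses but plays no role in the $O(1)$ estimate). I would present the proof in two or three lines exactly along these lines, perhaps remarking that it is an immediate strengthening of Lemma~\ref{toeplitz_lemma} under the stronger summability assumption.
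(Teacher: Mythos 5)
Your proof is correct and is exactly the argument the paper has in mind: the paper states this lemma ``with an obvious proof'' and omits it, and your decomposition $\sum_{k=0}^n a_k x_k = Lb_n + \sum_{k=0}^n a_k(x_k-L)$ with the error bounded by the $\ell_1$ norm of $\{a_k|x_k-L|\}$ is precisely that obvious proof. Nothing further is needed.
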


\begin{lem} \label{second unused lemma}
    Let $a_n$ and $b_n$ satisfy the same assumptions as Lemma~\ref{toeplitz_corollary}. 
    Let $x_n \in \mathbb{C}, \ n=0,1,\ldots$ and $\sigma_n = \frac 1{n+1} \sum_{k=0}^n x_k.$
    If $\sigma_n \rightarrow L$ sufficiently fast such that $\{a_k| \sigma_k - L|\}_{k=0}^\infty \in \ell_1$, then:
    \begin{equation*}
        \sum_{k=0}^n a_kx_k = L b_n + O(1).
    \end{equation*}
\end{lem}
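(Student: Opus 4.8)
The plan is to reduce Lemma~\ref{second unused lemma} to Lemma~\ref{L: unused lemma} via Abel summation, exactly mirroring the reduction of Lemma~\ref{toeplitz_corollary} to Lemma~\ref{toeplitz_lemma}, but now keeping careful track of error terms rather than little-$o$ estimates. First I would set $y_n := \sum_{k=0}^{n-1} x_k$ with $y_0 = 0$, so that $\sigma_n = \frac{y_{n+1}}{n+1}$, and apply Abel's summation formula to $\sum_{k=0}^n a_k x_k = \sum_{k=0}^n a_k (y_{k+1} - y_k)$, obtaining
\[
\sum_{k=0}^n a_k x_k = y_{n+1} a_n + \sum_{k=1}^n (a_{k-1} - a_k) y_k = y_{n+1} a_n + \sum_{k=1}^n k(a_{k-1}-a_k)\cdot \frac{y_k}{k}.
\]

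Next I would observe that $\frac{y_k}{k} = \frac{k+1}{k}\sigma_{k-1}$ wait — more carefully, $y_k = \sum_{j=0}^{k-1} x_j = k\sigma_{k-1}$, so $\frac{y_k}{k} = \sigma_{k-1}$. Then the sum $\sum_{k=1}^n k(a_{k-1}-a_k)\sigma_{k-1}$ has the form treated by Lemma~\ref{L: unused lemma} with the non-negative weights $c_k := k(a_{k-1}-a_k)$ and the sequence $\sigma_{k-1} \to L$: indeed $d_n := \sum_{k=1}^n c_k = b_{n-1} - n a_n \to \infty$ (using assumption (3) and the boundedness of $\{na_n\}$ from assumption (2)), and the fast-convergence hypothesis $\{a_k|\sigma_k - L|\}_k \in \ell_1$ translates into $\{c_k |\sigma_{k-1} - L|\}_k \in \ell_1$ because $c_k = k(a_{k-1}-a_k) \leq k a_{k-1}$ is comparable to $a_{k-1}$ up to the bounded factor governed by the $\mathcal{L}_{1,\infty}$ decay — this comparison is the one point that needs a short argument, using monotonicity of $a_k$ and $\sup_k k a_k < \infty$ to bound $\sum_k k(a_{k-1}-a_k)|\sigma_{k-1}-L|$ by a constant times $\sum_k a_{k-1}|\sigma_{k-1}-L|$ via summation by parts or a direct telescoping estimate. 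Applying Lemma~\ref{L: unused lemma} then gives $\sum_{k=1}^n c_k \sigma_{k-1} = L(b_{n-1} - na_n) + O(1)$.

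Finally I would reassemble: $\sum_{k=0}^n a_k x_k = \frac{y_{n+1}}{n}\cdot na_n + L(b_{n-1} - na_n) + O(1)$, and since $\frac{y_{n+1}}{n} \to L$ (as $\sigma_n \to L$) while $na_n$ is bounded, the term $\left(\frac{y_{n+1}}{n} - L\right)na_n$ is $o(1)$, hence $O(1)$; absorbing it and replacing $b_{n-1}$ by $b_n$ at the cost of $a_n = O(1)$ yields $\sum_{k=0}^n a_k x_k = L b_n + O(1)$, as claimed.

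The main obstacle I anticipate is the bookkeeping needed to verify that $\{c_k|\sigma_{k-1}-L|\}_k \in \ell_1$ genuinely follows from $\{a_k|\sigma_k-L|\}_k \in \ell_1$; the naive bound $c_k \leq k a_{k-1}$ loses a factor of $k$, so one cannot simply dominate termwise. The cleanest route is probably to not pass through $c_k$ at all but instead estimate the Abel-summed remainder $\sum_{k=1}^n (a_{k-1}-a_k) y_k$ directly: write $y_k - kL = k(\sigma_{k-1} - L)$ and split $\sum (a_{k-1}-a_k)y_k = L\sum(a_{k-1}-a_k)k + \sum (a_{k-1}-a_k)k(\sigma_{k-1}-L)$, where the first piece telescopes cleanly against $y_{n+1}a_n$ and the second is controlled by Abel summation once more using $\sum_{j\geq k}(a_{j-1}-a_j) = a_{k-1}$ together with $\{a_k|\sigma_k - L|\}_k \in \ell_1$. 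This is a routine but slightly delicate rearrangement, and getting the index shifts right is where care is required.
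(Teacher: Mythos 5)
Your overall route is the same as the paper's: Abel summation to rewrite $\sum_{k=0}^n a_kx_k = y_{n+1}a_n + \sum_{k=1}^n k(a_{k-1}-a_k)\,\sigma_{k-1}$, followed by an appeal to Lemma~\ref{L: unused lemma} with the modified weights $c_k := k(a_{k-1}-a_k)$, and your reassembly at the end (absorbing $\bigl(\tfrac{y_{n+1}}{n}-L\bigr)na_n$ and trading $b_{n-1}$ for $b_n$) is fine; you even handle the index more carefully than the paper, which writes $\sigma_k$ where the identity produces $\sigma_{k-1}$. The problem is precisely the step you flag as the main obstacle: passing from $\{a_k|\sigma_k-L|\}\in\ell_1$ to $\{c_k|\sigma_{k-1}-L|\}\in\ell_1$. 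Your first justification (that $c_k$ is comparable to $a_{k-1}$ up to a bounded factor, using monotonicity and $\sup_k ka_k<\infty$) is false: those assumptions only make $c_k$ bounded, not $c_k\lesssim a_k$. Concretely, take $a_k=2^{-j}$ for $2^j\le k<2^{j+1}$; then $c_{2^j}=2^j(2^{-(j-1)}-2^{-j})=1$ while $a_{2^j-1}=2^{-(j-1)}$, and choosing $\sigma_{2^j-1}-L=1/j$ with $\sigma_k=L$ at all other indices gives $\sum_k a_k|\sigma_k-L|=\sum_j 2^{-(j-1)}/j<\infty$ but $\sum_k c_k|\sigma_{k-1}-L|=\sum_j 1/j=\infty$. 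Your fallback does not repair this either: summing $\sum_{k\le n}c_k(\sigma_{k-1}-L)$ by parts via $\sum_{j\ge k}(a_{j-1}-a_j)=a_{k-1}$ simply inverts the first Abel summation and returns $\sum_{k\le n-1}a_k(x_k-L)-na_n(\sigma_{n-1}-L)$, i.e.\ (up to a bounded term) exactly the quantity you are trying to bound, so that argument is circular.

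Moreover, the gap is not mere bookkeeping. Feeding the example above into the identity (with $x_k:=(k+1)\sigma_k-k\sigma_{k-1}$, which the statement permits since the $x_n\in\mathbb{C}$ are arbitrary) gives $\sum_{k\le n}a_kx_k-Lb_n = O(1)+\sum_{2^j\le n}1/j$, which is unbounded, so the stated hypothesis alone cannot yield the conclusion by any rearrangement. What the reduction to Lemma~\ref{L: unused lemma} genuinely needs --- and what the paper's own proof also assumes tacitly at this very point, asserting that the lemma ``applies with $a'$ in place of $a$'' without verifying its $\ell_1$ hypothesis --- is either the stronger assumption $\sum_k k(a_{k-1}-a_k)|\sigma_k-L|<\infty$, or a regularity condition tying the increments of $a$ to its size, e.g.\ $k(a_{k-1}-a_k)\le Ca_k$ (true for $a_k=(k+1)^{-1}$, and then your termwise comparison does work). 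So your instinct about where the difficulty sits is exactly right, but neither of your two proposed ways of closing it succeeds; as written the proof is incomplete, and completing it requires an additional hypothesis of this kind.
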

\begin{proof}
    Let $y_n = \sum_{k=0}^{n-1} x_k$ with $y_{0}=0$. From the proof of Lemma~\ref{toeplitz_corollary} we have: 
    \begin{align*}
        \sum_{k=0}^n a_kx_k 
                            = y_{n+1}a_n+\sum_{k=1}^n \sigma_k \cdot k(a_{k-1}-a_{k}),
    \end{align*}
    the sequence $a_k' := k(a_{k-1}-a_{k})$ is non-negative, and as $n\to\infty$,
    \begin{equation*}
        \sum_{k=1}^n a_k' = b_{n-1}-na_n\rightarrow \infty,
    \end{equation*}
    Therefore, since by assumption $\lim_{k\to\infty} \sigma_k = L$
and $\{a_k| \sigma_k - L|\}_{k=0}^\infty \in \ell_1,$
     Lemma~\ref{L: unused lemma} applies with $a'$ in place of $a$ and $\sigma_k$ in place of $x_k,$ which gives 
    \begin{equation*}
        \sum_{k=1}^n \sigma_k\cdot k(a_{k-1}-a_k) = L (b_{n-1}-na_n) + O(1)
    \end{equation*} 
    as $n\to\infty$.
    Thus,
    \begin{align*}
        \sum_{k=0}^n a_kx_k  & = \frac{y_{n+1}}{n}\cdot na_n + L(b_{n-1}-na_n) + O(1)  \\
           & = L b_{n-1} + O(1) = L b_{n} + O(1).
    \qedhere\end{align*}
\end{proof}

\medskip
For an operator $A \in \mathcal{L}_{1,\infty},$ there are various criteria relating the behaviour of the sequence $\sum_{k=0}^n \lambda(k,A)$ to the measurability of $A$. For example,~\cite[Theorem 5.1.5]{LSZVol1} implies that
\begin{equation} \label{F: what a beauty}
    \sum_{k=0}^n \lambda(k,A) - c\log(2+n)=O(1),\quad n\to \infty
\end{equation}
if and only if $\varphi(A) = c$ for all normalised traces $\varphi$ on $\mathcal{L}_{1,\infty}$ (c.f.~\cite[Theorem 9.1.2]{LSZVol1}). For different classes of traces, different criteria are available, see~\cite{SemenovSukochev2015, UsachevThesis}.

\begin{thm}
    Let $(X,d_X)$, $T$ and $w$ satisfy the assumptions of Theorem~\ref{T: Main}.
    Let~$e_n$ be an orthonormal basis such that $M_w e_k = \mu(k,w) e_k$, where $\{\mu(k,w)\}_{k=0}^\infty$ is the decreasing rearrangement of $w$. 
     If $$\frac{1}{n+1}\sum_{k=0}^n \langle e_k,Te_k\rangle \rightarrow L \in \Cplx$$
    so fast that
    \begin{equation*}
        \sum_{n=0}^\infty \mu(n,w)\left| \frac{1}{n+1}\sum_{k=0}^n \langle e_k,Te_k\rangle - L\right| < \infty
    \end{equation*}
    and there exists $C > 0$ such that:
    \begin{equation*}
        \sum_{k=0}^n \mu(k,w) = C\log(2+n)+O(1),
    \end{equation*}
    then $TM_w$ is measurable in the sense of Connes, specifically
    $$
       \varphi(TM_w) = \Tr_\omega(M_w) \lim_{n \to \infty} \frac 1{n+1} \sum_{k=0}^n \langle e_k,Te_k\rangle
    $$
    for all traces $\varphi$ on $\mathcal{L}_{1,\infty}.$
\end{thm}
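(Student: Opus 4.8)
The plan is to combine the modified Toeplitz estimate of Lemma~\ref{second unused lemma} with the $V$-modulated operator machinery (Theorem~\ref{T: Modulated}) and the measurability criterion~\eqref{F: what a beauty}. Set $W = M_w$, which by the assumptions of Theorem~\ref{T: Main} satisfies $0 < W \in \mathcal{L}_{1,\infty}$; if $W$ is trace-class everything is trivial, so assume $W \notin \mathcal{L}_1$. Since $W$ is bounded and $TW$ is therefore $W$-modulated, Theorem~\ref{T: Modulated} gives
\[
\sum_{k=0}^n \lambda(k,TW) = \sum_{k=0}^n \langle e_k, Te_k\rangle\, \mu(k,w) + O(1), \quad n\to\infty.
\]
So the problem reduces to controlling the weighted sum $\sum_{k=0}^n a_k x_k$ with $a_k = \mu(k,w)$ and $x_k = \langle e_k, Te_k\rangle$.

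First I would verify that the hypotheses of Lemma~\ref{second unused lemma} hold: the sequence $\{\mu(k,w)\}$ is non-increasing, lies in $\ell_{1,\infty}$ (because $M_w \in \mathcal{L}_{1,\infty}$), and $b_n := \sum_{k=0}^n \mu(k,w)$ diverges (because $W \notin \mathcal{L}_1$). The Cesàro averages $\sigma_n = \frac{1}{n+1}\sum_{k=0}^n \langle e_k,Te_k\rangle$ converge to $L$ by hypothesis, and the fast-convergence assumption of the theorem is precisely $\{\mu(k,w)|\sigma_k - L|\}_{k=0}^\infty \in \ell_1$. Hence Lemma~\ref{second unused lemma} applies and yields $\sum_{k=0}^n a_k x_k = L b_n + O(1)$. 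Plugging this into the displayed identity from Theorem~\ref{T: Modulated} gives
\[
\sum_{k=0}^n \lambda(k,TW) = L\sum_{k=0}^n \mu(k,w) + O(1) = L\big(C\log(2+n) + O(1)\big) + O(1) = LC\log(2+n) + O(1),
\]
using the second hypothesis $\sum_{k=0}^n \mu(k,w) = C\log(2+n)+O(1)$.

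Now the criterion~\eqref{F: what a beauty} (via~\cite[Theorem~5.1.5]{LSZVol1}, equivalently~\cite[Theorem~9.1.2]{LSZVol1}) says that $\sum_{k=0}^n\lambda(k,TW) - LC\log(2+n) = O(1)$ implies $\varphi(TW) = LC$ for every normalised trace $\varphi$ on $\mathcal{L}_{1,\infty}$; in particular $TW$ is Connes-measurable. To match the stated conclusion I identify $C$ with $\Tr_\omega(M_w)$: applying the $O(1)$ asymptotic to the Dixmier functional itself gives $\Tr_\omega(M_w) = \omega\big(\{\frac{1}{\log(2+n)}\sum_{k=0}^n\mu(k,w)\}\big) = C$ for all $\omega$, so $\varphi(TM_w) = C\cdot L = \Tr_\omega(M_w)\lim_n \sigma_n$, which is the asserted formula.

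The routine parts here are genuinely routine — the only real content is checking the $\ell_1$-summability hypotheses line up exactly with what Lemma~\ref{second unused lemma} needs, which they do by design. The main obstacle, such as it is, is bookkeeping with the two different $O(1)$ error terms (one from Theorem~\ref{T: Modulated}, one from Lemma~\ref{second unused lemma}, one from the log-asymptotic of $b_n$) and making sure none of them secretly grows; since all three are genuinely bounded the conclusion follows cleanly, and no interplay with Property~\eqref{E: Condition} or the ball structure is needed at this stage — that was already used in Theorem~\ref{T: Main} to identify the limit, whereas here we are simply promoting an existing Dixmier-trace statement to a statement about all traces under the stronger convergence hypotheses.
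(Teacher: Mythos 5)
Your proposal is correct and follows essentially the same route as the paper: reduce via Theorem~\ref{T: Modulated} to the weighted sum $\sum_k \langle e_k,Te_k\rangle\mu(k,w)$, apply Lemma~\ref{second unused lemma} with $a_k=\mu(k,w)$, $x_k=\langle e_k,Te_k\rangle$, use the log-asymptotic of $\sum_k\mu(k,w)$, and conclude by the criterion~\eqref{F: what a beauty}. The extra remarks (identifying $C=\Tr_\omega(M_w)$ and checking the hypotheses of the lemma) are consistent with the paper's argument and introduce no gap.
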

\begin{proof} 
Since $TM_w$ is $M_w$-modulated (see Definition~\ref{D: V-modulated} and Theorem~\ref{T: Modulated}), from \eqref{expectation_values_sum} we have
$$
   \sum_{k=0}^n \lambda(k,TM_w) = \sum_{k=0}^n \Scal{e_k,Te_k} \mu(k,w) + O(1).
$$
Hence, in view of \eqref{F: what a beauty} to prove the claim it suffices to show that 
$$
   \sum_{k=0}^n \Scal{e_k,Te_k} \mu(k,w) = CL \log(2+n) + O(1).
$$
By the second condition this is equivalent to 
$$
   \sum_{k=0}^n \Scal{e_k,Te_k} \mu(k,w) = L \sum_{k=0}^n \mu(k,w) + O(1),
$$
so it suffices to prove this. This follows from Lemma~\ref{second unused lemma} applied to $x_k = \Scal{e_k,Te_k}$ and $a_k = \mu(k,w).$ 
\end{proof}

We could also replace the assumption with the slightly stronger assertion:
\begin{equation*}
    \left\{\frac{1}{n+1}\sum_{k=0}^n \langle e_k,Te_k\rangle-L\right\}_{k=0}^\infty \in \Lambda_{\log},
\end{equation*}
where $\Lambda_{\log}$ is the space of sequences $x$ such that
\begin{equation*}
    \sum_{k=0}^\infty \frac{\mu(k,x)}{k+1} < \infty.
\end{equation*}

\section{Equivariance of the DOS under translations of the Hamiltonian}
\label{S: Translation}
In this section, mostly the work of Edward McDonald, we will provide a straightforward application of the Dixmier formula for the DOS put forward in Theorem~\ref{T: Main}. Namely, we provide a new and original proof of the equivariance of the DOS on lattice graphs $X$ under translations of the Hamiltionian. By this we mean that if $U$ denotes a shift operator on $\ell_2(X)$ and the DOS exists for both a Hamiltonian $H$ and the shifted $UHU^*$, then the DOS is equal for $H$ and $UHU^*$. This fact is not hard to prove without Theorem~\ref{T: Main}, but it does provide a different perspective on the claim.

Afterwards, we will discuss some consequences of this translation equivariance.
    
\subsection{Translation equivariance on lattice graphs}
    We will consider the example where $X = \mathbb{Z}^d$, embedded as a subset of $\Rl^d$ with the Euclidean metric. Precisely the same reasoning
    applies to other lattices $X\subset \Rl^d$ (recall that a lattice in $\Rl^d$ is the $\mathbb{Z}$-linear span of $d$ linearly independent
    vectors).
    
    We take 
    \begin{equation} \label{F: w(x) = (x)}
       w(x) := (1+\norm{x}_2)^{-d}
    \end{equation}
    where $x \in \mathbb{Z}^d$ like before.
    
    \begin{lem}\label{translation_difference_L1}
        For all $n\in \mathbb{Z}^d$, we have:
        \begin{equation*}
            \{w(x)-w(x-n)\}_{x \in \mathbb{Z}^d} \in \ell_{\frac{d}{d+1},\infty}.
        \end{equation*}
    \end{lem}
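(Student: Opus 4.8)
The goal is to show that the sequence $\{w(x) - w(x-n)\}_{x \in \mathbb{Z}^d}$, where $w(x) = (1+\|x\|_2)^{-d}$, lies in the Lorentz sequence space $\ell_{\frac{d}{d+1},\infty}$. Recalling the definition of $\ell_{p,\infty}$, this amounts to showing that the decreasing rearrangement $\mu(k, \{w(x)-w(x-n)\}_x)$ is $O(k^{-\frac{d+1}{d}})$ as $k \to \infty$, or equivalently that the distribution function satisfies $\#\{x \in \mathbb{Z}^d : |w(x) - w(x-n)| > \varepsilon\} = O(\varepsilon^{-\frac{d}{d+1}})$ as $\varepsilon \downarrow 0$.

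The plan is to first obtain a pointwise bound on $|w(x) - w(x-n)|$ in terms of $\|x\|_2$. Since $w$ is (the restriction to $\mathbb{Z}^d$ of) a smooth radial function on $\mathbb{R}^d$ whose gradient satisfies $|\nabla w(y)| \lesssim (1+\|y\|_2)^{-d-1}$, the mean value theorem along the segment from $x-n$ to $x$ gives, for $\|x\|_2 \geq 2\|n\|_2$ say,
\[
|w(x) - w(x-n)| \leq \|n\|_2 \sup_{y \in [x-n,x]} |\nabla w(y)| \lesssim_n (1+\|x\|_2)^{-d-1}.
\]
For the finitely many remaining $x$ with $\|x\|_2 < 2\|n\|_2$, the difference is trivially bounded, and these contribute only a bounded perturbation which does not affect membership in $\ell_{\frac{d}{d+1},\infty}$ (a finitely supported sequence lies in every Lorentz space). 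Thus it suffices to check that $\{(1+\|x\|_2)^{-d-1}\}_{x \in \mathbb{Z}^d} \in \ell_{\frac{d}{d+1},\infty}$.

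The latter is a standard lattice-counting estimate: for $\varepsilon > 0$, the set $\{x \in \mathbb{Z}^d : (1+\|x\|_2)^{-d-1} > \varepsilon\}$ is contained in a Euclidean ball of radius $\asymp \varepsilon^{-\frac{1}{d+1}}$, which contains $O(\varepsilon^{-\frac{d}{d+1}})$ lattice points. Hence the distribution function of this sequence is $O(\varepsilon^{-\frac{d}{d+1}})$, which is precisely the defining condition for $\ell_{\frac{d}{d+1},\infty}$ with exponent $p = \frac{d}{d+1}$. Combining this with the pointwise bound and the harmless finite-support correction yields the claim.

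I expect the main (though still routine) obstacle to be the careful bookkeeping of the gradient estimate near the origin and on the segment $[x-n, x]$ — one must ensure the supremum of $|\nabla w|$ over that segment is genuinely comparable to $(1+\|x\|_2)^{-d-1}$ uniformly once $\|x\|_2 \gtrsim \|n\|_2$, and handle the region close to $0$ (where $w - w(\cdot - n)$ could in principle be large) by absorbing it into a finitely supported term. No heavy machinery is needed; this is purely an elementary decay-plus-lattice-counting argument, and the constants are allowed to depend on $n$.
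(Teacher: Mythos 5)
Your proposal is correct and follows essentially the same route as the paper: both bound $|w(x)-w(x-n)|$ by $\|n\|_2\sup|\nabla w|$ along the segment from $x-n$ to $x$, using $\|\nabla w(y)\|_2 = d(1+\|y\|_2)^{-d-1}$, to get decay of order $(1+\|x\|_2)^{-d-1}$ once $\|x\|_2$ dominates $\|n\|_2$, and then conclude membership in $\ell_{\frac{d}{d+1},\infty}$ (the paper states this last counting step without detail, while you spell out the standard lattice-point estimate). The only cosmetic difference is that you absorb the region $\|x\|_2<2\|n\|_2$ into a finitely supported correction, whereas the paper keeps the explicit bound $d\|n\|_2(1+\|x\|_2-\|n\|_2)^{-d-1}$ for $\|x\|_2>\|n\|_2$; both are fine.
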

    \begin{proof}
        The difference $w(x)-w(x-n)$ is provided by the formula:
        \begin{equation*}
            w(x)-w(x-n) = \int_0^1 \langle \nabla w(x-(1-\theta)n),n\rangle\,d\theta.
        \end{equation*}
        The gradient $\nabla w$ of $w$ is easily computed as:
        \begin{equation*}
            \frac{\partial}{\partial x_j}w(x) = -\frac{dx_j}{\norm{x}_2 (1+\norm{x}_2)^{d+1}}.
        \end{equation*}
        Thus,
        \begin{equation*}
            \norm{\nabla w(x)}_2 = \frac{d}{(1+\norm{x}_2)^{d+1}}.
        \end{equation*}
        Therefore for $x$ with $\norm{x}_2 > \norm{n}$ we have
        \begin{align*}
            \abs{w(x)-w(x-n)} &\leq d\norm{n}_2 \max_{0\leq \theta\leq 1}(1+\norm{x-(1-\theta) n)}_2)^{-d-1} \\
            &\leq  \frac{d \norm{n}_2}{(1+\norm{x}_2 - \norm{n}_2)^{d+1}},
        \end{align*}
       hence $\abs{w(x) - w(x-n)}_{x\in \mathbb{Z}^d} $ is an element of $\ell_{\frac{d}{d+1}, \infty}$.
    \end{proof} 
    
    \begin{thm}
        For $n \in \mathbb{Z}^d$, let $U_n$ denote the operator on $\ell_2(\mathbb{Z}^d)$ of translation by~$n$. Assume that $H = H_0+M_V$ is a Hamiltonian operator
        such that the density of states exists for both $H$ and $U_nHU_n^*.$ Then both measures are equal.
    \end{thm}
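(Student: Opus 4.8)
The strategy is to use the Dixmier trace formula of Theorem~\ref{T: Main} to express both densities of states via a $\Tr_\omega$ expression, and then to show that conjugation by $U_n$ changes the relevant operator only by a trace-class perturbation, which a Dixmier trace cannot see. Fix the weight $w$ as in~\eqref{F: w(x) = (x)}, so $M_w \in \mathcal{L}_{1,\infty}(\ell_2(\mathbb{Z}^d))$, and note that $(\mathbb{Z}^d, \|\cdot\|_2)$ satisfies property~\eqref{E: Condition} by the discussion in Section~\ref{S: Metric Condition} (indeed $|B(0,r)| = V_2(d) r^d + O(r^{d-1})$, so Proposition~\ref{P: CiprianiAsymptote} applies). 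Hence Theorem~\ref{T: Main} applies to any bounded operator $T$ on $\ell_2(\mathbb{Z}^d)$: whenever the density of states limit exists, it is computed by $\frac{1}{\Tr_\omega(M_w)}\Tr_\omega(T M_w)$, with $\Tr_\omega(M_w)$ independent of $\omega$.

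\textbf{Key steps.} First, since $f(H) M_{\chi_{B(0,r_k)}}$ must be trace-class for the density of states to be defined (and similarly for $U_n H U_n^*$), and since the density of states for $H$ exists by hypothesis, Theorem~\ref{T: Main} gives that for every $f \in C_c(\mathbb{R})$ and every extended limit $\omega$,
\[
\int_{\mathbb{R}} f \, d\nu_H = \frac{\Tr_\omega(f(H) M_w)}{\Tr_\omega(M_w)}, \qquad \int_{\mathbb{R}} f \, d\nu_{U_n H U_n^*} = \frac{\Tr_\omega(f(U_n H U_n^*) M_w)}{\Tr_\omega(M_w)}.
\]
Second, observe $f(U_n H U_n^*) = U_n f(H) U_n^*$ (by the spectral theorem / functional calculus, since $U_n$ is unitary), so
\[
f(U_n H U_n^*) M_w = U_n f(H) U_n^* M_w = U_n f(H) (U_n^* M_w U_n) U_n^*.
\]
Third, compute $U_n^* M_w U_n = M_{w(\cdot - n)}$ (translating the multiplier), hence $U_n^* M_w U_n - M_w = M_{w(\cdot-n) - w(\cdot)}$, which by Lemma~\ref{translation_difference_L1} lies in $\mathcal{L}_{\frac{d}{d+1},\infty} \subseteq \mathcal{L}_1$ (as $\frac{d}{d+1} < 1$). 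Therefore
\[
f(U_n H U_n^*) M_w = U_n f(H) M_w U_n^* + U_n f(H) M_{w(\cdot-n)-w(\cdot)} U_n^*,
\]
where the second term is trace-class (a bounded operator times a trace-class operator, conjugated by a unitary). Fourth, apply $\Tr_\omega$: it vanishes on the trace-class term, and it is unitarily invariant, so $\Tr_\omega(U_n f(H) M_w U_n^*) = \Tr_\omega(f(H) M_w)$. Combining, $\Tr_\omega(f(U_n H U_n^*) M_w) = \Tr_\omega(f(H) M_w)$, and dividing by the ($\omega$-independent) constant $\Tr_\omega(M_w)$ yields $\int f \, d\nu_{U_n H U_n^*} = \int f\, d\nu_H$ for all $f \in C_c(\mathbb{R})$, i.e.\ the two measures coincide by Riesz--Markov--Kakutani uniqueness.

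\textbf{Main obstacle.} The only genuinely delicate point is bookkeeping about trace-class membership: one must be sure that all the operators appearing ($f(H) M_w$, $f(U_nHU_n^*)M_w$, and the perturbation term) are well-defined bounded/trace-class operators so that $\Tr_\omega$ and $\Tr$ can legitimately be applied and the linearity $\Tr_\omega(A+B) = \Tr_\omega(A)+\Tr_\omega(B)$ used. This is handled by the hypothesis that the density of states exists for both $H$ and $U_n H U_n^*$ (forcing $f(H)M_{\chi_{B}}$ trace-class, hence $f(H)M_w \in \mathcal{L}_{1,\infty}$ since $M_w$ is $M_w$-modulated-dominated), together with Lemma~\ref{translation_difference_L1}. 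No estimate beyond those already in the excerpt is needed; the argument is essentially a one-line consequence of Theorem~\ref{T: Main} plus unitary invariance and singularity of $\Tr_\omega$.
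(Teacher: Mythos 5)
Your proof is correct and follows essentially the same route as the paper: combine Theorem~\ref{T: Main} with the identity $f(U_nHU_n^*)=U_nf(H)U_n^*$, the tracial property of $\Tr_\omega$, the trace-class bound $U_n^*M_wU_n-M_w\in\mathcal{L}_{\frac{d}{d+1},\infty}\subset\mathcal{L}_1$ from Lemma~\ref{translation_difference_L1}, and the vanishing of $\Tr_\omega$ on $\mathcal{L}_1$. The only cosmetic difference is that you invoke the $\omega$-independence of $\Tr_\omega(M_w)$, which is not actually needed (one just divides by the strictly positive constant $\Tr_\omega(M_w)$ for each fixed $\omega$), but this does not affect the validity of the argument.
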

    \begin{proof}
        For any $f \in C_c(\Rl)$, we have
            $f(U_nHU_n^*) = U_nf(H)U_n^*.$ Combining this with 
 the tracial property of the Dixmier trace gives 
        \begin{equation*}
            \Tr_\omega(f(U_nHU_n^*)M_w) = \Tr_\omega(f(H)U_n^*M_wU_n).
        \end{equation*}
        By Lemma~\ref{translation_difference_L1}, we have
            $U_n^*M_wU_n-M_w \in \mathcal{L}_{\frac{d}{d+1},\infty}\subset \mathcal{L}_1.$
Since the Dixmier trace vanishes on trace-class operators, it follows that 
            $\Tr_\omega(f(U_nHU_n^*)M_w) = \Tr_\omega(f(H)M_w).$
 Combining this with the Dixmier trace formula for the density of states, Theorem~\ref{T: Main}, completes the proof. 
    \end{proof}
    
    Via identical reasoning, we also have the following abstract assertion:
    \begin{thm}\label{general_translation_invariance}
        Let $(X,d_X)$ be an infinite discrete metric space and $w$ be a function such that these satisfy the assumptions of Theorem~\ref{T: Main}, let $\gamma$ be an isometry of $X$, and let $U_{\gamma}\delta_p = \delta_{\gamma(p)}$ be the corresponding unitary operator on $\ell_2(X)$. Assume that
            $w-w\circ\gamma \in (\ell_{1,\infty})_0(X).$
        Then 
        \begin{equation*}
            \lim_{R\to \infty} \frac{1}{|B(x_0,R)|}\sum_{d_X(p,x_0)\leq R} \langle \delta_p,T\delta_p\rangle = \lim_{R\to\infty} \frac{1}{|B(x_0,R)|}\sum_{d_X(p,x_0)\leq R} \langle \delta_p,U_{\gamma}^*TU_{\gamma}\delta_p\rangle,
        \end{equation*}        
        provided \emph{both} limits exist.
    \end{thm}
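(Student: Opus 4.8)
The plan is to reduce the claimed equality of limits to a statement about Dixmier traces, using the Dixmier trace formula for the DOS established in Theorem~\ref{T: Main}. First I would observe that by hypothesis both limits exist, so Theorem~\ref{T: Main} applies with the operator $T$ and with the operator $U_\gamma^* T U_\gamma$ respectively (note $U_\gamma^* T U_\gamma$ is again a bounded operator on $\ell_2(X)$, and the relevant limit for it is assumed to exist). Writing $\chi_R := \chi_{B(x_0,R)}$, this gives
\[
\Tr_\omega(T M_w) = \Tr_\omega(M_w)\lim_{R\to\infty}\frac{1}{|B(x_0,R)|}\sum_{d_X(p,x_0)\le R}\langle \delta_p, T\delta_p\rangle,
\]
and the analogous identity with $T$ replaced by $U_\gamma^* T U_\gamma$. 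Since $\Tr_\omega(M_w) = 1 \neq 0$ for the canonical choice $w(x) = (1+|B(x_0,d_X(x,x_0))|)^{-1}$ by Corollary~\ref{C:MwTrace}, and in general $\Tr_\omega(M_w) > 0$ by the positivity assumption in Theorem~\ref{T: Main}, it suffices to prove that $\Tr_\omega(T M_w) = \Tr_\omega(U_\gamma^* T U_\gamma M_w)$ for every extended limit $\omega$.

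To establish this, I would use the tracial property of the Dixmier trace together with the fact that $U_\gamma$ is unitary: since $U_\gamma$ permutes the basis $\{\delta_p\}_{p\in X}$ according to the isometry $\gamma$, conjugation by $U_\gamma$ sends $M_w$ to $M_{w\circ \gamma^{-1}}$ (or $M_{w\circ\gamma}$, depending on orientation conventions; either way it is multiplication by $w$ precomposed with the isometry). Concretely,
\[
\Tr_\omega(U_\gamma^* T U_\gamma M_w) = \Tr_\omega(T U_\gamma M_w U_\gamma^*) = \Tr_\omega(T M_{w\circ\gamma^{-1}}).
\]
Then I would write $M_{w\circ\gamma^{-1}} = M_w + (M_{w\circ\gamma^{-1}} - M_w)$; the difference is the multiplication operator by the sequence $w\circ\gamma^{-1} - w$, which by hypothesis lies in $(\ell_{1,\infty})_0(X)$. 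Since $(\ell_{1,\infty})_0$ corresponds to an operator ideal contained in the closure of the finite-rank operators, and since Dixmier traces vanish on $\mathcal{L}_1$ and more generally are continuous and vanish on $(\mathcal{L}_{1,\infty})_0$, we get $\Tr_\omega(T(M_{w\circ\gamma^{-1}} - M_w)) = 0$, because $T$ bounded times an element of $(\mathcal{L}_{1,\infty})_0$ stays in $(\mathcal{L}_{1,\infty})_0$. Hence $\Tr_\omega(T M_{w\circ\gamma^{-1}}) = \Tr_\omega(T M_w)$, which closes the argument.

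The main obstacle I anticipate is the bookkeeping around the hypothesis $w - w\circ\gamma \in (\ell_{1,\infty})_0(X)$ and the precise vanishing statement for Dixmier traces on this ideal: I need to confirm that $\Tr_\omega$ annihilates $(\mathcal{L}_{1,\infty})_0$ (this follows since Dixmier traces are normal/continuous singular traces vanishing on the separable part, or alternatively since any element of $(\mathcal{L}_{1,\infty})_0$ can be approximated in $\mathcal{L}_{1,\infty}$-norm by trace-class operators on which $\Tr_\omega$ vanishes, and $\Tr_\omega$ is $\|\cdot\|_{1,\infty}$-continuous). The only other point requiring care is checking that $U_\gamma M_w U_\gamma^* = M_{w\circ\gamma^{-1}}$ exactly, which is a direct computation on the basis $\{\delta_p\}$: $(U_\gamma M_w U_\gamma^*)\delta_p = U_\gamma M_w \delta_{\gamma^{-1}(p)} = w(\gamma^{-1}(p)) U_\gamma \delta_{\gamma^{-1}(p)} = w(\gamma^{-1}(p))\delta_p$. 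Everything else is a routine reorganisation of Theorem~\ref{T: Main}.
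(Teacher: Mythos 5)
Your proposal is correct and takes essentially the same route as the paper: by traciality $\Tr_\omega(U_\gamma^* T U_\gamma M_w)=\Tr_\omega(T M_{w\circ\gamma^{-1}})$, the difference $M_{w\circ\gamma^{-1}}-M_w$ lies in $(\mathcal{L}_{1,\infty})_0$ (equivalent to the stated hypothesis on $w-w\circ\gamma$, since decreasing rearrangements are invariant under the bijection $\gamma$), every Dixmier trace vanishes there, and Theorem~\ref{T: Main} applied to $T$ and to $U_\gamma^* T U_\gamma$ then identifies the two limits. This is precisely the argument the paper invokes as ``identical reasoning'' to the lattice case, with the trace-class difference of that case replaced by the $(\ell_{1,\infty})_0$ hypothesis.
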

    
\subsection{Ergodic operators}
The following results are direct consequences of the translation equivariance of the DOS measure and therefore could be derived without help of Theorem~\ref{T: Main}. However, the Dixmier trace formula provides a different approach.

Let $(\Omega,\Sigma,\mathbb{P})$ be a probability space, and let $\Gamma$ be a discrete amenable group of isometries of the metric space $X$
 from Theorem~\ref{T: Main}. We assume that there
    is a representation of $\Gamma$ as automorphisms of $\Omega$:
    \begin{equation*}
        \gamma\in \Gamma\mapsto \alpha_\gamma\in \mathrm{Aut}(\Omega).
    \end{equation*}
    It is assumed that the action $\alpha$ is ergodic, in the sense that:
    \begin{enumerate}
        \item For every $\gamma\in \Gamma$, the automorphism $\alpha_\gamma$ is measure preserving;
        \item If $E\subseteq \Omega$ is invariant under every $\alpha_\gamma$, then $\mathbb{P}(E)=0$ or $\mathbb{P}(\Omega\setminus E) = 0$.
    \end{enumerate}
    
    We will use a generalisation of Birkhoff's ergodic theorem, obtained by Lindenstrauss~\cite[Theorem 1.3]{Lindenstrauss2001}. This uses the concept of a F\o lner sequence, we give the definition as it is used for discrete groups.

\begin{defn}
Let $\Gamma$ be a discrete group, and let $\{F_n\}_{n=0}^\infty$ be a sequence of subsets of $\Gamma$. 
    \begin{enumerate}
        \item If for every finite subset $K \subseteq \Gamma$ and every $\delta>0$, there exists $N$ sufficiently large such that if $n>N$, we have for all $k\in K$
        $$|F_n \, \Delta \, kF_n| \leq \delta|F_n|,$$ then $\{F_n\}_{n=0}^\infty$ is called a F\o lner sequence.
        \item If $\{F_n\}_{n=0}^\infty$ satisfies (1) and furthermore for some $C \geq 1$ and for every $n\geq 0$, we have:
        $$\left|\bigcup_{k\leq n} F_k^{-1}F_{n+1}\right| \leq C|F_{n+1}|,$$ then $\{F_n\}_{n=0}^\infty$ is called a tempered F\o lner sequence. 
    \end{enumerate}
\end{defn}

The existence of a F\o lner sequence in this sense is equivalent with the condition of $\Gamma$ being discrete and amenable~\cite[p.~23]{Lubotzky1994}. Also note that any F\o lner sequence has a tempered subsequence~\cite[Proposition~1.4]{Lindenstrauss2001}.

    Lindenstrauss' pointwise ergodic theorem~\cite[Theorem 1.3]{Lindenstrauss2001} implies that if $\{F_n\}_{n=0}^\infty$
    is a tempered F\o lner sequence, then for all $f \in L_1(\Omega)$ we have:
    \begin{equation} \label{Lindenstrauss' Thm}
        \lim_{n\to\infty} \frac{1}{|F_n|} \sum_{\gamma\in F_n} f(\alpha_\gamma\omega) = \mathbb{E}(f).
    \end{equation}
    for almost every $\omega \in \Omega.$ 
    
    For $\gamma\in \Gamma$, let $U_{\gamma}$ denote the induced unitary operator acting on $\ell_2(X)$ by:
    \begin{equation*}
        U_\gamma\delta_p := \delta_{\gamma(p)},\quad p\in X,\,\gamma\in \Gamma.
    \end{equation*}
    
    We will consider strongly measurable random operators $T \in L_1(\Omega,B(\ell_2(X))$ which are compatible with $\alpha$ in the sense that:
    \begin{equation}\label{group_compatibility}
        U_{\gamma}T(\omega)U_\gamma^* = T(\alpha_\gamma \omega),\quad \gamma\in \Gamma
    \end{equation}
    for almost all $\omega \in \Omega.$ 
        
    \begin{prop}
        Let $T \in L_1(\Omega,B(\ell_2(X))$ be a random operator satisfying \eqref{group_compatibility} with respect to a group of isometries $\Gamma$ of $X,$
        which admits a tempered F\o lner sequence $\set{F_n}_{n=0}^\infty$ of finite subsets,  
         and with respect to an ergodic action $\alpha$
        of $\Gamma$ on $\Omega$. If there exists a function $w \colon X \to \mathbb{R}_+$ satisfying the assumptions of Theorem~\ref{T: Main} such that
        \begin{equation*}
            w\circ \gamma-w \in (\ell_{1,\infty})_0(X)
        \end{equation*} 
        for every $\gamma\in \Gamma$ then the density of states of $T(\xi)$ is non-random, in the sense that if the limit:
        \begin{equation*}
            \lim_{R\to\infty} \frac{\Tr(T(\xi) M_{\chi_{B(x_0,R)}})}{|B(x_0,R)|}
        \end{equation*}
        exists for almost every $\xi$, then the limit is almost surely constant in $\xi$.        
    \end{prop}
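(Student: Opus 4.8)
The plan is to combine the Dixmier trace formula for the density of states (Theorem~\ref{T: Main}) with the translation equivariance argument (the reasoning behind Theorem~\ref{general_translation_invariance}) and Lindenstrauss' pointwise ergodic theorem \eqref{Lindenstrauss' Thm}. First I would fix $f \in C_c(\R)$ and consider the random scalar
\[
F_f(\xi) := \Tr_\omega\big(f(T(\xi)) M_w\big), \quad \xi \in \Omega,
\]
using the fixed weight $w$ provided by the hypothesis. The key point is that $F_f$ is almost surely $\Gamma$-invariant: for $\gamma \in \Gamma$, compatibility \eqref{group_compatibility} gives $f(T(\alpha_\gamma\xi)) = U_\gamma f(T(\xi)) U_\gamma^*$, so by traciality of the Dixmier trace $F_f(\alpha_\gamma \xi) = \Tr_\omega\big(f(T(\xi)) U_\gamma^* M_w U_\gamma\big)$, and since $w\circ\gamma - w \in (\ell_{1,\infty})_0(X) \subseteq \mathcal{L}_1$ we have $U_\gamma^* M_w U_\gamma - M_w \in \mathcal{L}_1$; as the Dixmier trace vanishes on trace-class operators, $F_f(\alpha_\gamma\xi) = F_f(\xi)$ for almost every $\xi$. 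By ergodicity of the action $\alpha$, an almost surely $\Gamma$-invariant measurable function is almost surely constant; call this constant $c_f$. (One should check $F_f$ is measurable in $\xi$, which follows from strong measurability of $T$ and continuity of the relevant maps, and that $F_f \in L_1(\Omega)$ or at least is finite a.e., using $\|f(T(\xi))M_w\|_{1,\infty} \leq \|f\|_\infty \|M_w\|_{1,\infty}$.)

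Next I would invoke Theorem~\ref{T: Main}: for those $\xi$ where the density of states limit exists,
\[
\lim_{R\to\infty} \frac{\Tr(f(T(\xi)) M_{\chi_{B(x_0,R)}})}{|B(x_0,R)|} = \frac{1}{\Tr_\omega(M_w)} \Tr_\omega\big(f(T(\xi)) M_w\big) = \frac{c_f}{\Tr_\omega(M_w)},
\]
which is independent of $\xi$. Running this over a countable dense subset of $C_c(\R)$ (in the sup norm on compact sets, adapted to the uniformly bounded setting, or a countable set separating measures) and using that the density of states, when it exists, is a Radon measure determined by its pairings with $C_c(\R)$, I conclude that on the almost sure set where the limit exists for all $f$ simultaneously, the resulting measure $\nu_{T(\xi)}$ is almost surely the same. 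This gives exactly the claimed non-randomness.

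The main obstacle I anticipate is not the invariance argument itself but the measurability and integrability bookkeeping: one must ensure $\xi \mapsto \Tr_\omega(f(T(\xi))M_w)$ is genuinely measurable (the Dixmier trace is defined via a possibly non-measurable extended limit $\omega$, but it is a fixed linear functional, so measurability reduces to measurability of $\xi \mapsto \{\tfrac{1}{\log(n+2)}\sum_{k\le n}\lambda(k, f(T(\xi))M_w)\}_n$, which follows from strong measurability of $T(\xi)$ and continuity of eigenvalue sequences in the appropriate topology), and one must handle the exceptional null sets carefully so that the dense countable family of test functions $f$ can be treated simultaneously. A secondary subtlety is that Theorem~\ref{T: Main} requires the density of states limit to exist before concluding the Dixmier trace formula holds; here this is exactly the standing hypothesis ``the limit exists for almost every $\xi$'', so on that almost sure set the identity above is valid, and the invariance of the left-hand side is inherited from the already-established almost sure constancy of the right-hand side $c_f$.
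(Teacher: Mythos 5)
Your overall strategy is the paper's: establish that $\xi\mapsto \Tr_\omega(T(\xi)M_w)$ is invariant under the action (via the compatibility relation, traciality of $\Tr_\omega$, and the hypothesis on $w$), conclude it is almost surely constant, and then transfer this to the density-of-states limit through Theorem~\ref{T: Main}. However, two steps as written are flawed. First, the inclusion $(\ell_{1,\infty})_0(X)\subseteq \mathcal{L}_1$ is false (take $\mu(k)=1/((k+2)\log(k+2))$, which is $o(1/k)$ but not summable), so you cannot argue that $U_\gamma^*M_wU_\gamma-M_w$ is trace-class. The correct reason the invariance holds is that Dixmier traces vanish on the separable part $(\mathcal{L}_{1,\infty})_0$, i.e.\ on operators whose singular values are $o(1/n)$; since this set is stable under multiplication by the bounded operator $T(\xi)$, one still gets $\Tr_\omega\big(T(\xi)(U_\gamma^*M_wU_\gamma-M_w)\big)=0$, so the identity survives, but the justification must be repaired. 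Second, your detour through $f(T(\xi))$ for $f\in C_c(\mathbb{R})$ is both unjustified and unnecessary: $T(\xi)$ is only assumed to be a bounded random operator, not self-adjoint, so $f(T(\xi))$ need not be defined, and the proposition concerns only the single scalar limit $\lim_R \Tr(T(\xi)M_{\chi_{B(x_0,R)}})/|B(x_0,R)|$ for $T(\xi)$ itself. Work directly with $F(\xi)=\Tr_\omega(T(\xi)M_w)$; the countable-dense-family argument you sketch belongs to the later application to Schr\"odinger-type operators, not to this statement.

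Where you genuinely diverge from the paper is the ergodicity step. The paper averages the invariant quantity over the tempered F\o lner sets $F_n$ and applies Lindenstrauss' pointwise ergodic theorem to the function $\xi\mapsto\Tr_\omega(T(\xi)M_w)$, whose integrability follows from $T\in L_1(\Omega,B(\ell_2(X)))$ and $|\Tr_\omega(T(\xi)M_w)|\leq\|T(\xi)\|_\infty\|w\|_{1,\infty}$, and whose measurability follows from strong measurability of $T$ and norm continuity of $T\mapsto\Tr_\omega(TM_w)$; the a.e.\ limit is $\mathbb{E}(\Tr_\omega(TM_w))$, which is constant. You instead invoke the elementary fact that a measurable, ($\Gamma$-)invariant function is a.e.\ constant under an ergodic action. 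That route is legitimate and in fact lighter --- it uses neither temperedness of the F\o lner sequence nor the full $L_1$ hypothesis --- but you should note the small point that your invariance only holds almost everywhere for each fixed $\gamma$, so upgrading to almost-sure constancy requires the usual countability/null-set argument (unproblematic for countable $\Gamma$, which is the relevant case here). With the two repairs above and working with $T(\xi)$ directly, your argument is a valid, somewhat more elementary alternative to the paper's Lindenstrauss-based proof.
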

    \begin{proof}
        This is an application of the Lindenstrauss version of Birkhoff's ergodic theorem. 
        The assumption on $w$ and Theorem~\ref{general_translation_invariance} imply that:
        \begin{equation} \label{alpha invariant}
            \Tr_\omega(T(\xi) M_w) = \Tr_\omega(T(\alpha_\gamma\xi)M_w),\quad \gamma\in \Gamma.
        \end{equation}  
        Therefore for every $n\geq 0$ we have:
        \begin{equation*}
            \Tr_\omega(T(\xi) M_w) = \frac{1}{|F_n|}\sum_{\gamma\in F_n} \Tr_\omega(T(\alpha_\gamma\xi)M_w).
        \end{equation*}
        Note that:
        \begin{equation*}
            |\Tr_\omega(T(\xi)M_w)| \leq \|T(\xi)\|_\infty\|w\|_{1,\infty}.
        \end{equation*}
        Hence the function $\xi\mapsto \Tr_\omega(T(\xi)M_w)$ is integrable, due to our assumption that $T\in L_1(\Omega,B(\ell_2(X)))$, and the measurability
        of $\xi\mapsto \Tr_\omega(T(\xi)M_w)$ follows from the strong measurability of $\xi\mapsto T(\xi)$ and the norm continuity of $T\mapsto \Tr_\omega(TM_w)$.
        Hence, Lindenstrauss' ergodic theorem \eqref{Lindenstrauss' Thm}
        applies to this function, and hence for almost every $\xi\in \Omega$ we have:
        \begin{equation*}
            \lim_{n\to\infty} \frac{1}{|F_n|} \sum_{\gamma \in F_n} \Tr_\omega(T(\alpha_\gamma\xi)M_w) = \mathbb{E}(\Tr_\omega(TM_w)).
        \end{equation*}
        The right hand side has no dependence on $\xi\in \Omega$, and hence the limit is almost surely constant in $\xi$.
  Due to \eqref{alpha invariant}, this implies that $\Tr_\omega(T(\xi) M_w)$ is almost surely constant in $\xi.$ Alluding to Theorem~\ref{T: Main}, 
  we conclude that the density of states of $T(\xi)$ is almost surely constant in $\xi.$
    \end{proof}
    
    In an alternative direction of inquiry, the condition \eqref{group_compatibility} can be used in some circumstances to imply the existence of the density of states.
    For simplicity, we state the following condition for $X = \mathbb{Z}^d$.
    \begin{thm} \label{T: on existence of DOS}
        Let $T\in L_1(\Omega,B(\ell_2(\mathbb{Z}^d)))$ be a linear operator which satisfies \eqref{group_compatibility} with respect to the action
        of $\mathbb{Z}^d$ on itself by translations and an ergodic action $\alpha$ of~$\mathbb{Z}^d$ on $\Omega.$ Then for almost every $\xi \in \Omega$ there exists the limit:
        \begin{equation*}
            \lim_{R\to\infty} \frac{1}{|B(0,R)|}\sum_{|n|\leq R} \langle \delta_n,T(\xi)\delta_n\rangle = \mathbb{E}(\langle \delta_0,T\delta_0\rangle).
        \end{equation*}
    \end{thm}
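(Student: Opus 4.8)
The plan is to derive Theorem~\ref{T: on existence of DOS} directly from Lindenstrauss' pointwise ergodic theorem, exploiting the covariance relation~\eqref{group_compatibility}. First I would observe that for $X = \mathbb{Z}^d$ acting on itself by translations, the natural choice of F\o lner sequence is $F_R := B(0,R)\cap \mathbb{Z}^d$, and since $\mathbb{Z}^d$ is abelian (hence amenable) with polynomial growth, one checks easily that $\{F_R\}$ is a \emph{tempered} F\o lner sequence in the sense required: $|F_{R+1}|/|F_R| \to 1$ by the usual volume comparison for Euclidean balls, so the tempering condition $\big|\bigcup_{k\leq n} F_k^{-1}F_{n+1}\big| \leq C|F_{n+1}|$ holds with $C$ close to $1$. (One should take the F\o lner sequence indexed along the integers $R = 1, 2, 3, \dots$ to match the hypotheses of~\cite[Theorem~1.3]{Lindenstrauss2001}.)

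Next I would introduce the scalar function $g: \Omega \to \C$ given by $g(\xi) := \langle \delta_0, T(\xi)\delta_0\rangle$. This is measurable by the strong measurability of $\xi \mapsto T(\xi)$ and integrable since $|g(\xi)| \leq \|T(\xi)\|_\infty$ and $T \in L_1(\Omega, B(\ell_2(\mathbb{Z}^d)))$; thus $g \in L_1(\Omega)$. The key computation is that for $n \in \mathbb{Z}^d$, writing $U_n$ for the translation unitary, the covariance relation~\eqref{group_compatibility} gives $T(\alpha_n \xi) = U_n T(\xi) U_n^*$ for a.e.\ $\xi$, whence
\[
g(\alpha_n \xi) = \langle \delta_0, U_n T(\xi) U_n^* \delta_0\rangle = \langle U_n^*\delta_0, T(\xi) U_n^*\delta_0 \rangle = \langle \delta_{-n}, T(\xi)\delta_{-n}\rangle.
\]
Summing over $n \in F_R$ and reindexing (noting $F_R = -F_R$ is symmetric), this yields
\[
\frac{1}{|F_R|}\sum_{n \in F_R} g(\alpha_n \xi) = \frac{1}{|B(0,R)|}\sum_{|n|\leq R}\langle \delta_n, T(\xi)\delta_n\rangle,
\]
which is exactly the Ces\`aro average whose limit we want.

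Finally I would apply Lindenstrauss' ergodic theorem~\eqref{Lindenstrauss' Thm} to $g \in L_1(\Omega)$ along the tempered F\o lner sequence $\{F_R\}$: for a.e.\ $\xi \in \Omega$,
\[
\lim_{R\to\infty} \frac{1}{|F_R|}\sum_{n\in F_R} g(\alpha_n\xi) = \mathbb{E}(g) = \mathbb{E}(\langle \delta_0, T\delta_0\rangle),
\]
and combining with the identity from the previous paragraph gives the claimed limit. I do not expect a serious obstacle here; the only points requiring a little care are (i) verifying the tempering condition for Euclidean balls in $\mathbb{Z}^d$ — which is standard but should be stated — and (ii) making sure the ``almost every $\xi$'' in the covariance relation~\eqref{group_compatibility} holds simultaneously for all $n$ in a countable set (it does, being a countable intersection of full-measure sets), so that the reindexing is valid a.e. Everything else is a direct unwinding of definitions.
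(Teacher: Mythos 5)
Your proposal is correct and is essentially the paper's own argument: rewrite $\langle\delta_n,T(\xi)\delta_n\rangle=\langle\delta_0,T(\alpha_{-n}\xi)\delta_0\rangle$ via the covariance relation, note $\xi\mapsto\langle\delta_0,T(\xi)\delta_0\rangle\in L_1(\Omega)$, and apply Lindenstrauss' ergodic theorem along the tempered F\o lner sequence of balls $B(0,N)$. One small quibble: temperedness of the balls does not follow from $|F_{R+1}|/|F_R|\to1$ with constant close to $1$, but rather from the doubling property ($\bigcup_{k\leq n}F_k^{-1}F_{n+1}\subseteq B(0,2n+1)$, giving a constant of order $2^d$); this does not affect the validity of the proof.
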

    \begin{proof}
        We have that $U_n\delta_0 = \delta_n$, and therefore:
        \begin{equation*}
            \langle \delta_n,T(\xi)\delta_n\rangle = \langle \delta_0,U_n^*T(\xi)U_n\delta_0\rangle = \langle \delta_0,T(\alpha_{-n}\xi)\delta_0\rangle.
        \end{equation*}
        It follows that:
        \begin{equation*}
            \frac{1}{|B(0,R)|}\sum_{|n|\leq R} \langle \delta_n,T(\xi)\delta_n\rangle = \frac{1}{|B(0,R)|}\sum_{|n|\leq R}\langle \delta_0,T(\alpha_n\xi)\delta_0\rangle.
        \end{equation*}
        By our assumption on $T$, the function $\xi \mapsto \langle \delta_0,T(\xi)\delta_0\rangle$ belongs to $L_1(\Omega)$.
        Note that the sequence $F_N := B(0,N)$ is a tempered F\o lner sequence in $\mathbb{Z}^d$, and hence Lindenstrauss' ergodic theorem \eqref{Lindenstrauss' Thm}  implies that
        for almost every $\xi\in \Omega$ there exists the limit
        \begin{equation*}
            \lim_{N\to\infty} \frac{1}{|B(0,N)|} \sum_{n\in B(0,N)} \langle \delta_0,T(\alpha_n \xi)\delta_0\rangle = \mathbb{E}(\langle \delta_0,T(\xi)\delta_0\rangle).\qedhere
        \end{equation*}
    \end{proof}
    Note that the result also holds if the limit over balls $\{B(0,N)\}_{N\geq 0}$ is replaced with any other tempered F\o lner sequence, such as cubes $\{[-N,N]^d\}_{N\geq 0}$.
    The limit in every case is $\mathbb{E}(\langle \delta_0,T(\xi)\delta_0\rangle)$, and hence does not depend on the choice of sequence of sets.

    \begin{thm}
      Let $H(\xi) = H_0 + V_\xi(x)$ be a random operator on $\ell_2(\mathbb{Z}^d),$
       where $H_0$ is a $\mathbb{Z}^d$-translation invariant difference operator and $V_\xi,$ $\xi \in \Omega,$ 
      an iid random bounded function. Then there exists a set $\Omega_0 \subset \Omega$ of probability $1,$
      such that for any $f \in C_c(\mathbb{R})$ and for any $\xi \in \Omega_0$ 
 there exists the limit:
        \begin{equation} \label{F: what we need}
            \lim_{R\to\infty} \frac{1}{|B(0,R)|}\sum_{|n|\leq R} \langle \delta_n, f(H_\xi)\delta_n\rangle = \mathbb{E}(\langle \delta_0,f(H_\xi)\delta_0\rangle).
        \end{equation}
    \end{thm}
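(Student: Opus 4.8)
The plan is to realise \eqref{F: what we need} as an instance of Theorem~\ref{T: on existence of DOS}. The operator $H_\xi = H_0 + V_\xi$ is self-adjoint on $\ell_2(\mathbb{Z}^d)$, and I would first check that the assignment $\xi \mapsto f(H_\xi)$, for a fixed $f \in C_c(\mathbb{R})$, is a strongly measurable map $\Omega \to B(\ell_2(\mathbb{Z}^d))$ bounded by $\|f\|_\infty$, hence an element of $L_1(\Omega, B(\ell_2(\mathbb{Z}^d)))$. Measurability follows from the strong measurability of $\xi \mapsto H_\xi$ (an iid bounded potential plus a fixed difference operator) together with the fact that bounded Borel functional calculus is strongly continuous along strongly convergent sequences of uniformly bounded self-adjoint operators; since $f$ has compact support this can be reduced to polynomial approximation or to the Helffer--Sj\"ostrand formula, but I would not grind through this.

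The second step is to set up the $\mathbb{Z}^d$-action. Let $\alpha$ be the shift action of $\mathbb{Z}^d$ on $\Omega$ induced by translating the iid variables $\{V_\xi(x)\}_{x \in \mathbb{Z}^d}$; because the potential values are iid, this action is measure preserving and ergodic (indeed mixing, by Kolmogorov's zero--one law applied to tail events). Writing $U_n$ for translation by $n$ on $\ell_2(\mathbb{Z}^d)$, the covariance $U_n H_\xi U_n^* = H_{\alpha_{-n}\xi}$ holds because $H_0$ is translation invariant and $V_{\alpha_{-n}\xi}(x) = V_\xi(x - n) = (U_n M_{V_\xi} U_n^*)(x)$; applying the bounded Borel functional calculus to both sides gives $U_n f(H_\xi) U_n^* = f(H_{\alpha_{-n}\xi})$, which is exactly the compatibility condition \eqref{group_compatibility} for the random operator $T(\xi) := f(H_\xi)$. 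Then Theorem~\ref{T: on existence of DOS} applies verbatim and yields, for a full-measure set $\Omega_0^{(f)}$ of $\xi$,
\[
\lim_{R\to\infty} \frac{1}{|B(0,R)|}\sum_{|n|\leq R} \langle \delta_n, f(H_\xi)\delta_n\rangle = \mathbb{E}(\langle \delta_0, f(H_\xi)\delta_0\rangle).
\]

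The remaining issue is that Theorem~\ref{T: on existence of DOS} produces a null set depending on $f$, whereas the statement to prove asks for a \emph{single} $\Omega_0$ valid for all $f \in C_c(\mathbb{R})$ simultaneously. The standard remedy is a separability argument: $C_c(\mathbb{R})$ with the sup norm is separable, so fix a countable dense subset $\{f_k\}_{k\geq 0}$, set $\Omega_0 := \bigcap_k \Omega_0^{(f_k)}$, which still has probability $1$, and for general $f \in C_c(\mathbb{R})$ and $\xi \in \Omega_0$ approximate $f$ uniformly by the $f_k$. Both sides of \eqref{F: what we need} are Lipschitz in $f$ with respect to the sup norm uniformly in $R$ and $\xi$ --- the left side because $\big|\frac{1}{|B(0,R)|}\sum_{|n|\le R}\langle \delta_n,(f-f_k)(H_\xi)\delta_n\rangle\big| \le \|f - f_k\|_\infty$, the right side because $|\mathbb{E}\langle\delta_0,(f-f_k)(H_\xi)\delta_0\rangle| \le \|f-f_k\|_\infty$ --- so the limit relation passes from the dense set to all of $C_c(\mathbb{R})$ on the common set $\Omega_0$. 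I expect this diagonal/approximation bookkeeping to be the only genuinely delicate point; the measurability verification in the first step is the other place where care is needed, but it is routine. One could also remark, as the authors do after Theorem~\ref{T: on existence of DOS}, that $B(0,R)$ may be replaced by cubes or any tempered F\o lner sequence with the same limit, and that by the Riesz--Markov theorem the pointwise limits assemble into the density of states measure of $H_\xi$, which is therefore almost surely deterministic and equals the measure $f \mapsto \mathbb{E}\langle\delta_0, f(H_\xi)\delta_0\rangle$.
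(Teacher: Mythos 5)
Your proposal is correct and follows essentially the same route as the paper: verify that $T(\xi) := f(H_\xi)$ satisfies the measurability and covariance hypotheses so that Theorem~\ref{T: on existence of DOS} applies for each fixed $f$, then intersect the full-measure sets over a countable sup-norm-dense subset of $C_c(\mathbb{R})$ and extend to all of $C_c(\mathbb{R})$ using the uniform bound $\|f(H)-g(H)\|\leq\|f-g\|_\infty$ on both the ergodic averages and the expectation (the paper phrases this as an $\varepsilon/3$-argument). If anything, your explicit check of strong measurability and of the covariance/ergodicity of the shift action is slightly more careful than the paper, which simply asserts applicability of the earlier theorem.
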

\begin{proof} Proof follows a standard argument, see e.g.~\cite[Chapter 3]{AizenmanWarzel2015}.
Let $\Sigma$ be a countable dense subset  of $C_c(\mathbb{R}).$ 
The random operator $H(\xi)$ is ergodic and obeys \eqref{group_compatibility} so Theorem~\ref{T: on existence of DOS} is applicable. 
By this theorem, for every $f \in \Sigma$ 
there exists a full set $\Omega_f \subset \Omega$ such that \eqref{F: what we need} holds for all $\xi \in \Omega_f.$ Define a full set $\Omega_0 = \bigcap_{f \in \Sigma} \Omega_f,$
so for every $f \in \Sigma$ and every $\xi \in \Omega_0$ the equality \eqref{F: what we need} holds. Choose any $g \in C_c(\mathbb{R})$ and let $f_1,f_2, \ldots \in \Sigma$ be such that 
$f_n \to g$ in uniform topology. Let $\varepsilon>0.$ Further we proceed by a standard $\varepsilon/3$-trick. Let $N \in \mathbb{N}$ be such that for all $n\geq N$ 
$\norm{f_n-g}_\infty < \varepsilon/3.$ For $f_N$ the equality \eqref{F: what we need} holds for any $\xi \in \Omega _0.$ Let $R_0 > 0$ be such that for all $R > R_0$ 
and all $\xi \in \Omega _0$
$$
    \abs{ \frac{1}{|B(0,R)|}\sum_{|n|\leq R} \langle \delta_n, f_N(H_\xi)\delta_n\rangle - \mathbb{E}(\langle \delta_0,f_N(H_\xi)\delta_0\rangle) }  < \varepsilon/3.
$$
Then for all $R> R_0$ and $\xi \in \Omega_0$ we have 
\begin{align*}
    \Big| \frac{1}{|B(0,R)|}  &  \sum_{|n|\leq R} \langle \delta_n, g(H_\xi)\delta_n\rangle - \mathbb{E}(\langle \delta_0, g(H_\xi)\delta_0\rangle) \Big|  \\
      & \leq  \abs{ \frac{1}{|B(0,R)|}\sum_{|n|\leq R} \langle \delta_n, [g(H_\xi) - f_N(\xi)]   \delta_n\rangle  } \\ 
     & \qquad  +  \Big| \frac{1}{|B(0,R)|}  \sum_{|n|\leq R} \langle \delta_n, f_N(H_\xi)\delta_n\rangle - \mathbb{E}(\langle \delta_0, f_N(H_\xi)\delta_0\rangle) \Big|  \\
     & \qquad  + \Big|   \mathbb{E}(\langle \delta_0, [ f_N(H_\xi) - g(H_\xi) ]  \delta_0\rangle)   \Big|  \\
     & < \varepsilon,
\end{align*}
where the last inequality follows from the triangle, Schwartz and $\norm{f(H) - g(H)} \leq \norm{f-g}_\infty < \varepsilon/3$ inequalities. 
\end{proof}

    \chapter{The density of states on manifolds}\label{Ch:DOSManifolds}
{\setlength{\epigraphwidth}{0.5\textwidth}
\epigraph{I wholeheartedly congratulate you and wish you to get better and work much harder.}{Fedor Sukochev}}
This chapter is an adaptation of~\cite{HekkelmanMcDonald2024}, joint work with Edward McDonald. Section~\ref{S:MfsToDisc} appeared in the preprint version of that paper~\cite{HekkelmanMcDonaldv1}, but was omitted from the published article. For the content of this chapter, we are grateful to Teun van Nuland, Fedor Sukochev, and the anonymous referees of~\cite{HekkelmanMcDonald2024} for helpful comments. The main results in this chapter are a Dixmier trace formula for the density of states on manifolds in Theorem~\ref{T: main manifold thm} and a Dixmier trace formula for Roe's index on open manifolds in Theorem~\ref{roe_index_formula_theorem}.

Like Chapter~\ref{Ch:DOSDiscrete}, this chapter revolves around a Dixmier trace formula for the density of states (DOS). The general form of this formula can be stated for a metric space $(X,d)$ with a Borel measure, a weight $w: X \to \C$ such that $M_w \in \mathcal{L}_{1,\infty}$, and a basepoint $x_0 \in X$. As explained in the introduction of Chapter~\ref{Ch:DOSDiscrete} --- see also Section~\ref{S:DOS} --- the Dixmier trace formula for the DOS that we study is the equality of the two Borel measures associated with a self-adjoint operator $H$ on $L_2(X)$ and an extended limit  $\omega \in \ell_\infty^*$, 
\begin{equation}\label{eq:Measure1.2}
\Tr_\omega(f(H)M_w)  = \int_\R f \, d\nu_1, \quad f \in C_c(\R),
\end{equation}
and
\begin{equation}\label{eq:Measure2.2}
\lim_{R \to \infty} \frac{1}{|B(x_0, R)|} \Tr(f(H)M_{\chi_{B(x_0, R)}}) = \int_\R f\, d\nu_2, \quad f \in C_c(\R),
\end{equation}
where the existence of the measure $\nu_2$, the density of states, has to be assumed. The equality of these measures up to a constant implies that when the DOS exists, the measure $\nu_1$ does not depend on $\omega$.

Whereas Chapter~\ref{Ch:DOSDiscrete} (based on~\cite{AHMSZ}) proves this result for the case where $(X,d)$ is a discrete metric space with certain additional properties, here we will stay closer to the setting in the paper~\cite{AMSZ} where this formula was first proven for $X = \R^d$, $d\geq 2$, $w(x) = (1+|x|^2)^{-\frac d2}$, and $H = -\Delta + M_V$ with $V \in L_\infty(\R^d)$ real-valued.

Namely, we will consider (oriented, connected) non-compact Riemannian manifolds of bounded geometry (the definition follows below in Section~\ref{S: Prelims}). On these manifolds, the operators $H$ that we will take into account are self-adjoint lower-bounded uniformly elliptic differential operators of order two. Additionally, like in the discrete case in Chapter~\ref{Ch:DOSDiscrete}, we require the volume of the balls $|B(x_0, R)|$ to grow in a sub-exponential and regular manner, specified in Definition~\ref{D:Property_D}.

Let us discuss these conditions. The DOS has been studied before on manifolds, usually in the setting where for the non-compact Riemannian manifold $M$ one picks a discrete, finitely generated group $\Gamma$ of isometries of $(M,g)$ which acts freely and properly discontinuously on $M$ such that the quotient $M/\Gamma$ is compact, and the operator studied is a random Schr\"odinger operator~\cite{AdachiSunada1993, LenzPeyerimhoff2008, LenzPeyerimhoff2004, PeyerimhoffVeselic2002, Veselic2008}. It is not difficult to see that the existence of such a group is a stronger condition than requiring bounded geometry, this will be explicitly proven in Proposition~\ref{P:CoCompactBddGeom}. 

Furthermore, in each of the cited papers a recurring assumption is that the group of isometries $\Gamma$ is amenable. This is equivalent with the existence of an expanding family of bounded domains $D_j\subset M$ such that
\[
\lim_{j \to \infty} \frac{|\partial_h D_j|}{|D_j|} = 0, \quad \forall h>0,
\]
where $\partial_h D_j := \{x \in D_j : d(x, \partial D_j) \leq h \}$~\cite{AdachiSunada1993}. 
This is a \textit{weaker} condition than what we will require of the growth of the balls $B(x_0, R)$ specified in Definition~\ref{D:Property_D} below. In fact, for these manifolds to satisfy our assumptions, it is necessary that $\Gamma$ has subexponential growth which implies amenability. In Section~\ref{S: Example} a more detailed analysis of this co-compact setting with a random Schr\"odinger operator will follow.

Let $(M,g)$ be a $d$-dimensional Riemannian manifold, and let $d_g$ be the distance function on $M$ induced by the Riemannian metric. The Riemannian volume of the closed ball $B(x_0, r)$ is denoted by $|B(x_0, r)|$. Its boundary, $\partial B(x_0, r)$, is a $(d-1)$-dimensional Hausdorff-measurable subset of $M$, and as such we can talk about its volume, calculated with respect to its inherited Riemannian metric. This $d-1$-dimensional volume we will also denote as $|\partial B(x_0, r)|$.  In fact, it then holds that (see Section~\ref{S: Manifolds}) \[\frac{d}{dr}|B(x_0, r)|=|\partial B(x_0, r)|.\]

What we require of our manifolds is that both $|B(x_0,r)|$ and $|\partial B(x_0,r)|$ grow sufficiently slowly and regularly. Namely, we will ask that the ratios $\frac{|\partial B(x_0,r)|}{|B(x_0,r)|}$ and $\frac{\frac{d}{dr}|\partial B(x_0,r)|}{|\partial B(x_0,r)|}$ vanish as $R\to\infty$ in the following way.

\begin{defn}[Property (D)]\label{D:Property_D}
    Let $(M,g)$ be a non-compact Riemannian manifold of bounded geometry. It is said to have Property (D) if $r\mapsto V(r):=\abs{B(x_0, r)}$ is in $C^2(\R_{\geq0})$,
\begin{equation}\label{Condition1b}
\frac{V'(r)}{V(r)} \in L_2(\R_{\geq1}),
\end{equation}
and
\begin{equation}\label{Condition2}
    \lim_{r\to \infty} \frac{V''(r)}{V'(r)} = 0.
\end{equation}
\end{defn}

Lemma~\ref{L:NewPropD} below gives in particular that Property (D) implies that $\frac{V'(r)}{V(r)} \to 0$ as $r \to \infty$ (which is not immediate from the requirement that it be in $L_2(\R_{\geq 1})$). If a function $f\in C^1(\mathbb{R})$ satisfies $\lim_{x\to \infty}\frac{f'(x)}{f(x)}= 0$, then $\log f(x) = o(x)$ and hence $f(x) = e^{o(x)}$. Therefore, if the manifold $M$ satisfies Property (D), then necessarily both $|B(x_0,r)| = e^{o(r)}$ and $|\partial B(x_0, r)| = e^{o(r)}$. A quick calculation shows that Property (D) still admits volume growth of the order $|B(x_0, r)| = \exp (r^{\frac{1}{2}-\varepsilon})$, and it is not difficult to see that Property (D) is satisfied for Euclidean spaces. The conditions listed mostly serve to prevent erratic behaviour of the growth. Observe the similarity in this sense to Property (C) in Chapter~\ref{Ch:DOSDiscrete} which required for discrete metric spaces $X$
\[
\frac{|B(x_0, r_{k+1})|}{|B(x_0, r_k)|}\to 1, \quad k \to \infty,
\]
where $\{r_k\}_{k=0}^\infty$ is the set $\{d(y,x_0) \; : \; y \in X\}$ ordered in increasing manner.

Finally, let us specify the class of operators for which the main theorem is formulated. The following definition is essentially the same as~\cite{Kordyukov1991} and the $C^\infty$-bounded differential operators defined in \cite[Appendix 1]{Shubin1992}.
\begin{defn}\label{D:BD}
A differential operator $P$ on a $d$-dimensional Riemannian manifold $M$ of bounded geometry is called a uniform differential operator (of order $m$) if it can be expressed in normal coordinates in a neighbourhood of each point $x \in M$ as
\[
P = \sum_{|\alpha|\leq m} a_{\alpha, x}(y) \partial_y^\alpha,
\]
and for all multi-indices $\beta$ we have
\[
\abs{\partial_y^\beta a_{\alpha, x}(0)} \leq C_{\alpha, \beta}, \quad \abs{\alpha}\leq m.
\]
Following the notation of~\cite{Kordyukov1991}, we denote this by $P \in BD^m(M)$.

Furthermore, let $\sigma_x(y, \xi) = \sum_{\abs{\alpha}=m} a_{\alpha, x}(y) (i\xi)^\alpha$ be the principal symbol of $P$ near $x.$ We say that $P \in BD^m(M)$ is uniformly elliptic, denoted $P\in EBD^m(M)$, if there exists $\varepsilon>0$ such that \[\abs{\sigma_x(0,\xi)}\geq \varepsilon \abs{\xi}^m,\quad \xi\in \mathbb{R}^d, x\in M.\]
\end{defn}
A similar definition applies to operators acting between sections of vector bundles of bounded geometry; see \cite{Shubin1992}.

Having discussed all necessary details, we can now formulate the main theorem of this chapter.

\begin{thm}\label{T: main manifold thm}
    Let $(M,g)$ be a non-compact Riemannian manifold of bounded geometry with Property (D). Let $P \in EBD^2(M)$ be self-adjoint and lower-bounded, and let $w$ be the function on $M$ defined by 
    \[
    w(x) = \frac{1}{1+\abs{B(x_0,d_g(x,x_0))}},\quad x \in M.
    \]
    Then $f(P)M_w$ is an element of $\mathcal{L}_{1,\infty}$ for all compactly supported functions $f\in C_c(\mathbb{R})$. If $P$ admits a density of states $\nu_P$, we have for all extended limits $\omega \in \ell_\infty^*$ 
    \[
    \mathrm{Tr}_{\omega}(f(P)M_w) = \int_{\mathbb{R}} f \, d\nu_P, \quad f\in C_c(\mathbb{R}).
    \]
\end{thm}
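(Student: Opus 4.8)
The plan is to reduce Theorem~\ref{T: main manifold thm} to the discrete case handled in Chapter~\ref{Ch:DOSDiscrete}, following the strategy outlined in Section~\ref{S:MfsToDisc} (referenced above) and in the spirit of the proof of Theorem~\ref{T: Main}. The key observation is that a manifold of bounded geometry admits a quasi-lattice: a maximal $\varepsilon$-separated net $\{x_i\}_{i\in I}\subseteq M$ for suitable $\varepsilon>0$, which is automatically an $\varepsilon$-net and (by bounded geometry) has uniformly bounded local cardinality. Around such a net one builds a uniformly locally finite partition of unity $\{\varphi_i\}_{i\in I}$ with $\varphi_i$ supported in a ball of radius $2\varepsilon$ around $x_i$, and this produces a ``coarse'' identification of $L_2(M)$ with $\ell_2(I)\otimes(\text{something finite-dimensional on average})$ compatible with the metric. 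The measure $|B(x_0,R)|$ on $M$ and the counting measure of net points inside $B(x_0,R)$ are comparable up to multiplicative constants, so Property~(D) for $M$ translates into Property~(C) for the discrete metric space $(I, d_g|_I)$, which is exactly the hypothesis needed to invoke Theorem~\ref{T: Main}.

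Concretely, the steps I would carry out are as follows. First, establish that $f(P)M_w\in\mathcal{L}_{1,\infty}$: since $M_w\in\mathcal{L}_{1,\infty}$ (this is the content of the manifold analogue of Corollary~\ref{C:MwTrace}, which follows from a Weyl-law estimate for uniformly elliptic operators of bounded geometry together with Lemma~\ref{L:general_harmonic_series}), and since $f(P)$ is bounded, the ideal property of $\mathcal{L}_{1,\infty}$ gives the claim immediately once one knows $f(P)M_{\chi_K}$ is trace-class for compact $K$ — standard for elliptic operators with $f\in C_c$ via heat-kernel/resolvent bounds on bounded geometry manifolds. Second, prove the abstract transfer statement: for any bounded operator $T$ on $L_2(M)$ for which
\[
\lim_{R\to\infty}\frac{\Tr(TM_{\chi_{B(x_0,R)}})}{|B(x_0,R)|}
\]
exists, one has $\Tr_\omega(TM_w)=\Tr_\omega(M_w)\cdot(\text{that limit})$ for all extended limits $\omega$. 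This is where the discretisation enters: using the partition of unity, compare $\Tr(TM_{\chi_{B(x_0,R)}})$ with $\sum_{i: x_i\in B(x_0,R')}\Tr(\varphi_i^{1/2}T\varphi_i^{1/2})$ up to a boundary error controlled by $|\partial B(x_0,R)|/|B(x_0,R)|\to 0$ (which follows from Lemma~\ref{L:NewPropD} and Property~(D)), then apply the $V$-modulated machinery of Theorem~\ref{T: Modulated} exactly as in Lemma~\ref{expectation_values_lemma} and Proposition~\ref{general_DOS_proposition}, now with the net's crystal ball sequence playing the role of $\{|B(x_0,r_k)|\}$. Third, given a density of states $\nu_P$, apply the transfer statement with $T=f(P)$ for $f\in C_c(\R)$ and normalise using $\Tr_\omega(M_w)=1$, which is the manifold version of Corollary~\ref{C:MwTrace}.

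The main obstacle I anticipate is the discretisation bookkeeping, i.e.\ showing rigorously that the error incurred by replacing $M_{\chi_{B(x_0,R)}}$ with the partition-of-unity sum over net points is $o(|B(x_0,R)|)$ in the relevant averaged sense, uniformly enough to survive the Dixmier trace. Two points are delicate here: (i) the operator $\sqrt{w}$ must be compared with a radially \emph{strictly} decreasing step function adapted to the net's distance shells $\{r_k\}$, so that Theorem~\ref{T: Main}'s hypothesis on the weight is genuinely met — this forces a careful choice of $w$ (the displayed choice $w(x)=(1+|B(x_0,d_g(x,x_0))|)^{-1}$ is exactly engineered for this) and an argument that modifying $w$ on each shell to make it constant changes $M_w$ only by a trace-class (in fact $\mathcal{L}_{1,\infty}$-small, by Property~(D)) perturbation, hence invisible to $\Tr_\omega$; and (ii) establishing that $P$, being a uniform elliptic differential operator, interacts well with the partition of unity — specifically that $[\,f(P),\varphi_i]$ has trace-norm summable in $i$ over any ball with the right growth rate. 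This last commutator estimate is where bounded geometry and uniform ellipticity are essential: one uses the Helffer--Sjöstrand functional calculus (Theorem~\ref{T:Helffer-Sjostrand}) together with uniform off-diagonal resolvent decay on bounded geometry manifolds. Once these estimates are in place, the remaining steps are formal consequences of the discrete theorem and the $V$-modulated lemmas already established.
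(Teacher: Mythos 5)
Your reduction collapses at the very first step: on a manifold, $M_w$ is \emph{not} an element of $\mathcal{L}_{1,\infty}$ --- it is not even compact. A multiplication operator on $L_2(M)$ by a bounded function is compact only if the function vanishes a.e.; here $w\geq c>0$ on $B(x_0,1)$, so $M_w$ restricted to the infinite-dimensional subspace $L_2(B(x_0,1))$ is bounded below. Consequently there is no ``manifold analogue of Corollary~\ref{C:MwTrace}'', the quantity $\mathrm{Tr}_\omega(M_w)$ is undefined, and the claim that $f(P)M_w\in\mathcal{L}_{1,\infty}$ ``by the ideal property'' is unfounded. The compactness of $f(P)M_w$ in the continuous setting comes from the interplay between the smoothing of the elliptic operator and the decay of $w$, i.e.\ from a Cwikel-type estimate (this is what Theorem~\ref{T: Cwikel estimate} and Corollary~\ref{C:Cwikel} supply, using the bounded-geometry partition of unity, uniform Dirichlet Laplacian bounds on charts, and the left-disjoint family estimates of Corollary~\ref{disjointification_corollary}); it cannot be obtained from $M_w$ alone. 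For the same reason your proposed abstract transfer statement --- that for \emph{every} bounded $T$ with $\lim_R \Tr(TM_{\chi_{B(x_0,R)}})/|B(x_0,R)|$ existing one has $\Tr_\omega(TM_w)=\Tr_\omega(M_w)\cdot(\text{limit})$ --- is false on a manifold: take $T=1$, for which $TM_w=M_w$ is not compact. Such a statement is genuinely a discrete phenomenon, because there $M_w$ has a point-mass eigenbasis and lies in $\mathcal{L}_{1,\infty}$, which is exactly what the $V$-modulated machinery (Theorem~\ref{T: Modulated}, Lemma~\ref{expectation_values_lemma}) requires; that machinery has no direct analogue for a weight operator with continuous spectrum.

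The discretisation itself does not rescue this: each cell of your net carries an infinite-dimensional $L_2$-space, so the ``coarse identification'' of $L_2(M)$ with $\ell_2(I)$ discards precisely the high-frequency local modes whose contribution must be controlled by the smoothing of $f(P)$ --- again Cwikel-type input that your argument never produces. The paper's proof goes a different way entirely: it establishes the abstract operator-theoretic Theorem~\ref{MAIN_THEOREM} (via the Abelian Lemma~\ref{L: trace formula}, the zeta-residue criterion of Theorem~\ref{LSZ2ethm} with Araki--Lieb--Thirring, and Duhamel's formula), then verifies its two hypotheses, $e^{-tP}M_w\in\mathcal{L}_{1,\infty}$ and $e^{-tP}[P,M_w]\in\mathcal{L}_1$, through the bounded-geometry Cwikel estimates and Property~(D) (Lemmas~\ref{wl1infty} and~\ref{L:Dwl1}, Corollaries~\ref{C:Cwikel} and~\ref{C:CwikelL1}); the identification with the ball average then uses that $\chi_{[\varepsilon,\infty)}(M_w)=M_{\chi_{B(x_0,R)}}$ with $\varepsilon\sim(1+|B(x_0,R)|)^{-1}$, followed by a Laplace-transform density argument to pass from $e^{-t\lambda}$ to $f\in C_c(\mathbb{R})$. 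Indeed Section~\ref{S:MfsToDisc} of the paper shows the two methods are not interchangeable: the abstract route only recovers a weakened discrete theorem with an extra trace-class commutator hypothesis, and conversely the discrete proof does not transfer to manifolds. To repair your proposal you would have to prove the Cwikel and commutator estimates anyway, at which point you are following the paper's argument rather than reducing to Theorem~\ref{T: Main}.
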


Observe that Euclidean space is a manifold of bounded geometry satisfying Property (D), and Schr\"odinger operators $H=-\Delta + M_V$ with smooth bounded potential $V$ are operators of the required class. Therefore, besides the smoothness assumption on $V$, the main result in this paper is a generalisation of~\cite{AMSZ}.

Whereas the proofs in~\cite{AMSZ} are based on delicate singular value estimates particular to Euclidean space, and the ones in~\cite{AHMSZ} (i.e. Chapter~\ref{Ch:DOSDiscrete}) are based
on heavy real analysis, here we will prove a statement in abstract operator theory. It is a significant generalisation of~\cite[Theorem~5.7]{AMSZ}. In Section~\ref{S:MfsToDisc}, we will make a comparison between this method and the proof in Chapter~\ref{Ch:DOSDiscrete}.

\begin{thm}\label{MAIN_THEOREM}
    Let $W$ and $P$ be linear operators, such that $P$ is self-adjoint and lower-bounded and $W$ is positive and bounded. Assume that for every $t>0$ we have
    \begin{enumerate}
        \item\label{cwikelcond1} $\exp(-tP)W \in \mathcal{L}_{1,\infty},$
        \item\label{cwikelcond2} $\exp(-tP)[P,W] \in \mathcal{L}_1$.
    \end{enumerate}
    Then, for every extended limit $\omega$,
    \[
        \mathrm{Tr}_{\omega}(e^{-tP}W) = \lim_{\varepsilon\to 0} \varepsilon \mathrm{Tr}(e^{-tP}\chi_{[\varepsilon,\infty)}(W)), \quad t >0.
    \]
    whenever the limit on the right hand side exists.
\end{thm}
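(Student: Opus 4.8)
The plan is to reduce the statement to an application of the $V$-modulated operator machinery (Theorem~\ref{T: Modulated}) combined with a Tauberian-type passage from the Dixmier trace to the geometric averaging limit, mirroring the structure already used in Chapter~\ref{Ch:DOSDiscrete}. First I would fix $t>0$ and set $W_\varepsilon := \chi_{[\varepsilon,\infty)}(W)$, noting that $W_\varepsilon \uparrow 1$ strongly as $\varepsilon \downarrow 0$ and $W_\varepsilon \leq \varepsilon^{-1}W$. The operator $T := e^{-tP}W$ is $W$-modulated: since $0 \leq W$ is bounded and $e^{-tP}W \in \mathcal{L}_{1,\infty}$ by assumption \eqref{cwikelcond1}, and since $W$ itself is automatically $W$-modulated when $W \in \mathcal{L}_{1,\infty}$ — one has to first observe that assumption \eqref{cwikelcond1} forces $W \in \mathcal{L}_{1,\infty}$ as well (for instance $W = e^{tP}\cdot e^{-tP}W$ need not work since $e^{tP}$ is unbounded, so instead use $W^2 \le \|W\|\, W$ and $e^{-tP}W \in \mathcal{L}_{1,\infty}$ together with lower-boundedness of $P$ to get that $W$ is compact with the right singular value decay; alternatively argue directly that $\sup_{s>0} s^{1/2}\|T(1+sW)^{-1}\|_2 < \infty$). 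Granting $W$-modulatedness of $T$, pick an orthonormal eigenbasis $\{e_k\}$ of $W$ with $We_k = \mu(k,W)e_k$; Theorem~\ref{T: Modulated} gives
\[
\sum_{k=0}^n \lambda(k,T) = \sum_{k=0}^n \langle e_k, e^{-tP}W e_k\rangle + O(1) = \sum_{k=0}^n \mu(k,W)\langle e_k, e^{-tP}e_k\rangle + O(1), \quad n\to\infty.
\]

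The second step is to handle the commutator. Assumption \eqref{cwikelcond2}, $e^{-tP}[P,W]\in\mathcal{L}_1$, is exactly what is needed to show that $e^{-tP}W_\varepsilon$ and $e^{-tP}W$ interact well: I would use it to prove that $[e^{-tP}, W_\varepsilon]$ — or more precisely the relevant off-diagonal corrections — lie in $\mathcal{L}_1$, so that replacing $e^{-tP}W$ by the ``symmetrised'' or truncated versions only changes things by a trace-class operator, on which $\mathrm{Tr}_\omega$ vanishes. Concretely, $\varepsilon\,\mathrm{Tr}(e^{-tP}W_\varepsilon) = \varepsilon\sum_{k=0}^{N(\varepsilon)}\langle e_k, e^{-tP}e_k\rangle$ where $N(\varepsilon) = \#\{k : \mu(k,W)\geq\varepsilon\}$, using that $W_\varepsilon$ is the spectral projection of $W$ onto $[\varepsilon,\infty)$ and hence diagonal in the basis $\{e_k\}$. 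So the right-hand side of the theorem is $\varepsilon \cdot (\text{partial sum of }\langle e_k,e^{-tP}e_k\rangle \text{ up to }N(\varepsilon))$, and the commutator hypothesis will be invoked to control how far $e^{-tP}W_\varepsilon$ deviates from being a bona fide positive truncation whose trace matches the eigenvalue sum of $T$.

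The third step is the Tauberian/averaging argument, analogous to Lemmas~\ref{toeplitz_lemma}, \ref{toeplitz_corollary}, \ref{expectation_values_lemma} and \ref{general_DOS_proposition} of Chapter~\ref{Ch:DOSDiscrete}, but now without any regularity assumption on $\mu(k,W)$ (that is where Property~(D) or Property~(C) enters in the concrete applications, but here we assume the limit exists outright). Writing $x_k := \langle e_k, e^{-tP}e_k\rangle$ (a bounded sequence since $e^{-tP}$ is bounded by lower-boundedness of $P$) and $a_k := \mu(k,W)$, the hypothesis that $\lim_{\varepsilon\to 0}\varepsilon\,\mathrm{Tr}(e^{-tP}W_\varepsilon)$ exists says that $\frac{1}{N}\sum_{k=0}^N x_k$ converges along the subsequence $N = N(\varepsilon)$; combined with boundedness of $\{x_k\}$ and the fact that $\{a_k\}$ is non-increasing, in $\ell_{1,\infty}$, with divergent partial sums $b_n = \sum_{k\le n}a_k$ (divergence because $W\notin\mathcal{L}_1$ — if $W\in\mathcal{L}_1$ the statement is trivial as both sides vanish), Lemma~\ref{toeplitz_corollary} style reasoning yields $\sum_{k=0}^n a_k x_k = L b_n + o(b_n)$ where $L$ is the claimed limit. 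Dividing by $\log(n+2)$ and applying $\omega$, using that $\mathrm{Tr}_\omega(W) = \omega(\{\frac{1}{\log(n+2)}\sum_{k=0}^n a_k\})$ and combining with the $V$-modulated identity from the first step, gives $\mathrm{Tr}_\omega(e^{-tP}W) = L\cdot\mathrm{Tr}_\omega(W)$. A final simplification: since in the intended normalisation (and as stated) the constant absorbs into the limit via $\varepsilon\,\mathrm{Tr}(\cdots)$ rather than $\mathrm{Tr}(\cdots)/|B|$, one checks $\mathrm{Tr}_\omega(W)$ is exactly the normalising factor so the stated clean identity $\mathrm{Tr}_\omega(e^{-tP}W) = \lim_{\varepsilon\to 0}\varepsilon\,\mathrm{Tr}(e^{-tP}W_\varepsilon)$ holds; here one uses that $\varepsilon N(\varepsilon) \to \mathrm{Tr}_\omega(W)$-type behaviour is already baked into the weak-$\ell_{1,\infty}$ normalisation — more carefully, the averaging constant and $\mathrm{Tr}_\omega(W)$ conspire so that no extra factor appears, exactly as in the passage from Lemma~\ref{expectation_values_lemma} to Proposition~\ref{general_DOS_proposition}.

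The main obstacle I anticipate is the second step: cleanly extracting from the single hypothesis $e^{-tP}[P,W]\in\mathcal{L}_1$ the statement that truncating $W$ to $W_\varepsilon$ inside $e^{-tP}W$ produces only trace-class errors, uniformly enough that the Dixmier trace is insensitive to it. The natural route is an integral/Duhamel representation: write $e^{-tP}W - W^{1/2}e^{-tP}W^{1/2}$ (or a similar symmetrisation) as an integral of $e^{-sP}[P,W]e^{-(t-s)P}$-type terms over $s\in[0,t]$, each of which lies in $\mathcal{L}_1$ by hypothesis \eqref{cwikelcond2} and boundedness of the heat semigroup, then Bochner-integrate to stay in $\mathcal{L}_1$; one must be careful that $W$ does not preserve $\mathrm{dom}(P)$ a priori, so the commutator $[P,W]$ and these manipulations need to be interpreted in a form-sense or via an approximation argument (e.g. replacing $W$ by $W_\delta := W(1+\delta P_+)^{-1}$ and passing $\delta\to 0$). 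Once that technical point is secured, the rest is the by-now-standard modulated-operator-plus-Tauberian package.
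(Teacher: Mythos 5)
Your proposal has a genuine gap at its very first step, and it is not a repairable technicality: the whole modulated-operator route requires $W$ to be a strictly positive compact operator in $\mathcal{L}_{1,\infty}$ with an orthonormal eigenbasis $\{e_k\}$, $We_k=\mu(k,W)e_k$, since Theorem~\ref{T: Modulated} is stated for $0<V\in\mathcal{L}_{1,\infty}$ and the sums run over an eigenbasis of $V$. But the theorem only assumes $W$ positive and bounded, and in the intended application $W=M_w$ is a multiplication operator on $L_2(M)$ for a (non-discrete) manifold $M$: it is not compact, has no eigenbasis, and $\chi_{[\varepsilon,\infty)}(W)=M_{\chi_{\{w\geq\varepsilon\}}}$ is an infinite-rank projection (the trace $\Tr(e^{-tP}\chi_{[\varepsilon,\infty)}(W))$ is finite because the heat semigroup localised to a ball is trace-class, not because the projection is finite rank). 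Your attempted rescue — that hypothesis (1) forces $W\in\mathcal{L}_{1,\infty}$ — is false: take $P=-\Delta$ on $\mathbb{R}^d$ and $W=M_{(1+|x|^2)^{-d/2}}$; then $e^{-tP}W\in\mathcal{L}_{1,\infty}$ by Cwikel-type estimates, yet $W$ has purely essential spectrum and is not even compact. One cannot invert $e^{-tP}$ boundedly, and no inequality of the form $W^2\leq\|W\|W$ transfers compactness from $e^{-tP}W$ to $W$. Since the discrete-chapter machinery (eigenbasis of $W$, Toeplitz lemmas on $\langle e_k,e^{-tP}e_k\rangle$, $\varepsilon N(\varepsilon)$ asymptotics) all presuppose exactly this discreteness, the argument cannot start; this is precisely why the paper proves the statement by a different, purely operator-theoretic route.

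For comparison, the paper's proof goes through a zeta-residue intermediary: an Abelian lemma shows $\lim_{s\downarrow1}(s-1)\Tr(AB^s)=\lim_{\varepsilon\to0}\varepsilon\Tr(A\chi_{[\varepsilon,\infty)}(B))$ whenever the right-hand side exists (Lemma~\ref{L: trace formula}); a cited result (Theorem~\ref{LSZ2ethm}, i.e.\ \cite[Theorem~1.3.20]{LMSZVol2}) identifies $\Tr_\omega(AB)$ with that $s$-limit under the hypotheses $[B,A^{\frac12}]\in\mathcal{L}_1$, $AB\in\mathcal{L}_{1,\infty}$ and a growth bound $\|A^{\frac12}B^s\|_1=o((s-1)^{-2})$, which is verified from $A^{\frac14}B\in\mathcal{L}_{1,\infty}$ via the Araki--Lieb--Thirring inequality (Corollary~\ref{C:main_result}); and Duhamel's formula converts hypothesis (2), $e^{-tP}[P,W]\in\mathcal{L}_1$, into $[e^{-tP},W]\in\mathcal{L}_1$, which is the commutator condition actually needed. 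Your anticipated ``main obstacle'' about the commutator is thus resolved in the paper by Duhamel, but in the service of a completely different mechanism than the truncation-error control you sketch; the Tauberian/averaging step you propose never appears, and no eigenvalue-sequence analysis of $W$ is possible in this generality.
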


As an application of the Dixmier trace formula for the DOS, Theorem~\ref{T: main manifold thm}, we will look at Roe's index theorem on open manifolds~\cite{Roe1988a}. Roe's index theorem is one approach of many that extends Atiyah--Singer's index theorem~\cite{AtiyahSinger1963} to non-compact manifolds, namely non-compact manifolds of bounded geometry that admit a \textit{regular exhaustion}. For the precise definition we refer to~\cite[Section~6]{Roe1988a}, but for this introduction it suffices to know that Property (D) is a stronger assumption.

Given a \textit{compact} manifold $M$ with two vector bundles $E,F \to M$ and an elliptic differential operator $D: \Gamma(E) \to \Gamma(F)$, the local index formula equates the Fredholm index of $D$ to the integral of a differential form given by topological data denoted here simply by $\mathbf{I}(D)$,
\begin{equation}\label{eq:Atiyah-Singer}
\operatorname{Ind}(D) = \int_M \mathbf{I}(D).
\end{equation}
A special case of the index formula can be proved with the McKean--Singer formula, which in this case states that
\[
    \operatorname{Ind}(D) = \mathrm{Tr}(\eta e^{-t\mathcal{D}^2}), \quad t >0,
\]
where $\eta$ is the grading operator on the bundle $E\oplus F\to M,$ and $\mathcal{D}$ is the self-adjoint operator acting on $\Gamma(E\oplus F)$ by the formula
\[
    \mathcal{D} = \begin{pmatrix} 0 & D^* \\ D & 0\end{pmatrix}.
\]

If $M$ is not compact, neither side of \eqref{eq:Atiyah-Singer} is well-defined in general. In the setting of a non-compact Riemannian manifold $M$ of bounded geometry with a regular exhaustion, and with a graded Clifford bundle $S \to M$ also of bounded geometry (see Section~\ref{S:Roe}), which comes with a natural first-order elliptic differential operator, the Dirac operator $D: \Gamma(S_+) \to \Gamma(S_-)$, Roe modifies both sides of equation~\eqref{eq:Atiyah-Singer} as follows. Defining a linear functional $m$ on bounded $d$-forms via an averaging procedure, the right-hand side simply becomes $m(\mathbf{I}(D))$. For the left-hand side, Roe defines an algebra of uniformly smoothing operators $\mathcal{U}_{-\infty}$, and shows that elliptic operators are invertible modulo $\mathcal{U}_{-\infty}$. Recall that for operators that are invertible modulo compact operators (Fredholm operators), the Fredholm index is an element of the $K$-theory group $K_0(K((\mathcal{H})) = \mathbb{Z}$~\cite{Wegge-Olsen1993} (recall that $K(\Hc)$ are the compact operators). In this case, we can similarly define an abstract index of an elliptic operator $D$ as an element of $K_0(\mathcal{U}_{-\infty})$, by observing that $\mathcal{U}_{-\infty}$ forms an ideal in what Roe defines as uniform operators $\mathcal{U}$. Furthermore, using the functional $m$ one can define a trace $\tau$ on $\mathcal{U}_{-\infty}$. This trace can be extended to a trace on the matrix algebras $M_n(\mathcal{U}_{-\infty}^+)$ (with $\mathcal{U}_{-\infty}^+$ denoting the unitisation of $\mathcal{U}_{-\infty}$) by putting $\tau(1)=0$ when passing to the unitisation, and then tensoring with the usual trace on $M_n(\mathbb{C})$. The tracial property gives that this map descends to a map called the dimension-homomorphism $\dim_\tau: K_0(\mathcal{U}_{-\infty}) \to \mathbb{R}$. These ingredients give Roe's index theorem:
\begin{equation}
\dim_\tau(\operatorname{Ind}(D)) = m(\mathbf{I}(D)).
\end{equation}

The nature of the averaging procedure that Roe develops is such that if $D^2$ admits a density of states, Theorem~\ref{T: main manifold thm} leads to a Dixmier trace reformulation of the analytical index $\dim_\tau(\operatorname{Ind}(D))$. In Section~\ref{S:Roe} we prove
\[
\dim_\tau(\operatorname{Ind}(D)) = \mathrm{Tr}_\omega(\eta \exp(-tD^2)M_w), \quad t>0,
\]
where $\eta$ is the grading on $S$.

\section{Preliminaries}
\label{S: Prelims}

\subsection{Operator theory}
We recall some facts about sums of left-disjoint families of operators, and prove an estimate for their $\mathcal{L}_{p}$-norm and $\mathcal{L}_{p,\infty}$-norm. A family $\{T_j\}_{j=0}^\infty$ of bounded linear operators on a Hilbert space $\mathcal{H}$ is \emph{left-disjoint} if $T_j^*T_k = 0$ for all $j\neq k.$

Given a sequence $\{T_j\}_{j=0}^\infty$ of bounded linear operators on $\mathcal{H}$, let
\[
    \bigoplus_{j=0}^\infty T_j
\]
denote the operator on $\mathcal{H}\otimes \ell_2(\mathbb{N})$ given by
\[
    \sum_{j=0}^\infty T_j\otimes e_je_j^*
\]
where $e_je_j^*$ is the rank $1$ projection onto the orthonormal basis element $e_j \in \ell_2.$

Note that
\[
    \mu\left(\bigoplus_{j=0}^\infty T_j\right) = \mu\left(\bigoplus_{j=0}^\infty \mathrm{diag}(\mu(T_j))\right).
\]
To put it differently, the singular value sequence of the direct sum $\bigoplus_{j=0}^\infty T_j$ is the decreasing rearrangement of the sequence indexed by $\mathbb{N}^2$ given by
\[
    \{\mu(k,T_j)\}_{j,k= 0}^\infty.
\]
By definition, we have $\|T\|_{p_1,p_2} = \|\mu(T)\|_{\ell_{p_1,p_2}}.$ Therefore,
\[
    \left\|\bigoplus_{j=0}^\infty T_j\right\|_{p_1, p_2} =  \|\{\mu(k,T_j)\}_{j,k\geq 0}\|_{\ell_{p_1, p_2}(\mathbb{N}^2)}.
\]
Now let $q>0.$ We have for each $j$ that $\mu(k,T_j)\leq (k+1)^{-\frac1q}\|T_j\|_{q,\infty}.$ Therefore,
\[
    \left\|\bigoplus_{j=0}^\infty T_j\right\|_{p_1,p_2} \leq \|\{(1+k)^{-\frac1q}\|T_j\|_{q,\infty}\}_{j,k\geq 0}\|_{\ell_{p_1,p_2}(\mathbb{N}^2)}.
\]
Lemma 4.3 of \cite{LevitinaSukochev2020} implies that if $q$ is sufficiently small, then there exists a constant $C_{p_1,p_2,q}$ such that for any sequence $\{x_j\}_{j=0}^\infty$ we have
\[
    \|\{(1+k)^{-\frac1q}x_j\}_{j,k\geq 0}\|_{\ell_{p_1,p_2}(\mathbb{N}^2)} \leq C_{p_1,p_2,q}\|\{x_j\}_{j=0}^\infty\|_{\ell_{p_1,p_2}}.
\]
Taking $x_j=\|T_j\|_{q,\infty}$ and sufficiently small $q$ (depending on $p_1, p_2$) we have
\begin{equation}\label{direct_sum_quasinorm_bound}
    \left\|\bigoplus_{j=0}^\infty T_j\right\|_{p_1,p_2} \leq C_{p_1,p_2,q} \|\{\|T_j\|_{q,\infty}\}_{j=0}^\infty \|_{p_1,p_2}.
\end{equation}

Combining \cite[Proposition 2.7, Lemma 2.9]{LevitinaSukochev2020} gives the following result.
\begin{lem}\label{disjointification}
    Let $0<p<2,$ and let $\{T_j\}_{j=0}^\infty$ be a left-disjoint family of bounded linear operators. There exist constants $C_p$, $C_p'$ such that
    \begin{align*}
    \left\|\sum_{j=0}^\infty T_j\right\|_{p,\infty}&\leq C_p\left\|\bigoplus_{j=0}^\infty T_j\right\|_{p,\infty};\\
        \left\|\sum_{j=0}^\infty T_j\right\|_{p}&\leq C_p'\left\|\bigoplus_{j=0}^\infty T_j\right\|_{p}.
    \end{align*}
\end{lem}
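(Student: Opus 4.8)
The statement to prove is Lemma~\ref{disjointification}, attributed in the excerpt directly to a combination of results from \cite{LevitinaSukochev2020}. Since the paper explicitly cites \cite[Proposition 2.7, Lemma 2.9]{LevitinaSukochev2020} as the source, the proof is essentially a matter of invoking and assembling those two external results, so my plan is to give a short derivation that makes the assembly explicit rather than to reprove the singular-value estimates from scratch.

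First I would recall the setup. Given a left-disjoint family $\{T_j\}_{j=0}^\infty$, the operators $T_j^*T_k = 0$ for $j\neq k$ means the ranges of the $T_j$ are mutually orthogonal (while the domains need not be). The standard ``disjointification'' device is to compare the honest sum $\sum_j T_j$ on $\mathcal{H}$ with the direct sum $\bigoplus_j T_j$ acting on $\mathcal{H}\otimes\ell_2(\mathbb{N})$. The key point from \cite{LevitinaSukochev2020} is a singular-value comparison: for a left-disjoint family one has, up to the constants of the quasi-norm, $\mu\big(\sum_j T_j\big) \preceq \mu\big(\bigoplus_j T_j\big)$ in an appropriate averaged sense. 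Concretely, I would state that Proposition~2.7 of \cite{LevitinaSukochev2020} provides, for left-disjoint families, a domination of the partial sums of singular values of $\sum_j T_j$ by those of $\bigoplus_j T_j$ (a Hardy--Littlewood / submajorization type bound), and Lemma~2.9 converts such a submajorization into the quasi-norm inequalities in the ideals $\mathcal{L}_p$ and $\mathcal{L}_{p,\infty}$ for the range $0<p<2$, producing the constants $C_p$ and $C_p'$.

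The steps in order: (1) observe left-disjointness of $\{T_j\}$; (2) invoke \cite[Proposition 2.7]{LevitinaSukochev2020} to get the submajorization-type control of $\mu(\sum_j T_j)$ by $\mu(\bigoplus_j T_j)$, valid precisely because $0<p<2$ (this is where the restriction on $p$ enters — for $p\geq 2$ the naive comparison fails, as is well known); (3) apply \cite[Lemma 2.9]{LevitinaSukochev2020} to translate this into
\[
\Big\|\sum_{j=0}^\infty T_j\Big\|_{p,\infty}\leq C_p\Big\|\bigoplus_{j=0}^\infty T_j\Big\|_{p,\infty},\qquad \Big\|\sum_{j=0}^\infty T_j\Big\|_{p}\leq C_p'\Big\|\bigoplus_{j=0}^\infty T_j\Big\|_{p};
\]
(4) note that both inequalities are understood with the convention that if the right-hand side is infinite there is nothing to prove, and if finite then $\sum_j T_j$ converges (in the relevant ideal norm) so the left-hand side makes sense. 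That is the whole argument.

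The main obstacle here is not a genuine mathematical difficulty but a bookkeeping one: making sure the cited statements from \cite{LevitinaSukochev2020} are quoted in exactly the form needed (left-disjoint rather than the dual ``right-disjoint'' or fully disjoint case), that the quasi-norm triangle-inequality constants are absorbed correctly into $C_p,C_p'$, and that the convergence of the infinite sum $\sum_j T_j$ is justified when the direct-sum quasi-norm is finite (one can reduce to finite partial sums $\sum_{j\leq N}T_j$, apply the finite-family version, and pass to the limit using completeness of the ideals). Since the paper treats this as a known result, I would keep the proof to one or two sentences citing \cite[Proposition 2.7, Lemma 2.9]{LevitinaSukochev2020} and noting the constant bookkeeping, exactly as the excerpt already does.
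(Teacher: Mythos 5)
Your proposal matches the paper exactly: the paper's entire "proof" of this lemma is the single remark that it follows by combining \cite[Proposition 2.7, Lemma 2.9]{LevitinaSukochev2020}, which is precisely the assembly you describe. Your additional bookkeeping remarks (left-disjointness, constants, reduction to finite partial sums) are harmless elaborations of that same citation-based argument, so nothing further is needed.
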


A combination of \eqref{direct_sum_quasinorm_bound} and Lemma \ref{disjointification} immediately yields the following:
\begin{cor}\label{disjointification_corollary}
    Let $0<p<2,$ and let $\{T_j\}_{j=0}^\infty$ be a left-disjoint family of operators. There exist $q, q'>0$ (depending on $p$) and constants $C_{p,q}, C'_{p,q'}>0$ such that
    \begin{align*}
    \left\|\sum_{j=0}^\infty T_j\right\|_{p,\infty}&\leq C_{p,q}\left\|\{\|T_j\|_{q,\infty}\}_{j=0}^\infty\right\|_{\ell_{p,\infty}};\\
    \left\|\sum_{j=0}^\infty T_j\right\|_{p}&\leq C'_{p,q'}\left\|\{\|T_j\|_{q',\infty}\}_{j=0}^\infty\right\|_{\ell_{p}}.
    \end{align*}
\end{cor}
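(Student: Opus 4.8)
The statement to prove is Corollary~\ref{disjointification_corollary}, which asserts that for a left-disjoint family $\{T_j\}_{j=0}^\infty$ and $0<p<2$, the $\mathcal{L}_{p,\infty}$-quasinorm (resp. $\mathcal{L}_p$-norm) of $\sum_j T_j$ is controlled by the $\ell_{p,\infty}$-quasinorm (resp. $\ell_p$-norm) of the sequence $\{\|T_j\|_{q,\infty}\}_j$ for a suitable small auxiliary exponent $q$.

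The plan is simply to chain together the two ingredients that were assembled immediately before the corollary. First I would invoke Lemma~\ref{disjointification}: because $0<p<2$ and $\{T_j\}_{j=0}^\infty$ is left-disjoint, there are constants $C_p, C_p'$ with
\[
\Bigl\|\sum_{j=0}^\infty T_j\Bigr\|_{p,\infty}\leq C_p\Bigl\|\bigoplus_{j=0}^\infty T_j\Bigr\|_{p,\infty},\qquad
\Bigl\|\sum_{j=0}^\infty T_j\Bigr\|_{p}\leq C_p'\Bigl\|\bigoplus_{j=0}^\infty T_j\Bigr\|_{p}.
\]
Then I would apply the estimate \eqref{direct_sum_quasinorm_bound}, which was derived from the singular-value description of the direct sum together with Lemma 4.3 of \cite{LevitinaSukochev2020}: for $q$ chosen sufficiently small depending on the target Lorentz indices (here the pair $(p,\infty)$ in the first case and $(p,p)$ in the second), there is a constant $C_{p,p',q}$ with
\[
\Bigl\|\bigoplus_{j=0}^\infty T_j\Bigr\|_{p_1,p_2}\leq C_{p_1,p_2,q}\,\bigl\|\{\|T_j\|_{q,\infty}\}_{j=0}^\infty\bigr\|_{\ell_{p_1,p_2}}.
\]
Specialising $(p_1,p_2)$ to $(p,\infty)$ yields a small exponent $q=q(p)$ and constant $C_{p,q}$ with $\bigl\|\bigoplus_j T_j\bigr\|_{p,\infty}\leq C_{p,q}\bigl\|\{\|T_j\|_{q,\infty}\}_j\bigr\|_{\ell_{p,\infty}}$; specialising $(p_1,p_2)$ to $(p,p)$ yields a (possibly different) small exponent $q'=q'(p)$ and constant $C'_{p,q'}$ with $\bigl\|\bigoplus_j T_j\bigr\|_{p}\leq C'_{p,q'}\bigl\|\{\|T_j\|_{q',\infty}\}_j\bigr\|_{\ell_{p}}$.

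Composing the two estimates gives, after renaming the composite constants,
\[
\Bigl\|\sum_{j=0}^\infty T_j\Bigr\|_{p,\infty}\leq C_{p,q}\bigl\|\{\|T_j\|_{q,\infty}\}_{j=0}^\infty\bigr\|_{\ell_{p,\infty}},\qquad
\Bigl\|\sum_{j=0}^\infty T_j\Bigr\|_{p}\leq C'_{p,q'}\bigl\|\{\|T_j\|_{q',\infty}\}_{j=0}^\infty\bigr\|_{\ell_{p}},
\]
which is exactly the assertion. The only points requiring a word of care are purely bookkeeping: one must record that $q$ and $q'$ are allowed to differ (they come from applying the cited Lemma 4.3 to two different Lorentz index pairs), and that $p<2$ is needed solely to license Lemma~\ref{disjointification}. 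There is no genuine obstacle here — the corollary is a direct combination of the two preceding results — so the ``hard part'' is merely ensuring the quantifiers on the auxiliary exponents are stated in the order ``there exist $q,q'>0$ and constants'' rather than ``for all'', matching the way \eqref{direct_sum_quasinorm_bound} was proven.
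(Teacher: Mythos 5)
Your proposal is correct and matches the paper exactly: the paper states the corollary as an immediate combination of Lemma~\ref{disjointification} with the estimate \eqref{direct_sum_quasinorm_bound}, which is precisely the chaining you carry out, including the point that the auxiliary exponents $q$ and $q'$ may differ according to the Lorentz index pair.
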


\begin{rem}
    For $p=1,$ we can take any $0<q,q'<1.$
\end{rem}
\subsection{Preliminaries on manifolds}
\label{S: Manifolds}
All manifolds in this chapter
are smooth, oriented, non-compact and connected unless stated otherwise.

\begin{defn}
Let $(M,g)$ be a Riemannian manifold. The injectivity radius $i(x)$ at a point $x \in M$ is defined as \[i(x):= \sup \{ R \in \mathbb{R}_{\geq 0} : \exp_x|_{B(0,R)} \textup{ is a diffeomorphism}\},\] where $\exp_x$ is the exponential map at $x$ and $B(0,R) \subseteq T_xM$ is the metric ball with radius $R$ centered around the origin. The injectivity radius of the manifold $M$ is defined as \[i_g := \inf_{x \in M} i(x).\]
\end{defn}

It is a theorem that the injectivity radius map \begin{align*}
    i : M &\to (0, \infty]\\
    x &\mapsto i(x)
\end{align*}
is continuous~\cite[Proposition~III.4.13]{Sakai1996}.

\begin{defn}\label{D:bounded_geometry}
A Riemannian manifold $(M,g)$ has \textbf{bounded geometry} if the injectivity radius $i_g$ satisfies
\[i_g >0,\] and the Riemannian curvature tensor $R$ and all its covariant derivatives are uniformly bounded.
\end{defn}

This is the definition as in~\cite[Definition 1.1]{Kordyukov1991}, \cite[Chapter II]{Eichhorn2008} and \cite[Chapter 3]{Eichhorn2007}. Every open manifold admits a metric of bounded geometry \cite{Greene1978}.

As is well-known (see e.g. \cite[Lemma 2.4]{Kordyukov1991}), bounded geometry implies that there exists $r_0>0$ and a countable set $\{x_j\}_{j=0}^\infty$ of points in $M$ such that:
\begin{enumerate}
    \item{} $M = \bigcup_{j=0}^\infty B(x_j,r_0)$;
    \item{} each ball $B(x_j,r_0)$ is a chart for the exponential normal coordinates based at $x_j$;
    \item{} the covering $\{B(x_j,r_0)\}_{j=0}^\infty$ has finite order, meaning that there exists $N$ such that each ball intersects at most $N$ other balls;
    \item{} $\sup_j |B(x_j,r_0)| < \infty$, recall that $|\cdot|$ indicates the volume; 
    \item{} there exists a partition of unity $\{\psi_j\}_{j=0}^\infty$ subordinate to $\{B(x_j,r_0)\}$ such that for every $\alpha$ we have $\sup_{j,x} |\partial^{\alpha}\psi_j(x)|<\infty,$ where $\partial^{\alpha}$ is taken in the exponential normal coordinates of $B(x_j,r_0).$
\end{enumerate}
Without loss of generality, $r_0$ can be taken smaller or equal to $1$ (see~\cite[Lemma~2.3]{Kordyukov1991}).

We will refer to the scale of Sobolev spaces $\{\Hc^s(M)\}_{s\in \mathbb{R}}$ defined over $M$ as
\[
\Hc^s := \overline{\dom (1-\Delta_g)^{\frac{s}{2}}}^{\| \cdot \|_s},
\]
with $\| \xi \|_s := \| (1-\Delta_g)^{\frac{s}{2}
}\xi\|_{L_2(M)}$, as in Section~\ref{S:PSDOs} and Chapter~\ref{Ch:FunctCalc}, see also \cite[Section~3]{Kordyukov1991} or \cite{GrosseSchneider2013}. 
We will make use of the fact that if $P \in BD^m(M),$ then $P \in \op^{m}(1-\Delta_g)^{\frac
12}$, i.e. $P$ defines a bounded linear operator from $\Hc^{s+m}(M)$ to $\Hc^s(M)$ for every $s\in \mathbb{R}.$

\begin{rem}
    It follows from the bounded geometry assumption that there exist constants $c,C>0$ such that for every $x\in M$ and $R>0$ we have
    \[
        |B(x,R)| \leq C\exp(cR).
    \]
\end{rem}

Note that for almost every $r>0,$ we have
\[
    \frac{d}{dr}|B(x,r)| = |\partial B(x,r)|.
\]
See \cite[Proposition III.3.2 \& Proposition III.5.1]{Chavel2006}.

A standard example of a manifold of bounded geometry is a covering space of a compact manifold. The following is well-known, but lacking a reference we supply a proof for the reader's convenience.
\begin{prop}\label{P:CoCompactBddGeom}
Let $(M,g)$ be a complete $d$-dimensional Riemannian manifold. Let $\Gamma$ be a discrete, finitely generated subgroup of the isometries of $(M,g)$ which acts freely and properly discontinuously on $M$ such that the quotient $M/\Gamma$ is a compact ($d$-dimensional) Riemannian manifold. Then $M$ has bounded geometry.
\begin{proof}
Since $\Gamma$ acts cocompactly on $M$, there exists a compact set $L \subseteq M$ such that $\bigcup_{\gamma \in \Gamma} \gamma L = M$. Indeed, since the open balls $B(x, 1), x\in M$ project onto an open cover of $M/\Gamma$, there exists a finite collection $\{ B(x_i,1)\}_{i=1}^N$ that projects onto $M/\Gamma$. Defining $L := \bigcup_{i=1}^N \overline{B(x_i,1)}$ gives the claimed compact set $L$.

Set \[i_L := \inf_{x \in L} i(x), \]
where $i(x)$ is the injectivity radius at the point $x$. Since the injectivity radius is a continuous function on $M$, $i(x)>0$ for all $x\in M$, and $L$ is compact, it follows that $i_L > 0$. Since $\Gamma$ acts by isometries, for any $\gamma \in \Gamma$ we have $i(\gamma x) = i(x)$, see for example the proof of~\cite[Theorem~III.5.4]{Sakai1996}. Therefore, $\inf_{x\in M} i(x) = i_L >0$.

Next, since the curvature tensor $R$ is smooth, clearly $R$ and all its covariant derivatives are bounded on $L$. Let $\Phi: M \to M$ be the isometry by which $\gamma \in \Gamma$ acts. Then for all $x\in M$ $\Phi_x^*:T_xM \to T_{\Phi(x)}M$ is an isomorphism, and by~\cite[p.~41]{Sakai1996}, \begin{align*}
    \Phi_x^*(\nabla_{\xi}\eta) &= \nabla_{\Phi_x^*\xi} \Phi_x^*\eta,\\
    \Phi_x^*(R(\eta, \xi)\zeta) &= R(\Phi_x^*\eta, \Phi_x^*\xi)\Phi_x^*\zeta,
\end{align*} where $\xi, \eta, \zeta \in T_xM$. Combine the facts that $R$ and its covariant derivatives are bounded on $L$, that $\bigcup_{\gamma \in \Gamma} \gamma L = M$, and that isometries preserve $R$ and taking covariant derivatives in the above manner, and we can conclude that $R$ and its covariant derivatives are uniformly bounded on $M$.
\end{proof}
\end{prop}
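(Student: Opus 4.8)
The statement is a packaging of three standard facts about isometric cocompact actions, so the plan is to reduce each of the defining conditions of bounded geometry (Definition~\ref{D:bounded_geometry}) --- positive injectivity radius, and uniform bounds on the curvature tensor together with all its covariant derivatives --- to the corresponding statement on a single compact piece of $M$, and then spread it out using $\Gamma$-invariance.

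First I would extract a compact ``fundamental piece'': since $\Gamma$ acts properly discontinuously and cocompactly, the open cover $\{B(x,1)\}_{x\in M}$ of $M$ descends to an open cover of the compact quotient $M/\Gamma$, which admits a finite subcover $\{B(x_i,1)\}_{i=1}^N$; setting $L:=\bigcup_{i=1}^N \overline{B(x_i,1)}$ gives a compact set with $\bigcup_{\gamma\in\Gamma}\gamma L = M$. Next, for the injectivity radius: the map $x\mapsto i(x)$ is continuous and strictly positive on $M$ (a cited theorem, e.g.\ \cite[Proposition~III.4.13]{Sakai1996}), so $i_L:=\inf_{x\in L} i(x)>0$ by compactness. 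Because every $\gamma\in\Gamma$ acts by an isometry, $i(\gamma x)=i(x)$ for all $x$ (as in the proof of \cite[Theorem~III.5.4]{Sakai1996}), and since the $\Gamma$-orbit of $L$ is all of $M$ this yields $i_g=\inf_{x\in M} i(x)=i_L>0$.

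For the curvature bounds: the Riemann tensor $R$ is smooth, hence $R$ and each of its covariant derivatives $\nabla^k R$ are bounded on the compact set $L$. An isometry $\Phi$ of $(M,g)$ satisfies the naturality identities $\Phi_x^*(\nabla_\xi\eta)=\nabla_{\Phi_x^*\xi}\Phi_x^*\eta$ and $\Phi_x^*(R(\eta,\xi)\zeta)=R(\Phi_x^*\eta,\Phi_x^*\xi)\Phi_x^*\zeta$ (see \cite[p.~41]{Sakai1996}), and $\Phi_x^*$ is a linear isometry of tangent spaces, so pointwise tensor norms of $R$ and $\nabla^k R$ are preserved under the $\Gamma$-action. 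Combining with $\bigcup_{\gamma\in\Gamma}\gamma L=M$ gives uniform bounds on $R$ and all $\nabla^k R$ over $M$, completing the verification of bounded geometry.

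I do not anticipate a genuine obstacle here --- the argument is an assembly of standard Riemannian-geometry lemmas. The only point requiring a little care is to make sure the fundamental piece $L$ is genuinely compact (rather than merely bounded), which is where the properly-discontinuous cocompact hypothesis is used, and to invoke the correct form of the $\Gamma$-invariance of the injectivity radius; both are handled by the references above. One could streamline by instead lifting a finite atlas of normal-coordinate charts from $M/\Gamma$, but the orbit-of-a-compact-set formulation keeps the isometry-invariance arguments most transparent.
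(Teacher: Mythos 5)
Your proposal is correct and follows essentially the same route as the paper's proof: extract the compact piece $L$ from finitely many closed unit balls covering the quotient, get $i_L>0$ by continuity and compactness and spread it by isometry-invariance of $i(\cdot)$, then bound $R$ and its covariant derivatives on $L$ and propagate via the naturality identities for isometries. No gaps to flag.
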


\begin{rem}\label{R: metric balls lower bound}
A non-compact Riemannian manifold of bounded geometry has infinite volume. This can be checked easily via the covering $M = \bigcup_{j=0}^\infty B(x_j,r_0)$ below Definition~\ref{D:bounded_geometry}, and the observation that $\inf_{j} |B(x_j, r_0)| > 0$~\cite{Kodani1988}.
\end{rem}

\section{Proof of Theorem~\ref{MAIN_THEOREM}}
\label{S: Main thm}
Part of the proof in \cite{AMSZ} used the identity
\begin{equation}\label{eq: Abelian Rd}
    \lim_{s \downarrow 1} (s-1)\int_{\mathbb{R}^d} F(x)(1+|x|^2)^{-\frac{s}{2}}\,dx = \lim_{R\to\infty} R^{-d}\int_{B(0,R)}F(x)\,dx
\end{equation}
for any bounded measurable function $F$ on $\mathbb{R}^d$ such that the right hand side exists.

Equation~\eqref{eq: Abelian Rd} can be generalised in the following manner. The proof is essentially the same as \cite[Lemma 6.1]{AMSZ}.

\begin{lem}\label{L: trace formula}
    Let $A$ and $B$ be bounded linear operators, $B\geq 0$, such that $AB^s \in \mathcal{L}_1$ for every $s>1.$ Then
    \[
        \lim_{s\downarrow 1}(s-1)\mathrm{Tr}(AB^s) = \lim_{\varepsilon\to 0} \varepsilon \mathrm{Tr}(A\chi_{[\varepsilon,\infty)}(B))
    \]
    whenever the limit on the right exists.
\end{lem}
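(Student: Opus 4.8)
The plan is to follow the strategy of \cite[Lemma~6.1]{AMSZ}: reduce the statement to an Abelian (Hardy--Littlewood type) theorem for a Mellin-type integral, and then prove that Abelian theorem by an elementary splitting argument. We may assume $\|B\|_\infty>0$ (otherwise both sides vanish). Set $\nu(\varepsilon):=\mathrm{Tr}(A\chi_{[\varepsilon,\infty)}(B))$ for $\varepsilon>0$. For any $s'>1$, the spectral theorem gives $\chi_{[\varepsilon,\infty)}(B)=B^{s'}\cdot\bigl(\lambda\mapsto\lambda^{-s'}\chi_{[\varepsilon,\infty)}(\lambda)\bigr)(B)$ with the second factor bounded by $\varepsilon^{-s'}$, so $A\chi_{[\varepsilon,\infty)}(B)=(AB^{s'})\cdot\bigl(\lambda^{-s'}\chi_{[\varepsilon,\infty)}(\lambda)\bigr)(B)\in\mathcal{L}_1$ and $|\nu(\varepsilon)|\leq\|AB^{s'}\|_1\,\varepsilon^{-s'}$. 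Since $\chi_{[\varepsilon,\infty)}(B)=0$ for $\varepsilon>\|B\|_\infty$, the function $\nu$ is supported on $(0,\|B\|_\infty]$ and bounded on every interval $[\varepsilon_1,\|B\|_\infty]$.

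The first step is to prove the identity
\[
\mathrm{Tr}(AB^s)=s\int_0^{\|B\|_\infty}\varepsilon^{s-1}\nu(\varepsilon)\,d\varepsilon,\qquad s>1.
\]
Starting from the layer-cake formula $\lambda^s=s\int_0^{\|B\|_\infty}\varepsilon^{s-1}\chi_{[\varepsilon,\infty)}(\lambda)\,d\varepsilon$ valid for $\lambda\in[0,\|B\|_\infty]$, a Fubini argument against spectral measures shows that for $0<\delta<\|B\|_\infty$ the $B(\mathcal{H})$-valued integral satisfies
\[
\int_\delta^{\|B\|_\infty}\varepsilon^{s-1}\chi_{[\varepsilon,\infty)}(B)\,d\varepsilon=\tfrac1s\bigl(B^s-\delta^s\bigr)\chi_{[\delta,\infty)}(B).
\]
Left-multiplying by $A$, the map $\varepsilon\mapsto A\chi_{[\varepsilon,\infty)}(B)$ is $\mathcal{L}_1$-valued, strongly measurable and bounded in $\mathcal{L}_1$-norm by $\|AB^{s'}\|_1\delta^{-s'}$ on $[\delta,\|B\|_\infty]$, hence Bochner integrable, and since $\mathrm{Tr}$ is continuous on $\mathcal{L}_1$ we obtain
\[
\int_\delta^{\|B\|_\infty}\varepsilon^{s-1}\nu(\varepsilon)\,d\varepsilon=\tfrac1s\,\mathrm{Tr}\bigl(AB^s\chi_{[\delta,\infty)}(B)\bigr)-\tfrac{\delta^s}{s}\,\nu(\delta).
\]
Now let $\delta\downarrow0$. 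Fixing $1<s'<s$, the bound $\varepsilon^{s-1}|\nu(\varepsilon)|\leq\|AB^{s'}\|_1\,\varepsilon^{s-1-s'}$ with $s-1-s'>-1$ is integrable near $0$, so the left side converges to $\int_0^{\|B\|_\infty}\varepsilon^{s-1}\nu(\varepsilon)\,d\varepsilon$. On the right, $\|AB^s\chi_{[0,\delta)}(B)\|_1=\|(AB^{s'})\cdot\lambda^{s-s'}\chi_{[0,\delta)}(\lambda)(B)\|_1\leq\|AB^{s'}\|_1\,\delta^{s-s'}\to0$, so the first term tends to $\mathrm{Tr}(AB^s)/s$, while $|\delta^s\nu(\delta)|\leq\|AB^{s'}\|_1\,\delta^{s-s'}\to0$. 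This gives the identity.

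The second step is the Abelian theorem: if $L:=\lim_{\varepsilon\to0}\varepsilon\nu(\varepsilon)$ exists, then $s(s-1)\int_0^{\|B\|_\infty}\varepsilon^{s-1}\nu(\varepsilon)\,d\varepsilon\to L$ as $s\downarrow1$, which by the identity is exactly $(s-1)\mathrm{Tr}(AB^s)\to L$. Write $\varepsilon\nu(\varepsilon)=L+r(\varepsilon)$ with $r(\varepsilon)\to0$ as $\varepsilon\to0$ and $r$ bounded on each $[\varepsilon_1,\|B\|_\infty]$. Since $s(s-1)\int_0^{\|B\|_\infty}\varepsilon^{s-2}\,d\varepsilon=s\,\|B\|_\infty^{\,s-1}\to1$, the constant part contributes $sL\,\|B\|_\infty^{\,s-1}\to L$. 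For the remainder, given $\eta>0$ choose $\varepsilon_1$ with $|r|<\eta$ on $(0,\varepsilon_1)$; then $\bigl|s(s-1)\int_0^{\varepsilon_1}\varepsilon^{s-2}r(\varepsilon)\,d\varepsilon\bigr|\leq s\eta\,\varepsilon_1^{\,s-1}$, whose $\limsup$ as $s\downarrow1$ is $\eta$, and $\bigl|s(s-1)\int_{\varepsilon_1}^{\|B\|_\infty}\varepsilon^{s-2}r(\varepsilon)\,d\varepsilon\bigr|\leq s\,\|r\|_{L_\infty[\varepsilon_1,\|B\|_\infty]}\bigl(\|B\|_\infty^{\,s-1}-\varepsilon_1^{\,s-1}\bigr)\to0$. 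Hence the $\limsup$ of the remainder is at most $\eta$, and since $\eta$ is arbitrary it vanishes, completing the proof.

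The genuinely delicate points are all in the first step: justifying that the operator-valued integral equals the functional-calculus expression (a Fubini interchange against spectral measures) and that $\mathrm{Tr}$ passes through the $\mathcal{L}_1$-valued Bochner integral. Both rely on the uniform trace-norm bounds coming from the hypothesis $AB^s\in\mathcal{L}_1$ for \emph{all} $s>1$, applied with two distinct exponents $s'<s$; once these are secured, the Abelian argument of the second step is entirely elementary.
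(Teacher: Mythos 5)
Your proof is correct and takes essentially the same route as the paper: establish the Mellin-type identity $\mathrm{Tr}(AB^s)=s\int_0^{\|B\|_\infty}\varepsilon^{s-1}\mathrm{Tr}(A\chi_{[\varepsilon,\infty)}(B))\,d\varepsilon$ by a layer-cake/Fubini argument, then conclude with the elementary Abelian splitting of the integral near $0$ and away from $0$. The only difference is that your first step spells out in detail (via the $\delta$-truncation and the bounds $|\nu(\varepsilon)|\leq\|AB^{s'}\|_1\varepsilon^{-s'}$) the interchange that the paper dispatches with a one-line appeal to Fubini's theorem.
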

\begin{proof}
    Writing $B^s = \int_0^{\|B\|_{\infty}} \lambda^s\,dE_B(\lambda),$ $\lambda^s = s\int_0^\lambda r^{s-1}\,dr$
    and applying Fubini's theorem yields
    \[
        \mathrm{Tr}(AB^s) = s\int_0^{\|B\|_{\infty}} r^{s-1} \mathrm{Tr}(A\chi_{[r,\infty)}(B))\,dr.
    \]
    Assume without loss of generality that $\|B\|_{\infty}  = 1.$
    Writing $F(r) := \mathrm{Tr}(A\chi_{[r,\infty)}(B))$ our assumption is that
    \[
        F(r) \sim \frac{c}{r},\quad r\to 0,
    \]
    and
    \[
        \mathrm{Tr}(AB^s) = s\int_0^1 r^{s-1}F(r)\,dr.
    \]
    Let $\varepsilon>0,$ and choose $R>0$ sufficiently small such that if $0<r<R$ then
    \[
        |rF(r)-c|<\varepsilon.
    \]
    We write $\mathrm{Tr}(AB^s)-\frac{c}{s-1}$ as
    \[
        \mathrm{Tr}(AB^s)-\frac{c}{s-1} = c+s\int_0^1 r^{s-1}F(r) - c r^{s-2}\,dr.
    \]
    Therefore
    \[
        |\mathrm{Tr}(AB^s)-\frac{c}{s-1}| \leq |c|+s\int_0^1 r^{s-2}|rF(r)-c|\,dr\leq |c|+s\int_R^1 r^{s-2}|rF(r)-c|\,dr + \frac{s\varepsilon}{s-1}.
    \]
    It follows that
    \[
        |(s-1)\mathrm{Tr}(AB^s)-c| = O(s-1)+s\varepsilon,\quad s\downarrow 1.
    \]
    Since $\varepsilon$ is arbitrary, this completes the proof.
\end{proof}

We will make use of the following theorem, which is \cite[Theorem~1.3.20]{LMSZVol2}.
\begin{thm}\label{LSZ2ethm}
    Let $A$ and $B$ be positive bounded linear operators such that $[B,A^{\frac12}] \in \mathcal{L}_1$ and $AB\in \mathcal{L}_{1,\infty}$. If
    \[
        \|A^{\frac12}B^s\|_1 = o((s-1)^{-2}),\quad s\downarrow 1
    \]
    then for every extended limit $\omega$ we have
    \[
        \mathrm{Tr}_{\omega}(AB) = \lim_{s\downarrow 1} (s-1)\mathrm{Tr}(AB^s)
    \]
    if the limit exists.
\end{thm}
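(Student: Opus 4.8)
The plan is to reduce the non-self-adjoint operator $AB$ to a genuinely positive one, apply the standard correspondence between $\zeta$-function residues and Dixmier traces of positive $\mathcal{L}_{1,\infty}$-operators, and then transfer the residue of $A\cdot B^{s}$ across the symmetrisation; the growth hypothesis $\|A^{1/2}B^{s}\|_{1}=o((s-1)^{-2})$ is exactly what powers that last transfer. First I would symmetrise: set $T:=A^{1/2}BA^{1/2}$, a positive bounded operator. Since $AB-T=A^{1/2}(A^{1/2}B-BA^{1/2})=-A^{1/2}[B,A^{1/2}]$ with $A^{1/2}$ bounded and $[B,A^{1/2}]\in\mathcal{L}_{1}$, we get $AB-T\in\mathcal{L}_{1}$; hence $T=AB-(AB-T)\in\mathcal{L}_{1,\infty}$, and because every Dixmier trace vanishes on $\mathcal{L}_{1}$ we obtain $\mathrm{Tr}_{\omega}(AB)=\mathrm{Tr}_{\omega}(T)$ for every extended limit $\omega$. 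It therefore suffices to prove $\mathrm{Tr}_{\omega}(T)=\lim_{s\downarrow1}(s-1)\mathrm{Tr}(AB^{s})$.

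Next I would invoke the classical fact that for a positive operator $T\in\mathcal{L}_{1,\infty}$, if $\lim_{s\downarrow1}(s-1)\mathrm{Tr}(T^{s})$ exists and equals $L$, then $\mathrm{Tr}_{\omega}(T)=L$ for every extended limit $\omega$ --- the Dixmier-trace/$\zeta$-residue correspondence, proved by a Tauberian argument in which non-negativity of the eigenvalues supplies the Tauberian condition (see e.g.\ \cite[Chapters~8--9]{LSZVol1}). Thus the statement reduces to showing that the $\zeta$-residue of $A\cdot B^{s}$ coincides with that of $T^{s}$, i.e.
\[
\lim_{s\downarrow1}(s-1)\bigl(\mathrm{Tr}(AB^{s})-\mathrm{Tr}(T^{s})\bigr)=0.
\]
Using cyclicity of the trace --- legitimate for $s>1$ since $A^{1/2}B^{s}\in\mathcal{L}_{1}$ by the standing hypothesis --- one has $\mathrm{Tr}(AB^{s})=\mathrm{Tr}(A^{1/2}B^{s}A^{1/2})$, so this is the same as the $\mathcal{L}_{1}$-estimate
\[
\bigl\|A^{1/2}B^{s}A^{1/2}-(A^{1/2}BA^{1/2})^{s}\bigr\|_{1}=o\bigl((s-1)^{-1}\bigr),\qquad s\downarrow1.
\]

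The main obstacle is precisely this estimate, because $B^{s}$ and $(A^{1/2}BA^{1/2})^{s}$ involve $B$ in genuinely incompatible ways. I would attack it by writing $s=1+u$ with $0<u<1$, splitting $A^{1/2}B^{1+u}A^{1/2}=A^{1/2}B\cdot B^{u}A^{1/2}$ and $T^{1+u}=A^{1/2}BA^{1/2}\cdot T^{u}$, and then inserting the integral representation $x^{u}=\tfrac{\sin\pi u}{\pi}\int_{0}^{\infty}\lambda^{u-1}\tfrac{x}{x+\lambda}\,d\lambda$ ($x\ge0$) for $B^{u}$ and $T^{u}$. Expanding the resulting resolvent difference via $A^{1/2}(T+\lambda)^{-1}-(B+\lambda)^{-1}A^{1/2}=(B+\lambda)^{-1}(BA^{1/2}-A^{1/2}T)(T+\lambda)^{-1}$ and then rewriting $BA^{1/2}-A^{1/2}T$ so that the commutator $[B,A^{1/2}]\in\mathcal{L}_{1}$ is exposed, one obtains a "commutator contribution" whose $\mathcal{L}_{1}$-norm is $O(1)$: the $\lambda$-integral diverges only like $u^{-1}$, but the prefactor $\sin\pi u\sim\pi u$ absorbs it. The remaining "bulk" contribution carries no commutator and is instead dominated in trace norm, via Hölder, by quantities built from $\|A^{1/2}B^{s}A^{1/2}\|_{1}=\mathrm{Tr}(AB^{s})\le\|A^{1/2}\|_{\infty}\|A^{1/2}B^{s}\|_{1}=o((s-1)^{-2})$; paired again with the $\sin\pi u\sim\pi u$ gain this becomes $o((s-1)^{-1})$. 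Assembling the two contributions gives the displayed bound, and combined with the two reductions above this proves the theorem (and, if one wishes, Lemma~\ref{L: trace formula} rewrites the resulting limit in the $\chi_{[\varepsilon,\infty)}$-form). I expect the delicate bookkeeping in the split --- in particular verifying that the bulk term is genuinely $o((s-1)^{-1})$ rather than merely $O((s-1)^{-1})$, which is where the second-order growth hypothesis is indispensable --- to be the hard part.
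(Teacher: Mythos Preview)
The paper does not prove this theorem: it is quoted verbatim as \cite[Theorem~1.3.20]{LMSZVol2} and then invoked as a black box in Corollary~\ref{C:main_result}. So there is no in-paper proof to compare against; your proposal is effectively an attempt to reconstruct the argument from the cited monograph.

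Your overall architecture --- symmetrise to $T=A^{1/2}BA^{1/2}$, reduce $\mathrm{Tr}_\omega(AB)$ to $\mathrm{Tr}_\omega(T)$ via the trace-class commutator hypothesis, invoke the positive-operator $\zeta$-residue/Dixmier correspondence for $T$, and then transfer the residue from $\mathrm{Tr}(T^{s})$ to $\mathrm{Tr}(AB^{s})$ --- is exactly the natural route and is consistent with how such results are proved in \cite{LSZVol1,LMSZVol2}. The first two reductions are clean and correct as you wrote them.

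Where your sketch is genuinely incomplete is step~5. The resolvent identity you quote gives $BA^{1/2}-A^{1/2}T=(1-A)BA^{1/2}$, which is \emph{not} a commutator plus a trace-class remainder; your phrase ``rewriting \ldots\ so that the commutator is exposed'' glosses over the fact that this term does not split in the way you suggest. One can indeed push the argument through, but it requires a more careful bookkeeping: one typically iterates the resolvent identity or uses a different factorisation (for instance comparing $A^{1/2}B^{s}A^{1/2}$ with $A^{1/2}B^{s/2}\cdot A\cdot B^{s/2}A^{1/2}$ and then with $T^{s}$ in two steps), and it is in controlling the accumulated error by $\|A^{1/2}B^{s}\|_{1}\cdot O(s-1)=o((s-1)^{-1})$ that the growth hypothesis enters --- which you correctly identify as the crux. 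Your diagnosis of \emph{where} the difficulty lies is accurate; the displayed ``bulk'' estimate is the one that needs real work, and your current sketch does not yet supply it.
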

Hence, if $[B,A^{\frac12}] \in \mathcal{L}_1$ and $\|A^{\frac12}B^s\|_1 = o((s-1)^{-2}),$ then by Lemma~\ref{L: trace formula}
\[
    \mathrm{Tr}_{\omega}(AB) = \lim_{\varepsilon\to 0} \varepsilon\mathrm{Tr}(A\chi_{[\varepsilon,\infty)}(B))
\]
whenever the limit on the right exists.

Recall the Araki--Lieb--Thirring inequality~\cite{Kosaki1992}
\begin{equation}\label{alt_inequality}
    \|AB\|_{r,\infty}^r \leq e\|A^rB^r\|_{1,\infty},\quad r > 1
\end{equation}
and the numerical inequality
\begin{equation}\label{zeta_inequality}
    \|X\|_{\infty,1} = \sum_{k=0}^\infty \frac{\mu(k,X)}{k+1} \leq \|X\|_{q,\infty}\zeta(1+\frac1q),
\end{equation}
obtained by simply writing out the definitions. Here $\zeta$ is the Riemann zeta function, and it obeys $\zeta(1+\frac1q) \sim q$ as $q\to\infty.$

\begin{cor}\label{C:main_result}
    Let $A$ and $B$ be positive bounded linear operators such that
    \begin{enumerate}
        \item{}\label{cond1} $[A^{\frac12},B] \in \mathcal{L}_1,$
        \item{}\label{cond2} $A^{\frac14}B \in \mathcal{L}_{1,\infty}.$
    \end{enumerate}
    Then for every extended limit $\omega$ we have
    \[
        \mathrm{Tr}_{\omega}(AB) = \lim_{s\downarrow 1} (s-1)\mathrm{Tr}(AB^s)
    \]
    if the limit exists.
\end{cor}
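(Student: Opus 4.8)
The plan is to deduce Corollary~\ref{C:main_result} from Theorem~\ref{LSZ2ethm} (equivalently from the discussion immediately following it) by verifying its three hypotheses from conditions \eqref{cond1} and \eqref{cond2}. Theorem~\ref{LSZ2ethm} requires: (i) $[B,A^{\frac12}] \in \mathcal{L}_1$, (ii) $AB \in \mathcal{L}_{1,\infty}$, and (iii) $\|A^{\frac12}B^s\|_1 = o((s-1)^{-2})$ as $s\downarrow 1$. Hypothesis (i) is precisely condition \eqref{cond1}. So the work is entirely in establishing (ii) and (iii) from \eqref{cond2}, namely from $A^{\frac14}B \in \mathcal{L}_{1,\infty}$.

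For hypothesis (ii), I would write $AB = A^{\frac34}\cdot A^{\frac14}B$, note that $A^{\frac34}$ is bounded, and use the ideal property of $\mathcal{L}_{1,\infty}$ together with \eqref{cond2} to conclude $AB \in \mathcal{L}_{1,\infty}$. (One must be slightly careful that $A^{\frac14}B \in \mathcal{L}_{1,\infty}$ and $A^{\frac34}$ bounded gives $A^{\frac34}(A^{\frac14}B) = AB \in \mathcal{L}_{1,\infty}$, which is immediate since $\mathcal{L}_{1,\infty}$ is a two-sided ideal.)

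For hypothesis (iii), the key step is a singular value estimate for $A^{\frac12}B^s$. The natural route is the Araki--Lieb--Thirring inequality \eqref{alt_inequality}: applying it with $r = 2$ (or more precisely interpolating, since one wants an $\mathcal{L}_1$-bound rather than an $\mathcal{L}_{2,\infty}$-bound) gives control of a power of $A^{\frac12}B^s$ by $A^{\frac14}B^{\frac{s}{2}}$ in a weak ideal. More concretely, I would argue: since $A^{\frac14}B \in \mathcal{L}_{1,\infty}$, for $s$ near $1$ the operator $A^{\frac14}B^{s}$ lies in $\mathcal{L}_{q,\infty}$ for a $q$ slightly bigger than $\tfrac12$, with quasi-norm controlled uniformly; then \eqref{alt_inequality} upgrades this to $A^{\frac12}B^{2s} \in \mathcal{L}_{q',\infty}$ type bounds, and finally the numerical inequality \eqref{zeta_inequality} converts a $\|\cdot\|_{q,\infty}$ bound into an $\|\cdot\|_{\infty,1}$ bound with constant $\zeta(1+\tfrac1q) \sim q$. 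The parameter $q$ will be tied to $s$ (e.g. $q \sim (s-1)^{-1}$), so that the blow-up of the zeta constant and the decay coming from raising $B$ to a higher power combine to give the required $o((s-1)^{-2})$ rate. I would choose the relation between $q$ and $s$ to optimise this trade-off, mirroring the scaling arguments already present in the analogous Euclidean results in~\cite{AMSZ}.

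The main obstacle I anticipate is bookkeeping the exponents and the $s\downarrow 1$ asymptotics in step (iii): one needs to track precisely how $\|A^{\frac12}B^s\|_1$ is bounded by (a constant times) $\zeta(1+\tfrac1q)$ times a quasi-norm of a power of $A^{\frac14}B$, and verify that the product is genuinely $o((s-1)^{-2})$ rather than merely $O((s-1)^{-2})$. This requires using that raising $B$ to a power strictly larger than what is needed gives an extra decay factor that beats the logarithmic/polynomial growth of the zeta constant; the strictness of the $o$ (as opposed to $O$) is what forces a little care, and is exactly the point where the flexibility in choosing $q$ as a function of $s$ is used. Once the estimate $\|A^{\frac12}B^s\|_1 = o((s-1)^{-2})$ is in hand, Theorem~\ref{LSZ2ethm} applies directly and yields $\mathrm{Tr}_\omega(AB) = \lim_{s\downarrow 1}(s-1)\mathrm{Tr}(AB^s)$ whenever the right-hand limit exists, completing the proof.
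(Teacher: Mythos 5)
Your overall frame is right: reduce to Theorem~\ref{LSZ2ethm}, note that hypothesis (i) is condition (1) verbatim, and obtain $AB\in\mathcal{L}_{1,\infty}$ from $AB=A^{\frac34}(A^{\frac14}B)$ and the ideal property (the paper leaves this last point implicit, so making it explicit is fine). The gap is in your step (iii). First, your use of the Araki--Lieb--Thirring inequality runs in the wrong direction: \eqref{alt_inequality} bounds $\|TS\|_{r,\infty}^r$ by $\|T^rS^r\|_{1,\infty}$, i.e.\ it controls the \emph{low}-power product by the \emph{high}-power product. So knowing $A^{\frac14}B\in\mathcal{L}_{1,\infty}$ gives no membership of $A^{\frac14}B^{s}$ in any $\mathcal{L}_{q,\infty}$ with $q<1$, and no bound on $A^{\frac12}B^{2s}$; from condition (2) alone one only gets $A^{\frac12}B^s=A^{\frac14}(A^{\frac14}B)B^{s-1}\in\mathcal{L}_{1,\infty}$. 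Second, and more fundamentally, your plan for (iii) never uses the commutator hypothesis, but it must: condition (2) by itself does not even imply $A^{\frac12}B^s\in\mathcal{L}_1$. For instance, take $\{f_n\}\cup\{g_n\}$ orthonormal, $A$ the projection onto $\operatorname{span}\{f_n\}$, and $B$ the projection onto $\operatorname{span}\{e_n\}$ with $e_n=\tfrac1n f_n+(1-\tfrac1{n^2})^{\frac12}g_n$; then $\mu(n,A^{\frac14}B)=\tfrac1n$ so (2) holds, while $B^s=B$ and $\mu(n,A^{\frac12}B^s)=\tfrac1n$, so $A^{\frac12}B^s\notin\mathcal{L}_1$ for every $s$ (consistently, $[A^{\frac12},B]\notin\mathcal{L}_1$ here). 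So the real obstacle in (iii) is non-commutativity, not exponent bookkeeping.

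The paper's proof inserts condition (1) exactly at this point: write $A^{\frac12}B^s=[A^{\frac12},B]B^{s-1}+(BA^{\frac14})(A^{\frac14}B^{s-1})$. The first term has trace norm at most $\|[A^{\frac12},B]\|_1\|B\|_\infty^{s-1}$. For the second, use the H\"older-type bound $\|TS\|_1\leq 2\|T\|_{1,\infty}\|S\|_{\infty,1}$ with $T=BA^{\frac14}\in\mathcal{L}_{1,\infty}$ (the adjoint of $A^{\frac14}B$) and $S=A^{\frac14}B^{s-1}$; by \eqref{zeta_inequality}, $\|A^{\frac14}B^{s-1}\|_{\infty,1}\lesssim (s-1)^{-1}\|A^{\frac14}B^{s-1}\|_{\frac1{s-1},\infty}$, and the latter quasi-norm is bounded uniformly in $1<s<2$ by \eqref{alt_inequality} applied with $r=\frac1{s-1}>1$ in the \emph{correct} direction, since the high-power product is $A^{\frac{1}{4(s-1)}}B=A^{\frac{2-s}{4(s-1)}}(A^{\frac14}B)$, which condition (2) controls. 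Altogether $\|A^{\frac12}B^s\|_1=O((s-1)^{-1})=o((s-1)^{-2})$, verifying the remaining hypothesis of Theorem~\ref{LSZ2ethm}; no delicate $o$-versus-$O$ optimisation is needed. Without the commutator splitting, your step (iii) cannot be completed.
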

\begin{proof}
    Let $1<s<2.$ We have
    \[
        \|A^{\frac12}B^s\|_1 \leq \|[A^{\frac12},B]B^{s-1}\|_1+\|BA^{\frac12}B^{s-1}\|_1 \leq \|[A^{\frac12},B]\|_1\|B\|_{\infty}^{s-1}+\|BA^{\frac14}\|_{1,\infty}\|A^{\frac14}B^{s-1}\|_{\infty,1},
    \]
    where we have used the inequality
\[
\|TS\|_{1} \leq   2\|T\|_{1,\infty} \|S\|_{\infty,1}, \quad T \in \mathcal{L}_{1,\infty}, S \in \mathcal{L}_{\infty, 1},
\]
which can easily be checked via the definitions of these quasi-norms and the inequality $\mu(2k,TS)\leq \mu(k,T)\mu(k,S).$

    By the numerical inequality \eqref{zeta_inequality} above
    \[
        \|A^{\frac14}B^{s-1}\|_{\infty,1} \lesssim (s-1)^{-1}\|A^{\frac14}B^{s-1}\|_{\frac{1}{s-1},\infty}.
    \]
    Since $s<2,$ we have $\frac{1}{s-1}>1,$ and hence the Araki--Lieb--Thirring inequality \eqref{alt_inequality} delivers
    \[
        \|A^{\frac14}B^{s-1}\|_{\frac{1}{s-1},\infty}^{\frac{1}{s-1}} \leq e\|A^{\frac{1}{4(s-1)}}B\|_{1,\infty}\leq e\|A^{\frac14}\|_{\infty}^{\frac{2-s}{s-1}}\|A^{\frac14}B\|_{1,\infty}.
    \]
    This verifies the assumptions of Theorem \ref{LSZ2ethm}.
\end{proof}

In our case, we have an operator $P$ which is self-adjoint and lower-bounded, which means $\exp(-tP)$ is positive and bounded for all $t>0$, and we take $B=W$. Then the assumptions become
\begin{equation}\label{eq:original_conditions}
    [\exp(-tP),W] \in \mathcal{L}_1,\; \exp(-tP)W \in \mathcal{L}_{1,\infty}
\end{equation}
for every $t>0.$

The former condition can be modified to one which is easier to verify in geometric examples.
\begin{lem}[Duhamel's formula]\label{Duhamel formula}
Let $P$ be a lower-bounded self-adjoint operator on a Hilbert space $\Hc$, and let $W$ be a bounded operator. Then
\[
[\exp(-tP),W] = -\int_0^t \exp(-sP)[P,W]\exp(-(t-s)P)\,ds.
\]
\begin{proof}
The method of proof is standard, see for example~\cite[Lemma~5.2]{ACDS}. Define $F: [0,t] \to B(\Hc)$ by $F(s) = \exp(-sP)W\exp(-(t-s)P)$. Since $P$ is lower-bounded, $\exp(-sP)$ is bounded for all $s \in [0,t]$. Hence the derivative of $F(s)$ in the strong operator topology is
\begin{align*}
F'(s) &=  -P\exp(-sP)W\exp(-(t-s)P) +  \exp(-sP)W P \exp(-(t-s)P) \\
&= -\exp(-sP)[P,W]\exp(-(t-s)P),
\end{align*} in the sense that 
\[
\lim_{h\to 0} \frac{1}{h}(F(s+h)-F(s))\xi = F'(s)\xi, \quad \xi \in \Hc.
\]
Therefore, by the fundamental theorem of calculus for Banach space-valued functions, we can conclude that for $\xi \in \Hc$, \begin{align*}
[\exp(-tP),W]\xi &= (F(t)-F(0))\xi\\
&= \int_{0}^t F'(s) \xi\, ds\\
&= -\int_0^t \exp(-sP)[P,W]\exp(-(t-s)P) \xi\, ds.\qedhere
\end{align*}
\end{proof}
\end{lem}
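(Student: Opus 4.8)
\textbf{Proof plan for Lemma~\ref{Duhamel formula} (Duhamel's formula).}

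The plan is to follow the standard strategy: fix the endpoints of the conjugation and differentiate the interpolating family, then integrate. First I would define, for a vector $\xi \in \Hc$ and a fixed $t > 0$, the $\Hc$-valued function $s \mapsto F(s)\xi := \exp(-sP)W\exp(-(t-s)P)\xi$ on $[0,t]$. Since $P$ is lower-bounded, each $\exp(-sP)$ with $s \geq 0$ is a bounded operator (with norm $\leq e^{-s\lambda_0}$ where $\lambda_0$ is a lower bound for $P$), so $F(s)$ is a well-defined bounded operator for every $s \in [0,t]$, and $F(0)\xi = W\exp(-tP)\xi$, $F(t)\xi = \exp(-tP)W\xi$, so that $F(t) - F(0) = [\exp(-tP), W]$.

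Next I would compute the derivative of $s \mapsto F(s)\xi$ in the norm of $\Hc$. The key analytic input is that for $\eta \in \dom(P)$ the map $s \mapsto \exp(-sP)\eta$ is norm-differentiable with derivative $-P\exp(-sP)\eta = -\exp(-sP)P\eta$, valid for $s > 0$ (and one-sidedly at $s = 0$); for general $\eta \in \Hc$ one still has strong differentiability of $\exp(-sP)$ on $(0,t)$ by the spectral theorem and dominated convergence, using boundedness of $\exp(-sP)$ near the relevant $s$. Applying the product rule to the two factors $\exp(-sP)$ and $\exp(-(t-s)P)$ sandwiching the bounded operator $W$, one obtains
\[
\frac{d}{ds}F(s)\xi = -P\exp(-sP)W\exp(-(t-s)P)\xi + \exp(-sP)WP\exp(-(t-s)P)\xi = -\exp(-sP)[P,W]\exp(-(t-s)P)\xi,
\]
where the rearrangement $-P\exp(-sP)W\cdots = -\exp(-sP)PW\cdots$ uses that $\exp(-sP)$ commutes with $P$. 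Note $[P,W]$ is a priori only defined on $\dom(P)$, but since $\exp(-(t-s)P)\xi \in \dom(P)$ for $s < t$ the expression makes sense; the hypotheses of the geometric application (where $[P,W]$ extends boundedly) will be what makes the integrand a genuine bounded-operator-valued function, but at the level of this lemma it suffices to interpret things pointwise on $\Hc$.

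Finally, since $s \mapsto \frac{d}{ds}F(s)\xi$ is continuous on $(0,t)$ and $F(s)\xi$ extends continuously to $[0,t]$, the fundamental theorem of calculus for $\Hc$-valued functions (or equivalently, a weak form tested against arbitrary $\eta \in \Hc$, then upgraded) gives
\[
[\exp(-tP),W]\xi = F(t)\xi - F(0)\xi = \int_0^t \frac{d}{ds}F(s)\xi\, ds = -\int_0^t \exp(-sP)[P,W]\exp(-(t-s)P)\xi\, ds,
\]
which is the claimed identity, the integral being interpreted as a Bochner integral of an $\Hc$-valued (or $B(\Hc)$-valued, under the extra boundedness hypothesis) function. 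The main obstacle, and the only point requiring care, is justifying the norm-differentiability of $s \mapsto \exp(-sP)\eta$ at and near the endpoints and handling the possibly-unbounded $[P,W]$; this is dispatched by using the lower bound on $P$ to control $\|\exp(-sP)\|$ uniformly on compact subintervals and by always keeping a factor $\exp(-(t-s)P)$ or $\exp(-sP)$ next to $P$ so that one works with vectors in $\dom(P)$. The cited reference~\cite[Lemma~5.2]{ACDS} handles exactly this type of argument and can be invoked for the routine estimates.
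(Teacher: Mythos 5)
Your proposal is correct and follows essentially the same route as the paper: define $F(s)=\exp(-sP)W\exp(-(t-s)P)$, differentiate in the strong sense to get $-\exp(-sP)[P,W]\exp(-(t-s)P)$, and apply the fundamental theorem of calculus for Hilbert-space-valued functions, invoking the same reference for the routine semigroup estimates. Your extra care about domains (keeping a semigroup factor next to $P$ and noting $\exp(-(t-s)P)\xi\in\dom(P)$ for $s<t$) only makes explicit what the paper's proof leaves implicit.
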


\begin{lem}
    Let $P$ be a self-adjoint lower bounded linear operator
    and let $W$ be a bounded self-adjoint operator such that $[P,W]$ makes sense, and
    \[
        \exp(-tP)[P,W] \in \mathcal{L}_1,\quad t > 0.
    \]
    Then
    \[
        [\exp(-tP),W]\in \mathcal{L}_1,\quad t > 0.
    \]
\end{lem}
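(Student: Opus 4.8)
The plan is to use Duhamel's formula (Lemma~\ref{Duhamel formula}) to express the commutator $[\exp(-tP), W]$ in terms of the assumed trace-class operator $\exp(-sP)[P,W]$, and then to control the trace-class norm by splitting the time integral and using the semigroup property together with the fact that $\exp(-rP)$ is bounded for $r \geq 0$ (since $P$ is lower-bounded). First I would write, for fixed $t>0$,
\[
[\exp(-tP),W] = -\int_0^t \exp(-sP)[P,W]\exp(-(t-s)P)\,ds,
\]
interpreted as a Bochner integral once I check the integrand is $\mathcal{L}_1$-valued and norm-integrable on $[0,t]$.

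The key estimate is to bound $\|\exp(-sP)[P,W]\exp(-(t-s)P)\|_1$. For $s \in (0,t)$, I would write $\exp(-sP) = \exp(-\frac{s}{2}P)\exp(-\frac{s}{2}P)$ and move one half-semigroup factor onto $[P,W]$: by the ideal property of $\mathcal{L}_1$,
\[
\|\exp(-sP)[P,W]\exp(-(t-s)P)\|_1 \leq \|\exp(-\tfrac{s}{2}P)\|_\infty \cdot \|\exp(-\tfrac{s}{2}P)[P,W]\|_1 \cdot \|\exp(-(t-s)P)\|_\infty.
\]
Since $P$ is lower-bounded, say $P \geq -c$, we have $\|\exp(-rP)\|_\infty \leq e^{cr} \leq e^{ct}$ for $r \in [0,t]$, so these operator-norm factors are bounded uniformly in $s \in [0,t]$. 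It remains to see that $s \mapsto \|\exp(-\frac{s}{2}P)[P,W]\|_1$ is integrable near $s=0$. This is where the hypothesis $\exp(-uP)[P,W] \in \mathcal{L}_1$ for \emph{every} $u>0$ is used: the function $u \mapsto \|\exp(-uP)[P,W]\|_1$ is finite for each $u>0$ and is non-increasing in $u$ (because $\|\exp(-\varepsilon P)X\|_1 \leq \|\exp(-\varepsilon P)\exp(\varepsilon' P)\|_\infty \|\exp(-\varepsilon' P) X\|_1$ does not immediately give monotonicity — rather, for $u_1 \leq u_2$, $\exp(-u_2 P)[P,W] = \exp(-(u_2-u_1)P)\exp(-u_1 P)[P,W]$ need not be norm-decreasing). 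The cleaner route is: fix any $\tau \in (0,t)$; on $[\tau, t]$ the integrand norm is bounded because $u \mapsto \exp(-uP)[P,W]$ is $\mathcal{L}_1$-continuous on the compact interval $[\tau/2, t]$ (strong continuity of the semigroup plus the ideal property, giving continuity in $\mathcal{L}_1$-norm away from $0$), hence bounded there; on $[0,\tau]$ I would instead not split at $s/2$ but write $\exp(-sP)[P,W] = \exp(-sP)\exp(\tau P)\cdot\exp(-\tau P)[P,W]$, so that
\[
\|\exp(-sP)[P,W]\exp(-(t-s)P)\|_1 \leq \|\exp(-sP)\exp(\tau P)\|_\infty \|\exp(-\tau P)[P,W]\|_1 \|\exp(-(t-s)P)\|_\infty,
\]
and $\|\exp(-sP)\exp(\tau P)\|_\infty = \|\exp((\tau - s)P)\|_\infty \leq e^{c(\tau-s)} \leq e^{c\tau}$ for $s \in [0,\tau]$ (using again $P \geq -c$ and that $\tau - s \in [0,\tau]$). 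Thus the integrand is uniformly $\mathcal{L}_1$-bounded on all of $[0,t]$ by a constant times $\|\exp(-\tau P)[P,W]\|_1 < \infty$, hence Bochner integrable, and the integral lies in $\mathcal{L}_1$.

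Therefore $[\exp(-tP), W] \in \mathcal{L}_1$ for every $t>0$, which is the claim. The main obstacle is the behaviour of the integrand as $s \to 0^+$ (and symmetrically $s \to t^-$, though there it is the \emph{right} factor $\exp(-(t-s)P)$ that degenerates, but that one stays bounded in operator norm, so no problem): the resolution is to absorb a harmless $\exp(\pm\tau P)$ factor to trade the singular endpoint for the fixed, finite quantity $\|\exp(-\tau P)[P,W]\|_1$. One should also remark that $[\exp(-tP),W]$ is \emph{a priori} well-defined as a bounded operator (both $\exp(-tP)$ and $W$ are bounded), and that Duhamel's formula holds in the strong operator topology as in Lemma~\ref{Duhamel formula}; the argument above upgrades the representation to an identity of $\mathcal{L}_1$-valued Bochner integrals, from which the trace-class membership follows.
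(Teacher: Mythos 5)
Your overall strategy (Duhamel's formula, then split the time integral and absorb a fixed semigroup factor into the trace-class hypothesis) is exactly the paper's, but the step you use to handle the region $s\in[0,\tau]$ is wrong. You write $\exp(-sP)[P,W]=\exp(-sP)\exp(\tau P)\cdot\exp(-\tau P)[P,W]$ and claim $\|\exp(-sP)\exp(\tau P)\|_\infty=\|\exp((\tau-s)P)\|_\infty\leq e^{c(\tau-s)}$ from $P\geq -c$. This has the sign backwards: a lower bound on $P$ controls $\exp(-rP)$ for $r\geq 0$, not $\exp(+rP)$. Since $\tau-s\geq 0$ on this region and $P$ is in general unbounded \emph{above} (e.g.\ any Schr\"odinger-type operator, which is the intended application), $\exp((\tau-s)P)$ is an unbounded operator and the estimate fails; indeed the factorisation itself already involves the unbounded operator $\exp(\tau P)$. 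So the near-$s=0$ part of the integral — which is the only genuinely delicate part, since there the left factor $\exp(-sP)$ provides no smoothing — is not controlled by your argument.

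The missing idea is that for small $s$ the smoothing must be borrowed from the \emph{right} exponential, and for that you need the right-handed version of the hypothesis: since $P$ and $W$ are self-adjoint, $[P,W]^*=-[P,W]$, so $\bigl(\exp(-\delta P)[P,W]\bigr)^*=-[P,W]\exp(-\delta P)$, and hence $[P,W]\exp(-\delta P)\in\mathcal{L}_1$ for every $\delta>0$ because $\mathcal{L}_1$ is closed under adjoints. With this, for $0\leq s\leq t/2$ write $\exp(-(t-s)P)=\exp(-\tfrac t2 P)\exp(-(\tfrac t2 -s)P)$ and estimate
\[
\|\exp(-sP)[P,W]\exp(-(t-s)P)\|_1\leq \|\exp(-sP)\|_\infty\,\bigl\|[P,W]\exp(-\tfrac t2 P)\bigr\|_1\,\|\exp(-(\tfrac t2-s)P)\|_\infty,
\]
where both operator-norm factors are bounded by $e^{ct}$ since the exponents $s$ and $\tfrac t2-s$ are nonnegative; for $t/2\leq s\leq t$ one uses $\exp(-\tfrac t2 P)[P,W]\in\mathcal{L}_1$ directly, as you do on $[\tau,t]$. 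This is precisely the paper's proof (split at $t/2$, absorb $\exp(-\tfrac t2 P)$ on the appropriate side), and with this correction your argument goes through; the remainder of your write-up (validity of Duhamel's formula, Bochner integrability once the integrand is uniformly $\mathcal{L}_1$-bounded, boundedness on the region away from $s=0$) is fine.
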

\begin{proof}
    By the Duhamel formula (Lemma~\ref{Duhamel formula}),
    \begin{align*}
        [\exp(-tP),W] &= - \int_0^t \exp(-sP)[P,W]\exp(-(t-s)P)\,ds\\
                      &= -\int_0^{\frac{t}{2}}\exp(-sP)[P,W]\exp(-(t-s)P)\,ds\\
                      &\quad -\int_{\frac{t}{2}}^t \exp(-sP)[P,W]\exp(-(t-s)P)\,ds.
    \end{align*}
    For $0<s<\frac{t}{2},$ we have
    \[
        \|\exp(-sP)[P,W]\exp(-(t-s)P)\|_1 \leq \|[P,W]\exp(-\frac{t}{2}P)\|_1
    \]
    while for $\frac{t}{2}<s<t,$
    \[
        \|\exp(-sP)[P,W]\exp(-(t-s)P)\|_1 \leq \|\exp(-\frac{t}{2})[P,W]\|_1.
    \]
    Hence, the triangle inequality for weak integrals we have
    \[
        \|[\exp(-tP),W]\|_1 \leq t\|[P,W]\|_1.\qedhere
    \]
\end{proof}

Combining the results of this section completes the proof of Theorem~\ref{MAIN_THEOREM}.

\section{Manifolds of bounded geometry}\label{S:Proof_Manifolds}
Let us now shift our attention to the case where we have a Riemannian manifold of bounded geometry $M$ and a self-adjoint, lower-bounded operator $P \in EBD^2(M)$.

Estimates of the form
\[
    \|M_fg(-i\nabla)\|_{p,\infty} \leq c_p\|f\|_p\|g\|_{p,\infty}
\]
for $p>2$ are sometimes called Cwikel-type estimates. Here, $f$ and $g$ are function on $\mathbb{R}^d,$
see \cite[Chapter 4]{Simon2005}. Similar estimates with $p<2$ were obtained earlier by Birman and Solomyak \cite{BirmanSolomyak1969}.
In particular, it follows from \cite[Theorem 4.5]{Simon2005} that if $f$ is a measurable function on $\mathbb{R}^d$ then for every $t>0$ and $0<p<2$ that we have
\[
    \|M_fe^{t\Delta}\|_{p} \lesssim_{t,p} \left(\sum_{k\in \mathbb{Z}^d} \|f\|_{L_\infty(k+[0,1)^d)}^p\right)^{\frac1p}
\]
and similarly
\[
    \|M_fe^{t\Delta}\|_{p,\infty} \lesssim_{t,p} \left\|\{\|f\|_{L_{\infty}(k+[0,1)^d)}\right\|_{p,\infty}
\]
We seek similar estimates for functions $f$ on manifolds of bounded geometry. In place of the decomposition of $\mathbb{R}^d$ into cubes, $\mathbb{R}^d = \bigcup_{k\in \mathbb{Z}}^d [0,1)^d+k,$ we will use the partition of unity constructed according to Section~\ref{S: Manifolds}.

Namely, we will show that for $0<p<2$, we have $\exp(-tP)M_f \in \mathcal{L}_{p,\infty}$ whenever $f\in \ell_{p,\infty}(L_\infty)$, and $\exp(-tP)M_f \in \mathcal{L}_{p}$ whenever $f\in \ell_{p}(L_\infty),$
where $\ell_{p,\infty}(L_\infty)$ and $\ell_p(L_{\infty})$ are certain function spaces on $M.$ 

\begin{defn}
    Let $\{x_j\}_{j=0}^\infty$ and $r_0$ be as in Section \ref{S: Manifolds}. Given a bounded measurable function $f$ on $M,$ define
    \[
    \|f\|_{\ell_{p,\infty}(L_\infty)} := \left\| \{\|f\|_{L_{\infty}(B(x_j,r_0))}\}_{j=0}^\infty\right\|_{\ell_{p,\infty}}.
    \]
    and
    \[
    \|f\|_{\ell_{p}(L_\infty)} := \left\| \{\|f\|_{L_{\infty}(B(x_j,r_0))}\}_{j=0}^\infty\right\|_{\ell_{p}}.
    \]
\end{defn}

Let $B$ be an open ball in $\mathbb{R}^d$ such that at every point $x\in M$ we have a normal coordinate system $\exp_x \circ e: B \to B(x, r_0)$, where $e$ is the identification of $\mathbb{R}^d$ with the tangent space $T_xM$ and $\exp_x$ is the Riemannian exponential map~\cite[Proposition~1.2]{Kordyukov1991}. Via these maps we can, for each $x\in M$, pullback the metric $g$ restricted to $B(x, r_0)$ to a metric $g^x$ on $B$. 

\begin{defn}\label{d:Riem}
A Riemannian metric $g$ on $B$ can be represented uniquely by the $d^2$ smooth functions
\[
g_{ij}:= g(\partial_i, \partial_j) \in C^\infty(B).
\]
Define $\mathrm{Riem}_b(B)$ as the set of Riemannian metrics for which the functions $g_{ij}$ extend to smooth functions in $C^\infty(\overline{B})$. We equip $\mathrm{Riem}_b(B)$ with the topology induced by the embedding $\mathrm{Riem}_b(B) \subseteq (C^\infty(\overline{B}))^{d^2}$, where we we take the usual topology on $C^\infty(\overline{B})$ defined by the seminorms
\[
p_N(f) :=  \max_{x\in \overline{B}} \{|\partial^\alpha f(x)|: |\alpha|\leq N \}.
\]
\end{defn}

The following is essentially \cite{Eichhorn1991}, see in particular the discussion below \cite[Theorem~A]{Eichhorn1991}. See also the related statement \cite[Proposition~2.4]{Roe1988a}.
\begin{prop}\label{p:bounded_metric}
Let $(M,g)$ be a manifold of bounded geometry. The functions $g^x_{ij}$ considered as a family of smooth functions parametrized by $i,j$ and by a point $x\in M$, can be extended to $\overline{B}$ and then lie in a bounded subset of $C^\infty(\overline{B}).$
\end{prop}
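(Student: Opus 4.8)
The goal is Proposition~\ref{p:bounded_metric}: showing that the pulled-back metric components $g^x_{ij}$, viewed as a family of smooth functions on $B$ parametrized by $i,j$ and $x \in M$, extend to $\overline B$ and form a bounded subset of $C^\infty(\overline B)$. I would frame the proof as an application of the standard structure theory of manifolds of bounded geometry, reducing everything to uniform bounds on the metric tensor and its covariant derivatives in normal coordinates.

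The plan is as follows. First I would recall (citing \cite{Eichhorn1991, Roe1988a, Kordyukov1991}) the basic fact that bounded geometry, through the bounds on the curvature tensor $R$ and all its covariant derivatives together with the lower bound $i_g > 0$ on the injectivity radius, translates into uniform estimates in normal coordinates: there is a radius $r_0 \leq 1$ such that for every $x \in M$ the exponential map $\exp_x \circ e : B \to B(x,r_0)$ is a diffeomorphism, and in these coordinates there exist constants $C_\alpha$ (independent of $x$) with $|\partial^\alpha g^x_{ij}(y)| \leq C_\alpha$ for all $y \in B$ and all multi-indices $\alpha$, together with a uniform lower ellipticity bound $g^x \geq c\,\mathrm{Id}$ with $c>0$ independent of $x$. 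This is precisely the content of the "bounded geometry implies bounded coordinates" statements; the derivation uses the Jacobi equation to control $\exp_x$ and its derivatives in terms of curvature, exactly the route taken in \cite{Eichhorn1991}. Second, I would invoke that these uniform bounds, which a priori hold on the open ball $B$, extend by continuity (uniform $C^\infty$ bounds plus the mean value theorem give equicontinuity of all derivatives on compact subsets, hence uniform extendability to the closure) so that each $g^x_{ij} \in C^\infty(\overline B)$ with $p_N(g^x_{ij}) \leq C_N'$ uniformly in $x$. Third, since $\mathrm{Riem}_b(B)$ carries the topology induced by the embedding into $(C^\infty(\overline B))^{d^2}$ with the seminorms $p_N$, a family with uniformly bounded $p_N$-seminorms for every $N$ is by definition a bounded subset; this yields the claim.

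Concretely the key steps in order: (1) state the reduction — it suffices to bound $|\partial^\alpha g^x_{ij}|$ uniformly in $x$ over $B$; (2) recall/cite that this uniformity is exactly what bounded geometry provides via the Jacobi-field analysis of $\exp_x$ (this is the analytic heart and is done in \cite{Eichhorn1991}); (3) pass from open $B$ to $\overline B$ using that uniform bounds on all derivatives up to order $N+1$ give a uniform modulus of continuity for derivatives up to order $N$, hence continuous extension with the same bound; (4) conclude boundedness in $\mathrm{Riem}_b(B)$ directly from Definition~\ref{d:Riem} of its topology. I would keep step (2) as a citation rather than reproducing the Jacobi-equation estimates, since that is classical and already referenced.

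The main obstacle, and the only place real care is needed, is the precise bookkeeping in step (2)–(3): one must be sure that the constants $C_\alpha$ really are independent of the base point $x$ and not merely locally finite. This hinges on the fact that the curvature bounds in Definition~\ref{D:bounded_geometry} are \emph{uniform} over all of $M$ and that $i_g > 0$ is a single positive number, so that the comparison arguments (Rauch comparison / Jacobi field estimates) can be run with $x$-independent data. Since \cite{Eichhorn1991} and the discussion around \cite[Theorem~A]{Eichhorn1991} carry out exactly this uniform analysis, the honest thing is to cite it; the remaining work in this proposition is the essentially formal observation that "uniform $C^\infty$ bounds on an open ball" unpacks to "bounded subset of $C^\infty(\overline B)$" in the topology fixed by Definition~\ref{d:Riem}. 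I would therefore write the proof as a short paragraph making this unpacking explicit and pointing to \cite{Eichhorn1991, Roe1988a} for the substantive estimate.
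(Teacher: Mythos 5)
Your proposal is correct and matches the paper's treatment: the paper gives no proof of its own but simply attributes the statement to \cite{Eichhorn1991} (discussion below Theorem~A) and \cite[Proposition~2.4]{Roe1988a}, which is exactly the citation you make for the analytic core (the uniform Jacobi-field/normal-coordinate estimates). Your additional remarks on extending the uniform bounds from $B$ to $\overline{B}$ and unpacking boundedness in the topology of Definition~\ref{d:Riem} are routine and consistent with what the paper leaves implicit.
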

In principle the identification $e$ of $\mathbb{R}^d$ with the tangent space $T_xM$ can vary from point to point, and therefore
the matrix elements $g^x_{ij}$ are not uniquely defined. However this does not change the fact that for any given identification, the result of Proposition \ref{p:bounded_metric} holds.

\begin{cor}\label{c:compactmetrics}
Let $(M, g)$ be a manifold with bounded geometry, and let $\{B(x_j, r_0)\}_{j \in \mathbb{N}}$ be an open cover of $M$ as in Section~\ref{S: Manifolds}. The set
\[
\{ g^{x_j} : j \in \mathbb{N}\} \subset \mathrm{Riem}_b(B)
\]
is a pre-compact subset of $C^\infty(\overline{B}).$
\end{cor}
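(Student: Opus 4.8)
The plan is to deduce the statement from Proposition~\ref{p:bounded_metric} together with the standard fact that $C^\infty(\overline B)$ is a Fréchet space in which bounded subsets are relatively compact (a Montel space). Concretely, Proposition~\ref{p:bounded_metric} provides, for each pair of indices $i,j$, a bounded set $S_{ij}\subseteq C^\infty(\overline B)$ containing every function $g^{x_k}_{ij}$, $k\in\mathbb{N}$; equivalently, $\{g^{x_k}:k\in\mathbb{N}\}$ is a bounded subset of $(C^\infty(\overline B))^{d^2}$ in the product topology used to topologise $\mathrm{Riem}_b(B)$ in Definition~\ref{d:Riem}. It therefore suffices to show that a bounded subset of $C^\infty(\overline B)$ has compact closure, since a finite product of pre-compact sets is pre-compact.

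First I would recall (or reprove) that $C^\infty(\overline B)$ has this property. Since $\overline B$ is a compact subset of $\mathbb{R}^d$, a self-contained argument runs as follows. Let $S$ be a set which is bounded with respect to every seminorm $p_N(f)=\max_{|\alpha|\le N}\max_{x\in\overline B}|\partial^\alpha f(x)|$. Fix $N$. The bound on $p_{N+1}$ shows that the family $\{\partial^\alpha f : f\in S,\ |\alpha|\le N\}$ is uniformly bounded and, via the mean value theorem applied to its uniformly bounded first derivatives, uniformly equicontinuous on $\overline B$. By the Arzelà--Ascoli theorem, $S$ is relatively compact in the Banach space $C^N(\overline B)$. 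A diagonal argument over $N\in\mathbb{N}$ then shows that any sequence in $S$ has a subsequence converging in every $C^N(\overline B)$, hence in $C^\infty(\overline B)$; as the latter is metrisable, $\overline S$ is compact.

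Applying this with $S=S_{ij}$ for each $i,j$ and taking the product over the $d^2$ index pairs, the closure of $\{g^{x_k}:k\in\mathbb{N}\}$ in $(C^\infty(\overline B))^{d^2}$ is compact. Finally I would check that this closure in fact lies inside $\mathrm{Riem}_b(B)$: bounded geometry yields uniform ellipticity constants $0<c\le C$ with $c\,|\xi|^2\le g^x(\xi,\xi)\le C\,|\xi|^2$ for all $x\in M$ and $\xi\in\mathbb{R}^d$, and since $C^0$-convergence of the matrix entries preserves such two-sided bounds pointwise, every limit point is again a (smooth, by the $C^\infty$-convergence) Riemannian metric. This is precisely the asserted pre-compactness of $\{g^{x_j}:j\in\mathbb{N}\}$.

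The argument is essentially soft: the substantive input — the uniform bounds on the normal-coordinate metric coefficients — is exactly the content of the cited Proposition~\ref{p:bounded_metric}, which rests in turn on the bounded geometry hypothesis. The only point requiring a little care is the passage from boundedness to relative compactness, i.e. the Montel property of $C^\infty(\overline B)$; with $\overline B$ compact this is no more than Arzelà--Ascoli plus a diagonal argument, so I do not expect a genuine obstacle here.
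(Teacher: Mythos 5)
Your proposal is correct and follows essentially the same route as the paper: boundedness of the family $\{g^{x_j}_{ij}\}$ in $C^\infty(\overline{B})$ from Proposition~\ref{p:bounded_metric}, combined with the Heine--Borel (Montel) property of $C^\infty(\overline{B})$, which the paper simply cites while you reprove it via Arzel\`a--Ascoli and a diagonal argument. Your final verification that limit points remain positive-definite metrics is harmless but not needed, since the corollary only asserts pre-compactness in $C^\infty(\overline{B})$, not that the closure stays inside $\mathrm{Riem}_b(B)$.
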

\begin{proof}
By Proposition~\ref{p:bounded_metric}, the functions $g^{x_k}_{ij}$ lie in a bounded set in $C^\infty(\overline{B})$. Since the closure of a bounded set is also bounded~\cite[Section~IV.2]{Conway1990}, and since $C^\infty(\overline{B})$ has the Heine--Borel property~\cite[Section~1.9]{GrandpaRudin}, the assertion follows.
\end{proof}

Given $g \in \mathrm{Riem}_b(B),$ we denote $\Delta_g^D$ the self-adjoint realisation of $\Delta_g$ on $B$ with Dirichlet boundary conditions. Explicitly, $\Delta_g^D$ is defined as the operator associated with the closure of the quadratic form
\[
    q_g(u,v) = \int_{B} \sqrt{|g(x)|}\sum_{i,j} g^{ij}(x)\partial_i u(x)\overline{ \partial_j v(x)}\,dx,\quad u,v\in C^\infty_c(B).
\]
\begin{lem}\label{l:uniformlaplacianbound}
Let $x_j$ and $g^{x_j} \in \mathrm{Riem}_b(B)$ be as in Corollary~\ref{c:compactmetrics}.
We have
\[
\sup_{j \in \mathbb{N}} \|(1-\Delta_{g^{x_j}}^D)^{-1}\|_{\mathcal{L}_{\frac{d}{2},\infty}(L_2(B)) }<\infty.
\]
\end{lem}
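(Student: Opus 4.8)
The plan is to reduce the statement to a uniform Weyl-type bound for the Dirichlet Laplacian on the fixed bounded domain $B \subseteq \mathbb{R}^d$, where the Riemannian metrics $g^{x_j}$ range over a \emph{pre-compact} set in $\mathrm{Riem}_b(B)$ (Corollary~\ref{c:compactmetrics}). The key observation is that the quantity $\|(1-\Delta_{g}^D)^{-1}\|_{\mathcal{L}_{\frac{d}{2},\infty}(L_2(B))}$ depends continuously on $g \in \mathrm{Riem}_b(B)$, so that its supremum over a pre-compact set is finite. First I would fix a single reference metric, say the Euclidean $g_0$, and recall the classical fact (Weyl's law, or the Birman--Solomyak estimates referenced in the excerpt) that $(1-\Delta_{g_0}^D)^{-1} \in \mathcal{L}_{\frac{d}{2},\infty}(L_2(B))$ when $B$ is a bounded domain; equivalently, the eigenvalues $\lambda_k$ of $-\Delta_{g_0}^D$ satisfy $\lambda_k \gtrsim k^{2/d}$, so that $\mu(k,(1-\Delta_{g_0}^D)^{-1}) = (1+\lambda_k)^{-1} = O(k^{-2/d})$.

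The main step is a uniform comparison. For any $g \in \mathrm{Riem}_b(B)$ the matrix $(g_{ij}(x))$ is smooth on $\overline{B}$; if $g$ lies in a pre-compact subset $\mathcal{K}$ of $\mathrm{Riem}_b(B)$, then by continuity of $g \mapsto (g_{ij}, g^{ij}, \sqrt{|g|})$ and compactness there are constants $0 < c \leq C < \infty$ such that for all $g \in \mathcal{K}$ and all $x \in \overline{B}$ we have $c\,I \leq (g^{ij}(x)) \leq C\,I$ as quadratic forms and $c \leq \sqrt{|g(x)|} \leq C$. Then the quadratic forms satisfy $c^2 q_{g_0}(u,u) \leq q_g(u,u) \leq C^2 q_{g_0}(u,u)$ for all $u \in C^\infty_c(B)$, and also $c\|u\|_{L_2(B,\mathrm{vol}_{g_0})} \leq \|u\|_{L_2(B,\mathrm{vol}_g)} \leq C\|u\|_{L_2(B,\mathrm{vol}_{g_0})}$. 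By the min-max principle this forces $\lambda_k(-\Delta_g^D) \geq \frac{c^2}{C^2}\,\lambda_k(-\Delta_{g_0}^D) \gtrsim k^{2/d}$ with implied constant uniform over $g \in \mathcal{K}$, and likewise $(1-\Delta_g^D)^{-1}$ is a bounded operator on $L_2(B,\mathrm{vol}_g)$ whose singular values (computed in $L_2(B,\mathrm{vol}_g)$) are $O(k^{-2/d})$ uniformly. A final small point is to convert from $L_2(B,\mathrm{vol}_{g^{x_j}})$ to a fixed Hilbert space: the multiplication operator $M_{\sqrt[4]{|g^{x_j}|}}$ gives a bounded isomorphism $L_2(B,\mathrm{vol}_{g_0}) \to L_2(B,\mathrm{vol}_{g^{x_j}})$ with norm and inverse-norm bounded uniformly in $j$ by the constants $c, C$, so the $\mathcal{L}_{\frac{d}{2},\infty}$ quasi-norm transfers with a uniformly bounded multiplicative loss.

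Putting this together: since $\{g^{x_j}\}_{j \in \mathbb{N}}$ is pre-compact (Corollary~\ref{c:compactmetrics}), it is contained in some such $\mathcal{K}$, hence $\sup_j \|(1-\Delta_{g^{x_j}}^D)^{-1}\|_{\mathcal{L}_{\frac{d}{2},\infty}(L_2(B))} < \infty$, which is the claim. I expect the main obstacle to be bookkeeping around the precise Hilbert spaces --- each $\Delta_{g^{x_j}}^D$ is self-adjoint on its own weighted $L_2$-space, and one must check that the uniform two-sided form bounds plus the uniform volume-density bounds genuinely yield a \emph{uniform} $\mathcal{L}_{\frac{d}{2},\infty}$ estimate after transferring to a fixed Hilbert space; the spectral comparison via min-max is routine once the uniform ellipticity constants $c,C$ are extracted from compactness, and the base case $(1-\Delta_{g_0}^D)^{-1} \in \mathcal{L}_{\frac d2,\infty}$ is standard and may simply be cited.
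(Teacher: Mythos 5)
Your proposal is correct and is essentially the paper's argument: the paper likewise reduces the claim to a uniform two-sided comparison of the form $q_g$ with the Euclidean Dirichlet form $q_0$ on the fixed ball $B$ (obtaining $\|(1-\Delta_g^D)^{-1}\|_{\mathcal{L}_{\frac{d}{2},\infty}}\leq k_d\,c_g^{-1/2}$ via boundedness of $(1-\Delta_0^D)^{\frac12}(1-\Delta_g^D)^{-1}(1-\Delta_0^D)^{\frac12}$ together with Weyl asymptotics for $(1-\Delta_0^D)^{-\frac12}$, and then uses $\inf_j c_{g^{x_j}}>0$), your min--max eigenvalue comparison and the transfer between weighted $L_2$-spaces being only cosmetic variants of this. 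The one point to phrase more carefully is the uniform lower ellipticity bound: pre-compactness in $C^\infty(\overline{B})$ alone only yields upper bounds (a $C^\infty$-limit of metrics could a priori degenerate), so the bound $c\,I\leq (g^{x_j}_{ij})$ uniformly in $j$ should be drawn from the bounded-geometry statement behind Proposition~\ref{p:bounded_metric} (Eichhorn), which is exactly what the paper invokes when it asserts $\inf_j c_{g^{x_j}}>0$.
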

\begin{proof}
Let $g$ be a metric on the closed unit ball $\overline{B},$ and let $c_g$ and $c_G$ be positive constants such that
\[
    c_g\left(\sum_{k=1}^d |\xi_k|^2\right) \leq \sum_{k,l=1}^d \sqrt{|g(x)|}g^{kl}(x)\xi_k\overline{\xi_l} \leq C_g\left(\sum_{k=1}^d |\xi_k|^2\right)
\]
for all $\xi \in \mathbb{C}^d.$
We will prove that there is a constant $k_d$ such that
\begin{equation}\label{e:metric_uniformity}
\|(1-\Delta_{g}^D)^{-1}\|_{\mathcal{L}_{\frac{d}{2},\infty}(L_2(B)) } \leq  k_dc_g^{-\frac12}.
\end{equation}
Since
\[
    \inf_{j} c_{g^{x_j}} > 0,
\]
\eqref{e:metric_uniformity} implies the result.

Let $q_0$ denote the Dirichlet quadratic form on $B.$ That is,
\[
    q_0(u,v) := \sum_{j} \int_B \partial_j u(x)\overline{\partial_j v}(x)\,dx,\quad u,v \in C^\infty_c(B).
\]
The Dirichlet Laplacian $\Delta_0^D$ on $B$ is defined as the operator associated with the closure of the quadratic form $q_0$ (see e.g. \cite[Example 7.5.26]{Simon-course-IV}).
By the definitions of $q_g,$ $c_g$ and $C_g$ we have
\[
    c_gq_0(u,u) \leq q_g(u,u) \leq C_gq_0(u,u),\quad u\in C^\infty_c(B).
\]
It follows that the form domains of the closures of $q_0$ and $q_g$ coincide, we denote this space $H^1_0(B).$
The preceding inequality implies in particular that
\[
    c_g\|(1-\Delta_0^D)^{\frac12}u\|_{L_2(B)}^2 \leq \|(1-\Delta_g^D)^{\frac12}u\|_{L_2(B)}^2,\quad u\in H^1_0(B).
\]
By standard results in quadratic form theory (see e.g. \cite[Equation 7.5.29]{Simon-course-IV}), $1-\Delta_g^D$ defines a topological linear isomorphism from $H^1_0(B)$ to its dual $(H^1_0(B))^{^*}.$ Therefore, replacing $u$ with $(1-\Delta_g^D)^{-1}v$ for $v\in (H^1_0)^*,$ we arrive at
\[
    c_g\|(1-\Delta_0^D)^{\frac12}(1-\Delta_g^D)^{-1}v\|_{L_2(B)}^2 \leq \|(1-\Delta_g^D)^{-\frac12}v\|_{L_2(B)}^2,\quad v \in (H^1_0(B))^*.
\]
Replacing $v$ with $(1-\Delta_0^D)^{\frac12}w$ for $w \in L_2(B)$ gives
\[
    \|(1-\Delta_0^D)^{\frac12}(1-\Delta_g^D)^{-1}(1-\Delta_0)^{\frac12}w\|_{L_2(B)}^2 \leq c_g^{-1}\|w\|_{L_2(B)}^2.
\]
Recall that $\|(1-\Delta_0^D)^{-\frac12}\|_{\mathcal{L}_{d,\infty}(L_2(B)) }<\infty$ by standard Weyl asymptotics. Therefore,
\begin{align*}
    \|(1-\Delta_g^D)^{-1}\|_{\mathcal{L}_{\frac{d}{2},\infty}(L_2(B))} &\leq \|(1-\Delta_0^D)^{-\frac12}\|_{\mathcal{L}_{d,\infty}(L_2(B)) }^2\|(1-\Delta_0^D)^{\frac12}(1-\Delta_g^D)^{-1}(1-\Delta_0)^{\frac12}\|_{B(L_2(B))}\\
    &\leq c_g^{-\frac12}\|(1-\Delta_0^D)^{-\frac12}\|_{\mathcal{L}_{d,\infty}(L_2(B)) }^2.
\end{align*}
Defining $k_d = \|(1-\Delta_0^D)^{-\frac12}\|_{\mathcal{L}_{d,\infty}(L_2(B)) }^2$
completes the proof of \eqref{e:metric_uniformity},
and hence of the Lemma.
\end{proof}

\begin{prop}\label{P:UniformBound}
For all $q>0,$ there exists $K>0$ such that
\[
    \sup_{j} \|M_{\psi_j}(1-\Delta)^{-K}\|_{q,\infty} < \infty,
\]
where $\psi_j$ is the partition of unity subordinate to $\{B(x_j, r_0)\}_{j \in \mathbb{N}}$ mentioned in Section~\ref{S: Manifolds}.
\end{prop}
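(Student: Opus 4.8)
The plan is to reduce the statement to the uniform resolvent bound of Lemma~\ref{l:uniformlaplacianbound} by carefully passing from the intrinsic Laplacian $\Delta$ on $M$ to the model Dirichlet Laplacians $\Delta^D_{g^{x_j}}$ on the fixed ball $B$, using that the pullback metrics $g^{x_j}$ have uniformly controlled geometry (Corollary~\ref{c:compactmetrics}). First I would fix $q>0$ and choose $K$ so large that $\frac{d}{2K}<q$, or more precisely so that $(1-\Delta^D_{g^{x_j}})^{-K}\in\mathcal{L}_{q,\infty}(L_2(B))$ uniformly in $j$; this follows from Lemma~\ref{l:uniformlaplacianbound} together with the fact that $\mathcal{L}_{\frac{d}{2},\infty}$-membership of $T$ gives $\mathcal{L}_{\frac{d}{2K},\infty}$-membership of $T^K$ (the quasi-norm estimate $\|T^K\|_{\frac{d}{2K},\infty}\lesssim \|T\|_{\frac d2,\infty}^K$), and $\frac{d}{2K}\le q$ forces $\mathcal{L}_{\frac{d}{2K},\infty}\subseteq\mathcal{L}_{q,\infty}$ on the unit ball $B$, with a uniform constant.

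Next, the core step: for each $j$, the operator $M_{\psi_j}(1-\Delta)^{-K}$ acts on $L_2(M)$ but its range of relevance is localized to $B(x_j,r_0)$ because $\psi_j$ is supported there. Via the normal-coordinate chart $\exp_{x_j}\circ e: B\to B(x_j,r_0)$ one obtains a unitary $U_j: L_2(B(x_j,r_0),\nu_g)\to L_2(B,g^{x_j})$ identifying the restriction of $\Delta$ to a neighbourhood of $\mathrm{supp}(\psi_j)$ with (a resolvent-comparable modification of) $\Delta_{g^{x_j}}$. The issue is that $(1-\Delta)^{-K}$ is a \emph{nonlocal} operator, so $M_{\psi_j}(1-\Delta)^{-K}$ is not literally $U_j^*(\text{something on }B)U_j$. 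I would handle this by writing $M_{\psi_j}(1-\Delta)^{-K} = M_{\psi_j}(1-\Delta)^{-K}M_{\widetilde\psi_j} + M_{\psi_j}(1-\Delta)^{-K}M_{1-\widetilde\psi_j}$, where $\widetilde\psi_j$ is a cutoff equal to $1$ on $\mathrm{supp}(\psi_j)$ and supported in $B(x_j,r_0)$. The first term is genuinely localized and can be compared, via $U_j$ and the quadratic-form domain identification $H^1_0$, to $M_{\psi_j\circ\kappa}(1-\Delta^D_{g^{x_j}})^{-K}M_{\widetilde\psi_j\circ\kappa}$ on $B$ up to bounded multiplicative error uniform in $j$ (here $\kappa$ is the chart map), using that $1-\Delta$ and $1-\Delta^D_{g^{x_j}}$ agree near $\mathrm{supp}(\psi_j)$ modulo terms supported away from it, combined with the resolvent identity and pseudolocality / finite propagation speed of $e^{-s(1-\Delta)}$. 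The second, "off-diagonal" term $M_{\psi_j}(1-\Delta)^{-K}M_{1-\widetilde\psi_j}$ has operands with disjoint supports separated by a fixed distance, and its Schatten norm decays like $e^{-cK}$ (uniformly in $j$) by standard heat-kernel off-diagonal Gaussian bounds on manifolds of bounded geometry, writing $(1-\Delta)^{-K}=\frac{1}{\Gamma(K)}\int_0^\infty s^{K-1}e^{-s(1-\Delta)}\,ds$; for $K$ large enough this is negligible.

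Then I would invoke Lemma~\ref{disjointification}/Corollary~\ref{disjointification_corollary} only implicitly — actually for \emph{this} proposition I do not need disjointification, I only need the uniform bound $\sup_j\|M_{\psi_j}(1-\Delta)^{-K}\|_{q,\infty}<\infty$, which now follows from: (i) the uniform bound $\sup_j\|(1-\Delta^D_{g^{x_j}})^{-K}\|_{q,\infty(L_2(B))}<\infty$ from the first paragraph, (ii) $\|\psi_j\circ\kappa\|_{L_\infty}\le 1$ and $\|\widetilde\psi_j\circ\kappa\|_{L_\infty}\le 1$ so multiplication operators are uniformly bounded, and (iii) the uniform smallness of the off-diagonal error. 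The main obstacle I anticipate is step two — making the localization rigorous, i.e. controlling the difference between the nonlocal operator $(1-\Delta)^{-K}$ cut off by $\psi_j$ and the corresponding operator built from the model Laplacian on $B$, uniformly in $j$. This requires the bounded-geometry heat kernel estimates (uniform Gaussian upper bounds and gradient bounds, e.g. as in Cheeger--Gromov--Taylor or the references in the excerpt's Section~\ref{S: Manifolds}) and a careful commutator/resolvent-identity argument; once those are in place, the rest is the bookkeeping of interpolation-ideal quasi-norms. A cleaner alternative, which I would try first, is to avoid the model ball entirely: use that $\sup_j\|M_{\psi_j}(1-\Delta)^{-K}\|_{q,\infty}$ can be estimated directly from a uniform local Weyl law / uniform heat-kernel trace bound $\sup_j\mathrm{Tr}(M_{\psi_j}e^{-s(1-\Delta)}M_{\psi_j})\lesssim s^{-d/2}$ for small $s$ (which is standard under bounded geometry), combined with the Birman--Schwinger-type estimate $\|M_{\psi_j}(1-\Delta)^{-K}\|_{q,\infty}\lesssim \big(\sup_{s\le 1}s^{d/2}\mathrm{Tr}(M_{\psi_j}^2 e^{-s(1-\Delta)})\big)^{1/q}$ valid for $\tfrac{d}{2K}<q$. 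That route sidesteps the chart comparison and makes the uniformity manifest.
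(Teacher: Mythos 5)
Your opening reduction (take $K$ with $\tfrac{d}{2K}<q$ and use Lemma~\ref{l:uniformlaplacianbound} to get a uniform $\mathcal{L}_{q,\infty}$ bound on $(1-\Delta^D_{g^{x_j}})^{-K}$) matches the paper, but the core localization step has a genuine gap. After writing $M_{\psi_j}(1-\Delta)^{-K}=M_{\psi_j}(1-\Delta)^{-K}M_{\widetilde\psi_j}+M_{\psi_j}(1-\Delta)^{-K}M_{1-\widetilde\psi_j}$, what you must prove is that \emph{both} pieces lie in $\mathcal{L}_{q,\infty}$ uniformly in $j$ for every $q>0$, including arbitrarily small $q$. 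For the off-diagonal piece you only argue smallness ("Schatten norm decays like $e^{-cK}$"), without saying in which ideal, and operator-norm or Hilbert--Schmidt control is useless here: for $q<2$ one needs genuinely stronger (trace-class and below) bounds, and since $1-\widetilde\psi_j$ has non-compact support this requires kernel decay combined with volume-growth bookkeeping and a factorization into several smoothing factors — none of which is sketched, and uniformity in $j$ of such estimates is precisely the nontrivial content. Moreover, smallness in $K$ is beside the point: membership in $\mathcal{L}_{q,\infty}$ is what is needed, and it does not follow from an operator-norm bound however small. For the localized piece, the resolvents $(1-\Delta)^{-K}$ on $M$ and $(1-\Delta^D_{g^{x_j}})^{-K}$ on $B$ do not agree near $\mathrm{supp}(\psi_j)$ even after cutoffs (they differ globally, not by a bounded multiplicative factor), so "resolvent identity plus pseudolocality" is a placeholder for another parametrix/commutator argument that is not supplied. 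Your "cleaner alternative" is also not correct as stated: the inequality $\|M_{\psi_j}(1-\Delta)^{-K}\|_{q,\infty}\lesssim\bigl(\sup_{s\le1}s^{d/2}\mathrm{Tr}(M_{\psi_j}^2e^{-s(1-\Delta)})\bigr)^{1/q}$ is inhomogeneous in $\psi_j$ (degree $1$ on the left, $2/q$ on the right), so it fails by scaling for $q\neq 2$.

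The paper avoids all remainder terms by exploiting locality on the other side of the resolvent: since $\psi_j$ is supported in the chart and $(1-\Delta)^K M_{\psi_j}$ is a differential operator, it conjugates \emph{exactly} to the model ball, $(1-\Delta)^K M_{\psi_j}=V_j^*(1-\Delta_{g^{x_j}})^K M_{\phi_j}V_j$ with $V_j$ the chart partial isometry. A commutator rearrangement then gives the exact identity
\[
(1-\Delta)^{-K}M_{\psi_j}=M_{\psi_j}V_j^*(1-\Delta^D_{g^{x_j}})^{-K}V_j-(1-\Delta)^{-K}\bigl[(1-\Delta)^K,M_{\psi_j}\bigr]V_j^*(1-\Delta^D_{g^{x_j}})^{-K}V_j,
\]
so the only inputs are the uniform $\mathcal{L}_{q,\infty}$ bound of Lemma~\ref{l:uniformlaplacianbound} and the \emph{operator-norm} bound $\sup_j\|(1-\Delta)^{-K}[(1-\Delta)^K,M_{\psi_j}]\|_\infty<\infty$, which follows because $[(1-\Delta)^K,M_{\psi_j}]$ is a uniform differential operator of order $2K-1$ with coefficients uniform in $j$ (by the uniform derivative bounds on $\psi_j$) mapping $L_2$ to $\Hc^{1-2K}$. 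If you want to salvage your decomposition, you would need to either prove the uniform weak-Schatten bounds for the off-diagonal term (heat-kernel Gaussian bounds plus a multi-factor factorization argument), or switch to the paper's exact-identity route.
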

\begin{proof}
The proof is inspired by the proof of~\cite[Lemma~3.4.8]{SukochevZanin2018}. Let $V_j: L_2(M) \to L_2(B, g^{x_j})$ be the partial isometry mapping $\xi \in  L_2(M)$ to $V\xi (z) = f|_{B(x_j, r_0)} \circ \exp_x \circ e \in L_2(B, g^{x_j})$. Denote $V_j\psi_j := \phi_j$. Then, by construction,
\[
M_{\psi_j} = V_j^* M_{\phi_j} V_j
\]
and
\[
(1-\Delta)^K M_{\psi_j} = V_j^* (1-\Delta_{g^{x_j}})^K M_{\phi_j} V_j.
\]
It follows that
\begin{align*}
    (1-\Delta)^K M_{\psi_j} V_j^* (1-\Delta_{g^{x_j}}^D)^{-K} V_j &= V_j^* (1-\Delta_{g^{x_j}})^K M_{\phi_j} (1-\Delta_{g^{x_j}}^D)^{-K} V_j\\
    &=V_j^* [(1-\Delta_{g^{x_j}})^K, M_{\phi_j}] (1-\Delta_{g^{x_j}}^D)^{-K} V_j + V_j^*  M_{\phi_j}  V_j\\
    &= [(1-\Delta)^K, M_{\psi_j}]  V_j^* (1-\Delta_{g^{x_j}}^D)^{-K} V_j +  M_{\psi_j}.
\end{align*}
Multiplying both sides by $(1-\Delta)^{-K}$ and rearranging gives
\[
 (1-\Delta)^{-K}M_{\psi_j}  = M_{\psi_j} V_j^* (1-\Delta_{g^{x_j}}^D)^{-K} V_j - (1-\Delta)^{-K}[(1-\Delta)^K, M_{\psi_j}]  V_j^* (1-\Delta_{g^{x_j}}^D)^{-K} V_j.
\]
We claim that
\[
\sup_{j \in \mathbb{N}} \| (1-\Delta)^{-K}[(1-\Delta)^K, M_{\psi_j}] \|_\infty < \infty.
\]
For every $\alpha$ we have $\sup_{j,x} |\partial^{\alpha}\psi_j(x)|<\infty,$ where $\partial^{\alpha}$ is taken in the exponential normal coordinates of $B(x_j,r_0),$ and therefore $[(1-\Delta)^K, M_{\psi_j}]$ is a uniform differential operator of order $2K-1$ with coefficients that are uniform in $j$. Using~\cite[Theorem~3.9]{Kordyukov1991}, it follows that $[(1-\Delta)^K, M_{\psi_j}]$ is a bounded operator from $L_2(M)$ to the Sobolev space $\Hc^{1-2K}(M)$ with norm uniform in $j$. By~\cite[Proposition~4.4]{Kordyukov1991}, $(1-\Delta)^{-K}$ is a bounded operator from that space into $L_2(M)$, and so the claim holds.

Since the norm of $V_j$ is equal to one, via Lemma~\ref{l:uniformlaplacianbound} we get for $K$ large enough
\begin{align*}
    \sup_{j \in \mathbb{N}} \|(1-\Delta)^{-K}M_{\psi_j}\|_{q,\infty} &\leq  \sup_{j \in \mathbb{N}} \left(   \| M_{\psi_j} \|_\infty \cdot  \|(1-\Delta_{g^{x_j}})^{-K}\|_{q,\infty} \right)\\
    & \quad + \sup_{j \in \mathbb{N}} \left(\| (1-\Delta)^{-K}[(1-\Delta)^K, M_{\psi_j}] \|_\infty \cdot \|(1-\Delta_{g^{x_j}})^{-K}\|_{q,\infty} \right) \\
    &<\infty. \qedhere
\end{align*}
\end{proof}

It follows from this proposition that for every $q>0$ and every $j,$ there exists $K>0$ and a constant $C_K$ independent of $j$ such that
\begin{equation}\label{weak_bounded_cwikel_estimate}
    \|M_{f\psi_j}(1-\Delta)^{-K}\|_{q,\infty} \leq C_K\|f\|_{L_{\infty}(B(x_j,r_0))}.
\end{equation}

Let $\{\psi_{j}^{(0)}\}_{j=0}^\infty,$ $\{\psi_{j}^{(2)}\}_{j=0}^\infty$,$\ldots$, $\{\psi_{j}^{(N)}\}_{j=0}^\infty$ be a partition of $\{\psi_{j}\}_{j=0}^\infty$ into disjointly supported subfamilies. That is, for all $0\leq k\leq N,$ the functions $\{\psi_{j}^{(k)}\}_{j=0}^\infty$ are disjointly supported, and for every $j\geq 0$ there exists
a unique $0\leq k\leq N$ such that $\psi_{j} \in \{\psi_l^{(k)}\}_{l=0}^\infty.$

\begin{thm}\label{T: Cwikel estimate}
    Let $0<p<2.$ For sufficiently large $K,$ we have
    \begin{align*}
    \|M_f(1-\Delta)^{-K}\|_{p,\infty} &\leq C_{p,K,N}\|f\|_{\ell_{p,\infty}(L_\infty)};\\
        \|M_f(1-\Delta)^{-K}\|_{p} &\leq C_{p,K,N}'\|f\|_{\ell_{p}(L_\infty)}.
    \end{align*}
\end{thm}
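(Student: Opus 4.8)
The plan is to deduce the estimate from the uniform local bound \eqref{weak_bounded_cwikel_estimate} by a disjointification argument. Using the partition of unity $\{\psi_j\}_{j=0}^\infty$ subordinate to the cover $\{B(x_j,r_0)\}_{j=0}^\infty$ and writing $f=\sum_j f\psi_j$, I would split
\[
M_f(1-\Delta)^{-K}=\sum_{k=0}^N\sum_{l}M_{f\psi_l^{(k)}}(1-\Delta)^{-K},
\]
where for each $k$ the inner sum runs over the disjointly supported subfamily $\{\psi_l^{(k)}\}_l$. For fixed $k$ the operators $T_l^{(k)}:=M_{f\psi_l^{(k)}}(1-\Delta)^{-K}$ form a left-disjoint family, since $(T_l^{(k)})^*T_m^{(k)}=(1-\Delta)^{-K}M_{\overline{f\psi_l^{(k)}}\,f\psi_m^{(k)}}(1-\Delta)^{-K}=0$ for $l\neq m$ by disjointness of supports.

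First I would fix $0<p<2$ and apply Corollary~\ref{disjointification_corollary} to bound $\big\|\sum_l T_l^{(k)}\big\|_{p,\infty}$ (resp. $\big\|\sum_l T_l^{(k)}\big\|_p$) by a constant times $\big\|\{\|T_l^{(k)}\|_{q,\infty}\}_l\big\|_{\ell_{p,\infty}}$ (resp. $\big\|\{\|T_l^{(k)}\|_{q',\infty}\}_l\big\|_{\ell_p}$) for suitable $q,q'>0$ depending only on $p$. Then I would choose $K$ large enough that \eqref{weak_bounded_cwikel_estimate} applies with exponent $\min(q,q')$; enlarging $K$ is harmless, as an extra factor $(1-\Delta)^{-1}$ is a contraction and does not increase the relevant quasi-norm. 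This gives $\|T_l^{(k)}\|_{q,\infty}\leq C_K\|f\|_{L_\infty(B(x_{j(l,k)},r_0))}$, where $x_{j(l,k)}$ is the center associated with $\psi_l^{(k)}$. Since $l\mapsto j(l,k)$ is injective, $\{\|f\|_{L_\infty(B(x_{j(l,k)},r_0))}\}_l$ is a subsequence of $\{\|f\|_{L_\infty(B(x_j,r_0))}\}_j$, and the decreasing rearrangement of the full sequence dominates that of any subsequence pointwise, so its $\ell_{p,\infty}$- and $\ell_p$-quasi-norms are at most $\|f\|_{\ell_{p,\infty}(L_\infty)}$ and $\|f\|_{\ell_p(L_\infty)}$ respectively. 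Combining these observations bounds each of the $N+1$ pieces $\sum_l T_l^{(k)}$ uniformly in $k$, and summing over $k$ using the quasi-triangle inequality in $\mathcal{L}_{p,\infty}$ (resp. $\mathcal{L}_p$) — which for a fixed finite number of summands costs only a further constant depending on $p$ and $N$ — yields both claimed inequalities.

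The only point requiring genuine care is the bookkeeping of constants: the exponent $q$ supplied by Corollary~\ref{disjointification_corollary} is determined by $p$, and the threshold for $K$ in Proposition~\ref{P:UniformBound} is in turn determined by $q$, so ``sufficiently large $K$'' must be read as ``$K$ above a threshold depending on $p$'', and one should check, as above, that the estimate is stable under increasing $K$. A secondary routine matter is identifying the left-disjoint series $\sum_l T_l^{(k)}$ with $M_{g_k f}(1-\Delta)^{-K}$ for $g_k=\sum_l\psi_l^{(k)}$, using $\sum_{k=0}^N g_k\equiv 1$ and local finiteness of the cover; the quasi-norm bounds transfer to this limit directly from the statement of the disjointification lemma (Lemma~\ref{disjointification}). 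Neither point is serious — the mathematical substance of the theorem is entirely carried by the uniform local estimate \eqref{weak_bounded_cwikel_estimate} already established.
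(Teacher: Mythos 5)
Your proposal is correct and follows essentially the same route as the paper: decompose $f$ via the partition of unity into $N+1$ disjointly supported subfamilies, exploit left-disjointness through Corollary~\ref{disjointification_corollary}, invoke the uniform local estimate \eqref{weak_bounded_cwikel_estimate} for $K$ large (depending on the exponent $q$ supplied by the corollary), and finish with the quasi-triangle inequality over the finitely many pieces. The additional remarks on stability under enlarging $K$ and on passing from a subfamily to the full sequence are correct and only make explicit what the paper leaves implicit.
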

\begin{proof}
    We prove the first inequality, the second can be proved analogously. Let $f \in \ell_{p,\infty}(L_\infty).$
    Since $\{\psi_j\}_{j=1}^\infty$ is a partition of unity, we have
    \[
        f = \sum_{j=0}^\infty \psi_jf = \sum_{k=0}^N \sum_{j=0}^\infty \psi_{j}^{(k)}f.
    \]
    By the quasi-triangle inequality, there exists $C_{N,p}$ such that
    \[
        \|M_f(1-\Delta)^{-K}\|_{p,\infty} \leq C_{N,p}\sum_{k=0}^N \left\| \sum_{j=0}^\infty M_{\psi_j^{(k)}f} (1-\Delta)^{-K}\right\|_{p,\infty}.
    \]
    The operators $\{M_{\psi_j^{(k)}f}(1-\Delta)^{-K}\}_{j=0}^\infty$ are left-disjoint. Hence, Corollary \ref{disjointification_corollary} implies that
    there exists $q>0$ such that
    \[
        \|M_f(1-\Delta)^{-K}\|_{p,\infty} \leq C_{N,p,q}\sum_{k=0}^N \left\| \{\|M_{f\psi_j^{(k)}}(1-\Delta)^{-K}\|_{q,\infty}\}_{j=0}^\infty\right\|_{\ell_{p,\infty}}
    \]
    From \eqref{weak_bounded_cwikel_estimate}, it follows that if $K$ is sufficiently large (depending on $q$), we have
    \[
        \|M_f(1-\Delta)^{-K}\|_{p,\infty}\leq C_{N,p,q}\left\| \{\|f\|_{L_{\infty}(B(x_j,r_0))}\}_{j=0}^\infty\right\|_{\ell_{p,\infty}}.
    \]
    The latter is the definition of the $\ell_{p,\infty}(L_\infty)$ quasinorm.
\end{proof}

\begin{cor} \label{C:Cwikel} Let $0<p<2$. Let $P$ be a self-adjoint, lower-bounded $P \in EBD^m(M)$. We have $\exp(-tP)M_f \in \mathcal{L}_{p,\infty}$ whenever $f\in \ell_{p,\infty}(L_\infty)$, and $\exp(-tP)M_f \in \mathcal{L}_{p}$ whenever $f\in \ell_{p}(L_\infty)$.
\end{cor}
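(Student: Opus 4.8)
The plan is to factor $\exp(-tP)M_f$ as a bounded operator times an element of the relevant ideal, using the Cwikel-type estimate of Theorem~\ref{T: Cwikel estimate} together with the smoothing property of the heat semigroup $e^{-tP}$. Fix $0<p<2$ and let $K$ be large enough for Theorem~\ref{T: Cwikel estimate} to apply (to both the $\mathcal{L}_{p,\infty}$ and $\mathcal{L}_p$ statements).

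First I would record the smoothing property: for every $t>0$, $e^{-tP}$ maps $L_2(M)$ boundedly into $\Hc^{2K}(M)=\dom((1-\Delta_g)^{K})$, equivalently $(1-\Delta_g)^{K}e^{-tP}$ extends to a bounded operator on $L_2(M)$. Within the framework of this thesis, set $\Theta:=(1-\Delta_g)^{\frac12}$. Since $P\in EBD^m(M)$ is uniformly elliptic we have $P\in\op^{m}(\Theta)$, and $P$ is self-adjoint of domain $\Hc^m(M)$ by uniform elliptic regularity on manifolds of bounded geometry, so it is $\Theta$-elliptic by Corollary~\ref{C:EllipticEquiv}. As $P$ is lower-bounded, $\sigma(P)\subseteq[\lambda_0,\infty)$ and the function $x\mapsto e^{-tx}$ lies in $L^{\beta}_{\infty}(E)$ for every $\beta\in\R$, where $E$ is the spectral measure of $P$; applying Theorem~\ref{T:MainFunctCalc} with $\beta=-2K/m$ gives $e^{-tP}\in\op^{-2K}(\Theta)$, which is exactly the claimed smoothing. (Alternatively one may simply invoke that the heat operator of a self-adjoint, lower-bounded, uniformly elliptic operator on a manifold of bounded geometry has a uniformly bounded smooth Schwartz kernel.)

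Next, using $M_f^{*}=M_{\bar f}$, the self-adjointness of $(1-\Delta_g)^{-K}$, and the fact that $e^{-tP}$ maps $L_2(M)$ into $\dom((1-\Delta_g)^{K})$, I would write
\[
\exp(-tP)M_f=\big(M_{\bar f}\exp(-tP)\big)^{*}=\Big(M_{\bar f}(1-\Delta_g)^{-K}\cdot(1-\Delta_g)^{K}\exp(-tP)\Big)^{*}.
\]
By Theorem~\ref{T: Cwikel estimate} applied to $\bar f$, $M_{\bar f}(1-\Delta_g)^{-K}\in\mathcal{L}_{p,\infty}$ with $\|M_{\bar f}(1-\Delta_g)^{-K}\|_{p,\infty}\lesssim\|\bar f\|_{\ell_{p,\infty}(L_\infty)}=\|f\|_{\ell_{p,\infty}(L_\infty)}$, the equality of quasinorms holding because $\ell_{p,\infty}(L_\infty)$ depends only on $|f|$; likewise $M_{\bar f}(1-\Delta_g)^{-K}\in\mathcal{L}_{p}$ when $f\in\ell_{p}(L_\infty)$. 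Since $\mathcal{L}_{p,\infty}$ and $\mathcal{L}_{p}$ are two-sided ideals, multiplying on the right by the bounded operator $(1-\Delta_g)^{K}\exp(-tP)$ keeps us in the ideal, and passing to the adjoint preserves membership because $\mu(T)=\mu(T^{*})$. This yields $\exp(-tP)M_f\in\mathcal{L}_{p,\infty}$ for $f\in\ell_{p,\infty}(L_\infty)$ and $\exp(-tP)M_f\in\mathcal{L}_{p}$ for $f\in\ell_{p}(L_\infty)$.

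The only genuinely non-formal step is the first one: the quantitative smoothing $e^{-tP}\in\op^{-2K}(\Theta)$ (equivalently, the uniform heat-kernel bound). This is where uniform ellipticity and bounded geometry are truly used — in particular, checking $\Theta$-ellipticity of $P$ comes down to controlling the order of the commutator $[(1-\Delta_g)^{\frac12},P]$ in the bounded-geometry pseudodifferential calculus. Everything after that is ideal-theoretic bookkeeping, so I expect no further obstacles.
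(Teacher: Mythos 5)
Your proposal is correct and follows the same skeleton as the paper: reduce to the Cwikel-type estimate of Theorem~\ref{T: Cwikel estimate} for $M_{\bar f}(1-\Delta)^{-K}$, pair it with boundedness of $(1-\Delta)^{K}\exp(-tP)$ on $L_2(M)$, and finish with the two-sided ideal property and invariance of singular values under adjoints. The only real divergence is how you justify the heat smoothing. The paper does it concretely: $(P+C)^{-1}$ maps $\Hc^{s}$ boundedly into $\Hc^{s+m}$ (Kordyukov, elliptic regularity in the bounded-geometry calculus), $(1-\Delta)^{K}$ maps $\Hc^{s}$ into $\Hc^{s-2K}$, so $(P+C)^{-N}(1-\Delta)^{K}$ is bounded for $N$ large, and $\exp(-tP)(P+C)^{N}$ is bounded by the spectral theorem since $P$ is lower-bounded; no $\Theta$-ellipticity of $P$ in the abstract sense is ever invoked. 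You instead route through Corollary~\ref{C:EllipticEquiv} and Theorem~\ref{T:MainFunctCalc}, which buys you the cleaner statement $e^{-tP}\in\op^{-2K}\big((1-\Delta)^{\frac12}\big)$ in one stroke, but at the price of two inputs the paper never establishes and you only assert: that $[(1-\Delta)^{\frac12},P]\in\op^{m}\big((1-\Delta)^{\frac12}\big)$ (the paper only records $P\in\op^{m}$ for $P\in BD^{m}(M)$; the commutator estimate needs the uniform pseudodifferential calculus of Kordyukov/Shubin, and uses bounded coefficients rather than uniform ellipticity), and that the self-adjoint realization of $P$ in the hypothesis has domain $\Hc^{m}(M)$, which is needed so that the abstract functional calculus of Theorem~\ref{T:MainFunctCalc} actually produces $e^{-tP}$ for the given operator. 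Both facts are true in bounded geometry and citable, so this is not a gap in substance, but as written your "non-formal step" is exactly where the paper's shortcut via $(P+C)^{\pm N}$ and the spectral theorem is both simpler and fully referenced; your second suggested alternative (uniform smooth heat kernels) would by itself not give the $L_2\to\Hc^{2K}$ bound without derivative estimates on the kernel. The adjoint manipulation $\exp(-tP)M_f=\big(M_{\bar f}(1-\Delta)^{-K}\,(1-\Delta)^{K}\exp(-tP)\big)^{*}$ and the bookkeeping with $\|\bar f\|_{\ell_{p,\infty}(L_\infty)}=\|f\|_{\ell_{p,\infty}(L_\infty)}$ are fine and match the paper's intent.
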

\begin{proof}
By the preceding theorem, we already have that for $f\in \ell_{p,\infty}(L_\infty)$ and sufficiently large $K$, $M_f(1-\Delta)^{-K} \in \mathcal{L}_{p,\infty}$. Noting that $P+C>0$ and is therefore an invertible operator on $L_2(M)$ for some $C\in \mathbb{R}$, by Proposition~4.4 in~\cite{Kordyukov1991} it follows that $(P+C)^{-1}$ maps $\Hc^s(M)$ boundedly into $\Hc^{s+m}(M)$. By Theorem~3.9 in~\cite{Kordyukov1991}, $(1-\Delta)^K$ maps $\Hc^s(M)$ boundedly into $\Hc^{s-2K}(M)$. Therefore, we can find $N\in \mathbb{N}$ large enough such that $(P+C)^{-N}(1-\Delta)^K$ is a bounded operator on $L_2(M)$. Noting that $\exp(-tP) (P+C)^N$ is a bounded operator for any $N$ by the functional calculus on unbounded operators, the claim follows. The case for $f \in \ell_p(L_\infty)$ is proven analogously.
\end{proof}

This corollary will eventually make it possible to apply Theorem~\ref{MAIN_THEOREM} on $P$ and $W = M_w$ for $w \in \ell_{1,\infty}(L_\infty)$ defined by $w(x) = (1+|B(x_0,d_g(x, x_0))|)^{-1}$. We will also need to show that $\exp(-tP)[P,M_w] \in \mathcal{L}_1$, but this will require geometric conditions on the manifold $M$.

\begin{lem}\label{Grimaldi}
Let $(M,g)$ be a complete connected Riemannian manifold of bounded geometry. Then for any fixed $r\in \mathbb{R}$,
\[
\liminf_{R \to \infty} \frac{ \abs{B(x_0, R-r)}}{ \abs{B(x_0, R+r)}} > 0.
\]
\begin{proof}
The paper \cite{GrimaldiPansu2011} proves that for manifolds of bounded geometry, we have for fixed $R\geq 1$,
\[
\frac{1}{L} \leq \abs{B(x_0, R+2)}-\abs{B(x_0, R+1)} \leq L(\abs{B(x_0, R+1)} - \abs{B(x_0, R))}
\]
for some constant $L>0$ independent of $R$.

Now with this inequality, one can show by induction that
\[
\frac{\abs{B(x_0,R+k)}}{\abs{B(x_0,R)}} \leq (1+L)^k, \quad R \geq 1.
\] For $k=1$,
\begin{align*}
    \frac{\abs{B(x_0,R+1)}}{\abs{B(x_0,R)}} 
    &\leq 1 + L \frac{\abs{B(x_0,R)} - \abs{B(x_0,R-1)}}{\abs{B(x_0,R)}}\\
    & \leq 1 + L.
\end{align*}
Suppose that $\frac{\abs{B(x_0,R+k)}}{\abs{B(x_0,R)}} \leq (1+L)^k$ for some $k$, then \begin{align*}
     \frac{\abs{B(x_0,R+k+1)}}{\abs{B(x_0,R)}} &\leq \frac{\abs{B(x_0,R+k)}}{\abs{B(x_0,R)}} + L \frac{\abs{B(x_0,R+k)} - \abs{B(x_0,R+k-1)}}{\abs{B(x_0,R)}}\\
     &\leq (1+L)^k + L (1+L)^k = (1+L)^{k+1}.
\end{align*}

Therefore,
\begin{align*}
    \liminf_{R \to \infty} \frac{ \abs{B(x_0, R-r)}}{ \abs{B(x_0, R+r)}} &=  \liminf_{R \to \infty} \frac{ \abs{B(x_0, R)}}{ \abs{B(x_0, R+2r)}}\\
    &> \frac{1}{(1+L)^K},
\end{align*}
where $K$ is some integer greater than $2r$.
\end{proof}
\end{lem}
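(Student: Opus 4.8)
The plan is to reduce the statement to one quantitative fact about manifolds of bounded geometry: a metric ball can only gain a bounded multiplicative factor of volume when its radius is increased by $1$. First I would dispose of the trivial directions. If $r\le 0$ then $B(x_0,R-r)\supseteq B(x_0,R+r)$, so the ratio is $\ge 1$ for every $R$ and there is nothing to prove; hence assume $r>0$. Since the statement concerns only $\liminf_{R\to\infty}$, I may freely restrict attention to large $R$, in particular to $R$ with $R-r\ge 2$.

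The key step is to establish a constant $L>0$, depending only on $(M,g)$ and $x_0$, with
\[
|B(x_0,R+1)|\le (1+L)\,|B(x_0,R)|,\qquad R\ge 2.
\]
I would obtain this from the Grimaldi--Pansu estimate \cite{GrimaldiPansu2011}, valid on any manifold of bounded geometry, that for $R\ge 1$
\[
\frac{1}{L}\le |B(x_0,R+2)|-|B(x_0,R+1)|\le L\big(|B(x_0,R+1)|-|B(x_0,R)|\big);
\]
applying the right-hand inequality with indices shifted down by one and using $|B(x_0,R-1)|\ge 0$ gives $|B(x_0,R+1)|-|B(x_0,R)|\le L\,|B(x_0,R)|$, which is exactly the multiplicative bound. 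An alternative, self-contained route is Bishop--Gromov comparison: bounded sectional curvature forces a uniform lower Ricci bound $\mathrm{Ric}\ge -(d-1)\kappa^2$, so $R\mapsto |B(x_0,R)|/V_{-\kappa^2}(R)$ is non-increasing, and since $V_{-\kappa^2}(R+r)/V_{-\kappa^2}(R-r)\to e^{2(d-1)\kappa r}$ as $R\to\infty$, this directly bounds $\limsup_R |B(x_0,R+r)|/|B(x_0,R-r)|$ from above.

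To finish, I would iterate the multiplicative bound: an immediate induction on $k$ yields $|B(x_0,R+k)|\le (1+L)^k|B(x_0,R)|$ for every integer $k\ge 0$ and $R\ge 2$. Given $r>0$, choose an integer $K\ge 2r$. Then for $R$ with $R-r\ge 2$, monotonicity of $S\mapsto |B(x_0,S)|$ together with the iterated estimate gives
\[
|B(x_0,R+r)|\le |B(x_0,(R-r)+K)|\le (1+L)^K|B(x_0,R-r)|,
\]
so $|B(x_0,R-r)|/|B(x_0,R+r)|\ge (1+L)^{-K}$ for all such $R$, whence $\liminf_{R\to\infty}|B(x_0,R-r)|/|B(x_0,R+r)|\ge (1+L)^{-K}>0$.

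The only genuine obstacle is the multiplicative growth bound $|B(x_0,R+1)|\le (1+L)|B(x_0,R)|$ uniformly in $R$; this is precisely where the bounded-geometry hypothesis is used, either through the cited Grimaldi--Pansu inequality or through curvature comparison. Once that bound is in hand the remainder is elementary bookkeeping --- an induction and a single use of monotonicity of volume --- with the only subtlety being the harmless restriction to large radii, which is automatic inside the $\liminf$.
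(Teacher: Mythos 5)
Your proposal is correct and follows essentially the same route as the paper: the Grimaldi--Pansu difference inequality is converted into the multiplicative bound $|B(x_0,R+1)|\le(1+L)|B(x_0,R)|$, iterated by induction to $(1+L)^k$, and the conclusion follows by choosing an integer $K\ge 2r$ together with monotonicity of volume. Your bookkeeping (extracting the multiplicative bound once before iterating, handling $r\le 0$, and restricting to large $R$) is slightly cleaner than the paper's but not a different argument, and the Bishop--Gromov alternative you mention is only an aside.
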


\begin{lem}\label{wl1infty}
Let $(M,g)$ be a complete connected Riemannian manifold of bounded geometry. Then the function
\[
    w(x) = (1+\abs{B(x_0,d_g(x,x_0))})^{-1},\quad x \in M,
\]
is an element of $\ell_{1,\infty}(L_\infty)(M).$
\begin{proof}
We denote $d_g(x,x_0)$ by $r(x)$. Note that for any $x \in B(x_k, r_0)$ we have $r(x) \geq \abs{x_k}-r_0$ by the triangle inequality, and hence
\[
\|w\|_{L_\infty(B(x_k,r_0))} \leq (1+\abs{B(x_0,r(x_k)-r_0)})^{-1}.
\]
Without loss of generality, order the $x_j$ such that $r(x_1) \leq r(x_2) \leq \dots$. 
Note that as in Remark \ref{R: metric balls lower bound}, we have that $\inf_{j \in \mathbb{N}} \abs{B(x_j, r_0)} > 0$ and hence
\[
    \abs{B(x_0,\abs{x_k}-r_0)} = \inf_{j \in \mathbb{N}} \abs{B(x_j, r_0)} \cdot \frac{ \abs{B(x_0, \abs{x_k}-r_0)}}{ \abs{B(x_0, \abs{x_k}+r_0)}} \cdot \frac{\abs{B(x_0, \abs{x_k}+r_0)}}{\inf_{j \in \mathbb{N}} \abs{B(x_j, r_0)}}.
\]
By the ordering of the $x_j$, we know that all the balls $B(x_j, r_0)$ from $j=1$ up to and including $j=k$ are contained in $B(x_0, \abs{x_k}+r_0)$.
Hence \begin{align*}
    k \cdot \inf_{m\in \mathbb{N}} \abs{B(x_m, r_0)} & \leq \sum_{j=1}^k \abs{B(x_j, k)}\\
    &\leq (N+1) \abs{B(x_0, \abs{x_k}+r_0)},
\end{align*}
since any ball can only intersect at most $N$ other balls. We thus get
\[
\abs{B(x_0, \abs{x_k}+r_0)} \geq \frac{k}{N+1}\inf_{j \in \mathbb{N}} \abs{B(x_j, r_0)}.
\]
We will now show that for $k$ large enough, $\frac{ \abs{B(x_0, \abs{x_k}-r_0)}}{ \abs{B(x_0, \abs{x_k}+r_0)}}$ is bounded below away from zero. By Lemma~\ref{Grimaldi} we have
\[
\liminf_{R \to \infty} \frac{ \abs{B(x_0, R-r_0)}}{ \abs{B(x_0, R+r_0)}} > 0,
\]
and so there must be some $R_0$ such that for $R \geq R_0$ we have $\frac{ \abs{B(x_0, R-r_0)}}{ \abs{B(x_0, R+r_0)}} > \delta > 0.$ We claim that we can take $K$ large enough such that $\abs{x_k} \geq R_0$ for $k \geq K$. Indeed, by analogous reasoning as before, at most $K:= (N+1)\frac{\abs{B(x_0, R_0+r_0)}}{\inf_j \abs{ B(x_j, r_0)}}$ points $x_j$ can be inside the ball $\abs{B(x_0, R_0)}$, thus $\abs{x_k} > R_0$ for $k > K$.

Gathering the results above, we get the existence of some constant $C$ such that for $k \geq K$ we have
\[
\abs{B(x_0,\abs{x_k}-r_0)} \geq C k.
\]
This means that
\[
\|w\|_{L_\infty(B(x_k,r_0))} \leq (1+C k)^{-1},
\]
proving that $w \in \ell_{1,\infty}(L_\infty)(M).$
\end{proof}
\end{lem}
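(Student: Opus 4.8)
The plan is to control $\|w\|_{L_\infty(B(x_j,r_0))}$ as a function of $j$, after ordering the covering points $\{x_j\}_{j=0}^\infty$ from Section~\ref{S: Manifolds} by increasing distance to $x_0$. Write $r(x):=d_g(x,x_0)$ and order so that $r(x_1)\leq r(x_2)\leq\cdots$. Since $r\mapsto\abs{B(x_0,r)}$ is non-decreasing, $w$ is a radially non-increasing function of $r(x)$; combining this with the triangle-inequality bound $r(x)\geq r(x_j)-r_0$ for $x\in B(x_j,r_0)$ gives
\[
\|w\|_{L_\infty(B(x_j,r_0))}\leq \big(1+\abs{B(x_0,r(x_j)-r_0)}\big)^{-1}.
\]
The right-hand side is non-increasing in $j$, so it suffices to prove it is $O(1/j)$: the sequence $\{\|w\|_{L_\infty(B(x_j,r_0))}\}_j$ is then dominated term-by-term by a non-increasing sequence in $\ell_{1,\infty}$, whence its decreasing rearrangement is $O(1/k)$ and $w\in\ell_{1,\infty}(L_\infty)(M)$ by definition.

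First I would lower-bound the \emph{outer} ball volume $\abs{B(x_0,r(x_k)+r_0)}$. The balls $B(x_1,r_0),\dots,B(x_k,r_0)$ all lie inside $B(x_0,r(x_k)+r_0)$, and by the finite-order property of the covering their indicator functions sum to at most $N+1$; integrating and using $v_0:=\inf_j\abs{B(x_j,r_0)}>0$ (valid by Remark~\ref{R: metric balls lower bound}) yields
\[
k\,v_0 \leq \sum_{l=1}^{k}\abs{B(x_l,r_0)} \leq (N+1)\,\abs{B(x_0,r(x_k)+r_0)},
\]
so $\abs{B(x_0,r(x_k)+r_0)}\geq \tfrac{v_0}{N+1}\,k$. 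The same counting applied with a fixed radius shows only finitely many $x_j$ can lie in any fixed ball $B(x_0,R_0)$, hence $r(x_k)\to\infty$.

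The main step is to pass from this estimate on the outer ball to one on the \emph{inner} ball $\abs{B(x_0,r(x_k)-r_0)}$, which is what actually appears in the bound on $\|w\|_{L_\infty(B(x_k,r_0))}$. Here I would invoke Lemma~\ref{Grimaldi}: since $\liminf_{R\to\infty}\abs{B(x_0,R-r_0)}/\abs{B(x_0,R+r_0)}>0$, there exist $R_0>0$ and $\delta>0$ with $\abs{B(x_0,R-r_0)}\geq\delta\,\abs{B(x_0,R+r_0)}$ for all $R\geq R_0$. Choosing $K$ large enough that $r(x_k)\geq R_0$ for $k\geq K$ (possible by the previous paragraph), we get for $k\geq K$
\[
\abs{B(x_0,r(x_k)-r_0)} \geq \delta\,\abs{B(x_0,r(x_k)+r_0)} \geq \frac{\delta v_0}{N+1}\,k =: C k,
\]
so $\|w\|_{L_\infty(B(x_k,r_0))}\leq(1+Ck)^{-1}$ for $k\geq K$, which together with the reduction of the first paragraph completes the argument.

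I do not anticipate a serious obstacle. The only genuinely non-elementary ingredient is the quantitative volume comparison $\abs{B(x_0,R-r_0)}\gtrsim\abs{B(x_0,R+r_0)}$ for manifolds of bounded geometry, and that is precisely Lemma~\ref{Grimaldi}, which is already available; everything else is the two counting estimates (using finite multiplicity and $\inf_j\abs{B(x_j,r_0)}>0$) plus careful bookkeeping with the ordering and the decreasing rearrangement. The mild points to watch are that $\abs{B(x_0,r(x_j)-r_0)}$ should be read as $0$ for the finitely many $j$ with $r(x_j)<r_0$ (harmless, since those contribute finitely many bounded terms), and that the inequality $\mu(k,\{a_j\})\leq b_k$ when $0\leq a_j\leq b_j$ with $\{b_j\}$ non-increasing should be stated explicitly when concluding membership in $\ell_{1,\infty}$.
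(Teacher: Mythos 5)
Your proposal is correct and follows essentially the same route as the paper's proof: the triangle-inequality bound on $\|w\|_{L_\infty(B(x_k,r_0))}$, the counting argument using the finite order $N$ of the cover and $\inf_j\abs{B(x_j,r_0)}>0$ to lower-bound $\abs{B(x_0,r(x_k)+r_0)}$ by a multiple of $k$, and Lemma~\ref{Grimaldi} to transfer this to the inner ball. Your two closing remarks (treating the finitely many indices with $r(x_j)<r_0$ and stating the rearrangement comparison explicitly) are harmless refinements of details the paper leaves implicit.
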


We will now mold Property (D) into the form that we will apply it in.
\begin{lem}\label{L:NewPropD}
Let $(M,g)$ be a complete connected Riemannian manifold of bounded geometry. If $M$ has Property (D), that is if $V(r):=|B(x_0,r)| \in C^2(\R_{\geq 0})$ satisfies
\begin{equation}\label{eq3:Condition1}
    \frac{V'(r)}{V(r)} \in L_2(\R_{\geq 1}),
\end{equation}
and
\begin{equation}\label{eq3:Condition2}
\frac{V''(r)}{V'(r)} \to 0, \quad r \to \infty,
\end{equation}
then for every $h > 0$,
\begin{equation}
    \label{eq:Condition1Mod}
\left \{ \frac{\sup_{s \in [0, h]}\abs{\partial B(x_0, k+1+s)}}{\abs{B(x_0,k+1)}}\right\}_{k\in \mathbb{N}} \in \ell_2(\mathbb{N}).
\end{equation}
\end{lem}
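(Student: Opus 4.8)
The plan is to work with the single function $V(r):=|B(x_0,r)|\in C^2(\mathbb{R}_{\geq0})$, for which $V'(r)=|\partial B(x_0,r)|$, and the logarithmic derivative $g:=V'/V=(\log V)'$. One first records that $V(r)>0$ and $V'(r)>0$ for all $r>0$ (the latter because, via the coarea formula, every distance sphere $\rho^{-1}(r)$ with $\rho=d_g(\cdot,x_0)$ contains a point joined to $x_0$ by a short minimizing geodesic, hence a smooth hypersurface piece of positive $\mathcal H^{d-1}$-measure; completeness and non-compactness of $M$ guarantee such points exist), so $g>0$ on $(0,\infty)$ and $g\in L_2(\mathbb{R}_{\geq1})$ by \eqref{eq3:Condition1}. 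The target \eqref{eq:Condition1Mod} then factors into two steps: (i) replace $\sup_{s\in[0,h]}V'(k+1+s)$ by a bounded multiple of $V'(k+1)$, and (ii) show $\{g(k+1)\}_k\in\ell_2$.

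For (i), put $\eta(r):=\sup_{t\geq r}|V''(t)/V'(t)|$, which tends to $0$ by \eqref{eq3:Condition2}. For $s\in[0,h]$ and $r\geq1$,
\[
\log V'(r+s)-\log V'(r)=\int_r^{r+s}\frac{V''(t)}{V'(t)}\,dt\leq h\,\eta(r),
\]
so $\sup_{s\in[0,h]}V'(r+s)\leq e^{h\eta(r)}V'(r)$; choose $R_1=R_1(h)$ with $h\eta(r)\leq\log2$ for $r\geq R_1$. Then for every $k$ with $k+1\geq R_1$,
\[
\frac{\sup_{s\in[0,h]}V'(k+1+s)}{V(k+1)}\leq\frac{2V'(k+1)}{V(k+1)}=2\,g(k+1).
\]
Since only finitely many $k$ violate $k+1\geq R_1$, these contribute a finite amount to the $\ell_2$ sum, and \eqref{eq:Condition1Mod} is reduced to $\{g(k+1)\}_k\in\ell_2$.

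For (ii), I would exploit that $g$ is slowly varying over unit intervals once $r$ is large. From $(\log g)'=\frac{V''}{V'}-g$ we get, for $s\in[0,1]$ and $r\geq1$,
\[
\log\frac{g(r+s)}{g(r)}=\int_r^{r+s}\frac{V''(t)}{V'(t)}\,dt-\int_r^{r+s}g(t)\,dt\geq-\eta(r)-\Big(\int_r^{r+1}g(t)^2\,dt\Big)^{1/2},
\]
using $g\geq0$ and Cauchy--Schwarz on the last term. Writing $\delta(r):=\eta(r)+\big(\int_r^{r+1}g^2\big)^{1/2}$, we have $\delta(r)\to0$ (since $\eta(r)\to0$ and $\int_r^{r+1}g^2\to0$ as a tail of a convergent integral), hence $g(r)\leq e^{\delta(r)}g(r+s)$ for all $s\in[0,1]$. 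Taking $r=k$ an integer $\geq1$ and integrating in $s$ gives $g(k)^2\leq e^{2\delta(k)}\int_k^{k+1}g(t)^2\,dt$; with $C:=\sup_{k\geq1}e^{2\delta(k)}<\infty$ this yields $\sum_{k\geq1}g(k)^2\leq C\int_1^\infty g^2<\infty$, so $\{g(k+1)\}_{k\geq0}\in\ell_2$. Combined with (i), this establishes \eqref{eq:Condition1Mod}. (The same inequality with general $r$ gives $g(r)^2\leq e^{2\delta(r)}\int_r^{r+1}g^2\to0$, i.e.\ $V'(r)/V(r)\to0$, the assertion advertised just before the lemma.)

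The main obstacle is step (ii): pointwise samples of an $L_2$ function need not be $\ell_2$-summable, and what makes it work here is the regularity encoded in \eqref{eq3:Condition2}, fed through the identity $(\log g)'=\frac{V''}{V'}-g$ to force $g$ to be almost constant on unit intervals at infinity. A subsidiary point requiring attention is the strict positivity of $V'$ on $(0,\infty)$, which legitimizes all the logarithmic manipulations and is where completeness/bounded geometry of $M$ genuinely enters.
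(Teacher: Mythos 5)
Your proof is correct, and its skeleton matches the paper's: condition \eqref{eq3:Condition2} is used to show that $V'$ varies by a bounded factor over intervals of length $h$ (so the supremum in \eqref{eq:Condition1Mod} is dominated by a multiple of $V'(k+1)$), and then the resulting $\ell_2$ sum is compared with the convergent integral $\int_1^\infty (V'/V)^2\,dr$. Where you diverge is in the comparison step: the paper bounds $\int_k^{k+1}(V'/V)^2$ from below by $\bigl(\inf_{[k,k+1]}V'\bigr)^2/\bigl(\sup_{[k,k+1]}V\bigr)^2$, using monotonicity of $V$ for the denominator and the same condition-\eqref{eq3:Condition2} estimate for the numerator, so it never needs to control $g=V'/V$ itself; you instead prove that $g$ is slowly varying on unit intervals through the identity $(\log g)'=V''/V'-g$, with Cauchy--Schwarz and the convergent tail of $\int g^2$ supplying the smallness of $\int_r^{r+1}g$, which then gives $g(k)^2\leq e^{2\delta(k)}\int_k^{k+1}g^2$. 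Your route feeds hypothesis \eqref{eq3:Condition1} back into the pointwise control of $g$ (and, as you note, yields $V'/V\to 0$ as a by-product), while the paper's mechanism is marginally more elementary. Two small points of hygiene: $\eta(k)$, and hence $\delta(k)$, is only guaranteed finite for $k$ beyond the threshold where \eqref{eq3:Condition2} applies, so the constant should be a supremum over large $k$ only, with the finitely many remaining terms (each $g(k)$ being finite) absorbed separately rather than writing $\sup_{k\geq 1}e^{2\delta(k)}$; and strict positivity of $V'$ on all of $(0,\infty)$ is both slightly delicate (the coarea formula identifies $V'$ with $|\partial B(x_0,r)|$ only almost everywhere, so positivity of the sphere measure does not immediately give $V'(r)>0$ at every $r$) and unnecessary --- positivity for large $r$ suffices for all your logarithmic manipulations and is already implicit in the hypothesis that $V''/V'$ is defined and tends to $0$.
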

\begin{proof}
    Using condition~\eqref{eq3:Condition2}, let $\varepsilon >0$ and choose $R$ large enough so that $r \geq R$ implies that
    \[
    V''(r) \leq \varepsilon V'(r).
    \]
    For $r \geq R$, $\delta > 0$, we have
    \begin{align*}
        V'(r+\delta) &= V'(r) + \int_r^{r+\delta} V''(s)\,ds\\
        &\leq V'(r) + \varepsilon \delta \sup_{s\in [0,\delta]} V'(r+s).
    \end{align*}
    Taking the supremum over $\delta \in [0,h]$ on both sides and rearranging gives
    \[
     (1-\varepsilon h) \sup_{s\in [0,h]} V'(r+s) \leq V'(r).
    \]
    In particular, this implies that
    \begin{align*}
        \inf_{s\in [0,h]} V'(k+s) \geq  \inf_{s\in [0,h]} (1-\varepsilon h)\sup_{t\in[0,h]} V'(k+s+t) \geq (1-\varepsilon h) V'(k+h).
    \end{align*}
    Now choose $\varepsilon < \min(1,\frac{1}{h})$, and we can estimate the $L_2$-norm of $\frac{V'(r)}{V(r)}$ from below by a Riemann sum, 
    \begin{align*}
        \int_1^\infty \bigg(\frac{V'(r)}{V(r)} \bigg)^2\,dr & \geq \sum_{k=1}^\infty \frac{\big(\inf_{s\in [0,1]}V'(k+s)\big)^2}{\big(\sup_{s\in [0,1]} V(k+s)\big)^2}\\
        &\geq C + \sum_{k=1}^\infty(1-\varepsilon)^2  \frac{V'(k+1)^2}{V(k+1)^2}\\
        &\geq C + (1-\varepsilon)^2(1-\varepsilon h)^2\sum_{k=1}^\infty \frac{\sup_{s\in[0,h]}V'(k+1+h)^2}{V(k+1)^2}.
    \end{align*}
    This shows that~\eqref{eq3:Condition1} and~\eqref{eq3:Condition2} together imply~\eqref{eq:Condition1Mod}
\end{proof}

\begin{lem}\label{L:Property_D}
    Let $(M,g)$ be a non-compact Riemannian manifold with bounded geometry and Property (D). Then \begin{equation}\label{Condition1}
 \sum_{k=0}^\infty  \frac{\sup_{s \in [-1, 2]}\abs{\partial B(x_0, (k+s)r_0)}^2}{(1+\abs{B(x_0,(k-1)r_0)})^2} <\infty.
\end{equation}
\end{lem}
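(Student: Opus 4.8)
The plan is to deduce Lemma~\ref{L:Property_D} from Lemma~\ref{L:NewPropD} by a covering-and-reindexing argument, the key enabling fact being $r_0 \leq 1$. Write $V(r):=\abs{B(x_0,r)}$ and $\phi(r):=\abs{\partial B(x_0,r)}=V'(r)$; since $(M,g)$ has Property (D), $V$ is $C^2(\R_{\geq 0})$, so $\phi$ is continuous and in particular locally bounded. Applying Lemma~\ref{L:NewPropD} with $h=4$ and reindexing $n=k+1$, the hypothesis gives
\[
S:=\sum_{n=1}^\infty \frac{\sup_{r\in[n,n+4]}\phi(r)^2}{V(n)^2}<\infty .
\]

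Next I would bound the $k$-th summand of Lemma~\ref{L:Property_D} by a summand of $S$. For $k\geq K_1:=\lceil 1+r_0^{-1}\rceil$ put $m_k:=\lfloor (k-1)r_0\rfloor$, so that $m_k\geq 1$ and $m_k\leq (k-1)r_0<m_k+1$. The substitution $r=(k+s)r_0$ turns $\sup_{s\in[-1,2]}\phi((k+s)r_0)$ into $\sup_{r\in[(k-1)r_0,(k+2)r_0]}\phi(r)$; the interval $[(k-1)r_0,(k+2)r_0]$ has length $3r_0\leq 3$ and left endpoint in $[m_k,m_k+1)$, hence it is contained in $[m_k,m_k+4]$. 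Therefore $\sup_{s\in[-1,2]}\phi((k+s)r_0)^2\leq \sup_{r\in[m_k,m_k+4]}\phi(r)^2$. Since $V$ is non-decreasing, $1+V((k-1)r_0)\geq V(m_k)$, and so
\[
\frac{\sup_{s\in[-1,2]}\phi((k+s)r_0)^2}{(1+V((k-1)r_0))^2}\;\leq\;\frac{\sup_{r\in[m_k,m_k+4]}\phi(r)^2}{V(m_k)^2}.
\]
Finally, for each integer $n\geq 1$ the set $\{k\geq K_1: m_k=n\}$ equals the set of integers in $[nr_0^{-1}+1,(n+1)r_0^{-1}+1)$, which has at most $r_0^{-1}+1$ elements; summing the last display over $k\geq K_1$ therefore bounds the tail by $(r_0^{-1}+1)\,S<\infty$. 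The finitely many terms with $0\leq k<K_1$ are each finite, the numerator being a supremum of the locally bounded function $\phi$ over a bounded set (with the convention $\phi(r)=0$, $V(r)=0$ for $r\leq 0$, so the $k\in\{0,1\}$ denominators equal $1$) and the denominator being $\geq 1$. Adding these finitely many terms to the convergent tail yields the claim.

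There is no genuinely hard step here: given Lemma~\ref{L:NewPropD} the argument is bookkeeping. The only points needing a little care are verifying that the window $[(k-1)r_0,(k+2)r_0]$ really fits inside $[m_k,m_k+4]$ (hence the choice $h=4$ rather than $h=3$) and that each integer is reused only boundedly often as $m_k$ — both of which are exactly where $r_0\leq 1$ is used — together with isolating the finitely many small-$k$ terms so that one never divides by $V(m_k)=0$.
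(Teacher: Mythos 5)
Your proposal is correct and follows essentially the same route as the paper: both deduce the estimate from Lemma~\ref{L:NewPropD} by a covering/reindexing argument that uses $r_0\leq 1$, assigning to each $k$ an integer window (the paper groups the $k$ with $N\leq kr_0\leq N+1$ and bounds by the term at $N$ with $h=3$; you take $m_k=\lfloor (k-1)r_0\rfloor$ and use $h=4$), and noting that each integer is hit at most $O(r_0^{-1})$ times while the finitely many small-$k$ terms are harmless. Your slightly larger window $h=4$ is, if anything, a more careful accounting of the same bookkeeping.
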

\begin{proof}
    Since $M$ has Property (D), Lemma~\ref{L:NewPropD} gives that
    \[
    \sum_{k=1}^\infty \frac{\sup_{s \in [0, 3]}\abs{\partial B(x_0, k+s)}^2}{\abs{B(x_0,k)}^2} < \infty.
    \]
    Recall that we can assume $r_0 \leq 1$. Then,
    \begin{align*}
        \sum_{k=0}^\infty  &\frac{\sup_{s \in [-1, 2]}\abs{\partial B(x_0, (k+s)r_0)}^2}{(1+\abs{B(x_0,(k-1)r_0)})^2} \\
        & = \sum_{N=0}^\infty \sum_{N \leq kr_0 \leq N+1}  \frac{\sup_{s \in [-1, 2]}\abs{\partial B(x_0, (k+s)r_0)}^2}{(1+\abs{B(x_0,(k-1)r_0)})^2}\\
        &\leq \sum_{N=0}^\infty \sum_{N \leq kr_0 \leq N+1}  \frac{\sup_{s \in [-1, 2]}\abs{\partial B(x_0, N+s)}^2}{(1+\abs{B(x_0,N-1)})^2}\\
        &\leq \left\lceil \frac{1}{r_0}\right\rceil \left(2\cdot \sup_{s \in [0, 3]}\abs{\partial B(x_0, s)}^2+ \sum_{k=1}^\infty \frac{\sup_{s \in [0, 3]}\abs{\partial B(x_0, k+s)}^2}{\abs{B(x_0,k)}^2}\right)\\
        &<\infty. \qedhere
    \end{align*}
\end{proof}

Note that in the next lemma, we do not need uniform ellipticity of the differential operators considered, although we will consider operators $L \in BD^2(M)$ which lack a constant term. The meaning of this condition
is that in a system of normal coordinates $(U,\phi),$ where $U = B(x,r_0)$ we have
\[
    L = \sum_{0<|\alpha|\leq 2} a_{\alpha,x}(y)D^{\alpha}.
\]
Equivalently, $L1 = 0.$ The important feature of these operators is that if $P \in BD^2(M)$ and $f$ is a smooth function with uniformly bounded derivatives, then $[P,M_f] = [L, M_f]$ for a differential operator $L\in BD^2(M)$ with no constant term.

\begin{lem}\label{L:Dwl1}
Let $(M,g)$ be a non-compact Riemannian manifold with bounded geometry satisfying Property (D) (Definition~\ref{D:Property_D}), and take $w:M \to \R$ as in Lemma~\ref{wl1infty}. Then $Lw \in \ell_1(L_\infty)(M)$ for any $L \in BD^2(M)$ that lacks a constant term.
\end{lem}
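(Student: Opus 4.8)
The plan is to reduce the global $\ell_1(L_\infty)$-bound to a sum of local sup-norm estimates over the balls $B(x_j,r_0)$ of the bounded-geometry covering, and then to control each local contribution by the volume growth hypothesis (Property (D)) through Lemma~\ref{L:Property_D}. Write $r(x) := d_g(x,x_0)$, so that $w(x) = (1+|B(x_0,r(x))|)^{-1}$. First I would observe that since $L \in BD^2(M)$ has no constant term, in the normal coordinate chart $B(x_j,r_0)$ we have $L = \sum_{0<|\alpha|\le 2} a_{\alpha,x_j}(y)\partial_y^\alpha$ with $\sup_{j,|\alpha|\le 2}\|a_{\alpha,x_j}\|_{C^0(B)} < \infty$ by Definition~\ref{D:BD}. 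Hence $\|Lw\|_{L_\infty(B(x_j,r_0))}$ is bounded by a uniform constant times the sum of the $L_\infty(B(x_j,r_0))$-norms of the first and second coordinate derivatives of $y\mapsto w(\exp_{x_j}\circ e(y))$.

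The key computation is estimating these derivatives of $w$. The function $w$ is radial in $r$, and $w = \varphi\circ r$ where $\varphi(t) = (1+V(t))^{-1}$, $V(t) = |B(x_0,t)|$. Since $|\nabla r| = 1$ a.e.\ and, by bounded geometry, the Hessian of $r$ (equivalently the second fundamental forms of geodesic spheres) is uniformly bounded on $M$ away from $x_0$, the chain rule gives pointwise bounds of the form
\[
|\partial^\alpha_y w| \lesssim |\varphi'(r(y))| + |\varphi''(r(y))|, \qquad |\alpha|\le 2,
\]
with constants uniform in $j$. Now $\varphi'(t) = -V'(t)(1+V(t))^{-2}$ and $\varphi''(t) = -V''(t)(1+V(t))^{-2} + 2V'(t)^2(1+V(t))^{-3}$, and since $V'(t) = |\partial B(x_0,t)| \ge 0$ while by Property (D) the ratio $V''/V' \to 0$, we can bound $|\varphi'(t)| + |\varphi''(t)| \lesssim V'(t)(1+V(t))^{-2}$ for $t$ large (the $V''$ term is dominated by $V'$, and $V'(t)^2(1+V(t))^{-3} \le V'(t)(1+V(t))^{-2}$ once $V(t)\ge 0$). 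For a point $y$ in the chart at $x_j$ we have $r(y) \in [|x_j|-r_0, |x_j|+r_0]$ by the triangle inequality, so using monotonicity of $V$ we get
\[
\|Lw\|_{L_\infty(B(x_j,r_0))} \lesssim \frac{\sup_{s\in[-r_0,r_0]}|\partial B(x_0,|x_j|+s)|}{(1+|B(x_0,|x_j|-r_0)|)^2}.
\]

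Finally I would sum over $j$. As in the proof of Lemma~\ref{wl1infty}, order the $x_j$ by $|x_j|$; the finite-order property of the covering gives that at most $C k$ of the points $x_j$ lie in $B(x_0,|x_k|+r_0)$, hence $|x_k| \gtrsim$ the value $R$ with $|B(x_0,R)| \sim k$, and more precisely one can group the indices $j$ according to which integer shell $[N,N+1]$ the radius $|x_j|$ falls in; the number of $x_j$ in each shell is bounded by a constant times $\sup_{s\in[0,1]}|\partial B(x_0,N+s)|$ divided by $\inf_j |B(x_j,r_0)| > 0$. Substituting this into $\sum_j \|Lw\|_{L_\infty(B(x_j,r_0))}$ and using Cauchy--Schwarz turns the sum into something controlled by $\sum_N \sup_{s\in[-1,2]}|\partial B(x_0,(N+s)r_0)|^2 / (1+|B(x_0,(N-1)r_0)|)^2$, which is finite by Lemma~\ref{L:Property_D}. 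I expect the main obstacle to be the bookkeeping of the shell-counting argument and making the reduction from the $\ell_1$-sum to the $\ell_2$-type sum of Lemma~\ref{L:Property_D} rigorous — in particular justifying that the number of lattice points $x_j$ in an annular region is comparable to the boundary volume of that region uses both the finite-order covering and the lower bound $\inf_j|B(x_j,r_0)|>0$, and one must be careful that the extra factor $k$ coming from $w \sim 1/k$ on the $k$-th point combines correctly with the $1/k$ spacing to produce a convergent series rather than a logarithmically divergent one; this is exactly where Property (D), rather than mere subexponential growth, is needed.
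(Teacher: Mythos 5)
Your proposal follows essentially the same route as the paper: write $w=\tilde w\circ r$, use $|\nabla r|\le 1$ and the uniform bound on the Hessian of $r$ to reduce $\|Lw\|_{L_\infty(B(x_j,r_0))}$ to $\sup(|\tilde w'|+|\tilde w''|)$ over the relevant radial window, then count the centres $x_j$ per radial shell via the annulus-volume/finite-multiplicity argument and sum against Lemma~\ref{L:Property_D}. One small caveat: your justification ``once $V(t)\ge 0$'' for $V'(t)^2(1+V(t))^{-3}\le V'(t)(1+V(t))^{-2}$ is not right as stated (it needs $V'\le 1+V$), but this does hold for large $t$ since Property (D) forces $V'/V\to 0$, so the argument goes through.
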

\begin{proof}
For any $x \in M$ we can take a neighbourhood of normal coordinates $(U, \phi)$  in which $L$ takes the following form by definition:
\[
\sum_{|\alpha|= 1,2} a_{\alpha, x}(y) \partial_y^\alpha,
\]
where for any multi-index $\beta$
\[
\abs{\partial_y^\beta a_{\alpha, x}(0)} \leq C_{\alpha, \beta}, \quad x \in M.
\]
Now denote $r(x) := d_g(x_0,x)$, $\tilde{w}(r) = (1+|B(x_0,r)|)^{-1}$ so that $w = \tilde{w} \circ r$. Note that $\tilde{w}$ is a $C^2$ function on $[0,\infty)$ by assumption (Property (D)), and the combination of~\cite[Lemma~III.4.4]{Sakai1996} and~\cite[Proposition~III.4.8]{Sakai1996} gives that $r$ is smooth almost everywhere with $\norm{\nabla r} \leq 1$ almost everywhere. We therefore have almost everywhere
\begin{align*}
\abs{Lw(x)}  & \leq \Big | \sum_{|\alpha|= 1} a_{\alpha, x}(0) (\partial^\alpha (\tilde{w}\circ r \circ \phi^{-1}))(0) \Big | + \Big | \sum_{\substack{|\alpha|=2\\\alpha = \beta + \gamma\\|\beta|=|\gamma|= 1}} a_{\alpha, x}(0) (\partial^\beta \partial^\gamma (\tilde{w}\circ r \circ \phi^{-1}))(0) \Big |\\
&= \Big | \sum_{|\alpha|= 1} a_{\alpha, x}(0) (\partial^\alpha (r \circ \phi^{-1}))(0) \tilde{w}'(r(x)) \Big |\\
&\quad + \Big | \smash{\sum_{\substack{|\alpha|=2\\\alpha = \beta + \gamma\\|\beta|=|\gamma|= 1}} a_{\alpha, x}(0) \big( \tilde{w}''(r(x))  \cdot \partial^\beta (r \circ \phi^{-1})(0) \cdot \partial^\gamma (r \circ \phi^{-1})(0)} \\
&\quad \quad \quad \quad \quad \quad \quad \quad \quad \quad \quad \quad \quad \quad \quad \quad \quad \quad \quad \quad  + \tilde{w}'(r(x)) \cdot \partial^\beta \partial^\gamma (r \circ \phi^{-1})(0)\big) \Big |\\
&\leq \Big | \sum_{|\alpha|= 1} a_{\alpha, x}(0) \Big | \cdot |\tilde{w}'(r(x))| + \Big |\sum_{\substack{|\alpha|=2\\\alpha = \beta + \gamma\\|\beta|=|\gamma|= 1}} a_{\alpha,x}(0) \Big|\cdot |\tilde{w}''(r(x))| \\
&\quad + C\Big |\sum_{\substack{|\alpha|=2\\\alpha = \beta + \gamma\\|\beta|=|\gamma|= 1}} a_{\alpha,x}(0) \Big|\cdot |\tilde{w}'(r(x))| \\
&\leq (1+C)\cdot\big( |\tilde{w}'(r(x))|+|\tilde{w}''(r(x))|\big),
\end{align*}
where we have used the chain rule and
\[
|(\partial^\alpha (r \circ \phi^{-1}))(0)| = \abs{\frac{\partial}{\partial x^\alpha}\Big|_x (r)} \leq \norm{\nabla r(x)} \leq 1
\]
(because  $(\nabla r)_\alpha = \sum_{\beta}g_{\alpha \beta} \partial^\beta r$ and in normal coordinates at zero, $g_{\alpha,\beta}(0) = \delta_{\alpha,\beta}$)
in addition to
\[
|\partial^\beta \partial^\gamma (r \circ \phi^{-1})(0)| \leq \| \mathrm{Hess}\  r \| \leq C
\]
for some constant $C$, since the Hessian of $r$ is uniformly bounded~\cite[Theorem~6.5.27]{Petersen2006}.

By continuity, we therefore have everywhere
\[
\abs{Lw(x)} \leq  (1+C)\cdot(|\tilde{w}'(r(x))| + |\tilde{w}''(r(x))|).
\]
Therefore
\begin{align*}
    \norm{Lw}_{L_\infty(B(x_j, r_0))} &\leq (1+C)\cdot \big(\sup_{s \in [-r_0, r_0]} \abs{\tilde{w}'(r(x_j)+s)} + \abs{\tilde{w}''(r(x_j)+s)} \big)\\
    &\leq (1+C)\cdot \Bigg(\sup_{s \in [-r_0, r_0]} \frac{\abs{\partial B(x_0, r(x_j)+s)}}{(1+\abs{B(x_0,r(x_j)+s)})^2} \\
    & \quad + \sup_{s \in [-r_0, r_0]} \frac{\frac{d}{dR}\Big|_{R= r(x_j)+s}\abs{\partial B(x_0, R)}}{(1+\abs{B(x_0,r(x_j)+s)})^2} \\
    & \quad + 2 \sup_{s \in [-r_0, r_0]} \frac{\abs{\partial B(x_0, r(x_j)+s)}^2}{(1+\abs{B(x_0,r(x_j)+s)})^3}\Bigg).
\end{align*}

Next, we calculate
\begin{align*}
    \sum_{j\in \mathbb{N}} &\sup_{s \in [-r_0, r_0]} \frac{\abs{\partial B(x_0, r(x_j)+s)}}{(1+\abs{B(x_0,r(x_j)+s)})^2} \\
    &=  \sum_{k=0}^\infty \sum_{\{j: kr_0 \leq r(x_j)< (k+1)r_0\}} \sup_{s \in [-r_0, r_0]} \frac{\abs{\partial B(x_0, r(x_j)+s)}}{(1+\abs{B(x_0,r(x_j)+s)})^2}\\
    &\leq  \sum_{k=0}^\infty \abs{\{j: k r_0 \leq r(x_j)< (k+1)r_0\}}\cdot \sup_{l \in [-1, 2]} \frac{\abs{\partial B(x_0, (k+l)r_0}}{(1+\abs{B(x_0,(k+l)r_0)})^2}.
\end{align*}

With a similar calculation as in the proof of Lemma~\ref{wl1infty}, we have
\begin{align*}
    \inf_{m \in \mathbb{N}} \abs{B(x_m, r_0)} &\cdot \abs{\{j: k r_0 \leq r(x_j)< (k+1)r_0\}}\\
    &= \sum_{\{j : k r_0 \leq r(x_j)< (k+1)r_0\}} \inf_{m \in \mathbb{N}}\abs{B(x_m, r_0)}\\
    &\leq   \sum_{\{j : k r_0 \leq r(x_j)< (k+1)r_0\}} \abs{B(x_j, r_0)}\\
    &\leq (N+1) \big(\abs{B(x_0, (k+2)r_0)} - \abs{B(x_0, (k-1)r_0)}\big)\\
    &\leq 3(N+1)r_0 \sup_{l\in [-1,2]} \abs{\partial B(x_0, (k+l)r_0)},
\end{align*}
since all the balls $B(x_j, r_0)$ with $kr_0 \leq r(x_j)\leq (k+1)r_0$ are contained in the annulus $B(x_0, (k+2)r_0)\setminus B(x_0, (k-1)r_0)$, and balls can intersect at most $N$ other balls.

Using Lemma~\ref{L:Property_D} we can infer that the expression \eqref{Condition1} is finite, and so
\begin{align*}
    \sum_{j\in \mathbb{N}} \sup_{s \in [-r_0, r_0]} \frac{\abs{\partial B(x_0, r(x_j)+s)}}{(1+\abs{B(x_0,r(x_j)+s)})^2}
    &\leq C' \sum_{k=0}^\infty  \frac{\sup_{l \in [-1, 2]}\abs{\partial B(x_0, (k+l)r_0}^2}{(1+\abs{B(x_0,(k-1)r_0)})^2} < \infty.
\end{align*}

With analogous calculations,
we also have that
\[
\sum_{j\in \mathbb{N}}\sup_{s \in [-r_0, r_0]} \frac{\frac{d}{dR}\Big|_{R= r(x_j)+s}\abs{\partial B(x_0, R)}}{(1+\abs{B(x_0,r(x_j)+s)})^2} <\infty
\]
and
\[
\sum_{j \in \mathbb{N}}\sup_{s \in [-r_0, r_0]} \frac{\abs{\partial B(x_0, r(x_j)+s)}^2}{(1+\abs{B(x_0,r(x_j)+s)})^3} < \infty.
\]
We conclude that \[\|Lw\|_{\ell_1(L_\infty)} = \sum_{j\in \mathbb{N}} \| Lw \|_{L_{\infty}(B(x_j,r_0))} < \infty.\qedhere\]
\end{proof}

\begin{cor}\label{C:CwikelL1}
Let $P\in EBD^2(M)$ be a self-adjoint lower-bounded operator. Let $M$ be a non-compact Riemannian manifold with Property (D), and take $w:M \to \R$ as in Lemma~\ref{wl1infty}. Then $\exp(-tP)[P, M_w] \in \mathcal{L}_1$.
\end{cor}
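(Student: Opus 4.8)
The plan is to reduce the claim to the two Cwikel-type ingredients established earlier in this section. First I would observe that $M_w$ maps $\Hc^\infty$ into itself and, since $w$ is a smooth function on $M$ with uniformly bounded derivatives of all orders (this follows from Property~(D), which guarantees $V \in C^2(\R_{\geq 0})$, together with the bound $\|\nabla r\|\leq 1$ a.e. and the uniform bound on the Hessian of $r$ used in Lemma~\ref{L:Dwl1}), the commutator $[P,M_w]$ is a differential operator. Because $P \in EBD^2(M) \subseteq BD^2(M)$, the operator $[P,M_w]$ differentiates $w$ at most twice and no term survives in which $w$ is not differentiated; hence $[P,M_w] = M_{w_1} + (\text{first-order terms in } w_2)$ can be rewritten, more precisely, as $[P,M_w] = L M_? $ — it is cleaner to say: $[P,M_w]$ equals $L$ applied to $w$ in the coefficient sense, i.e. its coefficients are built from $Lw$, $L'w$, $w$ for uniform differential operators $L, L'$ with no constant term. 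The upshot I would extract is the factorisation
\[
[P,M_w] = \sum_{i} M_{f_i}\, Q_i,
\]
where each $Q_i \in BD^{m_i}(M)$ with $m_i \leq 1$ and each $f_i$ is, up to uniformly bounded smooth coefficients, of the form $Lw$ for some $L \in BD^2(M)$ without constant term, so that by Lemma~\ref{L:Dwl1} we have $f_i \in \ell_1(L_\infty)(M)$.

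Next I would write, for fixed $t>0$,
\[
\exp(-tP)[P,M_w] = \sum_i \big(\exp(-\tfrac{t}{2}P) M_{f_i}\big)\big(\exp(-\tfrac{t}{2}P)^{-1}\,\text{-adjusted}\big)\cdots
\]
— more carefully: $\exp(-tP)[P,M_w] = \sum_i \exp(-tP) M_{f_i} Q_i$, and I would move the smoothing across in two stages. By Corollary~\ref{C:Cwikel} applied with $p=1$, $\exp(-\tfrac{t}{2}P)M_{f_i} \in \mathcal{L}_1$ since $f_i \in \ell_1(L_\infty)(M)$. It then remains to show $\exp(-\tfrac{t}{2}P)^{\,-1}\exp(-tP) Q_i = \exp(-\tfrac{t}{2}P)Q_i$ is bounded; but $Q_i$ has order $\leq 1$, so $Q_i \in \op^{1}(1-\Delta)^{1/2}$, hence maps $\Hc^{s+1}(M)\to\Hc^s(M)$ boundedly, and arguing exactly as in the proof of Corollary~\ref{C:Cwikel} (using~\cite[Proposition~4.4]{Kordyukov1991} and~\cite[Theorem~3.9]{Kordyukov1991} to relate the $\Hc^s(M)$-scale to powers of $(P+C)$, together with the functional calculus fact that $\exp(-\tfrac{t}{2}P)(P+C)^N$ is bounded for all $N$) shows $\exp(-\tfrac{t}{2}P)Q_i \in B(L_2(M))$. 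Combining, each summand $\exp(-tP)M_{f_i}Q_i = \big(\exp(-\tfrac{t}{2}P)M_{f_i}\big)\big(M_{f_i}^{-1}\cdots\big)$ — better to just write $\exp(-tP)M_{f_i}Q_i$ and split the exponential as $\exp(-\tfrac{t}{2}P)\cdot\exp(-\tfrac{t}{2}P)$, putting the first factor with $Q_i$ and the second with $M_{f_i}$, so that it is a product of a trace-class operator and a bounded operator, hence trace-class. Summing the finitely many terms gives $\exp(-tP)[P,M_w]\in\mathcal{L}_1$.

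The main obstacle I anticipate is the bookkeeping in the first step: justifying cleanly that $[P,M_w]$ genuinely factors as $M_f Q$ with $f \in \ell_1(L_\infty)(M)$ and $Q$ of order $\leq 1$, rather than as a single operator whose coefficients mix derivatives of $w$ with the (bounded but not small) coefficients of $P$. The resolution is to expand $[P,M_w]$ in normal coordinates at each point, note that every surviving term carries at least one derivative landing on $w$, and collect the $w$-derivative factor as the multiplication operator $M_f$ (covered by Lemma~\ref{L:Dwl1}) while the remaining at-most-first-order derivatives together with the uniformly bounded coefficients of $P$ form $Q \in BD^1(M)$; uniformity of all the bounds in the point $x$ is exactly what the bounded geometry hypothesis supplies. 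Once this factorisation is in hand, the rest is a routine application of Corollary~\ref{C:Cwikel} and the mapping properties of $\op^1(1-\Delta)^{1/2}$ already used in this section.
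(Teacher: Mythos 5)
There is a genuine gap, and it sits exactly at your final step. The identity you lean on, namely $\exp(-tP)M_{f_i}Q_i=\bigl(\exp(-\tfrac{t}{2}P)M_{f_i}\bigr)\bigl(\exp(-\tfrac{t}{2}P)Q_i\bigr)$, is false: writing $\exp(-tP)=\exp(-\tfrac{t}{2}P)\exp(-\tfrac{t}{2}P)$ puts \emph{both} halves of the semigroup to the left of $M_{f_i}Q_i$, with $Q_i$ rightmost, and $\exp(-\tfrac{t}{2}P)$ does not commute with $M_{f_i}$, so there is no way to ``put the first factor with $Q_i$''. What you can legitimately extract is $\exp(-\tfrac{t}{2}P)M_{f_i}\in\mathcal{L}_1$ (Corollary~\ref{C:Cwikel} with $p=1$, since $f_i\in\ell_1(L_\infty)$ by Lemma~\ref{L:Dwl1}), but then the leftover factor multiplying on the \emph{right} is the unbounded first-order operator $Q_i$, and $\mathcal{L}_1\cdot Q_i$ is not contained in $\mathcal{L}_1$ (nor even in $B(\Hc)$). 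The natural repairs all fail for the same reason: to pair $Q_i$ with a smoothing factor you must move a power of $(1-\Delta)^{\frac12}$ or of $(P+C)$ across $M_{f_i}$, and the resulting commutator differentiates $f_i$, i.e.\ produces \emph{third} derivatives of $w$. These are not available: your assertion that $w$ is smooth with uniformly bounded derivatives of all orders is incorrect --- Property~(D) only gives $V\in C^2(\R_{\geq 0})$, and $d_g(\cdot,x_0)$ is merely Lipschitz (smooth a.e.), so $w$ has exactly two controlled derivatives, a.e., which is all that Lemma~\ref{L:Dwl1} uses. Your zero-order term (the $M_{Dw}$ piece) is fine and is handled exactly as you say; it is the first-order terms, where one derivative lands on $w$ and one survives as an operator, that your argument does not close.

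The paper resolves precisely this point by localising rather than arguing globally: after reducing to $(1-\Delta)^{-K-1}[P,M_w]\in\mathcal{L}_1$, it sums over the bounded-geometry partition of unity $\{\psi_j\}$, takes the trace-class factor to be $(1-\Delta)^{-K}M_{\psi_j}$ with $\sup_j\|(1-\Delta)^{-K}M_{\psi_j}\|_1<\infty$ (Proposition~\ref{P:UniformBound}), absorbs the stray derivative into the smoothing side via the uniform operator-norm bound on $(1-\Delta)^{-1}\partial^\gamma M_{\chi_{B_j}}$, and gets summability over $j$ from $\|Dw\|_{L_\infty(B_j)}$ and $\|\partial^\beta w\|_{L_\infty(B_j)}$ being $\ell_1$ (Lemma~\ref{L:Dwl1}). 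So the $\ell_1$-weight of the derivatives of $w$ enters only through sup-norms on charts, and the dangling derivative never has to be commuted past a function of limited regularity. If you want to keep your global factorisation, you would have to strengthen Property~(D) to control a third derivative of $|B(x_0,r)|$ and deal with the cut locus; as the corollary is stated, the chart-by-chart argument is needed.
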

\begin{proof}
Take $P \in EBD^2(M).$ Similarly to the proof of Corollary~\ref{C:Cwikel}, since $\exp(-tP)(1-\Delta)^K$ is a bounded operator for all $K$, it suffices to prove that $(1-\Delta)^{-K-1}[P,M_w] \in \mathcal{L}_1$ for $K$ large enough.

First note that
\begin{align*}
\|(1-\Delta)^{-K-1}[P,M_w]\|_1 &\leq \sum_{j \in \mathbb{N}} \|(1-\Delta)^{-K-1}[P,M_w] M_{\psi_j}\|_1.
\end{align*}
Using the expression $P = \sum_{|\alpha|\leq 2} a_{\alpha,x_j}(y)\partial_y^{\alpha}$ in the coordinate chart $B_j,$ we have for $K \geq 0$
\begin{align*}
\|(1-\Delta)^{-K-1}[P,M_w] M_{\psi_j}\|_1 &\leq \|(1-\Delta)^{-K-1}\sum_{|\alpha|\leq 2} [M_{a_{\alpha,x_j}} \partial^\alpha,M_w] M_{\psi_j}\|_1\\
&\leq \|(1-\Delta)^{-K-1} M_{\psi_j} M_{D w}\|_1\\
& \quad + \Bigg\|(1-\Delta)^{-K-1}\sum_{\substack{|\alpha|=2\\\alpha = \beta + \gamma\\|\beta|=|\gamma|= 1}} M_{a_{\alpha,x_j}} (\partial^\gamma M_{\partial^\beta w}+\partial^\beta M_{\partial^\gamma w}) M_{\psi_j}\Bigg\|_1\\
&\leq \|(1-\Delta)^{-K-1} M_{\psi_j} M_{D w}\|_1\\
&\quad + \Bigg\|(1-\Delta)^{-K-1}\sum_{|\beta|=1}\sum_{|\gamma|=1} M_{a_{\beta+\gamma,x_j}} \partial^\gamma M_{\partial^\beta w} M_{\psi_j}\Bigg\|_1\\
&\quad + \Bigg\|(1-\Delta)^{-K-1}\sum_{|\beta|=1} M_{a_{2\beta,x_j}} \partial^\beta M_{\partial^\beta w} M_{\psi_j}\Bigg\|_1\\
&=: I + II + III,
\end{align*}
where we have denoted $D$ for the differential operator given near $x_j$ by
\[
    D = \sum_{1\leq|\alpha|\leq 2} a_{\alpha,x_j}(y)\partial_y^{\alpha}.
\]
That is, $D$ is equal to $P$ without constant terms. Since $\psi_j$ is by definition supported in $B_j,$ we have
\begin{align*}
I &= \|(1-\Delta)^{-K-1} M_{\psi_j} M_{D w}\|_1\\
&= \|(1-\Delta)^{-K-1} M_{\psi_j} M_{\chi_{B_j}}M_{D w}\|_1\\
&\leq \|(1-\Delta)^{-K-1} M_{\psi_j}\|_{1} \| M_{\chi_{B_j}}M_{D w} \|_\infty\\
&=\|(1-\Delta)^{-K-1} M_{\psi_j}\|_{1} \| Dw \|_{L_\infty (B_j)}.
\end{align*}
By Proposition~\ref{P:UniformBound}, $\|(1-\Delta)^{-K-1} M_{\psi_j}\|_1$ is uniformly bounded in $j$ for $K$ large enough, and by Lemma~\ref{L:Dwl1}, $\{\| Dw \|_{L_\infty (B_j)} \}_{j\in \mathbb{N}}\in \ell_1$.

\begin{align*}
II &= \|(1-\Delta)^{-K-1}\sum_{|\beta|=1}\sum_{|\gamma|=1} M_{a_{\beta+\gamma,x}} \partial^\gamma M_{\partial^\beta w} M_{\psi_j}\|_1\\
&\leq \|\sum_{|\beta|=1}\sum_{|\gamma|=1} (1-\Delta)^{-K-1} [M_{a_{\beta+\gamma,x}}, \partial^\gamma] M_{\chi_{B_j}} M_{\partial^\beta w} M_{\psi_j}\|_1\\
& \quad + \|\sum_{|\beta|=1}\sum_{|\gamma|=1} (1-\Delta)^{-K-1} \partial^\gamma M_{\chi_{B_j}}M_{a_{\beta+\gamma,x}}  M_{\partial^\beta w} M_{\psi_j}\|_1\\
&=  \|\sum_{|\beta|=1}\sum_{|\gamma|=1} (1-\Delta)^{-K-1} M_{\partial^\gamma a_{\beta+\gamma,x}} M_{\chi_{B_j}} M_{\partial^\beta w} M_{\psi_j}\|_1\\
& \quad + \|\sum_{|\beta|=1}\sum_{|\gamma|=1} (1-\Delta)^{-K-1} \partial^\gamma M_{\chi_{B_j}}M_{a_{\beta+\gamma,x}}  M_{\partial^\beta w} M_{\psi_j}\|_1\\
&\leq \sum_{|\beta|=1}\sum_{|\gamma|=1} \| (1-\Delta)^{-K-1} M_{\psi_j}\|_{1} \|\partial^\gamma a_{\beta+\gamma,x}\|_{L_\infty (B_j)} \| \partial^\beta w \|_{L_\infty (B_j)}\\
& \quad + \sum_{|\beta|=1}\sum_{|\gamma|=1} \| (1-\Delta)^{-1} \partial^\gamma M_{\chi_{B_j}} \|_{\infty} \| (1-\Delta)^{-K}M_{\psi_j} \|_1 \| a_{\beta+\gamma,x}\|_{L_\infty (B_j)} \| \partial^\beta w\|_{L_\infty(B_j)}.
\end{align*}
Suppose $y$ is in the normal neighbourhood $(U_x,\phi_x)$ of $x$. Then, $D$ being a local operator, at $y$ (taking $\tilde{y} = \phi_x(y)$), there are two different expressions of $D$ in normal coordinates:
\begin{align*}
D &= \sum_{|\alpha| \leq 2} a_{\alpha, x}(\tilde{y}) \partial_x^\alpha\\
&= \sum_{|\beta|\leq 2} a_{\beta, y}(0) \partial_y^\beta.
\end{align*}
Hence we can express $a_{\alpha, x}(\tilde{y})$ in terms of $a_{\beta, y}(0)$ and transition functions $\frac{\partial^\alpha y}{\partial x^\alpha}$. These are uniformly bounded by the definition of a differential operator with bounded coefficients and~\cite[Proposition~1.3]{Kordyukov1991}. Therefore $\|a_{\beta+\gamma,x}\|_{L_\infty (B_j)} $ and also $\|\partial^\gamma a_{\beta+\gamma,x}\|_{L_\infty (B_j)} $ are uniformly bounded in $j$. Likewise (taking $|\beta| = 1$),

\begin{align*}
|\partial^\beta_x w (y)| &= \Big|\sum_{|\alpha|=1} \frac{\partial^\alpha y}{\partial x^\alpha} \partial^\alpha_y w(y) \Big|\\
&\leq C_d \| \nabla r\| |\tilde{w}'(y)|,
\end{align*}
and hence $\{\| \partial^\beta w \|_{L_\infty(B_j)}\}_{j \in \mathbb{N}} \in \ell_1$ by the arguments in the proof of Lemma~\ref{L:Dwl1}.

Finally,
\begin{align*}
\| (1-\Delta)^{-1} \partial^\gamma M_{\chi_{B_j}} \|_{\infty} &\leq \sup_{ |\alpha|=1} \|\frac{\partial^\alpha y}{\partial x^\alpha}\|_{L_{\infty}(B_j)} \| (1-\Delta)^{-1} |\nabla| \|_\infty < \infty,
\end{align*}
uniformly in $j$. The same estimates hold for $III$.

Combining everything, we have
\begin{align*}
\|(1-\Delta)^{-K-1}[P,M_w]\|_1 &\leq \sum_{j \in \mathbb{N}} \|(1-\Delta)^{-K-1}[P,M_w] M_{\psi_j}\|_1 < \infty. \qedhere
\end{align*}
\end{proof}

Gathering all results in this section, let $M$ be a non-compact manifold of bounded geometry with Property (D). Let $P\in EBD^2(M)$ be self-adjoint and lower-bounded. Then $\exp(-tP)M_w \in \mathcal{L}_{1,\infty}$ by the Cwikel estimate in Corollary~\ref{C:Cwikel} and Lemma~\ref{wl1infty}. Corollary~\ref{C:CwikelL1} states that $\exp(-tP)[P,M_w] \in \mathcal{L}_1$. Theorem~\ref{MAIN_THEOREM} then gives that \[
        \mathrm{Tr}_{\omega}(e^{-tP}M_w) = \lim_{\varepsilon\to 0} \varepsilon \mathrm{Tr}(e^{-tP}\chi_{[\varepsilon,\infty)}(M_w)),
    \]
if the limit on the right hand side exists. If we assume that $P$ admits a density of states, we do in fact get the existence of the limit
\begin{align*}
\int_{\mathbb{R}} e^{-t\lambda} \, d\nu_P(\lambda)&=\lim_{R\to\infty} \frac{1}{|B(x_0,R)|}\mathrm{Tr}(e^{-tP}M_{\chi_{B(x_0,R)}})\\
 & = \lim_{\varepsilon\to 0} \varepsilon \mathrm{Tr}(e^{-tP}\chi_{[\varepsilon,\infty)}(M_w)).
\end{align*}
Note that the above calculation assumes that the volume of $M$ is infinite, which is equivalent with $M$ being non-compact (see Remark~\ref{R: metric balls lower bound}).

Hence, 
\[
\mathrm{Tr}_{\omega}(e^{-tP}M_w) = \int_{\mathbb{R}} e^{-t\lambda} \, d\nu_P(\lambda),\quad t>0.
\]
From this we can easily deduce by a density argument that
\[
\mathrm{Tr}_{\omega}(f(P)M_w) = \int_{\mathbb{R}} f \, d\nu_P, \quad f \in C_c(\mathbb{R}).
\]
For details, see \cite[Remark 6.3]{AMSZ}.
This concludes the proof of Theorem~\ref{T: main manifold thm}.

\section{Roe's index theorem}\label{S:Roe}

The results of the preceding section were stated for operators $P$ acting on scalar valued functions on $M.$ Identical results, with the same proofs, apply to operators acting between sections of vector bundles of bounded geometry. In the terminology of Shubin \cite{Shubin1992}, a rank $N$ vector bundle $\pi:S\to M$ is said to have bounded geometry if in every coordinate chart $\{B(x_j,r_0)\}_{j=0}^\infty$ (as defined in Section \ref{S: Manifolds}) $E$ has a trivialisation \[
    \pi^{-1}(B(x_j,r_0)) \approx B(x_j,r_0)\times \mathbb{R}^N
\]
such that the transition functions
\[
    t_{j,k}:B(x_j,r_0)\times \mathbb{R}^N\cap B(x_k,r_0)\times \mathbb{R}^N \to B(x_j,r_0)\times \mathbb{R}^N\cap B(x_k,r_0)\times \mathbb{R}^N
\]
have uniformly bounded derivatives in the exponential normal coordinates around $x_j$ or $x_k.$ See also~\cite[Page 65]{Eichhorn2008}.

For our purposes we will assume that $S\to M$ is equipped with a Hermitian metric $h,$ which is assumed to be a $C^\infty$-bounded section of the bundle $\overline{S}\otimes S$ in the terminology of Shubin~\cite[Appendix 1]{Shubin1992}. We denote $L_2(S)$ for the Hilbert space of square integrable sections of $S$ with respect to the volume form of $M$ and the Hermitian metric~$h$.

Shubin defines elliptic differential operators
acting on sections of vector bundles of bounded geometry.
Given a vector bundle $S$ of bounded geometry, define $EBD^m(M,S)$ as the space of differential operators $P$ such that in the exponential normal coordinates $y$ around $x\in M,$ we have
\[
    P = \sum_{|\alpha|\leq m} a_{\alpha,x}(y)\partial_y^{\alpha}
\]
where $a_{\alpha,x}(y)$ are $N\times N$ matrices, identified with sections of $\mathrm{End}(S)$ in the local trivialisation of $S$ and a synchronous frame, and
\[
    \|\partial_y^{\beta} a_{\alpha,x}(0)\| \leq C_{\alpha,\beta},\quad |\alpha|\leq m
\]
where $\|\cdot\|$ is the norm on the fibre $\mathrm{End}(S)_x$ defined by the Hermitian metric $h.$

Given such an operator $P \in EBD^2(M,S),$ we say by analogy with the scalar-valued case that $P$ has a density of states $\nu_P$ if for every $t$ there exists the limit
\[
    \lim_{R\to\infty} \frac{1}{|B(0,R)|}\mathrm{Tr}(e^{-tP}M_{\chi_{B(0,R)}}) = \int_{\mathbb{R}} e^{-t\lambda}\,d\nu_P(\lambda).
\]
Here, the trace is now with respect to the Hilbert space $L_2(S)$ rather than $L_2(M).$ A verbatim repetition of the proof of Theorem~\ref{T: main manifold thm} shows the following.
\begin{thm}\label{T: vector bundle dos thm}
    Let $S\to M$ be a vector bundle of bounded geometry over a non-compact Riemannian manifold of bounded geometry with Property (D). If $P \in EBD^2(M,S)$ is a self-adjoint lower bounded operator having a density of states $\nu_P,$ then for $f\in C_c(\mathbb{R})$ we have
    \[
        \mathrm{Tr}_{\omega}(f(P)M_w) = \int_\mathbb{R} f(\lambda)\,d\nu_P(\lambda)
    \]
    where $w(x) = (1+|B(x_0,d(x,x_0))|)^{-1}.$
    Similarly, for all $t>0$ we have
    \[
        \mathrm{Tr}_{\omega}(\exp(-tP)M_w) = \int_\mathbb{R} \exp(-t\lambda)\,d\nu_P(\lambda).
    \]
\end{thm}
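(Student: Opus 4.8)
The plan is to follow the proof of Theorem~\ref{T: main manifold thm} essentially line by line, replacing scalar functions on $M$ by sections of $S$ and the reference operator $1-\Delta_g$ by a reference uniformly elliptic operator on $S$. The two analytic inputs that must be re-established in the bundle setting are the Cwikel-type estimate, $\exp(-tP)M_w \in \mathcal{L}_{1,\infty}$, and the commutator estimate, $\exp(-tP)[P,M_w]\in\mathcal{L}_1$; once these hold, Theorem~\ref{MAIN_THEOREM} applies verbatim to the pair $(P,\,W=M_w)$ on the Hilbert space $L_2(S)$, and the conclusion is extracted exactly as in the scalar case.

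First I would fix a reference operator. Since $S\to M$ has bounded geometry and carries a $C^\infty$-bounded Hermitian metric $h$, Shubin's calculus \cite{Shubin1992} provides a positive self-adjoint uniformly elliptic $L\in EBD^2(M,S)$ (for instance $1+\nabla^*\nabla$ for a bounded-geometry connection) whose powers $L^{s/2}$ define the Sobolev scale $\Hc^s(M,S)$, such that $L^{-K}\colon\Hc^s(M,S)\to\Hc^{s+2K}(M,S)$ is bounded and $P\in\op^2(L^{\frac12})$, so $\exp(-tP)L^N$ is bounded for every $N$. The partition-of-unity and local-coordinate arguments of Section~\ref{S:Proof_Manifolds} are purely local and uniform in the chart index $j$, so they transfer: using the bounded-geometry trivialisations $\pi^{-1}(B(x_j,r_0))\simeq B(x_j,r_0)\times\mathbb{C}^N$ in a synchronous frame, the estimate of $\|L^{-K}M_{\psi_j}\|_{q,\infty}$ reduces to a finite direct sum of $N$ copies of the scalar estimate on the unit ball (Lemma~\ref{l:uniformlaplacianbound}), whose metrics and local coefficients lie in a precompact family in $C^\infty(\overline{B})$ by Proposition~\ref{p:bounded_metric} and the bounded-geometry hypotheses on $(S,h)$. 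This yields the bundle version of Proposition~\ref{P:UniformBound}, hence, via the left-disjointness estimates of Corollary~\ref{disjointification_corollary}, the bundle Cwikel estimate of Theorem~\ref{T: Cwikel estimate}. Combined with $w\in\ell_{1,\infty}(L_\infty)(M)$ from Lemma~\ref{wl1infty}, which only involves the Riemannian geometry of $M$, we obtain $\exp(-tP)M_w=\bigl(\exp(-tP)L^N\bigr)\bigl(L^{-N}M_w\bigr)\in\mathcal{L}_{1,\infty}$.

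For the commutator estimate, since $w$ is a scalar function we have $[P,M_w]=[D,M_w]$ where $D\in BD^2(M,S)$ has no constant term, and in a synchronous frame its matrix-valued coefficients and their derivatives are uniformly bounded by Shubin's coordinate-change estimates. The scalar computations of Lemma~\ref{L:Dwl1} and Corollary~\ref{C:CwikelL1} use only: that $\|Dw\|_{L_\infty(B(x_j,r_0))}$ is controlled by $\sup_{B(x_j,r_0)}\bigl(|\tilde w'\circ r|+|\tilde w''\circ r|\bigr)$ with $r=d_g(\cdot,x_0)$, which is unchanged because $w$ and $r$ are scalar and $\|\nabla r\|\le1$, $\|\mathrm{Hess}\,r\|$ uniformly bounded; and the uniform bound on $\|L^{-K-1}M_{\psi_j}\|_1$ together with boundedness of $L^{-1}\nabla$ on $L_2(S)$, both already available. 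Property (D) enters exactly as in Lemmas~\ref{L:NewPropD}--\ref{L:Property_D} to make the resulting series summable, giving $\exp(-tP)[P,M_w]=\bigl(\exp(-tP)L^{K+1}\bigr)\bigl(L^{-K-1}[D,M_w]\bigr)\in\mathcal{L}_1$.

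Finally, Theorem~\ref{MAIN_THEOREM} yields $\mathrm{Tr}_\omega(\exp(-tP)M_w)=\lim_{\varepsilon\to0}\varepsilon\,\mathrm{Tr}\bigl(\exp(-tP)\chi_{[\varepsilon,\infty)}(M_w)\bigr)$ whenever the right-hand limit exists. Since $w$ is radially strictly decreasing in $d_g(\cdot,x_0)$, one has $\chi_{[\varepsilon,\infty)}(M_w)=M_{\chi_{B(x_0,R_\varepsilon)}}$ with $R_\varepsilon\to\infty$ and $\varepsilon\bigl(1+|B(x_0,R_\varepsilon)|\bigr)\to1$; as $M$ is non-compact it has infinite volume (Remark~\ref{R: metric balls lower bound}), so if $P$ admits a density of states the limit equals $\int_\mathbb{R}\exp(-t\lambda)\,d\nu_P(\lambda)$. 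Hence $\mathrm{Tr}_\omega(\exp(-tP)M_w)=\int_\mathbb{R}\exp(-t\lambda)\,d\nu_P(\lambda)$ for all $t>0$, and a density argument on $C_c(\mathbb{R})$ as in \cite[Remark~6.3]{AMSZ} upgrades this to $\mathrm{Tr}_\omega(f(P)M_w)=\int_\mathbb{R}f\,d\nu_P$ for all $f\in C_c(\mathbb{R})$. The only real obstacle is bookkeeping: verifying that Kordyukov's Sobolev mapping properties and Shubin's coordinate-uniformity estimates, stated for scalar operators, genuinely hold for bounded-geometry bundles — but this is standard in Shubin's framework and requires no idea beyond the scalar case.
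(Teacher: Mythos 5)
Your proposal is correct and matches the paper's own treatment: the paper proves this theorem precisely by observing that the scalar arguments of Section~\ref{S:Proof_Manifolds} (the Cwikel estimate, the commutator estimate via Property (D), Theorem~\ref{MAIN_THEOREM}, and the final identification of the limit with the Laplace transform of $\nu_P$ followed by the density argument) apply verbatim to operators acting on sections of bounded-geometry bundles. You simply spell out the uniformity bookkeeping (local trivialisations, synchronous frames, reduction to finitely many copies of the scalar estimates) that the paper leaves implicit.
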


In~\cite{Roe1988a}, Roe considers (orientable) manifolds of bounded geometry that have a regular exhaustion. In this section, we will only consider manifolds that satisfy the assumptions of Theorem~\ref{T: main manifold thm}. In particular, the assumptions imply that $\lim_{R\to \infty}\frac{|\partial B(x_0, R)|}{|B(x_0, R)|} = 0$, i.e. any increasing sequence of metric balls $\{B(x_0, R_i)\}_{i=0}^\infty$ where $R\to\infty$ forms a regular exhaustion.

Denote the Banach space of $C^1$ uniformly bounded $n$-forms on $M$ by $\Omega^n_{\beta}(M)$. An element $m$ in the dual space of $\Omega^n_{\beta}(M)$ is said to be associated with the regular exhaustion $\{B(x_0, R_i)\}$ if for each bounded $n$-form~$\alpha$ \[\liminf_{i\to \infty} \bigg| \langle \alpha, m \rangle - \frac{1}{|B(x_0, R_i)|} \int_{B(x_0, R_i)} \alpha\bigg| = 0.\]
The algebra $\mathcal{U}_{-\infty}(M)$ consists of operators $A: C_c^\infty(M) \to C_c^\infty(M)$ such that for each $s,k\in \mathbb{R},$ $A$ has a continuous extension to a quasilocal operator from $\Hc^s(M) \to \Hc^{s-k}(M)$. In Roe's terminology, an operator $A: \Hc^s(M) \to \Hc^{s-k}(M)$ is quasilocal if for each $K \subset M$ and each $u\in \Hc^k(M)$ supported within $K$, \[\| Au\|_{\Hc^{s-k}(M\setminus \text{Pen}^+(K,r))} \leq \mu(r) \| u \|_{\Hc^s(M)},\] where $\mu: \mathbb{R}^+ \to \mathbb{R}^+$ is a function such that $\mu(r) \to 0$ as $r \to \infty$, and $\text{Pen}^+(K,r)$ is the closure of $\bigcup \{B(x,r) : x\in K\}$.

Operators $A \in \mathcal{U}_{-\infty}(M)$ are represented by uniformly bounded smoothing kernels 
\[
Au(x) = \int k_A(x,y)u(y) \, d\nu_g(y),
\]
where $\nu_g$ is the Riemannian volume form (recall that $M$ is assumed to be oriented). 
Roe then defines traces on $\mathcal{U}_{-\infty}(M)$ coming from functionals associated with our regular exhaustion by 
\[
\tau(A) = \langle \alpha_A, m\rangle,
\] where $\alpha_A$ is the bounded $n$-form defined by $x\to k_A(x,x)\, d\nu_g$.

Recall from the introduction of this chapter that the trace $\tau$ extends to a trace on $M_n(\mathcal{U}_{-\infty}^+)$, and hence descends to a dimension-homomorphism
\[
\dim_\tau: K_0(\mathcal{U}_{-\infty}) \to \mathbb{R}.
\]
Furthermore, since Roe showed that elliptic differential operators are invertible modulo $\mathcal{U}_{-\infty}$~\cite{Roe1988a}, one can define an abstract index of an elliptic differential operator acting on sections of a Clifford bundle as an element of $K_0(\mathcal{U}_{-\infty})$ via standard $K$-theory constructions. The Roe index theorem then states the following.
\begin{thm}[Roe index theorem]
Let $M$ be a Riemannian manifold, $S$ a graded Clifford bundle on $M$, both with bounded geometry. Let $D$ be the Dirac operator of $S$. Let $m$ and $\tau$ be defined as above. Then
\[
    \dim_\tau (\operatorname{Ind}(D)) = m(\mathbf{I}(D)),
\]
where $\mathbf{I}(D)$ is the integrand in the Atiyah--Singer index theorem.
\end{thm}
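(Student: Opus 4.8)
The statement is Roe's index theorem, and the plan is to establish it by the heat-kernel method adapted to the bounded-geometry setting, using the averaged trace $\tau$ on $\mathcal{U}_{-\infty}(M)$ (equivalently the functional $m$ on $\Omega^d_\beta(M)$) in place of the operator trace. First I would check that for every $t>0$ the heat operator $\exp(-t\mathcal{D}^2)$ lies in $\mathcal{U}_{-\infty}(M)$, where $\mathcal{D}$ is the odd self-adjoint operator built from $D$ and $D^*$ on $S=S_+\oplus S_-$: it is smoothing because spectral calculus together with ellipticity of $\mathcal{D}$ (and $\mathcal{D}\in EBD^1(M,S)$) make it map every $\Hc^s(M,S)$ into $\Hc^\infty(M,S)$, and it is quasilocal because on a manifold of bounded geometry the heat kernel has uniform exponential off-diagonal decay via finite propagation speed for the wave equation. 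Hence the $\tau$-supertrace $\tau(\eta\exp(-t\mathcal{D}^2))$ is well defined, and — since $\mathcal{D}$ is invertible modulo $\mathcal{U}_{-\infty}(M)$ — one can write down an idempotent in a matrix algebra over $\mathcal{U}_{-\infty}^+$ representing $\operatorname{Ind}(D)\in K_0(\mathcal{U}_{-\infty})$ (a graph/Wassermann-type projection) whose $\tau$-supertrace equals $\dim_\tau(\operatorname{Ind}(D))$.

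The second step is the McKean--Singer argument: differentiating gives $\frac{d}{dt}\tau(\eta\exp(-t\mathcal{D}^2)) = -\tau(\eta\mathcal{D}^2\exp(-t\mathcal{D}^2))$, which vanishes because $\eta$ anticommutes with $\mathcal{D}$ and $\tau$ kills commutators of the relevant quasilocal operators with $\mathcal{D}$. This commutator-vanishing is where quasilocality and regularity of the exhaustion enter; in the present setting it is available because Property (D) forces $|\partial B(x_0,R)|/|B(x_0,R)|\to 0$, so any increasing sequence of metric balls is a regular exhaustion. Therefore $\tau(\eta\exp(-t\mathcal{D}^2))$ is independent of $t$ and equals $\dim_\tau(\operatorname{Ind}(D))$ for all $t>0$.

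The third step is to let $t\to 0$. Writing $\tau(\eta\exp(-t\mathcal{D}^2)) = \bigl\langle x\mapsto \operatorname{str} k_t(x,x)\,d\nu_g,\ m\bigr\rangle$ with $k_t$ the heat kernel, the pointwise supertrace $\operatorname{str} k_t(x,x)$ admits a short-time asymptotic expansion whose coefficients are universal local invariants of the curvature of $M$ and of $S$; by the local index theorem (Getzler rescaling, or Patodi cancellation) the negative powers of $t$ cancel and the $t^0$ term is exactly the Atiyah--Singer integrand $\mathbf{I}(D)(x)$. Bounded geometry makes this expansion uniform in $x$, so $\operatorname{str} k_t(\cdot,\cdot)\,d\nu_g\to\mathbf{I}(D)$ in $\Omega^d_\beta(M)$, and since $m$ is a bounded functional on that space, $\tau(\eta\exp(-t\mathcal{D}^2))\to m(\mathbf{I}(D))$. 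Combining this with the $t$-independence from the previous step yields $\dim_\tau(\operatorname{Ind}(D)) = m(\mathbf{I}(D))$. In the present paper one can alternatively route the argument through Theorem~\ref{T: vector bundle dos thm}: if $\mathcal{D}^2$ admits a density of states, the averaged trace of $\eta\exp(-t\mathcal{D}^2)$ coincides with $\Tr_\omega(\eta\exp(-t\mathcal{D}^2)M_w)$, and the heat expansion then identifies this quantity with $m(\mathbf{I}(D))$.

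The main obstacle is the non-compact functional-analytic bookkeeping rather than the local differential geometry: one must verify that $\tau$ is genuinely a trace on which the McKean--Singer cancellation runs — the commutator-vanishing for the averaged/limit trace is the delicate point, and is exactly where quasilocality and regularity of the exhaustion are indispensable — and that the short-time heat asymptotics are uniform enough over $M$ for the $t\to 0$ limit to pass through the averaging functional $m$. Both of these are precisely what the bounded-geometry hypothesis, reinforced here by Property (D), is designed to supply.
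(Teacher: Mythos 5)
This statement is not proved in the thesis at all: it is quoted as background from Roe~\cite{Roe1988a}, and the chapter only \emph{uses} it (via the McKean--Singer identity $\dim_\tau(\operatorname{Ind}(D))=\tau(\eta e^{-tD^2})$ from \cite[Proposition~8.1]{Roe1988a}) to derive the Dixmier trace formula of Theorem~\ref{roe_index_formula_theorem}. Your sketch is, in substance, Roe's original heat-kernel argument — $e^{-t\mathcal{D}^2}\in\mathcal{U}_{-\infty}$ by ellipticity plus uniform off-diagonal decay, $t$-independence of the $\tau$-supertrace by a McKean--Singer cancellation resting on the vanishing of $\tau$ on the relevant (super)commutators, and the $t\to0$ limit via uniform short-time asymptotics fed through the bounded functional $m$ — so as an outline it is the right proof of the cited result, and you correctly flag the commutator-vanishing for $\tau$ and the uniformity of the heat asymptotics as the genuinely delicate points.

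Two hypotheses in your write-up should be disentangled from the theorem itself. First, Property~(D) is not part of Roe's theorem: the theorem needs only bounded geometry and a functional $m$ associated with \emph{some} regular exhaustion; in this chapter Property~(D) enters only to guarantee that metric balls $\{B(x_0,R_i)\}$ form a regular exhaustion, which is needed for the later application, not for the statement you are proving. Second, your proposed alternative route through Theorem~\ref{T: vector bundle dos thm} requires the additional assumption that $D_{\mp}D_{\pm}$ admit a density of states; that is strictly stronger than the hypotheses of Roe's theorem and only recovers the thesis's Theorem~\ref{roe_index_formula_theorem} (the identification $\tau(\eta e^{-tD^2})=\Tr_\omega(\eta e^{-tD^2}M_w)$), so it cannot replace the general proof — and note that even then the identification of the limit with $m(\mathbf{I}(D))$ still goes through Roe's local index computation rather than through the Dixmier-trace machinery.
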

The basis for the proof of the theorem is the following McKean--Singer formula
\[
    \dim_{\tau}(\operatorname{Ind}(D)) = \tau(\eta e^{-tD^2})
\]
where $\eta$ is the grading operator on $S,$ and $\tau$ is now defined on operators on acting on sections of $S.$

The following lemma relates Roe's $\tau$ functional to the density of states. We will use the fact that if $P$ is elliptic and self-adjoint, then the mapping
\[
    f\mapsto \tau(f(P))
\]
is continuous on $f \in C_c(\mathbb{R}).$ Indeed, by the Sobolev inequality the uniform norm of the integral kernel of $f(P)$ is bounded above by the norm of $f(P)$ as an operator from a Sobolev space of sufficiently negative smoothness to a Sobolev space of sufficiently positive smoothness. Since $f$ is compactly supported, by functional calculus, the operator $f(P)(1+P)^N$ is bounded on $L_2(M,S)$ for every $N,$ with norm depending on the width of the support of $f$ and the uniform norm of $f.$ Since $P$ is elliptic, it follows from these arguments that if $f$
is supported in $[-K,K]$ then there is a constant $C_K$ such that
\[
    |\tau(f(P))| \leq C_K\|f\|_{\infty}.
\]
See the related arguments in \cite[Proposition 2.9, Proposition 2.10]{Roe1988a}.

\begin{lem}\label{L: tau is dos}
    Let $S\to M$ be a vector bundle of bounded geometry,
    and let $P \in EBD^m(M,S)$ for some $m>0.$
    Assume that $P$ has a density of states $\nu_P$
    with respect to the base-point $x_0\in M.$ If $\tau$ is associated with the exhaustion $\{B(x_0,R_i)\}_{i=0}^\infty$ for some sequence $R_i\to\infty,$ then
    \[
        \tau(f(P)) = \int_{\mathbb{R}} f\,d\nu_P,\quad f \in C_c(\mathbb{R}).
    \]
    Similarly,
    \[
        \tau(\exp(-tP)) = \int_{\mathbb{R}} \exp(-t\lambda)\,d\nu_P(\lambda),\quad t > 0.
    \]
\end{lem}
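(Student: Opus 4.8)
\textbf{Proof plan for Lemma~\ref{L: tau is dos}.}
The plan is to identify Roe's trace functional $\tau$, applied to a function of $P$, with the Dixmier-trace side of Theorem~\ref{T: vector bundle dos thm} via a direct comparison of the averaging procedures. First I would recall that for $A \in \mathcal{U}_{-\infty}(M,S)$ (or more generally for an operator with a continuous uniformly bounded integral kernel, such as $f(P)$ with $f \in C_c(\R)$, by the Sobolev-embedding argument recalled just above the lemma), the functional $\tau(A) = \langle \alpha_A, m\rangle$ is computed against \emph{any} form $m$ associated to the exhaustion $\{B(x_0,R_i)\}_{i=0}^\infty$, which by definition means
\[
\tau(A) = \lim_{i\to\infty} \frac{1}{|B(x_0,R_i)|}\int_{B(x_0,R_i)} \alpha_A = \lim_{i\to\infty}\frac{1}{|B(x_0,R_i)|}\mathrm{Tr}(A M_{\chi_{B(x_0,R_i)}}),
\]
the last equality being the standard identity $\int_{B}\operatorname{tr}_x k_A(x,x)\,d\nu_g(x) = \mathrm{Tr}(AM_{\chi_B})$ for operators with continuous kernels, where on the left $\operatorname{tr}_x$ is the fibrewise trace on $\mathrm{End}(S)_x$. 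Here I need the $\liminf$ in the definition of "associated with the exhaustion" to actually be a genuine limit in our situation; this holds because $P$ has a density of states, so the full limit $\lim_{R\to\infty}\frac{1}{|B(x_0,R)|}\mathrm{Tr}(f(P)M_{\chi_{B(x_0,R)}})$ exists, and any subsequential quantity appearing in the definition of $m$ must agree with it.

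Second, I would invoke the hypothesis that $P$ admits a density of states $\nu_P$ with respect to the base-point $x_0$, which says precisely that for every $t>0$,
\[
\lim_{R\to\infty}\frac{1}{|B(x_0,R)|}\mathrm{Tr}(e^{-tP}M_{\chi_{B(x_0,R)}}) = \int_\R e^{-t\lambda}\,d\nu_P(\lambda),
\]
and by the Riesz--Markov representation (and the standard density/Stone--Weierstrass argument used throughout, cf.~\cite[Remark~6.3]{AMSZ}) the analogous statement holds with $e^{-tP}$ replaced by $f(P)$ for any $f\in C_c(\R)$. Combining this with the first step gives immediately $\tau(f(P)) = \int_\R f\,d\nu_P$ for $f\in C_c(\R)$, and then $\tau(\exp(-tP)) = \int_\R \exp(-t\lambda)\,d\nu_P(\lambda)$ for $t>0$: note that although $e^{-t\lambda}$ is not compactly supported, the kernel of $e^{-tP}$ is still uniformly bounded and continuous (as $P\in EBD^m(M,S)$ is elliptic and lower bounded, $e^{-tP}(1+P)^N$ is bounded for all $N$, so the Sobolev argument applies), so the identity $\tau(e^{-tP}) = \lim_i \frac{1}{|B(x_0,R_i)|}\mathrm{Tr}(e^{-tP}M_{\chi_{B(x_0,R_i)}})$ is valid and the right-hand side converges to $\int_\R e^{-t\lambda}\,d\nu_P(\lambda)$ by assumption.

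The only real subtlety — and the step I would be most careful about — is the passage from the \emph{abstract} defining property of $m$ (a $\liminf$ of absolute differences being zero along the chosen exhaustion) to the concrete statement that $\tau(f(P))$ equals the genuine limit $\int_\R f\,d\nu_P$. The resolution is exactly that the density-of-states hypothesis upgrades the relevant Cesàro-type averages from "convergent along some subsequence" to "convergent", so that $\liminf$, $\limsup$ and $\lim$ all coincide; without a density of states one could only say $\tau$ lies between the $\liminf$ and $\limsup$ of the ball averages. A secondary point to verify is that $f(P)$ and $e^{-tP}$ genuinely have continuous, uniformly bounded integral kernels so that $\alpha_{f(P)}$ and $\alpha_{e^{-tP}}$ are well-defined bounded $n$-forms on which $m$ acts — this is where ellipticity of $P$ and the Sobolev inequality on manifolds of bounded geometry enter, and it is the same input already used by Roe~\cite[Propositions~2.9, 2.10]{Roe1988a}. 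With these two points in hand the lemma follows.
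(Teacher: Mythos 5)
Your proposal is correct and follows essentially the same route as the paper: you equate $\tau(f(P))$ and $\tau(e^{-tP})$ with the ball averages via the kernel-diagonal/trace identity (using the Sobolev/ellipticity argument for boundedness and continuity of the kernels), observe that the existence of the full density-of-states limit forces the $\liminf$-defined functional $\tau$ to coincide with it, and handle general $f\in C_c(\mathbb{R})$ by a Riesz--Markov plus Laplace-transform/density argument. The only (immaterial) difference is ordering: the paper first pins down $\tau(e^{-tP})$, then applies Riesz to $f\mapsto\tau(f(P))$ and uniqueness of Laplace transforms, whereas you first upgrade the heat-trace DOS statement to $C_c$ functions and then identify $\tau$ pointwise — same ingredients either way.
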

\begin{proof}
By Theorem \ref{T: vector bundle dos thm}, we have
\begin{align*}
\mathrm{Tr}_\omega (\exp(-tP) M_w)&= \int_{\mathbb{R}} e^{-t\lambda} \, d\nu_P(\lambda)\\
&= \lim_{R\to \infty} \frac{1}{|B(x_0, R)|} \mathrm{Tr}(\exp(-tP)M_{\chi_{B(x_0,R)}})\\
&= \lim_{R\to \infty} \frac{1}{|B(x_0, R)|} \int_{B(x_0,R)}\mathrm{tr}_{\mathrm{End}(S_x)}(K_{\exp(-tP)}(x,x)) \,d\nu_g(x)\\
&= \tau(\exp(-tP)),
\end{align*}
Since
\[
    f\mapsto \tau(f(P)),\quad f \in C_c(\mathbb{R})
\]
is continuous in the sense described in the paragraph preceding the theorem, it follows from the Riesz theorem that there exists a measure $\mu_{\tau,P}$ on $\mathbb{R}$ such that
\[
    \tau(f(P)) = \int_{\mathbb{R}} f\,d\mu_{\tau,P},\quad f \in C_c(\mathbb{R}).
\]
Since $\mu_{\tau,P}$ and $\nu_P$ have identical Laplace transform, it follows that $\mu_{\tau,P} = \nu_P.$
\end{proof}

A combination of Lemma \ref{L: tau is dos} and Theorem \ref{T: vector bundle dos thm} immediately yields the following:
\begin{thm}
Let $M$ be a manifold that satisfies the assumptions of Theorem~\ref{T: main manifold thm}, and let $S\to M$ be a vector bundle of bounded geometry. Let $P \in EBD^2(M,S)$, be self-adjoint and lower-bounded, and assume it admits a density of states $\nu_P$ at $x_0$. Let $w$ be the function on $M$ defined by \[ w(x) = (1+\abs{B(x_0,d_g(x,x_0))})^{-1},\quad x \in M.\] Then for any $f\in C_c(\mathbb{R})$
we have \[\tau(f(P)) = \mathrm{Tr}_\omega(f(P)M_w)\]
for any $\tau$ associated with the regular exhaustion $\{B(x_0,R_i)\}_{i\in \mathbb{N}}$ where $R_i\to\infty,$ and for any extended limit $\omega.$
Similarly,
\[
    \tau(\exp(-tP)) = \mathrm{Tr}_\omega(\exp(-tP)M_w)
\]
\end{thm}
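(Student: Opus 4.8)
The plan is to observe that the statement is an immediate synthesis of Theorem~\ref{T: vector bundle dos thm} and Lemma~\ref{L: tau is dos}, so the real work lies in checking that the hypotheses of the two results feed correctly into one another. First I would record that, by assumption, $M$ is a non-compact Riemannian manifold of bounded geometry with Property (D), $S \to M$ is a vector bundle of bounded geometry, and $P \in EBD^2(M,S)$ is self-adjoint, lower-bounded, and admits a density of states $\nu_P$ at $x_0$. These are precisely the hypotheses of Theorem~\ref{T: vector bundle dos thm}, which therefore gives, with $w(x) = (1+|B(x_0,d_g(x,x_0))|)^{-1}$,
\[
\mathrm{Tr}_\omega(f(P)M_w) = \int_{\mathbb{R}} f \, d\nu_P, \quad f \in C_c(\mathbb{R}),
\]
for every extended limit $\omega$, and likewise $\mathrm{Tr}_\omega(\exp(-tP)M_w) = \int_{\mathbb{R}} \exp(-t\lambda)\,d\nu_P(\lambda)$ for all $t>0$.

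Second I would invoke Lemma~\ref{L: tau is dos}. The only point deserving a remark here is that $\tau$ must be a trace associated with a \emph{regular} exhaustion; but as noted at the start of Section~\ref{S:Roe}, Property (D) forces $|\partial B(x_0,R)|/|B(x_0,R)| \to 0$ as $R\to\infty$, so any increasing sequence of concentric metric balls exhausting $M$ is automatically a regular exhaustion in Roe's sense, and the hypotheses of Lemma~\ref{L: tau is dos} ($P \in EBD^2(M,S)$, $P$ admits a density of states at $x_0$, $\tau$ associated with $\{B(x_0,R_i)\}_{i\in\mathbb{N}}$ with $R_i\to\infty$) are met. The lemma then delivers $\tau(f(P)) = \int_{\mathbb{R}} f\,d\nu_P$ for $f \in C_c(\mathbb{R})$ and $\tau(\exp(-tP)) = \int_{\mathbb{R}} \exp(-t\lambda)\,d\nu_P(\lambda)$ for $t>0$.

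Comparing the two displays finishes the argument: both $\tau(f(P))$ and $\mathrm{Tr}_\omega(f(P)M_w)$ equal $\int_{\mathbb{R}} f\,d\nu_P$, hence are equal, and the same reasoning applies to $\exp(-tP)$. The main (essentially the only) obstacle is not computational but bookkeeping: one must make sure that the measure $\nu_P$ appearing in Theorem~\ref{T: vector bundle dos thm} is literally the same as the one appearing in Lemma~\ref{L: tau is dos}. Both are characterised as the unique Borel measure whose Laplace transform is $t \mapsto \lim_{R\to\infty}|B(x_0,R)|^{-1}\mathrm{Tr}(e^{-tP}M_{\chi_{B(x_0,R)}})$, so by injectivity of the Laplace transform they coincide — this is exactly the identification already made inside the proof of Lemma~\ref{L: tau is dos}, so once it is cited there is nothing more to do.
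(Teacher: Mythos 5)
Your proposal is correct and is exactly the paper's argument: the paper states the theorem as an immediate combination of Theorem~\ref{T: vector bundle dos thm} and Lemma~\ref{L: tau is dos}, which is precisely the synthesis you carry out, including the observation that Property (D) makes the concentric balls a regular exhaustion and that the two occurrences of $\nu_P$ coincide (as already identified via the Laplace transform inside the proof of Lemma~\ref{L: tau is dos}). Nothing further is needed.
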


The preceding theorem is proved under the strong assumption that $P$ admits a density of states, which in particular implies that $\tau(\exp(-tP))$ is independent of the choice of functional $m$ used to define $\tau.$ Addressing the question of determining which traces $\tau$ and which extended limits $\omega$ are related in this way in general is beyond the scope of this chapter.

Roe \cite{Roe1988a} defines an algebra $\mathcal{U}_{-\infty}(E)$ of operators acting on sections of a vector bundle $E,$ and $\tau$ is extended to $\mathcal{U}_{-\infty}(E)$ essentially by composing $\tau$ with the pointwise trace on $\mathrm{End}(E),$ see \cite{Roe1988a} for details.

\begin{thm}\label{roe_index_formula_theorem}
Let $M$ be a non-compact Riemannian manifold of bounded geometry with Property (D), with a graded Clifford bundle $S \to M$ of bounded geometry. Let $D$ be a Dirac operator on $M$ associated with the Dirac complex \[C^\infty(S^+) \xrightarrow{D_+} C^\infty(S^-),\] where $D_+$ is the restriction of $D$ to the sections of $S^+$ and $D_- = D_+^*$ is its adjoint (cf.~\cite[Chapter~11]{Roe1998}). Let $D^2$ admit a density of states both when considered as an operator restricted to $L_2(S^+)$ and $L_2(S^-)$, in the sense that \[\lim_{R\to \infty} \frac{1}{|B(x_0,R)|} \mathrm{Tr}(\exp(-tD_-D_+)M_{\chi_{B(x_0,R)}}) = \int_{\mathbb{R}} e^{-t\lambda}\, d\nu_{D_-D_+}(\lambda), \quad t>0,\] for a Borel measure $\nu_{D_-D_+}$ and similarly for $D_+D_-$. Then for any $f\in C_c(\mathbb{R})$ such that $f(0)=1$, we have
\[
\dim_\tau(\mathrm{Ind} D) = \mathrm{Tr}_\omega(\eta f(D^2)M_w).
\]
\end{thm}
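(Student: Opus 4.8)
The plan is to reduce the asserted identity to the heat semigroup, where both sides are already controlled by results proved above, and then to use the rigidity of the Laplace transform to replace $\exp(-t\,\cdot\,)$ by an arbitrary $f\in C_c(\mathbb R)$ with $f(0)=1$.

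First I would exploit the grading. Writing $S=S^+\oplus S^-$ we have $D^2=D_-D_+\oplus D_+D_-$, and since $D$ is the Dirac operator of a Clifford bundle of bounded geometry the operators $D_-D_+\in EBD^2(M,S^+)$ and $D_+D_-\in EBD^2(M,S^-)$ are self-adjoint, non-negative and uniformly elliptic of order two. For any bounded Borel function $g$ one has $\eta\,g(D^2)=g(D_-D_+)\oplus\bigl(-g(D_+D_-)\bigr)$, and $M_w$ acts diagonally with the same weight on the two summands, so by additivity of the Dixmier trace (resp. of Roe's trace $\tau$) over this block decomposition,
\begin{align*}
\Tr_\omega\bigl(\eta\,g(D^2)M_w\bigr)&=\Tr_\omega\bigl(g(D_-D_+)M_w\bigr)-\Tr_\omega\bigl(g(D_+D_-)M_w\bigr),\\
\tau\bigl(\eta\,g(D^2)\bigr)&=\tau\bigl(g(D_-D_+)\bigr)-\tau\bigl(g(D_+D_-)\bigr).
\end{align*}
By hypothesis $D_-D_+$ and $D_+D_-$ admit densities of states $\nu_{D_-D_+}$ and $\nu_{D_+D_-}$; set $\mu:=\nu_{D_-D_+}-\nu_{D_+D_-}$, a signed Borel measure on $[0,\infty)$. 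Theorem~\ref{T: vector bundle dos thm} applied to each summand gives $\eta f(D^2)M_w\in\mathcal L_{1,\infty}$ (via Corollary~\ref{C:Cwikel}) and $\Tr_\omega(\eta\,f(D^2)M_w)=\int_{\mathbb R}f\,d\mu$ for all $f\in C_c(\mathbb R)$, while Lemma~\ref{L: tau is dos} together with the McKean--Singer formula $\dim_\tau(\mathrm{Ind}\,D)=\tau(\eta\,e^{-tD^2})$ (valid for every $t>0$) yields
\[
\dim_\tau(\mathrm{Ind}\,D)=\int_{0}^{\infty}e^{-t\lambda}\,d\mu(\lambda),\qquad t>0.
\]

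The crux is to identify $\mu$ with the point mass $c\,\delta_0$, where $c:=\dim_\tau(\mathrm{Ind}\,D)$. Since $\int e^{-t\lambda}\,d\nu_{D_\mp D_\pm}(\lambda)=\tau(\exp(-tD_\mp D_\pm))<\infty$ for every $t>0$, the total variation satisfies $\int e^{-t\lambda}\,d|\mu|(\lambda)<\infty$ for all $t>0$; as both measures live on $[0,\infty)$ we have $e^{-t'\lambda}\le e^{-t\lambda}$ for $t'\ge t$, so dominated convergence gives $c=\lim_{t\to\infty}\int e^{-t\lambda}\,d\mu(\lambda)=\mu(\{0\})$. Consequently $\int_{(0,\infty)}e^{-t\lambda}\,d\mu(\lambda)=0$ for all $t>0$, and the uniqueness theorem for Laplace transforms of signed measures (applied to the Jordan decomposition of $\mu$ restricted to $(0,\infty)$, whose positive and negative parts have finite, equal Laplace transforms) forces $\mu|_{(0,\infty)}=0$. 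Hence $\mu=c\,\delta_0$, and for any $f\in C_c(\mathbb R)$ with $f(0)=1$,
\[
\Tr_\omega\bigl(\eta\,f(D^2)M_w\bigr)=\int_{\mathbb R}f\,d\mu=f(0)\,c=\dim_\tau(\mathrm{Ind}\,D),
\]
which is the claim.

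I expect the main obstacle to be the measure-theoretic rigidity step, i.e. carefully justifying that $\mu$ is concentrated at the origin: this requires the integrability of $e^{-t\,\cdot\,}$ against $|\mu|$ (which comes from finiteness of the heat traces $\tau(\exp(-tD_\mp D_\pm))$), the dominated-convergence argument as $t\to\infty$, and a clean invocation of Laplace-transform uniqueness. A secondary point requiring care is confirming that $D_-D_+$ and $D_+D_-$ genuinely satisfy the hypotheses of Theorem~\ref{T: vector bundle dos thm} — self-adjoint, lower bounded, in $EBD^2$ over a bounded-geometry bundle — so that the density-of-states assumption of the present theorem supplies exactly the convergence needed on both sides.
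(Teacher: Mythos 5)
Your proposal is correct, and its skeleton is the same as the paper's: split $L_2(S)=L_2(S^+)\oplus L_2(S^-)$ by the grading, note $\eta f(D^2)M_w$ decomposes as $f(D_-D_+)M_w\oplus(-f(D_+D_-)M_w)$, apply Theorem~\ref{T: vector bundle dos thm} (and Lemma~\ref{L: tau is dos}) to $D_-D_+$ and $D_+D_-$ separately, and invoke Roe's McKean--Singer formula $\dim_\tau(\mathrm{Ind}\,D)=\tau(\eta e^{-tD^2})$. Where you genuinely go beyond the paper's written argument is the passage from the heat semigroup to an arbitrary $f\in C_c(\mathbb{R})$ with $f(0)=1$: the paper's proof only establishes $\dim_\tau(\mathrm{Ind}\,D)=\mathrm{Tr}_\omega(\eta e^{-tD^2}M_w)$ and leaves the general-$f$ statement implicit, whereas you make it rigorous by observing that the signed measure $\mu=\nu_{D_-D_+}-\nu_{D_+D_-}$ has constant Laplace transform in $t$, letting $t\to\infty$ with dominated convergence to identify $\mu(\{0\})=\dim_\tau(\mathrm{Ind}\,D)$, and then using Laplace-transform uniqueness (applied to the weighted finite measures, e.g.\ $e^{-\lambda}\,d\nu_{D_\mp D_\pm}$, so the Jordan decomposition argument on $(0,\infty)$ is legitimate) to conclude $\mu=\dim_\tau(\mathrm{Ind}\,D)\cdot\delta_0$, whence $\int f\,d\mu=f(0)\dim_\tau(\mathrm{Ind}\,D)$. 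This rigidity step is exactly what the stated $f(0)=1$ formulation needs, so your write-up is, if anything, more complete than the paper's; the only points to phrase carefully are the minor ones you already flag, namely that $D_\mp D_\pm\in EBD^2(M,S^\pm)$ are self-adjoint and non-negative so Theorem~\ref{T: vector bundle dos thm} applies, and that the sign convention $D^2|_{S^+}=D_-D_+$ is kept consistent on both sides (the paper's displayed signs are the opposite bookkeeping, which is immaterial).
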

\begin{proof}
By~\cite[Proposition~8.1]{Roe1988a}, we have that \[\dim_\tau(\text{Ind} D) = \tau(\eta \exp(-tD^2)), \quad t>0\]
where $\eta$ is the grading operator
\[
    \eta = \begin{pmatrix} 1 & 0 \\ 0 & -1\end{pmatrix}
\]
with respect to the orthogonal direct sum $L_2(S) = L_2(S^+)\oplus L_2(S^-).$

The proof of the theorem amounts to showing that
\[
    \tau(\eta e^{-tD^2}) = \mathrm{Tr}_{\omega}(\eta e^{-tD^2}M_w).
\]
The left-hand side is the same as
\[
    \tau(e^{-tD_+D_-})-\tau(e^{-tD_-D_+})
\]
while the right hand side is
\[
    \mathrm{Tr}_{\omega}(e^{-tD_+D_-}M_w)-\mathrm{Tr}_{\omega}(e^{-tD_-D_+}M_w).
\]
Applying Theorem~\ref{T: vector bundle dos thm} to $D_+D_-$ and $D_-D_+$ individually proves the result.
\end{proof}

\begin{rem}
The index $\dim_{\tau}(\operatorname{Ind}(D))$ is computed by a version of the McKean--Singer formula \cite[Proposition 8.1]{Roe1988a}
\[
    \mathrm{dim}_{\tau}(\mathrm{Ind}(\mathrm{D})) = \tau(\eta \exp(-tD^2))
\]
for arbitrary $t>0.$ One of the motivations in developing the present theorem was
to give a new explanation of why the function
\[
    t\mapsto\tau(\eta\exp(-tD^2))
\]
is independent of $t.$ If the assumptions of Theorem \ref{roe_index_formula_theorem} hold, then
\[
    \tau(\eta \exp(-tD^2)) = \mathrm{Tr}_\omega(\eta e^{-tD^2}M_w).
\]
Formally differentiating the right hand side with respect to $t$ and using the tracial property of $\mathrm{Tr}_\omega$ yields
\[
    \frac{d}{dt}\mathrm{Tr}_\omega(\eta e^{-tD^2}M_w) = -\frac12\mathrm{Tr}_{\omega}(\eta e^{-tD^2}D[D,M_w]).
\]
Our conditions ensure that $e^{-tD^2}D[D,M_w]$ is trace-class, and hence that
\[
    \frac{d}{dt}\mathrm{Tr}_{\omega}(\eta e^{-tD^2}M_w) = 0.
\]
It is interesting that $\mathrm{Tr}_{\omega}(\eta e^{-tD^2}M_w)$ and the traditional heat supertrace $\mathrm{Tr}(\eta e^{-tD^2})$ on a compact manifold are both independent of $t$ for apparently different reasons.
\end{rem}

\section{An example with a random operator}\label{S: Example}
This section has mostly been the work of Edward McDonald.

The assumptions in Theorem~\ref{T: main manifold thm} appear quite strong, especially the existence of the density of states. The following example of a random operator on a non-compact manifold where the density of states exists was given by Lenz, Peyerimhoff and Veseli\'{c}  \cite{LenzPeyerimhoffVeselic2007}, generalising earlier examples in \cite{LenzPeyerimhoff2004,PeyerimhoffVeselic2002}.

\begin{ex}\cite[Example (RSM)]{LenzPeyerimhoffVeselic2007}
    Let $(M,g_0)$ be the connected Riemannian covering of a compact Riemannian manifold $X = M/\Gamma,$ where $\Gamma$ is an infinite  group acting freely and properly discontinuously on $M$ by isometries. Assume that there is an ergodic action $\alpha$ of $\Gamma$ on a probability space $(\Xi,\Sigma,\mathbb{P}),$ and let $\{g_\xi\}_{\xi\in \Xi}$ be a measurable family of metrics on $M$ which are uniformly comparable with $g_0,$ in the sense that there exists $A>0$ such that
    \[
        \frac1A g_0(v,v) \leq g_\xi(v,v)\leq Ag_0(v,v).
    \]
    for all tangent vectors $v$ to $M.$ Assume that the action $\alpha$ of $\Gamma$ on $\Xi$ is compatible with the action on $\Gamma$ on $M$ in the sense that $g_{\alpha_{\gamma}^{-1}\xi}$ is the pullback of $g_{\xi}$ under the automorphism defined by $\gamma.$

    Similarly, it is assumed that there is a measurable family $\{V_{\xi}\}_{\xi\in \Xi}$ of smooth functions on $M$ such that
    \[
        V_{\xi}\circ \gamma = V_{\alpha^{-1}_\gamma\xi}.
    \]
    Let $\nu^\xi$ denote the Riemannian volume form on $M$ corresponding to $g_{\xi},$ and let $\Delta_{\xi}$ be the Laplace-Beltrami operator on $(M,g_{\xi}).$ Then \cite{LenzPeyerimhoffVeselic2007} consider the operator on $L_2(M,\nu^{\xi})$ given by
    \[
        H_{\xi} = -\Delta_{\xi}+M_{V_\xi}.
    \]
    What is shown in~\cite[Equation (27)]{LenzPeyerimhoffVeselic2007} is that if $\Gamma$ is amenable, then for every tempered F\o lner sequence $\{A_n\}_{n=0}^\infty$ in $\Gamma,$ the limit
    \[
        \lim_{n\to\infty} \frac{1}{|A_n|}\mathrm{Tr}_{L_2(M,\nu^{\xi}}(M_{\chi_{A_nF}}e^{-tH_{\xi}})
    \]
    exists for almost every $\xi,$ where $F$ is a fundamental domain for $\Gamma.$

    Recall that a we say that a F\o lner sequence $\{A_n\}_{n=0}^\infty$ is tempered if there is a constant $C>0$ such that for every $n\geq 1$ we have
    \[
        \left|\bigcup_{k<n} A_k^{-1}A_{n}\right| \leq C|A_{n}|.
    \]
\end{ex}

We will make one further assumption: that the measure $\nu^{0}$ associated with $g_0$ has a doubling property. That is, there exists a constant $C$ such that
\[
    \nu^0(B_{g_{0}}(x_0,2R)) \leq C\nu^{0}(B_{g_{0}}(x_0,R)),\quad R>0.
\]
Note that since the identity function is a bi-Lipschitz continuous map from $(M,g_0)$ to $(M,g_{\xi}),$ the same holds for the measures $\nu^{\xi}$ associated with the metrics $g_{\xi}.$

\begin{prop}
    Let $(M, g_0)$ be as above, and let
    \[
    w_\xi(x) = \frac{1}{1+\nu^\xi({B_{g_\xi}(x_0,d_{g_\xi}(x,x_0))})},\quad x \in M.
    \]
    If $M$ satisfies Property (D), the density for $H_\xi$ exists almost surely, and for almost all $\xi \in \Xi$,
    \[
    \mathrm{Tr}_{\omega}(f(H_\xi)M_{w_\xi}) = \int_{\mathbb{R}} f(\lambda) \, d\nu_{H_\xi}(\lambda), \quad f\in C_c(\mathbb{R}),
    \]
    for every extended limit $\omega \in \ell_\infty^*.$ 
\end{prop}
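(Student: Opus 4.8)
The plan is to verify that the hypotheses of Theorem~\ref{T: main manifold thm} (or rather its vector-bundle analogue, Theorem~\ref{T: vector bundle dos thm}, though here we work with scalar-valued functions so Theorem~\ref{T: main manifold thm} suffices) hold for the operator $H_\xi$ on $(M, g_\xi)$, for almost every $\xi \in \Xi$. There are two things to check: first, that $(M, g_\xi)$ is a non-compact Riemannian manifold of bounded geometry with Property (D); second, that $H_\xi = -\Delta_\xi + M_{V_\xi}$ is a self-adjoint, lower-bounded operator in $EBD^2(M)$ with respect to the metric $g_\xi$. Once these are established, and given the existence of the density of states $\nu_{H_\xi}$ (which is provided by~\cite[Equation (27)]{LenzPeyerimhoffVeselic2007} as recalled in the excerpt, since $\Gamma$ having subexponential growth --- forced by Property (D) on $M$, see the introduction --- implies amenability, so a tempered F\o lner sequence exists), Theorem~\ref{T: main manifold thm} applies directly and gives the claimed identity $\mathrm{Tr}_\omega(f(H_\xi)M_{w_\xi}) = \int_{\mathbb{R}} f \, d\nu_{H_\xi}$ with $w_\xi$ exactly the weight appearing there.

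First I would address the geometric points. Proposition~\ref{P:CoCompactBddGeom} shows that $(M,g_0)$, being a cocompact covering of a compact manifold via the isometric $\Gamma$-action, has bounded geometry. The bi-Lipschitz bound $\frac{1}{A}g_0(v,v) \le g_\xi(v,v) \le A g_0(v,v)$ means that the identity map $(M,g_0) \to (M,g_\xi)$ is bi-Lipschitz, so distances and volumes for $g_\xi$ are comparable to those for $g_0$. However, bounded geometry for $(M,g_\xi)$ is \emph{not} automatic from a mere bi-Lipschitz comparison, since it also constrains the curvature tensor and its covariant derivatives; here one must invoke the measurability and uniform-comparability structure of the family $\{g_\xi\}$ together with the $\Gamma$-equivariance $g_{\alpha_\gamma^{-1}\xi} = \gamma^* g_\xi$. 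The equivariance means the geometry of $(M, g_\xi)$ is, in a uniform sense, governed by data on the compact quotient, so that for almost every $\xi$ the manifold $(M, g_\xi)$ has bounded geometry with constants that may be taken uniform over a full-measure set; I would cite the relevant statements in~\cite{LenzPeyerimhoffVeselic2007} for this. For Property (D): since $V_{g_\xi}(r) = \nu^\xi(B_{g_\xi}(x_0,r))$ is comparable (with constants depending only on $A$ and the doubling constant $C$) to $V_{g_0}(r)$, and since Property (D) is stipulated to hold for $M$ (meaning for the reference metric $g_0$), the conditions $V'/V \in L_2(\mathbb{R}_{\ge 1})$ and $V''/V' \to 0$ transfer to $V_{g_\xi}$ --- though here one needs slight care, as Property (D) is a $C^2$ statement about the volume function and bi-Lipschitz comparison alone does not preserve second derivatives. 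The cleanest route is probably to observe that Lemma~\ref{L:NewPropD} reformulates Property (D) into the condition~\eqref{eq:Condition1Mod}, which is a statement purely about sup-norms of $\partial B(x_0, \cdot)$ over unit intervals relative to ball volumes, and this \emph{is} stable under the comparability of volumes and boundary areas; I would verify~\eqref{eq:Condition1Mod} for $g_\xi$ directly from its validity for $g_0$ and the doubling property, then apply Lemmas~\ref{L:NewPropD}--\ref{L:Property_D} in the direction needed by the proof of Theorem~\ref{T: main manifold thm}.

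Second, $H_\xi \in EBD^2(M)$ with respect to $g_\xi$: the Laplace--Beltrami operator $-\Delta_\xi$ of a bounded-geometry metric is uniformly elliptic of order two with uniformly bounded coefficients in the $g_\xi$-normal coordinates (this is standard, cf.~\cite{Kordyukov1991}), and adding the multiplication operator $M_{V_\xi}$ by a smooth function whose derivatives are uniformly bounded --- which again follows from $\Gamma$-equivariance $V_\xi \circ \gamma = V_{\alpha_\gamma^{-1}\xi}$ reducing $V_\xi$ to bounded data on the compact quotient --- keeps it in $EBD^2(M)$, now with a nonzero constant term that does not affect membership since $EBD^2$ allows $a_{\alpha,x}$ for $|\alpha| \le 2$. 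Self-adjointness and lower-boundedness of $-\Delta_\xi + M_{V_\xi}$ on $L_2(M, \nu^\xi)$ are classical for a uniformly bounded potential on a complete manifold. With all hypotheses verified on a full-measure set, the final statement follows from Theorem~\ref{T: main manifold thm} applied on $(M, g_\xi)$.

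The main obstacle I anticipate is the transfer of Property (D) --- specifically its $C^2$ regularity component --- from $g_0$ to $g_\xi$, since the bi-Lipschitz comparison of the metrics does not control second derivatives of the volume functions. Routing through the reformulation~\eqref{eq:Condition1Mod} of Lemma~\ref{L:NewPropD}, which is a more robust (integral/sup-norm) condition, is the way I would circumvent this; a secondary subtlety is ensuring all the ``bounded geometry'' and ``Property (D)'' constants can be chosen uniformly over a single full-measure subset of $\Xi$ rather than depending on $\xi$, for which the ergodicity of $\alpha$ and the equivariance relations are the essential tools, and for which I would lean on the corresponding uniformity statements already established in~\cite{LenzPeyerimhoffVeselic2007}.
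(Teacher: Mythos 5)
There is a genuine gap, and it sits exactly at the point you treat as already settled: the almost-sure existence of the density of states \emph{in the sense required by Theorem~\ref{T: main manifold thm}}, i.e.\ as a limit of $\frac{1}{|B(x_0,R)|}\mathrm{Tr}(e^{-tH_\xi}M_{\chi_{B(x_0,R)}})$ over \emph{metric balls}. The result of~\cite{LenzPeyerimhoffVeselic2007} that you cite gives convergence of $\frac{1}{|A_nF|}\mathrm{Tr}(M_{\chi_{A_nF}}e^{-tH_\xi})$ along tempered F\o lner sequences $\{A_n\}$ in the group $\Gamma$, where $F$ is a fundamental domain; it says nothing directly about exhaustion by balls. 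Since (as stressed in Section~\ref{S:DOS}) the density of states in general depends on the choice of exhausting sets, one cannot simply pass from F\o lner averages to ball averages by fiat. The bulk of the paper's proof is precisely this transfer: it constructs the specific sequence $A_k=\{\gamma\in\Gamma : d(\gamma x_0,x_0)<k-h\}$ with $h>2\,\mathrm{diam}(F)$, proves the sandwich $B(x_0,k-2h)\subseteq A_kF\subseteq B(x_0,k)$, uses the volume-ratio bound of Lemma~\ref{Grimaldi} together with the assumed doubling property to show that $\{A_k\}$ is F\o lner \emph{and} tempered (temperedness comes from $A_k\cdot A_k\subseteq A_{2k}$ and $|A_{2k}|\lesssim|A_k|$, which is where doubling is genuinely used), and then establishes
\[
\lim_{k\to\infty}\frac{1}{|A_kF|}\int_{A_kF}g\,d\nu^{\xi}=\lim_{R\to\infty}\frac{1}{|B(x_0,R)|}\int_{B(x_0,R)}g\,d\nu^{\xi}
\]
for bounded $g$, whenever either limit exists. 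Only then do the LPV results yield the existence of the ball-averaged limit almost surely, which is the nontrivial hypothesis of Theorem~\ref{T: main manifold thm}. Your proposal never addresses this equivalence of exhaustions, and your only use of the doubling hypothesis is for volume comparability between $g_0$ and $g_\xi$, which is not the role it plays.

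By contrast, the issues you devote most of your effort to (bounded geometry of $(M,g_\xi)$, transfer of Property (D) from $g_0$ to $g_\xi$, membership $H_\xi\in EBD^2(M)$) are not where the paper locates the difficulty: its proof works directly with the $\xi$-metric quantities $\nu^\xi$ and $B_{g_\xi}(x_0,R)$ and takes the geometric hypotheses of Theorem~\ref{T: main manifold thm} as given in that setting. Your caution about the $C^2$ component of Property (D) not being bi-Lipschitz stable is fair as a remark, but resolving it does not supply the missing step; without the F\o lner-to-balls argument (or an equivalent), the claimed almost-sure existence of $\nu_{H_\xi}$ with respect to balls, and hence the application of Theorem~\ref{T: main manifold thm}, is unsupported.
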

\begin{proof}
    For brevity, we will denote the measure $\nu^{\xi}$ by $|\cdot|$
    and $B(x_0,R)$ for $B_{g_{\xi}}(x_0,R).$

    We will show that Property (D) implies that $\Gamma$ admits a tempered F\o lner sequence $\{A_n\}_{n=1}^\infty$, and that for any bounded measurable function $g$ on $M$ we have
    \begin{equation}\label{equivalence_of_limits}
        \lim_{k\to\infty} \frac{1}{|A_kF|} \int_{A_kF}g\,d\nu^{\xi} = \lim_{R\to\infty} \frac{1}{|B(x_0,R)|}\int_{B(x_0,R)} g\,d\nu^{\xi}
    \end{equation}
    if either limit exists. Recall that $F$ is a fundamental domain for the action of $\Gamma.$
    Together with the results of \cite{LenzPeyerimhoffVeselic2007}, this implies that the limit
    \[
        \lim_{R\to\infty} \frac{1}{|B(x_0,R)|}\mathrm{Tr}(M_{\chi_{B(x_0,R)}}e^{-tH_{\xi}})
    \]
    exists for every $t>0$, and hence the assumptions of Theorem \ref{T: main manifold thm} are satisfied.

    Let $h>2\mathrm{diam}(F).$ For $k\geq h,$ let
    \[
        A_k = \{\gamma \in \Gamma\;:\; \mathrm{dist}(\gamma x_0,x_0) < k-h \}
    \]
    We claim that $A_k$ is a tempered F\o lner sequence.

    Note that $\mathrm{dist}(\gamma x_0,x_0) = \mathrm{dist}(\gamma^{-1}x_0,x_0),$ and so automatically $A_k=A_k^{-1}.$
    Define
    \[
        B_k := A_kF = \bigcup_{\gamma\in A_k} \gamma F
    \]
    First we show that $B(x_0,k-2h)\subseteq B_k.$ Indeed, if $p \in B(x_0,k-2h)$ there exists some $\gamma\in  \Gamma$ such that $p \in \gamma F,$ so $\mathrm{dist}(p,x_0) \leq \mathrm{diam}(F) < h,$ and thus $\mathrm{dist}(\gamma x_0,x_0) < k-2k+h =k-h.$ On the other hand, since $F$ has diameter smaller than $\frac{h}{2},$ if $p\in B_k$ then $\mathrm{dist}(p,x_0)\leq \frac{h}{2}+k-h < k.$
    That is, for all $k\geq 2h$ we have
    \[
        B(x_0,k-2h)\subset B_k \subset B(x_0,k).
    \]
    Since the action of $\Gamma$ is free, the union of the translates of $F$ is disjoint, and
    \[
        |B_k| = |A_k||F|,\quad k\geq 0
    \]
    and hence
    \[
        |B(x_0,k-2h)| \leq |F||A_k| \leq |B(x_0,k)|.
    \]
    As in the proof of Lemma~\ref{Grimaldi}, we know that for each $h>0$ there is a constant $C_h$ so that we have
    \[
    \frac{|B(x_0, k)|}{|B(x_0, k-2h)|} \leq C_h, \quad k > 1+2h
    \]
    Therefore,
    \[
        |A_k| \approx |B_k| \approx |B(x_0,k)|.
    \]
    uniformly in $k>1+2h.$

    To see that $A_k$ is F\o lner, let $\gamma\in \Gamma.$ By the triangle inequality, there exists $N>0$ such that
    \[
        \gamma A_k\subset A_{k+N}
    \]
    and, for $k$ sufficiently large,
    \[
        A_k\subset \gamma A_{k-N}
    \]
    and therefore the symmetric difference of $A_k$ and $\gamma A_k$ satisfies
    \[
        (\gamma A_k\setminus A_k)\cup (A_k\setminus \gamma A_k) \subset A_{k+N}\setminus A_{k-N}.
    \]
    It follows that, as $k\to\infty,$
    \[
        \frac{|(\gamma A_k\setminus A_k)\cup (A_k\setminus \gamma A_k)|}{|A_k|} \approx \frac{|B(x_0,k+N)|-|B(0,k-N)|}{|B(x_0,k)|}
    \]
    which is vanishing as $k\to\infty.$ Hence, $\{A_k\}_{k=1}^\infty$ is F\o lner.

    To see that $\{A_k\}_{k=1}^\infty$ is tempered, it suffices to show that there is a constant $C$ such that
    \[
        |A_k\cdot A_k| \leq C|A_k|.
    \]
    By the triangle inequality and the fact that $\Gamma$ acts isometrically we see that
    \[
        A_k\cdot A_k \subseteq A_{2k}.
    \]
    Therefore, by the doubling condition,
    \[
        |A_k\cdot A_k| \leq |A_{2k}| = \frac{1}{|F|}|B(x_0,2k)| \lesssim |B(x_0,k)| \approx |A_k|.
    \]
    uniformly in $k$ for sufficiently large $k.$
    Hence, $\{A_k\}_{k=1}^\infty$ is tempered.

Finally we prove \eqref{equivalence_of_limits}. Using the fact that $B_k\subseteq B(x_0,k),$ we write
\begin{align*}
    \frac{1}{|B(x_0,k)|}\int_{B(x_0,k)} g\,d\nu^{\xi} &= \frac{1}{|B_k|}\int_{B_k} g\,d\nu^{\xi} + \frac{|B_k|-|B(x_0,k)|}{|B(x_0,k)||B_k|}\int_{B_k} g\,d\nu^{\xi} \\
    &\quad + \frac{1}{|B(x_0,k)|}\int_{B(x_0,k)\setminus B_k} g\,d\nu^{\xi}
\end{align*}
Therefore,
\begin{align*}
    \left|\frac{1}{|B(x_0,k)|}\int_{B(x_0,k)} g\,d\nu^{\xi} - \frac{1}{|B_k|}\int_{B_k} g\,d\nu^{\xi}\right|
    &\leq 2\|g\|_{\infty}\frac{|B(x_0,k)|-|B_k|}{|B(x_0,k)|}\\
    &\leq 2\|g\|_{\infty} \frac{|B(x_0,k)|-|B(x_0,k-2h)|}{|B(x_0,k)|}
\end{align*}
and this vanishes as $k\to\infty.$ From here one easily deduces \eqref{equivalence_of_limits}.
\end{proof}

\section{Discrete metric spaces revisited}\label{S:MfsToDisc}
The abstract operator theoretical result in Theorem~\ref{MAIN_THEOREM} also recovers a weaker version of the main result from Chapter~\ref{Ch:DOSDiscrete}. This section is adapted from~\cite[Section~6]{HekkelmanMcDonaldv1}, the first preprint version of~\cite{HekkelmanMcDonald2024}, which did not appear in the published version.

As in Chapter~\ref{Ch:DOSDiscrete}, take a countably infinite discrete metric space $(X,d)$ such that every ball contains at most finitely many points, and let $x_0 \in X$. Write $\{r_k\}_{k \in \mathbb{N}} \subseteq \mathbb{R}$ for the set of values $\{d(x,x_0) \; : \; x\in X \}$ ordered in increasing fashion. Take a positive radially symmetric strictly increasing $w : X \to \C$ such that $M_w \in \mathcal{L}_{1,\infty}$.

Now take a self-adjoint bounded operator $T \in B(\ell_2(X))$ and $C >0$ such that $T + C$ is strictly positive. Writing $P = \log(T+C)$, we have that $\exp(-tP) = (T+C)^t$. Then Theorem~\ref{MAIN_THEOREM} --- using the weaker conditions~\eqref{eq:original_conditions} ---   gives us that if for all $t >0$
\begin{enumerate}
    \item $(T+C)^t M_w \in \mathcal{L}_{1,\infty}$;
    \item $[(T+C)^t, M_w] \in \mathcal{L}_1$,\label{WeakerCondition2}
\end{enumerate}
then we have for every extended limit $\omega \in \ell_\infty^*$,
\begin{equation}\label{eq:disc2}
\Tr_\omega((T+C) M_w) = \lim_{\varepsilon \to 0} \varepsilon \Tr((T+C) \chi_{[\varepsilon, \infty)}(M_w)),
\end{equation}
whenever the limit on the right hand exists. On the other hand, the main result in Chapter~\ref{Ch:DOSDiscrete}, Theorem~\ref{T: Main}, would let us conclude for all bounded $T \in B(\ell_2(X))$ (not just self-adjoint $T$)
\begin{equation}\label{eq:disc1}
    \mathrm{Tr}_\omega(TM_w) = \mathrm{Tr}_\omega(M_w)\lim_{k\to \infty} \frac{\mathrm{Tr}(TM_{\chi_{B(x_0,r_k)}})}{|B(x_0,r_k)|},
\end{equation}
whenever the limit on the right-hand side exists. These statements are not equivalent, namely, the existence of the limit~\eqref{eq:disc2} is a formally stronger requirement. 

Recall that $w$ is assumed to be radially strictly decreasing, so for $\varepsilon < 0$ we can pick $k \in \N$ such that 
\[
\tilde{w}(r_{k+1}) < \varepsilon \leq \tilde{w}(r_{k}),
\]
where $w(\cdot) = \tilde{w} \circ d(x_0, \cdot)$. Then we have for positive $0 < \delta 1 < T \in B(\ell_2(X))$,
\begin{align*}
    \varepsilon \Tr(T \chi_{[\varepsilon, \infty)}(M_w)) &-\mathrm{Tr}_\omega(M_w)\frac{\mathrm{Tr}(TM_{\chi_{B(x_0,r_k)}})}{|B(x_0,r_k)|}\\
    &\leq \bigg(\tilde{w}(r_{k}) - \mathrm{Tr}_\omega(M_w) \frac
    {1}{|B(x_0,r_k)|}\bigg) \mathrm{Tr}(TM_{\chi_{B(x_0,r_k)}})\\
    &\quad + \tilde{w}(r_{k}) \Tr(TM_{\chi_{B(x_0, r_{k+1} ) \setminus B(x_0, r_k)}})\\
    &\leq  \Big|\tilde{w}(r_{k})|B(x_0,r_k)| - \mathrm{Tr}_\omega(M_w)\Big| \|T\|_\infty\\
    &\quad + \tilde{w}(r_k) |S(x_0, r_{k+1})|\|T\|_\infty,
\end{align*}
where $S(x_0,r_{k+1}) = B(x_0, r_{k+1})\setminus B(x_0, r_k)$. Assuming that $X$ satisfies Property $(C)$, we have that
\[
\tilde{w}(r_k) |S(x_0, r_{k+1})| = \tilde{w}(r_k)|B(x_0, r_{k})| \frac{|S(x_0, r_{k+1})|}{|B(x_0, r_{k})|} \to 0, \quad k \to \infty,
\]
since $M_w \in \mathcal{L}_{1,\infty}$ implies that
\[
\tilde{w}(r_k)|B(x_0, r_{k})| \leq \sup_{k \in \N} k\mu(k,w) < \infty,
\]
where $\{\mu(k,w)\}_{k=0}^\infty$ is the decreasing rearrangement of $w$.

On the other hand we have
\begin{align*}
    \varepsilon \Tr(T \chi_{[\varepsilon, \infty)}(M_w)) &-\mathrm{Tr}_\omega(M_w)\frac{\mathrm{Tr}(TM_{\chi_{B(x_0,r_k)}})}{|B(x_0,r_k)|}\\
    &\geq \bigg(\tilde{w}(r_{k+1}) - \mathrm{Tr}_\omega(M_w)\frac{1}{|B(x_0,r_k)|}\bigg)\Tr(TM_{\chi_{B(x_0,r_k)}})\\
    &\geq -\delta \Big|\tilde{w}(r_{k+1}) |B(x_0,r_k)| - \Tr_\omega(M_w)\Big|.
\end{align*}
Hence, using Property $(C)$, the condition
\begin{equation}\label{eq:discrequirement1}
    \lim_{k \to \infty} \tilde{w}(k) |B(x_0, r_k)| = \mathrm{Tr}_\omega(M_w)
\end{equation}
allows going from equation~\eqref{eq:disc2} to~\eqref{eq:disc1}. 
This does not hold for general $w \in \ell_{1,\infty}(X)$, however. Equation~\eqref{eq:discrequirement1} requires $M_w$ to be Dixmier measurable, but it is an even stronger condition than this~\cite[Theorem~9.1.6]{LSZVol1}.

At least for the function $w(x) = (1+|B(x_0, d(x,x_0))|)^{-1}$ condition~\eqref{eq:discrequirement1} is true due to Corollary~\ref{C:MwTrace}, and we can re-derive a weaker version of Theorem~\ref{T: Main} using Theorem \ref{MAIN_THEOREM}.

\begin{prop}\label{P:HelpfulPowert}
Suppose $T \geq \varepsilon 1 >0, V \in B(H)$ are bounded operators such that $[T, V] \in \mathcal{L}_1$. Then  \[[T^t, V] \in \mathcal{L}_1, \quad t >0.\]
\end{prop}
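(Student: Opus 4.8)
The plan is to write $T^t$ as an integral of resolvents of $T$ and differentiate under the integral sign. Since $T \geq \varepsilon 1 > 0$, the spectrum of $T$ is contained in $[\varepsilon, \|T\|_\infty] \subseteq (0,\infty)$, so for $t \in (0,1)$ we have the standard Balakrishnan-type representation
\[
T^t = \frac{\sin(\pi t)}{\pi} \int_0^\infty \lambda^{t-1} T (\lambda + T)^{-1} \, d\lambda,
\]
which converges as a Bochner integral in $B(\Hc)$ because the integrand behaves like $\lambda^{t-1}$ near $0$ and like $\lambda^{t-2}$ near $\infty$. First I would commute $V$ through this integral. Using the resolvent identity $[(\lambda+T)^{-1}, V] = -(\lambda+T)^{-1}[T,V](\lambda+T)^{-1}$ and the product rule, one obtains
\[
[T^t, V] = \frac{\sin(\pi t)}{\pi} \int_0^\infty \lambda^{t-1} \Big( (\lambda+T)^{-1}[T,V](\lambda+T)^{-1} T (\lambda+T)^{-1} \cdot (\lambda+T) - \text{(correction terms)} \Big) d\lambda,
\]
but it is cleaner to write $T(\lambda+T)^{-1} = 1 - \lambda(\lambda+T)^{-1}$ first, so that $[T^t,V] = -\frac{\sin(\pi t)}{\pi}\int_0^\infty \lambda^t [(\lambda+T)^{-1}, V]\, d\lambda = \frac{\sin(\pi t)}{\pi}\int_0^\infty \lambda^t (\lambda+T)^{-1}[T,V](\lambda+T)^{-1}\, d\lambda$.

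Next I would estimate the $\mathcal{L}_1$-norm of this integrand. Since $[T,V] \in \mathcal{L}_1$ by hypothesis and $\mathcal{L}_1$ is an ideal, we have
\[
\big\| \lambda^t (\lambda+T)^{-1}[T,V](\lambda+T)^{-1} \big\|_1 \leq \lambda^t \|(\lambda+T)^{-1}\|_\infty^2 \, \|[T,V]\|_1 \leq \frac{\lambda^t}{(\lambda+\varepsilon)^2}\|[T,V]\|_1,
\]
using $T \geq \varepsilon$ to control the resolvent norm by functional calculus. The function $\lambda \mapsto \lambda^t(\lambda+\varepsilon)^{-2}$ is integrable on $(0,\infty)$ precisely when $t < 1$ (it is $O(\lambda^t)$ near zero and $O(\lambda^{t-2})$ at infinity). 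Hence the weak/Bochner integral converges in $\mathcal{L}_1$, the operator it defines agrees with $[T^t,V]$ by uniqueness of limits (the $B(\Hc)$-valued integral for $[T^t,V]$ and the $\mathcal{L}_1$-valued integral have the same weak limit), and therefore $[T^t,V] \in \mathcal{L}_1$ for all $t \in (0,1)$. For general $t > 0$, write $t = n + s$ with $n \in \N$, $s \in [0,1)$, and use the Leibniz-type identity $[T^t,V] = [T^n T^s, V] = [T^n,V]T^s + T^n[T^s,V]$; iterating $[T^n, V] = \sum_{k=0}^{n-1} T^k[T,V]T^{n-1-k} \in \mathcal{L}_1$ handles the integer part, and the fractional part was just treated (the case $s=0$ being trivial).

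The main obstacle I anticipate is the interchange of the Bochner integral with the commutator, i.e. justifying that the $B(\Hc)$-valued integral representing $T^t$ can be pushed through the (unbounded-in-spirit, but here bounded) operation $X \mapsto [X, V]$ and that the resulting $\mathcal{L}_1$-valued integrand is genuinely Bochner integrable rather than merely weakly integrable. This is routine given the clean resolvent bound above, but requires care: one should verify weak measurability of $\lambda \mapsto \lambda^t(\lambda+T)^{-1}[T,V](\lambda+T)^{-1}$ as an $\mathcal{L}_1$-valued map (follows from strong continuity of $\lambda \mapsto (\lambda+T)^{-1}$ and continuity of multiplication $\mathcal{L}_1 \times B(\Hc) \to \mathcal{L}_1$), and then Bochner integrability follows from the norm bound and Pettis' theorem exactly as invoked for Lemma~\ref{L:PettisIntegral}. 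Everything else is the Balakrishnan formula and ideal properties of $\mathcal{L}_1$, both standard.
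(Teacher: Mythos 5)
Your proposal is correct and follows essentially the same route as the paper: the Balakrishnan integral representation of $T^t$ for $t\in(0,1)$, rewriting $T(\lambda+T)^{-1}=1-\lambda(\lambda+T)^{-1}$ and the resolvent identity to get $[T^t,V]=\frac{\sin(\pi t)}{\pi}\int_0^\infty \lambda^t(\lambda+T)^{-1}[T,V](\lambda+T)^{-1}\,d\lambda$, the bound $\|(\lambda+T)^{-1}\|\leq(\lambda+\varepsilon)^{-1}$ giving integrability of $\lambda^t(\lambda+\varepsilon)^{-2}$, and a Leibniz-type recursion (the paper uses $[T^t,V]=T[T^{t-1},V]+[T,V]T^{t-1}$) to handle $t>1$. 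Your extra care about measurability and Bochner integrability of the $\mathcal{L}_1$-valued integrand is a harmless refinement of what the paper leaves implicit.
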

\begin{proof}
Let $0 < t < 1.$ We have
\[
    T^t = \frac{\sin(\pi t)}{\pi} \int_0^\infty \lambda^{t-1}T(\lambda+T)^{-1}\,d\lambda.
\]
Then
\begin{align*}
    [T^t,V] &= \frac{\sin(\pi t)}{\pi} \int_0^\infty \lambda^{t-1}[T(\lambda+T)^{-1},V]\,d\lambda\\
              &= \frac{\sin(\pi t)}{\pi}\int_0^\infty \lambda^{t-1}[1-\lambda(\lambda+T)^{-1},V]\,d\lambda\\
              &= \frac{\sin(\pi t)}{\pi}\int_0^\infty \lambda^{t}(\lambda+T)^{-1}[T,V](\lambda+T)^{-1}\,d\lambda.
\end{align*}
Since $T\geq \varepsilon1,$ we have
\[
    \|(\lambda+T)^{-1}\| \leq (\lambda+\varepsilon)^{-1},
\]
and hence
\[
    \|[T^t,V]\|_1 \leq \frac{\sin(\pi t)}{\pi} \int_0^\infty \lambda^t (\lambda+\varepsilon)^{-2}\,d\lambda \cdot \|[T,V]\|_1<\infty.
\]
To conclude, for $t > 1$ we can inductively apply the equality
\[
[T^t, V] = T[T^{t-1}, V] + [T, V]T^{t-1}
\]
to obtain the result.
\end{proof}

\begin{thm}\label{T:WeakerDiscrete}
    Let $(X,d)$ be a countably infinite discrete metric space such that every ball contains at most finitely many points, and let $x_0 \in X$. Write by $\{r_k\}_{k \in \mathbb{N}}$ for the set $d(\cdot, x_0): X \rightarrow \mathbb{R}_{\geq 0}$ ordered in increasing manner. Suppose that \begin{equation}\tag{C}
        \lim_{k\rightarrow \infty}\frac{|B(x_0, r_{k+1})|}{|B(x_0, r_k)|} = 1.
    \end{equation} Define the function $w(x) = (1+|B(x_0, d(x,x_0))|)^{-1}$. If $T$ is a bounded self-adjoint operator on  $\ell_2(X)$ for which $[T,M_w] \in \mathcal{L}_1$, we have that for every extended limit $\omega$,
    \begin{equation}
        \mathrm{Tr}_\omega(TM_w) = \lim_{k\to \infty} \frac{\mathrm{Tr}(TM_{\chi_{B(x_0,r_k)}})}{|B(x_0,r_k)|},
    \end{equation}
    if the limit on the right-hand side exists.

    In particular, if $H$ is a self-adjoint, possibly unbounded, operator on $\ell_2(X)$ with density of states measure $\nu_H$, and $f \in C_c(\mathbb{R})$ and such that $[f(H), M_w] \in \mathcal{L}_1$, then for all extended limits $\omega$
\begin{equation}
    \mathrm{Tr}_\omega(f(H)M_w) = \int_{\mathbb{R}} f\, d\nu_H.
\end{equation}
\end{thm}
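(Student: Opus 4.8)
\textbf{Proof plan for Theorem~\ref{T:WeakerDiscrete}.}
The plan is to invoke the abstract operator-theoretic result Theorem~\ref{MAIN_THEOREM} in disguise, exactly as sketched in the surrounding discussion, and then convert the resulting Abelian-type limit over the operator $M_w$ into the geometric limit over balls using Property~(C) together with the key identity $\mathrm{Tr}_\omega(M_w)=1$ furnished by Corollary~\ref{C:MwTrace}. First I would reduce to the case of a bounded self-adjoint $T$ which is, moreover, strictly positive: given arbitrary bounded self-adjoint $T$, pick $C>0$ with $T+C\geq \varepsilon 1>0$; both sides of the desired identity change by the same additive amount under $T\mapsto T+C$ (the left by $C\,\mathrm{Tr}_\omega(M_w)=C$ and the right by $C$, using $\mathrm{Tr}(C M_{\chi_{B(x_0,r_k)}})/|B(x_0,r_k)|=C$), so it suffices to prove the statement for $T>0$. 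Write $P:=\log(T)$, so that $e^{-tP}=T^{t}$ for $t>0$; note that $T^{t}M_w\in\mathcal{L}_{1,\infty}$ since $M_w\in\mathcal{L}_{1,\infty}$ and $T^t$ is bounded, and that $[T^{t},M_w]\in\mathcal{L}_1$ for every $t>0$ by Proposition~\ref{P:HelpfulPowert}, using the hypothesis $[T,M_w]\in\mathcal{L}_1$. Thus the two hypotheses \eqref{eq:original_conditions} of Theorem~\ref{MAIN_THEOREM} are met, and it yields
\[
\mathrm{Tr}_\omega(T M_w)=\lim_{\varepsilon\to 0}\varepsilon\,\mathrm{Tr}\big(T\,\chi_{[\varepsilon,\infty)}(M_w)\big)
\]
whenever the limit on the right exists.

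The second main step is the passage from $\lim_{\varepsilon\to 0}\varepsilon\,\mathrm{Tr}(T\chi_{[\varepsilon,\infty)}(M_w))$ to $\lim_{k\to\infty}\mathrm{Tr}(T M_{\chi_{B(x_0,r_k)}})/|B(x_0,r_k)|$. Here I would use the explicit structure of $M_w$: writing $\tilde w$ for the radial profile so that $w=\tilde w\circ d(\cdot,x_0)$, the spectral projection $\chi_{[\varepsilon,\infty)}(M_w)$ is exactly $M_{\chi_{B(x_0,r_k)}}$ when $\tilde w(r_{k+1})<\varepsilon\leq\tilde w(r_k)$, since $\tilde w$ is strictly decreasing and $w$ takes only the values $\{\tilde w(r_k)\}$. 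Following the computation already displayed in Section~\ref{S:MfsToDisc}, one sandwiches
\[
\varepsilon\,\mathrm{Tr}(T\chi_{[\varepsilon,\infty)}(M_w))-\frac{\mathrm{Tr}(TM_{\chi_{B(x_0,r_k)}})}{|B(x_0,r_k)|}
\]
between quantities controlled by $\big|\tilde w(r_k)|B(x_0,r_k)|-1\big|\,\|T\|_\infty$, $\big|\tilde w(r_{k+1})|B(x_0,r_k)|-1\big|\,\|T\|_\infty$, and $\tilde w(r_k)|S(x_0,r_{k+1})|\,\|T\|_\infty$, where $S(x_0,r_{k+1})=B(x_0,r_{k+1})\setminus B(x_0,r_k)$. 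The first two error terms vanish by the relation $\tilde w(r_k)|B(x_0,r_k)|\to 1=\mathrm{Tr}_\omega(M_w)$, which is the content of Corollary~\ref{C:MwTrace} (this is condition~\eqref{eq:discrequirement1} for this particular $w$, combined with Property~(C) to compare the $r_{k+1}$ and $r_k$ normalisations). The last error term vanishes because $\tilde w(r_k)|B(x_0,r_k)|$ is bounded (as $M_w\in\mathcal{L}_{1,\infty}$ gives $\tilde w(r_k)|B(x_0,r_k)|\leq\sup_k (k+1)\mu(k,w)<\infty$) while $|S(x_0,r_{k+1})|/|B(x_0,r_k)|\to 0$, which is precisely Property~(C). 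Hence the two limits coincide, and the first displayed identity of the theorem follows.

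For the application to $H$ and $f$, once $[f(H),M_w]\in\mathcal{L}_1$ is assumed and $f\in C_c(\mathbb{R})$ is real-valued (the general complex case following by splitting into real and imaginary parts, both of which have compact support and satisfy the commutator hypothesis since $[f(H),M_w]^*=-[\,\overline f(H),M_w\,]$), the operator $T:=f(H)$ is bounded self-adjoint and the first part applies, giving
\[
\mathrm{Tr}_\omega(f(H)M_w)=\lim_{k\to\infty}\frac{\mathrm{Tr}(f(H)M_{\chi_{B(x_0,r_k)}})}{|B(x_0,r_k)|}.
\]
By the definition of the density of states measure $\nu_H$, the right-hand side equals $\int_{\mathbb{R}} f\,d\nu_H$ — more precisely, the limit over balls exists because $H$ admits a density of states, and it is exactly $\int f\,d\nu_H$; one may also invoke the same Property~(C) interpolation argument (as in Lemma~\ref{L:AHMSZ}) to pass between the ball-radius sequence and a continuous radius if desired, but for discrete $X$ the balls $B(x_0,r_k)$ already exhaust all possible radii, so no extra work is needed. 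I expect the main obstacle to be bookkeeping the error terms in the sandwich estimate of the second step cleanly — in particular keeping track of the difference between normalising by $|B(x_0,r_k)|$ and by $\mathrm{Tr}_\omega(M_w)/\tilde w(r_k)$ — but this is entirely routine once Corollary~\ref{C:MwTrace} and Property~(C) are in hand; there is no genuinely hard analytic point, since all the heavy lifting is done by Theorem~\ref{MAIN_THEOREM}, Proposition~\ref{P:HelpfulPowert}, and Corollary~\ref{C:MwTrace}.
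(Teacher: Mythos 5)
Your proposal is correct and follows essentially the same route as the paper's proof: verify the weaker hypotheses \eqref{eq:original_conditions} of Theorem~\ref{MAIN_THEOREM} for powers of a positive shift of $T$ via Proposition~\ref{P:HelpfulPowert}, convert the resulting $\varepsilon$-limit into the ball limit using Property (C), Corollary~\ref{C:MwTrace} and the sandwich estimates of Section~\ref{S:MfsToDisc}, and handle $f(H)$ by splitting into real and imaginary parts. The only cosmetic point (shared with the paper's own wording) is the sign in the substitution: one should take $P=-\log(T)$ so that $e^{-tP}=T^{t}$.
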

\begin{proof}
    Let $C>0$ such that $T+C > \delta 1 > 0$. The fact that $M_w \in \mathcal{L}_{1,\infty}$, Proposition~\ref{P:HelpfulPowert} and the assumption that $[T, M_w] \in \mathcal{L}_1$ give that for all $t >0$,
    \begin{enumerate}
        \item $(T+C)^tM_w \in \mathcal{L}_{1,\infty}$;
        \item  $[(T+C)^t, M_w] \in \mathcal{L}_1$.
    \end{enumerate}
    Hence, Theorem~\ref{MAIN_THEOREM} gives that 
\[
\Tr_\omega((T+C) M_w) = \lim_{\varepsilon \to 0} \varepsilon \Tr((T+C) \chi_{[\varepsilon, \infty)}(M_w)),
\]
whenever the limit on the right-hand side exists. Let $\varepsilon > 0$ small and take $k\in \N$ such that $(1+|B(x_0, r_{k+1})|)^{-1} \leq \varepsilon < (1+|B(x_0, r_k)|)^{-1}$. Then,
\begin{align*}
     \frac{|B(x_0, r_{k})|}{1+|B(x_0,r_{k+1})|} \leq \varepsilon \Tr(\chi_{[\varepsilon, \infty)}(M_w)) \leq \frac{|B(x_0, r_{k+1})|}{1+|B(x_0,r_{k})|}.
\end{align*}
Due to Property $(C)$, we therefore have
\[
\lim_{\varepsilon \to 0}  \varepsilon \Tr(\chi_{[\varepsilon, \infty)}(M_w)) = 1 = \Tr_\omega(M_w),
\]
using Corollary~\ref{C:MwTrace}. Therefore,
\[
\Tr_\omega(TM_w) = \lim_{\varepsilon \to 0} \varepsilon \Tr(T \chi_{[\varepsilon, \infty)}(M_w)),
\]
whenever the limit on the right-hand side exists. With the computations given previously, we can conclude that 
\[
        \mathrm{Tr}_\omega(TM_w) = \lim_{k\to \infty} \frac{\mathrm{Tr}(TM_{\chi_{B(x_0,r_k)}})}{|B(x_0,r_k)|},
\]
    if the limit on the right-hand side exists.

Finally, if $H$ is a self-adjoint operator on $\ell_2(X)$ for which the density of states $\nu_H$ exists, we have the existence of all limits
\[
 \lim_{k\to \infty} \frac{\mathrm{Tr}(f(H)M_{\chi_{B(x_0,r_k)}})}{|B(x_0,r_k)|}, \quad f \in C_c(\R).
\]
If $f \in C_c(\R)$ is such that $[f(H), M_w] \in \mathcal{L}_1$, then also  the real part $\Re([f(H), M_w]) = [\Re (f)(H), M_w]$ is trace-class, and likewise the imaginary part. Since $\Re(f)(H)$ and $i\Im(f)(H)$ are self-adjoint, by the above we therefore have
\[
\mathrm{Tr}_\omega(f(H)M_w) = \int_\R f\, d\nu_H.\qedhere
\]
\end{proof}

We remark that, in the case that $H$ is self-adjoint, bounded, and $[H,M_w]$ is trace-class, we get that $[f(H),M_w]$ is trace-class if $f$ is operator-Lipschitz. In particular, this is the case when $f$ is in the homogeneous Besov space $\dot{B}^1_{\infty,1}(\mathbb{R})$ \cite[Theorem 2]{Peller1990}.

The condition that $[T,M_w]\in \mathcal{L}_1$ is relatively weak. Since the operator $[T,M_w]$ already belongs to the commutator subspace \[\mathrm{Com}(\mathcal{L}_{1,\infty}):=[B(H), \mathcal{L}_{1,\infty}],\] it follows that~\cite[Theorem~5.1.4]{LSZVol1} 
\[
\sum_{k=0}^n \lambda(k, [T,M_w]) = O(1).
\]
However, that is not quite enough for it to be trace-class. The following example has been provided by Teun van Nuland.

\begin{ex}
Let $X = \mathbb{N}^+ = \{1,2,\ldots\}$, so that $M_w: e_n \mapsto \frac{1}{n} e_n$. Define a bijection $\phi: \mathbb{N} \to \mathbb{N}$ by \[2n+1 \mapsto 2^n,\] and recursively mapping each even integer $2n$ to the least integer that doesn't appear in $\{\phi(k) : k<2n\}$. Then define the operator $T: e_n \mapsto e_{\phi(n)}$. Observe that $T^*: e_n \mapsto e_{\phi^{-1}(n)}$. By a simple calculation, \begin{align*}
    [T,M_w][T,M_w]^* e_n &=  \left(\frac{1}{\phi^{-1}(n)}-\frac{1}{n} \right) [T,M_w] e_{\phi^{-1}(n)}\\
    &= \left(\frac{1}{\phi^{-1}(n)} - \frac{1}{n}\right)^2 e_n.
\end{align*}
Hence, $[T,M_w]$ is trace-class if and only if \[\sum_{n\in \mathbb{N}} \abs{\frac{1}{\phi^{-1}(n)} - \frac{1}{n}} = \sum_{n\in \mathbb{N}} \abs{\frac{1}{n}- \frac{1}{\phi(n)}} < \infty.\]
However, \[\sum_{n\in \mathbb{N}} \abs{\frac{1}{n}- \frac{1}{\phi(n)}} > \sum_{n\in \mathbb{N}} \left(\frac{1}{2n+1} - \frac{1}{2^n}\right),\] which diverges. Hence $[T,M_w]$ is not trace-class.
\end{ex}

In conclusion, Theorem~\ref{T:WeakerDiscrete} is significantly weaker than Theorem~\ref{T: Main}, justifying the method of proof in Chapter~\ref{Ch:DOSDiscrete}.
    \clearpage
   
   
    \clearpage
    
    \backmatter
    
    \pagestyle{noHeading}

\printbibliography

\end{document}